\documentclass[amssymb,10pt]{article}

\usepackage{graphicx}
\usepackage{amssymb}
\usepackage{amsthm}
\usepackage{indentfirst}
\usepackage{amsmath}
\usepackage{color}
\usepackage{mathrsfs}
\usepackage{amssymb}
\usepackage{amsthm}
\usepackage{titletoc}
\usepackage{amssymb}
\usepackage{amsxtra}
\usepackage{amsmath,amstext,amsfonts,verbatim, url}
\usepackage{epstopdf}
\usepackage{subfig}
\usepackage{psfrag}
\usepackage{mathtools}
\usepackage{enumitem}
\usepackage[all]{xy}
\usepackage{hyperref}

\usepackage{tikz-cd}
\usepackage{bm}

\setlist[enumerate]{label={\rm (\arabic*)}}
\numberwithin{equation}{section}

\newtheorem{thm}{Theorem}[section]
\newtheorem{lem}[thm]{Lemma}
\newtheorem{pro}[thm]{Proposition}

\newtheorem{dfn}[thm]{Definition}

\newtheorem{cor}[thm]{Corollary}
\newtheorem{rk}[thm]{Remark}

\newtheorem{claim}[thm]{Claim}
\newtheorem{theorema}{Theorem}[section]

\newcommand{\norm}[1]{\left\Vert#1\right\Vert}
\newcommand{\Norm}[1]{\left\vert#1\right\vert}

\def\D{{\cal D}}           
   \def\U{{\cal U}}    \def\X{{\cal X}}    \def\P{{\cal P}}
          
 \def\N{{\cal N}}    \def\H{{\cal H}} \def\T{{\cal T}}

\def\wt{\widetilde}  
\def\wh{\widehat}
       \def\nt{\noindent}
\def\la{\langle}         \def\ra{\rangle}

\def\ol{\overline}   

\def\ul{\underline}

\def\qed {\hfill $\Box$\vskip5pt}

\def\Sing{{\rm Sing}}

\def\dist{{\rm dist}}

\def\Orb{{\rm Orb}}

\def\GL{{\rm GL}}

\def\dist{{\rm dist}}

\def\FB{{\rm{FB}^*}}
\def\Ker{{\rm Ker}}

\def\NUH{{\rm NUH}}
\def\diam{{\rm diam}}
\def\dim{{\rm dim}}
\def\proj{{\rm proj}}

\def\Pes{{\rm PES}}
\def\BHC{{\rm BHC}}
\def\HC{{\rm HC}}
\def\Var{{\rm Var}}

\def\bN{\mathbb{N}}
\def\bR{\mathbb{R}}
\def\bZ{\mathbb{Z}}

\def\over{\overset{\varepsilon}{\approx}}
\def\ov{\overset{\varepsilon}{\to}}

\def\eqL{\overset{N}{\sim}}

\begin{document}

\title{Thermodynamics formalism for singular flows}

\author{Ming Li and Xingzhong Liu}

\date{}
\maketitle

\begin{abstract}

We establish that $C^\infty$ three-dimensional flows with positive topological entropy admit only finitely many ergodic measures of maximal entropy, even when singularities (zero-velocity points) are present. Furthermore, every ergodic measure of maximal entropy is rapid mixing for such flows within a $C^\infty$ open and dense subset. To prove this, we develop a novel symbolic coding system for flows with singularities, which serves as a fundamental tool in this work. We also define the strong positive recurrence (SPR) property for singular flows and verify  that SPR flows can be coded by suspension flows of SPR symbolic systems. This framework extends to other singular flows, including  star flows, and to equilibrium states.

\bigskip
\noindent 2020 Mathematics Subject Classification: 37C40, 37C10, 37D25, 37D35, 37B10, 37C30, 37A25.

\noindent Keywords: singular flows, symbolic dynamics, equilibrium states, strong positive recurrence, rapid mixing.
\end{abstract}

\setcounter{tocdepth}{1}
\tableofcontents

\section{Introduction}\label{sec.intro}

The ergodic theory of dynamical systems fundamentally seeks to characterize typical long-term behavior through invariant probability measures. For flows, this pursuit becomes particularly challenging when singularities--equilibrium points where the velocity field vanishes--are present. In this paper, we develop a novel symbolic framework to characterize the statistical properties of singular flows with non-uniform hyperbolicity. Applied to three-dimensional smooth flows, this yields the following result:

\begin{thm}\label{thm.3flows}
	Every $C^\infty$  flow on a closed $3$-dimensional  smooth Riemannian manifold $M^3$ with positive topological entropy has only finitely many ergodic measures of maximal entropy.

Moreover, there exists a $C^\infty$ open and dense subset of the set of all  $C^\infty$ flows with positive topological entropy on $M^3$ such that, for every flow in this subset, every ergodic measure of maximal entropy is rapid mixing.	
\end{thm}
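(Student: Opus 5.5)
The plan follows the Buzzi–Crovisier–Sarig strategy for surface diffeomorphisms, with the new singular coding as the main input. I would split the argument into three steps: strong positive recurrence (SPR), coding, and finiteness plus mixing.

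\textbf{Step 1: SPR for the flow.} I would first show that a $C^\infty$ three-dimensional flow $X$ with $h_{top}(X)>0$ is SPR in the singular sense introduced in this paper. Fix $\chi$ with $0<\chi<h_{top}(X)$. By Yomdin theory applied to $C^\infty$ flows, every ergodic measure of entropy close to $h_{top}$ is $\chi$-hyperbolic, with one positive and one negative exponent transverse to the flow. Near a singularity $\sigma$, the time-one map has local entropy contribution controlled by $\lim_{r\to 0}$ of the tail entropy. I would argue, via a Ruelle-type inequality localized at $\sigma$ (or by a reparametrization that slows the flow near singularities), that this contribution vanishes. Consequently, measures giving large weight to small neighborhoods of singularities cannot have entropy near $h_{top}$. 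This yields the SPR estimate: there are a compact hyperbolic block $K$ away from $\Sing(X)$ and constants $\tau,h_0<h_{top}$ such that every ergodic $\mu$ with $h_\mu>h_0$ satisfies $\mu(K)>\tau$.

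\textbf{Step 2: Coding.} I would then invoke the symbolic coding constructed in the paper. SPR flows are coded by suspension flows $(\Sigma_r,\sigma_r)$ over countable-state Markov shifts with roof function $r$ that is Hölder and bounded away from zero on the relevant part. The factor map is finite-to-one on the recurrent set and lifts all $\chi$-hyperbolic measures of high entropy. By the SPR property of the flow, the relevant irreducible components of $\Sigma$ are SPR with respect to the potential $-h_{top}\, r$.

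\textbf{Step 3: Finiteness and mixing.} For finiteness, each irreducible SPR component carries at most one equilibrium measure for $-h_{top} r$, and this measure projects to at most one measure of maximal entropy (MME). To bound the number of contributing components, I would use the homoclinic-class decomposition: each ergodic MME is carried by the homoclinic class of a hyperbolic periodic orbit. A compactness argument, using upper semicontinuity of entropy in $C^\infty$ (Newhouse) together with uniform size of Pesin charts on $K$, shows that only finitely many classes have entropy equal to $h_{top}$. This is the step where I expect the main obstacle. One must rule out a sequence of distinct MMEs accumulating on singularities, and Step 1's uniform SPR estimate on the block $K$ is precisely what prevents this: infinitely many MMEs would have a weak-$*$ limit with $\mu(K)\ge\tau$, forcing their homoclinic classes to eventually coincide.

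For rapid mixing, an equilibrium measure of an SPR suspension over a mixing Markov shift is rapid mixing, in the sense of Melbourne / Dolgopyat–Melbourne-type results, unless the roof function $r$ is cohomologous to a function valued in a discrete subgroup $a\mathbb{Z}$. In the flow setting, that degeneracy would force all periods of periodic orbits in the homoclinic class to lie in $a\mathbb{Z}$. I would take the open and dense set to consist of flows for which every class supporting an MME contains two periodic orbits whose periods have irrational ratio. Openness follows from the finiteness just proved together with persistence of hyperbolic periodic orbits and of their homoclinic relations. Density follows from a local $C^\infty$ time-change perturbation along one periodic orbit, which alters its period while keeping positive entropy. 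Showing that this condition survives the perturbation for all MME classes at once, given that the MMEs themselves can jump, is the second delicate point. I would handle it using upper semicontinuity of the set of MMEs.
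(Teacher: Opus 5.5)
\textbf{Gap 1: Step 1 does not establish SPR.} Ruelle's inequality (not Yomdin theory) is what makes each ergodic $\nu$ with $h_\nu>\chi$ $\chi$-hyperbolic. Newhouse's upper semicontinuity of entropy then shows that high-entropy measures put little mass near $\Sing(X)$. This part is fine and matches Lemma~\ref{lem.regularhyper} and the Claim in the proof of Theorem~\ref{thm.3SPRset}.

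SPR, however, needs a single compact set $\Pes_\chi(\ell,\varepsilon)\setminus M(\Sing(X),r)$, with $\ell$ \emph{fixed}, that every high-entropy ergodic measure charges uniformly. Pointwise $\chi$-hyperbolicity only gives $\nu(\Pes_\chi(\ell_\nu,\varepsilon))$ close to $1$ with $\ell_\nu$ depending on $\nu$. Along a sequence of high-entropy measures, the unstable directions may in the limit sit partly on non-expanded directions. Expansion would then be visible only on a vanishing proportion of times at any fixed scale $N$.

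Excluding this is the entropic continuity of Lyapunov exponents for $C^\infty$ systems \cite{BCS22b,Bur,Zan} (Proposition~\ref{thm.zang}), which is where Yomdin theory really enters. It gives that the unstable lifts $\widetilde\nu_n^+$ converge to $\widetilde\mu^+$ (Proposition~\ref{pro.continu}). This yields a uniform $N_0$ with $\widetilde\nu^+(\widetilde{Q}^+_N(\chi))\ge 1-\delta^2$ (Lemma~\ref{lem.cptmeas}), and the argument of \cite[Theorem~3.1]{BCS25} converts that into uniform mass on a fixed Pesin block. Your Step 3 finiteness argument is essentially the paper's (Theorem~\ref{thm.main.spr}), but it presupposes exactly this uniform block.

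\textbf{Gap 2: the rapid-mixing criterion is false.} A roof not cohomologous to an $a\mathbb{Z}$-valued function (irrational period ratios) characterizes \emph{mixing}; this is the dichotomy of Theorem~\ref{thm.LS}. It does not give rapid mixing. Ruelle \cite{Rue83} and Pollicott \cite{Pol85} exhibit mixing basic sets of Axiom A flows, i.e. finite-state suspensions with non-lattice roof, whose correlations decay arbitrarily slowly. Rapid mixing requires excluding approximate eigenfunctions through a Diophantine-type condition \cite{Dol98}. Such conditions are not open, and irrationality of a fixed period ratio is not open either, since periods vary continuously. So your proposed open dense set fails on both counts.

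The paper instead uses the good asymptotics of \cite{FMT07} for a periodic orbit with a transverse homoclinic point (Proposition~\ref{pro.goodasymptotics}). This condition is $C^2$-open and $C^\infty$-dense, and it lives on a finite subsystem of the coding. The paper then reproves Dolgopyat's estimate on the countable SPR shift (Theorem~\ref{thm.rapidmixing}), since no off-the-shelf result applies there. The ingredients are the norm $\|\cdot\|_{(b)}$, a Lasota--Yorke inequality, and phase cancellation restricted to finitely many symbols so that the local Gibbs property is available.

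Finally, upper semicontinuity of the set of MMEs only gives weak-$*$ proximity for nearby flows, not homoclinic relation to your chosen periodic orbits. The paper proves a perturbative uniform SPR estimate (Lemmas~\ref{lem.pertSPR} and \ref{lem.pertasym}, again through Proposition~\ref{thm.zang}, now with $f_n\to f$). This produces finitely many periodic orbits $p_1,\dots,p_m$ such that every ergodic measure of large pressure, for every nearby $Y$, is homoclinically related to some continuation $p_j(Y)$. It then perturbs so that all $p_j(Y)$ have good asymptotics.
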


Here the notion \textit{rapid mixing} denotes superpolynomial decay of correlations: 
for any $n \in \mathbb{Z}^+$, 
there exist $C > 0$ and $m\ge 1$ such that for all observables $F, G\in C^m(M)$,
\[
\left| \int F \cdot (G \circ \varphi_t)  d\mu - \int F  d\mu \int G  d\mu \right| \leq C \|F\|_{C^m} \|G\|_{C^m}  |t|^{-n}
\]
holds for all $t \in \mathbb{R}$.

\subsection{Singular flows}

Throughout this paper, we assume that $M$ is a smooth closed  Riemannian manifold of dimension $d\ge 3$, that is, $M$ is compact and without boundary.  A $C^1$ vector field $X$ on $M$ generates a $C^1$ flow $\varphi_t:M\to M$.

Compared to the case of diffeomorphisms, studying flows presents greater challenges. A central difficulty arises from the presence of singularities (zero-velocity points of the flow). A flow with singularities is called a {\it  singular flow}. In this paper, we also use singular flow to generically refer to all flows--both with and without singularities--to emphasize that singularities are permitted.

While singularities themselves support trivial dynamics as fixed points of the flow, the dynamics often become highly complex when recurrent orbits accumulate near them. This complexity can disrupt standard dynamical structures, as exemplified by the classical Lorenz attractor \cite{Lor}. Due to its singularities, the Lorenz attractor is neither uniformly hyperbolic nor structurally stable, yet exhibits robust dynamical behavior.

It has long been recognized that hyperbolicity in flows manifests in transverse directions rather than the flow direction. To analyze the hyperbolic structure of flows, Liao \cite{Liao} defined the linear Poincar\'e flow on normal bundles. However, this flow remains undefined at singularities, which typically destroys compactness. Moreover, near singularities, the flow speed approaches zero, causing orbits to spend significant time there. Simultaneously, both the domain of the Poincar\'e map and the influence range of the linear Poincar\'e flow tend to vanish.

 To address the compactness issue, \cite{LGW} introduced the {\it extended linear Poincar\'e flow}, extending its definition to singularities. This framework was later extended to the nonlinear Poincar\'e flow \cite{CY}. For the second challenge, Liao \cite{Liao1,Liao2,Liao3} pioneered {\it scaling} techniques, subsequently refined by \cite{GY,WW} and others. Nevertheless, these techniques do not resolve all difficulties associated with singular flows. In this paper, building upon previous works, we develop a novel approach for studying the statistical properties of singular flows.
 
 If there exists a global section for the flow on the manifold, the corresponding return map is a diffeomorphism. It is generally accepted that the dynamics of such a flow resemble those of its discrete return map. However, global sections typically do not exist, which requires studying dynamics near periodic orbits through local sections. This is particularly challenging for flows with singularities, where no return map induced by finitely many sections can have uniformly bounded return times. The foundation of our work lies in newly constructing a collection of sections that is potentially infinite in total number but locally finite, with return times uniformly bounded above and below.

 \subsection{Equilibrium states}
 
 The statistical properties of dynamical systems explain how the random behaviors of a system are determined. This constitutes a fundamental problem in dynamical systems theory and is closely related to the evolution of measures. One of the most important concepts in the statistical theory of dynamical systems is the {\it measure of maximal entropy}, or more generally, the {\it equilibrium state}, defined as an invariant probability measure achieving the supremum in the variational principle. 
 
 Specifically, we say that a probability measure $\mu$ on $M$ is {\it $X$-invariant} (resp. {\it $X$-ergodic}) if $\mu$ is $\varphi_t$-invariant (resp. $\varphi_t$-ergodic) for any $t\in \bR$. Denote by $\mathbb P(X)$ (resp. $\mathbb P_e(X)$) the set of all $X$-invariant (resp. $X$-ergodic) probability measures. 
 
 For any $\mu\in\mathbb P(X)$, the {\it metric entropy} for $X$  with respect to $\mu$, denote by $h_\mu(X)$, is defined as the metric entropy for the time-$1$ map $\varphi_1$  with respect to $\mu$.  According to the variational principle (see, e.g. \cite{Wal}),
  \[
 h_{\rm top}(X)=\sup\{h_\mu(X): \mu\in \mathbb P(X)\}=\sup\{h_\mu(X): \mu\in \mathbb P_e(X)\},
 \]
 where  $h_{\rm top}(X)$ is the topological entropy for $X$. The {\it measure of maximum entropy} (MME) for $X$  is  an $X$-invariant probability measures $\mu$ such that $h_\mu(X)= h_{\rm top}(X)$.

 A {\it potential} is a bounded measurable function $\vartheta$ defined on $M$. The {\it metric pressure} of $\vartheta$ for $X$  with respect to $\mu\in \mathbb P(X)$ is defined as 
 \[
 P_{\mu}(X,\vartheta)=h_\mu(X)+\int \vartheta d\mu.
 \]
 Similarly, by the variational principle,
 \[
 P_{\rm top}(X,\vartheta)=\sup\{P_{\mu}(X,\vartheta): \mu\in \mathbb P(X)\}=\sup\{P_{\mu}(X,\vartheta): \mu\in \mathbb P_e(X)\},
 \]
where $P_{\rm top}(X,\vartheta)$ is the topological pressure  for $X$. The {\it equilibrium state} of $\vartheta$ for $X$ is a measure $\mu\in\mathbb P(X)$ such that $P_{\mu}(X,\vartheta)= P_{\rm top}(X,\vartheta)$.
The MME corresponds to the equilibrium state of $\vartheta\equiv 0$.

The existence and finiteness of ergodic equilibrium states for dynamical systems constitute a long-standing question. Research on equilibrium states began with uniformly hyperbolic systems \cite{Par,AW,Bow71,Sin,Bow74,Bow75,PP} and then extended to non-uniformly hyperbolic systems. Substantial results have been established for various types of non-uniformly hyperbolic systems.

There are two primary approaches to addressing this problem. The first employs the specification property. Bowen \cite{Bow75} pioneered this method to show the existence and uniqueness  of equilibrium state for uniformly hyperbolic transitive sets. Subsequently, Climenhaga-Thompson \cite{CT16} developed this theory, establishing existence and uniqueness for certain non-uniformly  hyperbolic systems (see, e.g. \cite{CT14,CT16,CT18,CKW}), including geodesic flows on closed manifolds with nonpositive curvature \cite{BCFT}. 

Most recently, Pacifico-Yang-Yang further generalized the techniques of Climenhaga-Thompson to flows with singularities \cite{PYY22,PYYY}, proving that for $C^1$-open and dense star flows satisfying the {\it pressure gap condition} (see \S\ref{subsec.starflow}), ergodic equilibrium states exist and are finite \cite{PYY25a,PYY25b}.

In this article, we employ a different approach: constructing a finite-to-one semi-conjugacy from a topological  Markov  flow to the flow on the manifold (or a topological Markov shift to a diffeomorphism), then leveraging properties of symbolic dynamics (see, e.g. \cite{Gur}) to study the equilibrium states of flows. This method has been widely applied (see \cite{Buz97,BFSV,BF,LM,LS,LP} for instance). Recently, Buzzi-Crovisier-Sarig established the finiteness of equilibrium states for smooth surface diffeomorphisms \cite{BCS22}. The corresponding result was later extended to three-dimensional flows without singularities \cite{Zan,BLY}.

\subsection{Symbolic dynamics for non-uniformly hyperbolic systems}

The symbolic dynamics approach to studying hyperbolic behavior of dynamical systems has achieved significant success. This includes applications to uniformly hyperbolic systems \cite{AW,Sin68,Rat69,Bow70,Rat73,Bow08} and piecewise maps on intervals and beyond \cite{Hof,Buz97,Buz05}. 

In Sarig's groundbreaking work \cite{Sar13}, symbolic dynamics was constructed for non-uniformly hyperbolic surface diffeomorphisms. The method has seen substantial development in recent years, both for various non-uniformly hyperbolic discrete systems \cite{LM,Ben,Lim,ALP} and for non-uniformly hyperbolic flows without singularities \cite{LS,BCL,LMN}.

In this paper, our central approach involves the construction of symbolic dynamics for general non-uniformly hyperbolic flows, possibly singular. 

A {\it topological Markov shift} $ (\Sigma, \sigma) $ is a symbolic system generated by a directed graph with at most countably many vertices. A {\it roof function} $r$ is a uniformly bounded positive continuous function defined on 
$\Sigma$. A {\it topological Markov flow} $(\Sigma_r, \sigma_r)$ is the suspension flow generated by a topological Markov shift $ (\Sigma, \sigma) $ and a roof function $r$ on it. Under the {\it Bowen-Walters metric}, this flow is continuous.

The {\it regular set} of a topological Markov flow is the set of points whose symbolic coordinate sequence has at least one symbol that repeats infinitely often in the future and at least one (possibly different) symbol that repeats infinitely often in the past. A topological Markov flow is {\it local compact} if its generating graph has finite out-degree and in-degree at every vertex. An {\it irreducible component} of a topological Markov flow is a maximal subset of $\Sigma_r$ where any two states in the generating graph are connected by  directed paths. See definitions in \S \ref{subsec.TMSTMF}

An $X$-invariant probability measure $\mu$ on $M$ is \textit{$\chi$-hyperbolic} if, for $\mu$-almost every point, the absolute values of all Lyapunov exponents except that of the flow direction exceed $\chi$. A measure $\mu$ is \textit{hyperbolic} if there exists $\chi > 0$ such that $\mu$ is $\chi$-hyperbolic. A measure $\mu$ is \textit{regular} if $\mu(\Sing(X)) = 0$, where $\Sing(X)$ is the set of all singularities of $X$. Two regular hyperbolic measures $\mu$ and $\nu$ are \textit{homoclinically related} if for $\mu$-almost every $x$ and $\nu$-almost every $y$, the stable manifold of $x$ transversely intersects the unstable manifold of $y$, and conversely.

\begin{theorema}\label{main.coding}
Let $X$ be a $C^{1+\beta}$ vector field ($\beta>0$) on $M$. For any $\chi>0$, there exists a locally compact topological Markov flow $(\wh{\Sigma}_{\wh{r}},\wh \sigma_{\wh{r}})$  and a map $\wh{\pi_{\wh{r}}}:\wh{\Sigma}_{\wh{r}}\to M$ satisfying the following properties:
\begin{enumerate}
\item  $\wh{\pi_{\wh{r}}}\circ \wh{\sigma}_{\wh{r}}^t=\varphi_t\circ \wh{\pi_{\wh{r}}}$ for all $t\in\mathbb{R}$.
\item  $\sup \wh{r}<\infty$ and $\inf \wh{r}>0$. Both $\wh{r}$ and $\wh{\pi_{\wh{r}}}$ are H\"older continuous with respect to the Bowen-Walters metric.
\item  $\wh{\pi_{\wh{r}}}(\wh{\Sigma}^{\#}_{\wh{r}})$ has full measure for every $\chi$-hyperbolic regular $X$-invariant measure.
\item  $\wh{\pi_{\wh{r}}}$ is finite-to-one on $\wh{\Sigma}^{\#}_{\wh{r}}$.
\item For any hyperbolic regular ergodic measure $\mu$, there is an irreducible component $\wh{\Sigma}'_{\wh{r}}$ which lifts all $\chi$-hyperbolic regular ergodic measures homoclinically related to $\mu$.
\end{enumerate}
\end{theorema}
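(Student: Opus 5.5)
We follow the Sarig / Lima–Sarig scheme for flows, with one new ingredient. The standard construction uses finitely many Poincaré sections; here we replace them by a locally finite, possibly countable family of sections adapted to the distance from $\Sing(X)$.

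\textbf{Step 1: sections.} Write $\rho(x)=\min\{1,\dist(x,\Sing(X))\}$ and use the scaling philosophy of Liao together with the extended linear Poincaré flow. For each regular point $x$ we place a transverse disc $D_x$ of radius comparable to $\rho(x)\,\|X(x)\|^{-1}$ times a small constant. Working on a scale where the rescaled flow $\varphi_t$ is uniformly $C^{1+\beta}$ along orbit segments of length about $\rho(x)$ divided by the speed, we then choose a countable family $\Lambda=\{D_i\}$ with three properties:
\begin{enumerate}
\item only finitely many discs meet any compact subset of $M\setminus\Sing(X)$;
\item every regular orbit hits $\bigcup_i D_i$;
\item the first-return times, measured in the rescaled time $s=\int \|X(\varphi_t x)\|/\rho(\varphi_t x)\,dt$, are bounded above and below uniformly.
\end{enumerate}
The collection is constructed on annuli $\{2^{-k-1}\le \rho\le 2^{-k}\}$, with finitely many discs per annulus. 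Near a singularity $\sigma$ we use the linearization to show that passage through a neighbourhood of $\sigma$ costs a bounded number of annuli per unit rescaled time. The return map $f:\Lambda\to\Lambda$ is then a $C^{1+\beta}$ map on each disc, with derivative bounds that are uniform after rescaling. It is a local diffeomorphism with small discontinuity set. The actual return time $R$ is unbounded, so we change the time of the flow, which multiplies $X$ by $\rho/\|X\|$. This reparametrization is what makes the roof bounded above and below, which is item (2). Item (1) holds for the original flow after we undo the reparametrization at the level of the conjugacy. More precisely, we apply the construction to the time change and transfer the invariant measures back: a regular $\chi$-hyperbolic measure for $X$ corresponds to a hyperbolic measure for the rescaled field via the standard Abramov correspondence, with exponents controlled because $\rho$ is bounded on most of the mass. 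We must adjust $\chi$ accordingly, working with each $\chi'<\chi$.

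\textbf{Step 2: Pesin charts and the coding.} For the map $f$ we define $\varepsilon$-double charts at $\chi$-summable points of the section, where the Pesin scale $q_\varepsilon$ also incorporates $\rho$ and the distance to the discontinuity set $\mathfrak S$ of $f$. We then run Sarig's program:
\begin{enumerate}
\item build a countable locally finite $\varepsilon$-overlap cover by double charts;
\item apply graph transforms to get the shadowing map $\pi$ on $\varepsilon$-generalized pseudo-orbits;
\item prove the inverse theorem (improvement lemma, control of $p^s,p^u$ and of $Q_\varepsilon$);
\item apply a Bowen–Sinai refinement to obtain a Markov partition $\mathcal R$ and the locally compact shift $\wh\Sigma$ with finite-to-one $\wh\pi$ on $\wh\Sigma^\#$.
\end{enumerate}
The roof $\wh r=R\circ\wh\pi$ is Hölder because the return time is Hölder on each disc in rescaled coordinates. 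Taking the suspension and composing with the time change gives (1)–(4). For (3), we show that $\mu$-a.e. point of the section induced by a $\chi$-hyperbolic regular measure is recurrently $\varepsilon$-summable; this uses $\mu(\Sing(X))=0$ and the Birkhoff theorem for $\log\rho$, or Poincaré recurrence for level sets of $q_\varepsilon$.

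\textbf{Step 3: irreducible components.} For (5) we follow Buzzi–Crovisier–Sarig and the flow versions in Zan and Buzzi–Lima–Yang. Homoclinic relation of $\mu$ and $\nu$ produces transverse intersections of the Pesin manifolds of typical points. These intersections give admissible paths between the corresponding rectangles of $\mathcal R$, and hence between the corresponding vertices. We take the irreducible component containing the vertices visited by $\mu$-typical codings and show that it carries all $\chi$-hyperbolic measures homoclinically related to $\mu$.

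\textbf{Main obstacle.} We expect Step 1 to be the hardest: obtaining uniform $C^{1+\beta}$ bounds and bounded-distortion Hölder constants for the rescaled return maps near singularities. This is needed so that the Pesin-chart estimates of Step 2 hold with constants independent of the annulus. We would first try Liao's rescaled tubular flow boxes and the nonlinear extended Poincaré flow of \cite{CY}, which give charts of size proportional to $\rho$ with uniform $C^{1+\beta}$ norms after rescaling by $\rho$.
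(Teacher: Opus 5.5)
\textbf{The time change in Step 1 makes (1) and (2) incompatible.} You say the first-return time $R$ to your discs is unbounded in the original time, and you make the roof bounded by passing to the field $(\rho/\|X\|)X$. Item (1), however, asks for a suspension semiconjugate to $\varphi_t$ itself. Its roof must be the return time in the \emph{original} time. ``Undoing the reparametrization'' replaces $\wh r$ by $\int_0^{\wh r}(\rho/\|X\|)$ along the orbit, which is exactly your unbounded $R$. So your construction gives (1) or (2), never both.

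When the zeros are non-degenerate, $\rho/\|X\|$ is bounded above and below and the time change is unnecessary. When they are degenerate, it is unbounded and there is no linearization to invoke. Your disc sizes are also miscalibrated. Since $\|X\|\le L\rho$, the radius $\rho\|X\|^{-1}$ is bounded below, so the discs do not shrink toward $\Sing(X)$. Even discs of size $\sim\rho$ are too large near degenerate zeros, where $X$ stays nearly parallel only on balls of radius $\sim|X(x)|$.

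The missing idea is to calibrate everything to the speed $|X|$ rather than to $\rho$. The uniform flow-box estimates of Lemma~\ref{lem.C1Poincare} hold for any $C^1$ field and need no analysis at the zeros. They control the scaled tubular neighbourhoods $\FB(x,r,\tau)$ (transverse radius $r|X(x)|$, uniform time-length $\tau$). A Vitali cover by balls $M(x,\frac{r'}{15}|X(x)|)$, with volume counting at scale $|X(x)|$, gives bounded multiplicity. A lattice-and-push argument then yields disjoint sections $N_{o_j}(r|X(o_j)|)$ whose first returns lie in $[\kappa r_0,\delta]$ in the original time, with a uniform bound $I_{\max}$ on neighbouring sections (Theorem~\ref{thm.poincaresection}). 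This gives (1) and (2) together without any reparametrization.

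\textbf{Step 2 would not give (3) for all measures at once.} Putting $d(\cdot,\mathfrak S)$ into the Pesin scale gives a Lima--Sarig type coding. It only captures points with $\frac1n\log d(f^nx,\mathfrak S)\to0$. For a fixed section this holds $\mu$-a.e.\ only when $\mu$ is adapted to the section, and \cite{LS} arrange this by choosing the section depending on $\mu$. Item (3) needs one coding carrying every $\chi$-hyperbolic regular measure, and (5) needs one coding for a whole homoclinic class.

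The paper, following \cite{BCL}, never runs charts through the return map. It uses the holonomies $g^{\pm}_x=P_{o_i,o_j}|_{N_{o_i}(x,r_0|X(o_i)|)}$ into the enlarged discs $\wh D_j$, so no chart sees $\partial D_i$. The scale $Q(x)=\varepsilon^{6/\beta}\|C(x)^{-1}\|^{-48/\beta}$ then involves only the scaled Lyapunov norm built from $\Phi^*$. Nor does $\rho$ need to enter $Q$. (NUH0) holds a.e.\ by Oseledets and recurrence, with no integrability of $\log\rho$, which your Birkhoff argument would require. Recurrence of the charts comes from (NUH5), together with local finiteness and the local discreteness of $\mathscr A_j$.

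\textbf{Step 3 is also too quick.} A transverse intersection of Pesin manifolds does not by itself produce an admissible path between rectangles. The paper, as in \cite{BCS22}, first lifts the periodic orbits homoclinically related to $\mu$ through compact transitive hyperbolic sets (item (9) of Theorem~\ref{thm.strongthm}). Finiteness of the lifts of one orbit then fixes a single component. A general $\nu$ is reached by periodic approximation and the openness of $\chi$-hyperbolicity.
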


See Theorem~\ref{thm.strongthm} and \ref{thm.irrcomp} for more details.
To the best of our knowledge, Theorem~\ref{main.coding} constitutes the first symbolic coding result for general singular flows that is finite-to-one with uniformly bounded return times. We note that in \cite{AV}, a symbolic system was constructed for the geometric Lorenz attractors (or general singular hyperbolic attractors) with singularities, where the corresponding return times are unbounded.

A flow is called {\it Bernoulli} if its time-1 map is a Bernoulli automorphism. We say that a flow is {\it Bernoulli up to a period} if it is Bernoulli, or isomorphic to the product of a Bernoulli flow and a rotational flow.
Denote  by ${\rm Per}_T(X)$ the number of periodic orbits with minimal positive period less than $T$.

As an application of Theorem A, one may study equilibrium states of singular flows via symbolic systems. The following theorem (see Theorem~\ref{thm.main.mme} for the stronger version) is a direct corollary of Theorem~\ref{main.coding} and \cite[Theorem~1.1]{LLS}, \cite[Theorem~4.7 and 5.1]{LS}. 

\begin{theorema}\label{main.mme}
	Let  $X$ be a $C^{1+\beta}$ vector field ($\beta>0$) on $M$. Then for any H\"older continuous potential $\vartheta$ on $M$ and any hyperbolic regular ergodic measure $\mu$, there is at most one hyperbolic regular ergodic equilibrium state of $\vartheta$ for $X$ homoclinically related to $\mu$. If it exists,  it must be Bernoulli up to a period.  
	
	In particular, there exists  at most one hyperbolic regular ergodic MME for $X$  homoclinically related to $\mu$.
If there exists at least one MME, then there is $C_0>0$ such that ${\rm Per}_T(X)\ge C_0 e^{T h_{\rm top}(X)}/T$ for any $T$ large enough.
\end{theorema}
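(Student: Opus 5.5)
Fix a H\"older potential $\vartheta$ and a hyperbolic regular ergodic $\mu$, say $\chi_0$-hyperbolic. Choose $\chi<\chi_0$ and apply Theorem~\ref{main.coding} with this $\chi$. This gives the locally compact topological Markov flow $(\wh{\Sigma}_{\wh r},\wh\sigma_{\wh r})$ with its H\"older factor map $\wh{\pi_{\wh r}}$. Let $\wh{\Sigma}'_{\wh r}$ be the irreducible component from item (5) attached to $\mu$.

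Now let $\nu$ be a hyperbolic regular ergodic equilibrium state of $\vartheta$ homoclinically related to $\mu$. Homoclinic relation forces comparable hyperbolicity: a homoclinically related measure has hyperbolic exponents bounded away from zero. Still, $\nu$ need not be $\chi$-hyperbolic for the fixed $\chi$. I would handle this by letting $\chi$ depend on the pair. Given two candidate equilibrium states $\nu_1,\nu_2$, both related to $\mu$, pick $\chi$ smaller than all their exponents and smaller than $\chi_0$, and code with that $\chi$. By (3) and (5), both measures lift to ergodic measures $\wh\nu_i$ on the single component $\wh{\Sigma}'_{\wh r}$, carried by $\wh{\Sigma}^\#_{\wh r}$. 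The lift is obtained by the standard argument: pick an ergodic component of any measure projecting onto $\nu_i$ over the $\sigma$-compact set $\wh\Sigma^\#$. Item (4), finite-to-one on $\wh\Sigma^\#$, then gives $h_{\wh\nu_i}=h_{\nu_i}$ (Abramov/Ledrappier–Walters). It also gives $\int \vartheta\circ\wh{\pi_{\wh r}}\,d\wh\nu_i=\int\vartheta\,d\nu_i$.

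Set $\Phi=\vartheta\circ\wh{\pi_{\wh r}}$. This is a H\"older potential on the Markov flow by (2). Every invariant measure on $\wh{\Sigma}'_{\wh r}$ projects to an $X$-invariant measure, and projection does not increase entropy, so $P_{\rm top}(\wh\sigma_{\wh r}|_{\wh{\Sigma}'},\Phi)\le P_{\rm top}(X,\vartheta)$. The lifts therefore attain the symbolic pressure, so $\wh\nu_1,\wh\nu_2$ are equilibrium states of $\Phi$ on an irreducible topological Markov flow with bounded H\"older roof.

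Next I invoke \cite[Theorem~1.1]{LLS} and \cite[Theorems~4.7, 5.1]{LS}. These give uniqueness of the equilibrium state of a H\"older potential on an irreducible countable Markov flow with roof bounded away from $0$ and $\infty$, and that such an equilibrium state is Bernoulli up to a period. Hence $\wh\nu_1=\wh\nu_2$ and $\nu_1=\nu_2$. The Bernoulli-up-to-a-period property passes to the projection because $\wh{\pi_{\wh r}}$ is finite-to-one on a full-measure set, so the factor is a measure-theoretic isomorphism modulo a finite extension. Here I would argue via the ergodic finite-to-one extension: it has equal entropy, and factors of Bernoulli flows are Bernoulli (Ornstein). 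The product with a rotation is preserved similarly.

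For the MME statement, take $\vartheta\equiv0$. For the periodic-orbit count, lift an MME as above to an ergodic MME on an irreducible component. By \cite{LS}/\cite{LLS}, the symbolic flow then satisfies Gurevich-type growth: the number of periodic orbits of period $<T$ within a single vertex's cylinder is at least $C e^{Th}/T$. Since $\wh{\pi_{\wh r}}$ is finite-to-one with a uniform bound on the relevant cylinder (Bowen-type bound), distinct symbolic orbits yield at least a fixed fraction of distinct periodic orbits of $X$ with the same periods. This gives ${\rm Per}_T(X)\ge C_0e^{Th_{\rm top}(X)}/T$.

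The main obstacle is the dependence of the coding on $\chi$ and the lifting step. We must ensure that a homoclinically related measure really lifts to the same component, and that the symbolic pressure does not exceed $P_{\rm top}(X,\vartheta)$. We also need the finite-to-one property to transfer entropy, the Bernoulli property and periodic-orbit counts. I would rely directly on items (3)–(5) together with the uniform degree bound from Theorem~\ref{thm.strongthm}.
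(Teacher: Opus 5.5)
Your proposal is correct and follows essentially the same route as the paper: code with $\chi$ below the exponents of the two candidate equilibrium states, and lift both to the single irreducible component given by the irreducible coding (Theorem~\ref{thm.irrcomp}). There they are equilibrium states of $\vartheta\circ\wh\pi_{\wh r}$, and uniqueness, the Bernoulli property and the periodic-orbit growth then follow from the results for irreducible topological Markov flows in \cite{LS,LLS}, with the periodic-orbit count transferred through the finite-to-one bound $C(R,S)$. The only difference is cosmetic: you bound the symbolic pressure by projecting measures to $X$, which suffices for this global statement, whereas the paper proves the stronger version on a Borel homoclinic class (Theorem~\ref{thm.main.mme}).
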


\subsection{Strong positive recurrence}

For uniformly hyperbolic systems, coding by finite symbolic systems provides deep statistical insights via the spectral gap of the Ruelle operator. The situation changes significantly for non-uniformly hyperbolic systems, where many fundamental questions remain open. In their groundbreaking work \cite{BCS25}, Buzzi-Crovisier-Sarig introduced the concept of {\it Strong Positive Recurrence} (SPR) for diffeomorphisms. SPR diffeomorphisms are generic in settings such as smooth surface dynamics and exhibit rich statistical properties.

The concept of SPR, or its equivalent conditions, originated in the study of countable Markov shifts through  works by Gurevich, Zargaryan, Savchenko \cite{GZ,Gur96,GS}, Vere-Jones \cite{Ver}, Sarig \cite{Sar01}, and others. SPR symbolic systems possess strong statistical properties. Notably, Cyr-Sarig established that the Ruelle operator exhibits a spectral gap for topologically mixing SPR countable Markov shifts \cite{CS}.

In \cite{BCS25}, Buzzi-Crovisier-Sarig developed the SPR theory for diffeomorphisms. By coding SPR diffeomorphisms with SPR topological Markov shift, they transferred the profound statistical properties arising from spectral gaps in symbolic dynamics to non-uniformly hyperbolic diffeomorphisms. This work constructs a refined ergodic theory framework for a broad class of non-uniformly hyperbolic diffeomorphisms, comparable to that of uniformly hyperbolic systems. It proves that such diffeomorphisms possess key statistical properties including but not limited to exponential mixing, large deviation principles, and central limit theorems.

In this paper, we define the SPR property for singular flows. It should be noted that due to potential singularities, we employ the scaled linear Poincar\'e flow to construct Pesin blocks (see Definition~\ref{def.pesin}). Furthermore, since singularities may lie in the closure of Pesin blocks--preventing their extension to compact closures--we adapt the SPR definition accordingly for singular flows.

 Specifically, we say that a flow is {\it strongly positively recurrent (SPR)} for a potential $\vartheta$, if there exists $\chi> 0$ satisfying:
 for each $\varepsilon > 0$, there exists  a $(\chi,\varepsilon)$-Pesin block, a compact subset $\Lambda$ of this Pesin block, and numbers $P_0<P_{\rm top}(X,\vartheta)$, $\tau>0$ such that
 $\nu(\Lambda)>\tau$ for every ergodic measure $\nu$ with $P_{\nu}(X,\vartheta)>P_0$.  
We also verify that SPR singular flows can be coding by  suspension flows of some SPR topological Markov shift. As an immediate application, we obtain the following theorem. A detailed statement appears in Theorem~\ref{thm.main.spr}.

\begin{theorema}\label{main.spr}
	Let $M$ be a closed smooth Riemannian manifold, $X$ be a $C^{1+\beta}$  vector field ($\beta>0$) on $M$ and $\vartheta$ be a H\"older continuous potential on $M$. 
	 If $X$ is SPR for $\vartheta$, then there exist only finitely many ergodic equilibrium states of $\vartheta$ for $X$, and each is hyperbolic, regular and Bernoulli up to a period.
\end{theorema}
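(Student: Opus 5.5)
The plan is to transfer the problem to symbolic dynamics through Theorem~\ref{main.coding} and then use the SPR theory for countable Markov shifts (Cyr--Sarig, Buzzi--Crovisier--Sarig) together with Theorem~\ref{main.mme}.

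\emph{Step 1: a priori hyperbolicity, regularity and compactness.} Let $\chi>0$ be the constant in the SPR definition, and fix $\varepsilon>0$ small. We take the Pesin block $\Lambda$ and the numbers $P_0<P_{\rm top}(X,\vartheta)$, $\tau>0$ provided by the SPR hypothesis. Choose ergodic $\nu_n$ with $P_{\nu_n}(X,\vartheta)\to P_{\rm top}(X,\vartheta)$. For $n$ large each $\nu_n$ satisfies $\nu_n(\Lambda)>\tau$. Since $\Lambda$ is compact and avoids $\Sing(X)$, every weak-$*$ limit $\nu$ charges $\Lambda$ with mass at least $\tau$. We then need upper semicontinuity of entropy in order to get an equilibrium state from this limit.
\begin{itemize}
\item In the $C^{1+\beta}$ setting we do not know entropy expansiveness, so we avoid the limit on $M$.
\item Instead we lift each $\nu_n$ to the coding, which is possible because $\nu_n(\Lambda)>0$ forces $\nu_n$ to be regular and $\chi$-hyperbolic.
\item Upper semicontinuity is then handled on the symbolic side by tightness, as described in Step 2.
\end{itemize}
We also check that every ergodic equilibrium state $\mu$ has $\mu(\Lambda)>\tau$, since $P_\mu=P_{\rm top}>P_0$. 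Hence $\mu$ is regular (singularities cannot lie in $\Lambda$) and $\chi$-hyperbolic (a Pesin block only carries points whose scaled Poincar\'e exponents exceed $\chi$). This already proves hyperbolicity and regularity of all ergodic equilibrium states.

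\emph{Step 2: SPR for the coding.} We apply Theorem~\ref{main.coding} with this $\chi$ to obtain $(\wh\Sigma_{\wh r},\wh\sigma_{\wh r})$ and $\wh{\pi_{\wh r}}$. The potential $\vartheta\circ\wh{\pi_{\wh r}}$ is H\"older, so it induces a H\"older potential $\Phi=\int_0^{\wh r}\vartheta\circ\wh{\pi_{\wh r}}\circ\wh\sigma^t_{\wh r}\,dt-P_{\rm top}\,\wh r$ on the base shift.

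Since $\wh{\pi_{\wh r}}$ is finite-to-one on $\wh\Sigma^\#_{\wh r}$, lifted measures keep their entropy and pressure. The preimage of $\Lambda$ meets only finitely many cylinders (local finiteness of the Markov partition over a compact piece of a Pesin block). Therefore the condition $\nu(\Lambda)>\tau$ translates to a uniform lower bound on the mass of a fixed finite union of cylinders, for all lifted measures of pressure $>P_0$.

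This is the symbolic criterion for SPR of each relevant irreducible component in the sense of \cite{BCS25}: pressure at infinity is strictly less than the Gurevich pressure. We will prove this via the Abramov formula converting flow pressure to base pressure, using $\inf\wh r>0$ and $\sup\wh r<\infty$ to control the conversion. The same finite set of cylinders also meets only finitely many irreducible components. Consequently, any equilibrium state lifts to one of finitely many components, each SPR.

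\emph{Step 3: conclusion.} On each SPR irreducible component, the Gurevich/Sarig theory gives a unique equilibrium state, together with existence. Via Abramov and Theorem~\ref{main.mme} (the theory of \cite{LS}), that state is Bernoulli up to a period for the suspension flow. Projecting by $\wh{\pi_{\wh r}}$ gives at most finitely many ergodic equilibrium states, one per component, and each is Bernoulli up to a period as a factor by a finite-to-one map preserving entropy.

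The main obstacle is Step 2. We must show that the preimage of the compact set $\Lambda\subset$ Pesin block involves only finitely many symbols, and that the flow-level pressure gap becomes a gap between the pressure at infinity and the Gurevich pressure for the induced base potential $\Phi$. I would first try to replicate the argument of \cite{BCS25}, replacing their Pesin blocks by the scaled ones of Definition~\ref{def.pesin}.
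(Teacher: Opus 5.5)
Your proposal is correct, but it gets finiteness by a different route from the paper.

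\textbf{The paper's route.} In the second item of Theorem~\ref{thm.main.spr}, the SPR coding is not used for finiteness at all. Each ergodic equilibrium state gives mass $>\tau$ to $\Lambda$, so it is regular, hyperbolic and carried by a Borel homoclinic class (Proposition~\ref{pro.proBHC}). By Theorem~\ref{thm.main.mme}, which gives uniqueness per class and rests on the irreducible coding of Theorem~\ref{thm.irrcomp}, infinitely many equilibrium states would live in pairwise distinct classes $B_n$. Choose $x_n\in B_n\cap\Lambda$ accumulating in $\Lambda$. The compact set $\Lambda$ avoids $\Sing(X)$, and the Pesin stable/unstable manifolds have uniform size and angles on it, so $x_i\sim x_j$ for large $i,j$, a contradiction. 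In that proof, Theorem~\ref{thm.SPRcoding} is invoked only for the first item, i.e. existence on a single class.

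\textbf{Your route.} You put the uniformity into the finite-cylinder statement for the global coding of Theorem~\ref{main.coding}. The finitely many symbols of $\mathbf{C}$ lie in finitely many irreducible components. The ergodic lift of any equilibrium state has full pressure and charges $\mathbf{C}$, so it lives in one of these components. Uniqueness of equilibrium states on an irreducible TMF (as in the proof of Theorem~\ref{thm.main.mme}) then gives an injection into a finite set. This bypasses Borel homoclinic classes and Theorem~\ref{thm.irrcomp}, and gives existence for free. The price is that you need the finite-cylinder lemma in full.

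\textbf{Three precisions.}
\begin{enumerate}
\item The finite-cylinder lemma does not follow from local finiteness of the sections alone. You need a uniform lower bound on $p^s,p^u$ for every double chart in a pseudo-orbit shadowing a point of $\Lambda$. This comes from a bound on $\|C_\chi(x)^{-1}\|$ over the Pesin block together with Theorem~\ref{invs}. Only then do the local discreteness in Proposition~\ref{pro.SDR} and Lemma~\ref{finrefine} apply; this is exactly the content of Theorem~\ref{thm.SPRcoding}.
\item Points of $\Pes_\chi(\ell,\varepsilon)$ only give exponents of modulus $\ge\chi$, so you should code with some $\chi'<\chi$.
\item A component meeting $\mathbf{C}$ whose Gurevich pressure is $\le P_0$ need not be SPR, but it carries no equilibrium state. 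For the finiteness claim you only need uniqueness on irreducible components, which holds without SPR. Your Step~1 detour through weak-$*$ limits is therefore unnecessary, since existence is not part of the statement.
\end{enumerate}
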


\subsection{Rate of mixing}

For SPR diffeomorphisms, a key  consequence of the spectral gap is that ergodic measures of maximal entropy are exponential mixing \cite{BCS25} (see also \cite{LY26} for ergodic equilibrium states of smooth  surface diffeomorphisms satisfying certain conditions). However, mixing properties differ fundamentally between flows and diffeomorphisms, even for Anosov systems. Although Anosov flows exhibit strong mixing in  transverse derictions,
exponential mixing for such flows  saw limited progress prior to Dolgopyat's groundbreaking work \cite{Dol98a}.
 This work inspired  subsequent research on Anosov flows and geodesic flows on negative curvature manifolds, see \cite{PS,Liv04,GLP,BDL,Tsu18,BM20,LP2023} for instance. Recently, Tsujii-Zhang proved exponential mixing of ergodic equilibrium states for three-dimensional smooth mixing Anosov flows \cite{TZ23}.

  The situation changes substantially for Axiom A flows. Although they remain exponentially mixing in transverse directions, Ruelle constructed examples of mixing Axiom A flows that fail to be exponentially mixing \cite{Rue83}.  Subsequently, Pollicott proved that mixing rates for mixing basic sets can be arbitrarily slow \cite{Pol85}. Dolgopyat developed a standard technique and proved that typical Axiom A flows are rapid mixing \cite{Dol98}.  Field-Melbourne-T\"or\"ok \cite{FMT07} introduced methods to construct {\it good asymptotics} under shadowing and perturbations, establishing rapid mixing for ergodic equilibrium states of open and dense Axiom A flows. Dolgopyat's approach was also extended to prove rapid mixing of SRB measures for nonuniformly hyperbolic flows \cite{Melbourne2007} in the sense of Young towers  \cite{Young1998,You99}.   Building on this,  \cite{Mel18,BBM} employed new techniques to broaden the applicability of prior results from \cite{Melbourne2007}, while \cite{AM2019} proved rapid mixing for SRB measures of singular hyperbolic flows.

For suspension semi-flows over interval maps with infinitely many intervals,  Baladi-Vall\'e introduced a method to prove exponential mixing for SRB measures under uniform non-integrability conditions  \cite{BV2005}. This method was extended in \cite{AGY2006} and used to study Teichm\"uller flows. Subsequently, Ara\'ujo-Melbourne further developed the approach for Lorenz attractors, establishing exponential mixing for SRB measures when the stable bundle is sufficient smooth \cite{AM2016}. And \cite{DV2021} proved the exponential mixing for Gibbs measures of Axiom A attractors.

In this work, we study rapid mixing for ergodic equilibrium states of general H\"older potentials for SPR topological Markov flows. Extending Field-Melbourne-T\"or\"ok's  methods of good asymptotics to singular flows, we prove that ergodic equilibrium states having  good asymptotics for SPR singular flows are necessarily rapid mixing. (See Theorem~\ref{thm.rapidmixing} for details.)

\subsection{Three-dimensional flows}

The thermodynamic formalism for two-dimensional diffeomorphisms has seen substantial development recently, see \cite{Sar13,BCS22,BCS22b,Bur,BCS25,BuzLY,BLY,LY26} for instance. Corresponding results for three-dimensional nonsingular flows appear in \cite{LS,BCL,BLY,Zan}.

As an application of the framework built in this paper, Theorem~\ref{main.3flowmme} below shows that three-dimensional smooth flows are SPR  for H\"older continuous potentials $\vartheta$ with small variation $\Var(\vartheta):= \sup_{x\in M^3} \vartheta(x)- \inf_{x\in M^3} \vartheta(x)$. This implies the existence and finiteness of ergodic equilibrium states. See also Theorem~\ref{thm.3SPRset} and Corollary~\ref{thm.main.3flowmme} for a more precise version. 

The small variation condition $\Var(\vartheta) < h_{\rm top}(X)$ ensures two crucial properties: it ensures that invariant measures with large pressure (including equilibrium states) are hyperbolic, and satisfies the pressure gap condition at singularities (see details in \S \ref{subsec.starflow}).

\begin{theorema}\label{main.3flowmme}
	Let $M^3$ be a closed $3$-dimensional smooth Riemannian manifold and $X$ be a $C^{\infty}$ vector field on $M^3$.
 If $\vartheta$ is a  H\"older continuous potential on $M^3$ satisfying $\Var(\vartheta) < h_{\rm top}(X)$, then $X$ is SPR for $\vartheta$. As a corollary, there exist only finitely many ergodic equilibrium states of $\vartheta$ for $X$.  
 
 In particular, if $X$ has positive topological entropy, then there exist only finitely many ergodic MMEs.
\end{theorema}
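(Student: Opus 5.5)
The plan is to verify the SPR definition directly and then invoke Theorem~\ref{main.spr}. Fix a $C^\infty$ vector field $X$ on $M^3$ and a H\"older potential $\vartheta$ with $\Var(\vartheta)<h_{\rm top}(X)$.

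\emph{Step 1: measures with large pressure are hyperbolic and regular.} Pick $\delta>0$ with $\Var(\vartheta)+2\delta<h_{\rm top}(X)$, and set $P_0:=P_{\rm top}(X,\vartheta)-\delta$. Suppose $\nu$ is ergodic with $P_\nu(X,\vartheta)>P_0$. Since $P_{\rm top}(X,\vartheta)\ge h_{\rm top}(X)+\inf\vartheta$, we get
\[
h_\nu(X)>h_{\rm top}(X)-\Var(\vartheta)-\delta>\delta .
\]
Dirac measures at singularities have zero entropy, so $\nu$ is regular. By Ruelle's inequality applied to the transverse (two-dimensional) direction, $\nu$ has one exponent at least $h_\nu(X)$ and one at most $-h_\nu(X)$, because the flow is volume-comparable up to the divergence, which integrates to zero against any invariant measure. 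Hence $\nu$ is $\chi$-hyperbolic for every $\chi<\delta$. This is exactly the role of the small-variation hypothesis. It also gives the pressure gap at singularities, since $\vartheta(\sigma)\le\sup\vartheta<P_{\rm top}-(h_{\rm top}-\Var)$.

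\emph{Step 2: uniform mass on a Pesin block.} Fix $\chi<\delta$ and $\varepsilon>0$. I would argue by contradiction, as in Buzzi--Crovisier--Sarig for surfaces. Suppose ergodic $\nu_n$ with $P_{\nu_n}>P_0$ give mass tending to $0$ to every compact subset $\Lambda$ of $(\chi,\varepsilon)$-Pesin blocks. Pass to a weak-$*$ limit $\nu$.

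The key input is upper semicontinuity of entropy for $C^\infty$ flows (Yomdin/Newhouse, applied to the time-one map), which gives $h_\nu\ge\limsup h_{\nu_n}$. So $\nu$, and almost every ergodic component with large pressure, still has pressure above $P_0$.

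The obstruction is then twofold:
\begin{itemize}
\item mass of $\nu_n$ escaping to singularities;
\item loss of hyperbolicity in the limit, i.e.\ exponents of $\nu_n$ degenerating.
\end{itemize}
The first is controlled by the pressure gap at singularities together with the scaled linear Poincar\'e flow: the ergodic decomposition of $\nu$ puts weight $\alpha$ on $\Sing(X)$. Writing $P_\nu\le(1-\alpha)P_{\rm top}+\alpha\sup_{\Sing}\vartheta$, and choosing $\delta$ small relative to the gap, I would bound $\alpha$ away from $1$ uniformly.

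For the second, I would use the Buzzi--Crovisier--Sarig ``entropy at exponent'' estimate, transferred to the Poincar\'e return maps on the locally finite sections built for Theorem~\ref{main.coding}. This estimate says that the entropy carried by points whose finite-time scaled exponents are below $\chi$ is small, namely at most $\chi$ plus a term controlled by $C^r$ Yomdin bounds with $r$ large. It forces a definite proportion $\tau$ of $\nu_n$-mass into a fixed compact $\Lambda$ where the finite-time scaled hyperbolicity holds, giving a contradiction.

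\emph{Step 3: conclude.} Step 2 is precisely the SPR condition, so Theorem~\ref{main.spr} yields finitely many ergodic equilibrium states. Taking $\vartheta\equiv0$, where $\Var=0<h_{\rm top}(X)$ whenever the entropy is positive, gives finiteness of ergodic MMEs.

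The main obstacle is Step 2. Adapting the entropy-at-exponent estimate near singularities requires the scaling to keep Yomdin-type reparametrization bounds uniform, even though the section sizes shrink. I would first try to work with the scaled nonlinear Poincar\'e flow, whose $C^r$ norms are uniformly bounded after rescaling.
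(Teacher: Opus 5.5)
The genuine gap is Step~2, which is the whole content of the theorem. You leave it as a programme whose decisive step, uniform Yomdin bounds for return maps near singularities, is unresolved.

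\textbf{Step 1.} It agrees with Lemma~\ref{lem.regularhyper}, except for the divergence remark. That remark is false: $\int \mathrm{div}X\,d\nu$ is the sum of the exponents and need not vanish. It is also unnecessary, since Ruelle's inequality for $\varphi_1$ and $\varphi_{-1}$ directly gives $\lambda^+\ge h_\nu(X)$ and $-\lambda^-\ge h_\nu(X)$.

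\textbf{The route you need is different.} Transferring entropy estimates to the Poincar\'e return maps is not needed. The paper applies the $C^\infty$ continuity result for unstable lifts (Proposition~\ref{thm.zang}, after \cite{BCS22b,Bur,Zan}) to the time-one map $\varphi_1$. This is a $C^\infty$ diffeomorphism of the compact $M^3$, on which singularities are just fixed points, and lifts are taken in the Grassmannian $G^1(M^3)$ of $TM^3$ rather than in the normal bundle. For this to bite, let $P_{\nu_n}(X,\vartheta)\to P_{\rm top}(X,\vartheta)$, which the negation of SPR allows, instead of fixing $P_0=P_{\rm top}-\delta$. Limits are then equilibrium states, hence regular and $\chi_0$-hyperbolic. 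In the decomposition $\widetilde\mu=(1-b)\widetilde\mu_0+b\widetilde\mu_1^+$, any $b<1$ would cost pressure $(1-b)\chi_0$, so $b=1$. Thus limits of unstable lifts are unstable lifts of the limit (Proposition~\ref{pro.continu}).

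\textbf{Where scaling enters.} Only after this, through the sets $\widetilde Q^+_N(\chi)=\{(x,E):x\notin\Sing(X),\ \|d_x\varphi^*_{-N}|_E\|<e^{-\chi N}\}$. These are open in $G^1(M^3)$ because they avoid the closed set $\widetilde\pi^{-1}(\Sing(X))$. Compactness of the limit set gives a weak-$*$ neighborhood and an $N_0$ such that every lift in it gives mass at least $1-\delta^2$ to some $\widetilde Q^+_N(\chi)$ with $N\le N_0$ (Lemma~\ref{lem.cptmeas}). The inequality $\|\psi^*_{-N}|_E\|\le\|d_x\varphi^*_{-N}|_E\|$ transfers this to the scaled linear Poincar\'e flow.

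\textbf{A second missing step.} Even granting such an estimate, your stated conclusion does not contradict $\nu_n(\Lambda)\to 0$ for compact subsets $\Lambda$ of Pesin blocks. That conclusion is a definite proportion of $\nu_n$-mass in a compact set where finite-time scaled hyperbolicity holds. Contraction $\|\psi^*_{-N}|_E\|<e^{-\chi N}$ at a point for a single $N\le N_0$ says nothing about membership in $\Pes_\chi(\ell,\varepsilon)$. That requires tempered two-sided estimates along the whole orbit. The missing ingredient is the hyperbolic-times argument of \cite[Proposition~2.21 and Theorem~3.1]{BCS25}. For ergodic $\nu$ whose lift gives mass greater than $1-\delta^2$ to such a set, contraction times along typical orbits have frequency close to one. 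This yields $\nu(\Pes_\chi(\ell,\varepsilon))>\tau$ with $\ell$ independent of $\nu$.

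\textbf{Finishing.} Only then does one remove ${\rm Int}\,M(\Sing(X),r)$. High-pressure measures give this neighborhood mass less than $\tau/2$, because their limits are regular. This produces the compact $\Lambda$ demanded by the definition of SPR, as in the proof of Theorem~\ref{thm.3SPRset}, and Step~3 (Theorem~\ref{main.spr}) then applies as you wrote it.
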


Furthermore, we prove that for $C^\infty$-open and dense three-dimensional vector fields satisfying the small variation condition, each ergodic equilibrium state has good asymptotics, thereby exhibiting rapid mixing. Specifically, let $\vartheta$ be a  H\"older continuous potential on $M^3$, denote by $\X_{\vartheta}^\infty(M^3)$ the set of all $C^\infty$ vector fields on $M^3$ satisfying  $\Var(\vartheta)< h_{\rm top}(X)$.

\begin{theorema}{\rm (Theorem~\ref{thm.main.3flowrm})}\label{main.3flowrm}
	If $\vartheta$ is a  H\"older continuous potential on $M^3$, Then there exists a $C^\infty$ open and dense subset $\U$ of $\X_{\vartheta}^\infty(M^3)$ such that for every $X\in \U$,  every ergodic equilibrium state of $\vartheta$ for $X$ is rapid mixing.
\end{theorema}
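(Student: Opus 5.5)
The plan is to deduce Theorem~\ref{main.3flowrm} by combining the SPR property of Theorem~\ref{main.3flowmme} with the rapid-mixing criterion announced before it: for SPR singular flows, an ergodic equilibrium state that has \emph{good asymptotics} is rapid mixing (Theorem~\ref{thm.rapidmixing}). Once Theorem~\ref{main.3flowmme} is known, every $X\in\X_{\vartheta}^\infty(M^3)$ is SPR for $\vartheta$. By Theorem~\ref{main.spr}, $X$ then has finitely many ergodic equilibrium states $\mu_1,\dots,\mu_k$, each hyperbolic, regular and Bernoulli up to a period. Through the SPR coding, each $\mu_i$ lifts to an equilibrium state of an SPR irreducible component of the topological Markov flow in Theorem~\ref{main.coding}. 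The whole task therefore reduces to this: find a $C^\infty$ open and dense set $\U$ on which every $\mu_i$ has good asymptotics, in the sense of Field--Melbourne--T\"or\"ok.

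\textbf{Step 1: a periodic-orbit criterion for good asymptotics.} Following \cite{FMT07}, I would use the criterion that good asymptotics holds as soon as the homoclinic class carrying $\mu_i$ contains three hyperbolic periodic orbits with periods $\ell_1,\ell_2,\ell_3$ such that the ratio $(\ell_1-\ell_2)/(\ell_2-\ell_3)$ is Diophantine. An equivalent formulation uses two orbits and their shadowing orbits.

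On the symbolic side, hyperbolic periodic orbits homoclinically related to $\mu_i$ lift to periodic loops in the irreducible component. The Diophantine condition then gives the required non-arithmeticity estimate for the roof function, namely a Dolgopyat-type bound on twisted transfer operators at large frequency. This is exactly what Theorem~\ref{thm.rapidmixing} needs.

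\textbf{Step 2: openness.} Hyperbolic periodic orbits and their transverse homoclinic intersections persist under $C^1$ perturbation, and their periods vary continuously. The Diophantine condition is not open, but it can be replaced by a quantitative one: three orbits whose period ratio is Diophantine with explicit constants up to a fixed scale. Combined with the robustness of the approximation argument in \cite{FMT07}, this gives an open condition.

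I also need that the equilibrium states vary in a controlled way, so that the chosen orbits stay homoclinically related to every ergodic equilibrium state of every nearby flow. For this I would use the uniform SPR estimates from Theorem~\ref{thm.3SPRset}. These give a uniform $\tau$ and a compact $\Lambda$ carrying mass for all high-pressure measures, so the finitely many homoclinic classes of equilibrium states are robustly determined by finitely many periodic orbits in $\Lambda$.

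\textbf{Step 3: density.} Given $X$, I would locate periodic orbits inside each of the finitely many relevant homoclinic classes. I would then perturb $X$ by a $C^\infty$-small time reparametrization supported in a small tube around one orbit and away from its homoclinic points. This changes only that orbit's period, by an arbitrary small amount, and so makes the ratio Diophantine. The perturbation keeps $X$ in $\X_{\vartheta}^\infty(M^3)$ because topological entropy is upper semicontinuous for $C^\infty$ flows by Yomdin, and lower semicontinuous via persistent horseshoes. Hence the strict inequality $\Var(\vartheta)<h_{\rm top}(X)$ survives small perturbations.

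\textbf{Main obstacle.} The hardest step is Step 2 combined with the uniformity in Step 3. We must ensure that \emph{every} ergodic equilibrium state of every nearby flow is homoclinically related to the periodic orbits we controlled, including states that might appear near singularities. Here I would rely on the pressure gap at singularities guaranteed by $\Var(\vartheta)<h_{\rm top}(X)$, together with the uniform Pesin blocks, to rule out new classes appearing under perturbation.
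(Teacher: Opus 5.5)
Your overall architecture matches the paper's: use Theorem~\ref{thm.rapidmixing} to reduce to showing that, on an open dense set, every ergodic equilibrium state lies in a Borel homoclinic class with a robust non-arithmeticity structure. However, the structure you pick cannot be made open, so Step~2 fails. Requiring $(\ell_1-\ell_2)/(\ell_2-\ell_3)$ to be Diophantine for three fixed periodic orbits is essentially Dolgopyat's \emph{prevalence} criterion \cite{Dol98}. It is not the Field--Melbourne--T\"or\"ok good asymptotics that Theorem~\ref{thm.rapidmixing} assumes, so Step~1 already conflates two different conditions.

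Diophantine numbers of any fixed type form a closed set with empty interior. Since periods vary continuously, arbitrarily $C^\infty$-small perturbations make the ratio rational; your own time change from Step~3 does this. Your substitute, a Diophantine condition ``up to a fixed scale'', is open but does not give rapid mixing. Excluding approximate eigenfunctions needs an obstruction at \emph{every} sufficiently large frequency $|b|$. Information below a fixed scale says nothing as $|b|\to\infty$. For finitely many fixed orbits, an obstruction at all scales amounts to a Diophantine-type condition on their periods, which again has empty interior.

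The missing idea is that openness must come from an infinite family of periodic orbits carrying information at all scales at once. The paper takes a hyperbolic periodic point $p$ with a transverse homoclinic point $q$, and the orbits $p_N$ shadowing the homoclinic loop, with periods $\tau_N=N\tau_0+\kappa+E_N\gamma^N\cos(N\alpha_0+\zeta_N)+o(\gamma^N)$. Good asymptotics means $\liminf_N|E_N|>0$, and this is $C^2$-open and $C^\infty$-dense by Proposition~\ref{pro.goodasymptotics}.

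In Proposition~\ref{pro.appfunct} one chooses $N$ of order $\log|b|$ so that $b\,E_N\gamma^N$ is of order one. This rules out approximate eigenfunctions at frequency $b$, for every large $b$. All the $p_N$ lift into a fixed finite subalphabet, which restores the local Gibbs property in the infinite-alphabet coding.

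Robustness over nearby flows is Lemma~\ref{lem.pertasym}. Katok's closing argument gives finitely many periodic points $p_1,\dots,p_m$ in a Pesin block with uniformly sized invariant manifolds. Their neighborhoods receive definite mass from every high-pressure ergodic measure of every nearby $Y$, so each such measure is homoclinically related to some $p_j(Y)$. Proposition~\ref{pro.goodasymptotics} is then applied to all $p_j$ simultaneously.

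This uniformity in $Y$ does not follow from Theorem~\ref{thm.3SPRset}, which concerns a single flow. It needs the perturbative Lemma~\ref{lem.pertSPR}. That lemma rests on upper semicontinuity of entropy along $Y_n\to X$ in the $C^\infty$ topology and on the unstable-lift decomposition of Proposition~\ref{thm.zang}.
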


Theorem~\ref{thm.3flows} is a special case of Theorems~\ref{main.3flowmme} and \ref{main.3flowrm}  when $\vartheta\equiv 0$.

\subsection{Star flows}\label{subsec.starflow}

The concept of the  star system originates from the study of the renowned Stability Conjecture. During research into this conjecture, Pliss, Liao, and Ma\~n\'e observed a crucial condition: the system should not generate non-hyperbolic periodic orbits under $C^1$-small perturbations (and for flows, non-hyperbolic singularities simultaneously). Liao named this the {\it star condition}. The star condition appears quite weak; however, for diffeomorphisms, it implies Axiom A combined with the no-cycle condition. This result was first established for the two-dimensional case by Liao \cite{Liao81} and Ma\~n\'e \cite{Man82}, and later extended to general dimensions by Ma\~n\'e \cite{Man88}, Aoki \cite{Aok} and Hayashi \cite{Hay}. Finally, Gan and Wen proved an analogous result for flows without singularities \cite{GW}.

For flows with singularities, Axiom A fails when singularities are approximated by periodic orbits. The geometric Lorenz attractor \cite{Guc76,ABS,GucW} serves as a canonical example: although not structurally stable, it exhibits robustness under perturbations. To characterize such hyperbolicity, Morales-Pujals-Pacifico introduced the notion of {\it singular hyperbolicity} \cite{MPP99}, later extended to higher dimensions through {\it sectional hyperbolicity} in \cite{MM}. Counterexamples constructed in \cite{DaL} show that singular hyperbolicity does not hold for generic star flows. To address this, Bonatti-da Luz introduce the concept of {\it multi-singular hyperbolicity} in \cite{BD}, and prove via methods from \cite{SGW} that all $C^1$-open and dense star flows are multi-singular hyperbolic.

Recently, Pacifico-Yang-Yang  established that $C^1$-open and dense star flows satisfying the pressure gap condition $\sup_{x\in  \Sing(X)} \vartheta(x)< P_{\rm top}(X)$, where $\vartheta$ is a given potential, admit only finitely many ergodic equilibrium states \cite{PYY25b}. Since $\Sing(X)$ is compact, the pressure gap condition is equivalent to that none of the atomic measures on the singularities are equilibrium states.
By the upper-semicontinuity of metric entropies,
the pressure gap condition also ensure that  large pressure measures do not accumulate to singularity.

On the other hand, this pressure gap condition is sharp: Shi-Yang constructed examples showing that for certain singular star flows and potentials, atomic measures at singularities become equilibrium states, which may lead to multiple ergodic equilibrium states within a same chain recurrent class \cite{SY}. In fact, SPR of countable symbolic system means the existence of pressure gap between the whole system and at infinity, while in the scaling sense, singularities can be regarded as the manifold's boundary at infinity -- both sharing identical essence.

As another application for the coding of singular flows, we also establish rapid mixing for open and dense star flows.  Let  $\vartheta$ be a  H\"older continuous potential, denote by $\X_\vartheta^{*}(M)$ the set of all $C^2$ star vector fields satisfying $\sup_{x\in \Sing(X)} \vartheta(x)< P_{\rm top}(X)$.

\begin{theorema}{\rm (Theorem~\ref{thm.main.starrm})}\label{main.starrm}
 For any H\"older continuous potential $\vartheta$, there exists a $C^2$ open and $C^1$ dense subset $\U$ of $\X_\vartheta^{*}(M)$ such that for every $X\in \U$,  every ergodic equilibrium state of $\vartheta$ is rapid mixing.
\end{theorema}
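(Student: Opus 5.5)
The plan is to derive Theorem~\ref{main.starrm} from three ingredients: the structure theory of $C^1$-generic star flows (multi-singular hyperbolicity, \cite{BD}), the SPR machinery behind Theorem~\ref{main.spr}, and the rapid-mixing criterion (Theorem~\ref{thm.rapidmixing}), which says that ergodic equilibrium states of SPR singular flows that have good asymptotics are rapid mixing. There are three steps: show SPR, establish good asymptotics on a dense set, and then check that both properties persist under $C^2$-small perturbation.

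\textbf{Step 1: SPR.} Start from a $C^1$ open and dense set $\mathcal{R}$ of star flows that are multi-singular hyperbolic \cite{BD}. For $X\in\mathcal{R}\cap\X_\vartheta^*(M)$, I would show that $X$ is SPR for $\vartheta$, as follows.
\begin{itemize}
\item Multi-singular hyperbolicity gives a dominated splitting for the extended (scaled) linear Poincar\'e flow away from the singularities, together with uniform contraction and expansion on the normal bundle. Hence there is $\chi>0$ such that every regular ergodic measure is $\chi$-hyperbolic.
\item The pressure gap $\sup_{\Sing(X)}\vartheta<P_{\rm top}(X,\vartheta)$ is used together with upper semicontinuity of entropy, which holds for star flows by the entropy expansiveness results of Pacifico--Yang--Yang. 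From these, an ergodic $\nu$ with $P_\nu(X,\vartheta)>P_0$ cannot put almost all its mass near $\Sing(X)$.
\item Take $\Lambda$ to be the complement of a small neighbourhood $U$ of the singularities, intersected with the $(\chi,\varepsilon)$-Pesin block. Uniform hyperbolicity of the scaled Poincar\'e flow outside $U$ means that $\Lambda$ is essentially the whole of $M\setminus U$.
\item The estimate $\nu(\Lambda)>\tau$ then follows by a compactness argument. Suppose $\nu_n$ have pressure above $P_0$ and $\nu_n(\Lambda)\to0$. Any limit is supported in $\Sing(X)$, so upper semicontinuity forces its pressure to be at most $\sup_{\Sing}\vartheta<P_0$, a contradiction once $P_0$ is chosen between $\sup_{\Sing}\vartheta$ and $P_{\rm top}$.
\end{itemize}
Theorem~\ref{main.spr} then gives finitely many ergodic equilibrium states, each coded by an SPR topological Markov flow.

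\textbf{Step 2: good asymptotics on a dense set.} The equilibrium states are finitely many, and each is homoclinically related to a hyperbolic periodic orbit inside a chain recurrent class. Following Field--Melbourne--T\"or\"ok \cite{FMT07}, I would pick, for each class, a horseshoe containing periodic orbits whose periods are Diophantine-related. A $C^2$-small local perturbation (a time change near one orbit) makes the period ratios badly approximable, which is the good asymptotics condition. Since the perturbation is local and small, it can be taken $C^1$-small while staying inside $\mathcal{R}$, and this gives $C^1$ density.

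\textbf{Step 3: openness, the main obstacle.} This needs three things to hold uniformly under $C^2$ perturbation.
\begin{itemize}
\item The pressure gap must persist. This uses lower semicontinuity of $P_{\rm top}$ in the $C^1$ topology, which comes from robust horseshoes, and continuity of $\Sing(X)$, whose points are hyperbolic because $X$ is star.
\item The horseshoes carrying the Diophantine periodic data must persist under perturbation.
\item The finitely many equilibrium states of the perturbed flow must still be homoclinically related to those horseshoes.
\end{itemize}
The hardest of these is the last: controlling equilibrium states of nearby flows. I would first try to use the uniform-in-$X$ constants ($\chi$, $\tau$, $P_0$) from Step 1 together with upper semicontinuity of entropy in $X$ for star flows. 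With these, a limit of equilibrium states of $X_n\to X$ is an equilibrium state of $X$. This rules out new equilibrium states appearing in classes that carry no Diophantine horseshoe, and openness follows.
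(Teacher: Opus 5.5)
Your Step~1 does not establish SPR. You claim that the scaled linear Poincar\'e flow is uniformly hyperbolic outside a neighbourhood $U$ of $\Sing(X)$, so that $\Lambda=\Pes_\chi(\ell,\varepsilon)\setminus U$ is essentially $M\setminus U$. This is false, for two reasons. First, multi-singular hyperbolicity makes $\psi_t$ hyperbolic only after reparametrizing by cocycles built from $|X(x)|/|X(\varphi_t(x))|$ near singularities, with exponent $\pm1$ depending on the singularity; for example, near a Lorenz-like singularity $\psi^*$ contracts the normal unstable direction along the exit leg of a passage. Second, membership in $\Pes_\chi(\ell,\varepsilon)$ constrains the whole orbit, so for fixed $\ell$ it fails at points of $M\setminus U$ whose orbits soon make long passages near $\Sing(X)$. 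Hence $\nu_n(\Lambda)\to0$ does not push the limit onto $\Sing(X)$. The natural repair (the limit is a regular $\chi$-hyperbolic equilibrium state $\mu$ with $\mu(\Lambda)>0$) gives nothing either: $\Lambda$ is closed, so weak-$*$ convergence only yields $\limsup_n\nu_n(\Lambda)\le\mu(\Lambda)$.

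The missing idea is the mechanism of \cite[Proposition~2.21, Theorem~3.1]{BCS25}, which the paper adapts in Lemma~\ref{lem.cptmeasstar}. Limits of high-pressure ergodic measures of $Y_n\to X$ form a compact set $\mathscr{L}^i$ of regular $\chi_0$-hyperbolic equilibrium states, by upper semicontinuity of entropy and the pressure gap. The dominated splitting coming from the star condition is continuous away from $\Sing$ and in $Y$. It converts hyperbolicity into an \emph{open} finite-time condition $\|\psi^{Y,*}_{-N}|_{\N^{Y,u}_x}\|<e^{-\chi_0N/2}$ with $N\le N_0$ (and its stable counterpart). This condition holds on a set of $\nu$-measure $>1-\delta^2$ for every ergodic $\nu$ near $\mathscr{L}^i$ of every $Y$ $C^1$-close to $X$. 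A Pliss-type argument then upgrades it to $\nu(\Pes^Y_\chi(\ell,\varepsilon))>\tau$ with $\ell$ depending only on $N_0$ and $\delta$. This uniformity in $Y$ is also the input your openness step needs and does not have.

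Step~2 misidentifies good asymptotics. A Diophantine (badly approximable) condition on period ratios is the criterion behind Dolgopyat's prevalence result, and it is not open: such numbers form a set with empty interior, while periods move continuously with the vector field. So the persistence you require in Step~3 fails. This is exactly why \cite{FMT07} introduced good asymptotics, namely the expansion $\tau_N=N\tau_0+\kappa+E_N\gamma^N\cos(N\alpha_0+\zeta_N)+o(\gamma^N)$ for periodic orbits $p_N$ shadowing a transverse homoclinic orbit of $p$, with $\liminf_N|E_N|>0$. This condition is $C^2$-open and $C^\alpha$-dense (Proposition~\ref{pro.goodasymptotics}). It is what Proposition~\ref{pro.appfunct} uses to exclude approximate eigenfunctions on a finite sub-alphabet of the countable coding.

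Your openness argument also has a gap. Knowing that limits of equilibrium states of $X_n\to X$ are equilibrium states of $X$ does not make the equilibrium states of a nearby $Y$ homoclinically related to the perturbed orbits. The paper proceeds as in Lemma~\ref{lem.pertasym}:
\begin{itemize}
\item It fixes finitely many periodic points $p_1,\dots,p_m$ with transverse homoclinic points whose $\gamma/2$-neighbourhoods cover $\Pes_\chi(\ell,\varepsilon)\setminus M(\Sing(X),r)$. Here $\gamma$ is a scale, uniform for $Y$ near $X$, below which points of a slightly larger Pesin block of $Y$ are homoclinically related.
\item It makes all continuations $p_j(Y)$ have good asymptotics on a $C^2$-open set.
\item The uniform Pesin-block estimate then forces every high-pressure ergodic $\nu$ of $Y$ to charge that Pesin block inside some $M(p_j(Y),\gamma)$. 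Hence the Borel homoclinic class carrying $\nu$ contains $p_j(Y)$.
\end{itemize}
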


\subsection{Plan of the proof}

For non-singular flows, it is common to discretize the flow by constructing a return system generated from finitely many sections. However, for flows with singularities, using only finitely many sections leads to return times that can become arbitrarily large.  Here, we introduce a novel return map $f$ generated by a countable collection of  sections  $\{D_j\}_{j\in \mathbb Z^+}$, with uniformly bounded return times.  Crucially,  this system of sections is locally finite: for each section  $D_j$,
 the number of sections intersecting with $f(D_j)$ or $f^{-1}(D_j)$
  is uniformly bounded. Consequently, the local compactness arising from this locally finite structure can substitute for the role played by the global compactness of finite section systems in the coding process. This local compactness is precisely the core reason enabling successful coding for non-uniformly hyperbolic systems.
  
  Another issue in coding occurs near singularities, where both the size of the sections and the scope of influence of the hyperbolicity of the return map tend to vanish. This challenge is commonly faced when handling singular flows. Liao introduced an effective approach: scaling the corresponding linear map to achieve certain uniform estimates. Here, we also employ the scaled return map to the aforementioned newly constructed sections for coding.  

The scaling process can be viewed as normalizing the metric on the tangent space at non-singular points through multiplication by the reciprocal of the local flow speed.
 This operation expands the size (in the sense of metric) of hyperbolic influence near singularities to match its range in regions far from singularitie. Correspondingly, Pesin theory for non-uniformly hyperbolic systems achieves uniform hyperbolicity by switching to the Lyapunov metric. We thus carry out two sequential metric modifications: first ensuring uniform local sizing via scaling, then establishing uniform hyperbolicity through the Lyapunov metric.

Subsequently, we extend the framework in \cite{BCL} by applying our newly constructed sections and the scaled system defined above to code singular flows. Concurrently, we implement irreducible coding for Borel homoclinic classes of singular flows using the method from \cite{BCS22}, and perform SPR coding for SPR singular flows following the approach in \cite{BCS25}.  As an application, we prove that three-dimensional smooth flows and star flows satisfying the small variation condition or the pressure gap condition are SPR. The existence and finiteness of ergodic equilibrium states follow directly from the SPR property.

The SPR symbolic system possesses a spectral gap for the Ruelle operator, which serves as the foundation for our proof of rapid mixing. We adopt Dolgopyat's approach from \cite{Dol98}, where he established several equivalent conditions for rapid mixing in suspension flows of finite type symbolic systems. However, these results are not directly applicable in our setting for two key reasons: the norm used here to obtain the spectral gap differs from the standard H\"older norm, and more critically, the infinite-alphabet environment leads to the absence of crucial uniform properties such as the global Gibbs property. To address this, we first introduce a new norm to derive a Lasota-Yorke type inequality. For the lack of uniformity, we restrict the phase cancellation condition to a predetermined subsystem constituted by finite alphabets, thereby enabling the application of the local Gibbs property.

When applying the above results to three-dimensional  smooth flows and star flows, we employ the method introduced by Field-Melbourne-T\"or\"ok in \cite{FMT07}. This approach was originally developed to prove rapid mixing for open and dense Axiom A flows. The core technique involves constructing sequences of periodic orbits satisfying the phase cancellation condition through shadowing and perturbations. Crucially, these periodic orbits lie within a small neighborhood of a transverse homoclinic orbit to some hyperbolic periodic orbit, and are contained in a uniformly hyperbolic set. This allows us to lift these periodic orbits to a finite type symbolic subsystem, thereby confining the phase cancellation within a determinable finite alphabets range, ultimately yielding the rapid mixing.

\subsection{Outline}

This paper is organized into four parts.

In Part~\ref{part.section}, we construct a Poincar\'e return system induced by at most countably many locally finite sections that covers all non-singular points. The local finiteness  provides the foundation for coding of singular flows.

In \S \ref{sec.poincaremap}, we recall fundamental concepts, including Poincar\'e maps and scale Poincar\'e maps, and results of uniform estimates.

In \S \ref{sec.section}, we complete the construction of the aforementioned Poincar\'e return system.

In \S \ref{sec.linearpoincare}, we analyze the return maps and their scaling on the constructed sections, and build a global linear flow compatible with the return dynamics.

In Part~\ref{part.coding}, we coding  singular flows following Buzzi-Crovisier-Lima's methodology, leveraging the return system constructed in Part \ref{part.section}.

In \S \ref{sec.prelim}, we review some related  concepts and  results.

In \S \ref{sec.nuh}--\ref{sec.finitetoone}, we give  the estimates required for coding.

In \S \ref{sec.codingthm}, we prove the coding theorems for singular flows, including irreducible coding and SPR coding.

In Part~\ref{part.thermdyn},  we employ the constructed coding system to transfer statistical properties from topological Markov flows to singular flows.

In \S \ref{sec.equili}, we address the existence and finiteness of ergodic equilibrium states.

In \S \ref{sec.tranoper}, we discuss the transfer operator of one-sided symbolic dynamics and prove a Lasota-Yorke type inequality.

In \S \ref{sec.rapidflow}, we prove rapid mixing of ergodic equilibrium states having good asymptotics for SPR flows.

In Part~\ref{part.app}, we apply aforementioned results to three-dimensional smooth flows and star flows.

In \S \ref{sec.3flow}, we show that  three-dimensional  smooth flows satisfying the small variation condition are SPR of the potential, and every ergodic equilibrium state for $C^\infty$-open and dense vector fields has good asymptotics.

In \S \ref{sec.starflow}, we prove  Theorem~\ref{main.starrm} for star flows satisfying the pressure gap condition.

\section*{Acknowledgements}

Xingzhong Liu is supported by NSFC (Grant No. 12501236).

\part{Locally finite uniform Poincar\'{e} sections}\label{part.section}

In this part, we construct a family of Poincar\'e sections compatible with flows that may exhibit singularities. Such sections and the corresponding first return maps form the foundational framework for coding flows in Part~\ref{part.coding}. A key property they possess is local finiteness. Additionally, we introduce a global ``linear Poincar\'e flow'' adapted to the Poincar\'e section, which is required for the coding process.

\section{Poincar\'e maps and scaled Poincar\'e maps}\label{sec.poincaremap}

In this section, we recall the definitions of the Poincar\'e maps and the scaled Poincar\'e maps, along with some properties.

\subsection{Poincar\'e maps}

 Recall that $M$ is a smooth $d$-dimensional ($d\ge 3$) connected compact Riemannian manifold without boundary. Let $X$ be a $C^1$ vector field on $M$. It generates a $C^1$ flow $\varphi_t=\varphi^X_t$ on $M$, together with the   tangent flow  $d\varphi_t:TM\to TM$  on the tangent bundle $TM$.

 Denote by $\Sing(X)$ the set of singularities of $X$.
For any point $x\in M\setminus\Sing(X)$, the {\it normal space} $\N_x$ at $x$ is defined as 
\begin{equation}\label{def.normbundle}
\N_x=\N^X_x:=\{v\in T_xM:\la v, X(x)\ra=0\},
\end{equation}
 where $\la\cdot,\cdot\ra$ is the inner product on $TM$ induced by the Riemannian metric. The disjoint union 
 \[
 \N:=\biguplus_{x\in M\setminus\Sing(X)}\N_x
 \] 
 forms a $(d-1)$-dimensional vector bundle over $M\setminus\Sing(X)$, which is called the {\it normal bundle} of $X$.

Let $\proj_{\N}: TM|_{M\setminus\Sing(X)}\to\N$ be the orthogonal projection to the normal bundle along the flow direction. The {\it linear Poincar\'{e} flow} $\psi_t=\psi^X_t: \N\to\N$ is defined as the orthogonal projection of $d\varphi_t$ on $\N$:
\[
\psi_t(v)={\rm proj}_{\N}(d\varphi_t(v))=d\varphi_t(v)-\frac{\langle d\varphi_t(v),X(\varphi_t(x))\rangle}{|X(\varphi_t(x)|^2}X(\varphi_t(x)),
\]
 for any $x\in M\setminus \Sing(X)$ and $v\in \N_x$.
 
 There exists a constant  $\rho_M>0$ such that for any $x\in M$, the exponential map  $\exp_x$ is a $C^\infty$ diffeomorphism from
 $T_xM(\rho_M)$ to its image $M(x,\rho_M)$, where $T_xM(\rho_M)$ is the closed ball of radius $\rho_M$ centered at the origin in $T_xM$ and $M(x,\rho_M)$ is the closed ball of radius $\rho_M$ centered at $x$ in $M$. Reduce $\rho_M$ if necessary, we assume that 
 \[
 \frac{1}{2}\le \|d\exp_x\|\le 2
 \]
 for any $x\in M$.
 Denote by $N_x(\rho_M)=\exp_x(\N_x(\rho_M))$ the {\it normal manifold} at $x$ of radius $\rho_M$, where $\N_x(\rho_M)$ is the closed ball of radius $\rho_M$ centered at the origin in $\N_x$.
 
For any $x \in M \setminus \Sing(X)$ and $t>0$, the classical {\it Poincar\'e map} $P_{x,\varphi_t(x)}$ is the holonomy map generated by flow from a neighborhood of $x$ in $N_x(\rho_M)$ to $N_{\varphi_t(x)}(\rho_M)$, that is, for any $z\in N_x(\rho_M)$ close enough to $x$,
\[
P_{x,\varphi_t(x)}(z)=\varphi_{\tau_{x,\varphi_t(x)}(z)}(z)\in N_{\varphi_t(x)}(\rho_M)
\]
for some $\tau_{x,\varphi_t(x)}(z)$ (which is called the {\it Poincar\'e time}) close to $t$. By using the exponential map, one may lift the Poincar\'e map locally to get the {\it sectional Poincar\'e map} on the normal bundle:
\[
\P_{x,\varphi_t(x)}=\exp_{\varphi_t(x)}^{-1}\circ P_{x,\varphi_t(x)}\circ \exp_x.
\]

It is no hard to see that if the orbit of $x\in M\setminus\Sing(X)$ intersects transversely the normal manifold at some point $y\in M\setminus\Sing(X)$, then the holonomy map generated by the flow from a neighborhood of $x$ in $N_x(\rho_M)$ to $N_{y}(\rho_M)$ is well defined. We say that this holonomy map is the {\it Poincar\'e map} from $x$ to $y$, and denote it by $P_{x,y}$. Denote by $\tau_{x,y}(z)$ the {\it Poincar\'e time} satisfying $P_{x,y}(z)=\varphi_{\tau_{x,y}(z)}(z)$. 
We also define the {\it sectional Poincar\'e map} from $x$ to $y$ as 
 \[
\P_{x,y}=\exp_{y}^{-1}\circ P_{x,y}\circ \exp_x.
\]

 \subsection{Scaled Poincar\'e maps and scaled tubular neighborhoods}
 
 As we mentioned in Section~\ref{sec.intro}, the Poincar\'e map is locally defined and 
 the size of its domain tends to zero when the point approximates to singularities.
To address the issue arising from this phenomenon, Liao \cite{Liao1, Liao2, Liao3} introduced the {\it scaled linear Poincar\'e flow} $\psi^*_t=\psi^{X,*}_t: \N\to \N$ as
\[
\psi^*_t(v)=\frac{\psi_t(|X(x)|v)}{|X(\varphi_t(x))|}=\frac{|X(x)|}{|X(\varphi_t(x))|}\psi_t(v),
\]
where $x\in M\setminus\Sing(X)$ and $v\in \N_x$.

We will also consider the {\it scaled sectional Poincar\'e map}
\[
\P_{x,y}^*(v)=\frac{\P_{x,y}(|X(x)|v)}{|X(y)|},
\]
where $x,y\in M\setminus\Sing(X)$ and  $v\in\N_x$ is a small vector.

The method of ``scaling'' proposed by Liao plays a crucial role in handling singular flows. During the coding process, we will extensively use various scaled objects. 
 
For $x\in M\setminus \Sing(X)$, we define the {\it scaled tubular neighborhood} of size $r>0$ and length $\tau>0$ as
\[
\FB(x,r,\tau):=\bigcup_{\begin{subarray}{c} y\in N_x(r|X(x)|)\\t\in[-\tau, \tau] \end{subarray}}\varphi_{t}(y).
\]
Denote by
$ U^*(x,r,\tau)=\{v+tX(x)\in T_{x}M:v\in \N_x(r|X(x)|,|t|\le \tau\} $
the flow box in the tangent space of $x$ of scaled size. Consider the $ C^{1} $ map:
\[
F_{x}:U^*(x,r,\tau)\rightarrow M,\ F_{x}(v+tX(x))=\varphi_{t}(\exp_{x}(v)),
\]
whose image is $\FB(x,r,\tau)$.

\subsection{Uniform estimates}

The lemma below states that scaled tubular neighborhoods of sufficiently small size and length are flow boxes on the manifold with some uniform control.

\setlist[enumerate]{label={\rm (F\arabic*)}}
\begin{lem}\label{lem.C1Poincare}
Let  $X$ be a $C^1$ vector field on $M$. Then there exist $\rho_{F1}\in (0,\rho_M)$ and $K_{F1}>1$, such that for all $x\in M\setminus\Sing(X)$,
\begin{enumerate}
  \item\label{fb.box}  the map $F_{x}:U^*(x,\rho_{F1},\rho_{F1})\rightarrow M$ is an embedding whose image contains no singularity of $X$, and satisfies $ \|d_{w}F_{x}\| \leq 3 $ for every $w\in U^*(x,\rho_{F1},\rho_{F1})$ and $ \|d_{y}F^{-1}_{x}\|\leq 3$ for every $y\in \FB(x,\rho_{F1},\rho_{F1})$;
  \item\label{fb.length} $\frac{|X(y)|}{|X(z)|}\in(\frac{9}{10},\frac{10}{9})$ for any $y,z\in \FB(x,\rho_{F1},\rho_{F1}$);
  \item\label{fb.contain} $M(x,\frac{r}{3}|X(x)|)\subset \FB(x,r,r)\subset M(x,3r|X(x)|)$ for any $r\in (0,\rho_{F1}]$;
  \item\label{fb.derivare} the maps $P_{x}:\FB(x,\rho_{F1},\rho_{F1})\to N_x(\rho_{F1}|X(x)|)$ and $ \tau_x:\FB(x,\rho_{F1},\rho_{F1})\to [-\rho_{F1},\rho_{F1}]$ with $P_{x}(y)=\varphi_{\tau_x(y)}(y)\in  N_x(\rho_{F1}|X(x)|)$ are well-defined $C^1$ maps  and satisfy that 
\[
\norm{dP_{x}}\le K_{F1}  \text{ and }  \norm{d \tau_x}\le \frac{ K_{F1}}{|X(x)|}; 
\]
\item for any $y$ satisfying $\dist(x,y)\le \rho_{F1}|X(x)|$, $P_{x,y}$ is well-defined on $N_x(\rho_{F1}|X(x)|)$.
\end{enumerate}
\end{lem}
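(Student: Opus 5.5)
The argument rests on one rescaling. For $x\notin\Sing(X)$, measure everything near $x$ in units of $|X(x)|$. Since $X$ is $C^1$ on a compact manifold, it is Lipschitz with some constant $L$, so $|X(y)-X(x)|\le L\,\dist(x,y)$. On a ball of radius $c|X(x)|$ around $x$ we therefore get $|X(y)|/|X(x)|\in[1-Lc',1+Lc']$. In particular this ball contains no singularity once $c$ is small compared with $1/L$. The speed is then comparable to $|X(x)|$ throughout the flow box. A time-$t$ orbit segment with $|t|\le\rho$ has length at most $\rho\cdot\frac{10}{9}|X(x)|$, so it stays inside the ball. This settles (F2) and the ``no singularity'' part of (F1) for $\rho_{F1}$ small, and all the constants depend only on $L$, $\|X\|_{C^1}$ and the geometry of $\exp$.

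For (F1) we compute $dF_x$ at $w=v+tX(x)$. On $\N_x$ it equals $d\varphi_t\circ d\exp_x(v)$, and on the $X(x)$-direction it equals $X(\varphi_t(\exp_x v))$. Since $|t|\le\rho$ and $\|d\varphi_t-{\rm Id}\|=O(\rho)$ (in charts), and $|v|\le\rho|X(x)|$ so that $d\exp_x$ is $O(\rho)$-close to the identity, the first block is $O(\rho)$-close to the identity. By the Lipschitz estimate, $X(\varphi_t(\exp_x v))=X(x)+O(\rho|X(x)|)$. Hence $dF_x={\rm Id}+E$ with $\|E\|=O(\rho)$ when $\N_x\oplus\bR X(x)$ carries the norm $|v|+|t||X(x)|$, which is equivalent to the Riemannian one. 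This gives $\|dF_x\|\le3$ and $\|dF_x^{-1}\|\le3$ for small $\rho$.

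Injectivity is the step I expect to be the main obstacle, because the domain shrinks with $|X(x)|$. I will handle it by conjugating with the dilation $v+tX(x)\mapsto (v+tX(x))/|X(x)|$. After this dilation, $F_x$ becomes a map on a domain of fixed size $\rho$, and its derivative is ${\rm Id}+O(\rho)$ uniformly in $x$. A map whose derivative is uniformly close to the identity on a convex set is injective. The same bound gives a quantitative inverse function estimate: the image contains a ball of radius about $(1-O(\rho))r|X(x)|$ for $r\le\rho$, and is contained in a ball of radius $3r|X(x)|$ by the derivative bound. This yields (F3) and shows that $F_x$ is an embedding.

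Next, (F4): $P_x$ and $\tau_x$ are the components of $F_x^{-1}$. Writing $F_x^{-1}(y)=v(y)+\tau'(y)X(x)$, we have $P_x(y)=\exp_x(v(y))$ and $\tau_x=-\tau'$. Since $\|dF_x^{-1}\|\le3$ in the norm $|v|+|t||X(x)|$, we obtain $\|dv\|\le3$ and $|d\tau|\le 3/|X(x)|$. Composing with $d\exp_x$ then gives $K_{F1}$. Finally, (F5): if $\dist(x,y)\le\rho|X(x)|$, then $y$ lies in $\FB(x,3\rho,3\rho)$ by (F3). Applying the whole construction with constants chosen for $3\rho$, and using the comparability $|X(y)|\approx|X(x)|$, the flow box at $y$ contains $N_x(\rho|X(x)|)$. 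Projecting it along the flow onto $N_y$ via $P_y$ defines $P_{x,y}$ on $N_x(\rho|X(x)|)$. We take $\rho_{F1}$ to be the minimum of all the smallness thresholds above.
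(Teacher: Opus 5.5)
Your proposal is correct and is essentially the standard rescaled flow-box argument. The paper gives no proof of its own here, only citing \cite{WW}, \cite{GY} and \cite{LLL2024}, and those proofs rest on the same observation: measured in units of $|X(x)|$, $dF_x$ is uniformly $O(\rho)$-close to the identity, and $P_x$, $\tau_x$ are then read off as the components of $F_x^{-1}$. In (F3) and (F5) you must invoke your sharper bounds, namely $dF_x={\rm Id}+O(\rho)$ and inner radius $(1-O(\rho))r|X(x)|$, rather than the literal constants $3$ and $r/3$: the bound $\|dF_x\|\le 3$ alone only gives $\FB(x,r,r)\subset M(x,3\sqrt2\,r|X(x)|)$, and the inner radius $\rho|X(y)|$ of $\FB(y,3\rho,3\rho)$ does not reach points of $N_x(\rho|X(x)|)$, which can lie about $\tfrac{20}{9}\rho|X(y)|$ from $y$. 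Alternatively, run (F5) at a larger scale such as $7\rho$.
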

\begin{proof}
	Item \ref{fb.box} is proved in \cite[\S 2]{WW}.
	The essence of \ref{fb.length}, \ref{fb.contain} and the first part of \ref{fb.derivare} is proved in \cite{GY}, and a clear statement and proof are provided in \cite[\S 2]{WW}.	
	The proofs of (F5) and the uniform boundedness of  $\norm{d \tau_x}$  can be found in \cite[\S 3]{LLL2024}.
\end{proof}
\setlist[enumerate]{label={\rm (\arabic*)}}

For the higher-order uniform estimates of $\P_{x,y}$, we have the following results.

\begin{lem}\label{lem.holderPoincare}\cite[Lemma~4.1]{LLL2024}
	If $X$ is a $C^{1+\beta}$ vector field on $M$, then there exists $\rho_{F}\in (0,\rho_{F1})$ and $K_{F}\ge K_{F1}$  such that for any $x,y\in M\setminus\Sing(X)$ with $\dist(x,y)\le \rho_F|X(x)|$ and $v_1, v_2\in \N_x(\rho_F)$, we have
	\[
	\|d\P_{x,y}^*(v_1)-d\P_{x,y}^*(v_2)\|\le K_F |v_1-v_2|^\beta.
	\]   
\end{lem}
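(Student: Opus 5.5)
The proof rests on a rescaling argument. Write $\P^*_{x,y}$ as a conjugate of the Poincar\'e map between flow boxes normalized to unit size, and show the normalized map has uniformly $C^{1+\beta}$ data. Fix $x\in M\setminus\Sing(X)$ and put $\lambda=|X(x)|$. By \ref{fb.length} and \ref{fb.contain} of Lemma~\ref{lem.C1Poincare}, the ball $M(x,3\rho_{F1}\lambda)$ sits inside a flow box on which $|X|$ is comparable to $\lambda$ within the factor $10/9$. On this ball, work in the exponential chart $\exp_x$ and dilate by $\lambda^{-1}$, i.e.\ set $\Phi_x(u)=\exp_x(\lambda u)$ for $u\in T_xM(3\rho_{F1})$. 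The pulled-back vector field is $Y_x:=\lambda^{-1}\,d\Phi_x^{-1}X\circ\Phi_x$. Up to the time change $t\mapsto \lambda t$, it generates the conjugated flow.

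The first step is to verify that the fields $Y_x$ form a bounded family in $C^{1+\beta}$ on the unit-scale ball, uniformly in $x$. The sup norm is fine: $|Y_x|\asymp |X|/\lambda\in(9/10,10/9)$ up to the factor coming from $\|d\exp\|\le 2$. For derivatives, the chain rule gives $DY_x(u)=DX(\exp_x(\lambda u))$ up to bounded chart factors. Hence $\|DY_x\|\le C\|X\|_{C^1}$, and the $\beta$-H\"older constant of $DY_x$ is at most $C\lambda^\beta\|X\|_{C^{1+\beta}}\le C'$. The rescaling therefore only improves the H\"older seminorm. The chart $\exp_x$ is $C^\infty$ with uniform bounds on $T_xM(\rho_M)$, so composing with it costs only constants. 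In these coordinates the normal disc $N_x(\rho_F\lambda)$ becomes the flat disc $\N_x(\rho_F)$.

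Next, treat the target point $y$ with $\dist(x,y)\le\rho_F\lambda$. In rescaled coordinates $y$ lies within distance $\rho_F$ of the origin. The section $N_y(\rho_F\lambda)$ becomes, after dilation by $\lambda^{-1}$ and the change of charts $\exp_y^{-1}\circ\exp_x$, a disc through a point of a uniformly $C^\infty$ family of submanifolds. Its normal vector is $X(y)/|X(y)|$, which in rescaled coordinates equals $Y_x(\tilde y)/|Y_x(\tilde y)|$ and is therefore uniformly Lipschitz. In these coordinates the scaled map is $\P^*_{x,y}(v)=\frac{\lambda}{|X(y)|}\,\tilde P(v)$, where $\tilde P$ is the holonomy of $Y_x$ between two transversal discs at bounded distance apart. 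The factor $\lambda/|X(y)|$ lies in $(9/10,10/9)$, and the linear map relating $\exp_y^{-1}$ to the dilated $\exp_x$ chart is uniformly smooth, so neither affects the claimed estimate beyond constants.

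The key remaining step, and the main obstacle, is a uniform $C^{1+\beta}$ holonomy estimate. For a $C^{1+\beta}$ vector field $Y$ with $|Y|\ge 1/2$ and $\|Y\|_{C^{1+\beta}}\le C$ on a unit ball, the holonomy between two transversal discs of uniformly bounded geometry at distance $\le\rho$ apart must have $D\tilde P$ that is $\beta$-H\"older with constant depending only on $C$. I would obtain this from the implicit function theorem applied to $G(v,t)=\langle \phi^Y_t(v)-\tilde y,\ n_y\rangle=0$, where $n_y$ is the normal of the target disc. The flow $\phi^Y_t$ is $C^{1+\beta}$ in $(v,t)$ for $|t|\le 2$, with bounds depending only on $C$; this follows from Gr\"onwall applied to the variational equation together with the H\"older continuity of $DY$. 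Transversality gives $\partial_tG=\langle Y,n_y\rangle\ge c>0$ once $\rho_F$ is small. The time function $\tau(v)$ is then $C^{1+\beta}$ with uniform constants, and so is $\tilde P(v)=\phi^Y_{\tau(v)}(v)$. The care needed here is in choosing $\rho_F<\rho_{F1}$ small enough that transversality holds and $|\tau|\le\rho_{F1}$ everywhere on $\N_x(\rho_F)$ for all admissible $y$, uniformly in $x$. Combining these pieces and undoing the bounded chart changes yields $\|d\P^*_{x,y}(v_1)-d\P^*_{x,y}(v_2)\|\le K_F|v_1-v_2|^\beta$.
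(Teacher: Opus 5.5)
The paper does not prove this lemma; it quotes it from \cite[Lemma~4.1]{LLL2024}. So your proposal has to stand as a self-contained argument, and it is correct.

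Your key observations are the right ones. After the dilation $\Phi_x(u)=\exp_x(|X(x)|u)$:
\begin{itemize}
\item the pulled-back field has norm $1$ at the origin;
\item its derivative is bounded by the $C^1$ norm of $\exp_x^*X$;
\item the $\beta$-H\"older seminorm of its derivative picks up a factor $|X(x)|^\beta\le(\sup|X|)^\beta$.
\end{itemize}
The statement therefore reduces to a fixed-scale $C^{1+\beta}$ holonomy estimate with constants independent of $x,y$, which your implicit-function argument supplies.

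The alternative is to work directly on $M$. There one differentiates $P_{x,y}(z)=\varphi_{\tau(z)}(z)$ and tracks how each error term scales with $|X(x)|$, using bounds such as $\|d\tau_x\|\le K_{F1}/|X(x)|$ and the H\"older continuity of $d\varphi_t$. That is the style of the estimates the paper carries out for Proposition~\ref{hpsi}. Your route makes the uniformity automatic and shows why the exponent $\beta$ is not degraded. The cost is bookkeeping of chart changes, which is where the imprecisions below sit.

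None of the following is a real gap, but each should be fixed.

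(i) As written, $Y_x=|X(x)|^{-1}(d\Phi_x)^{-1}X\circ\Phi_x$ has size $\asymp|X(x)|^{-1}$. The field with the properties you actually use is $\Phi_x^*X=(d\Phi_x)^{-1}X\circ\Phi_x$, which generates $\Phi_x^{-1}\circ\varphi_t\circ\Phi_x$ with no time change. Also, \ref{fb.contain} only places $M(x,\frac{\rho_{F1}}{3}|X(x)|)$, not $M(x,3\rho_{F1}|X(x)|)$, inside $\FB(x,\rho_{F1},\rho_{F1})$. You do not need that inclusion: $|\Phi_x^*X(u)|\ge 1-C|u|$ already follows from $|\Phi_x^*X(0)|=1$ and $\|D\Phi_x^*X\|\le C$.

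(ii) In rescaled coordinates $\Phi_x^{-1}(N_y)$ is a curved disc. The equation $\langle\phi_t(v)-\tilde y,n_y\rangle=0$ therefore describes only its tangent hyperplane. Use instead the defining function $g(u)=|X(x)|^{-1}\langle\exp_y^{-1}\Phi_x(u),X(y)/|X(y)|\rangle$. It satisfies $\|Dg\|\le C$, $\|D^2g\|\le C|X(x)|$ and $Dg(\tilde y)\,\Phi_x^*X(\tilde y)=|X(y)|/|X(x)|$, which gives the transversality you need.

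(iii) The chart change $\Theta(u)=|X(x)|^{-1}\exp_y^{-1}\exp_x(|X(x)|u)$ is not linear. It does satisfy $\|D\Theta\|\le C$ and $\|D^2\Theta\|\le C|X(x)|$. That is enough to transfer the H\"older bound from $D\tilde P$ to $d\P^*_{x,y}=\frac{|X(x)|}{|X(y)|}D\Theta(\tilde P)\,D\tilde P$.

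(iv) Only flow times $|t|=O(\rho_F)$ occur. The unit-size control on $\Phi_x^*X$ holds only on a rescaled ball of fixed small radius, so work on that time range rather than $|t|\le 2$. Existence of $\tau(v)$ on all of $\N_x(\rho_F)$ then follows from $\partial_tG\ge c$ and $|G(v,0)|=O(\rho_F)$ by monotonicity in $t$. The local implicit function theorem alone does not give it.
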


\section{Compatible Poincar\'{e} sections for singular flows}\label{sec.section}
In this section, we only require that  $X$ is a $C^1$ vector field. We aim to construct an appropriate set of transverse sections. These sections and the corresponding Poincar\'e return map will play a crucial role in the coding process of Part~\ref{part.coding}.

When  $X$ is devoid of singularities, the existence of standard Poincar\'{e} sections is established in \cite{Bow73} (also see \cite{LS}). We therefore focus here on cases where $X$ contains singularities, although this construction also applies to non-singular flows.

\subsection{Poincar\'{e} sections}

  We begin with introducing some definitions.
A connected smooth codimension-one submanifold $D\subset M\setminus \Sing(X)$ is called a {\it transverse section} of $X$ if each orbit of $X$ intersects $D$ transversely or does not intersect with $D$.

We say that the disjoint union of at most countable many disjoint transverse sections ${\mathbb D}=\biguplus_{j\in \Gamma} D_j$ ($\Gamma$ is a finite set $\{1,2,\dots,\ell\}$ or $\bZ^+$) is a {\it Poincar\'e section} of $X$ if for any $x\in M\setminus \Sing(X)$, we have
\begin{itemize}
  \item when arranged in ascending order, the set $\{t>0: \varphi_t(x)\in \mathbb D \}$ forms a one-sided sequence that tends to positive infinity; and 
  \item when arranged in descending order, the set $\{t<0: \varphi_t(x)\in \mathbb D \}$ forms a one-sided sequence that tends to negative infinity.
\end{itemize}

If ${\mathbb D}=\biguplus_{j\in \Gamma} D_j$ is a Poincar\'e section of $X$, then one may define the {\it roof function} $R_{\mathbb D}: {\mathbb D}\to \bR^+$ as $R_{\mathbb D}(x)=\min\{t>0: \varphi_t(x)\in {\mathbb D}\}$, and the {\it Poincar\'e return map} $f_{\mathbb D}:{\mathbb D}\to{\mathbb D}$ as $f_{\mathbb D}(x)=\varphi_{R_{\mathbb D}(x)}(x)$. In the view of the Poincar\'e section ${\mathbb D}$, the flow $\varphi_t$ on $M\setminus\Sing(X)$ can be regard as a \emph{suspension flow} of the Poincar\'e return map $f_{\mathbb D}$ with the roof function $R_{\mathbb D}$.

A Poincar\'e section ${\mathbb D}$ of $X$ is called {\it uniform} if $\inf_{x\in {\mathbb D}} R_{\mathbb D}(x) >0$ and $\sup_{x\in {\mathbb D}} R_{\mathbb D}(x) <+\infty$.

\begin{dfn}\label{dfn.locallyfinite}
We say that a Poincar\'e section ${\mathbb D}=\biguplus_{j\in \Gamma} D_j$ of $X$ is  {\rm locally finite} if there exists a constant $I_{\max}\ge 1$ such that for any $j\in \Gamma$, we have
 \[
\#\{i\in\Gamma: f_{{\mathbb D}}(D_j)\cap D_i\neq\emptyset \text{ or } f_{{\mathbb D}}(D_i)\cap D_j\neq\emptyset\}\le I_{\max}.
\]
\end{dfn}

The locally finite property is of fundamental importance indeed in the coding process. 

The following main result of this part asserts the existence of locally finite uniform Poincar\'e sections of $X$.

\setlist[enumerate]{label={\rm (S\arabic*)}}
\begin{thm}\label{thm.poincaresection}
For any $C^1$ vector field $X$ on $M$ and any positive numbers $\delta>0$, $\kappa>1$, there exist two numbers $r_0\in (0,\delta/\kappa)$, $I_{\max} \ge 1$ and at most countable many points $\{o_j\}_{j\in \Gamma}\subset M\setminus \Sing(X)$ such that for every $r\in[r_0,\kappa r_0]$,
\begin{enumerate}
  \item\label{se.poin} {\rm Poincar\'e section:} $\{N_{o_j}(r|X(o_j)|)\}_{j\in \Gamma}$ is a collection of mutually disjoint transverse sections, and ${\mathbb D}(r)=\biguplus_{j\in \Gamma} N_{o_j}(r|X(o_j)|)$ is a Poincar\'e section of $X$;
  \item\label{se.unif} {\rm Uniformity:} $R_{{\mathbb D}(r)}(x) \ge \kappa r_0$ and $R_{{\mathbb D}(r)}(x) \le \delta$ for all $x\in{\mathbb D}(r)$, where $R_{{\mathbb D}(r)}$ is the corresponding roof function of ${\mathbb D}(r)$;
  \item\label{se.fini} {\rm Local finiteness:} $\#\{i\in\Gamma: f_{{\mathbb D}(r)}(N_{o_j}(r|X(o_j)|))\cap N_{o_i}(r|X(o_i)|)\neq\emptyset \text{ or } f_{{\mathbb D}(r)}^{-1}(N_{o_j}(r|X(o_j)|))\cap N_{o_i}(r|X(o_i)|)\neq\emptyset\}\le I_{\max}$ for any $j\in \Gamma$,  where $f_{{\mathbb D}(r)}$ is the corresponding Poincar\'e return map of ${\mathbb D}(r)$;
  \item\label{se.sepa} {\rm Separated:} $M(o_{j},3\kappa r_0|X(o_{j})|)\cap M(o_{i},3\kappa r_0|X(o_{i})|)=\emptyset$ for any $j\neq i$;
\end{enumerate}
\end{thm}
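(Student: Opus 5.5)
The plan is to choose the centers $\{o_j\}$ by a maximal separated-set argument in the scaled metric, and then to verify (S1)–(S4) using the uniform flow-box estimates of Lemma~\ref{lem.C1Poincare}. Everything is done at the level of the scaled geometry: near a point $x$, the natural length scale is $|X(x)|$ in space and $O(1)$ in time. By \ref{fb.contain}, the scaled tubular neighborhood $\FB(x,r,r)$ is comparable to the ball $M(x,r|X(x)|)$.

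First, fix a time length $\tau\ll\delta$ with $\tau\le\rho_{F1}$, and let $r_0$ be much smaller than $\tau/\kappa$. Consider the set $M\setminus\Sing(X)$ with the scaled distance criterion
\[
y \text{ is far from } x \iff M(x,3\kappa r_0|X(x)|)\cap M(y,3\kappa r_0|X(y)|)=\emptyset .
\]
I would select points inductively. Take a maximal family $\{o_j\}$ such that the balls $M(o_j,3\kappa r_0|X(o_j)|)$ are pairwise disjoint. This family is countable, since the balls have positive volume and are disjoint, and (S4) then holds by construction.

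The difficulty is that maximality alone gives covering of $M\setminus\Sing(X)$ by enlarged balls $M(o_j,C\kappa r_0|X(o_j)|)$, not hitting of every orbit by the thin disks $N_{o_j}(r|X(o_j)|)$. So I would not select points in $M$ but along a tube structure. Cover $M\setminus\Sing(X)$ by flow boxes $\FB(o,\rho,\tau)$ in which the disk $N_o(r|X(o)|)$ is crossed, within time $\le\delta$, by every orbit passing within scaled distance $\ll r$ of the center axis. To guarantee that every orbit meets some disk, the selection has to be refined: points of $M$ not already within scaled distance $r_0|X|/10$ of the flow-saturation $\bigcup_{|t|\le \delta/2}\varphi_t(N_{o_j}(r_0|X(o_j)|))$ of previously chosen disks are candidates. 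A Zorn/greedy argument then gives that every point lies within time $\delta/2$ of some disk $N_{o_j}(r_0|X(o_j)|)$, in both time directions.

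I would obtain disjointness and minimal return time $\ge\kappa r_0$ together by requiring that the chosen disks are separated in flow time by at least $2\kappa r_0$. Here I use \ref{fb.box}, \ref{fb.length} and \ref{fb.derivare}, which give uniformly (in the scaled sense) controlled Poincaré times $\tau_x$ with $\|d\tau_x\|\le K_{F1}/|X(x)|$. On scaled disks these times therefore vary by $O(r)$. This yields (S1) and the lower bound in (S2) for all $r\in[r_0,\kappa r_0]$ at once, because the disks are nested in $r$ and the constants are uniform. For the upper bound $R\le\delta$, I apply the covering property to $\varphi_{\delta/2}(x)$, and I use \ref{fb.length} to ensure that orbits do not enter the singular region within such short times at uniformly bounded scale.

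For local finiteness (S3), suppose $f(D_j)\cap D_i\neq\emptyset$. Then $o_i$ lies within flow time $\delta$ of a point of $D_j$. Since $|X|$ changes by at most a factor $e^{\pm L\delta}$ along time $\delta$ (where $L$ is the Lipschitz constant of $X$, because $|d|X|/dt|\le L|X|$), the relevant images $\varphi_{[0,\delta]}(D_j)$ lie in a region of scaled size comparable to $|X(o_j)|$. The disjoint balls $M(o_i,3\kappa r_0|X(o_i)|)$ meeting this region have radii comparable to $\kappa r_0|X(o_j)|$. A volume-packing count then bounds their number by a constant $I_{\max}$ depending only on $d$, $L$, $\delta$, $\kappa$, $r_0$ and the geometry.

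The main obstacle is the covering/hitting step: guaranteeing that every regular orbit crosses a disk within time $\delta$ while keeping the disks mutually disjoint, time-separated, and valid for every $r$ in the whole range $[r_0,\kappa r_0]$. This is where I expect a careful greedy construction, processed in dyadic shells $\{2^{-k-1}<|X|\le 2^{-k}\}$ with the scaled boxes of \ref{fb.contain}, to be necessary.
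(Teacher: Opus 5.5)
The gap is the step you flag yourself as the main obstacle, and it is the heart of the theorem rather than a technicality. Your claim that a Zorn/greedy argument makes every orbit cross some disk $N_{o_j}(r_0|X(o_j)|)$ within time $\delta/2$ is unjustified, and with your selection rules it can fail. Maximality among families satisfying \ref{se.sepa} does not imply hitting. To serve an orbit segment $\varphi_{[-\delta/2,\delta/2]}(x)$ that has not been hit, you need a new centre $o$ whose disk the segment crosses \emph{and} whose ball $M(o,3\kappa r_0|X(o)|)$ misses all earlier balls. Your candidate rule, based on distance from the flow-saturation of earlier disks, says nothing about the second requirement.

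Here is how it breaks. Take an earlier centre $o_k$ at transverse scaled distance about $2r_0$ from the segment. The segment misses its disk, yet its ball forbids every position of $o$ in a time interval of length of order $\kappa r_0$. Your own packing count allows of order $\delta/(\kappa r_0)$ pairwise disjoint earlier balls along such a segment. So the forbidden intervals can have total length of order $\delta$ and exhaust the whole window, and nothing in a greedy maximal construction rules out such a chain of near misses.

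The rest of your plan is fine once covering is known. Property \ref{se.sepa} together with \ref{fb.contain} already gives disjointness of the disks and $R_{{\mathbb D}(r)}\ge\kappa r_0$, since $\FB(o_j,\kappa r_0,\kappa r_0)\subset M(o_j,3\kappa r_0|X(o_j)|)$; your extra time-separation requirement is therefore redundant. Your packing bound for \ref{se.fini} is also legitimate, because the statement lets $I_{\max}$ depend on $r_0$.

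The missing idea is to decouple covering from separation, and to fix the combinatorial constant before choosing the small scale. The paper proceeds in four steps.
\begin{enumerate}
\item It covers $M\setminus\Sing(X)$ by boxes $\FB(p_i,r',r')$ at a fixed coarse scale $r'$, with multiplicity bounded by $I_0$ (Lemma~\ref{lem.locallyfinitecover}, via Vitali).
\item It puts a scaled $\varepsilon$-lattice $\{q_j\}$ on each transversal $N_{p_i}(r'|X(p_i)|)$ and sets $r_0=\frac{20}{9}\sqrt{d-1}K_{F1}\varepsilon$. By \ref{fb.length} and \ref{fb.derivare}, for every $\theta\in[0,r']$ the disk of scaled radius $r_0$ at $\varphi_\theta(q_j)$ is crossed by every orbit through the lattice cell of $q_j$. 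Hence hitting holds \emph{wherever} along its orbit segment each centre is placed.
\item Separation is then achieved only by sliding each centre along its own orbit. One sets $o_j=\varphi_{\theta_j}(q_j)$, with $\theta_j$ chosen inductively outside the time intervals in which $\varphi_t(q_j)$ comes close to an earlier $o_{j_1}$, $j_1\in\D(j)$.
\item Lattice points on a common transversal are $\varepsilon$-separated and $r_0/\varepsilon$ is a fixed constant. Hence $\#\D(j)\le I_{\max}$ with $I_{\max}$ independent of $\varepsilon$, while each excluded interval has length $O(\kappa r_0)$. Choosing $\varepsilon$ so small that the total excluded length, at most a fixed multiple of $\kappa r_0 I_{\max}$, is less than $r'$ leaves an admissible $\theta_j$.
\end{enumerate}
This bound on the number of potential blockers, uniform in the small parameter and obtained by placing one centre on each lattice orbit segment, is exactly what your maximal-separated-set construction lacks.
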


\setlist[enumerate]{label={\rm (\arabic*)}}

\subsection{Construction of locally finite uniform Poincar\'{e} sections}

To prove Theorem~\ref{thm.poincaresection}, we first construct a locally finite cover of 
$M\setminus\Sing(X)$ by using scaled tubular neighborhoods of uniform size and length. Prior to this, we present a technical lemma. Denote $\rho_0={10^{-8}K_{F1}^{-1}}\cdot\min\{\rho_{F1},\rho_M\}$, where $\rho_{F1}$, $K_{F1}$ are given by Lemma~\ref{lem.C1Poincare} and $\rho_M$ is is the injective radius of exponential maps.

\begin{lem}\label{lem.FBsubset} For any $0<r\le \tau \le \rho_0$, if there are two points $x,y\in M\setminus \Sing(X)$ such that $\FB(x,r,\tau)\cap\FB(y,r,\tau)\neq\emptyset$, 
then we have $\FB(y,r,\tau)\subset \FB(x,3rK_{F1} , 30\tau)$, where $K_{F1}$ is given by Lemma~\ref{lem.C1Poincare}.
\end{lem}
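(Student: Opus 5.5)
Fix a point $z\in \FB(x,r,\tau)\cap\FB(y,r,\tau)$. Write $z=\varphi_{s}(x')$ with $x'\in N_x(r|X(x)|)$, $|s|\le\tau$, and $z=\varphi_{s'}(y')$ with $y'\in N_y(r|X(y)|)$, $|s'|\le \tau$. The idea is to express every point of $\FB(y,r,\tau)$ as a flow image of a point lying in $\FB(x,\rho_{F1},\rho_{F1})$, and then read off its $x$-coordinates through the projection $P_x$ and the time function $\tau_x$ from Lemma~\ref{lem.C1Poincare}\ref{fb.derivare}. The derivative bounds on $P_x$ and $\tau_x$ will control the transverse size and the time length.

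\textbf{Step 1: compare the speeds $|X(x)|$ and $|X(y)|$.} Since $r,\tau\le\rho_0\ll\rho_{F1}$, item~\ref{fb.length} applied to the box of $x$ gives $|X(z)|\approx|X(x)|$ within a factor $10/9$. Applied to the box of $y$, it gives $|X(z)|\approx|X(y)|$. Hence $|X(x)|$ and $|X(y)|$ agree up to a factor $(10/9)^2$. By item~\ref{fb.contain}, or simply because $|X|$ is Lipschitz with the time displacement at most $|X|\tau$, we get $\dist(x,y)\le C\tau|X(x)|$ for some uniform $C$ (say $C\le 10$). Consequently $\FB(y,r,\tau)\subset M(x,C'\tau|X(x)|)$ with $C'\tau\ll \rho_{F1}$, using $\tau\le\rho_0=10^{-8}K_{F1}^{-1}\min\{\rho_{F1},\rho_M\}$. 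By item~\ref{fb.contain} this set lies inside $\FB(x,\rho_{F1},\rho_{F1})$, so $P_x$ and $\tau_x$ are defined on all of $\FB(y,r,\tau)$.

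\textbf{Step 2: transverse control via $P_x$.} Take any $w=\varphi_t(y'')\in\FB(y,r,\tau)$ with $y''\in N_y(r|X(y)|)$. Since $P_x$ is constant along flow segments inside the box, $P_x(w)=P_x(y'')$. Also $P_x(y')=P_x(z)=x'$. The segment in $N_y$ from $y'$ to $y''$ has length at most $2\cdot 2r|X(y)|$, using $\|d\exp\|\le2$, and it lies inside the $x$-box by Step 1. Then $\|dP_x\|\le K_{F1}$ gives $\dist(P_x(y''),x')\le 4K_{F1}r|X(y)|$. Combined with $|x'|\le r|X(x)|$ and the speed comparison, this yields $P_x(w)\in N_x(3K_{F1}r|X(x)|)$ after tracking the constants. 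This is the main obstacle: the factors $2$ from $\exp$ and $(10/9)^2$ from the speeds must fit within the stated $3K_{F1}$. If they do not, I will measure distances directly in exponential coordinates of the box $F_x$ using item~\ref{fb.box}, or replace the crude factor $2$ by $1+o(1)$, which is allowed because everything happens at scale $\rho_0\ll\rho_M$, where $d\exp$ is nearly an isometry.

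\textbf{Step 3: time control via $\tau_x$.} For the same $w$ we have $\tau_x(w)=\tau_x(y'')-t$. Also $\tau_x(y')=\tau_x(z)+s'=-s+s'$, so $|\tau_x(y')|\le2\tau$. The bound $\|d\tau_x\|\le K_{F1}/|X(x)|$, applied along the segment from $y'$ to $y''$ of length $\le 4r|X(y)|\le 5r|X(x)|$, gives $|\tau_x(y'')|\le 2\tau+5K_{F1}r$. Hence $|\tau_x(w)|\le 3\tau+5K_{F1}r$. This is not obviously $\le 30\tau$ when $K_{F1}$ is large, because $r\le\tau$ but $K_{F1}$ is unbounded.

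To fix this, I will use the flow-box structure. Moving within $N_y$ is moving within a disk transverse to $X$ that is nearly orthogonal to $X(x)$; $N_y$ is tilted only slightly because $\dist(x,y)$ is small relative to the scale where $X/|X|$ varies, as in~\cite{WW}. So I will bound the time variation directly by $O(r)$, with an absolute constant, through the $C^1$ closeness of the unit fields, rather than through $K_{F1}$. This gives $|\tau_x(w)|\le 30\tau$. Then $w=\varphi_{-\tau_x(w)}(P_x(w))$ lies in $\FB(x,3rK_{F1},30\tau)$, which proves the inclusion.
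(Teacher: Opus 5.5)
Your Steps 1 and 2 follow the paper's proof. You compare $|X(x)|,|X(y)|$ through a point $z$ of the intersection using \ref{fb.length}, place $\FB(y,r,\tau)$ in a ball of radius $O(\tau|X(x)|)$ about $x$, and bound the transverse coordinate using $\|dP_x\|\le K_{F1}$, starting from $P_x(z)\in N_x(r|X(x)|)$. The constant worry in Step 2 is minor, and the paper's own step is equally loose there. Join $P_y(z)$ to $P_y(w)$ through $y$ by two radial geodesics of $N_y$, of total length at most $2r|X(y)|$. Then $\dist(x,P_x(w))\le r|X(x)|+2K_{F1}r|X(y)|\le(1+\tfrac{20}{9}K_{F1})r|X(x)|$. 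This is at most $3K_{F1}r|X(x)|$ once $|X(y)|\le\tfrac{10}{9}|X(x)|$ (see below) and $K_{F1}\ge 9/7$, which can be assumed.

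The genuine gap is Step 3. As you observe, integrating $\|d\tau_x\|\le K_{F1}/|X(x)|$ only gives $|\tau_x(w)|\le 3\tau+5K_{F1}r$. Your proposed repair, a tilt estimate ``as in \cite{WW}'', is not carried out, so the time bound $30\tau$ is never established. The missing point is that your Step 1 already contains it, because \ref{fb.contain} compares balls and boxes in both directions. Using your factor $(10/9)^2$, you get $\dist(x,y)\le 3\tau(|X(x)|+|X(y)|)<7\tau|X(x)|$. Hence $y\in M(x,7\tau|X(x)|)\subset\FB(x,\rho_{F1},\rho_{F1})$, so \ref{fb.length} in the box of $x$ gives $|X(y)|<\tfrac{10}{9}|X(x)|$. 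Every $w\in\FB(y,r,\tau)\subset M(y,3\tau|X(y)|)$ then satisfies $\dist(x,w)\le 3\tau|X(x)|+6\tau|X(y)|<10\tau|X(x)|$. Therefore $\FB(y,r,\tau)\subset M(x,10\tau|X(x)|)\subset\FB(x,30\tau,30\tau)$. Since $F_x$ is injective on $U^*(x,\rho_{F1},\rho_{F1})$ by \ref{fb.box}, the decomposition $w=\varphi_t(w')$ with $w'\in N_x(\rho_{F1}|X(x)|)$ and $|t|\le\rho_{F1}$ is unique. So $w'=P_x(w)$ and $|t|=|\tau_x(w)|\le 30\tau$. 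Together with Step 2 this gives $w\in\FB(x,3rK_{F1},30\tau)$. This is exactly how the paper concludes; neither $\tau_x$ nor its derivative bound is needed.
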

\begin{proof}
By \ref{fb.contain} of Lemma~\ref{lem.C1Poincare}, we have
\[
 \FB(x,r,\tau)\subset \FB(x,\tau,\tau)\subset M(x,3\tau|X(x)|) \text{ and } \FB(y,r,\tau)\subset \FB(y,\tau,\tau)\subset M(y,3\tau|X(x)|).
\]

	Take a point  $z\in \FB(x,r,\tau)\cap\FB(y,r,\tau)\subset \FB(x,\tau,\tau)\cap\FB(y,\tau,\tau)$. By  \ref{fb.length} of Lemma~\ref{lem.C1Poincare}, we have $|X(y)|\ge \frac{9}{10}|X(z)|\ge \frac{81}{100}|X(x)|$ and $|X(y)|\le  \frac{100}{81}|X(x)|$. Then by \ref{fb.contain} again we obtain
\[
\FB(y,r,\tau)\subset M(y,3\tau|X(y)|)\subset M(x,3\tau(|X(x)|+|X(y)|))\subset M(x,10\tau|X(x)|)\subset \FB(x,30\tau,30\tau).
\]

For any $y'\in \FB(y,r,\tau)$, denote $y'_1=P_y(y')\in N_y(r)$, where $P_y$ is the map given by \ref{fb.derivare} of Lemma~\ref{lem.C1Poincare}. We also denote $z_1=P_y(z)\in N_y(r)$. Since $y_1'\in \FB(x,30\tau,30\tau)$ and $z\in \FB(x,r,\tau)$, we have $P_x(y_1')=P_x(y')\in N_x(30\tau)$ and $P_x(z_1)=P_x(z)\in N_x(r)$. Note that $\dist (y_1',z_1)\le 2r$, it follows from \ref{fb.derivare} of Lemma~\ref{lem.C1Poincare} that
\[
\dist(x,P_x(y'))\le \dist(x,P_x(z_1))+\dist(P_x(z_1),P_x(y_1'))\le r+ 2rK_{F1}<3rK_{F1}. 
\]
 Thus $y'\in \FB(x,3rK_{F1} , 30\tau)$. It completes the proof.
\end{proof}

Next we use the Vitali Covering Lemma to get a locally finite cover of $M\setminus \Sing(X)$.

\begin{lem}\label{lem.locallyfinitecover}
For any $r'\in(0,\rho_0)$, there exists a constant $I_0\ge 1$ and at most countable many points $\{p_i\}_{i\in \Gamma_0}\subset M\setminus\Sing(X)$ such that
\begin{itemize}
  \item $M\setminus\Sing(X)\subset \bigcup_{i\in \Gamma_0} \FB({p_i},r',r')$; and
  \item $\#\{i\in\Gamma_0: \FB({p_i},3r'K_{F1}, 63000r')\cap \FB(x,3r'K_{F1}, 63000r')\neq \emptyset\}<I_0$ for any $x\in M\setminus\Sing(X)$.
\end{itemize}
\end{lem}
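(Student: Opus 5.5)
The plan is a Vitali-type selection of a maximal family of pairwise disjoint scaled tubular neighborhoods, followed by a volume-counting argument. The counting uses scaled volumes, which are comparable because of \ref{fb.length} of Lemma~\ref{lem.C1Poincare}.

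\textbf{Step 1 (selection).} Let $r''=r'/(10K_{F1})$. I would choose a maximal family $\{p_i\}_{i\in\Gamma_0}\subset M\setminus\Sing(X)$ such that the sets $\FB(p_i,r'',r'')$ are pairwise disjoint. Such a family exists by Zorn's lemma. It is at most countable, because these are disjoint open-interior subsets of positive volume in the compact manifold $M$.

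\textbf{Step 2 (covering).} Let $x\in M\setminus\Sing(X)$. By maximality, $\FB(x,r'',r'')$ meets some $\FB(p_i,r'',r'')$. Lemma~\ref{lem.FBsubset}, applied with $r=\tau=r''$, gives $x\in\FB(p_i,3r''K_{F1},30r'')$. Since $3r''K_{F1}<r'$ and $30r''<r'$ (recall $K_{F1}>1$, so $30/(10K_{F1})<3$; if this is not below $1$, shrink $r''$ further, say $r''=r'/(100K_{F1})$), we get $x\in\FB(p_i,r',r')$. This proves the first bullet.

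\textbf{Step 3 (local finiteness).} Fix $x$ and put $A=3r'K_{F1}$ and $B=63000r'$. Suppose $\FB(p_i,A,B)\cap\FB(x,A,B)\ne\emptyset$. Applying Lemma~\ref{lem.FBsubset} with $r=\tau=\max(A,B)$, which stays $\le\rho_0$ after possibly requiring $r'$ small relative to $\rho_0$ (the constants in $\rho_0$ were chosen for this purpose), gives
\[
\FB(p_i,r'',r'')\subset\FB(p_i,A,B)\subset\FB(x,C r',C r')
\]
for a constant $C$ depending only on $K_{F1}$. Here I enlarge both parameters to their maximum so that the lemma applies with $r\le\tau$. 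I would check that $Cr'\le\rho_{F1}$; otherwise I would run a chain argument through \ref{fb.contain}, covering $\FB(x,Cr',Cr')$ by the metric ball $M(x,3Cr'|X(x)|)$ and staying in the region where \ref{fb.length} applies.

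\textbf{Step 4 (counting).} By \ref{fb.length}, $|X(p_i)|$ is comparable to $|X(x)|$ up to a factor $(10/9)^k$ with $k$ bounded. By \ref{fb.box}, $F_{p_i}$ has derivative and inverse derivative bounded by $3$. Hence
\[
\mathrm{vol}\,\FB(p_i,r'',r'')\ge c\,(r''|X(x)|)^{d-1}r''|X(x)|,
\]
since the tube has normal radius $r''|X(p_i)|$ and flow length $r''$ at speed about $|X(x)|$. Similarly,
\[
\mathrm{vol}\,\FB(x,Cr',Cr')\le c'(Cr'|X(x)|)^{d}.
\]
The sets $\FB(p_i,r'',r'')$ are disjoint, so the number of admissible indices is at most $c'C^d(10K_{F1})^d/c$, a constant independent of $x$. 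Call this constant $I_0-1$.

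\textbf{Main obstacle.} The hardest part is the bookkeeping in Step 3. Lemma~\ref{lem.FBsubset} requires $r\le\tau\le\rho_0$ and equal parameters on both tubes, while here the parameters differ by large constant factors ($63000$). One has to make sure that every enlarged tube stays inside the range $\rho_{F1}$ where Lemma~\ref{lem.C1Poincare} holds. I would handle this by working with the metric balls from \ref{fb.contain}, namely $M(\cdot,3\tau|X|)$, rather than iterating the lemma. The specific choice of $\rho_0$, with its factor $10^{-8}$, is what makes these containments valid.
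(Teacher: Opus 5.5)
Your argument is correct and is essentially the paper's: a Vitali-type disjoint selection, followed by counting disjoint small sets inside a region of size comparable to $r'|X(x)|$ around $x$. The differences are cosmetic. You take a maximal disjoint family of flow boxes $\FB(p_i,r'',r'')$, so the covering comes from Lemma~\ref{lem.FBsubset}, and you compare Riemannian volumes via the Jacobian bounds in \ref{fb.box}. The paper instead applies the Vitali lemma to the balls $M(x,\frac{r'}{15}|X(x)|)$, gets the covering from the $5r$-enlargement and \ref{fb.contain}, and counts the disjoint balls after lifting them to $T_xM$ by $\exp_x^{-1}$.

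In Step~3 you should not shrink $r'$, since the statement is for every $r'\in(0,\rho_0)$, and you do not need to. The proof of Lemma~\ref{lem.FBsubset} only uses Lemma~\ref{lem.C1Poincare} at scale $30\tau$, so it goes through whenever $30\tau\le\rho_{F1}$. This holds for $\tau=\max(3r'K_{F1},63000r')$ because of the factor $10^{-8}K_{F1}^{-1}$ in $\rho_0$. The paper itself applies that lemma with $\tau=63000r'$ in exactly this way.
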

\begin{proof}
Take $r'\in(0,\rho_0)$.
Consider 
\[
\{M(x,\frac{r'}{15}|X(x)|):x\in M\setminus \Sing(X)\},
\]
which is a cover of $M\setminus \Sing(X)$. According to the  Vitali Covering Lemma,
there exists a subset $\{M(p_i,\frac{r'}{15}|X(p_i)|)\}_{i\in\Gamma_0}$ consisting of at most countable many mutually disjoint balls ($\Gamma_0$ is a finite set or $\mathbb Z^+$) such that
\[
M\setminus \Sing(X)\subset \bigcup_{i\in\Gamma_0}M(p_i,\frac{r'}{3}|X(p_i)|).
\]
Then by \ref{fb.contain} of Lemma~\ref{lem.C1Poincare}, we have that
\[
M\setminus \Sing(X)\subset \bigcup_{i\in\Gamma_0}M(p_i,\frac{r'}{3}|X(p_i)|)\subset \bigcup_{i\in \Gamma_0} \FB({p_i},r',r').
\]

On the other hand, for any $x\in M\setminus\Sing(X)$, if there is some $p_i$ such that $\FB({p_i},3r'K_{F1}, 63000r')\cap \FB(x,3r'K_{F1}, 63000r')\neq \emptyset$, then by Lemma~\ref{lem.FBsubset} and \ref{fb.contain}  we have 
\[
\FB({p_i},3r'K_{F1}, 63000r')\subset \FB(x,9r'K_{F1}^2 , 1890000r')\subset M(x,3(9K_{F1}^2+1890000)r'|X(x)|).
\]
 It follows from 
\ref{fb.length} of Lemma~\ref{lem.C1Poincare} that $|X(p_i)|\ge \frac{9}{10}|X(x)|$.

Collect all scaled tubular neighborhoods $\FB({p_i},3r'K_{F1}, 63000r')$ intersecting $\FB(x,3r'K_{F1}, 63000r')$. By construction, each such scaled tubular neighborhood contains a ball $M(p_i,\frac{r'}{15}|X(p_i)|)$, and these balls are pairwise disjoint. We lift all these balls to the tangent space $T_x M$ via the exponential map $\exp^{-1}_x$. Consequently, their images remain pairwise disjoint. Note that since $\|d\exp_x\|\le 2$ in $T_xM(\rho_M)$, each image $\exp^{-1}_x(M(p_i,\frac{r'}{15}|X(p_i)|))$ contains a ball of radius at least $\frac{r'|X(p_i)|}{30}$ ($\ge \frac{9r'|X(x)|}{300}$), and these subballs are also pairwise disjoint. 

Since all such subballs are contained in $T_xM(3(9K_{F1}^2+1890000)r'|X(x)|)$, their total volume is at most the volume of $T_xM(3(9K_{F1}^2+1890000)r'|X(x)|)$. It follows that the number of scaled tubular neighborhoods $\FB({p_i},3r'K_{F1}, 63000r')$ intersecting $\FB(x,3r'K_{F1}, 63000r')$
\begin{equation*}
\begin{aligned}
	& \#\{i\in\Gamma_0: \FB({p_i},3r'K_{F1}, 63000r')\cap \FB(x,3r'K_{F1}, 63000r')
	 \neq \emptyset\}\\ \le &~\frac{\left(3(9K_{F1}^2+1890000)r'|X(x)|\right)^d}{\left(\frac{9r'|X(x)|}{300}\right)^d}=\left(900(K_{F1}^2+210000)\right)^d=:I_0.
\end{aligned}
 \end{equation*}

The proof is completed.
\end{proof}

Clearly, if sections are permitted to intersect, we may choose $N_{p_i}(r'|X(p_i)|)$ from the above lemma, which can form a Poincar\'e system. However, the return time may lack a uniform positive lower bound. Next, we construct sections satisfying the theorem's requirements based on the above $N_{p_i}(r'|X(p_i)|)$. 

For non-singular flows (where $\Gamma_0$ is a finite set), Lima-Sarig in \cite{LS} provided a construction method. Due to the local finiteness in the above lemma, this method is still applicable here after adjustment. Although the proof need frequent switching between sections and may seem cumbersome, the core idea is clear. 

On each $N_{p_i}(r'|X(p_i)|)$, we define an $\varepsilon$-lattice net. Then, we push each lattice point along its orbit for a short time (with is uniformly bounded), and consider a thin  scaled tubular neighborhood of size proportional to $\varepsilon$ and longer length at each pushed lattice point. By Lemma~\ref{lem.C1Poincare}, these new thin  scaled tubular neighborhoods still cover the non-singular set of $M$. Thus, we only need to push the lattice points to suitable positions to ensure complete separation of the new small sections. 

Note that if two lattice points could  approach each other during pushing, their distance when flowing to the same large section is less than a constant multiple of $\varepsilon$. Therefore, for each fixed lattice point, the number of lattice points on the same $N_{p_i}(r'|X(p_i)|)$ at risk of approach  is uniformly bounded. Combined with the local finiteness of the above lemma,  the total number at risk is uniformly bounded, even if the total number of  lattice points is infinite. Hence, a suitable push exists for sufficiently small $\varepsilon$.

\begin{proof}[Proof of Theorem~\ref{thm.poincaresection}]

We only need to prove the theorem for sufficiently small $\delta>0$ and sufficiently large $\kappa>1$. Thus we assume that $\delta <\rho_0$ and $\kappa>10$. Let $I_0\ge 1$ and $\{p_j\}_{i\in\Gamma_0}\subset M\setminus\Sing(X)$ be given by Lemma~\ref{lem.locallyfinitecover} for $r'=10^{-8}\delta$.

\smallskip
\noindent Step 1: selection of parameters and candidate points.
\smallskip

Take a sufficiently small $\varepsilon>0$, to be specified later.  Denote
\[
r_0=\left(\frac{20}{9}\sqrt{d-1}K_{F1}\right)\varepsilon,
\]
where $d$ is the dimension of $M$ and $K_{F1}$ is given by Lemma~\ref{lem.C1Poincare}. Let $\varepsilon>0$ small enough such that $9\kappa r_0<r'$ and $81\kappa r_0K_{F1}^2<r'$.

For each $i\in \Gamma_0$, select an orthonormal basis $\mathbf e_i$ of the normal space $\N_{p_i}$.
Denote 
\[
\varepsilon\mathbb Z^{d-1}(r')=\{\mathbf{m}: \frac{\mathbf m}{\varepsilon} \in \mathbb Z^{d-1},\ |\mathbf m|\le r' \}
\]
and $p_i(\mathbf m)=\exp_{p_i}(|X(p_i)|(\mathbf{m}\cdot \mathbf{e}_i))$, $i\in \Gamma_0$, $\mathbf m\in \varepsilon\mathbb Z^{d-1}(r')$.
 Since the set $\{p_i(\mathbf m):i\in \Gamma_0,\mathbf m\in \varepsilon\mathbb Z^{d-1}(r')\}$ has at most countable many elements, we denote it by $\{q_j\}_{j\in\Gamma}$, where $\Gamma=\{1,2,\ldots, \ell\}$ or $\bZ^+$. 

\smallskip
\noindent Step 2: set of dangerous points.
\smallskip

For any $j\in \Gamma$, denote the set of ``dangerous points'' for $q_j$ as
\[
\D(j)=\{j'\in\Gamma: \FB(q_j,27\kappa r_0K_{F1}, 2100r')\cap \FB(q_{j'},27\kappa r_0K_{F1}, 2100r')\neq\emptyset\}.
\]

\smallskip
\smallskip
\nt {\bf Claim 1.} {\it  If $j_1\in  \Gamma\setminus\D(j)$, then $M(\varphi_{t}(q_j),3\kappa r_0|X(\varphi_{t}(q_j))|)\cap M(\varphi_{t_1}(q_{j_1}),3\kappa r_0|X(\varphi_{t_1}(q_{j_1}))|)=\emptyset$ for any $t,t_1\in [0,r']$.}
\begin{proof}
We proceed by contradiction. Suppose that	
\[
M(\varphi_{t}(q_j),3\kappa r_0|X(\varphi_{t}(q_j))|)\cap M(\varphi_{t_1}(q_{j_1}),3\kappa r_0|X(\varphi_{t_1}(q_{j_1}))|)\neq\emptyset
\]
for some $t,t_1\in [0,r']$. Then by \ref{fb.contain} of Lemma~\ref{lem.C1Poincare}, we know that 
\begin{equation}\label{eq.intersect}
  \FB(\varphi_{t}(q_j),9\kappa r_0,9\kappa r_0)\cap \FB(\varphi_{t_1}(q_{j_1}),9\kappa r_0,9\kappa r_0)\neq\emptyset.
\end{equation}
	
On the other hand, since
\[
\FB(\varphi_{t}(q_j),9\kappa r_0,r')\cap \FB(q_j,9\kappa r_0,r')\neq\emptyset,
\]	
by Lemma~\ref{lem.FBsubset} we obtain
\[
\FB(\varphi_{t}(q_j),9\kappa r_0,r')\subset \FB(q_j,27\kappa r_0K_{F1},30r').
\]
Similarly, we also have
\[
\FB(\varphi_{t_1}(q_{j_1}),9\kappa r_0,r')\subset \FB(q_{j_1},27\kappa r_0K_{F1},30r').
\]

Note that $9\kappa r_0<r'$. It follows from (\ref{eq.intersect}) that $j_1\in\D(j)$.
This contradiction proves the claim.	
\end{proof}

A crucial point in the proof is that $\# \D(j)$ has an upper bound independent of $\varepsilon$ and $j$.

\smallskip
\smallskip
\nt {\bf Claim 2.} {\it There exists $I_{\max}$ independent of $\varepsilon$ and $j$ such that $\# \D(j)\le I_{\max}$.}
\begin{proof}
	Assume that $q_j=p_i(\mathbf m)$ for some $i\in \Gamma_0$ and $\mathbf m\in \varepsilon\mathbb Z^{d-1}(r')$. For any $j'\in\D(j)$ (assume $q_{j'}=p_{i'}(\mathbf m')$), according to Lemma~\ref{lem.FBsubset}, (recall that $81\kappa r_0K_{F1}^2<r'$), we have 
	\[
	\FB(q_j,27\kappa r_0K_{F1}, 2100r')\subset \FB(q_{j'},81\kappa r_0K_{F1}^2, 63000r')\subset \FB(p_{i'}, 3r'K_{F1}, 1890000r').
	\]
	Flowing to $N_{p_{i'}}(\rho_0)$, we have $\dist(P_{p_{i'}}(q_j),q_{j'})= \dist(P_{p_{i'}}(P_{q_{j'}}(q_j)),P_{p_{i'}}(q_{j'}))\le 81\kappa r_0K_{F1}^3|X(q_{j'})|\le 90\kappa r_0K_{F1}^3|X(p_{i'})|$, where $P_{p_{i'}}$ and $P_{q_{j'}}$ are the maps given by \ref{fb.derivare} of Lemma~\ref{lem.C1Poincare}. Since $q_{j'}$ is the $\varepsilon$-lattice point, we have
	\[
	\#(\D(j)\cap N_{p_{i'}}(r'|X(p_{i'})|))\le \left( \frac{180\kappa r_0 K_{F1}^3}{\varepsilon} \right)^{d-1}=\left( 1200\kappa \sqrt{d-1}  K_{F1}^4 \right)^{d-1}=:I_1.
	\]
	
	Using Lemma~\ref{lem.FBsubset} again, we have 
	 $\FB(q_j,27\kappa r_0K_{F1}, 2100r')\subset \FB(p_i,3r'K_{F1}, 63000r')$ and \\$\FB(q_{j'},27\kappa r_0K_{F1}, 2100r') \subset \FB(p_{i'},3r'K_{F1}, 63000r')$. Thus 
	 \[
	 \FB(p_i,3r'K_{F1}, 63000r')
	 \cap \FB(p_{i'},3r'K_{F1}, 63000r')\neq \emptyset.
	 \]
	  It follows from Lemma~\ref{lem.locallyfinitecover} that
	 \[
	 \#\{i'\in\Gamma_0: \D(j)\cap N_{p_{i'}}(r'|X(p_{i'})|)\neq\emptyset\}\le I_0.
	 \]
	
	Thus finally we get $\# \D(j)\le I_0I_1=:I_{\max}$.

\end{proof}

Now we determine $\varepsilon$. Take $\varepsilon>0$ such that
\[
 9\kappa r_0<r' \text{, } 81\kappa r_0K_{F1}^2<r' \text{ and }30\kappa r_0 I_{\max}<r'.
\]

\smallskip
\noindent Step 3: construction of sections.
\smallskip

\smallskip
\smallskip
\nt {\bf Claim 3.} {\it  There exists a sequence of real numbers $\{\theta_j \}_{j\in \Gamma}\subset [0,r']$ such that for any $j\in\Gamma$ one has 
\[
  M(o_{j_1},3\kappa r_0|X(o_{j_1})|)\cap M(o_{j},3\kappa r_0|X(o_{j})|)=\emptyset
\]
for any $j_1\in \Gamma$ with $j_1<j$, where $o_j=\varphi_{\theta_j}(q_j)$. }

\begin{proof}
	We will construct the sequence $\{\theta_j \}$ inductively. Let $\theta_1=0$. 
	Assume that for all $j'<j$, a suitable $\theta_{j'}$ has been selected. Then we construct  $\theta_{j}$.
	
For any $j_1\in \D(j)$ with $j_1<j$, defined
\[
T_j(j_1)=\{t\in[0,r']: \varphi_{t}(q_j)\in \FB(o_{j_1},30\kappa r_0,30\kappa r_0) \}.
\]	
It is clear that $T_j(j_1)$ is an interval of length no more than $30\kappa r_0$.
Then by Claim~2 we have that the total length of $\bigcup_{j_1\in\D(j),j_1<j}T_j(j_1)$ is at most $30\kappa r_0 I_{\max}<r'$.  Thus there exists $\theta_j\in [0,r']\setminus \bigcup_{j_1\in\D(j),j_1<j}T_j(j_1)$. We show that $\theta_j$ satisfies the requirement.

Suppose, to the contrary, that there is $j_1<j$ such that  
 $M(o_{j_1},3\kappa r_0|X(o_{j_1})|)\cap M(o_{j},3\kappa r_0|X(o_{j})|)\neq\emptyset$.
Then by Claim 1 we know that $j_1\in \D(j)$. Note that $\dist(o_{j_1},o_j)\le 3\kappa r_0(|X(o_{j_1})|+|X(o_j)|)\le 10 \kappa r_0|X(o_{j_1})|$. Thus $o_j\in M(o_{j_1},10\kappa r_0|X(o_{j_1})|)\subset \FB(o_{j_1},30\kappa r_0, 30\kappa r_0)$. This contradicts the selection of $\theta_j$. 

Thus we can inductively construct $\theta_j$.	
\end{proof}

\smallskip
\noindent Step 4: verification of conclusions.
\smallskip

Now we verify that the given $r_0$, $I_{\max}$ and $\{o_j\}$ satisfy the requirements of the theorem.  \ref{se.sepa} is the conclusion of Claim~3. Furthermore, by Lemma~\ref{lem.FBsubset} we have
\begin{equation}\label{eq.empty}
 \FB(o_{j_1},\kappa r_0,\kappa r_0)|)\cap \FB(o_{j_2},\kappa r_0,\kappa r_0)=\emptyset.
\end{equation}

Take any $r\in[r_0,\kappa r_0]$. Since $\{q_j\}_{j\in\Gamma}\cap N_{p_i}(r'|X(p_i)|)$ is a $\varepsilon$-lattice net in $N_{p_i}(r'|X(p_i)|)$, so we have
\[
N_{p_i}(r'|X(p_i)|)=\bigcup_{q_j\in N_{p_i}(r'|X(p_i)|)} N_{p_i}(q_j,2\varepsilon  \sqrt{d-1}|X(p_i)|),
\]
where $ N_{p_i}(q_j,2\varepsilon  \sqrt{d-1}|X(p_i)|)$ is the closed ball of radius $2\varepsilon \sqrt{d-1}|X(p_i)|$ centered at $q_j$ in $N_{p_i}(\rho_0)$.

According to Lemma~\ref{lem.FBsubset} and the construction of $o_j$, for any $j\in\Gamma$ such that $q_j\in N_{p_i}(r'|X(p_i)|)$, we have
\[
\FB({p_i},r',r')\subset \FB(o_j, 3r'K_{F1},30r').
\]
Then for any $x\in  N_{p_i}(p_j,2\varepsilon \sqrt{d-1}|X(p_i)|)$, 
\[
\dist(P_{o_j}(x),o_j)\le 2\varepsilon K_{F1}\sqrt{d-1}|X(p_i)|\le \frac{20}{9} \varepsilon K_{F1}\sqrt{d-1}|X(o_j)|= r_0|X(o_j)|.
\]
It implies that 
\[
\FB({p_i},r',r')\subset \bigcup_{q_j\in N_{p_i}(r'|X(p_i)|)}\FB(o_j, r_0,30r').
\]

So by Lemma~\ref{lem.locallyfinitecover} we obtain that
\begin{equation}\label{eq.fullcover}
 M\setminus\Sing(X)\subset \bigcup_{i\in \Gamma_0} \FB({p_i},r',r')\subset \bigcup_{j\in\Gamma}\FB(o_j,r_0,30r').
\end{equation}

For any $x\in M\setminus \Sing(X)$, we claim that there exists $t\in[10r',70r']$ such that
 $\varphi_{t}(x)\in  \bigcup_{j\in\Gamma} N_{o_j}(r|X(o_j)|)$. 
 In fact by(\ref{eq.fullcover}), there exist $j\neq j'$ such that $x\in \FB(o_j, r,30r')$
 and $x':=\varphi_{40r'}(x)\in \FB(o_{j'}, r,30r')$. Thus there is $t'\in [-30r',30r']$ such that $\varphi_{t'}(x')\in  N_{o_{j'}}(r|X(o_{j'})|)$. So we get $t:=40r'+t'\in [10r',70r']$ satisfying $\varphi_{t}(x)\in  N_{o_{j'}}(r|X(o_{j'})|)$. Similarly, there exists $t^-\in[-70r',-10r']$ such that
 $\varphi_{t^-}(x)\in  \bigcup_{j\in\Gamma} N_{o_j}(r|X(o_j)|)$. 
 
 Combined with the above fact and (\ref{eq.empty}), we obtain \ref{se.poin} and \ref{se.unif}. In particular, $R_{{\mathbb D}(r)}(x) \le 70r' <\delta$.
 
 Next we assume that $f_{{\mathbb D}(r)}(N_{o_j}(r|X(o_j)|))\cap N_{o_i}(r|X(o_i)|)\neq\emptyset$ or $f_{{\mathbb D}(r)}^{-1}(N_{o_j}(r|X(o_j)|))\cap N_{o_i}(r|X(o_i)|)\neq\emptyset$. Since $R_{{\mathbb D}(r)}(x) \le 70r'$, we have $\FB({o_i},r,70r')\cap \FB({o_j},r,70r')\neq\emptyset$. It follows from Lemma~\ref{lem.FBsubset} that
$\FB(q_i,3rK_{F1},2100r')\cap \FB(q_j,3rK_{F1},2100r')\neq\emptyset $. Thus $i\in\D(j)$. So \ref{se.fini} follows from Claim~2. 

 The proof is finished.
\end{proof}

\section{Induced Poincar\'e systems}\label{sec.linearpoincare}

As outlined in the introduction, we build on the coding method from \cite{BCL,LMN}. The key distinction lies in handling infinite sections instead of finite ones, with the additional requirement that nearly all involved objects must be scaled. Based on the locally finite Poincar\'e sections  constructed in \S 3, we establish a foundational framework for return maps. This framework serves as the basis for the coding procedure in Part~\ref{part.coding}. 

Assume that $X$ is a $C^{1+\beta}$ vector field on $M$, $0<\beta\le 1$. To enable direct comparisons with \cite{BCL} and \cite{LMN}, we will use compatible notations.

Fix some $0<\rho<{10^{-20}K_{F}^{-1}}\cdot\min\{\rho_{F},\rho_M\}$, where $\rho_{F}\le\rho_{F1}$, $K_{F}\ge K_{F1}$ are given by Lemma~\ref{lem.holderPoincare}. For $\delta=\rho$ and $\kappa=10 K_F$, there exist $r_0>0$ and at most countable many $o_j\in M\setminus\Sing(X)$ satisfying the conclusion of Theorem~\ref{thm.poincaresection}.

Denote $D_j=N_{o_j}(r_0|X(o_j)|)$, $\wh D_j=N_{o_j}(\kappa r_0|X(o_j)|)$, $\mathbb D=\bigcup_{j\in\Gamma} D_j$ and  $\wh{\mathbb D}=\bigcup_{j\in\Gamma} \wh D_j$. We denote the Poincar\'e return map $f_{\mathbb D}$ of $\mathbb D$ simply as $f$.

\subsection{Holonomy maps}

In the previous  section, we constructed a  locally finite uniform Poincar\'e section. The primary issue with this system is that boundary effects cause the return map to be discontinuous. However, by appropriately increasing the section, the corresponding holonomy map can still be defined.

Assume that $x\in D_{i}$ and $f(x)\in D_{j}$. Then according to Lemma~\ref{lem.C1Poincare} and Theorem~\ref{thm.poincaresection}, the Poincar\'e map $P_{o_i,o_j}$ is well defined on $N_{o_i}(x,r_0|X(o_i)|)$, where $N_{o_i}(x,r_0|X(o_i)|)$ is the closed ball of radius $r_0|X(o_i)|$ centered at $x$ in  $\wh D_i$. Denote the holonomy map at $x$ as
 \[g^{+}_x:= {P}_{o_i,o_j}|_{N_{o_i}(x,r_0|X(o_i)|)}.\] 
 
 It follows from Lemma~\ref{lem.C1Poincare} that $\|dg^{+}_x\|\le K_{F1}$ and  $g^{+}_x(N_{o_i}(x,r_0|X(o_i)|))\subset N_{o_j}(f(x),r_0K_{F1 }|X(o_i)|)\subset \wh D_j$. Thus  $g^{+}_x$ is a diffeomorphism from
 $N_{o_i}(x,r_0|X(o_i)|)$ to its image on $\wh D_j$.

 For any point $x\in D_j$, denote by $\wt{x}=\exp_{o_j}^{-1}(x)\in \N_{o_j}(r_0)$ the lift of $x$, and by 
\[
\wt{g}^{+}_x:=\exp_{o_j}^{-1}\circ g^{+}_x\circ \exp_{o_i}
\]
the lift map of $g^{+}_x$.
Note that $\wt{g}^{+}_x(\wt y)=\wt{{g}^{+}_x(y)}$.

We also define $\wt x^*=\frac{\wt x}{|X(o_i)|}$ and
\[
\wt g^{+,*}_x:=\frac{1}{|X(o_j)|}\circ \wt{g}^{+}_x\circ |X(o_i)|.
\]

Obviously, $\wt{g}^{+}_x$ is well-defined on $\N_{o_i}(\wt{x},r_0|X(o_i)|)$ and $\wt g^{+,*}_x$ is well-defined on $\N_{o_i}(\wt x^*,r_0)$. Since $\wt g^{+,*}_x=\P_{o_i,o_j}|_{\N_{o_i}(\wt x^*,r_0)}$, by Lemma~\ref{lem.holderPoincare} we have that for any $v_1,v_2\in \N_{o_i}(\wt x^*,r_0)$
	\begin{equation}\label{eq.gstar}
  \|d_{v_1}\wt g^{+,*}_x-d_{v_2}\wt g^{+,*}_x\|\le K_F |v_1-v_2|^\beta.
\end{equation}

Similarly, we may define  $g^{-}_x$, $\wt g^{-}_x$ and $\wt g^{-,*}_x$.

\subsection{Induced linear Poincar\'e flow}\label{subsec.inducePion}

During the coding process in Part~\ref{part.coding}, we require a globally defined linear flow adapted to the return map. Since the linear Poincar\'e flow is generally not identical to $d g^{+}_x$, we need to define a new linear flow.

\subsubsection{Construction of the induced linear Poincar\'e flow}

For any $x\in M$ and any unit vector $v_x\in  T_xM$, denote by $\N_{v_x}$ the orthogonal complement of $\langle v_x\rangle$ in $T_x M$, where $\langle v_x\rangle$ is the $1$-dimensional subspace spanned by $v_x$. 
Define $N_{v_x}:=\exp_x(\N_{v_x}(\rho_M))$, where $\N_{v_x}(\rho_M):=\{v\in \N_{v_x}:|v|\leq \rho_M\}$. 
Let $Z_{v_x}$ be the continuous unit normal vector field of $N_{v_x}$ which contains $v_x$.

By using the parallel translation along the normal direction, $Z_{v_x}$ can be extended to a unit vector field over $M(x,\rho_M)$. We still denote it by $Z_{v_x}$.
By the compactness of $M$, $\|\nabla Z_{v_x}\|$ is bounded by a constant independent of the choice of $x$ and $v_x$. 

Take a $C^{\infty}$ bump function defined on the unit ball $h: \bR^d(1)\to [0,1]$ such that 
$h(w)=1$ for any $w\in \bR^d(1/3)$ and $h(w)=0$ for any $w\in \bR^d(1)\setminus \bR^d(2/3)$.
For any $j\in \Gamma$, choose a linear isometry $\iota_j: T_{o_j} M \to \bR^d$ and denote $Z_j=Z_{u_j}$, where $u_j:=\frac{X(o_j)}{|X(o_j)|}$. For any $x\in M(o_{j},3\kappa r_0|X(o_{j})|)$, denote
\[
h_j(x):=h\left(\frac{\iota_j(\exp_{o_j}^{-1}(x))}{3\kappa r_0|X(o_{j})|} \right).
\]
Then $h_j$ is a bump function defined on $M(o_{j},3\kappa r_0|X(o_{j})|)$.

According to Theorem~\ref{thm.poincaresection}~\ref{se.sepa},  $M(o_{j},3\kappa r_0|X(o_{j})|)\cap M(o_{i},3\kappa r_0|X(o_{i})|)=\emptyset$ for any $j\neq i$. 
We can thus define a vector field on $M$ as follows: 
\[
Y(x)=\left\{
\begin{array}{lcl}
h_j(x)|X(o_j)|\cdot Z_j(x)+(1-h_j(x))\cdot X(x), &&\text{if $x\in M(o_{j},3\kappa r_0|X(o_{j})|)$ for some $j\in\Gamma$;}\\
X(x),&&\text{else.}
\end{array}\right.
\]

By the construction, it holds that
\begin{itemize}
\item $\Sing(Y)=\Sing(X)$;
\item for any $x\in \wh{D}_j$, $Y(x)$ is a normal vector of $\wh{D}_j$ and $|Y(x)|=|X(o_j)|$;
\item $Y$ is a $C^{1}$ vector field on $M\setminus\Sing(X)$ and there is $K_Y>0$ such that $\|\nabla Y\|<K_Y$ on $M\setminus \Sing(X)$.
\end{itemize}

Since $\|\nabla Z_j\|$ is uniformly bounded, reducing $\rho$ if necessary, we may assume that for any $x\in M\setminus \Sing(X)$, the angle between $X(x)$ and $Y(x)$ is uniformly bounded by a small enough constant. Then we may further assume that $\frac{|Y(x)|}{|X(x)|}\in (\frac{5}{6},\frac{6}{5})$ for any $x\in M\setminus \Sing(X)$.

For any  $x\in M\setminus \Sing(X)$, denote by $\H_x$ the orthogonal complement of $\langle Y(x)\rangle$ in $T_x M$. Let $\H:=\biguplus_{x\in M\setminus \Sing(X)}\H_x$. Similar to the definition of the linear Poincar\'e flow, we define the {\it induced linear Poincar\'e flow} $\Phi:\H\to\H$ as
\[
\Phi_t(v):=\proj_{\H}\circ d\varphi_t(v), 
\]
where $x\in M\setminus\Sing(X)$, $v\in \H_x$ and  $\proj_{\mathcal{H}}: TM|_{M\setminus\Sing(X)}\to\mathcal{H}$ is the projection to $\H$ parallel to $\langle X\rangle$.

It is clear that 
\begin{equation}\label{rel.Pp}
\Phi_t=\proj_{\mathcal{H}}\circ \psi_t\circ \proj_{\mathcal{N}},
\end{equation}
for any $t\in \bR$.

We also define the scaled induced linear Poincar\'e flow. Slightly different from previous versions, to facilitate later calculations, we scale $\Phi$ by the length of the vector field $Y$. The {\it scaled induced linear Poincar\'e flow} is defined as
\begin{equation}\label{def.sf}
\Phi^*_t(v)=\frac{|Y(x)|}{|Y(\varphi_t(x))|}\cdot\Phi_t(v),
\end{equation}
where $x\in M\setminus\Sing(X)$,  $v\in\mathcal{H}_x$ and $t\in\mathbb{R}$.

\subsubsection{H\"older continuity of $\Phi^*$}

\begin{pro}\label{hpsi}
Let $X$ be a $C^{1+\beta}$ vector field  on $M$, $0<\beta\le 1$. Then 
\begin{enumerate}
\item \label{Phi.rel} $d_yg^{+}_{x}=\Phi_{\tau_x(y)}$, where $x\in D_i$, $y\in N_{o_i}(x,r_0|X(o_i)|)$ and $\tau_x(y)$ is the smallest positive number such that $\varphi_{\tau_x(y)}(y)=g^{+}_{x}(y)$;
\item\label{Phi.hol} there exist ${\rho}_{F3}>0$ and ${K}_{F3}> 1$ such that for any $x,y\in M\setminus\Sing(X)$ with $\dist(x,y)\le {\rho}_{F3}|X(x)|$, any unit vectors $v\in \mathcal{H}_x$, $w\in \mathcal{H}_y$ and any $|t|\leq 1$,
\[
\left|\Phi^*_t(v)-T_{\varphi_t(y),\varphi_t(x)}\circ\Phi^*_t(w)\right|\leq {K}_{F3}\left(\left(\frac{\dist(x,y)}{|X(x)|}\right)^{\beta}+|v-T_{y,x}(w)|\right),
\]
where $T_{y,x}$ is the  parallel translation along the geodesic from $y$ to $x$.
\end{enumerate}
\end{pro}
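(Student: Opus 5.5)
For item (1), I will compare the derivative of the holonomy with $d\varphi_t$ directly. Fix $x\in D_i$ and $y\in N_{o_i}(x,r_0|X(o_i)|)$. Then $g^+_x(z)=\varphi_{\tau_x(z)}(z)$ for $z$ near $y$ in $\wh D_i$. Differentiating at $z=y$ along $v\in T_y\wh D_i$ gives
\[
d_yg^+_x(v)=d\varphi_{\tau_x(y)}(v)+\big(d\tau_x(v)\big)X(g^+_x(y)).
\]
By construction of $Y$, for every point of $\wh D_i$ the vector $Y$ is normal to $\wh D_i$. Hence $T_y\wh D_i=\H_y$, and likewise $T_{g^+_x(y)}\wh D_j=\H_{g^+_x(y)}$. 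The left-hand side lies in $\H_{g^+_x(y)}$ and differs from $d\varphi_{\tau_x(y)}(v)$ by a multiple of $X$. It is therefore exactly the projection of $d\varphi_{\tau_x(y)}(v)$ to $\H$ parallel to $\langle X\rangle$, i.e.\ $\Phi_{\tau_x(y)}(v)$. The only thing to check is that this projection is well defined, i.e.\ that $X\notin\H$; this follows from the small angle between $X$ and $Y$.

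For item (2), I will reduce to known H\"older estimates for the scaled linear Poincar\'e flow via \eqref{rel.Pp}. Write
\[
\Phi^*_t=\frac{|Y(x)|}{|Y(\varphi_t(x))|}\cdot\frac{|X(\varphi_t(x))|}{|X(x)|}\,\proj_{\H}\circ\psi^*_t\circ\proj_{\N}.
\]
Each factor is then estimated separately.

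\emph{The scalar prefactor.} The ratio $|Y|/|X|$ lies in $(5/6,6/5)$. For $|t|\le1$ and $\dist(x,y)\le\rho_{F3}|X(x)|$, the points $\varphi_t(x),\varphi_t(y)$ lie in a common scaled flow box by Lemma~\ref{lem.C1Poincare}. The ratios $|X(\varphi_t(x))|/|X(x)|$ are bounded and Lipschitz in the scaled distance $\dist(x,y)/|X(x)|$, since $\|\nabla X\|$ is bounded and $|X|$ is comparable on the box.

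\emph{The projections.} The maps $\proj_{\N}$ and $\proj_\H$ depend on the unit directions $X/|X|$ and $Y/|Y|$. Their variation between $x$ and $y$ is bounded by $\|\nabla X\|\dist(x,y)/|X(x)|$, and for $Y$ by $K_Y\dist(x,y)/|X(x)|$. So they are Lipschitz in the scaled distance, which is dominated by the $\beta$-power since the scaled distance is at most $\rho_{F3}<1$.

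\emph{The core factor $\psi^*_t$.} This needs the H\"older estimate
\[
|\psi^*_t(v)-T\psi^*_t(w)|\le K\big((\dist(x,y)/|X(x)|)^\beta+|v-Tw|\big).
\]
I will obtain it by writing $\psi^*_t$ at $x$ as the derivative at the origin of the scaled sectional Poincar\'e map $\P^*_{x,\varphi_t(x)}$. Its value at $y$ is compared through $\P^*$ based at $x$ evaluated at the lifted point $\exp_x^{-1}(y)$ scaled by $|X(x)|$, which is a vector of size $\lesssim\dist(x,y)/|X(x)|$. Lemma~\ref{lem.holderPoincare} then gives the $\beta$-H\"older bound on $d\P^*$. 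Extra correction terms come from changing base point and sections ($N_x$ versus $N_y$, and the exponential charts); these are $C^1$ with uniformly controlled norms in scaled coordinates by Lemma~\ref{lem.C1Poincare}, so they contribute Lipschitz terms. For $|t|\le1$ I may split $t$ into boundedly many pieces of length at most $\rho_{F1}$ if needed, so that Lemma~\ref{lem.holderPoincare} applies to each piece, and chain the estimates.

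The main obstacle is this last comparison. The tangent-map relation between $\psi^*_t$ at the nearby point $y$ and $d\P^*_{x,\varphi_t(x)}$ at the point corresponding to $y$ involves the Poincar\'e time and the change of sections, and all bounds must be uniform near singularities. I would handle it by working entirely in the scaled coordinates of the flow box $F_x$, where the flow is uniformly $C^{1+\beta}$ by Lemmas~\ref{lem.C1Poincare} and~\ref{lem.holderPoincare}; this is the approach of \cite{LLL2024,GY}, and I would cite or adapt their estimate. Combining the three factors with triangle inequalities and the boundedness of every factor yields the constant $K_{F3}$.
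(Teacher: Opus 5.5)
Your item (1) is the paper's argument: differentiate $g^+_x(z)=\varphi_{\tau_x(z)}(z)$, observe that $d_yg^+_x(v)\in\H_{g^+_x(y)}$ because $Y$ is normal to the sections, and project along $X$. In item (2), your treatment of the speed ratios and of $\proj_\N$, $\proj_\H$ is also what the paper does: they are Lipschitz in the scaled distance $\dist(x,y)/|X(x)|$, hence dominated by its $\beta$-th power.

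The gap is the core factor. You leave the scaled H\"older estimate for $\psi^*_t$ as ``the main obstacle'', to be cited or adapted, and the sketch you give does not work as written.
\begin{itemize}
\item In general $y\notin N_x$, so $\exp_x^{-1}(y)$ is not in $\N_x$, the domain of $\P^*_{x,\varphi_t(x)}$. You would first have to slide $y$ along the flow to $N_x$.
\item After sliding, $d\P^*_{x,\varphi_t(x)}$ at the resulting nonzero point is the derivative of the holonomy between the fixed sections $N_x$ and $N_{\varphi_t(x)}$. Its domain and target are tangent planes of those sections, not $\N$ at the corresponding points. Its Poincar\'e time is not $t$, and its image point is not $\varphi_t(y)$.
\item Controlling these discrepancies requires comparing derivatives (linear maps) of the correction maps at different base points. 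The $C^1$ bounds of Lemma~\ref{lem.C1Poincare} only control the correction maps themselves, not how their derivatives vary. You would have to exploit that these are derivatives of $\exp$ and of $\varphi_s$ over scaled-short times, and you have not done so.
\end{itemize}
As it stands, the crux of item (2) is unproved.

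The detour is also unnecessary. Peel off one more layer: by definition $\Phi_t=\proj_\H\circ d\varphi_t$, or equivalently $\psi^*_t=\frac{|X(x)|}{|X(\varphi_t(x))|}\proj_\N\circ d\varphi_t$.
\begin{itemize}
\item Since $X\in C^{1+\beta}$ and $M$ is compact, $d\varphi_t$ for $|t|\le 1$ is uniformly $\beta$-H\"older on $TM$ in the \emph{unscaled} sense: $|d\varphi_t(v)-T_{\varphi_t(y),\varphi_t(x)}d\varphi_t(w)|\le K_1(\dist(x,y)^\beta+|v-T_{y,x}w|)$.
\item Since $|X(x)|\le\sup|X|$, we have $\dist(x,y)^\beta\le(\sup|X|)^\beta(\dist(x,y)/|X(x)|)^\beta$.
\item All non-uniformity near $\Sing(X)$ then sits in the projection along $X$ onto $\H$ and in the factor $|Y(x)|/|Y(\varphi_t(x))|$. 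You already control these via the bounds on $\|\nabla X\|$ and $\|\nabla Y\|$, together with the fact that $\varphi_t(x)$ and $\varphi_t(y)$ stay at comparable scaled distance for $|t|\le1$.
\end{itemize}
This is exactly the paper's proof. It needs no Poincar\'e maps, no subdivision of the time interval, and no appeal to Lemma~\ref{lem.holderPoincare}.
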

\begin{proof}
Since $g^{+}_x(y)=\varphi_{\tau_x(y)}(y)$ for any $y\in N_{o_i}(x,r_0|X(o_i)|)$, we have
\[
d_yg^{+}_x=d_y\varphi_{\tau_x(y)}+ d_y\tau_{x}\cdot \frac{d}{dt}\Big|_{t=\tau_x(y)}\varphi_t(y)=d_y\varphi_{\tau_x(y)}+ d_y\tau_{x}\cdot X(g^{+}_x(y)).
\]
Note that $d_yg^{+}_x(v)\in \H_{g^{+}_x(y)}$ for any $v\in \H_y$. So we have 
\[
d_yg^{+}_x(v)=\proj_{\H_{g^{+}_x(y)}}(d_y\varphi_{\tau_x(y)}(v)+ d_y\tau_{x}\cdot X(g^{+}_x(y)))=\proj_{\H_{g^{+}_x(y)}}(d_y\varphi_{\tau_x(y)}(v))=\Phi_{\tau_x(y)}(v).
\]

We now consider Item \ref{Phi.hol}. Denote by $d_{\rm Sas}(\cdot,\cdot)$ the {\it Sasaki metric} on $TM$, which is induced by the Riemannian metric on $M$ (see, e.g. \cite{BMW}). For any  $v'\in T_x M$ and $w'\in T_y M$ close enough, we have
\[
\frac{1}{2}(\dist(x,y)+|v'-T_{y,x}w'|)\le d_{\rm Sas}(v',w')\le 2(\dist(x,y)+|v'-T_{y,x}w'|).
\]

Since $d\varphi_t$ ($|t|\le 1$) is $\beta$-H\"older continuous,  there exist $\rho_1>0$ and $K_1> 1$ such that
\begin{equation}\label{eq.holderdphi}
  |d\varphi_t(v)|\le K_1 \text{ and } \left|d\varphi_t(v)-T_{\varphi_t(y),\varphi_t(x)}\circ d\varphi_t(w)\right|\leq {K}_{1}\left(({\dist(x,y)})^{\beta}+|v-T_{y,x}(w)|\right)
\end{equation}
for any $x,y\in M$ with $\dist(x,y)\le {\rho}_{1}$, any unit vectors $v\in T_x M$, $w\in T_y M$  and any $|t|\leq 1$. 

Recall that $\|\nabla Y\|<K_Y$ on $M\setminus\Sing(X)$. Thus there is $\rho_2>0$ and $K_2>1$ such that whenever $x,y\in M\setminus\Sing(X)$ with $\dist(x,y)\le \rho_2$,
\[
|Y(x)-T_{y,x}Y(y)|\le K_2\cdot\dist(x,y).
\]
It implies that there exist $\rho_3>0$ and $K_3>1$ such that 
\begin{equation}\label{eq.Yunit}
  \left|1- \frac{|Y(y)|}{|Y(x)|}\right|\le K_3\frac{\dist(x,y)}{|Y(x)|} \text{ and } \left|\frac{Y(x)}{|Y(x)|}-T_{y,x}\left(\frac{Y(y)}{|Y(y)|}\right)\right|\le K_3\frac{\dist(x,y)}{|Y(x)|}
\end{equation}
for any $x,y\in M$ with $\dist(x,y)\le {\rho}_{3}|Y(x)|$ (see \cite[Lemma~3.1]{LLL2024}). Since $\|\nabla X\|$ is uniformly bounded, increasing $K_3$ if necessary, we may also assume 
\begin{equation}\label{eq.Xunit}
  \left|\frac{X(x)}{|X(x)|}-T_{y,x}\left(\frac{X(y)}{|X(y)|}\right)\right|\le K_3\frac{\dist(x,y)}{|X(x)|}.
\end{equation}

Recall that $\proj_{\mathcal{H}}: TM|_{M\setminus\Sing(X)}\to\mathcal{H}$ is the projection to $\H$ parallel to $\langle X\rangle$, where $\H_x$ is the orthogonal complement of $\langle Y(x)\rangle$ in $T_x M$.  It follows from (\ref{eq.Yunit}) and (\ref{eq.Xunit}) that there is $K_4>1$ such that
\begin{equation}\label{eq.holderproj}
  |\proj_{\H}\circ T_{y,x}(w')-T_{y,x}\circ\proj_{\H}(w')|\le K_4\frac{\dist(x,y)}{|X(x)|}
\end{equation}
for any $x,y\in M$ with $\dist(x,y)\le {\rho}_{3}|Y(x)|$ and $w'\in T_y M$ with $|w'|\le K_1$.

Combining (\ref{eq.holderdphi}) and (\ref{eq.holderproj}),  there is $\rho_4>0$ such that
\[
\begin{aligned}
\left|\Phi_t(v)-T_{\varphi_t(y),\varphi_t(x)}\circ\Phi_t(w)\right|
&=\left|\proj_{\H}\circ d\varphi_t(v)-T_{\varphi_t(y),\varphi_t(x)}\circ\proj_{\H}\circ d\varphi_t(w)\right|\\
&\le \left|\proj_{\H}\circ d\varphi_t(v)-\proj_{\H}\circ T_{\varphi_t(y),\varphi_t(x)}\circ d\varphi_t(w)\right|\\
&\ \ \ \ \! +\left|\proj_{\H}\circ T_{\varphi_t(y),\varphi_t(x)}\circ d\varphi_t(w)-T_{\varphi_t(y),\varphi_t(x)}\circ\proj_{\H}\circ d\varphi_t(w)\right| \\
&\le \|\proj_{\H}\|\cdot {K}_{1}\left(({\dist(x,y)})^{\beta}+|v-T_{y,x}(w)|\right)+  K_4\frac{\dist(x,y)}{|X(x)|}  
\end{aligned}
\]
for any $x,y\in M\setminus\Sing(X)$ with $\dist(x,y)\le {\rho}_{4}|X(x)|$, any unit vectors $v\in \mathcal{H}_x$, $w\in \mathcal{H}_y$ and any $|t|\leq 1$.

On the other hand, it follows from (\ref{eq.holderdphi}) and (\ref{eq.Yunit}) that there are $\rho_5>0$ and $K_5>1$ such that
\[
\left|\frac{|Y(x)|}{|Y(\varphi_t(x))|}-\frac{|Y(y)|}{|Y(\varphi_t(y))|}\right|\le \left| \frac{Y(x)}{Y(\varphi_t(x))} \right|\cdot\left|1-\frac{Y(y)}{Y(x)}\right|+\left|\frac{Y(y)}{Y(\varphi_t(y))}\right|\cdot \left|1-\frac{Y(\varphi_t(y))}{Y(\varphi_t(x))}\right|\le K_5\frac{\dist(x,y)}{|Y(x)|}
\]
for any $x,y\in M$ with $\dist(x,y)\le {\rho}_{5}|Y(x)|$.

Let $\rho_{F3}=\frac{1}{2}\min\{1,\rho_3,\rho_4,\rho_5\}$. Then  for any $x,y\in M\setminus\Sing(X)$ with $\dist(x,y)\le {\rho}_{F3}|X(x)|$, any unit vectors $v\in \mathcal{H}_x$, $w\in \mathcal{H}_y$ and any $|t|\leq 1$,
\[
\begin{aligned}
&\ \ \ \ \!\left|\Phi^*_t(v)-T_{\varphi_t(y),\varphi_t(x)}\circ\Phi^*_t(w)\right|\\
&=\left|\frac{|Y(x)|}{|Y(\varphi_t(x))|}\Phi_t(v)-\frac{|Y(y)|}{|Y(\varphi_t(y))|}T_{\varphi_t(y),\varphi_t(x)}\circ\Phi_t(w)\right|\\
&\le \left|\frac{|Y(x)|}{|Y(\varphi_t(x))|}\right|\cdot \left|\Phi_t(v)-T_{\varphi_t(y),\varphi_t(x)}\circ\Phi_t(w)\right|+\left|\frac{|Y(x)|}{|Y(\varphi_t(x))|}-\frac{|Y(y)|}{|Y(\varphi_t(y))|}\right| \cdot \left|T_{\varphi_t(y),\varphi_t(x)}\circ\Phi_t(w)\right|\\
&\le 4 K_1 \left(\|\proj_{\H}\|\cdot {K}_{1}\left(({\dist(x,y)})^{\beta}+|v-T_{y,x}(w)|\right)+  K_4\frac{\dist(x,y)}{|X(x)|} \right)+K_5\frac{\dist(x,y)}{|Y(x)|}\cdot \|\proj_{\H}\|\cdot K_1 \\
&\le {K}_{F3}\left(\left(\frac{\dist(x,y)}{|X(x)|}\right)^{\beta}+|v-T_{y,x}(w)|\right) 
\end{aligned}
\]
for some constant ${K}_{F3}$ independent of the choice of $x$, $y$, $v$ and $w$.

\end{proof}

\part{Symbolic dynamics for singular flows}\label{part.coding}

\section{Preliminaries}\label{sec.prelim}

In this section, we recall some definitions and related results.

\subsection{Topological Markov shifts and Topological Markov flows}\label{subsec.TMSTMF}

\subsubsection{Topological Markov shifts}

Let $\mathscr{G}$ be a directed graph with vertex set $\mathscr{V}$ and edge set $\mathscr{E}=\{u\to v: u,v\in\mathscr{V}\}$. If every vertex has positive ingoing degree and outgoing degree then say $\mathscr{G}$ is {\it proper}. Throughout the paper, $\mathscr{G}$ is always assumed to be proper and have at most countable vertices.

If every vertex has finite degree then the graph $\mathscr{G}$ is called {\it locally finite}. A {\it path of length} $n$ from $u$ to $v$ is an $n$-tuple $(u_1,\dots,u_n)\in \mathscr{V}^{n}$ such that $u_1=u$ and $u_n=v$ and $u_i\to u_{i+1}$ for all $1\leq i\leq n-1$. 

The graph $\mathscr{G}$ is called {\it connected} if for every $u,v\in\mathscr{V}$, there is a path from $u$ to $v$. In this case, the {\it period} of $\mathscr{G}$ is the integer $gcd\{n:u_1\to\dots\to u_n \text{ with $u_1=u_n=a$ and $a\in\mathscr{V}$}\}$ which is independent of the choice of $a\in\mathscr{V}$.

A {\it maximal connected component} $\mathscr{G}'$ of $\mathscr{G}$ is a connected sub-graph of $\mathscr{G}$ such that any other connected subgraph of $\mathscr{G}$ does not contains $\mathscr{G}'$. 

Define the {\it symbolic space} as
\[
\Sigma=\Sigma(\mathscr{G})=\{\ul{x}=\{x_n\}_{n\in\mathbb{Z}}\in \mathscr{V}^{\mathbb{Z}}: x_i\to x_{i+1}\text{ for all $i\in\mathbb{Z}$}\},
\]
endowed with the metric $d(\ul{x},\ul{y})=\exp[-\min\{|n|\in\mathbb{Z}:x_n\neq y_n\}]$.
Denote by $\sigma:\Sigma\to\Sigma$ the standard {\it left shift} on $\Sigma$. The pair $(\Sigma,\sigma)$ is called a {\it topological Markov shift} (TMS). 
We say that a TMS is {\it locally compact} if $\mathscr{V}$ is locally finite. If $\mathscr{V}$ is finite then $\Sigma$ is compact.

A path $(u_0,\dots,u_n)$ is also called an {\it admissible word} in $\Sigma$. And a {\it cylinder} is formed as
 \[
 {}_m[u_0,\dots,u_n]:=\{\ul{x}\in\Sigma: x_{i+m}=u_i, 0\leq i\leq n\},
 \]
where $(u_0,\dots,u_n)$ is an  admissible word.
 The {\it regular set} of $\Sigma$ is 
\[
\Sigma^{\#}=\{\ul{x}\in\Sigma:\exists u,v\in \mathscr{V}, x_n=u \text{ for infinitely many $n\in\mathbb{Z}^{+}$ and } x_n=v\text{ for infinitely many $n\in\mathbb{Z}^{-}$}\}.
\]
By the Poincar\'e Recurrence theorem, $\mu(\Sigma^{\#})=1$ for any $\sigma$-invariant probability measure $\mu$.

A TMS $(\Sigma,\sigma)$ is called {\it irreducible} if the graph $\mathscr{G}$ is connected. A subshift  $\Sigma'\subset\Sigma$ generated by a proper, maximal connected component $\mathscr{G}'$ of $\mathscr{G}$ is called an {\it irreducible component} of $\Sigma$. In this case $(\Sigma',\sigma|_{\Sigma'})$ is an irreducible TMS. 

The {\it period} of an irreducible TMS is the period of $\mathscr{G}$. An irreducible TMS with period $1$ is called {\it irreducible and aperiodic}. 
As a dynamical system, we have the following  relations: 
\begin{itemize}
  \item $\sigma:\Sigma\to\Sigma$ is topologically transitive if and only if $\Sigma$ is irreducible;
  \item $\sigma:\Sigma\to\Sigma$ is topologically mixing if and only if $\Sigma$ is irreducible and aperiodic.
\end{itemize}

 The following spectral decomposition of an irreducible TMS is well known (see, e.g. \cite{BCS25}).

\begin{lem}\label{lem.spectdecomp}
An irreducible TMS $(\Sigma,\sigma)$ with period $p>1$ can be decomposed into $\Sigma=\biguplus_{i=0}^{p-1}\Sigma_i$ such that each $\Sigma_i$ is closed and $\sigma(\Sigma_i)=\Sigma_{i+1}$, where $\Sigma_{p}=\Sigma_0$. Moreover, for each $0\leq i\leq p-1$, $\sigma^p:\Sigma_i\to\Sigma_i$ is topologically conjugate to a topologically mixing TMS.
\end{lem}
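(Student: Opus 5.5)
The plan is to define the pieces through a period class function on vertices and then pass to the $p$-th power graph. Fix a vertex $a\in\mathscr V$. For any $v\in\mathscr V$, irreducibility gives paths from $a$ to $v$. If a path from $a$ to $v$ has $n$ edges and a path from $v$ to $a$ has $m$ edges, their concatenation is a loop at $a$, so $n+m\equiv 0 \pmod p$. Hence all paths from $a$ to $v$ have the same length mod $p$, because they all pair with one fixed return path. Define $c(v)\in\bZ/p\bZ$ to be this common length. Then every edge $u\to v$ satisfies $c(v)=c(u)+1$: append the edge to a path from $a$ to $u$. Set
\[
\Sigma_i=\{\ul x\in\Sigma: c(x_0)=i\}.
\]
Each $\Sigma_i$ is a union of the cylinders ${}_0[v]$ with $c(v)=i$. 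It is therefore open, and since the $\Sigma_i$ partition $\Sigma$, each is also closed. The relation $c(x_1)=c(x_0)+1$ gives $\sigma(\Sigma_i)\subset\Sigma_{i+1}$. For equality, take $\ul y\in\Sigma_{i+1}$; its preimage $\sigma^{-1}\ul y$ lies in $\Sigma_i$ because $c(y_{-1})=c(y_0)-1$. All classes are nonempty, since $c$ increases by one along edges and $\mathscr G$ is proper.

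Next, fix $i$ and build the graph $\mathscr G_i$. Its vertices are the admissible words $(v_0,\dots,v_{p-1})$ with $c(v_0)=i$. There is an edge $(v_0,\dots,v_{p-1})\to(w_0,\dots,w_{p-1})$ whenever $v_{p-1}\to w_0$. The map
\[
\ul x\mapsto\big((x_{kp},\dots,x_{kp+p-1})\big)_{k\in\bZ}
\]
is a homeomorphism from $\Sigma_i$ onto $\Sigma(\mathscr G_i)$ that conjugates $\sigma^p$ to the shift. Continuity in both directions follows from the cylinder metrics. Properness of $\mathscr G_i$ is inherited from $\mathscr G$. I would pass to the recoded graph rather than use the graph on $c^{-1}(i)$ with $p$-step transitions, because that graph can have multiple paths between the same pair of vertices, which a simple TMS cannot encode.

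The main obstacle is showing that $\mathscr G_i$ is irreducible and aperiodic. The key fact is the number-theoretic one. Let $L_a=\{n: \text{there is a loop of length } n \text{ at } a\}$. This set is closed under addition and has $\gcd L_a=p$, so it contains every sufficiently large multiple of $p$. Given two vertices $u,v$ with $c(u)=c(v)$, choose paths $u\to a$ and $a\to v$. Inserting loops at $a$ shows that there are paths from $u$ to $v$ of every sufficiently large length $\equiv 0 \pmod p$.

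Now take two words $A=(v_0,\dots,v_{p-1})$ and $B=(w_0,\dots,w_{p-1})$ in $\mathscr G_i$. Since $c(v_{p-1})+1=c(w_0)$, there are paths from $v_{p-1}$ to $w_0$ of every large length $\equiv 1 \pmod p$. Such a path has the form $v_{p-1}\to\cdots\to w_0$ with $kp+1$ edges. Its intermediate vertices, read in blocks of $p$, give a path from $A$ to $B$ in $\mathscr G_i$ of length $k+1$, and this works for every sufficiently large $k$. So $\mathscr G_i$ is connected, and it has paths of all large lengths between any two vertices, in particular loops of coprime lengths. Hence its period is $1$, and the conjugacy above shows that $\sigma^p|_{\Sigma_i}$ is topologically conjugate to a topologically mixing TMS.
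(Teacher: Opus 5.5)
Your proof is correct and complete. The paper does not prove this lemma: it states it as well known, cites \cite{BCS25}, and closes with \verb|\qed|. So there is no argument in the paper to compare against, and yours is the standard proof the citation stands in for. Its three steps are:
\begin{itemize}
\item cyclic classes $c(v)\in\mathbb{Z}/p\mathbb{Z}$, defined by path lengths from a base vertex $a$;
\item the $p$-block recoding, which makes $\sigma^p|_{\Sigma_i}$ an honest vertex shift;
\item aperiodicity of $\mathscr{G}_i$, because the additive semigroup $L_a$ of loop lengths contains every large multiple of $p$.
\end{itemize}
You are right not to use the $p$-step graph on $c^{-1}(i)$: the map to its vertex shift, $(x_n)_{n\in\mathbb{Z}}\mapsto(x_{kp})_{k\in\mathbb{Z}}$, forgets the intermediate symbols and is not injective in general.

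Two steps you leave implicit deserve a phrase each.
\begin{itemize}
\item $L_a\neq\emptyset$: by properness $a$ has an out-neighbour, and connectivity gives a path from it back to $a$.
\item Paths $v_{p-1}\to w_0$ exist of every large length $\equiv 1 \pmod p$: route through $a$ as in your previous paragraph, since the lengths are then $\equiv c(w_0)-c(v_{p-1})=1$.
\end{itemize}

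Finally, you count path length in edges, which is the intended meaning of the period. The paper's definition, read literally, is off by one: a path of length $n$ is an $n$-tuple of vertices, and the period is the gcd of such $n$ over closed paths at $a$. Under that reading every irreducible graph would have period $1$, so your convention is the right one to use.
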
\qed

Denote by $\mathbb{P}(\Sigma)$ the set of $\sigma$-invariant probability measures, and by $\mathbb{P}_e(\Sigma)$ the set of ergodic $\sigma$-invariant probability measures. Both of them are endowed with the standard weak-$*$ topology.

The {\it Gurevich entropy} of $(\Sigma,\sigma)$ is defined as
\[
h_{\rm TOP}(\sigma):=\sup\{h_{\mu}(\sigma):\mu\in \mathbb{P}(\Sigma)\}.
\]

A function $\vartheta:\Sigma\to\mathbb{R}$ is called
\begin{itemize}
\item {\it locally $\beta_0$-H\"older continuous} if there is $C>0$ such that $|\vartheta(\ul{u})-\vartheta(\ul{v})|\leq C\cdot d(\ul u,\ul v)^{\beta_0}$ for all $\ul{u},\ul{v}\in\Sigma$ with $u_0=v_0$.
\item {\it $\beta_0$-H\"older continuous} if it is bounded and locally $\beta_0$-H\"older continuous.
\end{itemize}

For a H\"older continuous potential $\vartheta$, the {\it top pressure} corresponding to $\vartheta$ is defined as 
\[
P_{\rm TOP}(\sigma,\vartheta):=\sup\{P_{\mu}(\sigma,\vartheta):\mu\in \mathbb{P}(\Sigma)\},
\]
where $P_{\mu}(\sigma,\vartheta)=h_{\mu}(\sigma)+\int_{\Sigma} \vartheta d\mu$ is the {\it metric pressure} of the $\sigma$-invariant probability measure $\mu$. 

If the supreme is achieved by some probability measure $\mu$, namely, $P_{\mu}(\sigma,\vartheta)=P_{\rm TOP}(\sigma,\vartheta)$, then $\mu$ is called an {\it equilibrium state} for $\vartheta$.
In particular, if $\vartheta\equiv 0$ then the top pressure is indeed the Gurevich entropy $h_{\rm TOP}(\sigma)$ of this TMS. In this case, the corresponding equilibrium state is called a {\it measure of maximal entropy}. 
See \cite{Sarig1999} for more properties on the thermodynamic formalism of TMS.

\subsubsection{Topological Markov flows}\label{sssec.tmf} 

Let $(\Sigma,\sigma)$ be a TMS.
A {\it roof function} $r$ is a  bounded positive continuous function defined on 
$\Sigma$. For $ n \ge 1 $, the $ n $-th {\it Birkhoff sum} of $ r $ is defined as
$r_n = r + r \circ \sigma + \cdots + r \circ \sigma^{n-1}$. We also define $r_0=0$ and $r_{-n}:=-r_n \circ \sigma^{-n} $ for any $n>0$.

Define 
\[
\Sigma_r=\{(\ul{x},t): \ul{x}\in\Sigma\text{ and } t\in[0,r(\ul{x}))\}
\]
and the flow $\sigma_r$ on $\Sigma_r$ acts as
\[
\sigma_{r,t}(\ul{x},s)=(\sigma^n(\ul{x}),s+t-r_n(\ul{x}))
\]
where $n$ is the unique integer satisfying $r_n(\ul{x})\leq s+t<r_{n+1}(\ul{x})$.

 The {\it Bowen-Walters metric} $ d_r(\cdot, \cdot) $ endows $ \Sigma_r $ such that $ \sigma_r $ constitutes a continuous flow \cite{BW}.  
The pair $(\Sigma_r,\sigma_r)$ is called a {\it topological Markov flow} (TMF). The {\it  regular set} of $\Sigma_r$ is 
$$
\Sigma_r^{\#} = \{(\ul{v}, t) \in \Sigma_r : \ul{v} \in \Sigma^\#\}.
$$

We say that $\Sigma'_r\subset\Sigma_r $ is an {\it irreducible component}, if there is an  irreducible component $ \Sigma' $ of $ \Sigma $ such that  $\Sigma'_r=\{ (\ul{v}, t) \in \Sigma_r: \ul{v}\in \Sigma'\}$.
 A TMF $(\Sigma_r,\sigma_r)$ is topologically transitive if and only if the base TMS $(\Sigma,\sigma)$ is topologically transitive. However, even though $\sigma$ is topologically mixing, it is possible that $\sigma_r$ is not. By Lemma~\ref{lem.spectdecomp}, every TMF with topologically transitive base can be recoded as one with topologically mixing base. We just need to replace $\Sigma$ by $\Sigma_0$ and $r$ by $r_p$.
 
 Denote by $\mathbb{P}(\Sigma_r)$ and $\mathbb{P}_e(\Sigma_r)$ the set of all $\sigma_r$-invariant probability measures and ergodic probability measures, respectively. Both of them are endowed with the standard weak-$*$ topology.
 
Each $\mu_{_\Sigma}\in\mathbb{P}(\Sigma)$  induces a $\sigma_r$-invariant measure $\mu$ on $\Sigma_r$ defined as
\begin{equation}\label{eq.measproj}
\mu=\frac{\int_{_\Sigma}\int_0^{r(\ul{x})}\delta_{(\ul{x},t)}dtd\mu_{_\Sigma}(\ul{x}) }{\int_{_\Sigma} rd\mu_{_\Sigma}},
\end{equation}
where $\delta_{(\ul{x},t)}$ denotes the Dirac measure at $(\ul{x},t)$.
It is shown that if the roof function satisfies $\inf r>0$, then the above equation gives a bijection between the space of $\sigma$-invariant measures and the space of $\sigma_r$-invariant measures. 
In addition, $\mu$ is ergodic if and only if $\mu_{_\Sigma}$ is ergodic. We say that $\mu_{_\Sigma}$ is the {\it projection} of $\mu$.

The metric entropy of $\mu\in\mathbb{P}(\Sigma_r)$ relates to that of its projection $\mu_{_\Sigma}\in\mathbb{P}(\Sigma)$ via the Abramov's formula \cite{Abr59}:
\begin{equation}\label{eq.Abramov}
  h_{\mu}(\sigma_r)=\frac{h_{\mu_{_\Sigma}}(\sigma)}{\int_{_\Sigma} r d\mu_{_\Sigma}}.
\end{equation}

A function $\vartheta :\Sigma_r\to\mathbb{R}$ is called {\it locally $\beta_0$-H\"older continuous} if there are $\rho>0$ and $C>0$ such that whenever $d_r((\ul{u},s),(\ul{v},t))<\rho$, $|\vartheta(\ul{u},s)-\vartheta(\ul{v},t)|\leq C\cdot d_r((\ul{u},s),(\ul{v},t))^{\beta_0}$. We say that $\vartheta$ is {\it $\beta_0$-H\"older continuous} if it is locally $\beta_0$-H\"older continuous and bounded.

Given a H\"older continuous potential $\vartheta$ and a $\sigma_r$-invariant probability measure $\mu$, the {\it metric pressure} of $\mu$ is defined as $P_{\mu}(\sigma_r,\vartheta)=h_{\mu}(\sigma_r)+\int_{\Sigma_r} \vartheta d\mu$, where $h_{\mu}(\sigma_r)$ is defined as the metric entropy of the time-1 map of $\sigma_r$. The {\it top pressure}  is the supreme $P_{\rm TOP}(\sigma_r,\vartheta)=\sup\{P_{\mu}(\sigma_r,\vartheta):\mu\in \mathbb{P}(\Sigma_r)\}
$. If $P_{\mu}(\sigma_r,\vartheta)=P_{\rm TOP}(\sigma_r,\vartheta)$ then $\mu$ is called an {\it equilibrium state} for $\vartheta$. Moreover, if $\vartheta\equiv0$ then the top pressure is  the {\it Gurevich entropy} $h_{\rm TOP}(\sigma_r):=\sup\{h_\mu(\sigma_r):\mu\in \mathbb{P}(\Sigma_r)\}$ of the TMF and the corresponding equilibrium state (if exists) is called a {\it measure of maximal entropy}.

For any H\"older continuous function $\vartheta:\Sigma_r\to\mathbb{R}$, define 
\begin{equation}\label{eq.potential}
\Delta_\vartheta(\ul{u}):=\int_0^{r(\ul{u})}\vartheta(\ul{u},s)ds.
\end{equation} 
Then $\Delta_\vartheta:\Sigma\to\mathbb{R}$ is also a H\"older continuous function \cite{BS00}.
We have the following  result.
\begin{pro}\cite{BI2006, IV2021}\label{pro.relationmme}
Let $(\Sigma_r,\sigma_r)$ be a TMF and $\vartheta$ be a H\"older continuous potential on $\Sigma_r$ with finite top pressure of $\vartheta$. Then an invariant measure $\mu$ on $\Sigma_r$ is an equilibrium state for  $\vartheta$ if and only if its projection $\mu_{_\Sigma}$ on $\Sigma$ is an equilibrium state for the potential $\Delta_\vartheta-P_{\rm TOP}(\sigma_r,\vartheta)\cdot r$.   
\end{pro}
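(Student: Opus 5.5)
The plan is to pass between the flow and the base shift through the bijection \eqref{eq.measproj}. For $\mu\in\mathbb P(\Sigma_r)$ with projection $\mu_{_\Sigma}$, write $\bar r:=\int_\Sigma r\,d\mu_{_\Sigma}$. Let $P:=P_{\rm TOP}(\sigma_r,\vartheta)$, which is finite by hypothesis. The first step is to express the flow pressure in terms of base quantities. By Abramov's formula \eqref{eq.Abramov} we have $h_\mu(\sigma_r)=h_{\mu_{_\Sigma}}(\sigma)/\bar r$. From the definition \eqref{eq.measproj} and Fubini, $\int_{\Sigma_r}\vartheta\,d\mu=\frac{1}{\bar r}\int_\Sigma\int_0^{r(\ul x)}\vartheta(\ul x,s)\,ds\,d\mu_{_\Sigma}=\frac{1}{\bar r}\int_\Sigma\Delta_\vartheta\,d\mu_{_\Sigma}$. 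Combining the two gives the identity
\[
P_\mu(\sigma_r,\vartheta)-P=\frac{1}{\bar r}\Big(h_{\mu_{_\Sigma}}(\sigma)+\int_\Sigma(\Delta_\vartheta-P\,r)\,d\mu_{_\Sigma}\Big)=\frac{1}{\bar r}P_{\mu_{_\Sigma}}(\sigma,\Delta_\vartheta-P\,r).
\]
Since $0<\inf r\le\bar r\le\sup r<\infty$, the factor $1/\bar r$ is a positive finite number. Both $\Delta_\vartheta$ and $r$ are bounded, so the potential $\Delta_\vartheta-Pr$ is bounded. It is also H\"older by \cite{BS00}, at least once $r$ is assumed H\"older as in the coding setting.

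The second step is to show that $P_{\rm TOP}(\sigma,\Delta_\vartheta-Pr)=0$. The identity shows that the sign of $P_{\mu_{_\Sigma}}(\sigma,\Delta_\vartheta-Pr)$ equals the sign of $P_\mu(\sigma_r,\vartheta)-P$. The latter is $\le0$ for every $\mu$, so the supremum over $\mathbb P(\Sigma)$ is $\le 0$. For the reverse inequality, take $\mu_n$ with $P_{\mu_n}(\sigma_r,\vartheta)\to P$. The identity, together with the uniform bound $\bar r_n\le\sup r$, shows that $P_{\mu_{n,\Sigma}}(\sigma,\Delta_\vartheta-Pr)=\bar r_n\big(P_{\mu_n}(\sigma_r,\vartheta)-P\big)\to 0$, so the supremum equals $0$. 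Entropies may be infinite in general, so I would first check that all quantities are finite. When $P<\infty$, every $h_\mu(\sigma_r)$ is bounded by $P+\sup|\vartheta|$, and then $h_{\mu_{_\Sigma}}(\sigma)\le(\sup r)\,h_\mu(\sigma_r)<\infty$. This rules out any $\infty-\infty$ issue.

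The conclusion then follows from the identity. The measure $\mu$ is an equilibrium state for $\vartheta$ exactly when $P_\mu(\sigma_r,\vartheta)-P=0$. Because $1/\bar r$ is positive and finite, this holds if and only if $P_{\mu_{_\Sigma}}(\sigma,\Delta_\vartheta-Pr)=0=P_{\rm TOP}(\sigma,\Delta_\vartheta-Pr)$. That is precisely the statement that $\mu_{_\Sigma}$ is an equilibrium state for $\Delta_\vartheta-P\,r$. Since \eqref{eq.measproj} is a bijection, this gives both directions of the equivalence.

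I expect the main obstacle to be the bookkeeping in the second step: verifying the finiteness of entropies and pressures so that the sign argument is legitimate. I would settle it using the bounds $\inf r>0$ and $\sup r<\infty$ and the boundedness of $\vartheta$, as above. Everything else is the Abramov and Fubini computation.
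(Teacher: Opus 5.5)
Your proposal is correct and follows the standard argument behind the cited results \cite{BI2006, IV2021}; the paper gives no proof of its own, but the Abramov--Fubini identity you derive is exactly the relation \eqref{eq.pressure} it writes down in the proof of Proposition~\ref{pro.relCT}, and your sign argument then yields the equivalence. Your step showing $P_{\rm TOP}(\sigma,\Delta_\vartheta-P\,r)=0$ is where $\sup r<\infty$ is used: with an unbounded roof, $\bar r_n$ need not stay bounded and that step could fail, so it is good that you made the dependence explicit.
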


\subsection{Lyapunov exponents and hyperbolic measures}\label{subsec.Lyapunov}

Let $X$ be a $C^1$ vector field on $M$.
Recall that $\mathbb P(X)$ denotes the set of $X$-invariant Borel probability measures, $\mathbb P_e(X)$  the set of ergodic $X$-invariant Borel probability measures, and a measure $\mu\in\mathbb P(X)$ is called regular if $\mu(\Sing(X))=0$.

According to the  Oseledec Theorem \cite{Ose}, there exists an invariant set 
$O(X)\subset M \setminus\Sing(X)$ that has full $\mu$-measure for every regular 
 $X$-invariant measure $\mu$; for any $x\in O(X)$, there is an invariant splitting for the tangent flow $d\varphi$:
\[
T_x M=E^1_x\oplus \cdots\oplus E^{k(x)}_x,
\] 
along with distinct numbers $\chi_1(x)<\cdots<\chi_{k(x)}(x)$ and positive integers $d_1(x),\dots,d_{k(x)}(x)$
such that $\dim E^j_x=d_j(x)$ and
\[
\lim_{t\to \pm\infty}\frac{1}{t}\log|d\varphi_t(v)|=\chi_j(x)
\]
holds for all $v\in E^j_x\setminus\{0\}$, $j=1,\ldots,k(x)$.

The numbers $\chi_1(x),\ldots,\chi_{k(x)}(x)$ are called the {\it Lyapunov exponents} of $x$, and $d_j(x)$ is called the {\it multiplicity} of $\lambda_j(x)$.
 All  functions $k$, $\chi_j$ and $d_j$ are  measurable in $x$ and invariant alone the orbit of $x$. Moreover, if $\mu$ is ergodic, then  $k$, $\chi_j$ and $d_j$ are constants $\mu$-almost everywhere.

By the Poincar\'e Recurrence Theorem, we may suppose that all points in $O(X)$ are recurrent. Thus,
\begin{equation}\label{eq.zeroexp}
\lim_{t\to \pm\infty}\frac{1}{t}\log \|d\varphi_t|_{\langle X(x)\rangle}\|=0
\end{equation}
for every $x\in O(X)$, where $\langle X(x)\rangle$ denote the one-dimensional linear space spanned by $X(x)$. We may also consider the  Lyapunov exponents for the linear Poincar\'e flow $\psi$. For any $x\in O(X)$, the Lyapunov exponents of $x$ for $\psi$ (counting multiplicity) coincide with those of $d\varphi$ except the zero exponent corresponding to the flow direction.

Furthermore, since
\[
\psi^*_t(v)=\frac{|X(x)|}{|X(\varphi_t(x))|}\psi_t(v)=\frac{\psi_t(v)}{\|d\varphi_t|_{\langle X(x)\rangle}\|},
\]
it follows from (\ref{eq.zeroexp}) that $\lim_{t\to \pm\infty}\frac{1}{t}\log|\psi^*_t(v)|=\lim_{t\to \pm\infty}\frac{1}{t}\log|\psi_t(v)|$ for any $v\in \N_x\setminus\{0\}$, $x\in O(X)$. Thus $\psi$ and $\psi^*$ have the same Lyapunov exponents and Oseledets splitting.

A regular $X$-invariant measure $\mu$   is called  {\it hyperbolic} if, for $\mu$-a.e. $x$, 
all Lyapunov exponents of $x$ for the linear Poincar\'e flow $\psi$ are non-zero. This is equivalent to requiring that the zero Lyapunov exponent for the tangent flow $d\varphi$ has multiplicity exactly one, which corresponds to the flow direction. For any $\chi>0$, a regular $X$-invariant measure is called {\it $\chi$-hyperbolic} if, for $\mu$-a.e. $x$, all Lyapunov exponents of $x$ for $\psi$ lie outside the interval $[-\chi,\chi]$. 

About the  Lyapunov exponents for the induced linear Poincar\'e flows $\Phi$ and the scaled induced linear Poincar\'e flow $\Phi^*$, we have the following conclusion.

\begin{lem}\label{lem.LyapexpforPhi}
For any $x\in O(X)$ and $w\in \H_x$,  we have
\[
\lim_{t\to \pm\infty}\frac{1}{t}\log|\Phi^*_t(w)|=\lim_{t\to \pm\infty}\frac{1}{t}\log|\Phi_t(w)|=\lim_{t\to \pm\infty}\frac{1}{t}\log|\psi_t(v)|,
\]
where $v=\proj_{\N_x}(w)$.
\end{lem}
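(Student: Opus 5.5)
We will compare $\Phi_t(w)$ with $\psi_t(v)$ through the relation \eqref{rel.Pp}, $\Phi_t=\proj_{\H}\circ\psi_t\circ\proj_{\N}$, and then handle the scaling separately. The plan has three steps.

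First, show that for $w\in\H_x$ we have $\psi_t(\proj_{\N}w)=\proj_{\N}(\Phi_t(w))$. This holds because $\proj_{\N}$ and $\proj_{\H}$ are both projections parallel to $\langle X\rangle$, so $\proj_{\N}\circ\proj_{\H}=\proj_{\N}$ on $TM$. Consequently, on each fibre $\proj_{\H}|_{\N_y}:\N_y\to\H_y$ and $\proj_{\N}|_{\H_y}:\H_y\to\N_y$ are mutually inverse linear isomorphisms.

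Second, bound the norms of these isomorphisms uniformly. The norm of $\proj_{\H}|_{\N_y}$ is $1/\cos\angle(X(y),Y(y))$, and the angle between $X$ and $Y$ is uniformly small by construction. Likewise $\|\proj_{\N}|_{\H_y}\|\le 1/\cos\angle(X(y),Y(y))$. Hence there is $C\ge 1$, independent of $y\in M\setminus\Sing(X)$, such that
\[
C^{-1}|\psi_t(v)|\le|\Phi_t(w)|\le C|\psi_t(v)|
\]
for all $t$. Taking $\frac1t\log$ and letting $t\to\pm\infty$ gives equality of the two limits, in the sense that if one exists then so does the other. For $x\in O(X)$, Oseledets guarantees the existence of $\lim\frac1t\log|\psi_t(v)|$ whenever $v\neq0$, and $v=\proj_{\N}w\neq0$ whenever $w\neq 0$.

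Third, pass from $\Phi$ to $\Phi^*$. By \eqref{def.sf}, $|\Phi^*_t(w)|=\frac{|Y(x)|}{|Y(\varphi_t(x))|}|\Phi_t(w)|$. Since $\frac{|Y|}{|X|}\in(\frac56,\frac65)$, this factor is comparable, up to a bounded constant, to $\frac{|X(x)|}{|X(\varphi_t(x))|}=\|d\varphi_t|_{\langle X(x)\rangle}\|^{-1}$. By \eqref{eq.zeroexp}, which holds on $O(X)$ since points there are recurrent, $\frac1t\log$ of this factor tends to $0$. Therefore the exponent of $\Phi^*$ equals that of $\Phi$.

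The only genuine point requiring care is the uniformity of the constant $C$ in the second step near singularities. This is supplied by the construction of $Y$, which keeps the angle between $X$ and $Y$ uniformly small on all of $M\setminus\Sing(X)$, together with the comparability $\frac{|Y|}{|X|}\in(\frac56,\frac65)$. With these two facts, the rest of the argument is direct.
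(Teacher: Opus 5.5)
Your proposal is correct and follows the paper's proof. The paper uses the uniform boundedness of $\|\proj_{\N}|_{\H}\|$ and $\|\proj_{\H}|_{\N}\|$ to equate the exponents of $\Phi_t(w)$ and $\psi_t(v)$. It then uses the comparability $|Y|/|X|\approx 1$ together with \eqref{eq.zeroexp} to remove the scaling factor in $\Phi^*_t$. You do the same, and additionally spell out what the paper leaves implicit: the identity $\proj_{\N}\circ\proj_{\H}=\proj_{\N}$, which makes the two fibrewise projections mutually inverse, and the explicit bound $1/\cos\angle(X,Y)$.
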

\begin{proof}
Since $\|\proj_{\N}|_{_{\H}}\|$ and $\|\proj_{\H}|_{_{\N}}\|$ are uniformly  bounded on $M\setminus\Sing(X)$, we have 
\[
\lim_{t\to \pm\infty}\frac{1}{t}\log|\Phi_t(w)|=\lim_{t\to \pm\infty}\frac{1}{t}\log|\psi_t(v)|.
\]

By 
\[
\Phi^*_t(w)=\frac{|Y(x)|}{|Y(\varphi_t(x))|}\Phi_t(w) \text{ and } \frac{|Y(y)|}{|X(y)|}\in (\frac{4}{5},\frac{5}{4}),
\]
we obtain that $\lim_{t\to \pm\infty}\frac{1}{t}\log|\Phi^*_t(w)|=\lim_{t\to \pm\infty}\frac{1}{t}\log|\Phi_t(w)|$.
\end{proof}

\subsection{Pesin blocks and invariant manifolds}\label{subsec.Pesinblock}

By (\ref{eq.zeroexp}), the tangent flow $d\varphi$ typically lacks hyperbolicity along the flow direction.  It is therefore more natural to define Pesin blocks for flows using the linear  Poincar\'e flow. To achieve uniform estimates in singular flows, we employ the scaled linear  Poincar\'e flow.

\begin{dfn}\label{def.pesin}
Given $\chi>0$, $0<\varepsilon\ll \chi$ and $\ell\ge 1$, a {\rm $(\chi,\varepsilon)$-Pesin block} $\Pes_\chi(\ell,\varepsilon)=\Pes^X_\chi(\ell,\varepsilon)$ is the set of all points $x\in M\setminus \Sing(X)$ such that there exists a splitting $\mathcal{N}_x=\N^s_x\oplus \N^u_x$ satisfying
\begin{itemize}
\item $\|\psi_n^*|_{\psi_m^*(\N^s_{x})}\|\le \ell e^{-\chi n}e^{\varepsilon|m|}$ for any $m, n\in\mathbb{Z}$, $n\ge 0$;
\item $\|\psi_{-n}^*|_{\psi_m^*(\N^u_{x})}\|\le \ell e^{-\chi n}e^{\varepsilon|m|}$ for any $m, n\in\mathbb{Z}$, $n\ge 0$.
\end{itemize}
\end{dfn}

The subspaces  $\N^s_x$ and $\N^u_x$ are continuous and invariant on $\Pes_\chi(\ell,\varepsilon)$ \cite{BP07}. Here the invariance means that if $x, \varphi_n(x)\in \Pes_\chi(\ell,\varepsilon)$, then $\N^{s/u}_x=\N^{s/u}_{\varphi_n(x)}$.\footnote{In fact, the {\it extended linear Poincar\'e flow} introduced in \cite{LGW} allows us to lift Pesin blocks to compact {\it extended Pesin blocks} on one-dimensional Grassmannian manifold. See \cite[Lemma~4.4]{LLL2024} for the details.}
For any $0<\chi'\le \chi$, $\ell'\ge\ell$ and $\varepsilon'\ge \varepsilon$, we have $\Pes_{\chi}(\ell,\varepsilon)\subset \Pes_{\chi'}(\ell',\varepsilon')$.

Typically, the Pesin block is not invariant for the flow. Moreover, since singularities may exist within its closure, the Pesin block is generally non-compact. However, applying the same argument as in \cite[Lemma~2.1]{BCS25}, it follows that $\ol{\Pes_\chi(\ell,\varepsilon)}\setminus\Pes_\chi(\ell,\varepsilon)\subset\Sing(X)$.

Define 
\[
\Pes_\chi:=\bigcap\limits_{\varepsilon>0}\bigcup\limits_{\ell>0}\Pes_\chi(\ell,\varepsilon) \text{\ and \ } \Pes:=\bigcup\limits_{\chi>0} \Pes_\chi.
\]
By using the same proof as in \cite{Pes76}, we  have $\mu(\Pes)=1$ for any regular hyperbolic $X$-invariant measure $\mu$, and $\nu(\Pes_\chi)=1$ for   any regular $\chi$-hyperbolic $X$-invariant measure $\nu$.

Denote
\[
\Pes':=\bigcup\limits_{\chi>0}\bigcap\limits_{\varepsilon>0}\bigcup\limits_{\ell>0}\{x\in M\setminus\Sing(X): \text{there are periodic points } y_n\in  \Pes_\chi(\ell,\varepsilon) \text{ s.t. } y_n\to x\}.
\]

If $X$ is a $C^{1+\beta}$ vector field ($\beta>0$) on $M$, then by using the closing lemma for singular flows \cite{LLL2024} and the same argument as in \cite{Kat80}, we may get  $\mu(\Pes')=1$ for any regular hyperbolic $X$-invariant measure $\mu$. 
According to the Pesin's stable manifold theorem, every $x\in \Pes$ admits a {\it stable manifold} 
\[
W^s(x):=\{y\in M: \exists \text{ an increasing homeomorphism } h: \mathbb{R}\to\mathbb{R} \text{ satisfying } \lim_{t\to+\infty}\dist(\varphi_t(x),\varphi_{h(t)}(y))=0\},
\]
and a {\it unstable manifold}
\[
W^u(x)=\{y\in M: \exists \text{ an increasing homeomorphism } h: \mathbb{R}\to\mathbb{R} \text{ satisfying } \lim_{t\to-\infty}\dist(\varphi_t(x),\varphi_{h(t)}(y))=0\}.
\]

\subsection{Homoclinic classes and Borel homoclinic classes}\label{subsec.BHC}

Assume that $X$ is a $C^{1+\beta}$ vector field ($\beta>0$) on $M$. Let $x,y\in \Pes$.
The  {\it transverse intersection} of $W^u(x)$ and $W^s(y)$ is the set
\[
W^u(x)\pitchfork W^s(y):=\{z\in W^u(x)\cap W^s(y):T_zM= T_{z}W^u(x) + T_{z}W^s(y)\}.
\]

We say that $x$ and $y$ are {\it homoclinically related}, denote by $x\sim y$, if $W^u(x)\pitchfork W^s(y)\neq\emptyset$ and $W^u(y)\pitchfork W^s(x)\neq\emptyset$. The homoclinic relation is an equivalent relation on $\Pes'$ (see \cite[Proposition~2.6]{BCS25}).  Each equivalence class of homoclinic relation $\sim$ is called a {\it Borel homoclinic class}. This notion is first introduced by Buzzi-Crovisier-Sarig for diffeomorphisms in \cite{BCS25}.

The following  properties of Borel homoclinic classes is proved in \cite{BCS25} for diffeomorphisms. The proof of the flow case is similar.
\begin{pro}\label{pro.proBHC}{\rm \cite[Proposition 2.8]{BCS25}}
The following statements hold:
\begin{enumerate}
\item The set of Borel homoclinic classes is a finite or countable partition of $\Pes'$ into $X$-invariant Borel sets.
\item Each Borel homoclinic class contains a dense set of hyperbolic periodic points.
\item Any regular hyperbolic ergodic measure is carried by a Borel homoclinic class.
\item Any measure carried by a Borel homoclinic class is regular and hyperbolic.
\end{enumerate}
\end{pro}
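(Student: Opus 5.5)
Following the scheme of \cite[Proposition~2.8]{BCS25}, I would prove the four items separately, replacing every use of the diffeomorphism's derivative by the scaled linear Poincar\'e flow $\psi^*$ and every use of compactness of Pesin blocks by the fact that $\ol{\Pes_\chi(\ell,\varepsilon)}\setminus\Pes_\chi(\ell,\varepsilon)\subset\Sing(X)$.

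\emph{Items (1) and (2).} By \cite[Proposition~2.6]{BCS25}, adapted to flows, $\sim$ is an equivalence relation on $\Pes'$. Flow invariance of each class holds because $W^{s/u}(\varphi_t(x))=W^{s/u}(x)$ as sets, which follows from the reparametrized definitions of the invariant manifolds. Borel measurability comes from exhausting $\Pes'$ by the sets $\Pes_\chi(\ell,\varepsilon)$ and using the continuous dependence of local invariant manifolds on these blocks.

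For (2) and for countability, I would use the closing lemma for singular flows \cite{LLL2024}. Every $x\in\Pes'$ is a limit of periodic points $y_n$ lying in a common block $\Pes_\chi(\ell,\varepsilon)$. On such a block, scaled local stable and unstable manifolds have uniform size $\gtrsim|X(y)|$ and vary continuously. Hence for $y_n$ close to $x$ in the scaled sense, $W^u_{loc}(x)\pitchfork W^s_{loc}(y_n)\ne\emptyset$ and conversely, so $y_n\sim x$. This shows the class of $x$ contains a dense set of hyperbolic periodic points. Since there are only countably many periodic orbits up to equivalence, obtained by taking a countable dense subset of each block, the partition is at most countable.

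\emph{Item (3).} Let $\mu$ be regular, hyperbolic and ergodic. Then $\mu(\Pes')=1$, as noted above via \cite{Kat80}. Choose $\chi,\varepsilon,\ell$ with $\mu(\Pes_\chi(\ell,\varepsilon))>0$, and a compact subset $K$ of positive measure avoiding a neighborhood of $\Sing(X)$, on which $|X|$ is bounded below and the invariant manifolds have uniform size. By the same local product argument, any two points of $K$ that are sufficiently close are homoclinically related. Cover $K$ by finitely many small pieces. By ergodicity and recurrence, a.e.\ orbit visits every piece of positive measure, so all pieces lie in a single class. Flow invariance then gives full measure to that class.

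\emph{Item (4).} For a measure $\nu$ carried by a class, regularity holds because $\Pes'\subset M\setminus\Sing(X)$. For hyperbolicity, each point of $\Pes'$ lies in some $\Pes_\chi$, where exponents of $\psi^*$ are bounded away from zero; by the earlier remarks these coincide with the $\psi$ exponents, and $\chi$ may vary measurably.

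The main obstacle is the uniform local product structure near singularities. The closeness needed for transverse intersection must be measured in the scaled metric $\dist(x,y)/|X(x)|$. I would therefore use the scaled Poincar\'e maps (Lemma~\ref{lem.holderPoincare}) to produce local invariant manifolds of size proportional to $|X(x)|$ inside normal sections. Once these are in hand, the remainder of the argument is a routine transcription of \cite{BCS25}.
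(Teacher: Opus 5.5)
Your proposal is correct and follows essentially the same route as the paper, which simply refers to \cite[Proposition~2.8]{BCS25} and says the flow case is analogous, with the adaptations you describe: scaled local invariant manifolds on Pesin blocks, the property $\ol{\Pes_\chi(\ell,\varepsilon)}\setminus\Pes_\chi(\ell,\varepsilon)\subset\Sing(X)$, and Katok's argument giving $\mu(\Pes')=1$. Two simplifications are available: countability follows directly from item (2) together with the countability of hyperbolic periodic orbits, and item (3) follows from item (1), since ergodicity forces one of the countably many invariant Borel classes partitioning the full-measure set $\Pes'$ to have full measure.
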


Let $\cal O$ be a periodic orbit. Denote by $\BHC(\cal O)$ the Borel homoclinic class containing $\cal O$. The classical {\it homoclinic class} $\HC(\cal O)$ of $\cal O$ defined by Newhouse \cite{New72} (which is called {\it topological homoclinic classes} in \cite{BCS22}) is the set
\[
\HC({\cal O}):=\ol{\{ x\in {\cal O}': {\cal O}' \text{ is a hyperbolic periodic orbit s.t. }{\cal O}'\sim {\cal O} \}}.
\]

Generally, the homoclinic class and the Borel homoclinic class of $\cal O$ do not coincide.
We have $\HC({\cal O})=\ol{\BHC(\cal O)}$.

The homoclinic relation can also be extended to regular hyperbolic measures. 
Precisely, let $\mu$, $\nu$ be two regular hyperbolic $X$-invariant measures. We say that $\mu$ is {\it homoclinically related} to $\nu$ (denoted by $\mu \sim \nu$) if $x\sim y$ for $\mu$-a.e. $x$ and $\nu$-a.e. $y$. It follows from Proposition \ref{pro.proBHC} that any two measures carried by a common Borel homoclinic class are homoclinically related, and any two homoclinically related measures are carried by the same Borel homoclinic class.

\subsection{Equilibrium states of invariant sets}

Let $B$ be a measurable invariant set for the flow. Denote by  $\mathbb P(X|_B)$ (resp. $\mathbb P_e(X|_B)$)  be the set of $X$-invariant (resp. $X$-ergodic) probability measures $\mu$ such that $\mu(B)=1$.
 
  The {\it top entropy} for $X$ on $B$ is defined as
 \[
 h_{\rm TOP}(X|_B)=\sup\{h_\mu(X): \mu\in \mathbb P(X|_B)\}.
 \]
 According to the variational principle (see, e.g. \cite{Wal}), when $B$ is compact, this top entropy equals the topological entropy $h_{\rm top}(X|_B)$ for $X$ on $B$. A {\it measure of maximum entropy (MME)} for $X$  on $B$ is an $X$-invariant probability measure $\mu$ supported on $B$ such that $h_\mu(X)= h_{\rm TOP}(X|_B)$. 
 
 Let $\vartheta$ be a potential  on $M$. The {\it top pressure} of $\vartheta$ for $X$ on $B$ is defined as
 \[
 P_{\rm TOP}(X|_B,\vartheta)=\sup\{P_{\mu}(X,\vartheta): \mu\in \mathbb P(X|_B)\}.
 \]
Similarly, according to the variational principle, if $B$ is compact and $\vartheta$ is continuous, then the top pressure is equal to the topological pressure $P_{\rm top}(X|_B,\vartheta)$ for $X$ on $B$. An {\it equilibrium state} of $\vartheta$ for $X$  on $B$ is a measure $\mu\in\mathbb P(X|_B)$ such that $P_{\mu}(X,\vartheta)= P_{\rm TOP}(X|_B,\vartheta)$.

\subsection{Strong positive recurrence}

Let $(\Sigma, \sigma)$ be a TMS. The strongly positive recurrent (SPR) property admits multiple equivalent definitions. Here we present the extended version proposed in \cite{BCS25}.

 We say that  $(\Sigma, \sigma)$ is {\it  SPR for a H\"older continuous potential $\vartheta$}, if it has  finite Gurevich entropy and there exist a finite union of cylinders $\mathbf C$ and numbers $P_0<P_{\rm TOP}(\sigma,\vartheta)$, $\tau>0$ such that  $\nu(\mathbf C)>\tau$ holds for every ergodic $X$-invariant probability measure $\nu$ with $P_{\nu}(\sigma,\vartheta)>P_0$.
In particular, we say $(\Sigma, \sigma)$ is {\it SPR} if it is SPR for $\vartheta\equiv 0$.

Let $(\Sigma_r,\sigma_r)$ be a TMF generated by a TMS $(\Sigma,\sigma)$ and $r$ be a roof function. Analogous to the definition of SPR property for TMS,
we say that  $(\Sigma_r,\sigma_r)$ is {\it  SPR for a H\"older continuous potential $\vartheta$} if the Gurevich entropy is finite, and there exists a finite union  of cylinders $\mathbf C$ on $\Sigma$ together with numbers $P_0<P_{\rm TOP}(\sigma_r,\vartheta)$, $\tau>0$ such that 
\[
\nu (\{(\ul{x},t)\in (\Sigma_r,\sigma_r):\ul{x}\in \mathbf C,\ t\in[0,r(\ul{x}))\})>\tau
\]
holds for every ergodic $X$-invariant probability measure $\nu$ with $P_{\nu}(\sigma_r,\vartheta)>P_0$.
We also say $(\Sigma_r, \sigma_r)$ is {\it SPR} if it is SPR for $\vartheta\equiv 0$.

The SPR property of TMF $(\Sigma_r,\sigma_r)$ is closely linked with that of the corresponding TMS $(\Sigma,\sigma)$. 

\begin{pro} \label{pro.relCT}
A TMF $(\Sigma_r,\sigma_r)$ is SPR for a  H\"older continuous potential $\vartheta$ if and only if $(\Sigma,\sigma)$ is SPR for the  H\"older continuous potential $\Delta_\vartheta -P_{\rm TOP}(\sigma_r,\vartheta)\cdot r$.
\end{pro}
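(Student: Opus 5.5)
The plan is to transfer measures and pressures between the flow and its base through the bijection \eqref{eq.measproj} and Abramov's formula \eqref{eq.Abramov}. Write $P^*:=P_{\rm TOP}(\sigma_r,\vartheta)$ and $\Theta:=\Delta_\vartheta-P^*\cdot r$. For $\mu\in\mathbb P(\Sigma_r)$ with projection $\mu_{_\Sigma}$, Fubini and Abramov give
\[
P_{\mu}(\sigma_r,\vartheta)=\frac{h_{\mu_{_\Sigma}}(\sigma)+\int\Delta_\vartheta\, d\mu_{_\Sigma}}{\int r\, d\mu_{_\Sigma}},\qquad\text{hence}\qquad P_{\mu_{_\Sigma}}(\sigma,\Theta)=\Big(\int r\,d\mu_{_\Sigma}\Big)\big(P_\mu(\sigma_r,\vartheta)-P^*\big).
\]
Since $0<\inf r\le \int r\, d\mu_{_\Sigma}\le \sup r<\infty$, this yields $P_{\rm TOP}(\sigma,\Theta)=0$, as in Proposition~\ref{pro.relationmme}. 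It also gives quantitative comparisons: $P_\mu(\sigma_r,\vartheta)>P^*-\eta$ implies $P_{\mu_{_\Sigma}}(\sigma,\Theta)>-\eta\sup r$, and conversely $P_{\mu_{_\Sigma}}(\sigma,\Theta)>-\eta'$ implies $P_\mu(\sigma_r,\vartheta)>P^*-\eta'/\inf r$. The same bounds on $r$ show that finiteness of the Gurevich entropy is equivalent on the two sides, and that $\Theta$ is H\"older continuous because $\Delta_\vartheta$ and $r$ are. Ergodicity is preserved by the correspondence.

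Next I compare the masses of the set $\mathbf C$. For a finite union of cylinders $\mathbf C\subset\Sigma$, set $\mathbf C_r=\{(\ul x,t):\ul x\in\mathbf C,\ t\in[0,r(\ul x))\}$. Then
\[
\mu(\mathbf C_r)=\frac{\int_{\mathbf C} r\,d\mu_{_\Sigma}}{\int r\,d\mu_{_\Sigma}}\in\Big[\tfrac{\inf r}{\sup r}\,\mu_{_\Sigma}(\mathbf C),\ \tfrac{\sup r}{\inf r}\,\mu_{_\Sigma}(\mathbf C)\Big].
\]

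For the forward direction, assume the TMF is SPR with data $(\mathbf C,P_0,\tau)$, and set $\eta=P^*-P_0>0$. Choose $P_0'=-\eta\inf r<0=P_{\rm TOP}(\sigma,\Theta)$. Any ergodic $\nu_{_\Sigma}$ with $P_{\nu_{_\Sigma}}(\sigma,\Theta)>P_0'$ lifts to an ergodic $\nu$ with $P_\nu(\sigma_r,\vartheta)>P_0$. Therefore $\nu(\mathbf C_r)>\tau$, and so $\nu_{_\Sigma}(\mathbf C)>\tau\inf r/\sup r$.

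For the converse, start from SPR data $(\mathbf C,P_0',\tau)$ for $\Theta$ and take $P_0=P^*+P_0'/\sup r<P^*$. Any ergodic $\nu$ with $P_\nu>P_0$ has $P_{\nu_{_\Sigma}}(\sigma,\Theta)>P_0'$, so $\nu_{_\Sigma}(\mathbf C)>\tau$. This gives $\nu(\mathbf C_r)>\tau\inf r/\sup r$.

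The main obstacle I expect is not the inequalities above but the finiteness hypotheses. One must check that $P^*$ is finite, and that the ``finite Gurevich entropy'' clause in each definition matches the other's. This reduces to $h_{\rm TOP}(\sigma_r)=$ (something between $h_{\rm TOP}(\sigma)/\sup r$ and $h_{\rm TOP}(\sigma)/\inf r$), which again follows from Abramov's formula. Boundedness of $\vartheta$ then gives $|P^*|<\infty$. A second point to verify is that Proposition~\ref{pro.relationmme}, or the direct computation above, applies under only H\"older regularity; for this I would rely on \cite{BS00} for the H\"older property of $\Delta_\vartheta$.
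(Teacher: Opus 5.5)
Your proposal is correct and matches the paper's proof essentially step for step. Both use the identity $P_\mu(\sigma_r,\vartheta)=P_{\mu_{_\Sigma}}(\sigma,\Theta)/\int r\,d\mu_{_\Sigma}+P_{\rm TOP}(\sigma_r,\vartheta)$, the thresholds $P_0'=\inf r\,(P_0-P^*)$ and $P_0=P^*+P_0'/\sup r$, and the loss factor $\inf r/\sup r$ in $\tau$. Your explicit check, via Abramov's formula, that finiteness of the Gurevich entropy and of $P^*$ transfers between the two sides is a worthwhile addition that the paper leaves implicit.
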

\begin{proof}
Let $\mu$ be a $\sigma_r$-invariant measure on $\Sigma_r$ and $\mu_{_\Sigma}$ be the projection of $\mu$.
Denote $\vartheta_{_\Sigma}:=\Delta_\vartheta -P_{\rm TOP}(\sigma_r,\vartheta)\cdot r$. It follows from (\ref{eq.measproj}), (\ref{eq.Abramov}) and (\ref{eq.potential}) that
\begin{equation}\label{eq.pressure}
  P_\mu(\sigma_r,\vartheta)=\frac{P_{\mu_{_\Sigma}}(\sigma, \vartheta_{_\Sigma})}{\int_{_\Sigma} rd\mu_{_\Sigma}}+P_{\rm TOP}(\sigma_r,\vartheta).
\end{equation}

Firstly, we suppose that $(\Sigma_r,\sigma_r)$ is SPR for $\vartheta$. Let the finite union of cylinders  $\mathbf C$, $P_0<P_{\rm TOP}(\sigma_r,\vartheta)$ and $\tau>0$ be given by the definition of SPR property for $(\Sigma_r,\sigma_r)$. Denote $P_{0,\Sigma}=\inf  r\cdot (P_0-P_{\rm TOP}(\sigma_r,\vartheta))<P_{\rm TOP}(\sigma, \vartheta_{_\Sigma})=0$ and $\tau_{_\Sigma}=\tau\cdot\inf r/\sup r$. If $\mu_{_\Sigma}\in{\mathbb P}_e(\sigma)$ satisfies $P_{\mu_{_\Sigma}}(\sigma, \vartheta_{_\Sigma})>P_{0,\Sigma}$,  then we have $P_\mu(\sigma_r,\vartheta)>P_0$ by (\ref{eq.pressure}). Thus, 
\[
\nu (\{(\ul{x},t)\in (\Sigma_r,\sigma_r):\ul{x}\in \mathbf C,\ t\in[0,r(\ul{x}))\})>\tau.
\]
Then according to (\ref{eq.measproj}), we obtain $\mu_{_\Sigma}(\mathbf C)>\tau_{_\Sigma}$. Thus $(\Sigma,\sigma)$ is SPR for $\vartheta_{_\Sigma}$.

Next we suppose that $(\Sigma,\sigma)$ is SPR for the potential $\vartheta_{_\Sigma}$. Let the finite union of cylinders  $\mathbf C$, $P_{0,\Sigma}<P_{\rm TOP}(\sigma, \vartheta_{_\Sigma})=0$ and $\tau_{_\Sigma}>0$ be given by the definition of SPR property for $(\Sigma,\sigma)$.  Denote $P_{0}=P_{\rm TOP}(\sigma_r,\vartheta) + P_{0,\Sigma}/\sup r<P_{\rm TOP}(\sigma_r, \vartheta)$ and $\tau= \tau_{_\Sigma}\cdot\inf r/\sup r$.

For any  $\mu\in{\mathbb P}_e(\sigma_r)$ with $P_{\mu}(\sigma_r, \vartheta)>P_{0}$,  from (\ref{eq.pressure}) we know $P_{\mu_{_\Sigma}}(\sigma, \vartheta_{_\Sigma})>P_{0,\Sigma}$. Thus $\mu_{_\Sigma}(\mathbf C)>\tau_{_\Sigma}$. It follows from (\ref{eq.measproj}) that
\[
\nu (\{(\ul{x},t)\in (\Sigma_r,\sigma_r):\ul{x}\in \mathbf C,\ t\in[0,r(\ul{x}))\})>\tau.
\]
We complete the proof.
\end{proof}

We now turn to singular flows. Let $X$ be a $C^1$ vector field on $M$ and $B\subset M$ be a invariant Borel set for $X$ which is not necessarily compact. We say that $X$ (or the generated flow $\varphi$) is {\it SPR on $B$ for a H\"older continuous potential $\vartheta :M\to\mathbb{R}$} if there exists $\chi>0$ satisfying the following property:  for each $\varepsilon>0$, there are a $(\chi,\varepsilon)$-Pesin block $\Pes_\chi(\ell,\varepsilon)$, a compact set $\Lambda\subset \Pes_\chi(\ell,\varepsilon)$ and numbers $P_0<P_{\rm TOP}(X|_B,\vartheta)$, $\tau>0$ such that $\nu(\Lambda)>\tau$ holds for every  $\nu\in\mathbb{P}_e(X|_B)$ with $P_{\nu}(X,\vartheta)>P_0$. 
In particular, we say that $X$ is {\it SPR on $B$}  if it is SPR on $B$ for  $\vartheta \equiv0$.
If $B=M$, we say that $X$ is {\it SPR  for$\vartheta :M\to\mathbb{R}$}  for simplicity.

\section{The non-uniformly hyperbolic locus}\label{sec.nuh}

Henceforth, we always assume that $X$ is a $C^{1+\beta}$ vector field ($\beta>0$) on $M$.    Replace $X$ with $cX$ for some small positive number $c$ if necessary, we assume that $\|\nabla X\|\le 1$.
Then by the Gronwall inequality (see e.g. \cite{KSSV}), we have that $\|d\varphi_t\|\le e^{|t|}$ for all $t\in \mathbb R$. This implies 
{\begin{equation}\label{eq.flowGI}
\|\psi_t\|, \|\Phi_t\|\le e^{|t|} \text{ and }   \|\psi^{*}_t\|, \|\Phi^{*}_t\|\le 2e^{|2t|} \text{ for all $t\in\mathbb{R}$.}
\end{equation}}

For real numbers $a,b,\varepsilon>0$,
we denote $a \wedge b :=\min\{a,b\}$ and denote $a=b e^{\pm \varepsilon}$ as the condition $\frac{a}{b}\in [e^{-\varepsilon},e^\varepsilon]$.
Now we fix $\mathbb D$, $\rho$, $r_0$,  $\Phi^*$ as in \S\ref{sec.linearpoincare}. 

\subsection{Non-uniformly hyperbolic locus}

We  focus on the following set with non-uniform hyperbolicity. Please note that here we consider the hyperbolicity of the scaled linear flow $\Phi^*_t$.

\begin{dfn}\label{dfn.NUH}
Let $\chi>0$. Denote by $\NUH_{\chi}$ the {\rm non-uniformly hyperbolic locus} of $X$ with respect to $\chi$, defined as the set of points $x\in M\setminus\Sing(X)$ admitting a splitting $\H_x=\H^{s}_x\oplus \H^{u}_x$ such that:
\begin{enumerate}
\item[\rm (NUH0)] $\lim_{t\to\pm\infty}\frac{1}{t}\log|X(\varphi_t(x))|=0$;
\item[\rm (NUH1)] for any vector $v^s\in\H^{s}_x$, $\liminf
\limits_{t\to+\infty}\frac{1}{t}\log|\Phi^{*}_{-t}(v^s)|>0$ and $\limsup\limits_{t\to+\infty}\frac{1}{t}\log|\Phi^{*}_{t}(v^s)|\le -\chi$;
\item[\rm (NUH2)] for any vector $v^u\in\H^{u}_x$ and $\liminf\limits_{t\to+\infty}\frac{1}{t}\log|\Phi^{*}_{t}(v^u)|>0$, $\limsup\limits_{t\to+\infty}\frac{1}{t}\log|\Phi^{*}_{-t}(v^u)|\le -\chi$;
\item[\rm (NUH3)] $s_\chi(x):=\sup_{v^s\in \H_x^s, |v^s|=1}s_\chi(x,v^s)<+\infty$ and $u_\chi(x):=\sup_{v^u\in \H_x^u, |v^u|=1}u_\chi(x,v^u)<+\infty$, where
\[
 s_\chi(x,v^s):=2e^{2\rho}\left(\int_0^{\infty}e^{2\chi t}|\Phi^{*}_t(v^s)|^2dt\right)^{1/2} {\rm \ and\ } u_\chi(x,v^u):=2e^{2\rho}\left(\int_0^{\infty}e^{2\chi t}|\Phi^{*}_{-t}(v^u)|^2dt\right)^{1/2}.
\]
\end{enumerate} 
\end{dfn}

\begin{lem}\label{lem.NUHFullMeas} We have the following:
\begin{itemize}
  \item for any $\chi>0$, $\NUH_\chi$ is an invariant set for the flow;
  \item for any $\chi_1>\chi_2>0$, $\NUH_{\chi_1}\subset \NUH_{\chi_2}$; and
  \item for any regular $\chi$-hyperbolic $X$-invariant measure $\mu$, $\mu(\NUH_\chi)=1$.
\end{itemize}
\end{lem}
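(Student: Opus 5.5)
The plan is to check the three items in turn, working directly from Definition~\ref{dfn.NUH} and Lemma~\ref{lem.LyapexpforPhi}.

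\emph{Invariance.} Let $x\in\NUH_\chi$ and $y=\varphi_s(x)$. We give $y$ the splitting $\H^{s/u}_y:=\Phi^*_s(\H^{s/u}_x)$, which is a splitting of $\H_y$ because $\Phi^*_s$ is a linear isomorphism $\H_x\to\H_y$.
\begin{itemize}
\item (NUH0) is a statement about the orbit only, so it holds at $y$.
\item (NUH1) and (NUH2) concern limits of $\frac1t\log|\Phi^*_{\pm t}(\cdot)|$. By the cocycle property $\Phi^*_{t}\circ\Phi^*_s=\Phi^*_{t+s}$, these limits are unchanged when we replace $x$ by $y$; the time shift $s$ disappears as $t\to\infty$.
\item (NUH3) at $y$: for a unit $w\in\H^s_y$ write $w=\Phi^*_s(v)$. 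Using \eqref{eq.flowGI},
\[
\int_0^\infty e^{2\chi t}|\Phi^*_t(w)|^2\,dt\le C(s)\Bigl(\int_0^{\infty}e^{2\chi t}|\Phi^*_{t}(v)|^2\,dt+\int_0^{|s|}e^{2\chi t}\,4e^{4t}\,dt\Bigr)|v|^2 .
\]
Since $|v|\le\|\Phi^*_{-s}\|\le 2e^{2|s|}$, this gives $s_\chi(y)<\infty$, and $u_\chi(y)<\infty$ follows symmetrically.
\end{itemize}
The cocycle property itself has to be verified. It holds because $\proj_\H$ is the projection parallel to $\langle X\rangle$ and $d\varphi_t$ preserves $\langle X\rangle$, so $\proj_\H d\varphi_t\proj_\H=\proj_\H d\varphi_t$. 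The scaling factors $|Y(x)|/|Y(\varphi_t x)|$ then telescope.

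\emph{Monotonicity.} Suppose $\chi_1>\chi_2$. Then $\limsup\le-\chi_1$ implies $\limsup\le-\chi_2$. Also $e^{2\chi_2 t}\le e^{2\chi_1 t}$ for $t\ge0$, so $s_{\chi_2}(x)\le s_{\chi_1}(x)$ and $u_{\chi_2}(x)\le u_{\chi_1}(x)$. The same splitting therefore witnesses $x\in\NUH_{\chi_2}$.

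\emph{Full measure.} Let $\mu$ be regular and $\chi$-hyperbolic. By ergodic decomposition we may assume $\mu$ is ergodic. We work on the Oseledets set $O(X)$, intersected with the recurrent points.
\begin{itemize}
\item (NUH0): from \eqref{eq.zeroexp}, $\frac1t\log\|d\varphi_t|_{\langle X\rangle}\|\to0$. Since $\|d\varphi_t|_{\langle X(x)\rangle}\|=|X(\varphi_tx)|/|X(x)|$, this is exactly (NUH0).
\item Splitting: take $\H^{s/u}_x:=\proj_{\H}(E^{s/u}_x)$, where $E^s_x$ and $E^u_x$ are the sums of the Oseledets spaces of $\psi$ with negative, respectively positive, exponents.
\item (NUH1), (NUH2): by Lemma~\ref{lem.LyapexpforPhi}, $\Phi^*$ has the same exponents as $\psi$. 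These all lie outside $[-\chi,\chi]$, so the $\limsup/\liminf$ conditions hold.
\end{itemize}

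The main obstacle is (NUH3). Finiteness of $\int_0^\infty e^{2\chi t}|\Phi^*_t v|^2dt$ needs exponents strictly below $-\chi$, uniformly over unit vectors in the finite-dimensional space $\H^s_x$. Exponents exactly $\le-\chi$ would not be enough, since then the integrand need not decay. Here the definition of $\chi$-hyperbolic helps: it says the exponents lie outside the closed interval $[-\chi,\chi]$, so for $\mu$-a.e.\ $x$ there is $\chi'(x)>\chi$ with every exponent on $\H^s_x$ at most $-\chi'(x)$. Fix $\chi<\chi''<\chi'(x)$. Oseledets' theorem, using the uniform exponent on the whole subspace, gives a constant $C(x)$ with $\|\Phi^*_t|_{\H^s_x}\|\le C(x)e^{-\chi'' t}$ for all $t\ge0$. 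The integral is then at most $C(x)^2/(2(\chi''-\chi))$ uniformly over unit $v$, so $s_\chi(x)<\infty$, and $u_\chi(x)<\infty$ likewise. If the intended definition only allows exponents $\le-\chi$, I would reduce to this case by shrinking $\chi$ slightly, or appeal to ergodicity to get a strict gap.

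Finally, the set of points satisfying all of the above is flow-invariant and has full $\mu$-measure. It is contained in $\NUH_\chi$, which gives $\mu(\NUH_\chi)=1$.
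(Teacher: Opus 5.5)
Your proposal is correct and follows the paper's route: items (1)--(2) come directly from Definition~\ref{dfn.NUH}, and item (3) comes from taking $\H^{s/u}_x=\proj_\H(\N^{s/u}_x)$ from the Oseledets splitting of $\psi$ and transferring exponents via Lemma~\ref{lem.LyapexpforPhi}. You also make explicit what the paper leaves implicit, namely the identity $\proj_\H\, d\varphi_t\,\proj_\H=\proj_\H\, d\varphi_t$ that gives the cocycle property behind invariance, and the strict gap below $-\chi$ (from excluding the closed interval $[-\chi,\chi]$) that makes $s_\chi,u_\chi$ finite; your closing fallback is unnecessary under the paper's definition, and in any case shrinking $\chi$ or invoking ergodicity would not yield $\mu(\NUH_\chi)=1$ itself.
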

\begin{proof}
Items (1) and (2) follow directly from the definition. Let $\mu$ be a regular $\chi$-hyperbolic $X$-invariant measure. By \eqref{eq.zeroexp}, condition (NUH0) holds for $\mu$-almost every $x$. For $x \in O(X)$, let $\mathcal{N}^s_x$ (resp. $\mathcal{N}^u_x$) be the direct sum of all subspaces of $\mathcal{N}_x$ with positive (resp. negative) Lyapunov exponents for $\psi$. Define $\mathcal{H}^s_x = \proj_{\mathcal{H}}(\mathcal{N}^s_x)$ and $\mathcal{H}^u_x = \proj_{\mathcal{H}}(\mathcal{N}^u_x)$. According to Lemma~\ref{lem.LyapexpforPhi}, 
$$
\lim_{t\to \pm\infty}\frac{1}{t}\log|\Phi^*_t(v^s)| < -\chi \quad \text{and} \quad \lim_{t\to \pm\infty}\frac{1}{t}\log|\Phi^*_t(v^u)| > \chi
$$
for any $v^s \in \mathcal{H}^s_x$ and $v^u \in \mathcal{H}^u_x$. This implies conditions (NUH1)--(NUH3).
\end{proof}

\begin{rk}\label{rk.NUH}
	According to Lemma~\ref{lem.NUHFullMeas}, for any regular $\chi$-hyperbolic $X$-invariant measure $\mu$, by ignoring a set of $\mu$-measure zero, $\NUH_\chi$  is independent of the selection of $\Phi$. In fact, even if we replace $\Phi^{*}$ with the linear Poincar\'e flow $\psi^*$ in Definition~\ref{dfn.NUH}, it can similarly be shown that the set defined thereby is of full $\mu$-measure.
	\end{rk}

\subsection{Scaled Lyapunov metric}

 Next we introduce a new metric to obtain uniform hyperbolicity for local maps. As we stated in the introduction, unlike the classical Lyapunov metric in Pesin theory, we adopt a ``scaled''  Lyapunov metric here.

For any $x\in \NUH_\chi$, the {\it scaled Lyapunov inner product} $\langle \cdot,\cdot \rangle_\chi'$ on  $\H_x$ is an inner product such that:
  \begin{itemize}
  \item  $\langle w^s,v^s\rangle_\chi'=4e^{4\rho}\int_0^{\infty}e^{2\chi t}\la \Phi^*_t(w^s),\Phi^*_t(v^s) \ra dt$, for any $w^s,v^s\in \H^s_x$;
\item $\langle w^u,v^u\rangle_\chi'=4e^{4\rho}\int_0^{\infty}e^{2\chi t}\la \Phi^*_{-t}(w^u),\Phi^*_t(v^u) \ra dt$, for any $w^u,v^u\in \H^u_x$;
\item $\langle v^s,v^u\rangle_\chi'=0$, for any $v^s\in \H^s_x$ and $v^u\in \H^u_x$,
\end{itemize}
where $\langle \cdot,\cdot \rangle$ is the Riemannian inner product on $T_x M$.

By (\NUH 1-3), the scaled Lyapunov inner product is well-defined and unique. Denote by $|\cdot|_\chi'$ the norm induced by the Lyapunov inner product $\langle \cdot,\cdot \rangle_\chi'$, which is called the {\it scaled Lyapunov norm}.

Analogous to Pesin theory, we may choose a linear map $C_\chi(x):\mathbb{R}^{d-1}\to \H_x$ for each $x\in\NUH_{\chi}$, which is measurable in $x$, such that $C_\chi(x)$ is an isometry between $(\mathbb R^{d-1}, \langle\cdot,\cdot\rangle_{\mathbb R^{d-1}} )$ and $(\H_x, \langle\cdot,\cdot\rangle' )$ satisfying
 $C_\chi(x)(\mathbb{R}^{d^s(x)}\times \{0\})=\H^s_x$ and $C_\chi(x)(\{0\}\times\mathbb{R}^{d^u(x)})=\H^u_x$, where $d^s(x)=\dim \H^s_x$ and $d^u(x)=\dim \H^u_x$. Note that both $d^s(x)$ and $d^u(x)$ are invariant along the flow orbit.

\begin{lem}\label{lem.propofC}
For any $x\in \NUH_{\chi}$, we have:
\begin{enumerate}
\item $\|C_\chi (x)\|\le 1$ and 
\[\|C_\chi (x)^{-1}\|=\sup_{v\in\H_x,|v|=1}\frac{|v|'}{|v|}=\sup_{\begin{subarray}{c}v^s\in \H^s_x,v^u\in\H^u_x,\\ |v^s+v^u|\ne 0\end{subarray}} \frac{\sqrt{s_\chi^2(x,v^s)+u_\chi^2(x,v^u)} }{|v^s+v^u|};
\]
\item the linear map $C_\chi(\varphi_t(x))^{-1}\circ\Phi^*_t\circ C_\chi(x):\bR^{d-1}\to \bR^{d-1}$ can be expressed as the block diagonal matrix  $\left(\begin{matrix}
 A_t(x) & 0 \\
0 & B_t(x)
\end{matrix}\right)$
  under the direct sum decomposition $\bR^{d-1}=\bR^{d^s(x)}\oplus \bR^{d^u(x)}$, where $A_t(x)\in \GL(d^s(x),\mathbb{R})$ and $B_t(x)\in\GL(d^u(x),\mathbb{R})$ satisfy
\[e^{-4\rho}<\|A_t(x)\|<e^{-\chi t}\text{ and }e^{\chi t}<\|B_t(x)\|<e^{4\rho}, \text{ for all $t\in[0,2\rho]$;}\]
\item  $\frac{s_\chi(\varphi_t(x),\Phi^*_t(v^s))}{s_\chi(x,v^s)}=e^{\pm10\rho}$ and $\frac{u_\chi(\varphi_t(x),\Phi^*_t(v^u))}{u_\chi(x,v^u)}=e^{\pm10\rho}$ for any $|t|\le 2\rho$ and $v^s\in\H^s_x$, $v^u\in\H^u_x$ with $|v^s|=|v^u|=1$; consequently, $\frac{\|C_\chi(\varphi_t(x))^{-1}\|}{\|C_\chi(x)^{-1}\|}=e^{\pm 18\rho}$.
\end{enumerate}
\end{lem}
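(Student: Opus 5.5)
The plan is to follow the standard Pesin-theory computation for the Lyapunov norm, as in \cite{BCL,LMN}, with $\Phi^*$ in place of the linear Poincar\'e flow. The only non-classical input is the growth bound (\ref{eq.flowGI}), $\|\Phi^*_t\|\le 2e^{2|t|}$; it is used in place of the usual boundedness of $\|d\varphi_t\|$. Throughout, write $v=v^s+v^u$ with $v^s\in\H^s_x$ and $v^u\in\H^u_x$.

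For Item (1), note that the definition of the scaled Lyapunov inner product gives $(|v^s|'_\chi)^2=s_\chi^2(x,v^s)$ and $(|v^u|'_\chi)^2=u_\chi^2(x,v^u)$, and $\H^s_x,\H^u_x$ are $\langle\cdot,\cdot\rangle'_\chi$-orthogonal. Hence
\[
(|v|'_\chi)^2=s_\chi^2(x,v^s)+u_\chi^2(x,v^u).
\]
Since $C_\chi(x)$ is an isometry onto $(\H_x,|\cdot|'_\chi)$, we have $\|C_\chi(x)^{-1}\|=\sup|v|'/|v|$, which is the stated formula. For $\|C_\chi(x)\|\le 1$ it suffices to show $|v|\le|v|'$. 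Take $w=\Phi^*_t(v^s)$ for $t\in[0,1]$; by (\ref{eq.flowGI}) applied to $\Phi^*_{-t}$, we get $|v^s|\le 2e^{2t}|\Phi^*_t(v^s)|$. Integrating over $[0,1]$ gives
\[
s_\chi(x,v^s)\ge 2e^{2\rho}\left(\int_0^1 \tfrac14 e^{-4t}dt\right)^{1/2}|v^s|.
\]
This should give $s_\chi\ge c|v^s|$, and similarly for $u_\chi$; combining them yields $|v^s+v^u|\le |v^s|+|v^u|\le\sqrt2\,|v|'$. The point I am least sure of is whether the normalizing constants $2e^{2\rho}$ and $4e^{4\rho}$ were chosen to make the constant exactly $1$. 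I would try integrating over an interval of length $2\rho$, or using the sup over the integrand near $t=0$. If the constant does not come out to $1$, I would treat it as a harmless adjustment of the normalization.

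For Item (2), $\Phi^*_t$ preserves the splitting, because $\H^{s/u}$ is defined through the asymptotic conditions (NUH1)--(NUH2) and these are flow-invariant; this gives the block-diagonal form. For the bounds, compute for $v^s\in\H^s_x$:
\[
s_\chi^2(\varphi_t(x),\Phi^*_tv^s)=4e^{4\rho}\int_0^\infty e^{2\chi u}|\Phi^*_{u+t}v^s|^2du
=e^{-2\chi t}\Big(s_\chi^2(x,v^s)-4e^{4\rho}\int_0^t e^{2\chi u}|\Phi^*_uv^s|^2du\Big).
\]
This gives the strict upper bound $\|A_t\|<e^{-\chi t}$ for $t>0$; at $t=0$ the statement is read in the non-strict sense. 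For the lower bound $e^{-4\rho}$ on $[0,2\rho]$, I would bound the subtracted integral using $|\Phi^*_u v^s|\le 2e^{2u}|v^s|$ together with the comparison $|v^s|\lesssim s_\chi$ from Item (1). The unstable block $B_t$ is treated symmetrically, using $\Phi^*_{-t}$.

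For Item (3), the same identity, applied for $|t|\le 2\rho$ in both time directions, gives $s_\chi(\varphi_t x,\Phi^*_tv^s)/s_\chi(x,v^s)=e^{\pm O(\rho)}$. Normalizing by $|\Phi^*_tv^s|=e^{\pm 5\rho}|v^s|$, which again follows from (\ref{eq.flowGI}), yields the $e^{\pm10\rho}$ bound for unit vectors. Then, using the formula in Item (1) and the fact that $\Phi^*_t$ changes Euclidean norms by at most $e^{\pm 4\rho}$, the suprema defining $\|C_\chi(\cdot)^{-1}\|$ at $x$ and at $\varphi_t(x)$ differ by at most $e^{\pm18\rho}$.

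The main obstacle I expect is pure constant bookkeeping, specifically producing the exact exponents $4\rho$, $10\rho$, $18\rho$ and the constant $1$ in $\|C_\chi\|\le1$. The structural argument itself is routine.
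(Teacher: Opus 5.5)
This is essentially the paper's proof: it uses the same identity $s_\chi^2(x,\xi)=4e^{4\rho}\int_0^t e^{2\chi s}|\Phi^*_s\xi|^2ds+e^{2\chi t}s_\chi^2(\varphi_t(x),\Phi^*_t\xi)$, bounds the short-time integral via (\ref{eq.flowGI}) to get $e^{-4\rho}<e^{\chi t}s_\chi(\varphi_t(x),\Phi^*_t\xi)/s_\chi(x,\xi)<1$, reads off Items (2)--(3) (citing \cite{Ben} for the bound on $\|C_\chi^{-1}\|$), and simply asserts Item (1) from the construction of $\langle\cdot,\cdot\rangle'_\chi$. Your doubt about the constants is justified and applies equally to the paper: because of the prefactor $2$ in $\|\Phi^*_t\|\le 2e^{2|t|}$, none of $\|C_\chi(x)\|\le 1$, the exponents $4\rho,10\rho,18\rho$, or your $|\Phi^*_tv^s|=e^{\pm5\rho}|v^s|$ follows literally from that bound, but this is harmless, since the normalizing factor $2e^{2\rho}$ cancels in $e^{2\chi t}s_\chi^2(\varphi_t(x),\Phi^*_t\xi)/s_\chi^2(x,\xi)=1-\int_0^t e^{2\chi s}|\Phi^*_s\xi|^2ds\big/\int_0^\infty e^{2\chi s}|\Phi^*_s\xi|^2ds$, so enlarging it forces $\|C_\chi(x)\|\le 1$ without affecting Items (2)--(3), which hold with $e^{\pm K\rho}$ for a constant $K$ fixed by the exponential growth rate of $\Phi^*$ (recall $\Phi^*_0=\mathrm{id}$).
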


\begin{proof}
Item (1) is easy from the construction of the norm $\la\cdot,\cdot\ra'$. For the ratio $\frac{s_\chi(\varphi_t(x),\Phi^*_t(\xi))}{s_\chi(x,\xi)}$, we first observe that for any $\xi\in\H^s_x$ with $|\xi|=1$, 
\[
s_\chi(x,\xi)^2=4e^{4\rho}\int_0^te^{2\chi s}|\Phi^*_s(\xi)|^2ds+e^{2\chi t}s_\chi(\varphi_t(x),\Phi^*_t(\xi))^2.
\] 
Equivalently, 
\[
e^{2\chi t}\frac{s_\chi^2(\varphi_t(x),\Phi^*_t(\xi))}{s_\chi^2(x,\xi)}=1-\frac{4e^{4\rho}}{s_\chi(x,\xi)^2}\int_0^te^{2\chi s}|\Phi^*_s(\xi)|^2ds.
\]

Since $|t|\leq 2\rho$ then by (\ref{eq.flowGI}), $\Norm{\frac{4e^{4\rho}}{s_\chi(x,\xi)^2}\int_0^te^{2\chi s}|\Phi^*_s(\xi)|^2ds}\leq 5\rho$ and 
 \[
 e^{-4\rho}<e^{\chi t}\frac{s_\chi(\varphi_t(x),\Phi^*_t(\xi))}{s_\chi(x,\xi)}<1.
 \]
It implies that $\frac{s_\chi(\varphi_t(x),\Phi^*_t(\xi))}{s_\chi(x,\xi)}=e^{\pm10\rho}$. 
Similarly, $\frac{u_\chi(\varphi_t(x),\Phi^*_t(\eta))}{u_\chi(x,\eta)}=e^{\pm 10\rho}$. These estimates readily imply Item (2). See \cite[Lemma 3.2]{BCL}, \cite[Lemma 3.2]{LMN} for the details.
The last  inequality proceeds as \cite[Theorem 1.8]{Ben}.
\end{proof}

From now on we fix a $\chi>0$. Throughout most of remainder of this part (except \S~\ref{sec.unifsu}), $\chi$ is held constant. Certain subsequently defined parameters also depend on $\chi$, but their variation with respect to $\chi$ is not addressed in this paper. Therefore, we suppress $\chi$ in their notations for simplicity. We also simplify the notations $C_\chi$, $s_\chi$ and $u_\chi$ to $C$, $s$ and $u$ respectively.

\subsection{Hyperbolicity of scaled flows and recurrently non-uniformly hyperbolic locus}

\subsubsection{Quantification of hyperbolicity} \label{Q.para}
For any $x\in \NUH_{\chi}$ and any $0<\varepsilon\ll \chi\wedge \rho$ (recall that $a\wedge b=\min\{a,b\}$), denote by
\[
Q(x)=\varepsilon^{6/\beta}\norm{C(x)^{-1}}^{-48/\beta}
\]
 the {\it quantification of hyperbolicity} of $x$ with respect to $\varepsilon$. We also define by
\[\begin{aligned}
q(x)&=\varepsilon\inf\{e^{\varepsilon|t|}Q(\varphi_t(x)):t\in\mathbb{R}\},\\
q^s(x)&=\varepsilon\inf\{e^{\varepsilon|t|}Q(\varphi_t(x)):t\geq0\},\\
q^u(x)&=\varepsilon\inf\{e^{\varepsilon|t|}Q(\varphi_t(x)):t\leq0\}
\end{aligned}\]
the quantification of hyperbolicity of the orbit of $x$, the positive orbit of $x$ and the negative orbit of $x$  with respect to $\varepsilon$ respectively.

It is easy to see that $0\leq q(x), q^s(x), q^u(x)\leq\varepsilon Q(x)$ and $q(x)=q^s(x)\wedge q^u(x)$. 

\begin{rk}\label{rk.Qandq}
	Both parameters $Q$ and $q$ defined above depend on $\varepsilon$, but we suppress this dependence in the notations for simplicity. The relevant results we subsequently prove hold for all $\varepsilon>0$ small enough, that is, there exists 
$\varepsilon_0>0$ such that they hold for all $0<\varepsilon\le \varepsilon_0$. Finally, it suffices to take the minimum of the 
   $\varepsilon_0$ obtained from finitely many instances of ``holds for all $\varepsilon$ small enough".
\end{rk}

\begin{lem}\label{lem.estimateQ}
The following holds for all $\varepsilon$ small enough:
\begin{enumerate}
\item $\frac{Q(\varphi_t(x))}{Q(x)}=e^{\pm288\rho/\beta}$ for all $x\in \NUH_{\chi}$ and $t\in[-2\rho,2\rho]$;
\item  An  invariant set $K\subset \NUH_{\chi}$ is uniformly hyperbolic if and only if $\inf_{x\in K}Q(x)>0$;
\item\label{est.QC}  $Q(x)<\varepsilon^{3/\beta}$, $\norm{C(x)^{-1}}\cdot Q(x)^{\beta/12}\leq \varepsilon^{1/4}$ and $\|C(f(x))^{-1}\|Q(x)^{\beta/12}\le \varepsilon^{1/4}$ for all $x\in \NUH_{\chi}$;
\item $q(\varphi_t(x))=e^{\pm\varepsilon|t|}q(x)$ for all $x\in \NUH_{\chi}$ and $t\in\mathbb{R}$.  
\end{enumerate}
\end{lem}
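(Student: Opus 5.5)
Item (1) follows from Lemma~\ref{lem.propofC}(3). Since $Q(x)=\varepsilon^{6/\beta}\|C(x)^{-1}\|^{-48/\beta}$, we have
\[
\frac{Q(\varphi_t(x))}{Q(x)}=\left(\frac{\|C(\varphi_t(x))^{-1}\|}{\|C(x)^{-1}\|}\right)^{-48/\beta}.
\]
For $|t|\le 2\rho$ that lemma bounds the inner ratio by $e^{\pm18\rho}$. This gives at most $e^{\pm 864\rho/\beta}$. To reach the stated constant $288=48\cdot 6$, I would reprove the comparison with the sharper factor $e^{\pm 6\rho}$. Starting from the identity for $s_\chi(x,\xi)^2$ in the proof of Lemma~\ref{lem.propofC}, the ratio $s(\varphi_t x,\Phi^*_t\xi)/s(x,\xi)$ lies in $(e^{-\chi t-4\rho},e^{-\chi t})$. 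Since $|\Phi^*_t\xi|=e^{\pm 4\rho}$ by \eqref{eq.flowGI}, the normalized quantities move by at most $e^{\pm 6\rho}$ for small $\chi t$. The splitting is $\Phi^*$-invariant, so taking the sup over decompositions $v^s+v^u$ transfers this to $\|C^{-1}\|$. If only $e^{\pm18\rho}$ is available, I would still use it, since only the form $e^{\pm c\rho/\beta}$ matters later.

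Item (4) is the standard tempered-kernel argument. Writing $q(\varphi_t x)=\varepsilon\inf_s e^{\varepsilon|s|}Q(\varphi_{s+t}x)$ and substituting $s'=s+t$, the bound $|s'|-|t|\le |s'-t|\le |s'|+|t|$ yields $q(\varphi_t x)=e^{\pm\varepsilon|t|}q(x)$. This works whether the infimum is positive or zero.

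For item (3), first note $\|C(x)^{-1}\|\ge 1$. This holds because $|v|'\ge s(x,v^s)\ge 2e^{2\rho}(\int_0^{\infty}e^{2\chi t}|\Phi^*_tv|^2dt)^{1/2}$ and $|\Phi^*_t v|\ge \tfrac12 e^{-2t}$, so a short integral is bounded below by a constant. Using Lemma~\ref{lem.propofC}(1) more carefully, I would check directly that $|v|'\ge |v|$, which gives $Q(x)\le\varepsilon^{6/\beta}<\varepsilon^{3/\beta}$. Next,
\[
\|C(x)^{-1}\|\,Q(x)^{\beta/12}=\varepsilon^{1/2}\|C(x)^{-1}\|^{-3}\le\varepsilon^{1/2}\le \varepsilon^{1/4}.
\]
For $f(x)=\varphi_{R(x)}(x)$ with $R(x)\le\rho$ from Theorem~\ref{thm.poincaresection}, Lemma~\ref{lem.propofC}(3) gives $\|C(f(x))^{-1}\|\le e^{18\rho}\|C(x)^{-1}\|$. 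So the product is at most $e^{18\rho}\varepsilon^{1/2}\le\varepsilon^{1/4}$ once $\varepsilon$ is small.

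Item (2) is the main obstacle, since it concerns uniform hyperbolicity defined through $\Phi^*$, or equivalently the scaled $\psi^*$. For one direction, suppose $\inf_K Q>0$. Then $\|C^{-1}\|\le L$ on $K$, so $s(x,\xi),u(x,\eta)\le L$ for unit vectors. Since $s(\varphi_t x,\Phi^*_t\xi)^2e^{2\chi t}\le s(x,\xi)^2$ and $|\cdot|\le s$, we get $|\Phi^*_t\xi|\le L e^{-\chi t}$, and similarly on $\H^u$. The angles are bounded because $|v^s+v^u|\ge L^{-1}\sqrt{s^2+u^2}$. This gives a uniform dominated/hyperbolic splitting on $K$. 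Conversely, suppose $\|\Phi^*_t|_{\H^s}\|\le Ce^{-\chi' t}$ with $\chi'>\chi$ and the angles are bounded away from zero. Then the integrals defining $s$ and $u$ are uniformly bounded, so $\|C^{-1}\|$ is bounded and $\inf Q>0$. Here I would interpret ``uniformly hyperbolic'' as having a rate exceeding $\chi$, matching the $\chi$ in $\NUH_\chi$.
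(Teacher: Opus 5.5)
Your route is the paper's: items (1)--(3) are read off from Lemma~\ref{lem.propofC}, and item (4) is the shift-of-infimum argument of \cite[Lemma~3.4]{BCL}. Your treatment of (3) and (4) is correct. For (3), $\|C(x)^{-1}\|\ge 1$ is just $\|C(x)\|\le 1$ from Lemma~\ref{lem.propofC}(1), and in fact any uniform positive lower bound on $\|C(x)^{-1}\|$ suffices once $\varepsilon$ is small. Two points need repair.

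\textbf{Item (1).} You correctly observe that Lemma~\ref{lem.propofC}(3) yields only $e^{\pm 18\cdot 48\rho/\beta}=e^{\pm 864\rho/\beta}$. The paper's ``immediately'' has exactly this mismatch: $288=6\cdot 48$ would need a chart comparison $e^{\pm 6\rho}$, which Lemma~\ref{lem.propofC} does not provide. Your proposed sharpening does not supply it either.
\begin{itemize}
\item Combining your window $(e^{-\chi t-4\rho},e^{-\chi t})$ for the $s$-ratio with $|\Phi^*_t\xi|=e^{\pm 4\rho}$ puts the normalized ratio in $(e^{-\chi t-8\rho},e^{-\chi t+4\rho})$. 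This is already outside $e^{\pm 6\rho}$.
\item Passing to $\|C^{-1}\|$ through the sup formula costs a further factor $|\Phi^*_tv|/|v|$ for the mixed vector $v=v^s+v^u$.
\item Also, \eqref{eq.flowGI} carries a factor $2$, so it does not literally give $e^{\pm 4\rho}$.
\end{itemize}
So drop the $6\rho$ claim and state (1) with the constant you can actually prove. It only enters later through $\mathfrak{Y}$, so nothing downstream changes.

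\textbf{Item (2).} Reading ``uniformly hyperbolic'' as ``with rate strictly larger than $\chi$'' is the right choice: with rate exactly $\chi$, the integrals defining $s,u$ need not be bounded along $K$. But under that reading your forward direction is incomplete. From $\|C^{-1}\|\le L$ you only derive $|\Phi^*_t\xi|\le Le^{-\chi t}$, i.e.\ rate $\chi$.

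You can close this with the identity you already use. Set $S(t):=e^{2\chi t}s(\varphi_tx,\Phi^*_t\xi)^2=s(x,\xi)^2-4e^{4\rho}\int_0^te^{2\chi r}|\Phi^*_r\xi|^2\,dr$. On the invariant set $K$ we have $s(\varphi_tx,w)\le\|C(\varphi_tx)^{-1}\|\,|w|\le L|w|$. Hence $S'(t)=-4e^{4\rho}e^{2\chi t}|\Phi^*_t\xi|^2\le -4e^{4\rho}L^{-2}S(t)$, so $S(t)\le S(0)e^{-4e^{4\rho}t/L^2}$. Together with $|\Phi^*_t\xi|\le s(\varphi_tx,\Phi^*_t\xi)$ (from $\|C\|\le 1$), this gives contraction on $\H^s$ at rate $\chi+2e^{4\rho}/L^2>\chi$, and symmetrically on $\H^u$. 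Both directions of the equivalence then refer to the same notion.
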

\begin{proof}
	The first three items come from Lemma~\ref{lem.propofC} immediately. Item (4) is proved in \cite[Lemma~3.4]{BCL}.
\end{proof}

\subsubsection{Recurrently non-uniformly hyperbolic locus}

\begin{dfn}\label{dfn.recNUH}
The {\rm recurrently non-uniformly hyperbolic locus} $\NUH_{\chi}^{\#}$ with respect to $\chi$ is the set of points $x\in\NUH_\chi$ satisfying:
\begin{enumerate}
\item[\rm (NUH4)] $q(x)>0$;
\item[\rm (NUH5)] there are $r_x, \delta_x>0$ and sequences $t^+_n\to +\infty$, $t^-_n\to -\infty$ such that $\varphi_{t^\pm_n}(x)\not\in M(\Sing(X), r_x)$ and $q(\varphi_{t^\pm_n}(x))\ge \delta_x$.
\end{enumerate}
\end{dfn}

\begin{rk}\label{rk.NUH5}
	For non-singular flows, condition {\rm (NUH5)} is stated in {\rm \cite{BCL,LMN}} as 
	\[
	\limsup\limits_{t\to+\infty}q(\varphi_t(x))>0 \text{ and } \limsup\limits_{t\to-\infty}q(\varphi_t(x))>0.
	\]
	Due to the presence of singularities, here we further require that the subsequence $\varphi_{t^\pm_n}(x)$
  achieving the limit superior can be chosen to remain bounded away from the singularities.
\end{rk}

\begin{lem}\label{lem.recNUH}
 $\NUH^\#_\chi$ is invariant for that flow and	$\mu({\NUH}^{\#}_\chi)=1$ for any regular $\chi$-hyperbolic $X$-invariant measure $\mu$.
\end{lem}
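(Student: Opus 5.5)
Invariance is immediate from the definitions. Since $\NUH_\chi$ is flow-invariant (Lemma~\ref{lem.NUHFullMeas}), it is enough to look at (NUH4) and (NUH5). For (NUH4), Lemma~\ref{lem.estimateQ}(4) gives $q(\varphi_s(x))=e^{\pm\varepsilon|s|}q(x)$, so positivity of $q$ is preserved along orbits. For (NUH5), if $x$ has witnesses $r_x,\delta_x,t_n^\pm$, then $\varphi_s(x)$ has witnesses $r_x,\delta_x,t_n^\pm-s$, because $q(\varphi_{t_n^\pm}(x))$ is evaluated at the same points of the orbit.

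For the full-measure statement, I would reduce to the ergodic case using the ergodic decomposition. Almost every ergodic component of a regular $\chi$-hyperbolic measure is again regular and $\chi$-hyperbolic, since both conditions are pointwise almost-sure conditions on invariant sets. So let $\mu$ be ergodic, regular and $\chi$-hyperbolic. By Lemma~\ref{lem.NUHFullMeas}, $\mu(\NUH_\chi)=1$, so $Q$ is a positive measurable function $\mu$-a.e. The plan is as follows.

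\textbf{Step 1: (NUH4).} I would show $\lim_{t\to\pm\infty}\frac1t\log Q(\varphi_t(x))=0$ for $\mu$-a.e.\ $x$. By Lemma~\ref{lem.estimateQ}(1), $\log Q$ varies by at most a bounded amount over time windows of length $2\rho$. So it suffices to control the discrete sequence $g(x)=\log Q(x)$ under the time-$\rho$ map $\varphi_\rho$, for which $|g\circ\varphi_\rho-g|\le C$ is bounded. A measurable function with bounded coboundary satisfies $\frac1n g\circ\varphi_{n\rho}\to 0$ almost surely; this is the standard tempered-function lemma, whose proof uses Birkhoff applied to $g\circ\varphi_\rho-g$ together with Poincar\'e recurrence. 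It follows that for $\mu$-a.e.\ $x$ and each fixed $\varepsilon$, $e^{\varepsilon|t|}Q(\varphi_t(x))\to\infty$ as $|t|\to\infty$. Hence the infimum defining $q(x)$ is attained on a compact time interval, where $Q$ is bounded below by Lemma~\ref{lem.estimateQ}(1). Therefore $q(x)>0$.

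\textbf{Step 2: (NUH5).} This is the main point where singularities matter. Since $\mu$ is regular and $q>0$ $\mu$-a.e., I choose $r>0$ and $\delta>0$ such that the set $A=\{y:\dist(y,\Sing(X))\ge r,\ q(y)\ge\delta\}$ has positive measure. This is possible because $\mu(\Sing(X))=0$, so the sets $A$ increase to a full-measure set as $r,\delta\to0$. By Birkhoff's ergodic theorem for the flow (or Poincar\'e recurrence for $\varphi_1$ together with ergodicity), $\mu$-a.e.\ $x$ visits $A$ at times $t_n^+\to+\infty$ and $t_n^-\to-\infty$. These times witness (NUH5) with $r_x=r$ and $\delta_x=\delta$.

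The expected obstacle is the temperedness in Step 1, because $Q$ is merely measurable and non-integrable, and its closure may meet $\Sing(X)$. The bounded-increment property from Lemma~\ref{lem.estimateQ}(1) is exactly what makes the classical argument work, as in \cite[Lemma~3.5]{BCL}, without any integrability assumption. The non-ergodic case then follows by integrating over the ergodic decomposition.
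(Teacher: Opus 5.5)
Your proof is correct. For (NUH5) it rests on the same idea as the paper's: recurrence to a positive-measure set that stays a fixed distance from $\Sing(X)$ and on which $q$ is bounded below. The differences are in packaging.

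\emph{(NUH4).} The paper simply cites \cite[Proposition~3.5]{BCL}. Your Step~1 instead gives a self-contained argument: the tempered-function lemma for $\log Q$ under $\varphi_\rho$, using the bounded increments from Lemma~\ref{lem.estimateQ}(1). This is a legitimate way to make that step explicit. The infimum need not actually be attained, but you only need the positive lower bound on $[-T,T]$, which you have.

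\emph{(NUH5).} The paper does not pass through the ergodic decomposition. It sets $A_k=\{x\in\NUH^4_\chi: q(x)\ge 1/k,\ x\notin M(\Sing(X),1/k)\}$ and notes that these sets increase to $\NUH^4_\chi$, because $\NUH_\chi\cap\Sing(X)=\emptyset$. It then applies Poincar\'e recurrence, which needs no ergodicity, to each $A_k$. This gives $B_k\subset A_k\cap\NUH^\#_\chi$ with $\mu(B_k)=\mu(A_k)$.

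That route is shorter. It avoids your reduction to ergodic components and the bookkeeping it brings: measurability of $\NUH^\#_\chi$ when you integrate $\mu_\xi(\NUH^\#_\chi)$, and the fact that your $r,\delta$ depend on the component. Your version, in exchange, needs only a single set $A$ per ergodic component and Birkhoff's theorem.

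One cosmetic point: $M(\Sing(X),r)$ is a closed ball. So take $A=\{\dist(y,\Sing(X))>r,\ q(y)\ge\delta\}$, or use $r_x=r/2$, so that visits to $A$ genuinely avoid $M(\Sing(X),r_x)$.
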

\begin{proof}
The invariance property is clear. Denote
\[
\NUH_\chi^4:=\{x\in \NUH_\chi: x \text{ satisfies (NUH4)} \}.
\]

It is established in \cite[Proposition~3.5]{BCL} that $\NUH_\chi^4$ has full measure for any regular $\chi$-hyperbolic $X$-invariant measure.  Let $\mu$ be a regular $\chi$-hyperbolic $X$-invariant measure. Define
\[
A_k:=\{x\in \NUH_\chi^4: q(x)\ge \frac{1}{k},\ x\not\in M(\Sing(X),\frac{1}{k})\}. 
\]
Since $\NUH_{\chi}\cap \Sing(X)=\emptyset$, we have $\NUH_{\chi}^4=\bigcup_{k\ge 1} A_k$. Thus $\mu(A_k)>0$ for each big $k$. By the Poincar\'e recurrence theorem, there exists $B_k\subset A_k$ satisfying: $\mu(B_k)=\mu(A_k)$ and for any $x\in B_k$, there are sequences $t^+_n\to +\infty$, $t^-_n\to -\infty$ such that $\varphi_{t^\pm_n}(x)\in A_k$. It implies that $x$ satisfies (NUH5) for every $x\in B_k$ (take $r_x=\delta_x=1/k$). Therefore, $B_k\subset \NUH^\#_\chi$ for all big $k$. Then we get $\mu(\NUH_{\chi}^{\#})=1$ since $\mu(\bigcup_{k\ge 1} B_k)=1$.

\end{proof}

\subsubsection{$\mathbb{Z}$-indexed quantification of hyperbolicity}
Let $x\in \NUH_{\chi}$. For any sequence $\T=\{t_n\}_{n\in\mathbb{Z}}$ of real numbers with $\frac{1}{2}\inf_{x\in\mathbb D} R_{\mathbb D}(x)\le t_{n+1}-t_n\leq2\sup_{x\in \mathbb D} R_{\Lambda}(x)$ we define
\[
\begin{aligned}
p^s(x,\mathcal{T},n)&=\varepsilon\inf\{e^{\varepsilon(t_m-t_n)}Q(\varphi_{t_m}(x)):m\geq n\},\\
p^u(x,\mathcal{T},n)&=\varepsilon\inf\{e^{\varepsilon(t_n-t_m)}Q(\varphi_{t_m}(x)):m\leq n\}.
\end{aligned}
\]

For brevity of notations, we denote $p^{s/u}(x,\mathcal{T},n)$ as $p^{s/u}(\varphi_{t_n}(x))$ when no ambiguity arises.

\begin{lem}\label{lem.pqcomp}
The following holds for all $\varepsilon$ small enough, all $x\in \NUH_{\chi}^{\#}$ and all sequence $\T=\{t_n\}_{n\in\mathbb{Z}}$ with $\frac{1}{2}\inf_{x\in\mathbb D} R_{\mathbb D}(x)\le t_{n+1}-t_n\leq2\sup_{x\in \mathbb D} R_{\Lambda}(x)$:
\begin{enumerate}
\item  $p^{s/u}(x,\mathcal{T},n)\geq q^{s/u}(\varphi_{t_n}(x))$;
\item let $\mathfrak{Y}=\varepsilon\rho+288\rho/\beta$, for all $n\in\mathbb{Z}$ and $t\in[t_n,t_{n+1}]$, we have
\[
\frac{p^{s/u}(\varphi_{t_n}(x))}{q^{s/u}(\varphi_t(x))}=e^{\mathfrak{Y}};
\]
\item for all $n\in\mathbb{Z}$ we have
\[
\begin{aligned}
p^s(\varphi_{t_n}(x))&=\min\{e^{\varepsilon(t_{n+1}-t_n)}p^s(\varphi_{t_{n+1}}(x)),\varepsilon Q(\varphi_{t_n}(x))\},\\
p^u(\varphi_{t_n}(x))&=\min\{e^{\varepsilon(t_{n}-t_{n-1})}p^u(\varphi_{t_{n-1}}(x)),\varepsilon Q(\varphi_{t_n}(x))\};
\end{aligned}
\]
in particular,
\[
\begin{aligned}
\varepsilon Q(\varphi_{t_n}(x))&\ge p^s(\varphi_{t_n}(x))\ge e^{-\varepsilon(t_n-t_m)}p^s(\varphi_{t_m}(x)), \text{\ for all $n\geq m$,}\\
\varepsilon Q(\varphi_{t_n}(x))&\ge p^u(\varphi_{t_n}(x))\ge e^{-\varepsilon(t_m-t_n)}p^s(\varphi_{t_m}(x)), \text{\ for all $m\geq n$;}
\end{aligned}
\]
\item $p^s(\varphi_{t_n}(x))=\varepsilon Q(\varphi_{t_n}(x))$ for infinitely many $n>0$ and $p^u(\varphi_{t_n}(x))=\varepsilon Q(\varphi_{t_n}(x))$ for infinitely many $n<0$.
\end{enumerate}
\end{lem}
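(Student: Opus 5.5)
The plan is to argue directly from the definitions of $p^{s/u}$ and $q^{s/u}$, exactly as in \cite[Lemma~3.6]{BCL}. The only inputs are the near-constancy of $Q$ along short orbit segments (Lemma~\ref{lem.estimateQ}(1)) and the uniform bounds on the gaps $t_{n+1}-t_n$, which come from Theorem~\ref{thm.poincaresection}~\ref{se.unif}: the gaps are at most $2\delta=2\rho$.

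\emph{Item (1).} Since $t_m-t_n\ge 0$ for $m\ge n$, we have $e^{\varepsilon(t_m-t_n)}Q(\varphi_{t_m}(x))=e^{\varepsilon|s|}Q(\varphi_s(\varphi_{t_n}(x)))$ with $s=t_m-t_n\ge0$. Hence the infimum defining $p^s$ runs over a subset of the times used for $q^s(\varphi_{t_n}(x))$, which gives $p^s\ge q^s$. The case of $p^u$ is symmetric.

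\emph{Item (3).} Split the infimum over $m\ge n$ into the term $m=n$, which is $\varepsilon Q(\varphi_{t_n}(x))$, and the terms $m\ge n+1$. Factoring out $e^{\varepsilon(t_{n+1}-t_n)}$ from the latter gives $e^{\varepsilon(t_{n+1}-t_n)}p^s(\varphi_{t_{n+1}}(x))$. Iterating this identity gives the displayed chain of inequalities; the $p^u$ case is again symmetric.

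\emph{Item (2).} I read the claim as the two-sided bound $e^{\pm\mathfrak Y}$. One direction follows from Item (1) together with Lemma~\ref{lem.estimateQ}(4)-type monotonicity. Write $t=t_n+s$ with $0\le s\le t_{n+1}-t_n\le 2\rho$. To compare $q^s(\varphi_t(x))$ with $q^s(\varphi_{t_n}(x))$, split the infimum for the latter into the times in $[0,s]$ and those $\ge s$. The tail part equals $e^{\varepsilon s}$ times that of $\varphi_t(x)$. The initial part is controlled by $Q(\varphi_{t_n+u}(x))=e^{\pm288\rho/\beta}Q(\varphi_t(x))$ for $u\in[0,s]$, using Lemma~\ref{lem.estimateQ}(1).

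To compare $p^s$ with $q^s$, each time $u\ge 0$ lies in some window $[t_m-t_n,t_{m+1}-t_n]$ of length at most $2\rho$. On that window, $Q$ is comparable to $Q(\varphi_{t_m}(x))$ up to $e^{\pm288\rho/\beta}$, and the exponential weight changes by at most $e^{2\varepsilon\rho}$. So the discrete infimum and the continuous infimum agree up to $e^{\pm(2\varepsilon\rho+288\rho/\beta)}$. One must also shift the base point from $t_n$ to $t$, which costs a further $e^{\varepsilon s}$ and a further $e^{288\rho/\beta}$.

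This bookkeeping is where I expect the only real obstacle: making the constants land exactly on $\mathfrak Y$ as stated, rather than on a multiple of it. If they do not fit, I would take $\mathfrak Y$ to be a fixed multiple of $\varepsilon\rho+288\rho/\beta$. This does not affect any later use, since only the uniformity of the constant matters.

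\emph{Item (4).} Suppose $p^s(\varphi_{t_n}(x))<\varepsilon Q(\varphi_{t_n}(x))$ for all $n\ge N$. By Item (3), $p^s(\varphi_{t_n}(x))=e^{\varepsilon(t_{n+1}-t_n)}p^s(\varphi_{t_{n+1}}(x))$ for all $n\ge N$. Iterating, $p^s(\varphi_{t_n}(x))=e^{-\varepsilon(t_n-t_N)}p^s(\varphi_{t_N}(x))$, which tends to $0$ exponentially, and hence $Q(\varphi_{t_n}(x))$ would exceed this bound.

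To reach a contradiction, use (NUH5). There are times $t^+_k\to\infty$ with $q(\varphi_{t^+_k}(x))\ge\delta_x$. Choose $n_k$ with $t_{n_k}\le t^+_k\le t_{n_k+1}$. By Item (2), $p^s(\varphi_{t_{n_k}}(x))\ge e^{-\mathfrak Y}q^s(\varphi_{t^+_k}(x))\ge e^{-\mathfrak Y}\delta_x$, which stays bounded away from $0$. This contradicts the exponential decay. The unstable case uses $t^-_k\to-\infty$ in the same way.
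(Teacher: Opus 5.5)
Your proposal is correct and is essentially the paper's argument. The paper proves Item~(1) from the definitions and defers Items~(2)--(4) to \cite[Proposition~3.6]{BCL}, which proceeds as you do, including the use of the recurrence condition (NUH5), Item~(2) and $q^s\ge q$ to force $p^s=\varepsilon Q$ infinitely often.

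The constant in Item~(2) does not need to be enlarged. First, $q^s(\varphi_{t_n}(x))\ge e^{-288\rho/\beta}p^s(\varphi_{t_n}(x))$ with no $\varepsilon$-loss, because a time $u$ with $t_n+u\in[t_m,t_{m+1}]$ satisfies $u\ge t_m-t_n$. Second, $q^s(\varphi_t(x))\ge e^{-\varepsilon(t-t_n)}q^s(\varphi_{t_n}(x))$ costs no $Q$-factor. So the ratio lies in $[e^{-288\rho/\beta},e^{\varepsilon(t-t_n)+288\rho/\beta}]\subset[e^{-\mathfrak{Y}},e^{\mathfrak{Y}}]$, since the proof of Theorem~\ref{thm.poincaresection} gives $R_{\mathbb D}\le 70\cdot 10^{-8}\rho$.
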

\begin{proof}
	Item (1) follows directly from the definition.  
  The proof of Item (2)-(4) follows the same approach as \cite[Proposition~3.6]{BCL}.  

\end{proof}

\begin{rk}\label{rk.Qqp}
Although the notations $Q(x)$, $q(x)$, $p(x)$ and the aforementioned $\NUH_\chi$, $s(x)$, $u(x)$, $C(x)$ do not have the superscript ``$^*$'', please note that they characterize the hyperbolicity after scaling.
\end{rk}  
  
\subsection{Scaled Pesin charts}

In this subsection, we define the scaled Pesin charts for each $x\in \mathbb D\cap\NUH_{\chi}$. 
Note that our definition here differs slightly from common usage. 

Typically, a small neighborhood of $x$ in $D_j$ is lifted to $\H_x$ via the exponential map $\exp_x$ at $x$, where scaling is performed (since $C(x)$ examines hyperbolicity after scaling) before lifting to Euclidean space via $C(x)$. 
 Our approach instead lifts the entire section $D_j$ (containing neighborhoods of $x$) to $\H_{o_j}$ using the exponential map $\exp_{o_j}$ at the central point $o_j$. Scaling occurs in this unified linear space $\H_{o_j}$, followed by transferring to $\H_x$ through $d_x\exp_{o_j}$, and final lift to Euclidean space via $C(x)$. 

This methodology provides some advantages: By treating $\exp_{o_j}^{-1}$ as a natural coordinate map, we transfer the analysis on submanifold $D_j$ to the linear space $\H_{o_j}$, enabling direct handling of the scaled systems. Consequently, existing results for diffeomorphisms and non-singular flows become applicable while computational complexity reduces.  We therefore define the following scaled Pesin charts.

Assume $x\in D_j=N_{o_j}(r_0|X(o_j)|)$. Define 
\[
\wt{C}(x)=d_x\exp_{o_j}^{-1}\circ C(x),
\]
which is a linear isomorphism from $\mathbb{R}^{d-1}$ to $\N_{o_j}$.

Recall that $\wt x^*=\frac{\exp^{-1}_{o_j}(x)}{|X(o_j)|}=:\varsigma(o_j)\circ \exp_{o_j}^{-1}(x)$. The {\it scaled Pesin chart} at $x$ is the affine map $\wt\Psi_x^* :\mathbb{R}^{d-1}\to \N_{o_j}$ defined as 
\begin{equation}\label{eq.scalepesinchart}
  \wt\Psi_x^*(\xi)=\wt x^*+\wt C(x)(\xi).
\end{equation}

In particular, $d_\xi\wt\Psi_{x}^*=\wt{C}(x)$ for all $\xi\in \mathbb{R}^{d-1}$ and satisfies  
\begin{equation}\label{eq.bdPsi}
\norm{d\wt\Psi_{x}^*}=\norm{\wt{C}(x)}\leq 2 \text{ and } \norm{d(\wt\Psi^{*}_x)^{-1}}=\norm{\wt{C}(x)^{-1}}\leq 2 \norm{C(x)^{-1}}.
\end{equation}

We also define the  {\it Pesin chart} of $\mathbb D$ as  
\[
\Psi_x=\exp_{o_j}\circ \varsigma(o_j)^{-1}\circ \wt\Psi^*_x: \mathbb R^{d-1}(r_0)\to D_j(3r_0), \ x\in D_j\cap \NUH_\chi.
\]

From (\ref{eq.bdPsi}), we have that
\begin{equation}\label{eq.BdPsi}
\norm{d\Psi_x}\leq 4|X(o_i)| \text{ and }
\norm{d\Psi_x^{-1}} \leq \frac{4}{|X(o_i)|}
\end{equation}

Denote $R[r]=\bR^{d^s(x)}(r)\times \bR^{d^u(x)}(r)$. We omit $x$ in the notation because we can always fix the corresponding dimensions later.

\begin{pro}\label{pro.Pesinlift}
For all $\varepsilon>0$ small enough and all $x\in \mathbb{D}\cap \NUH_{\chi}$, the lift map 
\[
f^{+,*}_x:=(\wt \Psi_{f(x)}^{*})^{-1}\circ \wt g^{+,*}_x\circ \wt \Psi_{x}^*
\] 
is a well-defined diffeomorphism from $R[10Q(x)]$ onto its image and satisfies:
\begin{enumerate}
\item $d_0f^{+,*}_x=C(f(x))^{-1}\circ \Phi^*_{R_{\mathbb D}(x)}\circ C(x)=
\left(\begin{matrix}
 A & 0 \\
0 & B
\end{matrix}\right)$
 where $A=A_{R_{\mathbb D}(x)}(x)$ and $B=B_{R_{\mathbb D}(x)}(x)$ are given by Lemma~\ref{lem.propofC} for $t={R_{\mathbb D}(x)}$;
\item $f^{+,*}_x=d_0f^{+,*}_x+H^*_x$, where
\begin{enumerate}
\item[\rm (2a)]\label{remain1} $H^*_x(0)=0$ and $d_0H^*_x=0$;
\item[\rm (2b)]\label{remain2} $\|H^*_x\|_{C^{1+\beta/2}}<\varepsilon$.
\end{enumerate}
\end{enumerate}
A symmetric conclusion holds for $f^{-,*}_x:=(\wt \Psi_{x}^{*})^{-1}\circ \wt g^{-,*}_{f(x)}\circ \wt \Psi_{f(x)}^*$.
\end{pro}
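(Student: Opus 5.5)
The plan is to follow the standard Pesin-chart argument of \cite[Proposition~3.8]{BCL} (and \cite[Theorem~5.3]{Sar13}). The work is done in the linear space $\N_{o_i}$, where by \eqref{eq.gstar} the scaled holonomy $\wt g^{+,*}_x=\P_{o_i,o_j}|_{\N_{o_i}(\wt x^*,r_0)}$ has $\beta$-H\"older derivative with a constant $K_F$ that does not depend on how close $x$ is to $\Sing(X)$.

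\emph{Step 1: well-definedness.} Let $x\in D_i$ and $f(x)\in D_j$. By \eqref{eq.bdPsi}, $\wt\Psi^*_x(R[10Q(x)])$ is contained in the ball of radius $20\sqrt{2}\,Q(x)$ around $\wt x^*$. By Lemma~\ref{lem.estimateQ}\ref{est.QC}, $Q(x)<\varepsilon^{3/\beta}\ll r_0$, so this ball lies in $\N_{o_i}(\wt x^*,r_0)$, which is the domain of $\wt g^{+,*}_x$. The map $\wt\Psi^*_{f(x)}$ is an affine isomorphism onto $\N_{o_j}$. Hence $f^{+,*}_x$ is defined, it is a diffeomorphism onto its image because $\wt g^{+,*}_x$ is one, and $f^{+,*}_x(0)=0$ because $\wt g^{+,*}_x(\wt x^*)=\wt{f(x)}^*$.

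\emph{Step 2: the derivative at $0$.} The chain rule gives
\[
d_0f^{+,*}_x=C(f(x))^{-1}\circ (d_{f(x)}\exp_{o_j}^{-1})^{-1}\circ d\wt g^{+,*}_x\circ d_x\exp_{o_j'}\ldots
\]
More precisely, $d_0f^{+,*}_x=\wt C(f(x))^{-1}\circ d_{\wt x^*}\wt g^{+,*}_x\circ\wt C(x)$. Undoing the lifts and scalings, $d_{\wt x^*}\wt g^{+,*}_x$ is conjugate, via $d\exp_{o_i}$ and $d\exp_{o_j}$, to $\frac{|X(o_i)|}{|X(o_j)|}\,d_xg^+_x$.

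By Proposition~\ref{hpsi}\ref{Phi.rel}, $d_xg^+_x=\Phi_{R_{\mathbb D}(x)}$ on $\H_x$. Here the tangent spaces of $\wh D_i$ at $x$ and of $\wh D_j$ at $f(x)$ are exactly $\H_x$ and $\H_{f(x)}$, because $Y$ is normal to $\wh D_k$. Moreover $|Y|\equiv|X(o_k)|$ on $\wh D_k$, so the scalar factor equals $\frac{|Y(x)|}{|Y(f(x))|}$, and the composition is $\Phi^*_{R_{\mathbb D}(x)}$ by \eqref{def.sf}. The $\exp$ factors cancel against the ones built into $\wt C$, which gives $C(f(x))^{-1}\Phi^*_{R_{\mathbb D}(x)}C(x)$. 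Lemma~\ref{lem.propofC}(2), which applies since $R_{\mathbb D}\le\rho\le 2\rho$, then gives the block-diagonal form.

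\emph{Step 3: the remainder.} Set $H^*_x=f^{+,*}_x-d_0f^{+,*}_x$, so that (2a) is immediate. For $\xi_1,\xi_2\in R[10Q(x)]$,
\[
\|d_{\xi_1}H^*_x-d_{\xi_2}H^*_x\|\le \|C(f(x))^{-1}\|\cdot 2\cdot K_F\,(2|\xi_1-\xi_2|)^{\beta}\cdot 2.
\]
Using Lemma~\ref{lem.estimateQ}\ref{est.QC}, namely $\|C(f(x))^{-1}\|Q(x)^{\beta/12}\le\varepsilon^{1/4}$, this becomes
\[
\le c K_F\,\varepsilon^{1/4}Q(x)^{-\beta/12}|\xi_1-\xi_2|^{\beta}.
\]
Since $|\xi_1-\xi_2|\le 40Q(x)$, we can trade half of the H\"older exponent: $|\xi_1-\xi_2|^{\beta/2}\le (40Q(x))^{\beta/2}$, which absorbs $Q(x)^{-\beta/12}$ with room to spare. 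This yields $\operatorname{Hol}_{\beta/2}(dH^*_x)\lesssim \varepsilon^{1/4}Q(x)^{\beta/3}\ll\varepsilon$. The $C^0$ and $C^1$ norms follow from $H^*_x(0)=0$, $d_0H^*_x=0$ and the small domain, giving (2b). The backward map $f^{-,*}_x$ is handled symmetrically using $g^-$.

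I expect Step 2 to be the main obstacle. One must check carefully that the change of coordinates $d\exp_{o_j}$ and the definition $\wt C=d\exp_{o_j}^{-1}\circ C$ are consistent, so that $d_0f^{+,*}_x$ is exactly $C(f(x))^{-1}\Phi^*C(x)$ and not only up to an error. This rests on the construction of $Y$, which makes $\H$ tangent to the sections and the two scalings ($|X(o_j)|$ versus $|Y|$) coincide on $\wh D_j$.
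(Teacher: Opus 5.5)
Your proposal is correct and follows the paper's proof essentially step for step. Well-definedness comes from the same containment $\wt\Psi^*_x(R[10Q(x)])\subset\N_{o_i}(\wt x^*,r_0)$. The identification of $d_0f^{+,*}_x$, which the paper dismisses as ``easy to see'', you justify via Proposition~\ref{hpsi}\ref{Phi.rel} and $|Y|\equiv|X(o_k)|$ on $\wh D_k$. The H\"older bound uses the same trade of half the exponent in \eqref{eq.gstar} against Lemma~\ref{lem.estimateQ}\ref{est.QC}. The only slip is arithmetic: the leftover power is $Q(x)^{5\beta/12}$ rather than $Q(x)^{\beta/3}$, which is harmless since $Q(x)<1$.
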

\begin{proof}
From (\ref{eq.bdPsi}) and the definition of $\wt\Psi_{x}^*$, $\wt\Psi_{x}^*(R[10Q(x)])$ is contained in a ball with center at $\wt x^*=\frac{1}{|X(o_i)|}\cdot \exp^{-1}_{o_i}(x)$ with radius of $20Q(x)$. Then for small $\varepsilon>0$, $20Q(x)<r_0/2$. This shows that $f^{+,*}_x$ is well-defined on $R[10Q(x)]$. 

It is easy to see that $d_0f^{+,*}_x=C(f(x))^{-1}\circ \Phi^*_{R_{\mathbb D}(x)}\circ C(x)$. The remained part of Items (1) and (2a) follow from Lemma~\ref{lem.propofC} (2) and definition of $H^*_x$ directly.  It remains to prove Item (2b). 
From the expression of $f^{+,*}_x$, we only verify that 
\begin{equation}\label{eq.Hhol}
\norm{d_{\xi_1}f^{+,*}_x-d_{\xi_2}f^{+,*}_x}\leq\varepsilon|\xi_1-\xi_2|^{\beta/2} \text{ for all $\xi_1,\xi_2\in R[10Q(x)]$}.
\end{equation} 
 
 In fact, denote $\eta_i=\wt\Psi_{x}^*(\xi_i)$ ($i=1,2$). Then by (\ref{eq.gstar}) and Lemma~\ref{lem.propofC} (1),
 \[
 \norm{d_{\xi_1}f^{+,*}_x-d_{\xi_2}f^{+,*}_x}
 \leq 4\norm{C(f(x))^{-1}}\cdot\norm{d_{\eta_1}\wt g^{+,*}_x-d_{\eta_2}\wt g^{+,*}_x} \leq4 K_F\norm{C(f(x))^{-1}} |\eta_1-\eta_2|^{\beta}.
  \]

 Since $R_{\mathbb D}(x)\in(0,2\rho)$, then we have that for all $\varepsilon>0$ small,
\[\begin{aligned}
\norm{C(f(x))^{-1}}|\eta_1-\eta_2|^{\frac{\beta}{2}}&\leq 2^{\beta}e^{18\rho}\norm{C(x)^{-1}}[20Q(x)]^{\frac{\beta}{2}}\text{ (Lemma~\ref{lem.propofC} (3))}\\
&\leq 2^{\beta}e^{18\rho}20^{\frac{\beta}{2}}Q(x)^{\frac{5}{12}\beta}\cdot\norm{C(x)^{-1}}Q(x)^{\frac{\beta}{12}}\\
&\leq 2^{\beta}e^{18\rho}20^{\frac{\beta}{2}}\varepsilon^{\frac{5}{4}}\cdot\varepsilon^{\frac{1}{4}}\text{ (by Lemma~\ref{lem.estimateQ} (3))}.
\end{aligned}\]
Therefore, $4K_F\norm{C(f(x))^{-1}}|\wt{v}_1-\wt{v}_2|^{\frac{\beta}{2}}\leq \varepsilon$ for all $\varepsilon>0$ small.
This completes the proof.
\end{proof}

The following calculations are mostly performed within $\N_{o_j}$ by using the scaled Pesin charts, and we only project it to the manifold when necessary. 
 The commutative diagram in Figure~\ref{fig.CommuDiagram} illustrates the relationships among these mappings. (Where $\imath(x)$ denotes the translation by $\wt x^*$ in $\N_{o_j}$.)

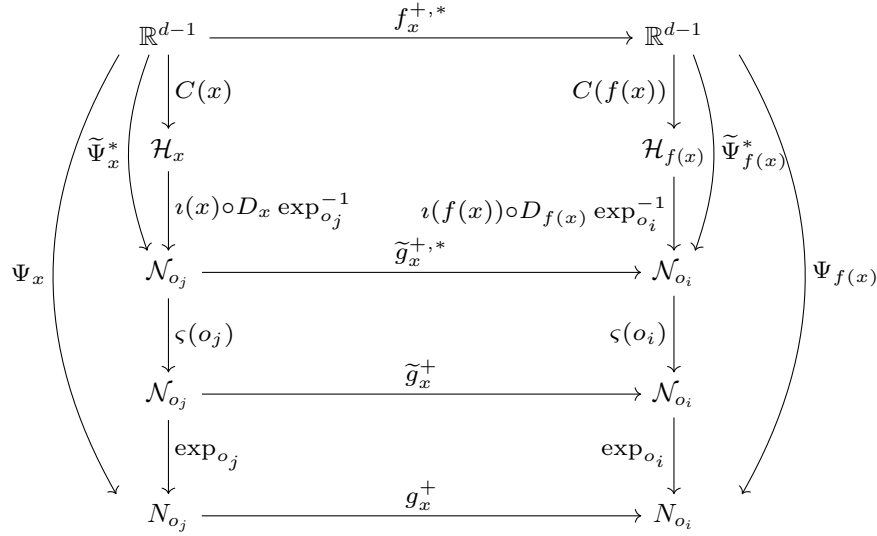
\begin{figure}[ht]
\centering
\begin{tikzcd}[column sep=5.6cm, row sep=1cm]
 \mathbb{R}^{d-1} \arrow[dddd, {xshift=-15pt}, bend right=30, "{\scalebox{1.3}{$\Psi_x$}}" left ] \arrow[dd, {xshift=-5pt}, bend right=20, "{\scalebox{1.3}{$\wt\Psi^*_x$}}" left] \arrow[d, "{\scalebox{1.3}{$C(x)$}}" right] \arrow[r, "{\scalebox{1.3}{$ f^{+,*}_x$}}" above] & \mathbb{R}^{d-1} \arrow[d, "{\scalebox{1.3}{$C(f(x))$}}" left]  \arrow[dddd, {xshift=21pt}, bend left=30, "{\scalebox{1.3}{$\Psi_{f(x)}$}}" right ] \arrow[dd, {xshift=5pt}, bend left=20, "{\scalebox{1.3}{$\wt\Psi^*_{f(x)}$}}" right]\\
\H_x \arrow[d, "{\scalebox{1.3}{$\imath(x)\circ D_x\exp_{o_j}^{-1}$}}" right] & \H_{f(x)} \arrow[d, "{\scalebox{1.3}{$\imath(f(x))\circ D_{f(x)}\exp_{o_i}^{-1}$}}" left]\\
 \N_{o_j} \arrow[d, "{\scalebox{1.3}{$\varsigma(o_j)$}}" right] \arrow[r, "{\scalebox{1.3}{$\wt g^{+,*}_x$}}" above] & \N_{o_i} \arrow[d, "{\scalebox{1.3}{$\varsigma(o_i)$}}" left] \\
 \N_{o_j} \arrow[d, "{\scalebox{1.3}{$\exp_{o_j}$}}" right] \arrow[r, "{\scalebox{1.3}{$\wt g^+_x$}}" above] & \N_{o_i}  \arrow[d, "{\scalebox{1.3}{$\exp_{o_i}$}}" left]\\
 N_{o_j} \arrow[r, "{\scalebox{1.3}{$g^+_x$}}" above] & N_{o_i}
\end{tikzcd}
\caption{Relationships between mappings}\label{fig.CommuDiagram}
\end{figure}

\subsection{Restricted Pesin charts and overlapping condition}\label{sec.overlap}

For every $\eta\in(0,Q(x)]$, we denote by $\wt\Psi^{*,\eta}_{x}$ the restriction of the scaled Pesin chart at $x$ of scaled size $\eta$, that is, $\wt\Psi^{*,\eta}_{x}:=\wt\Psi^{*}_{x}|_{R[\eta]}$. Accordingly, we denote $\Psi^{\eta}_{x}:=\Psi_{x}|_{R[\eta]}$.

Subsequently, we will discuss pseudo orbits and the shadowing property. Unlike diffeomorphisms and non-singular flows, we employ scaled methods to deal with the lack of uniformity of size. 

\begin{dfn}\label{dfn.overlap}
Assume that $x_1,x_2\in \mathbb D\cap \NUH_\chi$. We say that two scaled Pesin charts $\wt\Psi^{*,\eta_1}_{x_1}$ and $\wt\Psi^{*,\eta_2}_{x_2}$ are {\rm $\varepsilon$-overlap}, denoted by $\wt\Psi^{*,\eta_1}_{x_1}\over\wt\Psi^{*,\eta_2}_{x_2}$, if 
\begin{enumerate}
\item $x_1,x_2\in D_j$ for some $j$;
\item $d^s(x_1)=d^s(x_2)$;
\item\label{dfoverlap.rat} $\frac{\eta_1}{\eta_2}=e^{\pm\varepsilon}$; and
\item\label{dfoverlap.dist} $|\wt x_1^*-\wt x_2^*|+\|\wt C(x_1)-\wt C(x_2)\|<(\eta_1\eta_2)^4$.
\end{enumerate}
\end{dfn}

We point out that $|\wt x_1^*-\wt x_2^*|$ is the distance after scaling. Thus by Item (4) we have 
\begin{equation}\label{eq.disti}
|\wt x^*_1-\wt x^*_2|<(\eta_1\eta_2)^4 \text{ and }\dist(x_1,x_2)<2(\eta_1\eta_2)^4|X(o_i)|.
\end{equation}

\begin{pro}\label{pro.overlap}
For all $\varepsilon>0$ small enough, if $\wt\Psi^{*,\eta_1}_{x_1}\over\wt\Psi^{*,\eta_2}_{x_2}$, $x_1,x_2\in D_j$, then we have
\begin{enumerate}
\item\label{ol.comp} $\wt \Psi_{x_i}^*(R[10Q(x_j)])\subset \N_{o_j}(\wt x_1^*, r_0)\cap \N_{o_j}(\wt x_2^*, r_0)$ for $i=1,2$.
\item \label{ol.inclu}$\wt \Psi_{x_i}^*(R[e^{-2\varepsilon}\eta_i])\subset \wt\Psi^{*}_{x_k}(R[\eta_k])$ for $i,k=1,2$.
\item\label{ol.trans} The map $(\wt\Psi_{x_i}^*)^{-1}\circ\wt\Psi_{x_k}^*$ is well-defined on $R[ r_0/2]$, and $\norm{(\wt\Psi_{x_i}^*)^{-1}\circ\wt \Psi_{x_k}^*-{\rm Id}}_{C^2}<\varepsilon(\eta_1\eta_2)^{2}$ where the norm is taken in $R[ r_0/2]$ for $i, k=1, 2$.
\item\label{ol.Ccomp}$\norm{\wt{C}(x_1)^{-1}-\wt{C}(x_2)^{-1}}<2\varepsilon \eta_1\eta_2$ and $\frac{\norm{\wt{C}(x_1)^{-1}}}{\norm{\wt{C}(x_2)^{-1}}}, \frac{\norm{C(x_1)^{-1}}}{\norm{C(x_2)^{-1}}}=e^{\pm\eta_1\eta_2}$.
\end{enumerate}
\end{pro}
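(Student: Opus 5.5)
The four items reduce to elementary linear-algebra estimates inside the single vector space $\N_{o_j}$. The point is that both charts are affine maps $\wt\Psi^*_{x_i}(\xi)=\wt x_i^*+\wt C(x_i)\xi$ into the same space. The only inputs are the overlap condition $|\wt x_1^*-\wt x_2^*|+\|\wt C(x_1)-\wt C(x_2)\|<(\eta_1\eta_2)^4$, the bounds (\ref{eq.bdPsi}), and the smallness estimates of Lemma~\ref{lem.estimateQ}~(3). That lemma gives $\eta_i\le Q(x_i)<\varepsilon^{3/\beta}$ and $\|C(x_i)^{-1}\|\,Q(x_i)^{\beta/12}\le\varepsilon^{1/4}$. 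I will prove (4) first, since (2) and (3) use it.

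For (4), write $\wt C(x_1)^{-1}-\wt C(x_2)^{-1}=\wt C(x_1)^{-1}\bigl(\wt C(x_2)-\wt C(x_1)\bigr)\wt C(x_2)^{-1}$. This gives the bound $4\|C(x_1)^{-1}\|\|C(x_2)^{-1}\|(\eta_1\eta_2)^4$. Since $\eta_i\le Q(x_i)$, we have $\|C(x_i)^{-1}\|\eta_i^{\beta/12}\le\varepsilon^{1/4}$, and hence $\|C(x_i)^{-1}\|\eta_i\le\varepsilon^{1/4}\eta_i^{1-\beta/12}$. The product is therefore at most $4\varepsilon^{1/2}(\eta_1\eta_2)^{3-\beta/12}\le 2\varepsilon\eta_1\eta_2$ once $\varepsilon$ is small, because $\eta_1\eta_2<\varepsilon^{6/\beta}$. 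The ratio statements follow by dividing by $\|\wt C(x_2)^{-1}\|$ and using $\log(1+s)\le s$. For $C$ itself, note that $C(x_i)=d_{x_i}\exp_{o_j}\circ\wt C(x_i)$. The difference $\|d_{x_1}\exp_{o_j}-d_{x_2}\exp_{o_j}\|$ is $O(\dist(x_1,x_2))=O((\eta_1\eta_2)^4)$ by (\ref{eq.disti}), so the same computation applies.

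For (1), (\ref{eq.bdPsi}) shows that $\wt\Psi^*_{x_i}(R[10Q(x_i)])$ lies in the ball of radius $20Q(x_i)<r_0/4$ around $\wt x_i^*$. Since $|\wt x_1^*-\wt x_2^*|<(\eta_1\eta_2)^4\ll r_0$, this image also lies in the $r_0$-ball around the other centre. For (3), the transition map is affine:
\[
(\wt\Psi^*_{x_i})^{-1}\circ\wt\Psi^*_{x_k}(\xi)-\xi=\wt C(x_i)^{-1}(\wt x_k^*-\wt x_i^*)+\bigl(\wt C(x_i)^{-1}\wt C(x_k)-{\rm Id}\bigr)\xi .
\]
Because the map is affine, its second derivative vanishes, so the $C^2$ norm on $R[r_0/2]$ is bounded by $2\|C(x_i)^{-1}\|(\eta_1\eta_2)^4(1+r_0)$. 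The same absorption as in (4) makes this smaller than $\varepsilon(\eta_1\eta_2)^2$.

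For (2), take $\xi\in R[e^{-2\varepsilon}\eta_i]$ and consider $\zeta=(\wt\Psi^*_{x_k})^{-1}\wt\Psi^*_{x_i}(\xi)$. By (3), $\zeta$ differs from $\xi$ in each of the $s$- and $u$-components by less than $\varepsilon(\eta_1\eta_2)^2$; here $d^s(x_1)=d^s(x_2)$ ensures the product decomposition $R[\cdot]$ is the same for both charts. Also $\eta_i\le e^{\varepsilon}\eta_k$ by the ratio condition (3) of Definition~\ref{dfn.overlap}. Hence each component of $\zeta$ has norm less than $e^{-\varepsilon}\eta_k+\varepsilon\eta_k^2\cdot\eta_i^2<\eta_k$, since $e^{-\varepsilon}+\varepsilon\eta_i^2<1$ for small $\varepsilon$. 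So $\zeta\in R[\eta_k]$.

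The main obstacle is only bookkeeping. Absorbing the unbounded factor $\|C(x_i)^{-1}\|$ must be done through $\eta_i\le Q(x_i)$ together with Lemma~\ref{lem.estimateQ}~(3), and the single smallness threshold for $\varepsilon$ must be uniform in $x_1,x_2$ and $j$. The scaling is what makes this work: all estimates take place in $\N_{o_j}$, with constants independent of $|X(o_j)|$.
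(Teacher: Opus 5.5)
Your proposal is correct and follows essentially the paper's argument: the explicit affine formula for the transition map, the overlap bound, (\ref{eq.bdPsi}), and Lemma~\ref{lem.estimateQ}~(3) to absorb the factor $\|C(x_i)^{-1}\|$ via $\eta_i\le Q(x_i)$. The differences are cosmetic: you derive (2) from the $C^0$ part of (3) rather than from the estimates (Est1)--(Est3) directly, and you justify the ratio $\|C(x_1)^{-1}\|/\|C(x_2)^{-1}\|$ by comparing $d\exp_{o_j}$ at the two points, a step the paper passes over.
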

\begin{proof}
For $i=1$ in Item {\ref{ol.comp}}, as shown in Proposition \ref{pro.Pesinlift}, we have that $\wt \Psi_{x_1}^*(R[10Q(x_1)])\subset \N_{o_j}(\wt x_1^*,r_0)$. 

On the other hand, we also have $\wt\Psi_{x_1}^*(R[10Q(x_1)])\subset \N_{o_j}(x_2^{*},20Q(x_1)+|\wt x_1^{*}-\wt x_2^{*}|)$. Accoording to $\eta_i\leq Q(x_i)<\varepsilon^{3/\beta}$ and (\ref{eq.disti}), it holds for all small $\varepsilon>0$ that $20Q(x_1)+|\wt x_1^{*}-\wt x_2^{*}|<20\varepsilon^{3/\beta}+(\eta_1\eta_2)^4<r_0$. 
This shows that $\wt\Psi_{x_1}^*(R[10Q(x_1)])\subset \N_{o_j}(\wt x_2^*,r_0)$ and hence $\wt \Psi_{x_1}^*(R[10Q(x_j)])\subset \N_{o_j}(\wt x_1^*,r_0)\cap \N_{o_j}(\wt x_2^*,r_0)$. The verification for the case $i=2$ is analogous .

For Item \ref{ol.inclu}, we only prove $\wt\Psi_{x_1}^*(R[e^{-2\varepsilon}\eta_1])\subset \wt\Psi_{x_2}^*(R[\eta_2])$. The other one is symmetrical. Take any $\xi\in R[e^{-2\varepsilon}\eta_1]$. From (\ref{eq.scalepesinchart}), we can direct calculate that
\begin{equation}\label{eq.Psidifference}
(\wt\Psi_{x_2}^*)^{-1}\circ\wt\Psi_{x_1}^*(\xi)=\xi+\wt{C}(x_2)^{-1}(\wt x_1^*-\wt x_2^*)+\wt{C}(x_2)^{-1}\circ(\wt{C}(x_1)-\wt{C}(x_2))(\xi).
\end{equation}

Note that
\begin{enumerate}
\item[(Est1)] $|\xi|\leq e^{-2\varepsilon}\eta_1\leq e^{-\varepsilon}\eta_2$ from Definition \ref{dfn.overlap} \ref{dfoverlap.rat}.
\item[(Est2)] $|\wt{C}(x_2)^{-1}(\wt x_1^*-\wt x_2^*)|\leq 4\norm{C(x_2)^{-1}}\cdot (\eta_1\eta_2)^4$ from (\ref{eq.bdPsi}) and Definition \ref{dfn.overlap} \ref{dfoverlap.dist}.
\item[(Est3)]$|\wt{C}(x_2)^{-1}\circ(\wt{C}(x_1)-\wt{C}(x_2))|\leq 2\norm{C(x_2)^{-1}}\cdot (\eta_1\eta_2)^4$ from (\ref{eq.bdPsi}) and Definition \ref{dfn.overlap} \ref{dfoverlap.dist}. 
\end{enumerate}
Since $\eta_i<Q(x_i)$ for $i=1, 2$, then due to Lemma~\ref{lem.estimateQ} \ref{est.QC}, it is easily obtained that $|(\wt\Psi_{x_2}^*)^{-1}\circ\wt\Psi_{x_1}^*(\xi)|<\eta_2$. This proves the second item.

Now we tend to Item \ref{ol.trans}. Going through a similar process as the proof of Item \ref{ol.comp}, we may get that $(\wt\Psi_{x_2}^*)^{-1}\circ\wt\Psi_{x_1}^*$ is well-defined on $R[ r_0/2]$. In addition, from (\ref{eq.Psidifference}), we have that
\[
(\wt\Psi_{x_2}^*)^{-1}\circ\wt\Psi_{x_1}^*-{\rm Id}=\wt{C}(x_2)^{-1}(\wt x^*_1-\wt x^*_2)+\wt{C}(x_2)^{-1}\circ(\wt{C}(x_1)-\wt{C}(x_2)).
\]
Take the derivative we get that
\[
d^2((\wt\Psi_{x_2}^*)^{-1}\circ\wt\Psi_{x_1}^*-{\rm Id})=d((\wt\Psi_{x_2}^*)^{-1}\circ\Psi_{x^*_1}-{\rm Id})=\wt{C}(x_2)^{-1}\circ(\wt{C}(x_1)-\wt{C}(x_2)).
\]
Therefore, the estimate of $\norm{d((\wt\Psi_{x_2}^*)^{-1}\circ\wt\Psi_{x_1}^*-{\rm Id})}_{C^1}$ can be obtained from (Est2) and (Est3) directly. This proves Item \ref{ol.trans}.

Finally, since $\norm{\wt{C}(x_1)^{-1}-\wt{C}(x_2)^{-1}}\leq \norm{\wt{C}(x_2)^{-1}}\cdot\norm{\wt{C}(x_1)-\wt{C}(x_2)}\cdot\norm{\wt{C}(x_1)^{-1}}$ and $\frac{\norm{\wt{C}(x_1)^{-1}}}{\norm{\wt{C}(x_2)^{-1}}}, \frac{\norm{C(x_1)^{-1}}}{\norm{C(x_2)^{-1}}}= 1\pm 2\cdot \frac{\norm{\wt{C}(x_1)^{-1}-\wt{C}(x_2)^{-1}}}{\norm{C(x_2)^{-1}}}$, we may obtain the conclusion of Item \ref{ol.Ccomp} by using (Est2) and (Est3). Hence we complete the proof.
\end{proof}

We now consider the transition between overlapping scaled Pesin charts.  
If $\wt\Psi^{*,\eta}_{f(x)}\over\wt\Psi^{*,\eta'}_{y}$, then it follows from Proposition~\ref{pro.overlap} \ref{ol.comp} that the  map 
\[
f^{+,*}_{x,y}=(\wt \Psi^*_{y})^{-1}\circ \wt g^{+,*}_x\circ \wt\Psi^*_{x}
\] 
is well-defined on $R[10Q(x)]$. Symmetrically, if $\wt\Psi^{*,\eta}_{x}\over\wt\Psi^{*,\eta'}_{f^{-1}(y)}$, then  the  map 
\[
f^{-,*}_{x,y}=(\wt \Psi^*_{x})^{-1}\circ \wt g^{-,*}_y\circ \wt\Psi^*_{y}
\] 
is well-defined on $R[10Q(x)]$.

The proof of the following Proposition is completely the same as  \cite[Theorem~3.8']{BCL} and \cite[Theorem~3.10]{LMN}. 

\begin{pro}\label{pro.overlift}
For all $\varepsilon>0$ small enough, if $x, y\in \Lambda\cap \NUH_{\chi}$ and $\wt\Psi^{*,\eta}_{f(x)}\over\wt\Psi^{*,\eta'}_{y}$, then the lift map 
\[
f^{+,*}_{x,y}=(\wt \Psi^*_{y})^{-1}\circ \wt g^{+,*}_x\circ \wt\Psi^*_{x}
\] 
is a well-defined diffeomorphism from $R[10Q(x)]$ onto its image and can be written as
 $f^{+,*}_{x,y}=
\left(\begin{matrix}
 A & 0 \\
0 & B
\end{matrix}\right)+H^*_{x,y}$ where $A, B$ are given by Proposition~\ref{pro.Pesinlift} and satisfy that :
\begin{enumerate}
\item $e^{-4\rho}<|A|<e^{-\chi R_{\mathbb D}(x)}$ and $e^{\chi R_{\mathbb D}(x)}<|B|<e^{4\rho}$;
\item $\norm{H^*_{x,y}(0)}<\varepsilon \eta$, $\norm{d_0H^*_{x,y}}<\varepsilon\eta^{\beta/3}$ and $\text{\rm  H\"ol}_{\beta/3}(dH^*_{x,y})<\varepsilon$. Consequently, $df^{+,*}_{x,y}$
 is $\frac{\beta}{3}$-H\"older continuous with H\"older constant $H^*>1$ which is independent of $x, y$.
 \end{enumerate}
 A symmetric conclusion holds for $f^{-,*}_{x,y}=(\wt \Psi^*_{x})^{-1}\circ \wt g^{-,*}_y\circ \wt\Psi^*_{y}$.
\end{pro}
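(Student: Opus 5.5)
The plan is to reduce the statement to Proposition~\ref{pro.Pesinlift} by composing with a chart-change map. We write
\[
f^{+,*}_{x,y}=\big((\wt\Psi^*_{y})^{-1}\circ\wt\Psi^*_{f(x)}\big)\circ f^{+,*}_x .
\]
The inner map $f^{+,*}_x=d_0f^{+,*}_x+H^*_x$ is controlled by Proposition~\ref{pro.Pesinlift}. Its linear part $\mathrm{diag}(A,B)$ comes from Lemma~\ref{lem.propofC}~(2), applied with $t=R_{\mathbb D}(x)\in(0,2\rho)$. This gives Item (1) directly, with the bounds $e^{-4\rho}<|A|<e^{-\chi R_{\mathbb D}(x)}$ and $e^{\chi R_{\mathbb D}(x)}<|B|<e^{4\rho}$.

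For the outer map, $f(x)$ and $y$ lie in the same $D_i$ because the charts overlap. Proposition~\ref{pro.overlap}~\ref{ol.trans} then shows that $T:=(\wt\Psi^*_y)^{-1}\circ\wt\Psi^*_{f(x)}$ is well defined on $R[r_0/2]$ and satisfies $\|T-{\rm Id}\|_{C^2}<\varepsilon(\eta\eta')^2$. By formula (\ref{eq.Psidifference}), $T$ is affine: $T(\zeta)=\zeta+c+L\zeta$ with $|c|,\|L\|<\varepsilon(\eta\eta')^2$. We also need the image of $R[10Q(x)]$ under $f^{+,*}_x$ to lie in $R[r_0/2]$. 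This holds because $\|df^{+,*}_x\|\le e^{4\rho}+\varepsilon$ and $Q(x)<\varepsilon^{3/\beta}$ by Lemma~\ref{lem.estimateQ}. Since $T$ is an affine bijection and $f^{+,*}_x$ is a diffeomorphism onto its image, $f^{+,*}_{x,y}$ is a diffeomorphism onto its image.

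Next we set $H^*_{x,y}:=f^{+,*}_{x,y}-\mathrm{diag}(A,B)$ and expand:
\[
H^*_{x,y}=c+L\circ\mathrm{diag}(A,B)+({\rm Id}+L)\circ H^*_x .
\]
The three required estimates then follow term by term.

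\begin{itemize}
\item \emph{Value at $0$.} Since $H^*_x(0)=0$, we get $|H^*_{x,y}(0)|=|c|<\varepsilon(\eta\eta')^2<\varepsilon\eta$.
\item \emph{Derivative at $0$.} Since $d_0H^*_x=0$, we get $\|d_0H^*_{x,y}\|=\|L\,\mathrm{diag}(A,B)\|\le e^{4\rho}\varepsilon(\eta\eta')^2$. Because $\eta,\eta'<\varepsilon^{3/\beta}$, this is $<\varepsilon\eta^{\beta/3}$ for $\varepsilon$ small.
\item \emph{H\"older constant.} We have $dH^*_{x,y}=L\,\mathrm{diag}(A,B)+({\rm Id}+L)\,dH^*_x$, and only the second term varies. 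Proposition~\ref{pro.Pesinlift}~(2b) gives $\mathrm{H\ddot ol}_{\beta/2}(dH^*_x)<\varepsilon$ on $R[10Q(x)]$. The domain has diameter at most $20\sqrt{d}\,Q(x)<1$, so the $\beta/3$-H\"older constant is bounded by the $\beta/2$-H\"older constant times $\mathrm{diam}^{\beta/6}$. After absorbing the factor $(1+\varepsilon)$ by shrinking $\varepsilon$, this gives $\mathrm{H\ddot ol}_{\beta/3}(dH^*_{x,y})<\varepsilon$.
\end{itemize}

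Combining these, $df^{+,*}_{x,y}=\mathrm{diag}(A,B)+dH^*_{x,y}$ is $\beta/3$-H\"older with a constant independent of $x,y$, which gives the stated uniform $H^*$.

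The step I expect to be the main obstacle is the derivative bound $\|d_0H^*_{x,y}\|<\varepsilon\eta^{\beta/3}$. It depends on the chart-change error being quadratic in $\eta\eta'$, which in turn needs the normalization $\|C(y)^{-1}\|\,Q(y)^{\beta/12}\le\varepsilon^{1/4}$ inside Proposition~\ref{pro.overlap}. I would check that the exponent bookkeeping there, $(\eta\eta')^4\cdot\|C^{-1}\|\lesssim(\eta\eta')^2$, really holds, as in \cite[Theorem~3.8']{BCL}.

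The symmetric statement for $f^{-,*}_{x,y}$ follows the same way. One writes $f^{-,*}_{x,y}=\big((\wt\Psi^*_x)^{-1}\circ\wt\Psi^*_{f^{-1}(y)}\big)\circ f^{-,*}_{f^{-1}(y)}$, and the roles of $A$ and $B$ are reversed.
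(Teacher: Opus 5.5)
Your proof is correct and is essentially the argument the paper relies on. The paper gives no proof of its own and defers to \cite[Theorem~3.8']{BCL}, which (following Sarig) writes $f^{+,*}_{x,y}$ as the chart change $(\wt\Psi^*_y)^{-1}\circ\wt\Psi^*_{f(x)}$ composed with $f^{+,*}_x$, and combines Proposition~\ref{pro.Pesinlift} with Proposition~\ref{pro.overlap}~\ref{ol.trans} exactly as you do.

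Two steps can be simplified. Both scaled charts are affine on all of $\mathbb{R}^{d-1}$, so $T$ is a globally defined affine bijection, and the check $f^{+,*}_x(R[10Q(x)])\subset R[r_0/2]$ is unnecessary. The quadratic bound $\varepsilon(\eta\eta')^2$ you flagged as the main obstacle is already supplied by Proposition~\ref{pro.overlap}~\ref{ol.trans}.
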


\section{Graph transforms and shadowing lemma}\label{sec.graph}

\subsection{Pseudo-orbits}\label{sec.gpo}

A {\it scaled $\varepsilon$-double chart} $\wt\Psi^{*,p^s,p^u}_{x}$ is a pair of scaled Pesin charts $\wt \Psi^{*,p^s,p^u}_{x}=(\wt \Psi^{*,p^s}_{x},\wt\Psi^{*,p^u}_{x})$ with $p^s,p^u\in(0, \varepsilon Q(x)]$. The parameters $p^s$ and $p^u$ will be used to characterize the hyperbolicity in the stable and unstable directions, respectively.

Let $\wt v^*=\wt\Psi^{*,p^s,p^u}_{x}$ and $\wt w^*=\wt\Psi^{*,q^s,q^u}_{y}$ be two scaled $\varepsilon$-double charts satisfying $\wt\Psi^{*,q^s\wedge q^u}_{f(x)}\over \wt \Psi^{*,q^s\wedge q^u}_{y}$ and $\wt\Psi^{*,p^s\wedge p^u}_{f^{-1}(y)}\over\wt\Psi^{*,p^s\wedge p^u}_{x}$. The {\it transition time} from $\wt v^*$ to $\wt w^*$ is
\[
T(\wt v^*,\wt w^*)=\min\{\min\{\tau^{+}_x(z):z\in \Psi_x(R[\frac{1}{20}(p^s\wedge p^u)])\}, \min\{\tau^{-}_y(z):z\in\Psi_y(R[\frac{1}{20}(q^s\wedge q^u)])\}\}.
\]
\[
T(\wt v^*,\wt w^*)=\left(\min\{\tau^{+}_x(z):z\in \Psi_x(R[\frac{1}{20}(p^s\wedge p^u)])\}\right)\wedge \left(\min\{\tau^{-}_y(z):z\in\Psi_y(R[\frac{1}{20}(q^s\wedge q^u)])\}\right).
\]

\begin{dfn}\label{dfn.gpo}
Let $\wt v^*=\wt\Psi^{*,p^s,p^u}_{x}$ and $\wt w^*=\wt\Psi^{*,q^s,q^u}_{y}$ be two scaled $\varepsilon$-double charts. Define a relation {\rm edge $\wt v^*\ov \wt w^*$} if:
\begin{enumerate}
\item[\rm (GPO1)] $\wt\Psi^{*,q^s\wedge q^u}_{f(x)}\over \wt \Psi^{*,q^s\wedge q^u}_{y}$ and $\wt\Psi^{*,p^s\wedge p^u}_{f^{-1}(y)}\over\wt\Psi^{*,p^s\wedge p^u}_{x}$; and
\item[\rm (GPO2)] The following estimates hold:
\begin{equation}
e^{-\varepsilon p^s}\min\{e^{\varepsilon T(\wt v^*,\wt w^*)}q^s,e^{-\varepsilon}\varepsilon Q(x)\}\leq p^s\leq\min\{e^{\varepsilon T(\wt v^*,\wt w^*)}q^s,\varepsilon Q(x)\},
\end{equation}
\begin{equation}
e^{-\varepsilon q^u}\min\{e^{\varepsilon T(\wt v^*,\wt w^*)}p^u,e^{-\varepsilon}\varepsilon Q(y)\}\leq q^u\leq\min\{e^{\varepsilon T(\wt v^*,\wt w^*)}p^u,\varepsilon Q(y)\}.
\end{equation}
\end{enumerate}
\end{dfn}

Note that the second condition (GPO2) here follows the formulation given in \cite{BCL}. This flexible approach is well suited to address the difficulty arising from the non-discrete nature of the flow's time parameter (see Proposition~\ref{pro.SDR} in \S~\ref{sec.Coarse}).

An {\it $\varepsilon$-generalized pseudo-orbit} ({\it $\varepsilon$-gpo}) is a sequence $\ul{\wt v}^*=\{\wt v^*_n\}_{n\in\mathbb{Z}}$ of scaled $\varepsilon$-double charts such that $\wt v^*_n\ov \wt v^*_{n+1}$ for all $n\in\mathbb{Z}$. A {\it positive $\varepsilon$-gpo} is a sequence $\ul{\wt v}^{*,+}=\{\wt v^*_n\}_{n\ge 0}$ of scaled $\varepsilon$-double charts such that $\wt v^*_n\ov \wt v^*_{n+1}$ for all $n\ge 0$, and a {\it negative $\varepsilon$-gpo} is a sequence $\ul{\wt v}^{*,-}=\{\wt v^*_n\}_{n\le 0}$ of scaled $\varepsilon$-double charts such that $\wt v^*_{n-1}\ov \wt v^*_{n}$ for all $n\le 0$.

Denote the positive (resp. negative) part of an $\varepsilon$-gpo $\ul{\wt v}^*=\{\wt v^*_n\}_{n\in\mathbb{Z}}$ by $\ul{\wt v}^{*,+}=\{\wt v^*_n\}_{n\ge 0}$ (resp. $\ul{\wt v}^{*,-}=\{\wt v^*_n\}_{n\le 0}$). 

An $\varepsilon$-gpo $\ul{\wt v}^*$ is {\it regular} if there are $\wt v^*, \wt w^*$ so that $\wt v^*_n=\wt v^*$ for infinitely many $n>0$ and $\wt v^*_n=\wt w^*$ for infinitely many $n<0$. Similar to define a regular positive or negative $\varepsilon$-gpo.

The following relation is given by \cite[Lemma 4.2]{BCL}, which reveals the change of minimal sizes of double charts along a $\varepsilon$-gpo.

\begin{lem}\label{lem.edge}
If $\wt\Psi^{*,p^s,p^u}_{x}\ov \wt\Psi^{*,q^s,q^u}_{y}$ then 
$\frac{p^s\wedge p^u}{q^s \wedge q^u}=e^{\pm2\varepsilon}$. In particular, for any $\varepsilon$-gpo $\{\wt\Psi_{x_n}^{*,p^s_n,p^u_n}\}_{n\in\mathbb{Z}}$, $\frac{p_n^s\wedge p_n^u}{p_0^s \wedge p_0^u}=e^{\pm2|n|\varepsilon}$ for all $n\in\mathbb{Z}$.
\end{lem}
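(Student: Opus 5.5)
The plan is to deduce the ratio bound for the minimal sizes $p^s\wedge p^u$ and $q^s\wedge q^u$ directly from (GPO2) and the overlap condition in (GPO1), in the same way as \cite[Lemma 4.2]{BCL}. Write $T=T(\wt v^*,\wt w^*)$. The first task is to check that $T$ is small: by Theorem~\ref{thm.poincaresection}~\ref{se.unif} with $\delta=\rho$, together with \ref{fb.derivare} of Lemma~\ref{lem.C1Poincare} (the Poincar\'e time varies little on the tiny charts), we have $0<T\le 2\rho$. Since $\varepsilon\ll\rho$, we then get $e^{\varepsilon T}\le e^{2\varepsilon\rho}$, which is at most $e^{\varepsilon}$ once $\rho<1/2$. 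I would also use $p^s,q^u\le\varepsilon Q<\varepsilon$, so that $e^{-\varepsilon p^s}, e^{-\varepsilon q^u}\ge e^{-\varepsilon^2}$.

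For the upper bound on $p:=p^s\wedge p^u$, the second inequality of (GPO2) gives $p^s\le e^{\varepsilon T}q^s\le e^{\varepsilon}q^s$. To control $p^u$ I would use (GPO1). Since $\wt\Psi^{*,p}_{f^{-1}(y)}\over\wt\Psi^{*,p}_{x}$, Proposition~\ref{pro.overlap}~\ref{ol.Ccomp} gives $Q(f^{-1}(y))=e^{\pm\varepsilon}Q(x)$ after a small power loss, and by Lemma~\ref{lem.estimateQ}(1) we have $Q(y)=e^{\pm 288\rho/\beta}Q(f^{-1}(y))$. Together these only give $p^u\le\varepsilon Q(x)\lesssim\varepsilon Q(y)$ up to a constant, which is not sharp enough. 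So the sharper route is to go through the unstable inequality of (GPO2). Its lower bound is $q^u\ge e^{-\varepsilon q^u}\min\{e^{\varepsilon T}p^u,e^{-\varepsilon}\varepsilon Q(y)\}$. In the first case this gives $p^u\le e^{\varepsilon^2}q^u$. In the second case $q^u\ge e^{-\varepsilon-\varepsilon^2}\varepsilon Q(y)$, and $q^u$ is then essentially at its cap. In that case I use the upper inequality $q^u\le e^{\varepsilon T}p^u$ only in the other direction.

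The cleanest way to organize this is the symmetric argument of BCL, with two claims.

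First, $q^u\le e^{\varepsilon T}p^u\le e^{\varepsilon}p^u$ and $p^s\le e^{\varepsilon}q^s$, which are immediate.

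Second, $p^s\ge e^{-\varepsilon^2}\min\{e^{\varepsilon T}q^s,e^{-\varepsilon}\varepsilon Q(x)\}$ and the symmetric bound for $q^u$.

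To get $q\le e^{2\varepsilon}p$, where $q:=q^s\wedge q^u$, I would split according to which of $p^s,p^u$ realizes the minimum $p$. If $p=p^u$, then $q\le q^u\le e^{\varepsilon}p^u=e^{\varepsilon}p$. If $p=p^s$ and the min in the lower bound is $e^{\varepsilon T}q^s$, then $q\le q^s\le e^{\varepsilon^2}p^s$. The remaining case is $p=p^s\ge e^{-\varepsilon-\varepsilon^2}\varepsilon Q(x)$. Here I must compare with $q^u\le\varepsilon Q(y)$, which needs $Q(y)\le e^{\varepsilon}Q(x)$ in some form. I expect this to be the main obstacle, since the Lemma~\ref{lem.estimateQ}(1) constant is too large. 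What I would try first is to avoid $Q(y)$ entirely: $q\le q^u\le e^{\varepsilon}p^u$ and $p^u\ge p^s=p$ only give the trivial direction. Failing that, I would instead use that $p^u\le\varepsilon Q(x)\le e^{\varepsilon+\varepsilon^2}p^s$, so $p^u$ and $p^s$ are comparable, and then $q\le q^u\le e^{\varepsilon}p^u\le e^{2\varepsilon+\varepsilon^2}p$. The exponent $2\varepsilon$ is then obtained after absorbing $\varepsilon^2$ by using the slack $\varepsilon T<\varepsilon$ in place of $\varepsilon$.

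The reverse inequality $p\le e^{2\varepsilon}q$ follows by the symmetric argument, with the roles of the $s$ and $u$ estimates exchanged. The statement for an $\varepsilon$-gpo then follows by multiplying the one-step ratio bounds along $n$ steps.
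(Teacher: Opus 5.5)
Your final case analysis is correct and is essentially the argument of \cite[Lemma~4.2]{BCL}, which the paper cites instead of giving a proof. In that argument the caps $\varepsilon Q(x)$, $\varepsilon Q(y)$ in (GPO2) are neutralized by the double-chart constraints $p^u\le\varepsilon Q(x)$, $q^s\le\varepsilon Q(y)$, and the losses $e^{\varepsilon T}$, $e^{\varepsilon p^s}$, $e^{\varepsilon q^u}$ are absorbed into $e^{2\varepsilon}$ because $T\le 2\rho$ and $p^s,q^u<\varepsilon$; your detour comparing $Q(x)$ with $Q(y)$ via Proposition~\ref{pro.overlap} and Lemma~\ref{lem.estimateQ}(1) is not needed and can be dropped.
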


\subsection{Graph transforms}

In this subsection we introduce graph transforms in order to construct invariant manifolds.  
Let us reiterate that the scaling applied to $\N_{o_j}$, along with the scaled hyperbolicity of the relevant mappings, renders the discussion of graph transforms under scaled Pesin charts identical to that for diffeomorphisms and non-singular flows. Therefore, we state the results of this subsection without proof, citing specific references where appropriate. Furthermore, we state conclusions only for the stable direction, as the unstable direction follows by complete symmetry.

Recall that $\mathbb R^k(r)$ is the close ball in $\mathbb R^k$ with the origin as the center and radius $r$.
 Let $\wt v^*=\wt\Psi^{*,p^s,p^u}_{x}$ be a scaled $\varepsilon$-double chart.
An {\it $s$-admissible manifold} on $\wt v^*$ is a set 
\[
\wt V^*=\wt\Psi_x^*\{(F(\xi),\xi):\xi\in \bR^{d^s(x)}(p^s)\},
\] 
where $F:\bR^{d^s(x)}(p^s)\to\mathbb{R}^{d^u(x)}$ is a $C^{1+\beta/3}$ map (which is called the {\it representing function} of $\wt V^*$) such that:
\begin{itemize}
\item[\rm (AM1)] $|F(0)|\leq 10^{-3}(p^s\wedge p^u)$;
\item[\rm (AM2)] $\norm{d_0F}\leq \frac{1}{2}(p^s\wedge p^u)^{\beta/3}$;
\item[\rm (AM3)] $\norm{dF}_{C^0}+\text{H\"ol}_{\beta/3}(dF)\leq1/2$ where the norms are taken in $\bR^{d^s(x)}(p^s)$. 
\end{itemize}

We may define {\it $u$-admissible manifolds} on $\wt v^*$ in an analogous way. Moreover, if $\wt V^*_1, \wt V^*_2$ are two $s$-manifolds on $\wt v^*$ with representing functions $F_1, F_2$, then define $d_{C^{i}}(\wt V^*_1,\wt V^*_2)=\norm{F_1-F_2}_{i}$, $i\ge 0$. Denote by $\mathscr{M}^{s/u}(\wt v^*)$  the collection of $s/u$-admissible manifolds on $\wt v^*$.

\begin{pro}\label{pro.graphinters}
For all $\varepsilon>0$ small enough, any $\varepsilon$-double chart $\wt v^*=\wt \Psi^{*,p^s,p^u}_{x}$, any $\wt V^{*,s}\in\mathscr{M}^{s}(\wt v^*)$ and any $\wt V^{*,u}\in\mathscr{M}^{u}(\wt v^*)$,  $\wt V^{*,s}$ and $\wt V^{*,u}$ intersect transversally at a unique point in $R[10^{-2}(p^s\wedge p^u)]$.
\end{pro}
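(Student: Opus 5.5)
Since $\wt\Psi^*_x$ is an affine injection, the statement reduces to one about graphs in $\bR^{d-1}=\bR^{d^s(x)}\times\bR^{d^u(x)}$. Write $\wt V^{*,s}=\wt\Psi^*_x\{(\xi,F(\xi))\}$ and $\wt V^{*,u}=\wt\Psi^*_x\{(G(\eta),\eta)\}$, where $F:\bR^{d^s(x)}(p^s)\to\bR^{d^u(x)}$ and $G:\bR^{d^u(x)}(p^u)\to\bR^{d^s(x)}$ satisfy (AM1)--(AM3) (with the coordinates ordered as in $R[\cdot]$). Intersection points correspond exactly to solutions of $\eta=F(\xi)$, $\xi=G(\eta)$. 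Equivalently, $\xi$ must be a fixed point of $T:=G\circ F$, and then $\eta=F(\xi)$.

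The plan is to apply the contraction mapping principle on the small ball $B:=\bR^{d^s(x)}(10^{-2}(p^s\wedge p^u))$. First we check that the composition is defined. If $|\xi|\le 10^{-2}(p^s\wedge p^u)$, then (AM1) and the bound $\|dF\|_{C^0}\le 1/2$ from (AM3) give
\[
|F(\xi)|\le 10^{-3}(p^s\wedge p^u)+\tfrac12\cdot 10^{-2}(p^s\wedge p^u)<10^{-2}(p^s\wedge p^u)\le p^u,
\]
so $F(\xi)$ lies in the domain of $G$. The same estimate for $G$ shows $|T(\xi)|<10^{-2}(p^s\wedge p^u)$, so $T(B)\subset B$.

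Next, $\mathrm{Lip}(T)\le \mathrm{Lip}(G)\,\mathrm{Lip}(F)\le 1/4$, so $T$ is a contraction of the complete set $B$. Banach's theorem then gives a unique fixed point $\xi_0\in B$, hence an intersection point $(\xi_0,F(\xi_0))\in R[10^{-2}(p^s\wedge p^u)]$.

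\textbf{Uniqueness.} This requires one extra argument, because a second intersection might a priori lie outside the small box. Any intersection point $(\xi,\eta)$ has $\xi$ in the domain of $F$ and $\eta$ in the domain of $G$. Given two solutions $(\xi_1,\eta_1)$ and $(\xi_2,\eta_2)$, the Lipschitz bounds give
\[
|\xi_1-\xi_2|\le\tfrac12|\eta_1-\eta_2|\le \tfrac14|\xi_1-\xi_2|,
\]
so they coincide. This argument works on the full domains, since $F$ and $G$ are $\tfrac12$-Lipschitz on their whole (convex) domains by (AM3).

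\textbf{Transversality.} The tangent spaces at the intersection are $\{(v,d F v)\}$ and $\{(d G w,w)\}$. Suppose $(v,dFv)=(dGw,w)$. Then $v=dG\,dF\,v$, and since $\|dG\|\,\|dF\|\le 1/4$ this forces $v=0$, hence $w=0$. The two tangent spaces therefore meet trivially, and their dimensions add to $d-1$, so they span $\bR^{d-1}$. Pushing forward by the linear isomorphism $\wt C(x)=d\wt\Psi^*_x$ preserves transversality.

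No step here is really an obstacle. The only point needing care is the bookkeeping with the constants $10^{-3}$ and $10^{-2}$, which ensures the composition stays inside the domains. Note that (AM2) and the H\"older part of (AM3) are not needed, and no smallness of $\varepsilon$ enters.
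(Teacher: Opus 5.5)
Your proof is correct and takes the same route as the proofs the paper cites without reproducing (\cite[Proposition 4.11]{Sar13}, \cite[Proposition 3.5]{Ben}): reduce to graphs in the chart, obtain the intersection as the fixed point of the contraction given by composing the two representing functions, and use the same Lipschitz estimate for global uniqueness and for transversality of the tangent graphs. The only difference is in the constants: the cited arguments use the sharper slope bound coming from (AM2), the H\"older part of (AM3) and $p^s,p^u\le\varepsilon Q(x)$, which is where ``$\varepsilon$ small enough'' enters, whereas you correctly observe that (AM1) together with the global bound $\|dF\|_{C^0}\le 1/2$ already suffices.
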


The proof can be found in \cite[Proposition 4.11]{Sar13} and \cite[Proposition 3.5]{Ben}.

Given an edge $\wt v^*\ov \wt w^*$ with $\wt v^*=\wt\Psi^{*,p^s,p^u}_{x}$, $\wt w^*=\wt \Psi^{*,q^s,q^u}_{y}$. Then both stable and unstable dimensions of $x$ coincide with those of $y$. Denote $d^s=d^s(x)=d^s(y)$ and $d^u=d^u(x)=d^u(y)$. In particular, for a gpo $\ul{\wt {v}}^*=\{\wt\Psi_{x_n}^{*,p^s_n,p^u_n}\}_{n\in\mathbb{Z}}$, we denote $d^s=d^s(x_n)$, $d^u=d^u(x_n)$ for all $n\in\mathbb{Z}$. This fact leads to the following graph transform property. 

Define $\mathscr{F}^u_{\wt v^*,\wt w^*}:\mathscr{M}^u(\wt v^*)\to\mathscr{M}^u(\wt w^*)$ which sends a $u$-admissible manifold at $\wt v^*$ with representing function $F:\bR^{d^u}(p^u)\to\mathbb{R}^{d^s}$ to the unique $u$-admissible manifold at $\wt w^*$ with representing function $G:\bR^{d^u}(q^u)\to\mathbb{R}^{d^s}$ such that $\{(G(t), t): t\in\bR^{d^u}(q^u)\}\subset f^{+,*}_{x,y}\{(F( t), t): t\in\bR^{d^u}(p^u)\}$. 
Similarly, we may define $\mathscr{F}^s_{\wt v^*,\wt w^*}:\mathscr{M}^s(\wt w^*)\to\mathscr{M}^s(\wt v^*)$. We call both $\mathscr{F}^u_{\wt v^*,\wt w^*}$ and $\mathscr{F}^s_{\wt v^*,\wt w^*}$  \emph{graph transforms}.

The next standard results provide some basic properties of graph transforms.

\begin{pro}\label{graphtrans}
The following holds for all $\varepsilon>0$ small enough. For any $\wt v^*\ov \wt w^*$, the graph transforms $\mathscr{F}^{s/u}_{\wt v^*,\wt w^*}$ are well-defined. In addition, if $\wt V^*_1, \wt V^*_2\in\mathscr{M}^s(\wt w^*)$ then:
\begin{enumerate}
\item $d_{C^0}(\mathscr{F}^s_{\wt v^*,\wt w^*}(\wt V^*_1),\mathscr{F}^s_{\wt v^*,\wt w^*}(\wt V^*_2))\leq e^{-\frac{1}{2}\chi\inf R_{\mathbb D}}d_{C^0}(\wt V^*_1,\wt V^*_2)$;
\item $d_{C^1}(\mathscr{F}^s_{\wt v^*,\wt w^*}(\wt V^*_1),\mathscr{F}^s_{\wt v^*,\wt w^*}(\wt V^*_2))\leq e^{-\frac{1}{2}\chi\inf R_{\mathbb D}}(d_{C^1}(\wt V^*_1,\wt V^*_2)+d_{C^0}(\wt V^*_1,\wt V^*_2)^{\beta/3})$.
\end{enumerate}
Similar for $\mathscr{F}^u_{\wt v^*,\wt w^*}$.
\end{pro}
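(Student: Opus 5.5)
We will prove the statement for $\mathscr{F}^s_{\wt v^*,\wt w^*}$; the case of $\mathscr{F}^u$ is symmetric, using $f^{+,*}_{x,y}$ in place of $f^{-,*}_{x,y}$. The plan is to reduce to the abstract graph transform lemma for maps of the form ``hyperbolic block diagonal $+$ small $C^{1+\beta/3}$ perturbation'', as in \cite[Proposition~4.12]{Sar13}, \cite[Proposition~3.6]{BCL} and \cite{Ben}. We first record the input. By (GPO1) and Proposition~\ref{pro.overlift} (symmetric version), the map $f^{-,*}_{x,y}=(\wt\Psi^*_x)^{-1}\circ\wt g^{-,*}_y\circ\wt\Psi^*_y$ is a diffeomorphism on $R[10Q(y)]$. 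It has the form $\mathrm{diag}(A^{-1},B^{-1})+H$, where:
\begin{itemize}
\item $\|A^{-1}\|^{-1}\ge e^{\chi R_{\mathbb D}(x)}$ and $\|B^{-1}\|\le e^{-\chi R_{\mathbb D}(x)}$;
\item $|H(0)|<\varepsilon\eta$, $\|d_0H\|<\varepsilon\eta^{\beta/3}$ and ${\rm H\ddot{o}l}_{\beta/3}(dH)<\varepsilon$, with $\eta=p^s\wedge p^u$.
\end{itemize}
Since $R_{\mathbb D}\ge\inf R_{\mathbb D}>0$ by Theorem~\ref{thm.poincaresection}\ref{se.unif}, the expansion and contraction rates are uniform. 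This is exactly where the scaling and the locally finite sections enter; after that, everything is Euclidean in $\mathbb R^{d-1}$.

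Next, well-definedness. Take $\wt V^*\in\mathscr M^s(\wt w^*)$ with representing function $F$ on $\bR^{d^s}(q^s)$. We write the image of its graph under $f^{-,*}_{x,y}$ as $\{(\xi+h_1(\xi,F(\xi)),\,B'F(\xi)+h_2(\xi,F(\xi)))\}$, adapted to the ordering of coordinates. Because $\|dF\|\le 1/2$ and $dH$ is small, the first component $\xi\mapsto A^{-1}\xi+h_1$ is expanding by at least $e^{\chi\inf R_{\mathbb D}}-O(\varepsilon)$. It is therefore injective, and its image contains a ball of radius $e^{\chi R}q^s-O(\varepsilon\eta)$. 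By (GPO2), $p^s\le e^{\varepsilon T}q^s$, and this radius exceeds $p^s$ because $\varepsilon\ll\chi$ and $T\approx R_{\mathbb D}$. Inverting the first component then gives a new representing function $G$ on $\bR^{d^s}(p^s)$.

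We then verify (AM1)--(AM3) for $G$:
\begin{itemize}
\item (AM1) follows from $|G(0)|\le e^{-\chi R}|F(0)|+O(\varepsilon\eta)$, together with the ratio $\eta_{\wt w}/\eta_{\wt v}=e^{\pm2\varepsilon}$ from Lemma~\ref{lem.edge}.
\item (AM2) follows from $\|d_0G\|\le e^{-\chi R}\|d_{\cdot}F\|+O(\varepsilon\eta^{\beta/3})$, where we control $\|dF\|$ near the preimage of $0$ via the H\"older bound and the small displacement.
\item (AM3) follows from the standard H\"older estimate of a composition: the contraction factor $e^{-\chi R(1+\beta/3)}$ absorbs the $O(\varepsilon)$ terms.
\end{itemize}

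For the contraction estimates, let $F_1,F_2$ be two representing functions with images $G_1,G_2$. Fix a point $\xi$ and let $\zeta_i$ be its preimages under the first components. Since the first component is expanding, $|\zeta_1-\zeta_2|\lesssim e^{-\chi R}|F_1-F_2|\cdot\varepsilon$. This yields $|G_1(\xi)-G_2(\xi)|\le (e^{-\chi R}+O(\varepsilon))\|F_1-F_2\|_{C^0}\le e^{-\frac12\chi\inf R_{\mathbb D}}d_{C^0}$, which is item (1). For item (2), we differentiate and compare $dG_1(\xi)$ with $dG_2(\xi)$. The difference of $dF_i$ evaluated at the different points $\zeta_1,\zeta_2$ is controlled by ${\rm H\ddot{o}l}_{\beta/3}(dF)\,|\zeta_1-\zeta_2|^{\beta/3}\lesssim d_{C^0}^{\beta/3}$, and this produces the additional term in item (2).

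The main obstacle is the bookkeeping of radii. Specifically, we must ensure that the domain of $G$ really covers $\bR^{d^s}(p^s)$ and that the graph stays inside $R[10Q(y)]$, where Proposition~\ref{pro.overlift} applies. This requires (GPO2) with the transition time $T(\wt v^*,\wt w^*)$ instead of a fixed return time, and we handle it as in \cite[Lemma~4.2, Proposition~4.4]{BCL}.
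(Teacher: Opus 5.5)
Your proposal is correct and takes the same route as the paper, which gives no separate argument. The paper notes that Proposition~\ref{pro.overlift}, together with the uniform bound $\inf R_{\mathbb D}>0$, puts $f^{\pm,*}_{x,y}$ in the hyperbolic-plus-small-perturbation form, and then cites the standard graph transform estimates of \cite[Propositions~4.12, 4.14]{Sar13}, \cite{Ben} and \cite[Appendix]{ALP}, which is exactly the argument you sketch. One notational slip: the expansion of $A^{-1}$ should read $\|A\|^{-1}\ge e^{\chi R_{\mathbb D}(x)}$; as written, $\|A^{-1}\|^{-1}\ge e^{\chi R_{\mathbb D}(x)}$ would say that $A^{-1}$ contracts.
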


See \cite[Proposition 4.12, 4.14]{Sar13}, \cite[Proposition 3.6, 3.8]{Ben} and \cite[Appendix]{ALP} for the proof.

Proposition \ref{graphtrans} leads to the following definition.

\begin{dfn}
Given a scaled positive $\varepsilon$-gpo $\ul{\wt v}^{*,+}=\{\wt v^*_n\}_{n\geq0}$. The stable manifold $\wt V^{*,s}[\ul{\wt v}^{*,+}]$ of $\ul{\wt v}^{*,+}$ is defined by 
\[
\wt V^{*,s}[\ul{\wt v}^{*,+}]=\lim_{n\to+\infty}(\mathscr{F}^s_{\wt v^*_0,\wt v^*_1}\circ\cdots\circ\mathscr{F}^s_{\wt v^*_{n-1},\wt v^*_n})(\wt V^*_n),
\]
for some (any) choice $\{\wt V^*_n\}_{n\geq0}$ with $\wt V^*_n\in\mathscr{M}^s(\wt v^*_n)$. Analogously, for a scaled negative $\varepsilon$-gpo $\ul{\wt v}^{*,-}=\{\wt v^*_n\}_{n\leq0}$, the \emph{unstable manifold} $\wt V^{*,u}[\ul{\wt v}^{*,-}]$ of $\ul{\wt v}^{*,-}$ is defined by 
\[
\wt V^{*,u}[\ul{\wt v}^{*,-}]=\lim_{n\to-\infty}(\mathscr{F}^u_{\wt v^*_{n},\wt v^*_{n+1}}\circ\cdots\circ\mathscr{F}^u_{\wt v^*_{-1},\wt v^*_0})(\wt V^*_n).
\]
\end{dfn}

Before introducing more properties of the graph transform, we make some conventions to simplify the notations. Whenever an $\varepsilon$-gpo $\ul{\wt v}^*=\{\wt \Psi_{x_n}^{*,p^s_n,p^u_n}\}_{n\in\mathbb{Z}}$ is fixed, we denote 
\begin{itemize}
\item  
$\wt V^{*,s}_k:=\wt V^{*,s}[\{\wt v^*_n\}_{n\geq k}]$ and $\wt V^{*,u}_k:=\wt V^{*,u}[\{\wt v^*_n\}_{n\leq k}]$ for $k\in\mathbb{Z}$;
\item $\wt g^{+,*}_{x_0,n}:=\wt g^{+,*}_{x_{n-1}}\circ\cdots\circ \wt g^{+,*}_{x_0}$ for $n\geq0$; 
\item $\wt g^{-,*}_{x_0,n}:=\wt g^{-,*}_{x_{n+1}}\circ\cdots\circ \wt g^{-,*}_{x_{0}}$ for $n\leq0$. 
\end{itemize}
Similarly we denote $g^{\pm}_{x_0,n}$.

The graph transform gives us the following invariant manifold theorem. 
\begin{pro}\label{sca.invmnfd}
The following holds for all $\varepsilon>0$ small enough. Let $\ul{\wt v}^*=\{\wt v^*_n\}_{n\in\mathbb{Z}}=\{\wt\Psi_{x_n}^{*,p^s_n,p^u_n}\}_{n\in\mathbb{Z}}$ be an $\varepsilon$-gpo and $\ul{\wt v}^{*,+}$ be the positive part of $\ul{\wt v}^*$. 
\begin{enumerate}
\item  The set $\wt V^{*,s}_0$ is an $s$-admissible manifold at $\wt v^*_0$, equals to
\[\wt V^{*,s}_0=\{\wt x^*\in \wt \Psi_{x_0}^* (R[p^s_0]):\wt g^{+,*}_{x_0,n}(\wt x^*)\in \wt \Psi_{x_n}^*(R[10Q(x_n)]), \forall n\geq0\}.\]
\item $\wt g^{+,*}_{x_0,1}(\wt V^{*,s}_0)\subset \wt V^{*,s}_1$.
\item 
\begin{enumerate}
\item[\rm (3a)] For all $\wt y_1^*,\wt y_2^*\in \wt V^{*,s}_0$ and all $n\geq0$:
\[
|\wt g^{+,*}_{x_0,n}(\wt y_1^*)-\wt g^{+,*}_{x_0,n}(\wt y_2^*)|\leq 8p_0^s\cdot e^{-\frac{\chi\inf R_{\mathbb D}}{2}\cdot n}.
\]
\item[\rm (3b)]   For any unit vector $w_s\in T_{\wt y^*} \wt V^{*,s}_0$:
\[\begin{aligned}
&\norm{d_{\wt y^*}\wt g^{+,*}_{x_0,n}(w_s)}\leq 8\norm{\wt C(x_0)^{-1}}e^{-\frac{\chi\inf R_{\mathbb D}}{2}\cdot n},\quad \forall n\geq0\\
&\norm{d_{\wt y^*} \wt g^{-,*}_{x_{0},n}(w_s)}\geq \frac{1}{8}(p^s_0\wedge p^u_0)^{\beta/12}e^{(\frac{\chi\inf R_{\mathbb D}}{2}-\frac{\beta\varepsilon}{6})|n|} \quad \forall n\leq0.
\end{aligned}\]
\end{enumerate}

\item\label{sca.hol}The maps $\ul{\wt v}^*\mapsto \wt V^{*,s}_0, \wt V^{*,u}_0$ are H\"older continuous. Precisely, 
 there are $K^*>0$ and $\vartheta^*\in(0,1)$ such that for all $N\geq0$, if $\ul{\wt v}^*,\ul{\wt w}^*$ are two scaled $\varepsilon$-gpos with $\wt v^*_n=\wt w^*_n$ for $n=0,\dots,N$ then $d_{C^1}(\wt V^{*,s}[\ul{\wt v}^{*,+}],\wt V^{*,s}[\ul{\wt w}^{*,+}])\leq K^*(\vartheta^*)^N$. 
\end{enumerate}
The set $\wt V^{*,s}[\ul{\wt v}^{*,+}]$ is called {\rm the local scaled stable manifold} of $\ul{\wt v}^{*,+}$.
A symmetrical conclusion holds for scaled unstable manifolds.
\end{pro}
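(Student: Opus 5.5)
The plan is to reduce everything to the standard Pesin-chart argument, carried out in the linear spaces $\N_{o_j}$, as in \cite[Prop.~4.15, 6.3]{Sar13}, \cite[Prop.~3.9]{Ben} and \cite[\S4]{BCL}. The point is that, after the scaled charts $\wt\Psi^*_{x_n}$, the maps $f^{+,*}_{x_n,x_{n+1}}$ of Proposition~\ref{pro.overlift} have exactly the form treated there. Each is a block-diagonal hyperbolic linear map, with contraction $e^{-\chi R_{\mathbb D}}\le e^{-\chi\inf R_{\mathbb D}}$ on the stable block and expansion on the unstable block. The error term $H^*$ is $\varepsilon$-small in the $C^{1+\beta/3}$ sense. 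The uniform lower bound $\inf R_{\mathbb D}\ge\kappa r_0$ from Theorem~\ref{thm.poincaresection} makes the contraction rate uniform. No singular geometry enters once we work in the scaled charts.

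\textbf{Item (1) and item (2).} Define $\wt V^{*,s}_0$ as the limit of the iterated graph transforms. The sequence is Cauchy in $d_{C^0}$ by Proposition~\ref{graphtrans}(1): successive compositions contract by $e^{-\frac12\chi\inf R_{\mathbb D}}$, and the diameter of $\mathscr M^s(\wt v^*_n)$ in $C^0$ is bounded by a multiple of $p^s_n\le\varepsilon Q(x_n)$. Proposition~\ref{graphtrans}(2) upgrades this to $C^1$ convergence. The limit stays $s$-admissible because (AM1)–(AM3) are closed conditions preserved by each transform. Independence of the choice of $\wt V^*_n$ follows from the same contraction estimate.

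The characterization of $\wt V^{*,s}_0$ as the set of points whose forward $\wt g^{+,*}$-orbit stays in the charts $\wt\Psi^*_{x_n}(R[10Q(x_n)])$ is proved in two directions. First, points of $\wt V^{*,s}_0$ have this property by construction and invariance. Second, a point outside $\wt V^{*,s}_0$ has a nonzero unstable-coordinate offset in the chart. This offset grows by at least $e^{\chi\inf R_{\mathbb D}}-\varepsilon$ per step, whereas the chart sizes change only by factors $e^{\pm\varepsilon}$, as controlled via Lemma~\ref{lem.edge} and Proposition~\ref{pro.overlap}. So such a point must eventually exit, which is the usual cone/exit argument. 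Item (2) then follows directly from this characterization together with Proposition~\ref{pro.overlap}\ref{ol.inclu}, used to pass between the overlapping charts at $f(x_0)$ and $x_1$.

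\textbf{Item (3).} Item (3a) comes from iterating the contraction of $f^{+,*}$ along the graphs. On an $s$-admissible graph with $\|dF\|\le1/2$, the map contracts distances by $e^{-\chi R_{\mathbb D}}+O(\varepsilon)\le e^{-\frac12\chi\inf R_{\mathbb D}}$. The starting diameter is at most $2\sqrt2\,p^s_0$ in chart coordinates, and $\|\wt C\|\le2$ gives the constant $8$.

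For (3b), the first inequality is obtained the same way for tangent vectors. Pulling a unit vector into the chart costs a factor of $\|\wt C(x_0)^{-1}\|$, and pushing it back out costs $\|\wt C(x_n)\|\le 2$. The second inequality, for backward iterates, uses the lower bound $\|A\|>e^{-4\rho}$ together with the growth of $\|C(x_n)^{-1}\|$ along the gpo. That growth is controlled by $p_n\ge e^{-2|n|\varepsilon}p_0$ (Lemma~\ref{lem.edge}) and Lemma~\ref{lem.estimateQ}(3), which yields the factor $(p^s_0\wedge p^u_0)^{\beta/12}e^{-\beta\varepsilon|n|/6}$.

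\textbf{Item (4).} For Hölder continuity, suppose two gpos agree on indices $0,\dots,N$. Start both from the same admissible manifold at index $N$ and apply the shared transforms. Proposition~\ref{graphtrans} gives a $C^0$ distance at most $e^{-\frac12\chi\inf R_{\mathbb D}N}$ times a bounded quantity. The $C^1$ estimate of Proposition~\ref{graphtrans}(2), iterated, contains the term $d_{C^0}^{\beta/3}$. Summing the resulting geometric series gives $K^*(\vartheta^*)^N$ with $\vartheta^*=e^{-\frac{\beta}{6}\chi\inf R_{\mathbb D}}$, say.

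The main obstacle is the exit argument in item (1) together with the chart changes. Each step of a gpo passes through $\wt\Psi^*_{f(x_n)}$ to $\wt\Psi^*_{x_{n+1}}$, and the only control is the $\varepsilon$-overlap of Definition~\ref{dfn.overlap}. I would handle this exactly as in \cite[Theorem~3.8']{BCL}: absorb the transition maps into $H^*_{x_n,x_{n+1}}$ using Proposition~\ref{pro.overlap}\ref{ol.trans}, whose $C^2$-closeness to the identity of order $\varepsilon(\eta_1\eta_2)^2$ is what keeps the error within the bounds of Proposition~\ref{pro.overlift}. Once this is done, the whole statement is word-for-word the non-singular proof. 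The uniformity needed — bounded roof function, uniform Hölder constants from Lemma~\ref{lem.holderPoincare}, and charts of size $\le\varepsilon Q$ fitting inside the sections of scaled radius $r_0$ — has already been secured by Proposition~\ref{pro.Pesinlift} and Proposition~\ref{pro.overlap}\ref{ol.comp}.
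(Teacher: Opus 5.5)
Your proposal is correct and follows the paper's route. The paper gives no separate argument: it notes that in the scaled charts the maps $f^{+,*}_{x,y}$ of Proposition~\ref{pro.overlift} have exactly the form treated for diffeomorphisms and non-singular flows, and cites \cite[Proposition 4.15]{Sar13}, \cite[Proposition 3.12]{Ben}, \cite[Proposition~4.11]{ALP}, which is precisely your reduction.

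Two small slips do not affect the argument. First, the backward expansion in (3b) comes from $\|A\|<e^{-\chi R_{\mathbb D}(x)}$, so that $|A^{-1}w|\ge e^{\chi R_{\mathbb D}(x)}|w|$; the bound $\|A\|>e^{-4\rho}$ you cite does not give it. Second, item (2) is immediate from $\wt V^{*,s}_0=\mathscr{F}^s_{\wt v^*_0,\wt v^*_1}(\wt V^{*,s}_1)$, which follows from continuity of the graph transform, so you need not route it through the exit characterization and Proposition~\ref{pro.overlap}.
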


The proof can be found in \cite[Proposition 4.15]{Sar13}, \cite[Proposition 3.12]{Ben} and \cite[Proposition~4.11]{ALP}.

\subsection{The shadowing lemma}

We say that an $\varepsilon$-gpo $\{\wt \Psi_{x_n}^{*,p^s_n,p^u_n}\}_{n\in\mathbb{Z}}$ \emph{shadows} a point $\wt z^*\in\N_{o_j}(3r_0)$, where $x_0\in \widehat D_j=D_j(3r_0)$, if:
\[\wt g^{+,*}_{x_0,n}(\wt z^*)\in \wt\Psi_{x_n}^*(R[p^s_n\wedge p^u_n]) \text{ for all $n\geq0$ and }\wt g^{-,*}_{x_0,n}(\wt z^*)\in \wt \Psi_{x_n}^*(R[p^s_n\wedge p^u_n]) \text{ for all $n\leq0$.}
\]

Recall
\[
\Psi_x=\exp_{o_j}\circ \varsigma(o_j)^{-1}\circ \wt\Psi^*_x.
\]

Since $\exp_{o_j}\circ \varsigma(o_j)^{-1}$ is a diffeomorphism from $\N_{o_j}(3r_0)$ to $D_j(3r_0)$, we also say that an $\varepsilon$-gpo $\{\wt \Psi_{x_n}^{*,p^s_n,p^u_n}\}_{n\in\mathbb{Z}}$  \emph{shadows} the point $z=\exp_{o_j}(|X(o_j)|\wt z^*)$ when it shadows  $\wt z^*$.
We point out that $|\wt z^*-\wt x_0^*|\le \delta$ means $\dist(z,x_0)\le 2r|X(o_j)|\le \frac{20}{9}\delta|X(x_0)|$.

Similarly, for any $s$-admissible manifold $\wt V^*$  with the representing function $G$, corresponds to a unique submanifold  
\[
V:=\exp_{o_j}\circ \varsigma(o_j)^{-1}(\wt V^*)=\Psi_x(\{( \xi, G(\xi)):\xi\in \bR^{d^s(x)}(p^s)\}).
\]
We also call $V$ is an {\it $s$-admissible manifold}. And we also define $V^{s/u}[\ul{\wt v}^{*,+}],V^{s/u}_k$ as the projections of $\wt V^{*,s/u}[\wt{\ul{v}}^{*,+}],\wt V^{*,s/u}_k$ via $\exp_{o_j}\circ \varsigma(o_j)^{-1}$ respectively.

The following shadowing lemma is an application of the graph transform (Proposition~\ref{sca.invmnfd}) and Proposition~\ref{pro.graphinters}.
\begin{pro}\label{pro.shadow}
For $\varepsilon$ is small enough, any $\varepsilon$-gpo $\ul{\wt v}^*$ with $\wt v_0^*=\wt \Psi_{x_0}^{*,p^s_0,p^u_0}$ shadows a unique point $\wt z^*=\wt V^{*,s}[\ul{\wt v}^{*,+}]\pitchfork \wt V^{*,u}[\ul{\wt v}^{*,-}]\in \wt \Psi_{x_0}^*(R[\frac{1}{100}(p^s_0\wedge p^u_0)])$. Equivalently,  $\ul{\wt v}^*$  shadows a unique point $ z=V^{s}[\ul{\wt v}^{*,+}]\pitchfork V^{u}[\ul{\wt v}^{*,-}]\in  \Psi_{x_0}(R[\frac{1}{100}(p^s_0\wedge p^u_0)])$. 
\end{pro}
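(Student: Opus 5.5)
The plan is to follow the standard shadowing argument of Sarig and Ben Ovadia, working entirely inside the scaled linear spaces $\N_{o_j}$. Once the gpo is fixed, every object there behaves like the diffeomorphism case.

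\emph{Existence.} Let $\wt z^*$ be the unique transverse intersection point of $\wt V^{*,s}[\ul{\wt v}^{*,+}]$ and $\wt V^{*,u}[\ul{\wt v}^{*,-}]$. By Proposition~\ref{sca.invmnfd}~(1) these are an $s$-admissible and a $u$-admissible manifold at $\wt v^*_0$. Proposition~\ref{pro.graphinters} then gives existence and uniqueness of the intersection point, with $\wt z^*\in\wt\Psi^*_{x_0}(R[10^{-2}(p^s_0\wedge p^u_0)])$, which is the stated localisation.

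\emph{Shadowing.} By Proposition~\ref{sca.invmnfd}~(2), for every $n\ge 0$,
\[
\wt g^{+,*}_{x_0,n}(\wt z^*)\in \wt V^{*,s}_n \quad\text{and}\quad \wt g^{+,*}_{x_0,n}(\wt z^*)\in \wt g^{+,*}_{x_0,n}(\wt V^{*,u}_0)\supset\cdots .
\]
More precisely, the forward image of $\wt V^{*,u}_0$ contains $\wt V^{*,u}_n$, since $\wt V^{*,u}_n$ is obtained by the graph transform $\mathscr F^u$ from $\wt V^{*,u}_0$. The forward image of the point therefore lies in $\wt V^{*,s}_n\cap\wt V^{*,u}_n$. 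Applying Proposition~\ref{pro.graphinters} at $\wt v^*_n$ places it in $\wt\Psi^*_{x_n}(R[10^{-2}(p^s_n\wedge p^u_n)])\subset\wt\Psi^*_{x_n}(R[p^s_n\wedge p^u_n])$. Negative times are handled symmetrically.

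\emph{Uniqueness.} Suppose $\wt y^*$ is also shadowed. Its forward iterates stay in $\wt\Psi^*_{x_n}(R[p^s_n\wedge p^u_n])\subset\wt\Psi^*_{x_n}(R[10Q(x_n)])$, so the characterisation in Proposition~\ref{sca.invmnfd}~(1) gives $\wt y^*\in\wt V^{*,s}_0$. The same reasoning for backward iterates gives $\wt y^*\in\wt V^{*,u}_0$. Uniqueness of the intersection then yields $\wt y^*=\wt z^*$.

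\emph{Main obstacle.} The one delicate point is in the uniqueness step. The characterisation in Proposition~\ref{sca.invmnfd}~(1) also requires $\wt y^*\in\wt\Psi^*_{x_0}(R[p^s_0])$, and this follows from the shadowing condition at $n=0$. The iterates must be composed in the correct charts, $\wt g^{\pm,*}$ mapping $\N_{o_j}$ to $\N_{o_i}$, and this is exactly guaranteed by (GPO1) together with Proposition~\ref{pro.overlap}.

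\emph{Manifold version.} The statement on $M$ follows by applying the diffeomorphism $\exp_{o_j}\circ\varsigma(o_j)^{-1}$. It conjugates $\wt g^{\pm,*}$ to $g^{\pm}$ and carries $\wt\Psi^*$ to $\Psi$, so it preserves transversality and the shadowing property.
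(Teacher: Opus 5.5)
Your overall route is the one the paper intends: it cites Sarig, BCL and LMN and calls the result an application of Propositions~\ref{sca.invmnfd} and~\ref{pro.graphinters}. Your existence and localisation of $\wt z^*$ are fine, and so is your uniqueness argument via the characterisation in Proposition~\ref{sca.invmnfd}~(1).

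\textbf{The gap is in the shadowing step.} From $\wt V^{*,u}_n\subset \wt g^{+,*}_{x_0,n}(\wt V^{*,u}_0)$ you cannot conclude $\wt g^{+,*}_{x_0,n}(\wt z^*)\in \wt V^{*,u}_n$, because the inclusion goes the wrong way. The image $\wt g^{+,*}_{x_0,n}(\wt V^{*,u}_0)$ is an expanded graph, and $\wt V^{*,u}_n$ is only its piece over $\bR^{d^u}(p^u_n)$. A priori the iterate of $\wt z^*$ could lie on the remaining part. What you actually know is $\wt g^{+,*}_{x_0,n}(\wt z^*)\in\wt V^{*,s}_n$. That only bounds the $s$-coordinate by $p^s_n$, which does not give membership in $R[p^s_n\wedge p^u_n]$ when $p^u_n<p^s_n$. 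So the shadowing inclusion is not yet proved. The same problem occurs for negative times, with $s$ and $u$ exchanged.

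\textbf{The fix is an ``at most one intersection'' statement for the full image.} Let $\wt z^*_n$ be the unique point of $\wt V^{*,s}_n\cap \wt V^{*,u}_n$. Then $\wt z^*_0=\wt z^*$, and $\wt z^*_n\in\wt\Psi^*_{x_n}(R[10^{-2}(p^s_n\wedge p^u_n)])$ by Proposition~\ref{pro.graphinters}. Prove $\wt g^{+,*}_{x_n}(\wt z^*_n)=\wt z^*_{n+1}$ by induction.
\begin{itemize}
\item Both points lie in $\wt V^{*,s}_{n+1}\cap \wt g^{+,*}_{x_n}(\wt V^{*,u}_n)$. For $\wt g^{+,*}_{x_n}(\wt z^*_n)$ this follows from Proposition~\ref{sca.invmnfd}~(2) applied to the shifted gpo. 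For $\wt z^*_{n+1}$ it follows from $\wt V^{*,u}_{n+1}=\mathscr F^u_{\wt v^*_n,\wt v^*_{n+1}}(\wt V^{*,u}_n)\subset \wt g^{+,*}_{x_n}(\wt V^{*,u}_n)$.
\item By Proposition~\ref{pro.overlift}, $f^{+,*}_{x_n,x_{n+1}}$ is a $C^1$-small perturbation of $\mathrm{diag}(A,B)$ on $R[10Q(x_n)]$. Hence the entire image of the $u$-admissible graph is, in the chart at $x_{n+1}$, a graph over the $u$-directions with Lipschitz constant $<1$. This is the cone argument behind Proposition~\ref{graphtrans}.
\item $\wt V^{*,s}_{n+1}$ is a graph over the $s$-directions with Lipschitz constant $\le 1/2$, by (AM3).
\item Two such graphs meet in at most one point, so the two points coincide and the localisation follows.
\end{itemize}

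Trying instead to use the characterisation in Proposition~\ref{sca.invmnfd}~(1) for this step runs into a box-size mismatch: points of $\wt V^{*,s}_n$ need not lie in $R[p^u_n]$. That is why Lipschitz-graph transversality is the tool needed here.
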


See \cite[Theorem~4.2]{Sar13}, \cite[Proposition~4.6]{BCL} and \cite[Proposition~4.6]{LMN}.

\subsection{Local invariant  manifolds on sections}\label{sec.locinvarmani}

We wish to obtain more properties of admissible manifolds. To this end, we focus the discussion on sections and analyze the dynamics under flows.

Recall that $\wt g^{+,*}_x$ is the scaling of the local lift $\wt g_x^+$  of $g^{+}_x$. Thus the above results of admissible manifolds in $\N_{o_j}$ for $\wt g^{+,*}_x$ can be directly project to $\widehat D_j$ for $g^{+}_x$.
The following proposition is easily verified from Proposition~\ref{sca.invmnfd} by using the relations between $g^{+}_x$ and $\wt g^{+,*}_x$.

\begin{pro}\label{invmnfd}
The following hold for all $\varepsilon>0$ small enough. Let $\ul{\wt v}^*=\{\wt v^*_n\}_{n\in\mathbb{Z}}=\{\wt\Psi_{x_n}^{*,p^s_n,p^u_n}\}_{n\in\mathbb{Z}}$ be an $\varepsilon$-gpo and $\ul{\wt v}^{*,+}$ be the positive part of $\ul{\wt v}^*$.  
\begin{enumerate}
\item  The set $ V^{s}_0=V^{s}[\ul{\wt v}^{*,+}]$ is an $s$-admissible manifold, equals to
\[ V^{s}_0=\{x\in  \Psi_{x_0} (R[p^s_0]): g^{+}_{x_0,n}(x)\in  \Psi_{x_n}(R[10Q(x_n)]), \forall n\geq0\}.\]
\item $g^{+}_{x_0,1}( V^{s}_0)\subset  V^{s}_1$.
\item 
\begin{enumerate}
\item[\rm (3a)] For all $ y_1, y_2\in  V^{s}_0$ and all $n\geq0$:
\[
\dist (g^{+}_{x_0,n}( y_1),g^{+}_{x_0,n}( y_2))\leq 30 p_0^s |X(x_n)|e^{-\frac{\chi\inf R_{\mathbb D}}{2}\cdot n}.
\]
\item[\rm (3b)]  For any unit vector $w_s\in T_{ y}  V^{s}_0$:
\[\begin{aligned}
&\norm{d_y g^{+}_{x_0,n}(w_s)}\leq 60\norm{ C(x_0)^{-1}}\frac{|X(x_n)|}{|X(x_0)|}e^{-\frac{\chi\inf R_{\mathbb D}}{2}\cdot n},\quad \forall n\geq0\\
&\norm{d_y  g^{-}_{x_{0},n}(w_s)}\geq \frac{1}{30}(p^s_0\wedge p^u_0)^{\beta/12}\frac{|X(x_n)|}{|X(x_0)|}e^{(\frac{\chi\inf R_{\mathbb D}}{2}-\frac{\beta\varepsilon}{6})|n|} \quad \forall n\leq0.
\end{aligned}\]
\end{enumerate}
\end{enumerate}
The set $V^{s}[\ul{\wt v}^{*,+}]$ is also called {\rm the local scaled stable manifold} of $\ul{\wt v}^{*,+}$.
A symmetrical conclusion holds for scaled unstable manifolds.

\end{pro}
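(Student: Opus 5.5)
The plan is to transport each item of Proposition~\ref{sca.invmnfd} from the scaled linear space $\N_{o_j}$ to the section $\wh D_j$ through the fixed coordinate map $\Theta_j:=\exp_{o_j}\circ\varsigma(o_j)^{-1}$, i.e.\ $\Theta_j(v)=\exp_{o_j}(|X(o_j)|v)$. By definition $\Psi_x=\Theta_j\circ\wt\Psi^*_x$ and $V^s_0=\Theta_{j_0}(\wt V^{*,s}_0)$, where $x_n\in D_{j_n}$. By the definition of $\wt g^{\pm,*}$ we also have $g^{\pm}_{x_n}=\Theta_{j_{n\pm1}}\circ \wt g^{\pm,*}_{x_n}\circ\Theta_{j_n}^{-1}$. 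Hence
\[
g^{+}_{x_0,n}=\Theta_{j_n}\circ \wt g^{+,*}_{x_0,n}\circ\Theta_{j_0}^{-1}.
\]
The only quantitative input is $\tfrac12\le\|d\exp\|\le 2$, which gives
\[
\|d\Theta_j\|\le 2|X(o_j)|,\qquad \|d\Theta_j^{-1}\|\le \frac{2}{|X(o_j)|}.
\]
We combine this with the comparison $|X(x_n)|/|X(o_{j_n})|\in(\tfrac{9}{10},\tfrac{10}{9})$, which follows from \ref{fb.length} of Lemma~\ref{lem.C1Poincare} since $x_n\in \wh D_{j_n}\subset \FB(o_{j_n},\rho_{F1},\rho_{F1})$.

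Items (1) and (2) are formal. Apply $\Theta_{j_0}$ to the characterization in Proposition~\ref{sca.invmnfd}(1). The condition $\wt g^{+,*}_{x_0,n}(\wt x^*)\in\wt\Psi^*_{x_n}(R[10Q(x_n)])$ becomes $g^+_{x_0,n}(x)\in\Psi_{x_n}(R[10Q(x_n)])$ by the conjugacy displayed above. Admissibility is by definition the image of an admissible graph. Item (2) is the image of Proposition~\ref{sca.invmnfd}(2) under $\Theta_{j_1}$. One should check that all points stay in $\N_{o_j}(3r_0)$, where $\Theta_j$ is a diffeomorphism. This holds because $20Q(x)<r_0/2$ for small $\varepsilon$, as in the proof of Proposition~\ref{pro.Pesinlift}.

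For (3a), take $y_i=\Theta_{j_0}(\wt y^*_i)$. The two images $\wt g^{+,*}_{x_0,n}(\wt y^*_i)$ both lie in the convex set $\wt\Psi^*_{x_n}(R[10Q(x_n)])\subset \N_{o_{j_n}}(3r_0)$. Integrating $d\Theta_{j_n}$ along the segment between them and using (3a) of Proposition~\ref{sca.invmnfd} gives
\[
\dist\le 2|X(o_{j_n})|\cdot 8p^s_0e^{-\frac{\chi\inf R_{\mathbb D}}2 n}\le 16\cdot\tfrac{10}{9}\,p^s_0|X(x_n)|e^{-\frac{\chi\inf R_{\mathbb D}}2 n}.
\]
This is below $30p^s_0|X(x_n)|e^{-\frac{\chi\inf R_{\mathbb D}}2 n}$.

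For (3b), use the chain rule. For a unit $w_s\in T_yV^s_0$, set $\wt w=d\Theta_{j_0}^{-1}w_s$, which is tangent to $\wt V^{*,s}_0$ and has norm between $\frac{1}{2|X(o_{j_0})|}$ and $\frac{2}{|X(o_{j_0})|}$. Then
\[
|d g^+_{x_0,n}w_s|\le 2|X(o_{j_n})|\,|\wt w|\cdot 8\|\wt C(x_0)^{-1}\|e^{-\frac{\chi\inf R_{\mathbb D}}2n}.
\]
Next use $\|\wt C(x_0)^{-1}\|\le 2\|C(x_0)^{-1}\|$ from (\ref{eq.bdPsi}) and the ratio bounds for $|X|$. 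The constant produced is $2\cdot2\cdot8\cdot2\cdot(10/9)^2\approx 79$, which exceeds the claimed $60$.

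This constant bookkeeping is the main obstacle, and I would handle it as follows. I would first sharpen the ratio estimates. Because $r_0$ is tiny relative to $\rho_{F1}$, the sections $\wh D_j$ lie in $\FB(o_j,\kappa r_0,0)$, so $|X(x)|/|X(o_j)|=e^{\pm O(\kappa r_0)}$ there. Likewise $\|d\exp_{o_j}\|=1+O(r_0|X(o_j)|)$ on $\N_{o_j}(3r_0|X(o_j)|)$, and $\|d_x\exp_{o_j}^{-1}\|\approx1$, which improves (\ref{eq.bdPsi}) to about $1$. With these, the constant is close to $8\cdot1\cdot1=8$ for (3a) and close to $8$ for (3b) as well, and both fall well under $30$ and $60$. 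The lower bound in (3b) for $n\le0$ is handled symmetrically. The lower norm of $d\Theta$ contributes a factor of about $1$ in place of $\frac12$, so the constant $\frac18$ becomes roughly $\frac1{8}\cdot(1-o(1))\ge\frac1{30}$, and the ratio $|X(x_n)|/|X(x_0)|$ appears in the same way. The unstable statements follow by the symmetric argument.
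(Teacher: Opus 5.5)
Your proposal is correct and follows the paper's route. The paper gives only a one-line justification, namely that the statement follows from Proposition~\ref{sca.invmnfd} through the relation $g^{\pm}_{x_n}=\Theta_{j_{n\pm1}}\circ\wt g^{\pm,*}_{x_n}\circ\Theta_{j_n}^{-1}$ with $\Theta_j=\exp_{o_j}\circ\varsigma(o_j)^{-1}$, which is exactly your conjugation argument. Your added constant check is right and is a detail the paper leaves implicit: the crude bounds $\frac12\le\|d\exp\|\le 2$ and the $\frac{10}{9}$-ratios give only about $79$ and $\frac1{40}$ in (3b), not $60$ and $\frac1{30}$, and this is repaired by using that $d\exp_{o_j}$ and $|X(\cdot)|/|X(o_j)|$ are $1+O(r_0)$ on the tiny sections $\wh D_j$ (shrinking $\rho$ if necessary).
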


The shadowing lemma  says that any $\varepsilon$-gpo  uniquely determines a real orbit for the flow $\varphi_t$. Hence for a given $\varepsilon$-gpo $\ul{\wt v}^*=\{\wt\Psi_{x_n}^{*,p^s_n,p^u_n}\}_{n\in\mathbb{Z}}$, we denote by $t_n$ the time when the shadowed point $x\in \wt v^*_0$ arrives at  $\wt v^*_n$. Denote by $\tau_{x_i}$ the time such that $g^+_{x_i}(x)=\varphi_{\tau_{x_i}(x)}(x)$ and
\begin{equation*}
t_n=\left\{
\begin{array}{lcl}
\sum\limits_{i=0}^{n-1}\tau_{x_i}(g^{+}_{x_0,i}(x)) && n\geq 1\\
\sum\limits_{i=-1}^{n}(-\tau_{x_i}(g^{-}_{x_0,i}(x)))&& n\leq -1\\
0 && n=0.
\end{array}\right.
\end{equation*}

We have some extra properties of these invariant manifolds. Firstly, the invariant $s/u$-manifolds for $g^{\pm}_x$ are related to the strong stable/unstable manifolds with respect to the flow $\varphi_t$.
\begin{pro}\label{strmnfd}
The following holds for all $\varepsilon>0$ small enough. 
For any $\varepsilon$-gpo $\wt{\ul{v}}^*=\{\wt\Psi_{x_n}^{*,p^s_n,p^u_n}\}_{n\in\mathbb{Z}}$, let  $F:\bR^{d^s(x_0)}(p_0^s)\to\mathbb{R}^{d^u(x_0)}$ be the representing function of the stable manifold $V^s_0$. Then there exists a function $\Delta:\bR^{d^s(x_0)}(p_0^s)\to\mathbb{R}$ with $\Delta(0)=0$ such that the manifold $\wh{V}^s:=\{\varphi_{\Delta( \omega)}(\Psi_x( \omega,F( \omega))): \omega\in\bR^{d^s(x_0)}(p_0^s)\}$ satisfies $\dist(\varphi_t(\wh y),\varphi_t(\wh z))< e^{-\frac{\chi\inf R_{\mathbb D}}{2\sup R_{\mathbb D}}t} \cdot |X(x_n)|$ for all $\wh y,\wh z\in \wh{V}^s$, $t\in [t_{n-1},t_{n+1}]$ and $n\geq0$.
In particular, $\dist(\varphi_t(\wh y),\varphi_t(\wh z))< e^{-\frac{\chi\inf R_{\mathbb D}}{2\sup R_{\mathbb D}}t}$ for all $\wh y,\wh z\in \wh{V}_0^s$ and $t\geq0$.

An analogous conclusion holds for the unstable case.
\end{pro}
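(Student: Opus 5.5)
We will build $\wh V^s$ by flowing each point of $V^s_0$ a small time so that it lies on the strong stable manifold of the flow, then transfer the discrete contraction of Proposition~\ref{invmnfd} to continuous time. Each $y=\Psi_{x_0}(\omega,F(\omega))\in V^s_0$ has an itinerary $g^+_{x_0,n}(y)\in\widehat D_{j_n}$ at times $t_n(y)=\sum_{i<n}\tau_{x_i}(g^+_{x_0,i}(y))$. These differ from the times $t_n$ of the shadowed point $x=\Psi_{x_0}(0,F(0))$ (we may translate so that $\Delta(0)=0$). Set
\[
\Delta(\omega):=\lim_{n\to\infty}\big(t_n(x)-t_n(y)\big).
\]
Then $\varphi_{\Delta(\omega)}(y)$ reaches the $n$-th section at approximately the same time as $x$.

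\textbf{Step 1: the limit exists.} By Lemma~\ref{lem.C1Poincare} (F4), $\|d\tau_x\|\le K_{F1}/|X(o_{j_n})|$. By Proposition~\ref{invmnfd} (3a),
\[
\dist\big(g^+_{x_0,n}(x),g^+_{x_0,n}(y)\big)\le 30p_0^s|X(x_n)|e^{-\frac{\chi\inf R_{\mathbb D}}{2}n}.
\]
Since $|X(x_n)|\approx|X(o_{j_n})|$ by (F2), the scale factors cancel, and
\[
\big|\tau_{x_n}(g^+_{x_0,n}(x))-\tau_{x_n}(g^+_{x_0,n}(y))\big|\le C p_0^s e^{-\frac{\chi\inf R_{\mathbb D}}{2}n}.
\]
This is summable, so $\Delta$ is well defined, and $|t_n(x)-t_n(y)-\Delta(\omega)|\le C'p_0^s e^{-\frac{\chi\inf R_{\mathbb D}}{2}n}$. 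This cancellation of the speed $|X|$ is exactly where the scaled estimates are needed; without them the time drift near singularities would not be controlled uniformly.

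\textbf{Step 2: distance estimate for $t\in[t_{n-1},t_{n+1}]$.} Write $\wh y=\varphi_{\Delta}(y)$ and $\wh z=\varphi_{\Delta'}(z)$. At time $t_n$, the point $\varphi_{t_n}(\wh y)$ equals $\varphi_{s}(g^+_{x_0,n}(y))$ with $|s|\le C'p_0^se^{-cn}$. Hence
\[
\dist\big(\varphi_{t_n}(\wh y),g^+_{x_0,n}(y)\big)\le|s|\sup|X|\lesssim|s|\,|X(x_n)|,
\]
using that inside the flow box $\FB(x_n,\rho_{F1},\rho_{F1})$ the speed is comparable to $|X(x_n)|$ by (F2). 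Combining with (3a) gives
\[
\dist\big(\varphi_{t_n}(\wh y),\varphi_{t_n}(\wh z)\big)\le C''p_0^s|X(x_n)|e^{-\frac{\chi\inf R_{\mathbb D}}{2}n}.
\]
For $t$ within $2\sup R_{\mathbb D}\le 2\rho$ of $t_n$, Gronwall (\eqref{eq.flowGI}, $\|d\varphi_t\|\le e^{|t|}$) costs only a factor $e^{2\rho}$. Finally, $n\ge t/\sup R_{\mathbb D}-1$ converts $e^{-\frac{\chi\inf R}{2}n}$ into $e^{-\frac{\chi\inf R}{2\sup R}t}$ up to a constant. Since $p_0^s\le\varepsilon Q(x_0)<\varepsilon^{1+3/\beta}$, all constants are absorbed for $\varepsilon$ small, which gives the strict inequality with factor $|X(x_n)|$. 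The "in particular" statement follows because $|X|\le 1$ after the normalization $\|\nabla X\|\le1$ (we may also rescale so that $\sup|X|\le1$).

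\textbf{Main obstacle.} The key difficulty is Step 1's uniformity near singularities. It requires that the time-derivative bound $K_{F1}/|X(o_j)|$ and the spatial contraction scale exactly by the same factor $|X(x_n)|$, so that the time corrections remain summable independently of how close the orbit passes to $\Sing(X)$. This should follow from (F2), (F4) and Proposition~\ref{invmnfd} (3a) as sketched above. We also need to check that the flowed points stay in the flow boxes where (F2) applies. This holds because $|\Delta|\lesssim p_0^s\ll\rho_{F1}$.
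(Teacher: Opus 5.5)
Your proposal is correct and follows essentially the same route as the paper. $\Delta$ is the limit of the accumulated holonomy-time differences; these are summable because the bound $\|d\tau\|\lesssim 1/|X(x_n)|$ from (F4) cancels the factor $|X(x_n)|$ in Proposition~\ref{invmnfd}~(3a). The distance estimate then follows from a triangle inequality (common-time flow of $g^+_{x_0,n}(y),g^+_{x_0,n}(z)$, plus a small time offset times the local speed), Gronwall on a window of length at most $2\rho$, and $t\le (n+1)\sup R_{\mathbb D}$.

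Just fix the sign: you need $\Delta(\omega)=\lim_n\bigl(t_n(y)-t_n(x)\bigr)$. This matches the paper's $\Delta_n(\omega)=\tau_n(\omega)-\tau_n(\omega_0)$ and is what your Step~2 actually uses. Also, the shadowed point sits at some $\omega_0$, not necessarily at $\omega=0$; taking $\Psi_{x_0}(0,F(0))$ as the reference point works equally well and gives $\Delta(0)=0$ directly.
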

\begin{proof}
The proof is almost the same as \cite[Proposition~4.8]{LMN} and \cite[Proposition~4.8]{BCL}. What we should pay more attention to is the distortion of the Poincar\'e return time  along two nearby orbits for $\varphi_t$. 

Let $x=\Psi_{x_0}( \omega_0,F( \omega_0))$ be the shadowed point where $ \omega_0\in\bR^{d^s(x_0)}(p_0^s)$. Denote by $\tau_{x_i}$ the time such that $g^+_{x_i}(z)=\varphi_{\tau_{x_i}(z)}(z)$ and
\begin{equation*}
\tau_n(\omega)=\left\{
\begin{array}{lcl}
\sum\limits_{i=0}^{n-1}\tau_{x_i}(g^{+}_{x_0,i}(\Psi_{x_0}( \omega_0,F( \omega_0)))) && n\geq 1\\
\sum\limits_{i=-1}^{n}(-\tau_{x_i}(g^{-}_{x_0,i}(\Psi_{x_0}( \omega_0,F( \omega_0)))))&& n\leq -1\\
0 && n=0.
\end{array}\right.
\end{equation*}

Define $\Delta_n:\bR^{d^s(x_0)}(p_0^s)\to\mathbb{R}$ by $\Delta_n( \omega)=\tau_n(\omega)-\tau_n(\omega_0)$ for $n\geq0$. We assert that the sequence $\{\Delta_n(\omega)\}_{n\in\mathbb{N}}$ uniformly converges. In fact, for any $ \omega\in\bR^{d^s(x_0)}(p_0^s)$ and $n\leq m$
\begin{equation}\label{Delta}
|\Delta_n( \omega)-\Delta_m( \omega)|=\sum_{k=m}^{n-1}|\tau_{x_k}(g^{+}_{x_0,k}(\Psi_{x_0}( \omega,F( \omega))))-\tau_{x_k}(g^{+}_{x_0,k}(\Psi_{x_0}( \omega_0,F( \omega_0))))|.
\end{equation}

Using Proposition~\ref{invmnfd} (3a) we have  that
\[\begin{aligned}
&|\tau_{x_k}(g^{+}_{x_0,k}(\Psi_{x_0}( \omega,F( \omega))))-\tau_{x_k}(g^{+}_{x_0,k}(\Psi_{x_0}( \omega_0,F( \omega_0))))|\\
\leq &\frac{ K_F }{|X(x_k)|}\cdot \dist(g^{+}_{x_0,k}(\Psi_{x_0}( \omega,F( \omega))),g^{+}_{x_0,k}(\Psi_{x_0}( \omega_0,F( \omega_0))))\\
\leq &\frac{ 2K_F }{|X(x_k)|}\cdot p^s_0 |X(x_k)|\cdot e^{-\frac{\chi\inf R_{\mathbb D}}{2}k}= 2 p^s_0 K_F\cdot e^{-\frac{\chi\inf R_{\mathbb D}}{2}k}.
\end{aligned}\]

Then it is readily checked  that (\ref{Delta}) uniformly converges to a function $\Delta:\bR^{d^s(x_0)}(p_0^s)\to\mathbb{R}$. Denote $E_n=\Delta-\Delta_n$. A similar reason tells us that $|E_n( \omega)|<\varepsilon e^{-\frac{\chi}{2}n}$ for all $n\geq0$ and $ \omega\in\bR^{d^s(x_0)}(p_0^s)$.

Recall $\wh{V}^s:=\{\varphi_{\Delta( \omega)}(\Psi_x( \omega,F( \omega))): \omega\in\bR^{d^s(x_0)}(p_0^s)\to\mathbb{R}\}$.
For any $\wh{y}, \wh{z}\in \wh{V}^s$, there exist $y=\Psi_x( \omega_1,F( \omega_1))$ and $z=\Psi_x( \omega_2,F( \omega_2))$ for some $ \omega_1,\omega_2\in\bR^{d^s(x_0)}(p_0^s)\to\mathbb{R}$, such that $\wh{y}=\varphi_{\Delta( \omega_1)}(y)$ and $\wh{z}=\varphi_{\Delta( \omega_2)}(z)$. For $t\geq0$, there is unique $n\geq0$ such that $t_{n-1}<t\leq t_n$. Therefore,
\begin{equation}\label{strongdis}\begin{aligned}
\dist(\varphi_t(\wh{y}),\varphi_t(\wh{z}))
&\leq
\dist(\varphi_{t-\tau_n(\omega_0)+E_n(\omega_1)}(g^{+}_{x,n}(y)),\varphi_{\omega-\tau_n(\omega_0)+E_n(\omega_1)}(g^{+}_{x,n}(z)))\\
&+\dist(\varphi_{t-\tau_n(\omega_1)+E_n(\omega_1)}(g^{+}_{x,n}(z)),\varphi_{t-\tau_n(\omega_0)+E_n(\omega_2)}(g^{+}_{x,n}(z))).
\end{aligned}\end{equation}

By Proposition \ref{invmnfd} (3a), we have
 \[
 \dist(\varphi_{t-\tau_n(\omega_0)+E_n(\omega_1)}(g^{+}_{x,n}(y)),\varphi_{t-\tau_n(\omega_0)+E_n(\omega_1)}(g^{+}_{x,n}(z)))\leq L_1\cdot\dist(g^{+}_{x,n}(y),g^{+}_{x,n}(z))\leq 2p_0^s e^{2\rho}|X(x_n)|\cdot e^{-\frac{\chi\inf R_{\mathbb D}}{2}n}.
 \]

 Since for $i=1,2$, $|t-\tau_n(\omega_0)+E_n(\omega_i)|\leq|t-\tau_n(t_0)|+|E_n(\omega_i)|$ and $|t|\leq|\tau_n(\omega_0)-\tau_{n-1}(\omega_0)|<\sup R_{\mathbb D}$ and $|E_n(\omega_i)|\leq \varepsilon\cdot e^{-\frac{\chi\inf R_{\mathbb D}}{2}n}$, then for $n$ large enough we have that 
\[\dist(\varphi_{t-\tau_n(\omega_0)+E_n(\omega_1)}(g_{x,n}(z)),\varphi_{t-\tau_n(\omega_0)+E_n(\omega_2)}(g_{x,n}(z)))\leq 2\varepsilon e^{2\rho} |X(x_n)|\cdot e^{-\frac{\chi\inf R_{\mathbb D}}{2}n}.\]

Plugging these two estimations to (\ref{strongdis}), we obtain that for $\varepsilon>0$ small, 
\[
\dist(\varphi_t(\wh{y}),\varphi_t(\wh{z}))\leq(2p_0^s e^{2\rho}+2\varepsilon  K_F )|X(x_n)|\cdot e^{-\frac{\chi\inf R_{\mathbb D}}{2}n}<|X(x_n)|\cdot e^{-\frac{\chi\inf R_{\mathbb D}}{2}n}.
\]
According to the fact that $t\leq\tau_n( \omega_0)\leq n\sup R_{\mathbb D}$, we conclude that $\dist(\varphi_t(\wh{y}),\varphi_t(\wh{z}))< |X(x_n)|\cdot e^{-\frac{\chi\inf R_{\mathbb D}}{2\sup R_{\mathbb D}}t}$.

We complete the proof.
\end{proof}

Moreover, we confirm the coherence of admissible $s/u$-manifolds respectively. 

\begin{pro}\label{2mnfd}
The following holds for all $\varepsilon>0$ small enough. If $\ul{\wt v}^*=\{\ul{\wt v}^*_n\}_{n\in\mathbb{Z}}$, $\ul{\wt w}^*=\{\ul{\wt w}^*_n\}_{n\in\mathbb{Z}}$ be two $\varepsilon$-gpos with ${\wt v}^*_0=\wt\Psi_{x}^{*,p^s,p^u}$ and ${\wt w}^*_0=\Psi_{x}^{*,q^s,q^u}$. Then $V^{s}=V^{s}[\{\wt v^*_n\}_{n\geq0}]$ and $U^{s}=V^{s}[\{\wt w^*_n\}_{n\geq0}]$  are disjoint or one contains the other. 
\end{pro}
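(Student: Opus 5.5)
Since both gpos have zeroth chart centred at the same point $x$ in the same section $D_j$, I will work entirely in the lifted scaled coordinates of $\N_{o_j}$, where $\wt\Psi^*_x$ is a common affine chart. By Proposition~\ref{invmnfd}~(1), $V^s$ and $U^s$ are graphs over the stable direction in the chart $\Psi_x$, with representing functions $F$ on $\bR^{d^s}(p^s)$ and $G$ on $\bR^{d^s}(q^s)$. Without loss of generality $p^s\le q^s$. The claim to prove is that if $V^s\cap U^s\neq\emptyset$ then $V^s\subset U^s$. This is the argument of \cite[Proposition~6.3]{Sar13} and \cite[Proposition~4.9]{BCL}, transplanted to the scaled setting.

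\textbf{Step 1 (forward contraction).} Take $z\in V^s\cap U^s$ and any $y\in V^s$. By Proposition~\ref{sca.invmnfd}~(3a), in the scaled coordinates
\[
|\wt g^{+,*}_{x_0,n}(\wt y^*)-\wt g^{+,*}_{x_0,n}(\wt z^*)|\le 8p^s e^{-\frac{\chi\inf R_{\mathbb D}}{2}n}.
\]
Here the iterates along $\ul{\wt v}^{*}$ and along $\ul{\wt w}^{*}$ are the same maps $g^+$, because the holonomies $g^+_{x_n}$ depend only on the sections visited. I must justify that the two gpos visit the same sections. The point $z$ lies on both manifolds, so its forward return sequence to $\mathbb D$ determines the section at each step. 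Hence the sections containing $v_n$ and $w_n$ coincide for all $n\ge0$, and the forward images of $z$ lie in both $\wt\Psi^*_{x_n}(R[10Q(x_n)])$ and $\wt\Psi^*_{y_n}(R[10Q(y_n)])$, where $w_n=\wt\Psi^{*,q_n^s,q_n^u}_{y_n}$.

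\textbf{Step 2 (membership in $U^s$).} By the characterisation in Proposition~\ref{sca.invmnfd}~(1), it suffices to show that $\wt g^{+,*}_{x_0,n}(\wt y^*)\in\wt\Psi^*_{y_n}(R[10Q(y_n)])$ for all $n\ge0$, and that $\wt y^*\in\wt\Psi^*_x(R[q^s])$. The latter holds since $p^s\le q^s$ and both manifolds are graphs over the same chart.

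For the former, I will use the overlap relations (GPO1) and Proposition~\ref{pro.overlap}~\ref{ol.inclu}. The image $\wt g^{+,*}_{x_0,n}(\wt z^*)$ lies in $\wt\Psi^*_{y_n}(R[q_n^s\wedge q_n^u])$, up to the shadowing-type bounds. The image of $\wt y^*$ is within $8p^se^{-cn}$ of it in $\N_{o_{j_n}}$. Converting this to chart coordinates costs a factor $\|\wt C(y_n)^{-1}\|\le 2\|C(y_n)^{-1}\|$. Since $p^s\le\varepsilon Q(x)$ and $Q$ varies slowly along the orbit by Lemma~\ref{lem.edge} and Lemma~\ref{lem.estimateQ}, the quantity $\|C(y_n)^{-1}\|\,8p^se^{-cn}$ is much smaller than $Q(y_n)$. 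The bound $\|C^{-1}\|Q^{\beta/12}\le\varepsilon^{1/4}$ from Lemma~\ref{lem.estimateQ}~\ref{est.QC} gives the needed room. Hence the image stays in $R[10Q(y_n)]$.

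\textbf{Main obstacle.} The hard part is this last bookkeeping: comparing the $\NUH$ quantities at $x_n$ and at $y_n$, which are different points of the same section. I will try to control them using the $\varepsilon$-overlap, which gives $\|C(x_n)^{-1}\|/\|C(y_n)^{-1}\|=e^{\pm\eta\eta'}$, together with the $e^{\pm2|n|\varepsilon}$ drift of chart sizes from Lemma~\ref{lem.edge}. The drift is beaten by the contraction rate $e^{-\chi\inf R_{\mathbb D}n/2}$ because $\varepsilon\ll\chi$. Scaling introduces no new non-uniformity here, since all estimates are carried out in $\N_{o_j}$ with the scaled maps $\wt g^{\pm,*}$.
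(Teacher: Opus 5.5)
Your argument breaks at the identification of the two forward iterations. The holonomy $g^+_{x_n}$ is determined by the chart centre $x_n$: it is $P_{o_i,o_k}$ with $x_n\in D_i$ and $f(x_n)\in D_k$. It is not determined by $z$. (GPO1) only puts the centres $x_1,y_1$ of $\wt v^*_1,\wt w^*_1$ into the component containing $f(x)$. After that, $f(x_n)$ and $f(y_n)$ may lie in different components, for instance when one orbit crosses $D_k$ and the nearby one crosses only $\wh D_k\setminus D_k$. Meanwhile $z$ need not lie in $\mathbb D$, and its own first returns play no role in which sections the gpos record. So the two gpos can desynchronize by an index shift, and the maps along $\ul{\wt v}^*$ and $\ul{\wt w}^*$ are not the same.

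Step 2 has a further gap. For $n\ge 2$ there is no overlap relation between $\wt\Psi^*_{x_n}$ and $\wt\Psi^*_{y_n}$, since they belong to different gpos. Hence Proposition~\ref{pro.overlap}~\ref{ol.Ccomp} says nothing about $\|C(y_n)^{-1}\|$, and $C(\cdot)$ is only measurable. You also need membership for every $n\ge0$, including small $n$ where there is no contraction. There the bound $\|\wt C(y_n)^{-1}\|\cdot 8p^s\le 10Q(y_n)$ does not follow from the lemmas you cite: Lemma~\ref{lem.estimateQ}~\ref{est.QC} only gives $\|C^{-1}\|\le\varepsilon^{1/4}Q^{-\beta/12}$. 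Moreover $Q$ is not slowly varying: Lemma~\ref{lem.estimateQ}(1) allows a factor $e^{\pm288\rho/\beta}$ per return, which $e^{-\chi\inf R_{\mathbb D}/2}$ does not beat. Only the chart sizes drift slowly (Lemma~\ref{lem.edge}).

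The missing idea is to work in the charts of a single gpo, to transport the other manifold by the flow rather than by a section sequence, and to do so only for large $n$. The paper takes $z\in V^s\cap U^s$ with $q^s\le p^s$ and applies the holonomies $g^+_{x,n}$ of $\ul{\wt v}^*$ to both manifolds.

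For $U^s$ it uses the flow-adapted manifold $\wh U^s$ of Proposition~\ref{strmnfd}. With $s_n$ the time from $z$ to $z_n=g^+_{x,n}(z)$, one has $\dist(\varphi_{s_n}(w),z_n)\le |X(z_n)|e^{-cn}$ for $w\in\wh U^s$, where $c=\chi(\inf R_{\mathbb D})^2/(2\sup R_{\mathbb D})$. Hence $\varphi_{s_n}(\wh U^s)$ lies in the flow box at $x_n$, and $g^+_{x,n}(U^s)=P_{x_n}[\varphi_{s_n}(\wh U^s)]$ has scaled diameter $O(e^{-cn})$. Since $Q(x_n)\ge\varepsilon^{-1}(p^s_n\wedge p^u_n)\ge\varepsilon^{-1}e^{-2n\varepsilon}(p^s\wedge p^u)$, this set lies in $\Psi_{x_n}(R[Q(x_n)])$ once $n\ge n_0$, even after paying the factor $\|C(x_n)^{-1}\|$ for chart coordinates.

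The characterization in Proposition~\ref{invmnfd}(1), applied to the tail $\{\wt v^*_i\}_{i\ge n_0}$, then puts both $g^+_{x,n_0}(V^s)$ and $g^+_{x,n_0}(U^s)$ into the single admissible manifold $V^s[\{\wt v^*_i\}_{i\ge n_0}]$. Comparing the domains of the two graphs, using $q^s\le p^s$, gives $g^+_{x,n_0}(U^s)\subset g^+_{x,n_0}(V^s)$, and pulling back gives $U^s\subset V^s$. This route never compares charts of different gpos and never needs estimates at small $n$.
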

\begin{proof}
Write $\wt v^*_n=\wt\Psi_{x_n}^{*,p^s_n,p^u_n}$ with $\wt\Psi_{x_0}^{*,p^s_0,p^u_0}=\wt\Psi_{x}^{*,p^s,p^u}$. Assume that $z\in V^s\cap U^s\neq\emptyset$ and $q^s\leq p^s$. Then it is enough to show $U^s\subset V^s$. We have the following facts:
\begin{itemize}
\item If $n$ is large enough then $g^{+}_{x,n}(V^s)\subset\Psi_{x_n}(\frac{1}{2}R[Q(x_n)])$. 
\end{itemize}
This is directly obtained from Proposition \ref{invmnfd} (2a). As a consequence, $z_n:=g^{+}_{x,n}(z)\in \Psi_{x_n}(R[Q(x_n)])$. 

\begin{itemize}
\item If $n$ is large enough then $g^{+}_{x,n}(U^s)\subset \Psi_{x_n}(R[Q(x_n)])$.
\end{itemize}
Let $\wh{U}^s$ be the curve given by Proposition \ref{strmnfd} with respect to $U^s$. For any $n\geq0$, set $s_n=\sum_{k=0}^{n-1}\tau_{x_k}(g^{+}_{x,k}(z))$ then $z_n=\varphi_{s_n}(z)$ and $s_n\geq n\inf R_{\mathbb D}$. Thus for any $w\in \wh{U}^s$, 
\begin{equation}\label{eq.diamU}
\dist(\varphi_{s_n}(w),z_n)\leq |X(z_n)|\cdot e^{-\chi \frac{(\inf R_{\mathbb D})^2}{2\sup R_{\mathbb D}}n}
\end{equation}

On the other hand, $|X(z_n)|\leq \frac{10}{9}|X(x_n)|$. Thus we have $\dist(\varphi_{s_n}(w),z_n)\leq 2e^{-\chi \frac{(\inf R_{\mathbb D})^2}{2\sup R_{\mathbb D}}n}|X(x_n)|$, which implies that $\varphi_{s_n}(w)\in\FB({x_n},r_0,r_0)$ for all $n$ large. Thus $g^{+}_{x,n}(U^s)=P_{x_n}[\varphi_{s_n}(\wh{U}^s)]$. Therefore,
\[
\diam(g^{+}_{x,n}(U^s))\leq K_F\diam(\varphi_{s_n}(\wh{U}^s))\leq 2\sqrt{2}K_F|X(x_n)|\cdot e^{-\chi \frac{(\inf R_{\mathbb D})^2}{2\sup R_{\mathbb D}}n}.
\]

Consequently, $\Psi_{x_n}^{-1}(g^{+}_{x,n}(U^s))$ is contained in the ball  of radius  $4\sqrt{2}K_F\norm{C(x_n)^{-1}}\cdot e^{-\chi \frac{(\inf R_{\mathbb D})^2}{2\sup R_{\mathbb D}}n}$  center at $\Psi_{x_n}^{-1}(z_n)$. Note that from Lemma~\ref{lem.edge},  for $n\in\mathbb{N}$ large enough, it holds that $4\sqrt{2}K_F\norm{C(x_n)^{-1}}\cdot e^{-\chi \frac{(\inf R_{\mathbb D})^2}{2\sup R_{\mathbb D}}n}<\frac{1}{2}Q(x_n)$, which verifies the fact.

\begin{itemize}
\item $U^s\subset V^s$.
\end{itemize}

From the above two facts, there is $n_0\in\mathbb{N}$ sufficiently large so that for any $n\geq n_0$, both $g^{+}_{x,n}(V^s)$ and $g^{+}_{x,n}(U^s)$ are subsets of $\Psi_{x_n}[R(Q(x_n))]$, where $g^{+}_{x,n}$ corresponds to the gpo $\ul{v}$. In particular,  $g^{+}_{x,n_0}(V^s), g^{+}_{x,n_0}(U^s)\subset V^s[\{\Psi_{x_i}^{p^s_i,p^u_i}\}_{i\geq n_0}]$ due to Proposition \ref{invmnfd} (3a). 

This allows us to lift the two manifolds by the local chart $\wt\Psi_{x_i}^{*,p^s_i,p^u_i}$ for all $i\geq n_0$, which excludes the influence by the flow speeds of the chart $\wt\Psi_{x_i}^{*,p^s_i,p^u_i}$. 

 Using the language of scaling and according to the above two inclusions, we get that $\wt g^{+,*}_{x,n_0}(\wt V^{*,s}), \wt g^{+,*}_{x,n_0}(\wt U^{*,s})\subset \wt V^{*,s}[\{\wt \Psi_{x_i}^{*,p^s_i,p^u_i}\}_{i\geq n_0}]$. 
Following a similar argument as in \cite[Proposition 3.15]{Ben}, it holds that the domain of the graph $\wt g^{+,*}_{x,n_0}(\wt U^{*,s})$ is contained in the domain of $\wt g^{+,*}_{x,n_0}(\wt V^{*,s})$. Otherwise, it will contradict to the assumption $q^s\leq p^s$. This means that $\wt g^{+,*}_{x,n_0}(\wt U^{*,s})\subset \wt g^{+,*}_{x,n_0}(\wt V^{*,s})$. Then by iterating backward, we  get the inclusion $\wt U^{*,s}\subset \wt V^{*,s}$, as well as $U^s\subset V^s$.

The proof is completed.
\end{proof}

\section{First coding}\label{sec.firstcod}

In this section we construct a first coding. This coding  is usually infinite to one.

\subsection{Coarse graining}\label{sec.Coarse}

For any  $x\in \mathbb D\cap\NUH^{\#}_{\chi}$, there are uncountably many scaled double  charts containing it simultaneously. We need to select countably many of these for later use. The methods of selecting charts is first proposed in \cite{Sar13}. In \cite{BCL}, the difficulty arising from the non-discrete nature of the flow's time parameter is addressed by relaxing the requirements of condition (GPO2) of Defination~\ref{dfn.gpo}.

However, the situation we face here differs significantly: there are now infinitely many sections. Fortunately, local finiteness ensures that the argument can proceed after appropriate modifications. Please see the  local discreteness in Proposition~\ref{pro.SDR}~\ref{SDR.D} below. The conclusion obtained here is weaker than global discreteness, but it suffices for subsequent applications. 

Moreover, we wish to reiterate that in the following discussion we will exclusively consider the scaled objects on the normal spaces $\N_{o_j}$.

\begin{pro}\label{pro.SDR}
For all $\varepsilon$ small enough,
there exists a countable collection $\mathscr{A}$ of scaled $\varepsilon$-double charts such that:
\begin{enumerate}
\item\label{SDR.D} \emph{Local Discreteness:} Let $\mathscr{A}_j$ be the collection of scaled $\varepsilon$-double charts of $\mathscr{A}$ contained in $\widehat D_j$. Then for every $t>0$ and every $j\in\Gamma$, the set $\{\wt\Psi_{x}^{*,p^s,p^u}\in\mathscr{A}_j: p^s,p^u>t\}$ is finite.
\item \label{Suff}\emph{Sufficiency:} For every $x \in \mathbb D\cap \NUH_{\chi}^{\#}$, there is a regular $\varepsilon$-gpo $\ul{v}^*\in\mathscr{A}^{\mathbb{Z}}$ that shadows $x$.
\item \emph{Relevance:} For every $v^*\in\mathscr{A}$, there is an  $\varepsilon$-gpo $\ul{v}^*\in\mathscr{A}^{\mathbb{Z}}$ such that $v^*_0=v^*$ and $\ul{v}^*$ shadows a point in $\mathbb D\cap\NUH_{\chi}^{\#}$. 
\end{enumerate}
\end{pro}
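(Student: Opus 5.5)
We follow the coarse-graining scheme of \cite{Sar13} in the flow version of \cite[Proposition~4.5]{BCL}, performing it section by section in the scaled coordinates on $\N_{o_j}$. For each $j\in\Gamma$ we parametrise candidate charts by the data $\Gamma(x)=(\wt x^*,\wt C(x),Q(x),\wt x_{f}^*,\wt C(f(x)),Q(f(x)),\wt x_{f^{-1}}^*,\wt C(f^{-1}(x)),Q(f^{-1}(x)))$ for $x\in D_j\cap\NUH^{\#}_\chi$. Here the forward and backward data are also expressed in the scaled coordinates of the (at most $I_{\max}$, by Theorem~\ref{thm.poincaresection}~\ref{se.fini}) sections containing $f^{\pm1}(x)$. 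We record the index of those target sections as a discrete label. All scaled points lie in the bounded ball $\N_{o_j}(r_0)$, and the matrices $\wt C$ have norm $\le 2$. Hence for each level $k\ge 0$ the set $Y_{j,k}=\{\Gamma(x): x\in D_j\cap\NUH^\#_\chi,\ e^{-k-1}<Q(x)\le e^{-k}\}$, together with the inverse norms $\|C^{-1}\|\approx Q^{-\beta/48}$, is precompact in a fixed finite-dimensional space. The dimension of that space is independent of $j$ after identifying each $\N_{o_i}$ with $\bR^{d-1}$ via the fixed bases used in Theorem~\ref{thm.poincaresection}.

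On each $Y_{j,k}$ we choose a finite $e^{-8(k+3)}$-dense subset $Y_{j,k}(\text{finite})$, so that two points at that distance give $\varepsilon$-overlapping charts in the sense of Definition~\ref{dfn.overlap} (the threshold $(\eta_1\eta_2)^4$ with $\eta_i\approx e^{-k}$ is guaranteed by the density). We then let $\mathscr{A}_j$ be the set of all double charts $\wt\Psi^{*,p^s,p^u}_x$ with $\Gamma(x)\in Y_{j,k}(\text{finite})$ for some $k$, $p^s,p^u\in\{e^{-\varepsilon\ell/3}:\ell\ge0\}\cap(0,\varepsilon Q(x)]$, and with $e^{-\varepsilon/3}p^{s}\wedge p^u\le\ldots$ as in \cite{BCL} (a discretised set of sizes). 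Set $\mathscr{A}=\bigcup_j\mathscr{A}_j$. Local discreteness \ref{SDR.D} then follows immediately: $p^s,p^u>t$ forces $Q(x)>t/\varepsilon$, hence only finitely many levels $k$, each contributing finitely many centres and finitely many admissible exponents. Countability is clear, since $\Gamma$ is countable. Note that we obtain only local, not global, finiteness because there may be infinitely many $j$; this is exactly what \ref{SDR.D} asserts.

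For sufficiency, take $x\in\mathbb D\cap\NUH^\#_\chi$ with return sequence $x_n=f^n(x)$ and return times $\T=\{t_n\}$, and set $p^{s/u}_n=p^{s/u}(x,\T,n)$ from Lemma~\ref{lem.pqcomp}. We approximate $\Gamma(x_n)$ by a point $\Gamma(y_n)$ of the finite nets and round $p_n^{s/u}$ down to the discrete grid. Lemma~\ref{lem.pqcomp}(3) gives the recursion needed for (GPO2). The relaxed two-sided inequality in (GPO2), with transition time $T$ measured along nearby points, absorbs both the rounding and the mismatch between $t_{n+1}-t_n$ and $T(\wt v^*_n,\wt v^*_{n+1})$. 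The latter mismatch is controlled by $\|d\tau_x\|\le K_{F1}/|X(x)|$, which becomes $O(1)$ after scaling. Overlap of $f(y_n)$ with $y_{n+1}$ in (GPO1) follows from the density of the net applied to the recorded forward and backward data. Regularity uses (NUH5): along the times $t^\pm_n$, $q$ is bounded below by $\delta_x$ and the points stay $r_x$-away from $\Sing(X)$, so they lie in finitely many sections $D_j$. By Lemma~\ref{lem.pqcomp}(2) the corresponding $p$'s are bounded below, and local discreteness then yields finitely many possible charts, one of which repeats infinitely often in each direction. Finally, the shadowing of $x$ follows from Proposition~\ref{pro.shadow}.

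Relevance is obtained by discarding from $\mathscr{A}$ all charts not appearing in some gpo produced by the sufficiency step; this keeps $\mathscr{A}$ countable and preserves the other properties. The main obstacle is the sufficiency step: checking (GPO2) with the discretised sizes when consecutive charts live on different sections with different scalings $|X(o_j)|$, and making the overlap in (GPO1) uniform despite infinitely many sections. I expect that recording the forward and backward data in the target section's own scaled coordinates, together with the bounded ratio $|X(o_i)|/|X(o_j)|\in(\frac{81}{100},\frac{100}{81})$ for adjacent sections, reduces this to the estimates of \cite[Proposition~4.5]{BCL} verbatim.
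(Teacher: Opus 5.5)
Your overall scheme (section-by-section precompactness via local finiteness, regularity from (NUH5) plus local discreteness, pruning for relevance) is the paper's. The coarse graining itself, however, has a genuine gap.

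\textbf{Net precision.} You tie the net precision to the level of $Q(x)$ and assert that the overlap threshold $(\eta_1\eta_2)^4$ holds ``with $\eta_i\approx e^{-k}$''. Chart sizes are not comparable to $Q$.
\begin{itemize}
\item Iterating the upper bound in (GPO2) along a gpo $\{\wt\Psi^{*,p^s_n,p^u_n}_{y_n}\}$ gives $p^s_n\le\varepsilon\inf_{m\ge n}e^{\varepsilon(T_n+\cdots+T_{m-1})}Q(y_m)$, where $T_i=T(\wt v^*_i,\wt v^*_{i+1})$, and symmetrically for $p^u_n$.
\item In your own sufficiency step, Lemma~\ref{lem.pqcomp}(2) gives $p^s_n\wedge p^u_n=e^{\pm\mathfrak Y}q(x_n)$, and $q(x_n)\le\varepsilon Q(x_n)$ can be arbitrarily small relative to $Q(x_n)$.
\item (GPO1) requires $|\wt{f(y_n)}^*-\wt y_{n+1}^*|+\|\wt C(f(y_n))-\wt C(y_{n+1})\|<(p^s_{n+1}\wedge p^u_{n+1})^8$, roughly $q(x_{n+1})^8$. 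A net of precision about $Q(x_n)^8$ does not give this.
\end{itemize}
The paper repairs this as follows. It puts $q(x)$ into the data and partitions also by $e^{-k-1}\le q(x)<e^{-k}$. It chooses nets with $|\wt{f^i(x)}^*-\wt{f^i(y)}^*|+\|\wt C(f^i(x))-\wt C(f^i(y))\|<\frac12q(x)^8$ and $q(x)/q(y)=e^{\pm\varepsilon/3}$. Finally, it admits only charts with $(p^s\wedge p^u)/q(x)=e^{\pm(\mathfrak Y+1)}$. This last restriction is what preserves local discreteness once the nets are refined by $q$-level: $p^s,p^u>t$ then bounds the $q$-level along with the $Q$- and $\|\wt C^{-1}\|$-levels.

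\textbf{Size grid.} Your grid $\{e^{-\varepsilon\ell/3}\}$ is Sarig's grid for the discrete-time edge rule $p^s=\min\{e^{\varepsilon}q^s,Q(x)\}$, which is preserved by multiplication by $e^{\varepsilon}$. Here (GPO2) involves a real transition time $T$. When the first term is the minimum it reads $e^{-\varepsilon p^s}e^{\varepsilon T}q^s\le p^s\le e^{\varepsilon T}q^s$. That is a window of multiplicative width $e^{\varepsilon p^s}$, far narrower than one grid step $e^{\varepsilon/3}$.

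So rounding $p^s_n$ down to your grid is not ``absorbed'' by the relaxed inequality. Rounding consecutive indices independently can even violate the upper bound $p^s_n\le e^{\varepsilon T_n}p^s_{n+1}$, since $T_n$ is essentially the true return time.

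The paper instead uses the $q$-dependent grid $I_{\varepsilon,q(x)}=\{e^{-\gamma\varepsilon^2q(x)}:\gamma\ge0\}$. Its log-step $\varepsilon^2q(x)$ is smaller than $\varepsilon p^s$ precisely because $p^s\wedge p^u$ is comparable to $q(x)$. The sizes are set as $p^s_n=a^s_nP^s_n$, with $a^s_n\in(e^{-\varepsilon},1]$ chosen by backward induction between consecutive maximal indices, as in \cite[Theorem~5.1]{BCL}. Both repairs require $q(x)$ as part of the coarse-graining data, and that is the idea missing from your construction.
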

\begin{proof}
The construction will be achieved through a pre-compactness argument as introduced in \cite{Sar13}.
This argument is also used in \cite{Ben, LS, ALP, BCL, LMN}. 

Let $\mathbb N_0=\mathbb Z^+\cup\{0\}$. Denote  $\widehat Y=
{\mathbb D}^3\times {\rm GL}(d-1,\mathbb{R})^3\times(0,1]^2\times \{0,1,\dots,d-1\}$.
For any $x\in \mathbb D\cap \NUH_{\chi}^{\#}$, denote $\Omega (x)=(\ul{x},\ul{\wt C},\ul{Q},d^s(x))\in \widehat Y$ such that
\[
\ul{x}=(f^{-1}(x),x,f(x)),\ \ul{\wt C}=(\wt{C}(f^{-1}(x)), \wt{C}(x), \wt{C}(f(x)))
\text{ and } \ul{Q}=(Q(x),q(x)) .
\] 

Define $Y=\{\Omega(x):x\in \mathbb D\cap \NUH_{\chi}^{\#}\}$ and $Y^j=\{\Omega(x)\in Y: x\in D_j\}$. We further divide $Y^j$ to disjoint sets $Y^j_{\ul{\ell},m,k,d^s}$ for $j\in \Gamma$, $\ul{\ell}:=(\ell_{-1},\ell_{0},\ell_{1})\in\mathbb{N}_0^3$, $m,k\in\mathbb{N}_0$, and $d^s\in \{0,1,\dots,d-1\}$:
\[
Y^j_{\ul{\ell},m,k,d^s}:=\left\{\Omega(x)\in Y^j:
\begin{subarray}{c}
e^{\ell_i}\leq\norm{\wt{C}(f^i(x))^{-1}}< e^{\ell_i+1},\ i=0,\pm1\\
	e^{-m-1}\leq Q(x)<e^{-m}\\
	e^{-k-1}\leq q(x)< e^{-k}\\
	d^s(x)=d^s
\end{subarray}
\right\}.
\]

It is clear that $Y^j=\bigcup_{\begin{subarray}{c} \ul{\ell}\in\mathbb{N}_0^3, m,j\in\mathbb{N}_0\\ d^s\in \{0,1,\dots,d-1\}\end{subarray}} Y^j_{\ul{\ell},m,k,d^s}$. We  perform the pre-compactness argument on each $Y^j_{\ul{\ell},m,k,d^s}$.

\smallskip
\smallskip
\nt{\bf Claim 1.} {\it
$Y^j_{\ul{\ell},m,k,d^s}$ is pre-compact in $\widehat Y$.
}
\begin{proof}
Fix $j\in \Gamma, \ul{\ell}\in\mathbb{N}_0^3, m,k\in\mathbb{N}_0$, we have the following:
\begin{itemize}
\item By the local finiteness from Theorem~\ref{thm.poincaresection} \ref{se.fini}, there are at most $I_{\max}$ components of $D_\gamma$ satisfying $f(D_j)\cap D_\gamma\neq\emptyset$ or $f^{-1}(D_j)\cap D_\gamma\neq\emptyset$. Define $\mathbb D_j$ as the union of $D_j$ and these $D_\gamma$. So $\ul x\in \mathbb D_j^3$ for any $\Omega(x)\in Y^j_{\ul{\ell},m,k,d^s}$. Note that  $\mathbb D_j^3$ is a compact set of $\mathbb D^3$. 
\item For each 
$i=0, \pm1$, $\norm{\wt C(f^i(x))}\le 2$ and $\norm{\wt{C}(f^i(x))^{-1}}\le e^{\ell_i+1}$. Thus $\wt{C}(f^{i}(x))$ lies in a compact set of ${\rm GL}(d-1,\mathbb{R})$. 
\item $\ul{Q}=(Q(x),q(x))$ is contained in the compact set $[e^{-m-1},e^{-m}]\times[e^{-j-1},e^{-j}]\subset [0,1]^2$.
\end{itemize}
Consequently, we have that $Y^j_{\ul{\ell},m,k,d^s}$ is a pre-compact set.
\end{proof}

The pre-compactness implies that there exists a finite set $Z^j_{\ul{\ell},m,k,d^s}\subset Y^j_{\ul{\ell},m,k,d^s}$ such that for every $\Omega(x)\in Y^j_{\ul{\ell},m,k,d^s}$, there exists $\Omega(y)\in Z^j_{\ul{\ell},m,k,d^s}$ such that:
\begin{enumerate}
\item[(a)] $|\wt{f^i(x)}^*-\wt{f^i(y)}^*|+\|\wt{C}(f^i(x))-\wt{C}(f^i(y))\|<\frac{1}{2}\cdot q(x)^8$ for $i=0,\pm1$;
\item[(b)] $\frac{Q(x)}{Q(y)}=e^{\pm\varepsilon/3}$ and $\frac{q(x)}{q(y)}=e^{\pm\varepsilon/3}$;
\item[(c)] $d^s(x)=d^s(y)$.
\end{enumerate}

Now take $\mathscr{A}_j$ be the collection of scaled double charts $\Psi_{x}^{*,p^s,p^u}$ such that:
\begin{enumerate}
\item[(CG1)] $\Omega(x)\in Z^j_{\ul{\ell},m,k,d^s}$ for some $(\ul{\ell},m,k,d^s)\in\mathbb{N}_0^3\times\mathbb{N}_0\times\mathbb{N}_0\times\{0,1,\dots,d-1\}$.
\item[(CG2)] $0<p^s,p^u\leq\varepsilon Q(x)$ and $p^s,p^u\in I_{\varepsilon,q(x)}:=\{e^{-\gamma\varepsilon^2q(x)}:\gamma\geq 0\}$.
\item[(CG3)] $\frac{p^s\wedge p^u}{q(x)}=e^{\pm(\mathfrak{Y}+1)}$ where $\mathfrak{Y}$ is given by Proposition \ref{lem.pqcomp} (2).
\end{enumerate}

Finally, let $\mathscr{A}=\bigcup_{j\in\Gamma}\mathscr{A}_j$. It is clear that $\mathscr{A}$ is a countable set. Next we show  that $\mathscr{A}$ satisfies the three conditions.

\begin{itemize}
  \item \emph{Local Discreteness}:
\end{itemize}

Fix $j\in\Gamma$ and $t>0$, assume that $\wt \Psi_x^{*,p^s,p^u}\in \mathscr{A}_j$ with $p^s,p^u>t$ and $\Omega(x)\in Z^j_{\ul{\ell},m,k,d^s}$.

Since $e^{\ell_0} \leq \|\wt C(x)^{-1}\|\leq 2\|C(x)^{-1}\| < 2Q(x)^{-1} < 2t^{-1}$, we have that $\ell_0 < |\log (2t)|$. And for $i=\pm1$, it follows from Lemma~\ref{lem.propofC} (3) that  $e^{\ell_i} \leq \|\wt C(f^i(x))^{-1}\| \leq 2\|C(f^i(x))^{-1}\| \leq 2e^{18\rho}\|C(x)^{-1}\| <2 e^{18\rho}t^{-1}$, thus $\ell_{-1},\ell_1 < 18\rho+|\log (2t)|$. So $\ul \ell$ has only finitely many choices.

Since $e^{-m} > Q(x) > t$, we have that $m < |\log t|$ have only finitely many choices.

Since $e^{-k} > q(x) \geq e^{-\mathfrak Y -1}(p^s \wedge p^{\mu}) > e^{-\mathfrak Y-1}t$, we have that $k \leq |\log t|+\mathfrak Y+1$ have only finitely many choices.

In summary, there exist only finitely many $(\ul{\ell},m,k)$ such that $\Omega(x)\in Z^j_{\ul{\ell},m,k,d^s}$. Moreover, since $p^s,p^u\in I_{\varepsilon,q(x)}$ and $p^s,p^u>t$, $p^s$ and $p^u$ have only finitely many choices. Then we finally obtain that $\{\wt\Psi_{x}^{*,p^s,p^u}\in\mathscr{A}_j: p^s,p^u>t\}$  is a finite set.

\begin{itemize}
  \item \emph{Sufficiency}:
\end{itemize}

Let $x\in \mathbb D \cap \NUH_{\chi}^{\#}$.
 Take $j_n, \ell_n, m_n$ and $k_n$ such that 
\[
f^n(x)\in N_{o_{j_n}},\ \|\wt C(f^n(x))^{-1}\| \in [e^{\ell_n}, e^{\ell_n+1}), \  Q(f^n(x)) \in [e^{-m_n-1}, e^{-m_n}), \   q(f^n(x)) \in [e^{-k_n-1}, e^{-k_n}).
\]
Let $\ul\ell^{(n)} = (\ell_{n-1}, \ell_n, \ell_{n+1})$ and $d^s = d^s(x)$.  Take $\Omega(x_n) \in Z^{j_n}_{\ell^{(n)}, m_n, d_n, d^s}$ such that:
\begin{enumerate}
    \item[($a_n$)] $|\wt{f^i(f^n(x))}^*-\wt{f^i(x_n)}^*| + \|\wt C(f^i(f^n(x))) -\wt C(f^i(x_n))\| < \frac{1}{4}q(f^n(x))^8$, $i=0,\pm 1$.
    \item[($b_n$)] $\frac{Q(f^n(x))}{Q(x_n)} = e^{\pm\varepsilon/3}$ and $\frac{q(f^n(x))}{q(x_n)} = e^{\pm\varepsilon/3}$.
\end{enumerate}

Next we show that there exist $p^s_n$ and $p^u_n$ such that $\wt \Psi_{x_n}^{*,p^s_n,p^u_n}$ meets the requirement. The proof follows the argument of \cite[Theorem 5.1]{BCL}. Note that this proof remains unaffected by flow speed. We therefore provide only an outline. The only distinction lies in the proof of regularity (Claim 4 below).

Write $f^n(x)=\varphi_{t_n}(x)$ for $n\in\bZ$. Then we have $g^{+}_{x_n}(f^n(x))=\varphi_{t_{n+1}-t_n}(x)$. Define 
\[
P^s_n=\varepsilon\inf\{e^{\varepsilon|t_{n+\gamma}-t_n|}Q(x_{n+\gamma}):\gamma \geq0\} \text{ and } 
P^u_n=\varepsilon\inf\{e^{\varepsilon|t_{n+\gamma}-t_n|}Q(x_{n+\gamma}):\gamma \leq0\}.
\]

We give two sequences $\{a^s_n\}_{n\in\mathbb{Z}}$ and $\{a^u_n\}_{n\in\mathbb{Z}}$ such that $\{\wt \Psi_{x_n}^{*,p^s_n,p^u_n}\}_{n\in\mathbb{Z}}$ belongs to $\mathscr{A}^{\mathbb{Z}}$ and it is a gpo, where $p^s_n:=a^s_nP^s_n$ and $p^u_n=a^u_nP^u_n$. We only show the construction of $\{a^s_n\}_{n\in\mathbb{Z}}$.

The indices $n\in\mathbb{Z}$ can be divided into two groups: $n$ is \emph{growing} if $P^s_n\geq e^{\varepsilon^{3/2}}P^s_{n+1}$ and $n$ is \emph{maximal} if $P^s_n=\varepsilon Q(x_n)$. There are infinitely many $n>0$ and  infinitely many $n<0$ being maximal. Now we define $a^s_n$ for these two groups.

If $n$ is maximal then take $a^s_n\in(e^{-\varepsilon},1]$ as the largest value in $(0,1]$ with $a^s_nP^s_n\in I_{\varepsilon,q(x_n)}$.

 For the growing indices, we perform a backwards induction. 
Fix two consecutive maximal indices $n<m$. Assume that $a^s_{\gamma+1}$ is well-defined  and then choose $a^s_\gamma$ to be the largest number in $(e^{-\varepsilon},1]$ satisfying that:
\begin{enumerate}
\item $e^{-\varepsilon  P^s_\gamma}a^s_{\gamma+1}\leq e^{\varepsilon  P^s_\gamma}a^s_\gamma\leq a^s_{\gamma+1}$.
\item $a_\gamma P^s_\gamma\in I_{\varepsilon,q(x_\gamma)}$.
\end{enumerate}

Such $a^s_\gamma$ exists because that for all small $\varepsilon>0$, $(e^{-\varepsilon P^s_k}a_{k+1},a_{k+1}]\cap I_{\varepsilon,q(x_k)}\neq \emptyset$ by condition ($b_n$) and Lemma \ref{lem.pqcomp}~(2). We complete the construction.

The proof of $\{\wt\Psi_{x_n}^{*,p^s_n,p^u_n}\}_{n\in\mathbb{Z}}\in  \mathscr A^{\mathbb Z}$, $\wt\Psi_{x_n}^{*,p^s_n,p^u_n}\ov \wt\Psi_{x_{n+1}}^{*,p^s_{n+1},p^u_{n+1}}$ and $\{\wt\Psi_{x_n}^{*,p^s_n,p^u_n}\}_{n\in\mathbb{Z}}$ shadows $x$ are the same as their of Claim 2, 3 and 5 in the proof of \cite[Theorem 5.1]{BCL}. We only show the reason that 
$\{\wt\Psi_{x_n}^{*,p^s_n,p^u_n}\}_{n\in\mathbb{Z}}$ is regular.

It follows from  $x\in \NUH^{\#}_\chi$ and $\frac{p_n^s \wedge p_n^u}{q(f^n(x))}= e^{\pm(\mathfrak Y+1)}$ that $\limsup_{n\to +\infty}(p_n^s\wedge p_n^u)>0$ and  $\limsup_{n\to -\infty}(p_n^s\wedge p_n^u)>0$. Thus there exists $t>0$ such that $p_n^s, p_n^u>t$ for any big enough $|n|$.

By (NUH5), there are $r_x, \delta_x>0$ and sequences $t^+_n\to +\infty$, $t^-_n\to -\infty$ such that $\varphi_{t^\pm_n}(x)\not\in M(\Sing(X), r_x)$ and $q(\varphi_{t^\pm_n}(x))\ge \delta_x$.
 By the local finiteness of $\mathbb D$,  there are only finite many components of $\mathbb D$ intersecting $M\setminus M(\Sing(X), r_x)$. Then combining with the local discreteness of $\mathscr A_j$, we know that $\{\wt\Psi_{x_n}^{*,p^s_n,p^u_n}\}_{n\in\mathbb{Z}}$ repeats infinitely many times for $n>0$. The backward case is the same.

\begin{itemize}
  \item \emph{Relevance}:
\end{itemize}

We say that 
$\wt\Psi_{x_n}^{*,p^s_n,p^u_n}\in  \mathscr A$ is {\it relevant} if there exists $\ul{\wt v}^*\in  \mathscr A^{\mathbb Z}$ such that $\wt v_0^*=\wt\Psi_{x_n}^{*,p^s_n,p^u_n}$ and $\ul{\wt v}^*$ shadows a point in $\mathbb D\cap \NUH^{\#}_\chi$.
Replacing $\mathscr A$ with the subset comprising all relevant elements in $\mathscr A$,  then it is locally discrete, sufficient and relevant.

We complete the proof.
\end{proof}

\subsection{Symbolic dynamics generated by $\mathscr{A}$}

Let $(\Sigma,\sigma)$ be the TMS generated by the graph with  vertex set $\mathscr{A}$ given by Proposition~\ref{pro.SDR} and 
 edge defined by the relation $\wt v^*\ov \wt w^*$. Then each element $\ul{\wt v}^*=\{\wt v^*_n\}_{n\in\mathbb{Z}}\in\Sigma$ assigns an $\varepsilon$-gpo. 
By Proposition \ref{pro.shadow}, the following map is well-defined:
\[\begin{aligned}
&\wt\pi^*:\Sigma\to \bigcup_{j\in\Gamma}\N_{o_j}\\
&\wt\pi^*(\ul{\wt v}^*)=\wt V^{*,s}[\{\wt v^*_n\}_{n\geq0}]\cap \wt V^{*,u}[\{\wt v^*_n\}_{n\leq0}].
\end{aligned}\]

We also define $\pi:\Sigma\to\widehat {\mathbb D}$ as $\pi(\ul{\wt v}^*)= \exp_{o_j}(|X(o_j)|\cdot\wt\pi^*(\ul{\wt v}^*))$, where $\pi^*(\ul{\wt v}^*)\in \N_{o_j}$.

\begin{pro}\label{1code}
The following holds for all $\varepsilon>0$ small enough.
\begin{enumerate}
\item Each $\wt v^*\in\mathscr{A}$ has finite degrees, hence $\Sigma$ is locally compact.
\item $\pi:\Sigma\to\bigcup_{j\in\Gamma}\N_{o_j}$ is H\"older continuous.
\item\label{1code.incl}  $\mathbb D\cap \NUH_{\chi}^{\#}\subset \pi(\Sigma^{\#})$ where $\Sigma^{\#}$ is the regular part of $\Sigma$.
\end{enumerate}
\end{pro}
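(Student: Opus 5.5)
The plan is to establish the three items in order, following \cite[Proposition~4.5]{BCL} and \cite{Sar13}. The only new ingredient compared with the non-singular case is that $\mathscr A$ is merely locally discrete (Proposition~\ref{pro.SDR}~\ref{SDR.D}) rather than globally discrete. This will be compensated by the local finiteness \ref{se.fini} of Theorem~\ref{thm.poincaresection}.

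\emph{Item (1): finite degree.} Fix $\wt v^*=\wt\Psi^{*,p^s,p^u}_x\in\mathscr A_j$ and consider an edge $\wt v^*\ov\wt w^*$ with $\wt w^*=\wt\Psi^{*,q^s,q^u}_y$.
\begin{itemize}
\item By (GPO1), $y$ lies in some $D_\gamma$ that meets $f(D_j)$. By \ref{se.fini} there are at most $I_{\max}$ such $\gamma$.
\item By Lemma~\ref{lem.edge}, $q^s\wedge q^u\ge e^{-2\varepsilon}(p^s\wedge p^u)=:t>0$.
\item Proposition~\ref{pro.SDR}~\ref{SDR.D} then leaves only finitely many candidates $\wt w^*$ in each admissible $\mathscr A_\gamma$.
\end{itemize}
The same argument applied to $f^{-1}$ bounds the in-degree. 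Hence the graph is locally finite and $\Sigma$ is locally compact.

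\emph{Item (2): H\"older continuity.} If $d(\ul{\wt v}^*,\ul{\wt w}^*)=e^{-N}$, then $\wt v^*_n=\wt w^*_n$ for $|n|\le N$, so both sequences sit in the same chart $\wt\Psi^*_{x_0}$. Proposition~\ref{sca.invmnfd}~\ref{sca.hol} gives that the stable manifolds are $C^1$-close, and likewise the unstable manifolds, within $K^*(\vartheta^*)^N$. Each stable manifold is the graph of a $1/2$-Lipschitz function and each unstable manifold likewise, by (AM3). The intersection point of such a pair of graphs therefore moves by at most a constant multiple of the displacement of the graphs. This yields $|\wt\pi^*(\ul{\wt v}^*)-\wt\pi^*(\ul{\wt w}^*)|\le 3K^*(\vartheta^*)^N$.

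Passing to $\pi$ multiplies this bound by $|X(o_j)|$ through $\exp_{o_j}\circ\varsigma(o_j)^{-1}$, which only improves it, since $|X|$ is bounded and $\|d\exp\|\le2$. We obtain H\"older exponent $-\log\vartheta^*$ with a uniform constant.

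\emph{Item (3): coverage of the regular set.} By the sufficiency part of Proposition~\ref{pro.SDR}, every $x\in\mathbb D\cap\NUH^\#_\chi$ is shadowed by a regular gpo $\ul{\wt v}^*\in\mathscr A^{\mathbb Z}$. Consecutive entries are joined by edges, so $\ul{\wt v}^*\in\Sigma$, and regularity means $\ul{\wt v}^*\in\Sigma^\#$. By the uniqueness in Proposition~\ref{pro.shadow}, the shadowed point is $\pi(\ul{\wt v}^*)$, so $x=\pi(\ul{\wt v}^*)$.

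\emph{Main obstacle.} I expect the delicate step to be item (1). In $\Sigma$ itself there may be charts of arbitrarily small size near singularities, so finiteness must come from combining the uniform lower bound on neighbouring chart sizes given by Lemma~\ref{lem.edge} with the bound of at most $I_{\max}$ neighbouring sections. Global discreteness is unavailable, and this combination is exactly the substitute for it.
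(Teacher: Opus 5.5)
Your proposal is correct and follows the paper's own argument. Finite out- and in-degree comes from combining Lemma~\ref{lem.edge} with the local discreteness of $\mathscr A_j$ in Proposition~\ref{pro.SDR}, H\"older continuity from the H\"older dependence of the local stable and unstable manifolds on the gpo, and item (3) from the sufficiency part of Proposition~\ref{pro.SDR} together with uniqueness of the shadowed point; the appeal to $I_{\max}$ in item (1) is harmless but not needed, since by Definition~\ref{dfn.overlap}(1) the base point $y$ of $\wt w^*$ must lie in the single component containing $f(x)$.
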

\begin{proof}
For the first item, we only show that any $\wt v^*\in\mathscr{A}$ has finite outgoing degree, the finiteness of ingoing degree is similarly obtained. 

To this aim, we will bound the number of $\wt w^*$ such that $\wt v^*\ov\wt w^*$. Write $\wt v^*=\wt\Psi_{x}^{*,p^u,p^s}$, $\wt w^*=\wt\Psi_{y}^{*,q^u,q^s}$ and assume that $f(x)$ and $y$ are contained in the component $D_j$.  
According to Lemma~\ref{lem.edge}, $q^s\wedge q^u\geq e^{-\varepsilon}(p^s\wedge p^u)$. In particular, $q^s,q^u\geq e^{-\varepsilon}(p^s\wedge p^u)$. Then due to the local discreteness of $\mathscr{A}_j$ in Proposition~\ref{pro.SDR}, the number of $\wt w^*$ must be finite. 
This implies that $\wt v^*$ has finite outgoing degree. 

The second item follows from Proposition~\ref{invmnfd} and the last item follows from Proposition~\ref{pro.SDR}.   Thus the proof is completed.
\end{proof}

\section{Inverse theorem}\label{sec.inverse}

In this section,  we investigate whether the quantification of hyperbolicity remains consistent at each section when an orbit is shadowed by a gpo. Our approach largely follows in \cite[\S~6]{BCL} and \cite[\S~6]{LMN}. 

Up to now, most discussions primarily occurred in the normal space (except in Section~\ref{sec.locinvarmani}). However, this section necessitates a return to the manifold framework. This shift is required because the quantification of hyperbolicity is evaluated along entire orbits rather than discretely at sections, which is a groundbreaking feature introduced in \cite{BCL}. Therefore, we will frequently handle transformations between scaling multipliers at distinct points.

Define a roof function $r:\Sigma\to\mathbb{R}$ by $r(\ul{\wt v}^*):=\tau_{x_0}(x)\geq\inf R_{\widehat{\mathbb D}}>0$ where $\ul{\wt v}^*=\{\wt\Psi_{x_n}^{*,p^s_n,p^u_n}\}_{n\in\mathbb{Z}}\in\Sigma$. Let $z:=\pi(\ul{\wt v}^*)$ be the shadowing point in $\widehat{\mathbb D}$.  Furthermore, let $r_n$ be the $n$-th Birkhoff sum with respect to the left shift $\sigma$. Then for any $n\in\mathbb{Z}$, $\varphi_{r_n(\ul{\wt v}^*)}(z)=\pi(\sigma^{n+1}(\ul{\wt v}^*))$ and $g^{+}_{x_n}(\varphi_{r_n(\ul{\wt v}^*)}(z))=\varphi_{r_{n+1}(\ul{\wt v}^*)}(z)$. 
To simplify notations, we denote $z_n:=\varphi_{r_n(\ul{v}^*)}(z)$ for all $n\in\mathbb{Z}$.

The main result of this section is the following theorem.
\begin{thm}\label{invs}
For all $\varepsilon>0$ small enough, if $\ul{\wt v}^*=\{\wt \Psi_{x_n}^{*,p^s_n,p^u_n}\}_{n\in\mathbb{Z}}\in\Sigma^{\#}$ and 
 $z=\pi(\ul{\wt v}^*)\in \widehat{\mathbb D}$, then $z\in\NUH^{\#}_{\chi}$ and 
\begin{enumerate}
\item $\dist(z_n,x_n)<\frac{1}{10}(p^s_n\wedge p^u_n)|X(x_n)|$.

\item $\frac{\norm{C(x_n)^{-1}}}{\norm{C(z_n)^{-1}}}=e^{\pm\sqrt[3]{\varepsilon}}$. Consequently, $\frac{Q(x_n)}{Q(z_n)}=e^{\pm\sqrt[3]{\varepsilon}}$.

\item $\frac{p^s_n}{p^s(z_n)}=e^{\pm\sqrt[3]{\varepsilon}}$ and $\frac{p^u_n}{p^u(z_n)}=e^{\pm\sqrt[3]{\varepsilon}}$.

\item $\Psi^{-1}_{x_n}\circ\Psi_{z_n}=(\wt\Psi^*_{x_n})^{-1}\circ \wt\Psi^*_{z_n}=id+\delta_n+\Delta_n$ on $R[10Q(x_n)]$, where $\delta_n<\frac{1}{10}(p^u_n\wedge p^s_n)$, $\Delta_n$ is an affine vector field such that $\Delta_n(\ul{0})=\ul{0}$ and $\Vert d\Delta_n\Vert_{C^0}<2\varepsilon^{1/3}$ on $R[10Q(x_n)]$.
\end{enumerate}
\end{thm}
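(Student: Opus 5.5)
The plan follows the inverse theorem of \cite[\S6]{BCL} and \cite[\S6]{LMN}, carried out in the scaled charts $\wt\Psi^*$. Scaling reduces most estimates to the non-singular setting. The only genuinely new ingredients are the comparison of flow speeds at $z_n$ and $x_n$ and the regularity condition (NUH5).

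Item (1) comes first and is essentially immediate. By Proposition~\ref{pro.shadow}, $\wt z_n^*\in\wt\Psi^*_{x_n}(R[\frac1{100}(p^s_n\wedge p^u_n)])$, so $|\wt z_n^*-\wt x_n^*|\le \frac{2}{100}(p^s_n\wedge p^u_n)$ by (\ref{eq.bdPsi}). Unscaling through $\exp_{o_j}\circ\varsigma(o_j)^{-1}$ and using \ref{fb.length} of Lemma~\ref{lem.C1Poincare} to compare $|X(o_j)|$ with $|X(x_n)|$ then gives $\dist(z_n,x_n)<\frac1{10}(p^s_n\wedge p^u_n)|X(x_n)|$. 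This step also shows that $|X(z_n)|/|X(x_n)|=e^{\pm\varepsilon}$, and that ratio will be used repeatedly below.

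Next, $z\in\NUH_\chi$ and item (2). Define $\H^s_{z}$ as the tangent space at $z$ of the flow-saturated stable manifold $\wh V^s$ from Proposition~\ref{strmnfd}, projected to $\H$, and define $\H^u_z$ symmetrically. For a unit $v^s\in\H^s_{z_n}$, split $s(z_n,v^s)^2$ into integrals over the intervals $[r_k,r_{k+1}]$ between consecutive section hits. Propositions~\ref{hpsi}(1) and \ref{invmnfd}(3b) convert the discrete contraction of $d\wt g^{+,*}$ along $V^s$ into bounds on $|\Phi^*_t(v^s)|$; all flow-speed factors $|X(x_k)|/|X(x_0)|$ cancel because we work with the scaled flow. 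This yields $s(z_n)<\infty$, and with the unstable side it yields (NUH1)--(NUH3). (NUH0) follows because $z$ stays within a scaled distance of the $x_n$, which recur by regularity.

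The sharp ratio $e^{\pm\sqrt[3]{\varepsilon}}$ is the main obstacle. Following \cite[Prop.~6.3--6.5]{BCL}, compare $s(z_n,\cdot)$ with $s(x_n,\cdot)$ along the two orbits. Write $\Phi^*_t$ along $z$ against $\Phi^*_t$ along $x_n$ via Proposition~\ref{hpsi}(2), with H\"older error $K_{F3}(\dist/|X|)^\beta\lesssim (p_n^s\wedge p_n^u)^\beta$. Control the accumulated errors by the exponential decay along the stable graph together with the $e^{\varepsilon}$-slow variation of the $p$'s from Lemma~\ref{lem.edge}. Also use that $\H^s_{z_n}$ and $C(x_n)(\bR^{d^s}\times 0)$ are close, since $V^s_n$ is admissible with $\|d_0F\|\le\frac12(p^s\wedge p^u)^{\beta/3}$. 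I expect the hardest part to be the bookkeeping showing that the relative error between $s(z_n,\cdot)$ and $s(x_n,\cdot)$ is $O(\varepsilon^{1/3})$ uniformly in $n$, when the charts live at different sections with different scalings. I would attack it first via the telescoping ``$s$-ratio'' recursion across one return, exactly as in BCL, using Proposition~\ref{pro.overlift}. The statement for $Q$ then follows from its definition.

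For item (3), combine item (2) with the recursions in Lemma~\ref{lem.pqcomp}(3) and the GPO2 inequalities, which are the same recursion up to $e^{\pm\varepsilon p}$. A comparison at maximal indices, which occur infinitely often in both directions by Lemma~\ref{lem.pqcomp}(4) and regularity, propagates the ratio. Item (4) follows by writing $(\wt\Psi^*_{x_n})^{-1}\circ\wt\Psi^*_{z_n}=\mathrm{id}+\wt C(x_n)^{-1}(\wt z^*_n-\wt x^*_n)+\wt C(x_n)^{-1}(\wt C(z_n)-\wt C(x_n))$, as in (\ref{eq.Psidifference}). The translation term is bounded by item (1), and the linear term by the closeness of the splittings and norms obtained in step two. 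Finally, $z\in\NUH^\#_\chi$: (NUH4) holds since $q(z_n)$ is comparable to $p^s_n\wedge p^u_n>0$. For (NUH5), $\ul{\wt v}^*\in\Sigma^\#$ revisits a fixed chart $\wt\Psi^{*}_{x}$ infinitely often in both time directions, which bounds $q$ from below there. Because this chart lies in a single $D_j\subset M\setminus\Sing(X)$, item (1) keeps the corresponding $z_n$ a definite distance from $\Sing(X)$.
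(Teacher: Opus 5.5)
There is a genuine gap where you claim that Propositions~\ref{hpsi}(1) and \ref{invmnfd}(3b) give $s(z_n,\cdot)<\infty$, and hence (NUH1)--(NUH3).

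Proposition~\ref{invmnfd}(3b) only yields $|\Phi^*_{t_n}(v^s)|\lesssim \|\wt C(x_0)^{-1}\|e^{-\frac12\chi\inf R_{\mathbb D}\,n}$. That is an exponential rate of at most $\frac{\chi\inf R_{\mathbb D}}{2\sup R_{\mathbb D}}<\chi$ per unit time. A sharper chart-level estimate via Proposition~\ref{pro.overlift} gives at best the rate $\chi$ minus a loss from the nonlinear terms, which is still insufficient. Since $s(z,v)^2=4e^{4\rho}\int_0^\infty e^{2\chi t}|\Phi^*_t(v)|^2dt$, no such bound makes the integral converge. By Claim~\ref{cl.sfin}, finiteness of $s$ is exactly what produces the exponent $-\chi$ in (NUH1), and the paper remarks explicitly that Proposition~\ref{invmnfd} cannot give it.

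The missing idea is the argument of Claim~\ref{sfinite} and Lemma~\ref{lem.comparesu}:
\begin{enumerate}
\item By regularity, some chart $\wt v^*$ recurs along $\ul{\wt v}^*$ at indices $n_k$.
\item By relevance (Proposition~\ref{pro.SDR}), $\wt v^*$ extends to a gpo shadowing a genuine point $y\in\NUH^\#_\chi$. Its stable manifold $W$ is $s$-admissible at every $\wt v^*_{n_k}$.
\item Lemma~\ref{impv.fin} bounds $s$ uniformly along $W$, which gives a crude ratio $e^{\pm\zeta}$ at the recurrence times.
\item The improvement lemma (Lemma~\ref{impv.srat}), run \emph{backwards} from $n_k$, shrinks the deviation by $Q^{\beta/6}$ per step until it reaches $e^{\pm\sqrt\varepsilon}$.
\item Taking a limit over the graph transforms of $W$, which converge to $V^s$, yields $s(z,\cdot)<\infty$ and $s(z_n,\xi)/s(x_n,\pi^s_{z_n,x_n}\xi)=e^{\pm\sqrt\varepsilon}$ uniformly in $n$.
\end{enumerate}
Item (2) then follows from this comparison via Lemma~\ref{lem.propofC}(1).

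The same defect affects your route to item (2). Comparing $\Phi^*_t$ along the two orbits with Proposition~\ref{hpsi}(2) and summing errors forward cannot close, because the error at time $t$ carries weight $e^{2\chi t}$, which beats every convergence rate available. The uniform control comes from errors shrinking when transported backward, starting from the a priori bound that regularity and relevance supply. It does not come from forward decay along the stable graph, and without that a priori bound your one-step recursion has no starting point.

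Two smaller points. For item (4), closeness of the splittings and of $s,u$ only makes $C(x_n)^{-1}\circ\pi_{z_n,x_n}\circ C(z_n)$ close to an orthogonal $O_n$ preserving $\mathbb R^{d^s}\times\{0\}$ and $\{0\}\times\mathbb R^{d^u}$ (polar decomposition). This is because $C(\cdot)$ is chosen only up to such isometries, so you cannot conclude $\wt C(z_n)\approx\wt C(x_n)$; the paper's proof accordingly produces $O_n+a_n+\Delta_n$. Also, recurrence of the charts does not by itself give (NUH0): sparse returns allow long excursions on which $|X(\varphi_t(z))|$ is exponentially small. The paper's proof does not spell out (NUH0) either.
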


\subsection{Preparations}
We remark here that all the results in this subsection are stated for the $s$-case since the results for the $u$-case is symmetric. 

\subsubsection{Identification of invariant subspaces}
We first give the comparison $\pi^{s/u}_{z_n,x_n}$. We state a general version of this result. 

\begin{lem}\label{lem.comH}
The following holds for all $\varepsilon>0$ small enough. Let $\wt v^*=\wt \Psi^{*,p^s,p^u}_{x}$ be an $\varepsilon$-double chart and $x$ is contained in the component $D_j$. Then for any $y\in V$, where $V$ is an $s$-admissible manifold at $\wt v^*$, there exists a linear isomorphism $\pi^s_{y,x}:T_yV\to\H^x$ such that 
\begin{enumerate}
\item\label{comH1} $\norm{d_x\exp_{o_j}^{-1}\circ \pi^s_{y,x}-d_y\exp_{o_j}^{-1}}<\varepsilon^{1/6}\cdot Q(x)^{\beta/4}$.
\item\label{comH2} $\norm{d_x\exp_{o_j}^{-1}\circ \pi^s_{y,x}\circ d_{\wt{y}}\exp_{o_j}-id}<\varepsilon^{1/6}\cdot Q(x)^{\beta/4}$, where $\wt{y}=\exp_{o_j}^{-1}(y)$.
\item\label{comH3} $\norm{\pi^s_{y,x}-T_{y,x}}<\varepsilon^{1/6}\cdot Q(x)^{\beta/4}$.
\end{enumerate}
Results for the $u$-case are obtained symmetrically.
\end{lem}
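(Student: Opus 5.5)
Throughout, $x\in D_j$ and $y\in V$, where $V$ is the $s$-admissible manifold $V=\Psi_x(\{(\xi,F(\xi)):\xi\in\bR^{d^s}(p^s)\})$ at $\wt v^*$ with representing function $F$ satisfying (AM1)--(AM3). I will take $T_yV$ to mean its natural lift inside $\H_y$.

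The map is built in the scaled coordinate $\N_{o_j}$, where the geometry is uniform. Set $\wt y^*=\wt\Psi^*_x(\xi_y,F(\xi_y))$. The tangent space of $\wt V^*=\varsigma(o_j)\circ\exp_{o_j}^{-1}(V)$ at $\wt y^*$ is $\wt C(x)\{(w,d_{\xi_y}F(w))\}$. Since $\varsigma(o_j)$ is a scalar multiple, this is also the tangent space of $\exp_{o_j}^{-1}(V)$ at $\wt y$. The natural comparison space at $x$ is $\wt C(x)(\bR^{d^s}\times\{0\})=d_x\exp_{o_j}^{-1}(\H^s_x)$. I therefore define
\[
\pi^s_{y,x}:=d_{\wt x}\exp_{o_j}\circ\,\wt C(x)\circ\Pi\circ\wt C(x)^{-1}\circ d_y\exp_{o_j}^{-1}\big|_{T_yV},
\]
where $\Pi(w,d_{\xi_y}F(w))=(w,0)$. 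With $\wt x=\exp_{o_j}^{-1}(x)$ and $\wt y=\exp_{o_j}^{-1}(y)$, this sends $T_yV$ isomorphically onto $\H^s_x\subset\H_x$. If one only needs the target $\H_x$, the same formula is used with $\Pi$ removed.

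The three items reduce to one estimate, plus smoothness of $\exp$. Item \ref{comH1} amounts to bounding $\|\wt C(x)\circ(\Pi-\mathrm{Id})\circ\wt C(x)^{-1}\|$ on the tangent space. By \eqref{eq.bdPsi} this is at most $4\|C(x)^{-1}\|\,\|d_{\xi_y}F\|$. By (AM2) and (AM3), $\|d_{\xi_y}F\|\le \|d_0F\|+\text{H\"ol}_{\beta/3}(dF)|\xi_y|^{\beta/3}$. Since $|\xi_y|\le p^s\le\varepsilon Q(x)$, the right side is controlled by $\tfrac12(p^s\wedge p^u)^{\beta/3}+\tfrac12(\varepsilon Q(x))^{\beta/3}\le Q(x)^{\beta/3}$.

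Two things remain for item \ref{comH1}. First, the composition with $d_x\exp_{o_j}^{-1}$ on the left and $d_{\wt x}\exp_{o_j}$ inside has norm product at most $4$. Second, $\|d_{\wt x}\exp_{o_j}-d_{\wt y}\exp_{o_j}\|\le C|\wt x-\wt y|\le C' p^s|X(o_j)|$, which is even smaller than the main term. Combining these, the error in item \ref{comH1} is at most
\[
16\|C(x)^{-1}\|Q(x)^{\beta/3}+O(Q(x))=16\big(\|C(x)^{-1}\|Q(x)^{\beta/12}\big)Q(x)^{\beta/4}+O(Q(x)).
\]
Lemma~\ref{lem.estimateQ}\ref{est.QC} gives $\|C(x)^{-1}\|Q(x)^{\beta/12}\le\varepsilon^{1/4}$, so the error is at most $16\varepsilon^{1/4}Q(x)^{\beta/4}+O(Q(x))$, which is below $\varepsilon^{1/6}Q(x)^{\beta/4}$ for small $\varepsilon$.

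Item \ref{comH2} follows from item \ref{comH1} by composing on the right with $d_{\wt y}\exp_{o_j}$, whose norm is at most $2$, and using $d_y\exp_{o_j}^{-1}\circ d_{\wt y}\exp_{o_j}=\mathrm{id}$.

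For item \ref{comH3}, note that $\dist(x,y)\le 40p^s|X(o_j)|$ is tiny compared with $\rho_M$. In normal coordinates the parallel transport satisfies
\[
T_{y,x}=d_{\wt x}\exp_{o_j}\circ d_y\exp_{o_j}^{-1}+O(\dist(x,y)),
\]
because all these maps are smooth on $\N_{o_j}(\rho_M)$ with uniform bounds by compactness of $M$. Item \ref{comH3} then follows from item \ref{comH1} with an extra $O(Q(x))$ term.

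The main obstacle is the exponent bookkeeping: the chart factor $\|C(x)^{-1}\|$ has to be absorbed against the admissible slope $Q(x)^{\beta/3}$ to leave $Q(x)^{\beta/4}$. This works only because the Hölder and slope bounds in (AM2) and (AM3) are powers $\beta/3>\beta/12+\beta/4$, and Lemma~\ref{lem.estimateQ}\ref{est.QC} supplies exactly the $\varepsilon^{1/4}$ factor.

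The $u$-case is proved symmetrically, with $\Pi(d_{\xi}G(w),w)=(0,w)$ for the representing function $G$ of a $u$-admissible manifold.
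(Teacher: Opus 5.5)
Your proposal is correct and is essentially the paper's proof: your $\pi^s_{y,x}$ coincides with the paper's map $d_t\Psi_x(v,d_{t_s}G(v))\mapsto d_0\Psi_x(v,0)$, and you obtain item (1) the same way the paper does, namely from the slope bound $\norm{dG}_{C^0}\le Q(x)^{\beta/3}$, the bound $\norm{\wt C(x)^{-1}}\le 2\norm{C(x)^{-1}}$, and Lemma~\ref{lem.estimateQ}~\ref{est.QC}; you correctly derive the slope bound from (AM2)--(AM3) and $p^s\le\varepsilon Q(x)$ (a step the paper only asserts), and you deduce items (2) and (3) from (1) and the smoothness of $\exp$, as the paper indicates. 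Two harmless slips: the term $\norm{d_{\wt x}\exp_{o_j}-d_{\wt y}\exp_{o_j}}$ never actually arises, because $d_x\exp_{o_j}^{-1}\circ d_{\wt x}\exp_{o_j}=\mathrm{id}$, and $\beta/3=\beta/12+\beta/4$ is an equality rather than a strict inequality (the gain comes from $\varepsilon^{1/4}$ versus $\varepsilon^{1/6}$).
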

\begin{proof}
Let $G$ be the representing function of $V$ and write $t=(t_s,G(t_s))=\Psi_x^{-1}(y)$. Fix $\xi\in T_yV$ with $|\xi|=1$. Thus there exists unique $v\in\mathbb{R}^s$ satisfying that $\xi=d_t\Psi_x(v,d_{t_s}G(v))$. Now let $\pi^s_{y,x}(\xi)=d_0\Psi_x(v,0)$, which defines a linear isomorphism $\pi^s_{y,x}:T_{y}V\to\H^s_x$. 

It remains to show the three statements. Note that the last two statements easily follow from the first one and the compactness of $M$. Thus we leave the details to the readers. In order to get the first statement, we show that for any unit vector $\xi\in T_yV$,
\begin{equation}\label{im.vecrat}
|d_x\exp_{o_j}^{-1}\circ \pi^s_{y,x}(\xi)-d_y\exp_{o_j}^{-1}(\xi)|<\varepsilon^{1/6}\cdot Q(x)^{\beta/4}.
\end{equation}

Since $\xi=d_t\Psi_x(v,d_{t_s}G(v))$ then we get that
\begin{equation}\label{normv}
|v|, |\langle v,d_{t_s}G(v)\rangle |\leq \frac{ 4}{|X(o_j)|}\cdot \norm{C(x)^{-1}}.
\end{equation}

In addition, through a direct calculation, we get that
\[\begin{aligned}
&|d_x\exp_{o_j}^{-1}\circ \pi^s_{y,x}(\xi)-d_y\exp_{o_j}^{-1}(\xi)|\\=&|d_t\wt\Psi^*_{x}(v, d_{t_s}G(v))-d_0\wt\Psi^*_{x}(v,0)|\cdot |X({o_j})|\\
\leq&\norm{d_0\wt\Psi^*_{x}}\cdot |d_{t_s}G(v)|\cdot |X({o_j})|.
\end{aligned}\]
The last inequality holds since $d_0\wt\Psi^*_{x}=d_t\Psi^*_{x}=\wt{C}(x)$.

From the definition of admissible manifold, $\norm{dG}_{C^0}\leq Q(x)^{\beta/3}$. Combining with (\ref{eq.bdPsi}) and (\ref{normv}) and Lemma~\ref{lem.estimateQ}~\ref{est.QC}, we get that for all $\varepsilon>0$ small enough,
\[
|d_x\exp_{o_j}^{-1}\circ \pi^s_{y,x}(\xi)-d_y\exp_{o_j}^{-1}(\xi)|\leq 2 Q(x)^{\beta/3}\cdot \frac{4}{|X({o_j})|} \norm{C(x)^{-1}}\cdot |X({o_j})|<\varepsilon^{1/6}\cdot Q(x)^{\beta/4},
\]
which finishes the proof.
\end{proof}

\subsubsection{The improvement lemma}

Let $\wt v^*\ov\wt w^*$ with $\wt v^*=\wt\Psi^{*,p^s,p^u}_{x}$, $\wt w^*=\wt\Psi^{*,q^s,q^u}_{y}$ and $x\in D_{i}$, $y\in D_{j}$. 
For $\wt W^{*,s}\in \mathscr{M}^s[\wt w^*]$ and $\wt V^{*,s}=\mathscr{F}^s_{\wt v^*,\wt w^*}(\wt W^{*,s})$, denote by $W^s$ and $V^s$ the projections of $\wt W^{*,s}$ and $\wt V^{*,s}$ via $\exp_{o_i}\circ \varsigma(o_i)^{-1}$ and $\exp_{o_j}\circ \varsigma(o_j)^{-1}$, respectively.

Fix any $z\in W^s$ with $g^{-}_y(z)\in V^s$. Then by Lemma~\ref{lem.C1Poincare}, there exist $\tau_0, \tau_1\in (0,2\rho)$ such that $g^{-}_y(y)=f^{-1}(y)=\varphi_{-\tau_0}(y)$ and $g^{-1}_y(z)=\varphi_{-\tau_1}(z)$. From Proposition~\ref{invmnfd}~(3) and Proposition~\ref{hpsi}~\ref{Phi.rel},
 if $\xi\in T_{g^{-1}_y(z)}V^s$ then $\lambda:=\Phi^*_{\tau_1}(\xi)\in T_zW^s$.

\begin{lem}[Improvement lemma]\label{impv.srat}
The following holds for all $\varepsilon>0$ small enough. Let $\wt v^*=\wt \Psi^{*,p^s,p^u}_{x}$, $\wt w^*=\wt\Psi^{*,q^s,q^u}_{y}$, $W^s$, $V^s$ and $z$ be as above.
  Then for $\zeta\geq \sqrt{\varepsilon}$, if $\frac{s(z,\lambda)}{s(y,\pi_{z,y}^s(\lambda))}=e^{\pm\zeta}$ then $\frac{s(g^{-}_y(z),\xi)}{s(x,\pi_{g^{-}_y(z),x}^s(\xi))}=e^{\pm(\zeta-Q(y)^{\beta/6})}$ for all $\xi\in T_{g^{-}_y(z)}V^s$ and $\lambda=\Phi^*_{\tau_1}(\xi)\in T_z W^s$. 
 Similar results for the $u$-case.
\end{lem}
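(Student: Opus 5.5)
Because $\xi$ lies in the stable space, the proof starts from the one-step recursion for $s$ already used in Lemma~\ref{lem.propofC}. For $\xi\in T_{g^-_y(z)}V^s$ (identified with a stable vector via the flow), the integral in (NUH3) splits at time $\tau_1$:
\[
s(g^-_y(z),\xi)^2=4e^{4\rho}\int_0^{\tau_1}e^{2\chi t}|\Phi^*_t\xi|^2dt+e^{2\chi\tau_1}s(z,\lambda)^2 ,
\]
with the analogous identity at $x$ for $\eta:=\pi^s_{g^-_y(z),x}(\xi)$, time $\tau_0$ and $\Phi^*_{\tau_0}\eta$ in place of $\lambda$. By Proposition~\ref{pro.Pesinlift}, $\Phi^*_{\tau_0}\eta$ is the stable block $A$ applied to $\eta$ in the chart coordinates. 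Since $\Phi^*_{\tau_0}\eta$ lies in $\H^s_{f(x)}$, I will compare it with $\pi^s_{z,y}(\lambda)\in\H^s_y$ by using the overlap $\wt\Psi^{*}_{f(x)}\over\wt\Psi^*_y$ (Proposition~\ref{pro.overlap}).

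The core is a comparison of the two first terms and of the two second terms. Write $a=4e^{4\rho}\int_0^{\tau_1}e^{2\chi t}|\Phi^*_t\xi|^2dt$, $b=e^{2\chi\tau_1}s(z,\lambda)^2$, and let $a',b'$ be the corresponding quantities at $x$. I claim three estimates:
\begin{enumerate}
\item $a/a'=e^{\pm \varepsilon^{1/3}Q(y)^{\beta/4}}$ and $e^{2\chi\tau_1}/e^{2\chi\tau_0}=e^{\pm\varepsilon^{1/3}Q(y)^{\beta/4}}$. These follow from Proposition~\ref{hpsi}~\ref{Phi.hol}, Lemma~\ref{lem.comH}, and $|\tau_1-\tau_0|\le K_F\,\dist/|X|$ (Lemma~\ref{lem.C1Poincare}). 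The scaled distances involved are $\lesssim Q(y)$ because $z$ and $g^-_y(z)$ lie in charts of size $\le\varepsilon Q$.
\item $s(z,\lambda)/s(y,\pi^s_{z,y}\lambda)=e^{\pm\zeta}$, which is the hypothesis.
\item $\pi^s_{z,y}\lambda$ and $\Phi^*_{\tau_0}\eta$ (transported) differ by a relative error $\varepsilon^{1/3}Q(y)^{\beta/4}$. This uses Proposition~\ref{pro.overlap}~\ref{ol.Ccomp} together with the Hölder estimate on $dH^*_{x,y}$ from Proposition~\ref{pro.overlift}. One more Hölder step is needed to compare $s(y,\cdot)$ and $s(f(x),\cdot)$; it gives the same size of error, since $\norm{C^{-1}}$ ratios are $e^{\pm\eta_1\eta_2}$.
\end{enumerate}

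With these in hand, the ratio becomes
\[
\frac{a+b}{a'+b'}\in\Big[\min\{a/a',b/b'\},\max\{a/a',b/b'\}\Big].
\]
The improvement comes from the fact that $a$ is a definite fraction of $a+b$. By (\ref{eq.flowGI}), $|\Phi^*_t\xi|\ge e^{-4\rho}|\xi|$, so $a\ge c\,\tau_1\ge c\inf R_{\mathbb D}$. On the other side, $s(g^-_y(z),\xi)^2\lesssim\norm{C(x)^{-1}}^2$, so $a/(a+b)\ge \theta:=c'\norm{C(x)^{-1}}^{-2}$. A convex combination then gives
\[
\log\frac{a+b}{a'+b'}\le (1-\theta)\zeta+\theta\,\varepsilon^{1/3}Q^{\beta/4}+O(\varepsilon^{1/3}Q^{\beta/4}).
\]
Since $\zeta\ge\sqrt\varepsilon$, we have $\theta(\zeta-\varepsilon^{1/3}Q^{\beta/4})\gtrsim \theta\sqrt\varepsilon$. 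Because $Q=\varepsilon^{6/\beta}\norm{C^{-1}}^{-48/\beta}$, the quantity $\theta\sqrt\varepsilon$ dominates $2Q(y)^{\beta/6}$ and also the additive error $\varepsilon^{1/3}Q^{\beta/4}$. This yields $e^{\pm(\zeta-Q(y)^{\beta/6})}$, with the $\pm$ directions handled symmetrically; the lower bound for the ratio is argued the same way. The $u$-case is identical, using $\Phi^*_{-t}$.

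The main obstacle is estimate (3). It requires that the exponents in the error terms, after carrying the scaling factors $|X(o_i)|/|X(o_j)|$ and $|Y|$ through the charts, be powers of $Q$ strictly better than $\beta/6$. The rest is bookkeeping parallel to \cite[Lemma~6.2]{BCL} and \cite{LMN}.
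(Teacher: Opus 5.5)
Your overall mechanism is the one the paper uses in Part~2 of its proof. You split $s^2$ by the one-step recursion, show the short-time integrals agree up to a tiny relative error, control the tails by the hypothesis, and gain because the short-time integral is a definite fraction of the whole. However, the final averaging step, which is where the improvement actually comes from, is wrong as written in two ways.

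\textbf{Jensen is used backwards.} Your inequality $\log\frac{a+b}{a'+b'}\le(1-\theta)\zeta+\cdots$ fails because $\log$ is concave: $\log(\theta e^{u}+(1-\theta)e^{v})\ge\theta u+(1-\theta)v$. So a weighted mean of the two ratios is at least, not at most, $e^{(1-\theta)\cdot(\text{tail exponent})}$. Also, since $a+b$ and $a'+b'$ are squares, the tail ratio is $e^{\pm2\zeta}$, not $e^{\pm\zeta}$.

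\textbf{The weight is bounded on the wrong side.} You bound $a/(a+b)$ from below via $s(g^-_y(z),\xi)^2\lesssim\norm{C(x)^{-1}}^2$. But $g^-_y(z)$ is only a point of an admissible manifold. The recursion plus the hypothesis give only $s(g^-_y(z),\xi)^2\lesssim e^{2\zeta}\norm{C(y)^{-1}}^2$, which degenerates as $\zeta$ grows, while the lemma is claimed for every $\zeta\ge\sqrt\varepsilon$.

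\textbf{Repair.} Average with the weights of the reference side, where $x\in\NUH_\chi$. Use the exact identity
\[
\frac{a+b}{a'+b'}=\frac{a'}{a'+b'}\cdot\frac{a}{a'}+\frac{b'}{a'+b'}\cdot\frac{b}{b'},\qquad \frac{a}{a'}=e^{\pm\delta},\quad \frac{b}{b'}=e^{\pm(2\zeta+\delta)}.
\]
This gives
\[
e^{-2\zeta-\delta}\left[1+\theta'(e^{2\zeta}-1)\right]\le\frac{a+b}{a'+b'}\le e^{2\zeta+\delta}\left[1-\theta'(1-e^{-2\zeta})\right],\qquad \theta'=\frac{a'}{a'+b'}\ge c\,\norm{C(x)^{-1}}^{-2}.
\]
The bound on $\theta'$ holds because $a'\gtrsim|\eta|^2$ and $a'+b'=s(x,\eta)^2\le\norm{C(x)^{-1}}^2|\eta|^2$. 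Since $\zeta\ge\sqrt\varepsilon$, the gain in the exponent is $\gtrsim\sqrt\varepsilon\,\norm{C(x)^{-1}}^{-2}$, which beats $2Q(y)^{\beta/6}+\delta$. The true gain is thus $\theta'(1-e^{-2\zeta})$ rather than $\theta'\zeta$, but it suffices. This is exactly the paper's quantity $\frac{I_1(1-e^{-2\zeta})}{I_1+e^{2\chi\tau_0}s(y,\eta_1)^2}$. Its denominator is $s(f^{-1}(y),\eta)^2$, computed along the orbit of the $\NUH_\chi$ point $y$.

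\textbf{Comparison with the paper's route.} You differ from the paper only in how the two non-dynamically related pairs are bridged.
\begin{itemize}
\item The paper inserts $(f^{-1}(y),\eta)$ with $\eta=\Phi^*_{-\tau_0}\pi^s_{z,y}(\lambda)$; there $\tau_0$ is the time from $f^{-1}(y)$ to $y$, not $R_{\mathbb D}(x)$. The recursion then runs along two genuine orbit segments, $g^-_y(z)\to z$ and $f^{-1}(y)\to y$, whose tails are literally $s(z,\lambda)$ and $s(y,\pi^s_{z,y}\lambda)$. The static comparison of $(f^{-1}(y),\eta)$ with $(x,\pi^s_{g^-_y(z),x}\xi)$ becomes a separate factor $e^{\pm\frac13Q(x)^{\beta/6}}$. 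It comes from the overlap $\wt\Psi^*_{f^{-1}(y)}\overset{\varepsilon}{\approx}\wt\Psi^*_x$, the H\"older bound (\ref{eq.gstar}) and Lemma~\ref{lem.comH}.
\item You push $x$ forward instead and use the other half of (GPO1), $\wt\Psi^*_{f(x)}\overset{\varepsilon}{\approx}\wt\Psi^*_y$. Your static error then enters the tail ratio before the contraction, which is harmless.
\end{itemize}

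Your ``main obstacle'' (3) is in fact easy. At points of $\NUH_\chi$ one has $s(p,C(p)(u,0))=|u|$. Write $\xi=d_t\Psi_x(v,dG(v))$. Then the two quantities are the same scalar multiple of $|Av|$ and of $|Av+[d_tH^*_{x,y}(v,dG(v))]_s|$. By Proposition~\ref{pro.overlift} and $|Av|\ge e^{-10\rho}|v|$ (Lemma~\ref{lem.propofC}(3)), the relative error is $O(\norm{d_tH^*_{x,y}})=O(\varepsilon^{1+\beta/3}Q^{\beta/3})$. This is well below the $Q(y)^{\beta/6}$ threshold.
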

\begin{proof}
Without loss of generality, we assume that $\xi\in T_{g^{-}_y(z)}V^s$ with $|\xi|=1$. Keep in mind that $\tau_0, \tau_1\in (0,2\rho)$, then from  Lemma \ref{lem.comH}, there exists a constant $I>1$ such that 
\begin{equation}\label{eq.bdvectors}
|\lambda|,  |\pi^s_{g^{-}_y(z),x}(\xi)|, |\pi^s_{z,y}(\lambda)|\in(I^{-1},I).
\end{equation}

We will show that under the isomorphisms $\pi_{g^{-}_y(z),x}^s$ and $\pi_{z,y}^s$, the ratio of $s$ may improve. Firstly, we have that
\begin{equation}\label{im.dyz}
\dist(y,z)<4Q(y)|X(o_j)| \text{ and } \dist(g^{-}_y(z),f^{-1}(y))< 4K_F e^{\frac{250\rho}{\beta}}\cdot Q(y)|X(o_i)|.
\end{equation}

Write 
\begin{equation}\label{ssratio}
\frac{s(g_y^{-}(z),\xi)}{s(x,\pi_{g^{-}_y(z),x}^s(\xi))}=\frac{s(f^{-1}(y),\eta)}{s(x,\pi_{g^{-}_y(z),x}^s(\xi))}\cdot\frac{s(g_y^{-}(z),\xi)}{s(g_y^{-}(y),\eta)},\; \eta=\Phi^*_{-\tau_0}(\pi_{z,y}^s(\lambda))\in\H^s_{f^{-1}(y)}.
\end{equation}

\begin{itemize}
\item { Part 1}: $\frac{s(f^{-1}(y),\eta)}{s(x,\pi_{g^{-}_y(z),x}^s(\xi))}=e^{\pm\frac{1}{3}Q(x)^{\beta/6}}$.
\end{itemize}

Note that $x, y\in\NUH_{\chi}$. Then from the definitions of the functions $s(\cdot,\cdot)$ and $u(\cdot,\cdot)$ and the relation between $C(\cdot)$ and $\wt{C}(\cdot)$, we obtain that 
\[
s(x,\pi^s_{g^{-}_y(z),x}(\xi))=|C(x)^{-1}(\pi^s_{g^{-}_y(z),x}(\xi))|=|\wt{C}(x)^{-1}\circ d_x\exp_{o_i}^{-1}(\pi^s_{g^{-}_y(z),x}(\xi))|
\]
and
\[
s(f^{-1}(y),\eta)=|C(f^{-1}(y))^{-1}(\eta)|=|\wt{C}(f^{-1}(y))^{-1}\circ d_{f^{-1}(y)}\exp_{o_i}^{-1}(\eta)|.
\]
Thus
\[\begin{aligned}
&\Norm{\frac{s(f^{-1}(y),\eta)}{s(x,\pi_{g^{-}_y(z),x}^s(\xi))}-1}\\=&\frac{1}{|\wt{C}(x)^{-1}\circ d_x\exp_{o_i}^{-1}(\pi^s_{g^{-}_y(z),x}(\xi))|}\cdot\Norm{\wt{C}(x)^{-1}\circ d_x\exp_{o_i}^{-1}(\pi^s_{g^{-}_y(z),x}(\xi))-\wt{C}(f^{-1}(y))^{-1}\circ d_{f^{-1}(y)}\exp_{o_i}^{-1}(\eta)}.
\end{aligned}\]
The first factor is less than $2I$ by (\ref{eq.bdPsi}), (\ref{eq.bdvectors}) and Lemma~\ref{lem.propofC}.
To give estimation on the second factor, we divide it into three parts:
\[\begin{aligned}
&\Norm{\wt{C}(x)^{-1}\circ d_x\exp_{o_i}^{-1}(\pi^s_{g^{-}_y(z),x}(\xi))-\wt{C}(f^{-1}(y))^{-1}\circ d_{f^{-1}(y)}\exp_{o_i}^{-1}(\eta)}\\\leq & \norm{\wt{C}(x)^{-1}-\wt{C}(f^{-1}(y))^{-1}}\cdot |d_x\exp_{o_i}^{-1}(\pi^s_{g^{-}_y(z),x}(\xi))|\\
&+\norm{\wt{C}(f^{-1}(y))^{-1}}\cdot|d_x\exp_{o_i}^{-1}(\pi^s_{g^{-}_y(z),x}(\xi))-d_{g^{-}_y(z)}\exp_{o_i}^{-1}(\xi)|\\
&+\norm{\wt{C}(f^{-1}(y))^{-1}}\cdot|d_{g^{-}_y(z)}\exp_{o_i}^{-1}(\xi)-d_{f^{-1}(y)}\exp_{o_i}^{-1}(\eta)|\\
=:&s_1+s_2+s_3
\end{aligned}\]
where 
\[\begin{aligned}
&s_1=\norm{\wt{C}(x)^{-1}-\wt{C}(f^{-1}(y))^{-1}}\cdot |d_x\exp_{o_i}^{-1}(\pi^s_{g^{-}_y(z),x}(\xi))|,\\
&s_2=\norm{\wt{C}(f^{-1}(y))^{-1}}\cdot|d_x\exp_{o_i}^{-1}(\pi^s_{g^{-}_y(z),x}(\xi))-d_{g^{-}_y(z)}\exp_{o_i}^{-1}(\xi)|,\\
&s_3=\norm{\wt{C}(f^{-1}(y))^{-1}}\cdot|d_{g^{-}_y(z)}\exp_{o_i}^{-1}(\xi)-d_{f^{-1}(y)}\exp_{o_i}^{-1}(\eta)|.
\end{aligned}\]

\noindent$\circ$ Estimate on $s_1$:

From the (\ref{eq.bdvectors}), it is obvious that $|d_x\exp_{o_i}^{-1}(\pi^s_{g^{-}_y(z),x}(\xi)|\leq 2I$. Besides that, $\norm{\wt{C}(x)^{-1}-\wt{C}(f^{-1}(y))^{-1}}\leq \norm{\wt{C}(x)^{-1}}\cdot \norm{\wt{C}(f^{-1}(y))^{-1}}\cdot \norm{\wt{C}(x)-\wt{C}(f^{-1}(y))}<2\varepsilon Q(x)Q(f^{-1}(y))<2\varepsilon e^{288\rho/\beta}\cdot Q(x)Q(y)$ by the definition of $``\ov"$ and Lemma~\ref{lem.estimateQ}~(1). Consequently, we conclude that there exists a constant $A_1>1$ such that
\[s_1\leq A_1\cdot \varepsilon^{\beta/48}\cdot Q(x)^{\beta/6} \text{ for all small $\varepsilon>0$.}\]

\vspace{5pt}
\noindent$\circ$ Estimate on $s_2$:

Due to Lemma \ref{lem.comH}, $|d_x\exp_{o_i}^{-1}(\pi^s_{g^{-}_y(z),x}(\xi))-d_{g^{-}_y(z)}\exp_{o_i}^{-1}(\xi)|\leq \varepsilon^{1/6}\cdot Q(x)^{\beta/4}$. Then, combining with Lemma~\ref{lem.estimateQ}~(3), we conclude that there exists a constant $A_2>1$ such that  
\[s_2\leq A_2\cdot \varepsilon^{\beta/48}\cdot Q(x)^{\beta/6} \text{ for all small $\varepsilon>0$.}\]

\vspace{5pt}
\noindent$\circ$ Estimate on $s_3$:

Recall that $\eta=\Phi^*_{-\tau_0}(\pi_{z,y}^s(\lambda))$ and $\xi=\Phi^*_{-\tau_1}(\lambda)$. According to Lemma~\ref{hpsi}~\ref{Phi.rel}, we have that
\[
d_{f^{-1}(y)}\exp_{o_i}^{-1}(\eta)=\frac{|X(o_j)|}{|X(o_i)|}\cdot  d_{\wt y^*}\wt g^{-,*}_y(\wt{\lambda}_0)
\]
and 
\[
d_{g^{-}_y(z)}\exp_{o_i}^{-1}(\xi)=\frac{|X(o_j)|}{|X(o_i)|}\cdot  d_{\wt z^*}\wt g^{-,*}_y(\wt{\lambda}_1)
\]
where $\wt{\lambda}_0=d_y\exp^{-1}_{o_j}(\pi^s_{z,y}(\lambda))$, $\wt{\lambda}_1=d_z\exp^{-1}_{o_j}(\lambda)$.  From  (\ref{eq.bdvectors}), they satisfy that
\begin{equation}\label{eq.bdwtlambda}
|\wt{\lambda}_0|, |\wt{\lambda}_1|\leq 2I.
\end{equation}
Therefore
\[
|d_{g^{-}_y(z)}\exp_{o_i}^{-1}(\xi)-d_{f^{-1}(y)}\exp_{o_i}^{-1}(\eta)|=\frac{|X(o_j)|}{|X(o_i)|}\cdot \Norm{ d_{\wt y^*}\wt g^{-,*}_y(\wt{\lambda}_0)
-d_{\wt z^*}\wt g^{-,*}_y(\wt{\lambda}_1)}. 
\]

By Lemma~\ref{lem.C1Poincare}, we have $\frac{|X(o_j)|}{|X(o_i)|}\in (\frac{9}{10},\frac{10}{9})$.
On the other hand, $\Norm{d_{\wt y^*}\wt g^{-,*}_y(\wt{\lambda}_0)-d_{\wt z^*}\wt g^{-,*}_y(\wt{\lambda}_1)}\leq \norm{d_{\wt y^*}\wt g^{-,*}_y-d_{\wt z^*}\wt g^{-,*}_y}\cdot|\wt{\lambda}_0|+\norm{d_{\wt z^*}\wt g^{-,*}_y}\cdot|\wt{\lambda}_0-\wt{\lambda}_1|$. 
And (\ref{eq.gstar}), (\ref{im.dyz}) indicate that $\norm{d_{\wt z^*}\wt g^{-,*}_y}\leq K_F $ and $\norm{d_{\wt y^*}\wt g^{-,*}_y-d_{\wt z^*}\wt g^{-,*}_y}\leq 4\wt{C}\cdot Q(y)$. Meanwhile, Lemma \ref{lem.comH} shows that $|\wt{\lambda}_0-\wt{\lambda}_1|\leq \varepsilon^{1/6}\cdot Q(y)^{\beta/4}$ and (\ref{eq.bdwtlambda}) gives $|\wt{\lambda}_0|\leq2 I$. Hence there exists a constant $A_3>1$ satisfying that $|d_{g^{-}_y(z)}\exp_{o_i}^{-1}(\xi)-d_{f^{-1}(y)}\exp_{o_i}^{-1}(\eta)|\leq A_3\cdot Q(y)^{\beta/4}$. 

Finally, by increasing $A_3$, Lemma~\ref{lem.estimateQ}~(3) tells us that $s_3\leq A_3\cdot \varepsilon^{\beta/48}\cdot Q(y)^{\beta/6}$.

\vspace{5pt}
To sum up, we obtain that for $\varepsilon>0$ sufficiently small, $\Norm{\frac{s(f^{-1}(y),\eta)}{s(x,\pi_{g^{-}_y(z),x}^s(\xi))}-1}\leq 4(A_1+A_2+A_3)\cdot \varepsilon^{\beta/48}\cdot Q(x)^{\beta/6}<\frac{1}{3}Q(x)^{\beta/6}$ and thus
$\frac{s(f^{-1}(y),\eta)}{s(x,\pi_{g^{-}_y(z),x}^s(\xi))}=e^{\pm\frac{1}{3}Q(x)^{\beta/6}}$.

\begin{itemize}
\item { Part 2}: $\frac{s(g_y^{-}(z),\xi)}{s(g_y^{-}(y),\eta)}=e^{\pm(\zeta-\frac{3}{2}Q(x)^{\beta/6})}$.
\end{itemize}

We decompose $s(\cdot,\cdot)$ as:
\begin{equation}\label{decps}
s(g^{-}_y(y),\eta)^2=I_1+e^{2\chi\tau_0}s(y, \Phi^*_{\tau_0}(\eta))^2 \text{ and } s(g^{-}_y(z),\xi)^2=I_2+ e^{2\chi\tau_1}s(z,\Phi^*_{\tau_1}(\xi))^2,
\end{equation} 
where $I_1=4e^{4\rho}\int_{0}^{\tau_0}e^{2\chi t}|\Phi^*_{t}\eta|^{2}dt$ and $I_2=4e^{4\rho}\int_{0}^{\tau_1}e^{2\chi t}|\Phi^*_{t}\xi|^{2}dt$. With a similar reason as (\ref{eq.bdvectors}), $|\Phi^*_{t}\eta|$, $|\Phi^*_{t}\xi|\in(I^{-1},I)$ for $t\in(0,2\rho)$. Then combining with the fact that $\tau_0, \tau_1<2\rho$, there is $\wt{I}>1$ so that $I_i\in[\wt{I}^{-1},\wt{I}]$ for $i=1,2$. Thus $s(g^{-1}_y(z),\xi)<\infty$. Moreover, 
\begin{equation}\label{est.srat}
\frac{s(g_y^{-}(z),\xi)^2}{s(g_y^{-}(y),\eta)^2}=\frac{I_2+e^{2\chi\tau_1}s(z,\lambda)^2}{I_1+e^{2\chi\tau_0}s(y,\pi^s_{z,y}(\lambda))^2}.
\end{equation}

 Let us give estimates on $I_1/I_2$. We first have the following estimates which is a direct consequence of Lemma~\ref{lem.C1Poincare} and (\ref{im.dyz}):
\begin{equation}\label{im.cl0}
|\tau_0-\tau_1|<K_{F1}\cdot Q(y).
\end{equation}

\smallskip
\smallskip
\nt {\bf Claim.} {\it $\frac{I_1}{I_2}=e^{\pm Q(y)^{\beta/6}}$.}
\begin{proof}
Write $I_1-I_2=4e^{4\rho}\int_{0}^{\tau_0}e^{2\chi t}(|\Phi^*_{t}\eta|^{2}-|\Phi^*_{t}\xi|^{2})dt+4e^{4\rho}\int_{\tau_0}^{\tau_1}e^{2\chi t}|\Phi^*_{t}\xi|^{2}dt$.

\noindent$\circ$ Note that $||\Phi^*_{t}\eta|^{2}-|\Phi^*_{t}\xi|^{2}|=(|\Phi^*_{t}(\eta)|+|\Phi^*_t(\xi)|)||\Phi^*_{t}(\eta)|-|\Phi^*_{t}(\xi)||$. It is easy to see that $|\Phi^*_{t}(\eta)|+|\Phi^*_t(\xi)|<2I$. On the other hand,
\begin{equation}\label{diffxieta}
||\Phi^*_{t}\eta|-|\Phi^*_{t}\xi||\leq ||\Phi^*_{t}(\frac{\eta}{|\eta|})|-|\Phi^*_{t}\xi||+|\Phi^*(\frac{|\eta|-1}{|\eta|}\eta)|.
\end{equation}
Note that $|\xi|=1$. Then from  Part 1, it holds that $|\frac{\eta}{|\eta|}-T_{g^{-}_y(z),f^{-1}(y)}(\xi)|\leq2|\eta-T_{g^{-}_y(z),f^{-1}(y)}(\xi)|\leq 4|d_{g^{-}_y(z)}\exp_{o_i}^{-1}(\xi)-d_{f^{-1}(y)}\exp_{o_i}^{-1}(\eta)|<4A_3\cdot Q(y)^{\beta/4}$. Then the scaled H\"older property of $\Phi^*_t$ (recall from Proposition~\ref{hpsi}~\ref{Phi.hol}) gives that $||\Phi^*_{t}(\frac{\eta}{|\eta|})|-|\Phi^*_{t}\xi||\leq A_4Q(y)^{\beta/4}$ for some constant $A_4>1$. In addition, from (\ref{eq.flowGI}), $|\Phi^*(\frac{|\eta|-1}{|\eta|}\eta)|\leq e^{4\rho}|\eta-T_{g^{-}_y(z),f^{-1}(y)}(\xi)|<A_4Q(y)^{\beta/4}$ (increasing $A_4$ if necessary). Putting these two estimates into (\ref{diffxieta}), we have that $||\Phi^*_{t}\eta|-|\Phi^*_{t}\xi||\leq2A_4\cdot Q(y)^{\beta/4}$. 

Hence $|4e^{4\rho}\int_{0}^{\tau_0}e^{2\chi t}(|\Phi^*_{t}\eta|^{2}-|\Phi^*_{t}\xi|^{2})dt|\leq 4e^{4\rho}\cdot 2\rho\cdot 2Ie^{4\chi\rho} \cdot2A_4\cdot Q(y)^{\beta/4} <\varepsilon^{\beta/12}Q(y)^{\beta/6}$ for all $\varepsilon>0$ small enough.

\noindent$\circ$ Because of (\ref{eq.bdvectors}), (\ref{eq.flowGI}) and $\tau_0,\tau_1\in(0,2\rho)$, we have that $|e^{2\chi t}\cdot|\Phi^*_t(\xi)||^2$ is uniformly upper bounded. Thus according to  (\ref{im.cl0}), $|4e^{4\rho}\int_{\tau_0}^{\tau_1}e^{2\chi t}|\Phi^*_{t}\xi|^{2}dt|<\varepsilon^{\beta/12}Q(y)^{\beta/6}$ for all small $\varepsilon>0$.
\vspace{5pt}
These indicate that $|I_1-I_2|<2\varepsilon^{\beta/12}Q(y)^{\beta/6}$ and $|\frac{I_1}{I_2}-1|\leq 2\wt{I}\varepsilon^{\beta/12}Q(y)^{\beta/6}<\frac{1}{2}Q(y)^{\beta/6}$. As a result, $\frac{I_1}{I_2}=e^{\pm Q(y)^{\beta/6}}$.
\end{proof}

With these preparation, we give the estimation on (\ref{est.srat}). Firstly, from (\ref{im.cl0}) we get that $e^{2\chi\tau_1}/e^{2\chi\tau_0}= e^{\pm2\chi|\tau_1-\tau_0|}=e^{\pm2 L_0 Q(y)}=e^{\pm Q(y)^{\beta/6}}$. Then combining the Claim, we get that 
\[\frac{s(g_y^{-}(z),\xi)^2}{s(g_y^{-}(y),\eta)^2}
\leq e^{2\zeta+Q(y)^{\beta/6}}[1-\frac{I_1(1-e^{-2\zeta})}{I_1+e^{2\chi\tau_0}s(y,\eta_1)^2}].\] 

Note that $I_1(1-e^{-2\zeta})>\wt{I}^{-1}\varepsilon^{1/2}$ and $I_1+e^{2\chi\tau_0}s(y,\eta_1)^2<2\wt{I}e^{4\chi\rho}s(y,\eta_1)^2\leq 2\wt{I}e^{4\chi\rho}\norm{C(y)^{-1}}^2$. Hence $\frac{I_1(1-e^{-2\zeta})}{I_1+s(y,\eta)^2}\geq \frac{1}{2}\wt{I}^{-2}\varepsilon^{-1/6}\cdot\varepsilon^{2/3}\norm{C(y)^{-1}}^{-2}>\frac{5}{4}Q(y)^{\beta/6}$. The last inequality is derived from Lemma~\ref{lem.estimateQ}~(3)and holds for all $\varepsilon>0$ small enough. 

Therefore $\frac{s(g_y^{-}(z),\xi)^2}{s(g_y^{-}(y),\eta)^2}\leq e^{2\zeta-\frac{9}{4}Q(y)^{\beta/6}}$ and thus $\frac{s(g_y^{-}(z),\xi)}{s(g_y^{-}(y),\eta)}\leq e^{\zeta-\frac{3}{2}Q(y)^{\beta/6}}$. The other side can be obtained symmetrically. As a summary, we have that $\frac{s(g_y^{-}(z),\xi)}{s(g_y^{-}(y),\eta)}=e^{\pm(\zeta-\frac{3}{2}Q(x)^{\beta/6})}$.

Plugging results of { Part 1} and { Part 2} to (\ref{ssratio}), we may easily obtain the lemma.
\end{proof}

\subsubsection{Uniform bounds of the functions $s, u$ on invariant manifolds}\label{sec.unifsu}
The next lemma shows that the functions $s(\cdot,\cdot)$ (resp. $u(\cdot,\cdot)$) is uniformly bounded on the corresponding $s$-admissible manifold (resp. $u$-admissible manifold). We follow the proof in \cite[Lemma~6.6]{ALP} with a little modification. Some tedious calculation may be omitted. 

\begin{lem}\label{impv.fin}
Let $\ul{\wt v}^*=\{\wt\Psi^{*,p^s_n,p^u_n}_{x_n}\}_{n\in\mathbb{Z}}$ be a gpo and $V^s:=V^s[\ul{\wt v}^*]$. If $V^s\cap\NUH_{\chi}^{\#}\neq \emptyset$ then for any $y\in V^s$ and unit vector $\xi\in T_y V^s$, it holds that 
$s(y,\xi)<\infty$.
\end{lem}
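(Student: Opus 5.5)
Fix a point $z\in V^s\cap\NUH^{\#}_\chi$. For $z$ the quantity $s(z,\cdot)$ is finite by (NUH3), and $\H^s_z$ is given by the splitting. The plan is to transport this finiteness from $z$ to an arbitrary $y\in V^s$. The tool is the exponential contraction along the stable admissible manifold, combined with the scaled H\"older control of $\Phi^*$ from Proposition~\ref{hpsi}~\ref{Phi.hol}.

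\textbf{Step 1: reduce to integrals over return blocks.} Write $y_n=g^+_{x_0,n}(y)$, $z_n=g^+_{x_0,n}(z)$ and $\xi_n=d_yg^+_{x_0,n}(\xi)$. By Proposition~\ref{hpsi}~\ref{Phi.rel}, $\xi_n$ is the image of $\xi$ under $\Phi$ along the orbit of $y$ up to the $n$-th return time $t_n(y)$. Scaling converts $\Phi$ into $\Phi^*$ with a factor $|Y(y)|/|Y(y_n)|$. The integral defining $s(y,\xi)^2$ then splits into a sum over $n\ge0$ of integrals over $[t_n(y),t_{n+1}(y)]$. Each block has length at most $2\rho$, so by (\ref{eq.flowGI}) it is bounded by a constant times $e^{2\chi t_n(y)}|\Phi^*_{t_n(y)}\xi|^2$.

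\textbf{Step 2: compare $\Phi^*$ along the two orbits.} By Proposition~\ref{invmnfd}~(3a), $\dist(y_n,z_n)\le 30p^s_0|X(x_n)|e^{-\frac{\chi\inf R_{\mathbb D}}{2}n}$. In the scaled sense this is exponentially small relative to $|X(z_n)|$. Proposition~\ref{strmnfd} controls the time lag: $|t_n(y)-t_n(z)|$ converges, hence is bounded. The tangent vectors $\xi_n$ lie in $T_{y_n}V^s_n$, which has slope at most $\frac12$ in the chart. So they can be compared with $\pi^s$-images in $\H^s_{z_n}$, using Lemma~\ref{lem.comH} at $z_n$; this works since $V^s_n$ also contains $z_n$. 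Iterating the one-step H\"older estimate of Proposition~\ref{hpsi} over the block decomposition gives
\[
|\Phi^*_{t_n(y)}\xi|\le K\,|\Phi^*_{t_n(z)}\xi'|+K\sum_{k<n}\bigl(e^{-\frac{\chi\inf R_{\mathbb D}}{2}k}\bigr)^{\beta}\,\|\cdots\|,
\]
where $\xi'$ is a unit vector in $\H^s_z$.

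\textbf{Main obstacle.} A naive telescoping of this estimate would let the errors accumulate through the unbounded derivative growth encoded in $\norm{C(x_n)^{-1}}$. I would avoid this as in \cite[Lemma~6.6]{ALP}. Proposition~\ref{invmnfd}~(3b) already gives a uniform contraction $\norm{d_yg^+_{x_0,n}|_{TV^s}}\le 60\norm{C(x_0)^{-1}}\frac{|X(x_n)|}{|X(x_0)|}e^{-\frac{\chi\inf R_{\mathbb D}}{2}n}$. After scaling, this yields $|\Phi^*_{t_n(y)}\xi|\le K'\norm{C(x_0)^{-1}}e^{-\frac{\chi\inf R_{\mathbb D}}{2}n}$, but the rate $\frac{\chi\inf R_{\mathbb D}}{2}$ is too weak against the weight $e^{2\chi t}$. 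So the contraction alone is not enough, and the sharp rate must come from $z$.

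To get it, I would use the H\"older comparison with the orbit of $z$. The comparison error at step $k$ is $O(e^{-\beta ck})$. This error is propagated forward by at most the contraction of $\Phi^*$ on $\H^s$ along the orbit of $z$, which is of rate $\chi$ up to $\ell e^{\varepsilon k}$-type factors, because $q(z)>0$ and $Q(z_k)\ge q(z)e^{-\varepsilon t_k}$.

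This gives $|\Phi^*_t\xi|\le K''e^{-(\chi+\delta')t}$ for some $\delta'>0$. Here $\delta'$ uses the strict inequality in (NUH1) through a slightly larger exponent available at $z$, and I would accept a possible reduction from $\chi$ to $\chi'<\chi$ if necessary. Summing the block bounds then gives a convergent series, hence $s(y,\xi)<\infty$. Checking that the exponent stays strictly above $\chi$ is where I expect the real difficulty.
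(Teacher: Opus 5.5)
There is a genuine gap, and it sits where you flag it: nothing justifies $|\Phi^*_t\xi|\le K''e^{-(\chi+\delta')t}$.

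First, no exponent beyond $\chi$ is available at $z$. The strict inequality in (NUH1) is the backward condition $\liminf_{t\to+\infty}\frac1t\log|\Phi^*_{-t}v^s|>0$. Forward you only have $\limsup\le-\chi$, and (NUH3)--(NUH4) at $z$ are compatible with $|\Phi^*_tv|\approx e^{-\chi t}/(1+t)$.

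Second, your error mechanism could not produce such a rate even if $z$ had one. Let $c=\frac{\chi\inf R_{\mathbb D}}{2}$. Additive errors of size $e^{-\beta ck}$, injected at the $k$-th return and then contracted, give at time $t_n$ a bound dominated by the terms with $k$ near $n$. That bound is of order $e^{-\beta cn}\ge e^{-\chi t_n}$, since $t_n\ge\frac n2\inf R_{\mathbb D}$ and $\beta\le 1$. So the resulting bound on $\sum_n e^{2\chi t_n}|\Phi^*_{t_n}\xi|^2$ diverges. Even a clean pointwise bound $|\Phi^*_t\xi|\le Ke^{-\chi t}$ would leave $\int_0^\infty e^{2\chi t}|\Phi^*_t\xi|^2dt$ divergent; pointwise rates are the wrong currency at the critical exponent.

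Third, retreating to $\chi'<\chi$ only proves $s_{\chi'}(y,\xi)<\infty$, which is weaker than the lemma.

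The missing idea, which is how the paper argues (following \cite[Lemma~6.6]{ALP}), is to use $s_\chi(y,\xi)=\sup_{\chi'<\chi}s_{\chi'}(y,\xi)$ and bound $s_{\chi'}(y,\xi)$ \emph{uniformly in $\chi'$}.
\begin{itemize}
\item For fixed $\chi'$, finiteness comes from running the graph transform along charts centred on the orbit of $z$ with sizes $\delta q_{\bar\chi}$, where $\bar\chi=\frac{\chi+\chi'}{2}$. Their stable manifolds contract at rate $\bar\chi>\chi'$. By a coherence argument as in Proposition~\ref{2mnfd}, they contain the forward images of $V^s$ for large $n$.
\item Uniformity in $\chi'$ comes from the improvement lemma (Lemma~\ref{impv.srat}). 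It is applied along the blocks between returns $n_k$ at which $Q$ along the orbit of $z$ stays above a fixed $q>0$; these exist because $z\in\NUH^\#_\chi$. The lemma drives the \emph{ratio} of $s_{\chi'}$ at points of the image of $V^s$ and at the orbit point into $e^{\pm\sqrt\varepsilon}$, with $\chi'$-independent constants.
\end{itemize}
The essential shift is from differences of vectors to ratios of Lyapunov norms.

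In that currency, your comparison with the orbit of $z$ can plausibly be made to work directly, with no exponent gain.
\begin{itemize}
\item Transport $\Phi^*_{t_k(y)}\xi$ to $u_k\in T_{z_k}V^s_k\subset\H^s_{z_k}$.
\item Telescope, along the $u_k$, the exact identity $s(z_k,w)^2=4e^{4\rho}\int_0^{\tau}e^{2\chi t}|\Phi^*_tw|^2dt+e^{2\chi\tau}s(z_{k+1},\Phi^*_\tau w)^2$ from the proof of Lemma~\ref{lem.propofC}.
\item The one-step relative errors then enter multiplied by $\|C(z_{k+1})^{-1}\|$. This grows only subexponentially because $q(z)>0$, so the errors stay summable.
\end{itemize}
For this you would still have to verify two things: the inclusion $T_{z_k}V^s_k\subset\H^s_{z_k}$, and an exponentially small angle between $T_{y_k}V^s_k$ and $T_{z_k}V^s_k$.
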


\begin{proof}
Recall that the function $s(\cdot,\cdot)$ depends on the parameter $\chi$. To strength the dependence, we write $s(\cdot,\cdot)=s_{\chi}(\cdot,\cdot)$ during the proof. One basic fact is that $s_{\chi}(x,v)=\sup_{\chi'<\chi}s_{\chi'}(x,v)$ for every $x\in\NUH_{\chi}$ and $v\in \H_x$. Take $\ol{\chi}=(\chi+\chi')/2$ and $\delta=(e^{-\chi'}-e^{-\chi})^{3/\beta}\in(0,1)$. 

Assume that $z\in V^s\cap\NUH_{\chi}^{\#}$ and let $\ol{z}$ be the first intersection between the positive orbit of $z$ and $\mathbb D$, which also belongs to $\NUH_{\chi}^{\#}$. Denote by $P_0$ the induced holonomy map by the flow $\varphi_t$ corresponding to sections which contains $z$ and $\ol{z}$. Thus $\norm{d_zP_0}\leq K_F$ by Lemma~\ref{lem.C1Poincare}.

Furthermore, from the definition of $\NUH_{\chi}^{\#}$ and Lemma~\ref{lem.estimateQ}~(3), we have there are $q>0$ and an increasing sequence $\{n_k\}_{k\geq0}$ so that 
\begin{equation}\label{eq.olzq}
Q(f^{n_k}(\ol{z}))>q(f^{n_k}(\ol{z}))\geq q \text{ and }\norm{C(f^{n_k}(\ol{z}))^{-1}}<q^{-\beta/4} \text{ for all $k\geq0$.}
\end{equation}

Define $\ul{\wt w}^*_{\ol{\chi}}:=\{\wt\Psi^{*,\delta q_{\ol{\chi}}(f^n(z)),\delta q_{\ol{\chi}}(f^n(z))}_{f^{n}(z)}\}_{n\in\mathbb{Z}}$. It is worthy to remark that $q_{\ol{\chi}}(\cdot)$ is defined for $\NUH_{\ol{\chi}}$ while $\wt\Psi_{f^{n}(z)}$ is still the scaled Pesin chart defined for $\NUH_{\chi}$. In general, $\ul{w}_{\ol{\chi}}$ is not an $\varepsilon$-gpo for either $\NUH_{\chi}$ or $\NUH_{\ol{\chi}}$. However, the graph transform can be performed along $\ul{w}_{\ol{\chi}}$. See \cite[Appendix]{ALP} for the reasons.  Moreover, the resulting $s$-admissible manifold, denoted by $V^s_{\ol{\chi},0}$, satisfies that for all $n\geq0$ and $y\in V^s_{\ol{\chi},0}$ and $w_s\in T_yV^s_{\ol{\chi},0}$ with $|w_s|=1$
\begin{equation}\label{propty.newchi}
\norm{dg^{+}_{\ol{z},n}(y)(w_s)}\leq 30\norm{C(x_0)}^{-1}\cdot \frac{|X(o_n)|}{|X(o_0)|}e^{-\frac{\ol{\chi}\inf R_{\mathbb D}}{2}\cdot n}
\end{equation}
where  $o_n$ is the base point of the connect component of $\mathbb D$ that $f^n(\ol{z})$ lies in. 

For each $n$, we denote the unique invariant $s$-admissible manifold in $\wt\Psi^{*,\delta q_{\ol{\chi}}(f^n(z)),\delta q_{\ol{\chi}}(f^n(z))}_{f^{n}(z)}$ by $V^s_{\ol{\chi},n}$. Proceeding a similar argument as in Proposition \ref{2mnfd}, we may conclude that for all large $n\in\mathbb{N}$, $g^{+}_{\ol{z},n}\circ P_0(V^s)$ must be a proper subset of $V^s_{\ol{\chi},n}$ since the uniform contracting rate of each $s$-admissible invariant manifold is much smaller than the decay of the size of each Pesin chart. Then take $n_0$ large and write $g^{+}_{\ol{z},n_0}(y)=\phi_{\tau_{0}}(y)$ for $y\in V^s_{\ol{\chi},0}$ and 
\begin{equation}\label{eq.simprove}
s^2_{\chi'}(y,v)=4e^{4\rho}\int_0^{\tau_0}e^{2\chi t}|\Phi^*_t(v)|dt+4e^{4\rho}\int_{\tau_0}^{\infty}e^{2\chi t}|\Phi^*_t(\Phi^*_{\tau_0}(v))|dt.
\end{equation} 

Then by the relation between the holonomy $dg^{+}_{x}$ and $\Phi^*_{t}$, and by using (\ref{eq.olzq}), (\ref{propty.newchi}), we conclude that there is $L_{\chi'}>1$ such that $s_{\chi'}(y,v)<L_{\chi'}$ for all unit vector $v\in T_yV^s_{\ol{\chi},n_k}$.

Now we use the Improvement lemma to show that $L_{\chi'}$ can be chosen uniformly. Firstly, using $L_{\chi'}$ and Lemma \ref{lem.comH}, one may find a number $\xi\geq\sqrt{\varepsilon}$ such that for all $k\geq0$, $s_{\chi'}(z_{n_k},\pi^s_{y,z_{n_k}}(v))/s_{\chi'}(y,v)=e^{\pm\xi}$ for all $y\in V^s_{\ol{\chi},n_k}$ and unit vector $v\in T_yV^s_{\ol{\chi},n_k}$.

By applying Lemma \ref{impv.srat} (for $\chi'$) along the path $f^{n_{k-1}}(\ol{z})\to\cdots\to f^{n_k}(\ol{z})$, the ratio improves until it reaches $e^{\pm\sqrt{\varepsilon}}$. Since $\xi$ is uniform for $k$ then for all $k\in\mathbb{N}$ large the ratio must be contained in $[e^{-\sqrt{\varepsilon}},e^{\sqrt{\varepsilon}}]$. This is the threshold for using Lemma \ref{impv.srat}. Therefore, we conclude that for every $k\geq0$,
\[
\frac{s_{\chi'}(f^{n_k}(\ol{z}),\pi^s_{y,f^{n_k}(\ol{z})}(v))}{s_{\chi'}(y,v)}=e^{\pm\sqrt{\varepsilon}}
\]
for all $y\in V^s_{\ol{\chi},n_k}$ and unit vectors $v\in T_yV^s_{\ol{\chi},n_k}$. Therefore, repeating a similar process as (\ref{eq.simprove}), one may get $L>1$ (independent of $\chi'$) instead of $L_{\chi'}$. This proves the lemma.
\end{proof}

\subsection{Proof of Theorem \ref{invs}}
\subsubsection{Proof of item (1)}
This is a direct corollary of Proposition \ref{invmnfd} (1).

\subsubsection{Functions $s$ and $u$ on pseudo orbits and shadowed orbits}

We compare $s(\cdot,\cdot)$ and $u(\cdot,\cdot)$ on a  pseudo-orbit and its shadowed orbit.

\begin{lem}\label{lem.comparesu}
	For any $\xi_s\in \H^s_{z_n}$ and $\xi_u\in \H^u_{z_n}$, we have
\[
\frac{s(x_n,\pi^s_{z_n,x_n}(\xi_s))}{s(z_n,\xi_s)}=e^{\pm\sqrt{\varepsilon}} \text{ and } \frac{u(x_n,\pi^u_{z_n,x_n}(\xi_u))}{u(z_n,\xi_u)}=e^{\pm\sqrt{\varepsilon}}.
\]
\end{lem}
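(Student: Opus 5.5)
We prove the $s$-case; the $u$-case is symmetric with the improvement lemma applied forward. The context is that of Theorem~\ref{invs}: $\ul{\wt v}^*=\{\wt\Psi^{*,p^s_n,p^u_n}_{x_n}\}\in\Sigma^{\#}$, $z=\pi(\ul{\wt v}^*)$ and $z_n=\varphi_{r_n(\ul{\wt v}^*)}(z)$. For each $n$, $z_n$ lies on the $s$-admissible manifold $V^s_n=V^s[\{\wt v^*_k\}_{k\ge n}]$, so $T_{z_n}V^s_n$ can be identified with $\H^s_{z_n}$. The strategy follows \cite[Lemma~6.6]{BCL} and \cite[\S 6]{LMN}. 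We first obtain a crude, possibly large, bound on the ratio $s(x_k,\pi^s_{z_k,x_k}(\cdot))/s(z_k,\cdot)$ at infinitely many future indices $k$. We then propagate it backward with the Improvement Lemma~\ref{impv.srat}, which shrinks the exponent by $Q(x_k)^{\beta/6}$ at each step until it reaches $\sqrt{\varepsilon}$.

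\textbf{Step 1 (finiteness and a uniform crude bound).} Since $\ul{\wt v}^*$ is regular, some chart $\wt v^*=\wt\Psi^{*,p^s,p^u}_{x}$ occurs at infinitely many indices $n_k\to+\infty$. Lemma~\ref{impv.fin}, applied once we know $V^s\cap\NUH^\#_\chi\neq\emptyset$, gives $s(z_{n_k},\xi)<\infty$ for unit $\xi$. Its proof actually gives a bound $L$ that is uniform over the admissible manifolds attached to the repeated chart. The lower bound $s(z_{n_k},\xi)\ge 2e^{2\rho}(2\chi)^{-1/2}\cdot$const is automatic from the definition together with (\ref{eq.flowGI}). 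At $x_{n_k}=x$ the quantity $s(x,\cdot)$ is bounded above and below by $\|C(x)^{-1}\|$ and a constant. Lemma~\ref{lem.comH} makes $|\pi^s_{z_{n_k},x}(\xi)|$ comparable to $1$. Together these give a $\zeta_0\ge\sqrt\varepsilon$, independent of $k$, with ratio $e^{\pm\zeta_0}$ at every index $n_k$.

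\textbf{Step 2 (backward improvement).} Fix $n$ and pick $n_k>n$. We apply Lemma~\ref{impv.srat} successively to the edges $\wt v^*_{m-1}\ov\wt v^*_m$ for $m=n_k,n_k-1,\dots,n+1$. Here $z_m\in V^s_m$, $g^-_{x_m}(z_m)=z_{m-1}\in V^s_{m-1}$, and the vectors are related by $\Phi^*$ as in the setup of that lemma. Each step replaces the exponent $\zeta$ by $\zeta-Q(x_m)^{\beta/6}$ as long as $\zeta\ge\sqrt\varepsilon$. Once the exponent drops below $\sqrt{\varepsilon}$, the same lemma applied with $\zeta=\sqrt\varepsilon$ keeps it within $e^{\pm\sqrt\varepsilon}$. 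It remains to see that the total decrement $\sum_{m=n+1}^{n_k}Q(x_m)^{\beta/6}$ exceeds $\zeta_0$ when $n_k-n$ is large. This holds because the chart $x$ recurs infinitely often, so $Q(x)^{\beta/6}$ appears infinitely many times in the sum. Choosing $k$ large therefore gives the ratio $e^{\pm\sqrt\varepsilon}$ at index $n$.

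\textbf{Main obstacle.} The hard part is Step 1: making the crude bound uniform in $k$ and checking that $V^s_n$ meets $\NUH^\#_\chi$, which is the hypothesis of Lemma~\ref{impv.fin}. I would first prove $z\in\NUH_\chi$ directly. Proposition~\ref{invmnfd}(3b) gives exponential contraction of $dg^+$ along $V^s$, and this converts into the integral defining $s$ via Proposition~\ref{hpsi}(1), with uniformly bounded gaps between section times. The scaling factors $|X(x_n)|/|X(x_0)|$ cancel in the scaled flow $\Phi^*$. This would give finiteness of $s(z_n,\cdot)$ directly and bypass the hypothesis. Uniformity in $k$ then comes from the fact that the chart at $n_k$ is the same, so the constants in (3b) are the same.
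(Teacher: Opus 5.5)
Your Step 2 matches the paper's final step: it applies Lemma~\ref{impv.srat} backward from a recurring index, and each return of the repeated chart subtracts a fixed $Q(x)^{\beta/6}$. The gap is in Step 1, and the bypass you propose in your last paragraph does not work.

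Proposition~\ref{invmnfd}(3b) only gives contraction $e^{-\frac{\chi\inf R_{\mathbb D}}{2}n}$ after $n$ returns. In flow time this is a rate of at most $\frac{\chi\inf R_{\mathbb D}}{2\sup R_{\mathbb D}}\le\chi/2$, so it cannot make $\int_0^\infty e^{2\chi t}|\Phi^*_t\xi|^2dt$ finite. A finer chart computation does not save it: $\|A\|<e^{-\chi R_{\mathbb D}}$, perturbed by $H^*$ and by the slowly varying $\|C(x_n)^{-1}\|$, still only yields an exponent $\chi-O(\varepsilon)$. The paper says explicitly that Proposition~\ref{invmnfd} cannot give (NUH1), and this is also why the proof of Lemma~\ref{impv.fin} passes through $s_{\chi'}$ with $\chi'<\chi$.

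Consequently you have neither $z_n\in\NUH_\chi$ nor a crude bound uniform in $k$. Without $z_n\in\NUH_\chi$, the space $\H^s_{z_n}$ and the identification $T_{z_n}V^s_n=\H^s_{z_n}$ that Step 2 uses are not yet available. Your claim that $L$ is uniform ``over the admissible manifolds attached to the repeated chart'' is also unjustified. The bound in Lemma~\ref{impv.fin} is built from one specific point of $\NUH^\#_\chi$ on the manifold and its recurrence, whereas $V^s_{n_k}=V^s[\{\wt v^*_m\}_{m\ge n_k}]$ changes with $k$.

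The missing idea is the relevance property of Proposition~\ref{pro.SDR}(3), used as in Claim~\ref{sfinite}.
\begin{enumerate}
\item The recurring chart $\wt v^*$ starts some $\varepsilon$-gpo $\ul{\wt u}^*$ that shadows a point $y\in\NUH^\#_\chi$. So $W:=V^s[\ul{\wt u}^{*,+}]$ is a single $s$-admissible manifold in $\wt v^*$ that meets $\NUH^\#_\chi$, and Lemma~\ref{impv.fin} gives one bound $L$ on all of $W$.
\item Since $\wt v^*_{n_k}=\wt v^*$, you can pull this fixed $W$ back by the graph transforms along $\wt v^*_{n_0}\ov\cdots\ov\wt v^*_{n_k}$. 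This gives $W^k\in\mathscr{M}^s(\wt v^*)$ with $W^k\to V^s[\{\wt v^*_m\}_{m\ge n_0}]$ in $C^1$.
\item The image points $g^+_{x_{n_0},n_k}(z_k)$ all lie on $W$. Hence the $s$-ratio at index $n_k$ is bounded independently of $k$, and Lemma~\ref{impv.srat} transports this bound back to $(z_k,\xi_k)$ on $W^k$.
\item Letting $k\to\infty$ in the truncated integrals $\int_0^T e^{2\chi t}|\Phi^*_t\xi_k|^2dt$ gives $s<\infty$ on the tangent space of the limiting stable manifold.
\item Then Claim~\ref{cl.sfin} yields $z\in\NUH_\chi$ and $T_zV^s=\H^s_z$. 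The same construction at each recurrence $n_k$ provides the $k$-independent $\zeta_0$ that your Step 2 then improves to $\sqrt{\varepsilon}$.
\end{enumerate}
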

\begin{proof}

Since both the $s$-case and the $u$-case can be obtained symmetrically then we just show the conclusion for the $s$-case.
First of all, we prove this for $n=0$. We begin with an important claim.
\begin{claim}\label{sfinite}
$s(x,\xi)$ is finite for all $\xi\in T_xV^s$, where $V^s=V^s[\ul{\wt v}^{*,+}]$.
\end{claim}
\begin{proof}
Since $\ul{\wt v}^*\in\Sigma^{\#}$, there exists a sequence $\{n_k\}_{k\in\mathbb{N}}$ so that $\wt v^*_{n_k}=\wt v^*$ for all $k\in\mathbb{N}$ is constant. Let $\bar{x}:=g^{+}_{n_0}(x)$ such that $\wt{\bar x}^*\in \wt v^*$. Then through a similar decomposition to (\ref{decps}), it is enough to prove the result for $\bar{x}$. 

By the relevance of $\wt v^*$, one may extend $\wt v^*v$ to an $\varepsilon$-gpo $\ul{\wt u}^*=\{\wt u^*_n\}_{n\in\mathbb{Z}}$ with $\wt u^*_0=\wt v^*$, and $y:=\pi(\ul{\wt u}^*)\in\NUH_{\chi}^{\#}$ which satisfies $s(y,\eta)<\infty$ for any $\eta\in T_yV^s[\ul{\wt u}^{*,+}]$. Furthermore, let $V:=V^s[\{\wt v^*_n\}^{+}_{n\geq n_0}]$ that contains $\bar{x}$ and $W:=V^s[\ul{\wt u}^{*,+}]$ that contains $y$. 

It is worth mentioning that  $W\in\mathscr{M}^s(\wt v^*_{n_k})$ for all $k\in\mathbb{N}$.
Then by the graph transform along $\ul{\wt v}^*$, one may easily get that $W^k:=(\mathscr{F}^s_{\wt v^*_{n_0},\wt v^*_{n_1}}\circ\cdots\circ\mathscr{F}^s_{\wt v^*_{n_k-1},\wt v^*_{n_k}})(W)\in \mathscr{M}^s(\wt v^*)$ with $W^0=W$, and $W^k$ converges to $V$ under the $C^1$-topology as $k\to\infty$.
Under the chart $\wt v^*$, denote $F$ the representing function of $V$, and $F_k$ the representing function of $W^k$. Fix $\xi\in T_{\wt{x}}V^s$ with an expression $\xi=d_{( t_0,F( t_0))}\Psi_{x_{n_0}}(a,dF( t_0)[a])$ with $a\in\mathbb{R}^{d^s}$. 

Take
$z_k=\Psi_{x_{n_0}}( t_0,F_k( t_0))$ and $\xi_k=d_{( t_0,F_k( t_0))}\Psi_{x_{n_0}}(a,dF_k( t_0)[a])$.  Then $z_k\to \bar{x}$ and $\xi_k\to\xi$ as $k$ tends to infinity. In particular, $|\xi_k|\leq 2|\xi|$ for $k$ large enough.
Let $y_k=g^{+}_{x_{n_0},n_k}(z_k)\in W$ and $\bar{\xi}_k=dg^{+}_{x_{n_0},n_k}(z_k)[\xi_k]$, then $s(y_k,\bar{\xi}_k)\leq L$ by Lemma \ref{impv.fin}, where $L$ is a constant determined by $W$ and $\xi$. 
Furthermore, denote $\eta_k=\pi_{z_k,x_{n_0}}^s(\xi_k)$ and $\bar{\eta}_k=\pi_{y_k,x_{n_0}}^s(\bar{\xi}_k)$ where $\pi_{z_k,x_{n_0}}^s: T_{z_k}W^k\to\H^s_{x_{n_0}}$ and $\pi_{y_k,x_{n_0}}^s: T_{y_k}W\to\H^s_{x_{n_0}}$ are given by Lemma \ref{impv.srat}. Then by the contracting property of $dg^{+}_{x_{n_0},n_k}(z_k)$ and Lemma \ref{lem.comH}, we conclude that there is $L_2>1$, independent of $k$, so that all the norms $|{\xi}_k|$, $|\bar{\xi}_k|$, $|\bar{\eta}_k|$ are less than $L_2$.
It is easily obtained that for all $k\in\mathbb{N}$, $\frac{s(y_{k},\bar{\xi}_k)}{s(x_{n_0},\bar{\eta}_k)}$ is bounded by some $L_3>e^{\sqrt{\varepsilon}}$ which depends on $L_1, L_2$. 

Now applying Lemma \ref{impv.srat} to $\wt v^*_{n_0}\ov \cdots \ov \wt v^*_{n_k}$, then $s(z_k,\xi_k)/s(x_{n_0},\eta_k)\leq L_4$ where $L_4>1$ which depends on $L_1$, $L_2$ and $L_3$, and thus $s(z_k,\xi_k)$ is finite.
By the continuity of the function $4e^{4\rho}\int_{0}^{T}e^{2\chi T}|\Phi^*_t(u)|^2dt$ in $\wt u^*$ and $\xi_k\to\xi$, it follows that for any $T>0$, 
\[
4e^{4\rho}\int_{0}^{T}e^{2\chi T}|\Phi^*_t(\xi)|^2dt=\lim_{k\to\infty}4e^{4\rho}\int_{0}^{T}e^{2\chi T}|\Phi^*_t(\xi_k)|^2dt\leq s(z_k,\xi_k)^2\leq L^2_4s^2(x_{n_0},\eta_k).
\]
$s^2(x_{n_0},\eta_k)$ is finite since $|\eta_k|<L_2$ for all $k\in\mathbb{N}$.
Finally, let $T\to\infty$ then $s(\bar{x},\xi)<L_4s(x_{n_0},\eta_k)$ is finite. Hence we complete the proof.
\end{proof}

As a corollary, we have that:
\begin{lem}\label{impv.incl}
$x\in\NUH_{\chi}$ for all $\varepsilon>0$ small enough.
\end{lem}
\begin{proof}
Fix $\ul{\wt v}^*\in\Sigma^{\#}$ and let $x=\pi(\ul{\wt v}^*)$. We only prove that $x$ satisfies (\NUH1). (\NUH2) can be verified for $x$ symmetrically. 

From Proposition \ref{pro.shadow}, $x\in V^s[\ul{\wt v}^{*,+}]$. Let $\xi\in T_xV^s[\ul{\wt v}^{*,+}]$.
The condition $\liminf_{t\to+\infty}\frac{1}{t}\log|\Phi^*_{-t}(\xi)|>0$ follows from Proposition \ref{invmnfd} directly.  Meanwhile, it is worthy to remark that Proposition \ref{invmnfd} can not imply $\lim_{n\to+\infty}\frac{1}{n}\log|\Phi^*_t(\xi)|\leq  -\chi$. We need the following claim.

\begin{claim}\label{cl.sfin}
If $s(x,\xi)<\infty$ then $\limsup\limits_{t\to+\infty}\frac{1}{t}\log|\Phi^*_t(\xi)|\leq  -\chi$.
\end{claim}
\begin{proof}
First of all, $t_{n+1}-t_{n}\in[\frac{1}{2}\inf R_{\mathbb D}, 2\sup R_{\mathbb D}]$ by Lemma~\ref{lem.C1Poincare} and (\ref{im.dyz}).
By (\ref{eq.flowGI}), we get that for all $n\geq 0$, $t\in[t_n,t_{n+1}]$, $|\Phi^*_t(\xi))|\geq e^{-4\rho}|\Phi^*_{t_n}(\xi)|$.

Consequently, from Claim \ref{sfinite}, we have that 
\[
\infty>s^2(x,\xi)=4e^{4\rho} \int_0^{\infty}e^{2\chi t}|\Phi^*_t(\xi)|^2dt
=4e^{4\rho}\sum_{n=0}^{\infty}\int_{t_n}^{t_{n+1}}e^{2\chi t}|\Phi^*_t(\xi)|^2dt\geq 2\inf R_{\mathbb D}\cdot e^{-4\rho}\sum_{n=0}^{\infty}e^{2\chi t_n}|\Phi^*_{t_n}(\xi)|^2.
\]
Hence $e^{\chi t_n}\cdot|\Phi^*_{t_n}(\xi)|\to0$ as $n\to\infty$, and for any $\gamma>0$ and all $n$ large, $\frac{1}{t_n}\log|\Phi^*_{t_n}(\xi)|\leq -\chi+\frac{1}{t_n}\log\gamma$. 

On the other hand, for any $t>0$, there exists $n\geq0$ so that $t\in[t_n,t_{n+1}]$. Note that $0\leq t-t_n\leq t_{n+1}-t_n\leq2\sup R_{\mathbb D}<2\rho$, then from (\ref{eq.flowGI}), $|\Phi^*_{t}(\xi)|\leq e^{4\rho}|\Phi^*_{t_n}(\xi)|$. As a result, 
\[\frac{1}{t}\log|\Phi^*_{t}(\xi)|\leq \frac{ 4\rho}{t}+\frac{t_n}{t}\cdot \frac{1}{t_n}\log|\Phi^*_{t_n}(\xi)|.\]
Let $t$ goes to infinity then $\limsup\limits_{t\to\infty}\frac{1}{t}\log|\Phi^*_{t}(\xi)|\leq-\chi$ which completes the proof.
\end{proof}

Claim \ref{cl.sfin} implies Lemma \ref{impv.incl}.
\end{proof}

As a corollary, we have $T_{x}V^s=\H_x^s$. 

Now we continue with the verification of Lemma~\ref{lem.comparesu}, which applies Lemma \ref{impv.srat} repeatedly. 
Denote $z_n=\varphi_{r_n(\ul{\wt v}^*)}(x)$ and $\bar{\xi}_n=dg^{+}_{x_0,n}(x)(\xi)$ and $\bar{\eta}_n=\pi^s_{z_n,x_n}(\bar{\xi}_n)$, where $\pi^s_{z_n,x_n}: \H^s_{z_n}\to\H^s_{x_n}$ given by Lemma \ref{impv.srat}. Obviously, there exists $L_5>1$ so that $|\bar{\xi}_n|, |\bar{\eta}_n|<L_5$. Proceeding as in the proof of Claim \ref{sfinite}, we have that for every $k\in\mathbb{N}$, $\frac{s(z_{n_k},\bar{\xi}_{n_k})}{s(x_{n_k},\bar{\eta}_{n_k})}=e^{\pm\zeta}$ for some $\zeta\geq\sqrt{\varepsilon}$. Then applying Lemma \ref{impv.srat} to the subsequence $\wt v^*_{n_0}\ov \cdots \ov \wt v^*_{n_k}$, we also get that $\frac{s(z_{n_{k_0}}, \bar{\xi}_{n_{k_0}})}{s(x_{n_0}, \bar{\eta}_{n_{k_0}})}=e^{\pm\sqrt{\varepsilon}}$ for some large $k_0\in\mathbb{N}$, and thus $\frac{s(z_i,\bar{\xi}_{i})}{s(x_i,\bar{\eta}_i)}=e^{\pm\sqrt{\varepsilon}}$ for all $0\leq i\leq n_{k_0}$.  Particularly, $\frac{s(x,\xi)}{s(x_0,\eta)}=e^{\pm\sqrt{\varepsilon}}$. This proves Lemma~\ref{lem.comparesu} for $n=0$.

For $n\neq0$, one just need replace $\{\wt\Psi_{x_{k}}^{*,p^u_{k},p^s_{k}}\}_{k\in\mathbb{Z}}$ by $\{\wt\Psi_{x_{n+k}}^{*,p^u_{n+k},p^s_{n+k}}\}_{k\in\mathbb{Z}}$ and repeat the argument above. 
The proof is completed.
\end{proof}

\subsubsection{Proof of item (2)}

We continue to use the notation $z_n=\varphi_{r_n(\ul{\wt v}^*)}(x)$ for every $n\in\mathbb{Z}$. 

Define $\pi_{z_n,x_n}=\pi_{z_n,x_n}^s+\pi_{z_n,x_n}^u$ where $\pi_{z_n,x_n}^s:\H^s_{z_n}\to\H^s_{x_n}$ and $\pi_{z_n,x_n}^u:\H^u_{z_n}\to\H^u_{x_n}$ are given by Lemma \ref{impv.srat}. Then $\frac{|C(z_n)^{-1}(\xi)|}{|(C(x_n)^{-1}\circ\pi_{z_n,x_n})(\xi)|}=e^{\pm4\sqrt{\varepsilon}}$ follows from Lemma~\ref{lem.propofC}~(1) and \ref{lem.comparesu}.

As a consequence, $ \frac{\norm{C(x_n)^{-1}}}{\norm{C(\varphi_{r_n(\ul{\wt v}^*)}(z))^{-1}}}=e^{\pm\sqrt[3]{\varepsilon}}$ is readily obtained. The estimation on $\frac{Q(x_n)}{Q(\varphi_{r_n(\ul{\wt v}^*)}(z))}$ can be also obtained simultaneously by the definition of $Q$ and the estimation $\frac{\norm{C(x_n)^{-1}}}{\norm{C(\varphi_{r_n(\ul{\wt v}^*)}(z))^{-1}}}=e^{\pm\sqrt[3]{\varepsilon}}$. Thus we get the second item of Theorem \ref{invs}.

\subsubsection{Proof of item (3)}
The proof is almost the same as the discussion in \cite[Section 6.3]{BCL}. The only difference concerns about the comparison of times between two orbit segments: $\wt{\Delta}_n:=(t_{n+1}-t_n)-T_n$ where $T_n=T(v_n,v_{n+1})$ is introduced in (GPO2). The time function induced by the holonomy between charts $\wt\Psi^{*,p^s_n,p^u_n}_{x_n}$ and $\wt\Psi^{*,p^s_{n+1},p^u_{n+1}}_{x_{n+1}}$, which satisfies that $\norm{d\tau_n}<\frac{K_{F1}}{|X(o_n)|}$ by Lemma~\ref{lem.C1Poincare}. Thus, combining with Theorem \ref{invs} (1), we get that 
\[
|\wt{\Delta}_n|=|\tau_n(z_n)-\tau_n(x_n)|<\frac{K_{F1}}{|X(o_n)|}.
\]

With the help of this inequality, the proof of Lemma 6.5 in \cite{BCL} also works here and thus proves this item.

\subsubsection{Proof of the inclusion $\pi(\Sigma^{\#})\subset \NUH_{\chi}^{\#}$}

Let $x\in\pi(\Sigma^{\#})$. Since Lemma \ref{impv.incl} tells us that $x\in\NUH_{\chi}$, it is enough to show that $x$ satisfies conditions (NUH4) and (NUH5). Denote $z_n=\varphi_{t_n}(x)$. From the regularity of $\ul{ v}$, there exists $ v\in\mathscr{A}_i$ for some $i\in\Gamma$ and positive integers $\{n_k\}_{k\in\mathbb{N}}$ so that $z_{n_k}\in  v$. Thus $x$ is positive recurrent and $p^s(z_{n_k}), p^u(z_{n_k})\geq e^{\sqrt{\varepsilon}}(p^s_{x_0}\wedge p^u_{x_0})$ (by Theorem~\ref{invs}~(3)). It follows from Lemma~\ref{lem.pqcomp}~(2)   that $q(z_{n_k})>0$, and then $q(x)>0$ by Lemma \ref{lem.estimateQ}~(4). So (NUH4) is clear. Since $q(z_{n_k})$ is actually uniformly lower bounded then (NUH5) holds. 

Hence we get that $x\in\NUH^{\#}_{\chi}$.

\subsubsection{Proof of item (5)}

Firstly, by considering item (1) of Theorem \ref{invs} and then repeating the proof of Proposition~\ref{pro.overlap}~(3), we have that both of them are defined on $R[10Q(x_n)]$. Moreover, we have 
\[
\Psi^{-1}_{x_n}\circ\Psi_{z_n}=(\wt\Psi^*_{x_n})^{-1}\circ\wt\Psi^*_{z_n}=\wt{C}(x_n)^{-1}(\wt z^*_n-\wt x^*_n)+\wt{C}^{-1}(x_n)\circ \wt{C}(z_n).
\]

From the graph transform, $|\wt{C}(x_n)^{-1}(\wt z^*_n-\wt x^*_n)|<\frac{1}{50}\eta_n$ where $\eta_n=p^s_n\wedge p^u_n$. 

 From the definition of $\wt{C}(\cdot)$, we get that $\wt{C}^{-1}(x_n)\circ \wt{C}(z_n)=C^{-1}(x_n)\circ (d_{\wt{x}_{n}}\exp_{o_n}\circ d_{z_n}\exp_{o_n}^{-1})\circ C(z_n)$, where $\wt{x}_n=\exp^{-1}_{o_n}(x_n)$. We claim that $C^{-1}(x_n)\circ (d_{\wt{x}_{n}}\exp_{o_n}\circ d_{z_n}\exp_{o_n}^{-1})\circ C(z_n)$ 
is almost $C(x_n)^{-1}\circ \pi_{z_n,x_n}\circ C(z_n)$. In fact,
\[\begin{aligned}
&C^{-1}(x_n)\circ (d_{\wt{x}_{n}}\exp_{o_n}\circ d_{z_n}\exp_{o_n}^{-1})\circ C(z_n)-C(x_n)^{-1}\circ \pi_{z_n,x_n}\circ C(z_n)\\
=& C^{-1}(x_n)\circ\left(d_{\wt{x}_{n}}\exp_{o_n}\circ d_{z_n}\exp_{o_n}^{-1}-\pi_{z_n,x_n}\right)\circ C(z_n)
\end{aligned}\]

According to Lemma~\ref{lem.comH}, 
\[\begin{aligned}
&\norm{d_{\wt{x}_{n}}\exp_{o_n}\circ d_{z_n}\exp_{o_n}^{-1}-\pi_{z_n,x_n}}\\
\leq &\norm{d_{\wt{x}_{n}}\exp_{o_n}}\cdot \norm{d_{x_n}\exp_{o_n}^{-1}\circ\pi_{z_n,x_n}\circ d_{\wt{z_n}}\exp_{o_n}-id}\cdot\norm{d_{z_n}\exp_{o_n}^{-1}}\\
\leq & 2\varepsilon^{1/6}\cdot Q(x_n)^{\beta/4}.
\end{aligned}\]

Then Lemma~\ref{lem.estimateQ}~(3) indicates that
\[\begin{aligned}
&\norm{C^{-1}(x_n)\circ (d_{\wt{x}_{n}}\exp_{o_n}\circ d_{z_n}\exp_{o_n}^{-1})\circ C(z_n)-C(x_n)^{-1}\circ \pi_{z_n,x_n}\circ C(z_n)}\\
\leq & 2\varepsilon^{1/6}\cdot (\norm{C(x_n)^{-1}}\cdot Q(x_n)^{\beta/4})\\
\leq &\varepsilon^{1/3}.
\end{aligned}\]  
This proves the claim. 

We pause to remark that from now on, the problem are reduced to express $C(x_n)^{-1}\circ \pi_{z_n,x_n}\circ C(z_n)$ in an appropriate way, which places us in the same setting as \cite[Theorem 3.13]{Ben}, \cite[Section 5.7]{ALP} and \cite[Theorem~6.1]{LMN}.
 
According to item (2) of Theorem \ref{invs}, we can decompose $C(x_n)^{-1}\circ \pi_{z_n,x_n}\circ C(z_n)=O_n\cdot R_n$ where $O_n$ is an orthogonal matrix and $R_n$ is a positive symmetric matrix with $\norm{R_n}=e^{4\sqrt{\varepsilon}}$ and $\norm{R_n-id}<4\sqrt{\varepsilon}$ (see \cite{Ben, ALP} for the details). In particular, both $O_n$ and $R_n$ preserve $\mathbb{R}^s\times\{0\}$ and $\{0\}\times\mathbb{R}^u$ from the definition of $C^{\pm}(\cdot)$ and $\pi_{z_n,x_n}$.

Afterwards we write $a_n=\wt{C}(x_n)^{-1}(\wt z^*_n-\wt x^*_n)$ and $\Delta_n=\Psi_{x_n}^{-1}\circ\Psi_{z_n}-O_n-a_n$. It is clear that $\Delta_n(0)=0$. We now estimate the norm $\norm{d_v\Delta_n}$ for $v\in R[10Q(x_n)]$. From the above reduction and the definition of $R_n$, we get that for all $\varepsilon>0$ small enough,
\[\begin{aligned}
\norm{d_v\Delta_n}&\leq\norm{\wt{C}^{-1}(x_n)\circ \wt{C}(z_n)-C(x_n)^{-1}\circ \pi_{z_n,x_n}\circ C(z_n)}+\norm{C(x_n)^{-1}\circ \pi_{z_n,x_n}\circ C(z_n)-O_n}\\
&\leq \varepsilon^{1/3}+4\varepsilon^{1/2}<2\varepsilon^{1/3}.
\end{aligned}\]

This shows that $\norm{d\Delta_n}_{C^0}<2\varepsilon^{1/3}$. Hence we complete the proof. \qed

\section{Construction of the finite-to-one Topological Markov flow}\label{sec.finitetoone}

In previous sections, we constructed a symbolic system $(\Sigma,\sigma)$ with countably many states. Each state corresponds to a gpo as defined in \S\ref{sec.gpo} and determines a unique point in $\widehat{\mathbb D}=\mathbb D(3r_0)$ via the shadowing lemma. Moreover, \S\ref{sec.inverse} shows that the quantification of hyperbolicity of the gpo and its shadowed point are essentially identical.

This section aims to derive our main theorem from these results. To achieve this, we use methodologies from \cite{BCL} and \cite{LMN}. Crucially, all parts required in their proof for non-singular flows have now been established in our framework. We further note that most proofs are unaffected by  the infiniteness of the global section's components. Consequently, we state the results without proofs.

Special attention must be given to Proposition~\ref{proL} \ref{finL}, which relies on the locally finiteness  of $\widehat{\mathbb D}$.  This is a difference from non-singular flows.

\subsection{A countable cover $\mathscr{L}$}
 
For all $\varepsilon>0$ small enough. Let $\mathcal{L}=\{Z(\wt v^*):\wt v^*\in\mathscr{A}\}$ and $Z(\wt v^*)=\{\pi(\ul{\wt v}^*):\ul{\wt v}^*\in\Sigma^{\#} \text{ and } \wt v^*_0=\wt v^*\}$. Denote $\mathscr{L}=\bigcup_{\wt v^*\in\mathscr{A}}Z(\wt v^*)$ and write $Z=Z(\wt v^*)$ for simplicity.

Fix any $x\in Z$, for some (any) $\ul{\wt v}^*=\{\wt v^*_n\}_{n\in\mathbb{Z}}\in\Sigma^{\#}$ such that $\pi(\ul{\wt v}^*)=x$ and $\wt v^*_0=\wt v^*$, the intersection $W^s(x,Z):=V^s(x,Z)\cap Z$ is called an \emph{$s$-fiber of $x$ in $Z$}, where $V^s(x,Z):=V^s[\{\wt v^*_n\}_{n\geq 0}]$.  Similarly, the \emph{$u$-fiber of $x$ in $Z$} is defined as $W^u(x,Z):=V^u[\{\wt v^*_n\}_{n\leq 0}]\cap Z$, wihere $V^u(x,Z):=V^u[\{\wt v^*_n\}_{n\leq 0}]$. 

 Now we may define a Poincar\'e return map on $\mathscr{L}$. If $x=\pi(\ul{\wt v}^*)\in\mathscr{L}$ with $\ul{\wt v}^*\in\Sigma^{\#}$ then $\varphi_{r_n(\ul{\wt v}^*)}(x)=\pi(\sigma^n(\ul{\wt v}^*))\in\mathscr{L}$ for all $n\in\mathbb{N}$. Define $r_{\mathscr{L}}:\mathscr{L}\to(0,\rho)$ by $r_{\mathscr{L}}(x):=\min\{t>0:\varphi_t(x)\in\mathscr{L}\}$. Then define $H:\mathscr{L}\to\mathscr{L}$ by $H(x)=\varphi_{r_{\mathscr{L}}}(x)$.
 
\begin{pro}\label{proL}
The following hold:
\begin{enumerate}
\item\label{proL.cov} $\mathscr{L}$ is a cover of $\mathbb D\cap \NUH_{\chi}^{\#}$.
\item\label{finL} For every $Z\in\mathcal{L}$,  
\[
\#\{Z'\in\mathscr{L}:(\varphi_{[-\rho,\rho]}(Z))\cap Z'\neq\emptyset\}<\infty.
\]

In particular,
\[
\#\{Z'\in\mathscr{L}:(\bigcup_{n=-1,0,1}H^n(Z))\cap Z'\neq\emptyset\}<\infty.
\]
\item\label{proL.inter} For every $Z\in\mathcal{L}$ and $x, y\in Z$, the intersection $W^s(x,Z)\cap W^u(y,Z)$ consists of a single point in $Z$. Precisely, if $\ul{\wt v}^*,\ul{\wt u}^*\in\Sigma$ satisfy that $\wt v^*_0=\wt u^*_0$, $\pi(\ul{\wt v}^*)=x$ and $\pi(\ul{\wt u}^*)=y$ ($Z=Z(\wt v^*_0)=Z(\wt u^*_0)$), then the unique point of $W^s(x,Z)\cap W^u(y,Z)$ is exact $\pi(\ul{\wt w}^*)$, where $\ul{\wt w}^*=\{\wt w^*_n\}_{n\in\mathbb{Z}}$ is defined as follows:
\[
\wt w^*_n=\left\{
\begin{array}{lcl}
\wt v^*_n&&n\geq 0,\\
\wt u^*_n,&&n<0.
\end{array}\right.
\] 
\item\label{proL.coin}Any two $s$-fibers (resp. $u$-fibers) either coincide or are disjoint.
\item\label{proL.hyp} If $x=\pi(\ul{\wt v}^*)\in\mathscr{L}$ with $\ul{\wt v}^*=\{\wt v^*_n\}_{n\in\mathbb{Z}}=\{\wt\Psi^{*,p^s_n,p^u_n}_{x_n}\}_{n\in\mathbb{Z}}\in\Sigma^{\#}$ then
\begin{equation}
g^{+}_{x_0}(W^s(x,Z(\wt v^*_0)))\subset W^s(g^{+}_{x_0}(x), Z(\wt v^*_1)) \text{ and }
g^{-}_{x_1}(W^u(g^{+}_{x_0}(x),Z(\wt v^*_1)))\subset W^u(x, Z(\wt v^*_0)). 
\end{equation}
\end{enumerate}
\end{pro}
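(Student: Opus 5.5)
Items \ref{proL.cov}, \ref{proL.inter}, \ref{proL.coin} and \ref{proL.hyp} are handled as in the non-singular case \cite{BCL,LMN}, since every ingredient is now available in scaled form. For \ref{proL.cov}, take $x\in\mathbb D\cap\NUH_\chi^{\#}$. Proposition~\ref{pro.SDR}~(2) gives a regular $\varepsilon$-gpo in $\mathscr A^{\mathbb Z}$, hence an element of $\Sigma^{\#}$, shadowing $x$. By Proposition~\ref{pro.shadow}, $\pi$ of this sequence equals $x$, so $x\in Z(\wt v^*_0)$.

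For \ref{proL.inter}, the concatenated sequence $\ul{\wt w}^*$ is again an $\varepsilon$-gpo, because the only new edge is $\wt u^*_{-1}\ov\wt v^*_0=\wt u^*_0$. It is regular, since its future is that of $\ul{\wt v}^*$ and its past is that of $\ul{\wt u}^*$. Its shadowed point lies in $V^s[\ul{\wt v}^{*,+}]\cap V^u[\ul{\wt u}^{*,-}]$, and Proposition~\ref{pro.graphinters} says this intersection is a single point. This yields existence and uniqueness.

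For \ref{proL.coin}, first use Proposition~\ref{2mnfd}: the admissible manifolds $V^s$ at the same chart are disjoint or nested. Then run the standard symmetry argument of \cite[Prop.~7.? ]{BCL}. If $z$ lies in two $s$-fibers, combine \ref{proL.inter} with Proposition~\ref{2mnfd} to obtain mutual inclusion of the fibers intersected with $Z$. For \ref{proL.hyp}, apply Proposition~\ref{invmnfd}~(2), $g^+_{x_0}(V^s_0)\subset V^s_1$. Together with the shift-equivariance $\pi\circ\sigma$ of shadowed points, and the fact that $\sigma$ preserves $\Sigma^{\#}$, this gives that points of $Z(\wt v^*_0)$ are mapped into $Z(\wt v^*_1)$.

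The genuinely new item is \ref{finL}, which I would prove in three steps. First, fix $Z=Z(\wt v^*)$ with $\wt v^*=\wt\Psi^{*,p^s,p^u}_x\in\mathscr A_j$, so $Z\subset\Psi_x(R[p^s\wedge p^u])\subset\widehat D_j$. Second, since $\sup R_{\mathbb D}\le\rho$ and $\inf R_{\mathbb D}>0$, the set $\varphi_{[-\rho,\rho]}(\widehat D_j)$ meets only those $\widehat D_i$ reachable from $D_j$ by a bounded number $N$ of applications of $f^{\pm1}$. By the local finiteness \ref{se.fini} of Theorem~\ref{thm.poincaresection}, there are at most $I_{\max}^N$ such indices $i$, using the enlarged section $\widehat D$ and \ref{se.sepa} to control boundary effects. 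Third, if $Z'=Z(\wt w^*)$ meets $\varphi_{[-\rho,\rho]}(Z)$, then some $y\in Z\cap\varphi_{[-\rho,\rho]}(Z')$ reaches $Z'$ within boundedly many symbolic steps along an element of $\Sigma^{\#}$. Lemma~\ref{lem.edge} then gives $q^s\wedge q^u\ge e^{-2N\varepsilon}(p^s\wedge p^u)$. Here I would use Theorem~\ref{invs}~(3): both $(p^s,p^u)$ and $(q^s,q^u)$ are comparable to $p^{s/u}$ of the actual orbit, which changes by a factor of at most $e^{O(\rho)}$ over time $\rho$ by Lemma~\ref{lem.pqcomp}. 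So both parameters of $\wt w^*$ exceed some $t=t(\wt v^*)>0$. The local discreteness of Proposition~\ref{pro.SDR}~\ref{SDR.D} in each of the finitely many $\mathscr A_i$ then leaves only finitely many candidates $\wt w^*$.

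The main obstacle is this last lower bound on $q^s,q^u$. Lemma~\ref{lem.edge} controls only the minimum $p^s\wedge p^u$, while local discreteness needs both parameters bounded below. Since each parameter is at least the minimum, the minimum bound suffices. The delicate point is rather that an arbitrary $Z'$ meeting the flow segment need not be adjacent in the graph to $\wt v^*$. For this reason I route the comparison through Theorem~\ref{invs}~(3) along the actual orbit rather than through graph edges. The ``in particular'' clause follows immediately, because $H^{\pm1}(Z)\subset\varphi_{[-\rho,\rho]}(Z)$ as $r_{\mathscr L}<\rho$.
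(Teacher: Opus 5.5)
Your treatment of item~\ref{finL} is essentially the paper's argument. You compare the chart sizes of $Z=Z(\wt\Psi^{*,p^s,p^u}_x)$ and $Z'=Z(\wt\Psi^{*,q^s,q^u}_y)$ with the hyperbolicity of the common orbit via Theorem~\ref{invs}, rather than via graph edges. You then apply local discreteness of $\mathscr A_i$ in the finitely many components of $\widehat{\mathbb D}$ met by $\varphi_{[-\rho,\rho]}(Z)$.

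One precision: $p^{s/u}(z)$ and $p^{s/u}(z')$ refer to the time sequences of two different codings, so the comparison must go through an intrinsic quantity. The paper uses $q=q^s\wedge q^u$. For $z\in Z$ and $z'=\varphi_t(z)\in Z'$ with $|t|\le\rho$, Theorem~\ref{invs}~(3) and Lemma~\ref{lem.pqcomp}~(2) give $p^s\wedge p^u=e^{\pm(\sqrt[3]{\varepsilon}+\mathfrak{Y})}q(z)$ and $q^s\wedge q^u=e^{\pm(\sqrt[3]{\varepsilon}+\mathfrak{Y})}q(z')$. Lemma~\ref{lem.estimateQ}~(4) then gives $q(z')=e^{\pm\varepsilon|t|}q(z)$.

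Also, $f$ is the return map to $\mathbb D$, not to $\widehat{\mathbb D}$. So reachability under $f^{\pm1}$ does not by itself control which $\widehat D_i$ are met. The finiteness of components is more safely obtained directly from \ref{se.sepa} by a volume count, as in Lemma~\ref{lem.locallyfinitecover}.

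There is, however, a genuine gap in your sketch of item~\ref{proL.hyp}. It is not true that $g^+_{x_0}$ maps points of $Z(\wt v^*_0)$ into $Z(\wt v^*_1)$. Take $y=\pi(\ul{\wt u}^*)$ with $\ul{\wt u}^*\in\Sigma^{\#}$ and $\wt u^*_0=\wt v^*_0$. Then $g^+_{x_0}(y)=\pi(\sigma\ul{\wt u}^*)\in Z(\wt u^*_1)$, and $\wt u^*_1$ may differ from $\wt v^*_1$. Proposition~\ref{invmnfd}~(2) places $g^+_{x_0}(y)$ in $V^s(g^+_{x_0}(x),Z(\wt v^*_1))$, but not in $Z(\wt v^*_1)$.

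The missing idea is to recode $y$ using item~\ref{proL.inter}. If $y\in W^s(x,Z(\wt v^*_0))$, then $y\in W^s(x,Z)\cap W^u(y,Z)$. By item~\ref{proL.inter}, $y=\pi(\ul{\wt w}^*)$, where $\ul{\wt w}^*$ has future $\{\wt v^*_n\}_{n\ge 0}$ and past $\{\wt u^*_n\}_{n<0}$. This sequence lies in $\Sigma^{\#}$, hence $g^+_{x_0}(y)=\pi(\sigma\ul{\wt w}^*)\in Z(\wt v^*_1)$. Combined with Proposition~\ref{invmnfd}~(2), this gives the $s$-inclusion.

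The $u$-inclusion is symmetric. Recode a point of $W^u(g^+_{x_0}(x),Z(\wt v^*_1))$ by a sequence whose past is that of $\sigma\ul{\wt v}^*$, and then apply $\sigma^{-1}$. This is the argument in \cite{BCL,LMN}, to which the paper defers.
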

\begin{proof}
 As we mentioned  at the beginning of this  section, we only focus on the second item. The proof of other items is the same as it in \cite[Proposition~7.1]{BCL} and \cite[Proposition~7.1]{LMN}.
 
 For any $Z\in\mathcal{L}$, assume that there exists $z\in Z$ such that $z'=\varphi_t(z)\in Z'\subset \widehat D_j$ for some $t\in [-\rho,\rho]$ and $j\in\Gamma$. Write $Z=Z(\wt\Psi_x^{*,p^s,p^u})$ and $Z'=Z(\wt\Psi_y^{*,q^s,q^u})$. Then from the definition of $Z$, Theorem~\ref{invs}~(4) and Lemma~\ref{lem.pqcomp}~(2), we have that $\frac{p^s\wedge p^u}{q(z)}=e^{\pm(\sqrt[3]{\varepsilon}+\mathfrak{Y})}$ and $\frac{q^s\wedge q^u}{q(z')}=e^{\pm(\sqrt[3]{\varepsilon}+\mathfrak{Y})}$. In addition, Lemma~\ref{lem.estimateQ}~(4) shows that $\frac{q(z)}{q(z')}= e^{\pm2\varepsilon}$. As a consequence, $\frac{p^s\wedge p^u}{q^s\wedge q^u}=e^{\pm2(\sqrt[3]{\varepsilon}+\varepsilon+\mathfrak{Y})}$, and thus by the local discreteness of $\mathscr{A}_i$ (see Proposition~\ref{pro.SDR}), $\#\{\wt\Psi^{*,q^s,q^u}_y\in\mathscr{A}_j:q^s\wedge q^u\geq e^{-2(\sqrt[3]{\varepsilon}+\varepsilon+\mathfrak{Y})}(p^s\wedge p^u)\}$ is finite.
 
 It follows from Theorem~\ref{thm.poincaresection} that $\varphi_{[-\rho,\rho]}(Z)$ intersects finitely many components of $\widehat{\mathbb D}$. 
 Thus 
\[\begin{aligned}
&\#\{Z'\in\mathscr{L}:(\varphi_{[-\rho,\rho]}(Z))\cap Z'\neq\emptyset\}\\
=&\sum_{j\in\Gamma} \# \{\wt\Psi^{*,q^s,q^u}_y\in\mathscr{A}_j: \varphi_{[-\rho,\rho]}(Z)\cap \wh{\mathbb D}_j\neq \emptyset,\; q^s\wedge q^u\geq e^{-2(\sqrt[3]{\varepsilon}+\varepsilon+\mathfrak{Y})}(p^s\wedge p^u)\}<\infty.
\end{aligned}\]

It is clear that $H(x)\le \rho$ for any $x\in \mathscr L$. So the  ``in particular" part is a direct corollary.
\end{proof}

Let $Z=Z(\wt v^*)$, $Z'=Z(\wt w^*)$ where $\wt v^*=\wt\Psi^{*,p^s,p^u}_x$, $\wt w=\Psi^{*,q^s,q^u}_y\in\mathscr{A}$ and assume that $Z\cap\varphi_{[-2\rho,2\rho]}(Z')\neq\emptyset$. Assume  $Z\subset \wh{D}_{i}$ and $Z'\subset \wh{D}_{j}$. Given any $z\in Z$, $z'\in Z'$, define
\[
[z,z']_Z:=V^s(z,Z)\cap {P}_{o_i}(V^u(z',Z')),
\]
\[
[z,z']_{Z'}:= {P}_{o_j}(V^s(z,Z))\cap V^u(z',Z').
\]
Here $ {P}_{o_i}$, $ {P}_{o_j}$ are given by Lemma~\ref{lem.C1Poincare}.

The following proposition gives the Markov property on $\mathscr{L}$. The proof is completely the same as in \cite[Proposition 7.2]{BCL} and \cite[Proposition 7.2]{LMN}, since they are irrelevant to the infiniteness of the components of $\mathbb D$.

\begin{pro}\label{markovp}
The following holds for all small $\varepsilon>0$.
\begin{enumerate}
\item Let $Z=Z(\wt v^*)$, $Z'=Z(\wt w^*)$ with $\wt v^*=\wt\Psi^{*,p^s,p^u}_x$, $\wt w^*=\wt\Psi^{*,q^s,q^u}_y\in\mathscr{A}$, and assume that $Z\cap \varphi_{[-2\rho,2\rho]}(Z')\neq\emptyset$. Then:
\begin{enumerate}
\item\label{mark.incl} ${P}_{o_i}\circ \Psi_x(R[\frac{1}{2}(p^s\wedge p^u)])\subset \Psi_y(R[q^s\wedge q^u])$.
\item\label{mark.Pincl} If $z\in Z$ with $z'={P}_{o_j}(z)\in Z'$, then ${P}_{o_j}[W^{s/u}(z,Z)]\subset V^{s/u}(z',Z')$.
\item If $z\in Z$, $z'\in Z'$ then $[z,z']_{Z}$, $[z,z']_{Z'}$ are points with $[z,z']_Z={P}_{o_i}([z,z']_{Z'})$.
\end{enumerate}

\item \label{3Z}Let $Z$, $Z'$, $Z''$ such that $Z\cap \varphi_{[-2\rho,2\rho]}(Z')\neq\emptyset$, $Z\cap \varphi_{[-2\rho,2\rho]}(Z'')\neq\emptyset$. Assume that $z'\in Z$ such that $\varphi_t(z')\in Z''$ for some $|t|\leq 2\rho$. For every $z\in Z$, it holds that $[z,z']_{Z}=[z,\varphi_t(z')]_Z$.
\end{enumerate}
\end{pro}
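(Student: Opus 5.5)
The plan is to follow \cite[Proposition 7.2]{BCL} and \cite[Proposition 7.2]{LMN}, working entirely in the scaled charts on $\N_{o_i}$ and $\N_{o_j}$. In these charts the local holonomies ${P}_{o_i}$, ${P}_{o_j}$ act as uniformly bounded $C^{1+\beta}$ maps (Lemma~\ref{lem.C1Poincare}, Lemma~\ref{lem.holderPoincare}), exactly as in the non-singular setting. The first step reduces everything to comparing the chart parameters.

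If $Z\cap\varphi_{[-2\rho,2\rho]}(Z')\neq\emptyset$, pick $z\in Z$ and $\varphi_t(z)\in Z'$. Theorem~\ref{invs}(1)–(3), together with Lemma~\ref{lem.pqcomp}(2) and Lemma~\ref{lem.estimateQ}(4), give
\[
\frac{p^s\wedge p^u}{q(z)}=e^{\pm(\sqrt[3]{\varepsilon}+\mathfrak{Y})},\qquad \frac{q^s\wedge q^u}{q(\varphi_t(z))}=e^{\pm(\sqrt[3]{\varepsilon}+\mathfrak{Y})},
\]
and therefore $p^s\wedge p^u$ and $q^s\wedge q^u$ are comparable up to a uniform factor. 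The same results show that $\dist(z,x)$ and $\dist(\varphi_t(z),y)$ are at most $\frac{1}{10}$ of the chart size times $|X(o_i)|$ or $|X(o_j)|$; these speeds are comparable by \ref{fb.length}. Theorem~\ref{invs}(4) says $\Psi_x$ and $\Psi_z$ (resp. $\Psi_y$ and $\Psi_{\varphi_t(z)}$) differ by a near-orthogonal map plus a small translation.

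For (1a), transfer $\Psi_x(R[\frac12(p^s\wedge p^u)])$ through ${P}_{o_j}$. Its image has diameter controlled by $K_{F1}$ times its scaled size. I expect the bare estimate to lose a constant factor, so I will rely on the finer structure instead: the holonomy composed with the charts is $C^1$-close to $\Phi^*$, which in Lyapunov coordinates is block-diagonal with bounded distortion over times $\le 2\rho$ (Lemma~\ref{lem.propofC}(2)). This step, getting the inclusion with the correct constants despite the change of base points $o_i\neq o_j$, is the main obstacle. I would handle it exactly as BCL do, using the relaxed (GPO2) comparison and the fact that $q$ varies by at most $e^{\pm 2\varepsilon\rho}$ along the orbit segment.

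For (1b), note that ${P}_{o_j}(V^{s}(z,Z))$ is a $C^1$ manifold through $z'$ that lies in the stable set. By Proposition~\ref{strmnfd} its points have exponentially converging forward orbits, after the time reparametrisation $\Delta$. Using the characterisation of Proposition~\ref{invmnfd}(1) and the coherence argument of Proposition~\ref{2mnfd}, it follows that ${P}_{o_j}(W^s(z,Z))\subset V^s(z',Z')$; the unstable case is symmetric.

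For (1c), admissibility is preserved under ${P}_{o_i}$ by (1a) and (1b), so Proposition~\ref{pro.graphinters} yields unique transverse intersection points. The identity $[z,z']_Z={P}_{o_i}([z,z']_{Z'})$ then follows by applying (1b) to both fibres.

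For (2), $\varphi_t(z')\in Z''$ with $z'\in Z$, and (1b) gives $V^u(\varphi_t(z'),Z'')\supset{P}_{o_k}(W^u(z',Z))$. After projecting by ${P}_{o_i}$, both $u$-manifolds pass through $z'$ inside $Z$'s chart. By the disjoint-or-nested property (Proposition~\ref{2mnfd}, unstable version) they agree near $z'$, so they meet $V^s(z,Z)$ in the same point.
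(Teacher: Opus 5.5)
Your route is the paper's own, since the paper proves nothing here and simply refers to \cite[Proposition~7.2]{BCL} and \cite[Proposition~7.2]{LMN}. However, the steps you fill in yourself do not all go through.

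\textbf{Step (1a).} This is the crux on which (1b), (1c) and (2) rest, and you leave it to BCL with an indication that misses the actual obstruction. Your first step only shows that $p^s\wedge p^u$ and $q^s\wedge q^u$ agree up to a factor $e^{\pm2(\sqrt[3]{\varepsilon}+\varepsilon+\mathfrak{Y})}$. Here $\mathfrak{Y}=\varepsilon\rho+288\rho/\beta$ does not tend to $0$ with $\varepsilon$. It is the loss in Lemma~\ref{lem.pqcomp}(2): the two codings sample the orbit at different discrete times, on different sections, and between samples $Q$ can change by $e^{\pm288\rho/\beta}$ (Lemma~\ref{lem.estimateQ}(1)). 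Next to this, the variation $q(\varphi_t(z))=e^{\pm2\varepsilon\rho}q(z)$ that you invoke is negligible. The relaxed (GPO2) enters only through Theorem~\ref{invs}(3), which your first step already uses.

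Pass through a point $a\in Z$ with $P_{o_j}(a)\in Z'$. Theorem~\ref{invs}(1),(4) at $a$ and at $P_{o_j}(a)$, together with the block form of the holonomy in Lyapunov charts (Lemma~\ref{lem.propofC}(2)), put the image of $R[\frac12(p^s\wedge p^u)]$ in a box of size about $e^{O(\rho)}(\frac12+\frac1{10})(p^s\wedge p^u)+\frac1{10}(q^s\wedge q^u)$. For this box to lie in $R[q^s\wedge q^u]$ you need $e^{\mathfrak{Y}+O(\rho)}$ below roughly $\frac32$, i.e.\ $\rho$ small compared with $\beta$. That must be arranged in the choice of $\rho$, or avoided by a sharper comparison of the two windows. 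It is not a matter of the change of base points $o_i\neq o_j$, which the scaling and \ref{fb.length} of Lemma~\ref{lem.C1Poincare} already absorb.

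\textbf{Step (1c).} The inference ``admissibility is preserved by (1a) and (1b)'' is not valid. Item (1a) is only an inclusion of boxes, and (1b) only concerns fibres through a point $z$ with $P_{o_j}(z)\in Z'$. For an arbitrary $z'\in Z'$ you need that $P_{o_i}(V^u(z',Z'))$ is, in the chart of $Z$, a $u$-graph of small slope over a $u$-disc large enough to cross $V^s(z,Z)$. This comes from the $C^1$ form of $\Psi_x^{-1}\circ P_{o_i}\circ\Psi_y$: use Theorem~\ref{invs}(4) at both ends and the analogue of Proposition~\ref{pro.Pesinlift} for the holonomy, with Lemma~\ref{lem.holderPoincare} controlling the nonlinear part. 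The argument of Proposition~\ref{pro.graphinters} then gives the single point. The identity $[z,z']_Z=P_{o_i}([z,z']_{Z'})$ holds simply because $P_{o_i}$ and $P_{o_j}$ are mutually inverse there; (1b) plays no role.

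\textbf{Step (2).} Proposition~\ref{2mnfd} is about two gpos whose zeroth charts are centred at the same point of one section. It therefore does not compare $V^u(z',Z)$ with $P_{o_i}(V^u(\varphi_t(z'),Z''))$. Moreover, agreement ``near $z'$'' would not reach the intersection point with $V^s(z,Z)$, which lies at chart scale from $z'$.

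With $z'\in Z$ as printed, the step is immediate. By Proposition~\ref{proL}\ref{proL.inter}, $[z,z']_Z\in W^u(z',Z)$. Your application of (1b) gives $W^u(z',Z)\subset P_{o_i}(V^u(\varphi_t(z'),Z''))$. Hence $[z,z']_Z$ is the single point $[z,\varphi_t(z')]_Z$ given by (1c).

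The intended hypothesis, however, is surely $z'\in Z'$, since otherwise the hypothesis on $Z'$ is vacuous. In that case $[z,z']_{Z'}$ need not lie in $Z'$ and the shortcut fails. You then genuinely need a window-scale nesting of $P_{o_i}(V^u(z',Z'))$ and $P_{o_i}(V^u(\varphi_t(z'),Z''))$. This can be obtained by rerunning the proof of Proposition~\ref{2mnfd} across sections, using the characterization of Proposition~\ref{invmnfd}(1) along the backward chain and the contraction of Proposition~\ref{strmnfd}.
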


\subsection{A refined procedure}
With previous preparation, the construction of the topological Markov flow can be proceeded as in \cite{BCL} from now on.  

\subsubsection{The refined partition $\mathscr{R}$}
\begin{lem}\label{Rfinite}
There is $N\in\mathbb{N}$ such that for any $x=\pi(\ul{\wt v}^*)\in\mathscr{L}$ with $\ul{\wt v}^*=\{\wt\Psi_{x_n}^{*,p^s_n,p^u_n}\}_{n\in\mathbb{Z}}$, there exists $1\leq n\leq N$ such that $g^{+}_{x_0}(x)=H^n(x)$. 
\end{lem}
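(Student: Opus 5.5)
The plan is to bound the number of intermediate returns to $\mathscr{L}$ along the orbit segment from $x$ to $g^{+}_{x_0}(x)$ by combining a uniform lower bound on the return time $r_{\mathscr{L}}$ with a uniform upper bound on the transition time $r(\ul{\wt v}^*)$. The existence of some $n\ge 1$ is immediate. Since $\ul{\wt v}^*\in\Sigma^{\#}$, the shift $\sigma(\ul{\wt v}^*)$ is again regular and $\pi(\sigma(\ul{\wt v}^*))=g^{+}_{x_0}(x)=\varphi_{r(\ul{\wt v}^*)}(x)$. So $g^{+}_{x_0}(x)\in\mathscr{L}$ and lies on the forward orbit of $x$ at a positive time. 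Because the successive returns $H^n(x)$ enumerate, in increasing order, all positive times at which the orbit of $x$ meets $\mathscr{L}$, the point $g^{+}_{x_0}(x)$ equals $H^n(x)$ for the unique $n$ counting the times $t\in(0,r(\ul{\wt v}^*)]$ with $\varphi_t(x)\in\mathscr{L}$. It remains to show that $n\le N$ with $N$ independent of $x$.

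\textbf{Step 1 (upper bound on the transition time).} By Theorem~\ref{invs}~(1), $z_0=x$ and $z_1$ are within $\frac{1}{10}(p^s\wedge p^u)|X(x_n)|$ of $x_0$ and $x_1$ respectively. Using Lemma~\ref{lem.C1Poincare}~\ref{fb.derivare} (via $\|d\tau\|\le K_{F1}/|X|$) together with Theorem~\ref{thm.poincaresection}~\ref{se.unif}, we get $r(\ul{\wt v}^*)\le R_{\mathbb D}(x_0)+O(1)\cdot Q(x_0)\le 2\sup R_{\mathbb D}\le 2\rho$.

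\textbf{Step 2 (lower bound on the return time to $\mathscr{L}$).} Every element of $\mathscr{L}$ lies in some $\widehat D_j=N_{o_j}(\kappa r_0|X(o_j)|)$. I would aim to show that any two distinct points of one orbit lying in $\widehat{\mathbb D}$ are separated in time by at least a uniform constant $c>0$, for instance $c=\frac12\kappa r_0$ or a comparable quantity. There are two cases. For two hits of the same disc $\widehat D_j$: the disc sits inside the flow box $\FB(o_j,\kappa r_0,\kappa r_0)$, and by Lemma~\ref{lem.C1Poincare}~\ref{fb.box} the orbit crosses a flow box only once within time $\kappa r_0$. For hits of different discs: \eqref{eq.empty} and Theorem~\ref{thm.poincaresection}~\ref{se.sepa} make the flow boxes $\FB(o_j,\kappa r_0,\kappa r_0)$ pairwise disjoint, so passing from one disc to another requires leaving a flow box, which costs at least a uniform time. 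This gives $r_{\mathscr{L}}\ge c$ on $\mathscr{L}$.

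\textbf{Conclusion.} Combining the two steps, $n\le r(\ul{\wt v}^*)/c\le 2\rho/c=:N$.

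The main obstacle is Step 2. The flow box $\FB(o_j,\kappa r_0,\kappa r_0)$ is centred at $o_j$, whereas $\widehat D_j$ extends to scaled radius $\kappa r_0$, so the exit-time argument must be made at matching scales. If this fails, I would fall back on Proposition~\ref{proL}~\ref{finL}. That result says that only finitely many $Z'$ meet $\varphi_{[-\rho,\rho]}(Z)$, and by the disjointness of fibres each $Z'$ is hit at most once per short time window. However, this bound depends on $Z$, so uniformity would again have to come from the flow-box separation above.
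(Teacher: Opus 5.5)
Your argument is correct and is essentially the standard one behind this lemma, whose proof the paper omits and defers to \cite{BCL}. There, as in your proposal, $N$ is the ratio of a uniform upper bound on the transition time $r(\ul{\wt v}^*)$ to a uniform lower bound on the time between consecutive hits of $\widehat{\mathbb D}\supset\mathscr{L}$.

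The step you flag as the main obstacle is not an obstacle. Theorem~\ref{thm.poincaresection}~\ref{se.unif} holds for every $r\in[r_0,\kappa r_0]$, so $\widehat{\mathbb D}=\mathbb D(\kappa r_0)$ (or $\mathbb D(3r_0)$ in the later normalization) is itself a Poincar\'e section with $R_{\widehat{\mathbb D}}\ge\kappa r_0$. Equivalently, $\widehat D_j$ is exactly the central slice of $\FB(o_j,\kappa r_0,\kappa r_0)$, so the scales match. You may therefore take $c=\kappa r_0$ and $N=\lfloor 2\rho/(\kappa r_0)\rfloor$, and the fallback through Proposition~\ref{proL} is unnecessary.
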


For each $Z\in\mathscr{L}$, let $\mathscr{I}_Z:=\{Z'\in\mathscr{L}:\varphi_{[-\rho,\rho]}(Z)\cap Z'\neq\emptyset\}$. It is a finite set by Proposition~\ref{proL}~\ref{finL}. Assume $Z\subset\wh{D}_i$. By Proposition \ref{proL}, the following property holds for all $\varepsilon>0$ small:
\[
\text{If $Z'\in\mathscr{I}_Z$ then $Z'\subset\varphi_{[-2\rho,2\rho]}(\wh{D}_i$).}
\]
Thus ${P}_{o_i}|_{Z'}$ is well-defined. For each $Z'\in\mathscr{I}_Z$, consider the partition of $Z$ into four subsets as follows:
\[
\begin{aligned}
E^{su}_{Z,Z'}&=\{x\in Z: W^s(x,Z)\cap {P}_{o_i}(Z')\neq\emptyset,\; W^u(x,Z)\cap {P}_{o_i}(Z')\neq\emptyset\},\\
E^{s\emptyset}_{Z,Z'}&=\{x\in Z: W^s(x,Z)\cap {P}_{o_i}(Z')\neq\emptyset,\; W^u(x,Z)\cap  {P}_{o_i}(Z')=\emptyset\},\\
E^{\emptyset u}_{Z,Z'}&=\{x\in Z: W^s(x,Z)\cap {P}_{o_i}(Z')=\emptyset,\; W^u(x,Z)\cap  {P}_{o_i}(Z')\}\neq\emptyset\},\\
E^{\emptyset\emptyset}_{Z,Z'}&=\{x\in Z: W^s(x,Z)\cap {P}_{o_i}(Z')=\emptyset,\; W^u(x,Z)\cap  {P}_{o_i}(Z')=\emptyset\}.	
\end{aligned}
\]

Write this partition $\mathscr{E}_{Z,Z'}=\{E^{su}_{Z,Z'},E^{s\emptyset}_{Z,Z'},E^{\emptyset u}_{Z,Z'},E^{\emptyset\emptyset}_{Z,Z'}\}$. Clearly, $E^{su}_{Z,Z'}=Z\cap{P}_{o_i}(Z')$.

Denote by $\mathscr{E}_Z$ the coarser partition of $Z$ that refines all of $\mathscr{E}_{Z,Z'}$, $Z'\in\mathscr{I}_Z$. Define an equivalence relation $\eqL$ on $\mathscr{L}$:

for $x,y\in\mathscr{L}$, $x\eqL y$ if for any $|k|\leq N$:
\begin{enumerate}
\item[(H1)] For all $Z\in\mathscr{L}$, $H^k(x)\in Z\Leftrightarrow H^k(y)\in Z$;
\item[(H2)] For all $Z\in\mathscr{L}$ with $H^k(x), H^k(y)\in Z$, the points $H^k(x), H^k(y)$ belong to the same element of $\mathscr{E}_Z$.
\end{enumerate} 

We use this equivalence relation to refine the cover $\mathscr{L}$. Precisely, let $\mathscr{R}$ be the collection of equivalence classes of $\eqL$. 

\begin{lem}\label{finrefine}
The partition $\mathscr{R}$ satisfies the following properties:
\begin{enumerate}
\item For every $Z\in\mathscr{L}$, $\#\{R\in\mathscr{R}: R\subset\varphi_{[-\rho,\rho]}(Z)\}<\infty$.
\item For every $R\in\mathscr{R}$, $\#\{Z\in\mathscr{L}: R\subset\varphi_{[-\rho,\rho]}(Z)\}<\infty$.
\end{enumerate}
\end{lem}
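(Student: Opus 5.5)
We will prove both items by reducing to the local finiteness in Proposition~\ref{proL}~\ref{finL}, together with the finiteness of each partition $\mathscr{E}_Z$. The basic observation is this. Each $R\in\mathscr{R}$ is an equivalence class of $\eqL$, so it is determined by data attached to an (arbitrary) point $x\in R$: for every $|k|\le N$, the set $\{Z''\in\mathscr{L}: H^k(x)\in Z''\}$, and for each such $Z''$ the element of $\mathscr{E}_{Z''}$ containing $H^k(x)$. Since $r_{\mathscr{L}}<\rho$, the orbit segment $\{H^k(x)\}_{|k|\le N}$ lies in $\varphi_{[-N\rho,N\rho]}(x)$.

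\textbf{Item (1).} Fix $Z$ and let $R\subset\varphi_{[-\rho,\rho]}(Z)$. Take $x\in R$, so $x\in\varphi_{[-\rho,\rho]}(Z)$. We need to bound how many sets $Z''\in\mathscr{L}$ can contain some $H^k(x)$ with $|k|\le N$. Each such $Z''$ meets $\varphi_{[-(N+1)\rho,(N+1)\rho]}(Z)$.

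We iterate Proposition~\ref{proL}~\ref{finL}. Let $\mathscr{J}_0=\{Z\}$ and $\mathscr{J}_{m+1}=\bigcup_{Z'\in\mathscr{J}_m}\mathscr{I}_{Z'}$. Each $\mathscr{J}_m$ is finite by induction. The point to check is that any $Z''$ meeting $\varphi_{[-(m+1)\rho,(m+1)\rho]}(Z)$ lies in $\mathscr{J}_{m'}$ for some $m'\le C(m+1)$. We argue by chaining along the orbit through the intermediate points $H^j(x)$, which lie in $\mathscr{L}$ and are separated by flow times $<\rho$. Each step moves from some $Z'$ to an element of $\mathscr{I}_{Z'}$. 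Moreover, $H^{j}(x)\in\mathscr{L}$ belongs to some $Z'$ whose orbit-piece of length $\rho$ reaches the next $Z''$.

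Hence the set $\mathscr{J}:=\mathscr{J}_{2N+2}$ is a finite collection containing every $Z''$ relevant to (H1) and (H2). The class $R$ is then determined by a map assigning to each $|k|\le N$ a subset of $\mathscr{J}$, together with a choice of element of $\mathscr{E}_{Z''}$ for each $Z''$ in that subset. Each $\mathscr{E}_{Z''}$ is finite: it is the common refinement of the four-element partitions $\mathscr{E}_{Z'',Z'}$ over the finite set $\mathscr{I}_{Z''}$, so it has at most $4^{\#\mathscr{I}_{Z''}}$ elements. Therefore only finitely many classes $R$ arise.

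A subtlety is that $R$ need not be contained in a single flow-translate in a clean way. We only use $R\subset\varphi_{[-\rho,\rho]}(Z)$, applied pointwise to one representative, which is enough.

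\textbf{Item (2).} Fix $R$ and $x\in R$. If $R\subset\varphi_{[-\rho,\rho]}(Z)$, then $\varphi_{[-\rho,\rho]}(x)\cap Z\ne\emptyset$. Let $Z_0\in\mathscr{L}$ be any set containing $x$ (one exists since $x\in\mathscr{L}$). Then $Z$ meets $\varphi_{[-\rho,\rho]}(Z_0)$, so $Z\in\mathscr{I}_{Z_0}$, which is finite by Proposition~\ref{proL}~\ref{finL}.

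\textbf{Main obstacle.} The main difficulty is the chaining step in item (1): controlling all rectangles met by the longer orbit segment of length about $N\rho$ using only the one-step finiteness of Proposition~\ref{proL}~\ref{finL}. We will handle this by inserting the intermediate $\mathscr{L}$-points $H^j(x)$, and, if needed, by the first bullet of Proposition~\ref{proL}~\ref{finL} with $\rho$ replaced by a larger constant. The proof of that proposition works verbatim for any bounded time window, since it only used local finiteness of $\widehat{\mathbb D}$ and local discreteness of $\mathscr{A}_j$.
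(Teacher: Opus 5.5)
Your argument is correct and follows essentially the standard proof that the paper defers to in \cite[Lemma~8.2]{BCL}. For item (2), $Z\in\mathscr{I}_{Z_0}$ for any $Z_0\in\mathscr{L}$ containing a point of $R$. For item (1), an $\eqL$-class is determined by the sets of $Z''$ containing $H^k(x)$, $|k|\le N$, together with the $\mathscr{E}_{Z''}$-atoms of these points; chaining Proposition~\ref{proL}~\ref{finL} along the $H$-orbit (return times $<\rho$) confines every such $Z''$ to the finite set $\mathscr{J}_{N+1}$, and $\#\mathscr{E}_{Z''}\le 4^{\#\mathscr{I}_{Z''}}$. The chaining only needs the $\mathscr{L}$-points $H^j(x)$ themselves, so neither the broader claim about every $Z''$ meeting $\varphi_{[-(m+1)\rho,(m+1)\rho]}(Z)$ nor enlarging the time window in Proposition~\ref{proL} is required.
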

\begin{proof}
See \cite[Lemma 8.2]{BCL} and \cite[Lemma 8.2]{LMN} for references.
\end{proof}

\subsubsection{The Markov property of $\mathscr{R}$}
Similar to $\mathscr{L}$, we can define $s/u$-fibers for $\mathscr{R}$. Given $x\in R\in\mathscr{R}$, define \emph{$s/u$-fiber of $x$ in $R$} by:
\[
W^s(x,R)=\bigcup_{Z\in\mathscr{L},R\subset Z}V^s(x,Z)\cap R,\quad\quad W^u(x,R)=\bigcup_{Z\in\mathscr{L},R\subset Z}V^u(x,Z)\cap R.
\]

We extend the Markov property to $\mathscr{R}$ stated in the Proposition \ref{MarkovR}. It is given in \cite[Proposition 8.3]{BCL} and \cite[Proposition 8.3]{LMN}. 
\begin{pro}\label{MarkovR}
The following hold:
\begin{enumerate}
\item\label{MR.inter} For every $R\in\mathscr{R}$ and every $x,y\in\mathscr{R}$, the intersection $[x,y]:=W^s(x,R)\cap W^u(x,R)$ is a single point that belongs to $R$.
\item\label{MR.coin} Any two $s$-fibers either coincide or are disjoint. Similar to the $u$-fibers.
\item\label{MR.incl} Let $R_0,R_1\in\mathscr{R}$. If $x\in R_0\cap H^{-1}(R_1)$ then $H(W^s(x,R_0))\subset W^s(H(x),R_1)$ and $H^{-1}(W^u(H(x),R_1))\subset W^u(x,R_0)$. 
\item\label{MR.hyp} If $z,w\in W^s(x,R)$ then $\dist(H^n(z), H^n(w))\overset{n\to\infty}{\longrightarrow}0$ exponentially. Meanwhile,  if $z,w\in W^u(x,R)$ then $\dist(H^{-n}(z), H^{-n}(w))\overset{n\to\infty}{\longrightarrow}0$ exponentially.
\end{enumerate}
\end{pro}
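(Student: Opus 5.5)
We follow the scheme of \cite[Proposition 8.3]{BCL} and \cite[Proposition 8.3]{LMN}, transferring each item from the corresponding statement for $\mathscr{L}$ (Proposition~\ref{proL} and Proposition~\ref{markovp}) to the refinement $\mathscr{R}$. The basic observation is that for $R\in\mathscr{R}$ and any $Z\in\mathscr{L}$ with $R\subset Z$, condition (H2) for $k=0$ forces $R$ to lie in a single element of $\mathscr{E}_{Z}$. Moreover, by Lemma~\ref{finrefine} only finitely many such $Z$ occur, so every union defining $W^{s/u}(x,R)$ is a finite union.

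For item \ref{MR.inter}, fix $x,y\in R$ and pick $Z\supset R$. By Proposition~\ref{proL}~\ref{proL.inter}, $V^s(x,Z)\cap V^u(y,Z)$ is a single point $z=\pi(\ul{\wt w}^*)$, obtained by concatenating the future of a coding of $x$ with the past of a coding of $y$. We then check (H1)--(H2) for $z$ against $x$ for all $|k|\le N$. For $k\ge0$ the iterates $H^k(z)$ follow those of $x$ along the common stable fiber, using Proposition~\ref{markovp}~\ref{mark.Pincl} and Lemma~\ref{Rfinite}. For $k<0$ they follow those of $y$ along the unstable fiber. Membership in the partition elements $E^{\cdot\cdot}_{Z,Z'}$ is decided by whether the $s$-fiber and the $u$-fiber meet $P_{o_i}(Z')$. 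Since $z$ shares its $s$-fiber with $x$ and its $u$-fiber with $y$, and $x\eqL y$, this gives $z\eqL x$, hence $z\in R$. Independence of the choice of $Z$ follows from Proposition~\ref{markovp}~(2) together with Proposition~\ref{proL}~\ref{proL.coin}: different $Z\supset R$ produce overlapping admissible manifolds, and these coincide wherever both are defined. This yields uniqueness.

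Item \ref{MR.coin} follows from Proposition~\ref{proL}~\ref{proL.coin} together with item \ref{MR.inter}. Indeed, if $W^s(x,R)\cap W^s(y,R)\ni w$, then each $V^s(\cdot,Z)$ agrees with $V^s(w,Z)$, and intersecting with $R$ gives equality. Item \ref{MR.incl} is the step we expect to be the main obstacle. We start from Proposition~\ref{proL}~\ref{proL.hyp} and Proposition~\ref{markovp}~\ref{mark.Pincl}, which show $H(W^s(x,R_0))\subset V^s(H(x),Z_1)$ for each $Z_1\supset R_1$. The real work is showing that, for $w\in W^s(x,R_0)$, the image $H(w)$ actually lies in $R_1$, that is $H(w)\eqL H(x)$. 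Checking (H1)--(H2) for $H(w)$ at indices $|k|\le N$ uses $w\eqL x$ at indices shifted by one. The only new index is $k=N$ for $H(w)$, i.e.\ $H^{N+1}(w)$. This is the reason for building $\mathscr{E}_Z$ from the full finite family $\mathscr{I}_Z$ and choosing $N$ via Lemma~\ref{Rfinite}. The symbolic coding step $g^+_{x_0}=H^n$ with $n\le N$ lets us recover the location of $H^{N+1}(w)$ in the relevant $Z$ from the coding of $w$ and the $s$-fiber position, which equals that of $x$. Here we need Proposition~\ref{markovp}~\ref{3Z} to handle charts reached after flowing a time $|t|\le2\rho$. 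Local finiteness (Proposition~\ref{proL}~\ref{finL}) keeps all the sets involved finite. The unstable inclusion is symmetric.

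For item \ref{MR.hyp}, take $z,w\in W^s(x,R)$, so they lie in a common $V^s(x,Z)$. Proposition~\ref{invmnfd}~(3a) gives $\dist(g^+_{x_0,n}(z),g^+_{x_0,n}(w))\le 30p_0^s|X(x_n)|e^{-\frac{\chi\inf R_{\mathbb D}}{2}n}$, which is exponentially small since $|X|$ is bounded. By Lemma~\ref{Rfinite}, the $H$-iterates interpolate between these section returns with at most $N$ intermediate steps, each moving points by a time at most $\rho$. Using the flow-box bounds of Lemma~\ref{lem.C1Poincare} and Proposition~\ref{strmnfd}, the holonomies between charts have uniformly bounded derivative, so the exponential decay transfers to $\dist(H^n(z),H^n(w))$ with a rate reduced by a factor at most $N$. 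The unstable case is symmetric.
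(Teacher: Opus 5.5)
Your overall route is the one the paper relies on (it only cites \cite[Proposition 8.3]{BCL} and \cite[Proposition 8.3]{LMN}), and items (2) and (4) are fine as sketched. The gap is in the step you yourself single out, item~(3).

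After the index shift, $H(w)\eqL H(x)$ requires (H1)--(H2) for $H^{N+1}(w)$ versus $H^{N+1}(x)$. Your mechanism goes back to the gpo predecessor $H^m$, $1\le m\le N$, and uses Proposition~\ref{proL}~\ref{proL.hyp}. At best it gives (H1) at that index together with $H^{N+1}(w)\in W^s(H^{N+1}(x),Z)$, i.e.\ the \emph{stable} half of the $\mathscr{E}_{Z,Z'}$-data. But $\mathscr{E}_{Z,Z'}$ also records whether $W^u(\cdot,Z)$ meets $P_{o_i}(Z')$. The fibers $W^u(H^{N+1}(w),Z)$ and $W^u(H^{N+1}(x),Z)$ are in general different. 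This unstable information is contained neither in the future coding of $w$ nor in its $s$-fiber. So ``recovering the location of $H^{N+1}(w)$ from the coding of $w$ and the $s$-fiber position'' does not show $H(w)\in R_1$.

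What is missing is Bowen's argument:
\begin{enumerate}
\item take a witness $q\in W^u(H^{N+1}(x),Z)\cap P_{o_i}(Z')$;
\item pull it back, using backward invariance of $u$-fibers along the coding of the corresponding point of $Z'$, to an $H$-index where $w$ and $x$ are known to carry the same $\mathscr{E}$-data (this is what the window $|k|\le N$ and Lemma~\ref{Rfinite} are for);
\item use that equality to move the witness onto the unstable fiber of the corresponding iterate of $w$ by a bracket;
\item push it forward to obtain a point of $W^u(H^{N+1}(w),Z)\cap P_{o_i}(Z')$.
\end{enumerate}
The last step requires that brackets commute with the dynamics.

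That commutation, $H^k([x,y]_Z)=[H^k(x),H^k(y)]_{Z_k}$ for $|k|\le N$ (with a common $Z_k\ni H^k(x),H^k(y)$ supplied by (H1)), is also what your item~(1) needs and does not supply. Proposition~\ref{proL}~\ref{proL.hyp} is one-sided. For $k>0$ it puts $H^k(z)$ on the stable fiber of $H^k(x)$ but says nothing about whether $H^k(z)$ lies on the unstable fiber of $H^k(y)$. For $k<0$ the situation is reversed. Your sentence ``$z$ shares its $s$-fiber with $x$ and its $u$-fiber with $y$'' therefore only settles $k=0$. Even (H1) at $k\neq0$ cannot be checked without it, since $z$ is not yet known to lie in $R$.

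You need to prove the analogue for $H$ of the fact that $g^+_{x_0}$ carries $[\,\cdot\,,\cdot\,]_{Z(v_0)}$ to $[\,\cdot\,,\cdot\,]_{Z(v_1)}$. It can be obtained from the characterization of admissible manifolds in Proposition~\ref{invmnfd}~(1), uniqueness of the intersection of $s$- and $u$-admissible manifolds (Proposition~\ref{pro.graphinters}), and Proposition~\ref{markovp} for passing between the sections visited by $H$. Once it is available, (H1)--(H2) at every $|k|\le N$ in item~(1) reduce to the $k=0$ argument you gave, and it supplies the push-forward step in the Bowen argument for item~(3).
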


\subsection{Second coding}
Let $\mathscr{R}$ be as in the previous section. Furthermore, via the lifting and scaling process, we can also treat $\mathscr{R}$ as a collection of rectangles on the scaled space. 

Suppose $R,S\in\mathscr{R}$. Define $R\to S$ if $R\cap H^{-1}S\neq  \emptyset$. Using this relation, we define a directed graph $\wh{\mathscr{G}}$ with vertex set $\mathscr{R}$ and edge set $\{(R,S)\in\mathscr{R}\times\mathscr{R}:R\to S\}$. Then the graph induces a symbolic system $(\wh{\Sigma},\wh{\sigma})$. From Proposition~\ref{proL}~\ref{finL}, $\wh{\mathscr{G}}$ has finite ingoing degrees and outgoing degrees, and thus $\wh{\Sigma}$ is locally compact.

Define
\[
_{\ell}[R_m,\dots,R_n]:=H^{-\ell}(R_m)\cap H^{-\ell-1}(R_{m+1})\cap\dots\cap H^{-\ell-(n-m)}(R_n)
\]
 and 
 \[Z_{\ell}[\wt v^*_m,\dots,\wt v^*_n]=\{\pi(\ul{\wt u}^*):\ul{\wt u}^*\in\Sigma^{\#}, \wt u^*_\ell=\wt v^*_m, \dots, \wt u^*_{\ell+n-m}=\wt v^*_n\}.
\]  

The following relation between $\Sigma$ and $\wh{\Sigma}$ are proved in \cite[Proposition 9.2]{BCL}. 

\begin{pro}\label{RProp}
The following properties hold:
\begin{enumerate}
\item \label{RP.nemp}Suppose $R_m\to\dots\to R_n$ is a finite path on $\mathscr{G}$, then
$_{\ell}[R_m,\dots,R_n]\neq  \emptyset$;

\item \label{RP.RZ} For each $\ul{R}=\{R_m\}_{m\in\mathbb{Z}}\in\wh{\Sigma}$ and $Z\in\mathscr{L}$ with $R_0\subset Z$, there are $\ul{\wt v}^*=\{\wt v^*_k\}_{k\in\mathbb{Z}}$ with $Z(v_0)=Z$ and a sequence $\{n_k\}_{k\in\mathbb{Z}}$ of integers with $n_0=0$ and $1\leq n_k-n_{k-1}\leq N$ for all $k\in\mathbb{Z}$ such that for every $k\geq 1$, $_{n_{-k}}[R_{n_{-k}},\dots,R_{n_k}]\subset Z_{-k}[v_{-k},\dots,v_k]$. Moreover, $R_{n_k}\subset Z(v_k)$ for all $k\in\mathbb{Z}$.
\end{enumerate}
\end{pro}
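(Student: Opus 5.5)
We prove Proposition~\ref{RProp} following \cite[Proposition 9.2]{BCL}. The only inputs are the Markov property of $\mathscr{R}$ (Proposition~\ref{MarkovR}), the Markov property of $\mathscr{L}$ (Propositions~\ref{proL} and \ref{markovp}), and the bounded return count of Lemma~\ref{Rfinite}. Local finiteness of $\mathbb D$ matters only insofar as all the sets we intersect are finite, which Lemma~\ref{finrefine} and Proposition~\ref{proL}~\ref{finL} already guarantee.

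\textbf{Item \ref{RP.nemp}.} We induct on the length $n-m$. For a single symbol, $_{\ell}[R_m]=H^{-\ell}(R_m)\neq\emptyset$ because $R_m\neq\emptyset$. Suppose $_{\ell}[R_m,\dots,R_{n-1}]\neq\emptyset$. Pick $x\in H^{\ell+n-1-m}(\,_{\ell}[R_m,\dots,R_{n-1}])\subset R_{n-1}$. Since $R_{n-1}\to R_n$, there is also $y\in R_{n-1}\cap H^{-1}(R_n)$. Set $w=[x,y]\in R_{n-1}$ using Proposition~\ref{MarkovR}~\ref{MR.inter}.
\begin{itemize}
\item Because $w\in W^s(y,R_{n-1})$, Proposition~\ref{MarkovR}~\ref{MR.incl} gives $H(w)\in W^s(H(y),R_n)\subset R_n$.
\item Because $w\in W^u(x,R_{n-1})$, repeated backward application of \ref{MR.incl} along the path $R_m\to\dots\to R_{n-1}$ places $H^{-k}(w)$ in $R_{n-1-k}$ for $k\le n-1-m$. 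This uses that $H^{-k}(x)\in R_{n-1-k}$ and that the $u$-fibres are carried back by $H^{-1}$.
\end{itemize}
Hence $H^{-(\ell+n-1-m)}(w)\in\,_{\ell}[R_m,\dots,R_n]$.

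\textbf{Item \ref{RP.RZ}, construction.} Fix $\ul R$ and $Z\supset R_0$. By item \ref{RP.nemp}, each cylinder $_{-k}[R_{-k},\dots,R_k]$ is nonempty, and these cylinders are nested. For a point $x$ in such a cylinder we have $x\in Z$, so $x=\pi(\ul{\wt u}^*)$ for some $\ul{\wt u}^*\in\Sigma^\#$ with $Z(\wt u^*_0)=Z$. By Lemma~\ref{Rfinite}, $g^+_{u_0}(x)=H^{n}(x)$ for some $1\le n\le N$.

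The key claim is that the pair $(n_1,v_1)$, meaning the return index and the next chart $Z(v_1)\supset R_{n_1}$, can be read off from $R_0,\dots,R_N$ alone. The reasoning is as follows. By (H1) and (H2) in the definition of $\eqL$, all points of $R_0$ have the same pattern of memberships $H^k(\cdot)\in Z'$ for $|k|\le N$. Combining this with Proposition~\ref{markovp}~\ref{3Z} and Proposition~\ref{proL}~\ref{proL.hyp}, the forward return to $Z(v_1)$ through $g^+$ must occur at the same index $n_1$ for every point of $R_0$. We choose $v_1$ so that $v_0\to v_1$ and $H^{n_1}(R_0)\cap Z(v_1)$ contains $H^{n_1}$ of the cylinder. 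This is possible because $Z(v_1)$ ranges over the finite set $\mathscr{I}_Z$.

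Iterating forward and backward gives sequences $\{n_k\}$ and $\{v_k\}$ with $1\le n_k-n_{k-1}\le N$. A diagonal (König-type) argument over the finitely many choices at each step, which is available by Lemma~\ref{finrefine} and local finiteness, makes these choices consistent for all $k$ simultaneously.

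\textbf{Item \ref{RP.RZ}, inclusion.} For $y\in\,_{n_{-k}}[R_{n_{-k}},\dots,R_{n_k}]$, write $y=\pi(\ul{\wt u}^*)$ with $\wt u^*_0=v_0$. We then show $y=\pi(\ul{\wt w}^*)$ with $\wt w^*_j=v_j$ for $|j|\le k$ by replacing coordinates one step at a time. Proposition~\ref{proL}~\ref{proL.inter} lets us splice pseudo-orbits at $Z(v_j)$ while keeping the shadowed point, since $y$ and the model point share $s$/$u$ fibres there by Proposition~\ref{MarkovR}~\ref{MR.incl}. Regularity (the $\Sigma^\#$ condition) is preserved because the tails are unchanged. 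This yields the inclusion, and $R_{n_k}\subset Z(v_k)$ follows from (H1).

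The main obstacle is showing that $n_1$ and $v_1$ are determined by the $R$-symbols rather than by the individual point. This is precisely what the $N$-window in the definition of $\eqL$ is designed to ensure, and it is the step we would check most carefully against \cite[Lemma 9.1]{BCL}.
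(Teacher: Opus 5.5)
Your item (1) is the standard induction and is fine. With the convention $[x,y]=W^s(x,R)\cap W^u(y,R)$, the point you want is $[y,x]$, but this is only a naming slip. In item (2), the diagonal/K\"onig construction of $\{v_k\}$ and $\{n_k\}$ from codings of points $x_\ell\in{}_{-\ell}[R_{-\ell},\dots,R_\ell]$ also works: there are at most $N$ indices and finitely many out-neighbours at each step. The deduction $R_{n_k}\subset Z(v_k)$ from (H1) is correct.

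The gap is in the inclusion ${}_{n_{-k}}[R_{n_{-k}},\dots,R_{n_k}]\subset Z_{-k}[v_{-k},\dots,v_k]$. You justify the splicing by saying that $y$ and the model point $x$ share $s$/$u$ fibres by Proposition~\ref{MarkovR}~\ref{MR.incl}. This is false in general. Two points of the same cylinder need not lie on a common $s$- or $u$-fibre; \ref{MR.incl} only says that $H^{\pm1}$ carries fibres into fibres. Splicing a coding $\ul a$ of $y$ (with $a_0=v_0$) with the model coding via Proposition~\ref{proL}~\ref{proL.inter} produces the bracket point $W^s(x,Z(v_0))\cap W^u(y,Z(v_0))=\pi(\dots,a_{-1},v_0,v_1,\dots)$. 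This equals $y$ only if $y\in W^s(x,Z(v_0))$, which you have no reason to expect. So the shadowed point is not kept, and $y$'s own coding is never shown to pass through $v_1,\dots,v_k$.

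The missing idea is a Markov property of the cover $\mathscr L$ itself, in which the model point plays no role. The claim is: if $y\in Z(v_0)$, $v_0\to v_1$, and the holonomy image $g^+_{x_0}(y)$ lies in $Z(v_1)$ (here $x_0$ is the centre of the chart $v_0$), then $y\in Z_0[v_0,v_1]$. To prove it:
\begin{enumerate}
\item Take $\ul a\in\Sigma^\#$ coding $y$ with $a_0=v_0$, and $\ul b\in\Sigma^\#$ coding $g^+_{x_0}(y)$ with $b_0=v_1$.
\item The concatenation $(\dots,a_{-1},v_0,v_1,b_1,b_2,\dots)$ is a regular $\varepsilon$-gpo.
\item Its negative half shadows $y$ and its positive half shadows $g^+_{x_0}(y)$, so it shadows $y$.
\item By uniqueness of the shadowed point (Proposition~\ref{pro.shadow}), it codes $y$.
\end{enumerate}
Iterating along $j$ gives the inclusion. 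At each step you keep the past already built and replace the future by a coding of the next return.

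To apply this, what you need for every $y$ in the cylinder is $g^+\bigl(H^{n_j}(y)\bigr)=H^{n_{j+1}}(y)$, so that this point lies in $R_{n_{j+1}}\subset Z(v_{j+1})$. This is a timing statement about holonomies. It uses that the relevant returns hit the same sections (by (H1) on the $N$-window), that $Z(v_j)$ is small, and that returns to the same section take a definite time. That is the correct form of your ``key claim''; it is not the assertion that $v_1$ is determined by the symbols $R_0,\dots,R_N$.
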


It follows from Proposition~\ref{RProp}~\ref{RP.RZ} and Proposition~\ref{invmnfd} that for every $k\geq 0$, $\diam\overline{_{n_{-k}}[R_{n_{-k}},\dots,R_{n_k}]}\leq \diam\overline{Z_{-k}[v_{-k},\dots,v_k]}\leq K\vartheta^k$. Thus the  map $\wh{\pi}:\wh{\Sigma}\to\wh{\mathbb D}$ defined as 
\begin{equation}\label{eq.piSigma}
  \{\wh{\pi}(\ul{R})\}:=\bigcap_{n\ge 0}\overline{_{-n}[R_{-n},\dots,R_{n}]}
\end{equation}

is well-defined.

Analogous to Proposition \ref{1code}, we may get desirable properties of $\wh{\pi}$. The proof can be found in \cite[Proposition 9.2]{BCL}.
\begin{pro}\label{pro.pi}
Let $\ul{R}\in\wh{\Sigma}$, $\ul{\wt v}^*\in\Sigma$ be as in Proposition \ref{RProp}. Then the following conclusions hold:
\begin{enumerate}
\item\label{pi.1} $\wh{\pi}(\ul{R})=\pi(\ul{\wt v}^*)$. In particular, if $\ul{R}\in\wh{\Sigma}^{\#}$ then $\ul{\wt v}^*\in\Sigma^{\#}$ and $\wh{\pi}(\wh{\Sigma}^{\#})=\pi(\Sigma^{\#})$.

\item\label{pi.2} $\wh{\pi}$ is scaled H\"older continuous, that is, there exist $\wh H>1$ and $\gamma>0$ such that for any $\ul{R}$ and $\ul{S}\in\wh{\Sigma}$ with $R_i=S_i$ for $i=0, 1$, it holds that 
\[\dist(\wh{\pi}(\ul{R}),\wh{\pi}(\ul{S}))<\wh H\cdot d(\ul{R},\ul{S})^{\gamma}\cdot|X(\wh{\pi}(\ul{R}))|.\]
In particular $\wh{\pi}$ is H\"older continuous.
We also have that $\{\wt v^*_i\}_{|i|\leq k}$ depends only on $\{R_j\}_{|j|\leq kN}$ for each $k\geq 1$. 
\end{enumerate}
\end{pro}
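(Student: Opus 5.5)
We prove item~\ref{pi.1} first and derive item~\ref{pi.2} from it, transporting everything to the first coding $\Sigma$ through Proposition~\ref{RProp}~\ref{RP.RZ}.

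\textbf{Item~\ref{pi.1}.} Fix $\ul R\in\wh\Sigma$ and let $\ul{\wt v}^*$ and $\{n_k\}$ be as in Proposition~\ref{RProp}~\ref{RP.RZ}. For every $k\ge1$ we have
\[
\wh\pi(\ul R)\in\overline{{}_{n_{-k}}[R_{n_{-k}},\dots,R_{n_k}]}\subset\overline{Z_{-k}[\wt v^*_{-k},\dots,\wt v^*_k]}.
\]
Each point of $Z_{-k}[\wt v^*_{-k},\dots,\wt v^*_k]$ has the form $\pi(\ul{\wt u}^*)$ with $\wt u^*_i=\wt v^*_i$ for $|i|\le k$. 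By Proposition~\ref{sca.invmnfd}~\ref{sca.hol}, $\pi(\ul{\wt u}^*)$ lies in $V^s[\{\wt u^*_n\}_{n\ge0}]\cap V^u[\{\wt u^*_n\}_{n\le0}]$, and these manifolds are $C^1$-close to the stable and unstable manifolds of $\ul{\wt v}^*$. Consequently the diameters are at most $K^*(\vartheta^*)^k$ in scaled coordinates, and the intersection contains $\pi(\ul{\wt v}^*)$, since $\pi$ is continuous and $\ul{\wt v}^*$ is a limit of the $\ul{\wt u}^*$. Intersecting over $k$ gives $\wh\pi(\ul R)=\pi(\ul{\wt v}^*)$.

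For regularity: if $\ul R\in\wh\Sigma^\#$, some symbol $R$ repeats infinitely often in the future. By Lemma~\ref{finrefine}(2), only finitely many $Z$ satisfy $R\subset\varphi_{[-\rho,\rho]}(Z)$. Since $R_{n_k}\subset Z(\wt v^*_k)$ and the gaps $n_k-n_{k-1}$ are at most $N$, we can choose $\wt v^*_k$ so that it takes finitely many values along the times $n_k$ at which $R$ is visited up to a shift bounded by $N$. By pigeonhole some $\wt v^*$ then repeats infinitely often, and the past is handled the same way. Hence $\ul{\wt v}^*\in\Sigma^\#$.

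This requires that the construction in Proposition~\ref{RProp} can be made consistent. I would apply it at the recurring index, using Lemma~\ref{Rfinite} and the local finiteness from Proposition~\ref{proL}~\ref{finL}; this is the step I expect to be most delicate. For the equality $\wh\pi(\wh\Sigma^\#)=\pi(\Sigma^\#)$, one inclusion is the previous argument. For the reverse, given $x=\pi(\ul{\wt v}^*)$ with $\ul{\wt v}^*\in\Sigma^\#$, take $R_n$ to be the element of $\mathscr R$ containing $H^n(x)$. Recurrence of $x$ to a fixed $Z(\wt v^*)$, together with Lemma~\ref{finrefine}(1), shows that some $R$ recurs, and Proposition~\ref{MarkovR} shows the sequence is admissible.

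\textbf{Item~\ref{pi.2}.} Suppose $R_i=S_i$ for $|i|\le n$. I would first establish that $\{\wt v^*_i\}_{|i|\le k}$ is determined by $\{R_j\}_{|j|\le kN}$. This comes from the inductive nature of the construction in Proposition~\ref{RProp}: $\wt v^*_k$ is chosen from $R_{n_k}$ and the transition, and $n_k\le kN$. Then both $\ul R$ and $\ul S$ correspond to gpos agreeing on $|i|\le\lfloor n/N\rfloor$. By Proposition~\ref{sca.invmnfd}~\ref{sca.hol} and Proposition~\ref{pro.graphinters}, the scaled shadowing points differ by at most $2K^*(\vartheta^*)^{n/N}$ in $\N_{o_j}$. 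Undoing the scaling multiplies by $|X(o_j)|$, which is comparable to $|X(\wh\pi(\ul R))|$ by Lemma~\ref{lem.C1Poincare}~\ref{fb.length}. Since $d(\ul R,\ul S)=e^{-n}$, this gives the bound with $\gamma=|\log\vartheta^*|/N$. Plain H\"older continuity follows because $|X|$ is bounded.
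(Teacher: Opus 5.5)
Your strategy is the one the paper relies on: it defers to \cite[Proposition 9.2]{BCL} and records, just before the statement, the bound $\diam\overline{{}_{n_{-k}}[R_{n_{-k}},\dots,R_{n_k}]}\le\diam\overline{Z_{-k}[\wt v^*_{-k},\dots,\wt v^*_k]}\le K\vartheta^k$. Your identification $\wh\pi(\ul R)=\pi(\ul{\wt v}^*)$ and your inclusion $\pi(\Sigma^\#)\subset\wh\pi(\wh\Sigma^\#)$ go through as you describe.

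\textbf{The gap.} The step you leave open, $\ul R\in\wh\Sigma^\#\Rightarrow\ul{\wt v}^*\in\Sigma^\#$, is a genuine hole as written. Saying ``we can choose $\wt v^*_k$'' would at best produce \emph{some} regular gpo. The claim, however, is about the $\ul{\wt v}^*$ supplied by Proposition~\ref{RProp}. No re-choice or consistency is needed; the missing idea is the finite in-degree of $\wh{\mathscr G}$ (Proposition~\ref{proL}~\ref{finL}).
\begin{itemize}
\item Suppose $R_m=R$ for infinitely many $m>0$. For each such $m$, take $k$ with $n_k\le m<n_{k+1}$.
\item Then $R_{n_k}\to\cdots\to R_m=R$ is a path of length $<N$, so $R_{n_k}$ lies in a finite set $F$.
\item Since $R_{n_k}\subset Z(\wt v^*_k)$, Lemma~\ref{finrefine}(2) leaves only finitely many candidates for $\wt v^*_k$. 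Here it is read as counting charts, as in the proof of Proposition~\ref{proL}.
\item Each $k$ arises from at most $N$ values of $m$, so by pigeonhole some chart recurs infinitely often. The past is symmetric.
\end{itemize}
This works for every $\ul{\wt v}^*$ as in Proposition~\ref{RProp}.

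\textbf{Item (2).} Comparing two gpos for $\ul R$ and $\ul S$ makes you depend on the clause ``$\{\wt v^*_i\}_{|i|\le k}$ depends only on $\{R_j\}_{|j|\le kN}$''. You justify that only by appeal to an unspecified construction. You can bypass it entirely. If $R_i=S_i$ for $|i|\le kN$, then by \eqref{eq.piSigma} both $\wh\pi(\ul R)$ and $\wh\pi(\ul S)$ lie in $\overline{{}_{-kN}[R_{-kN},\dots,R_{kN}]}\subset\overline{Z_{-k}[\wt v^*_{-k},\dots,\wt v^*_k]}$, for the single gpo attached to $\ul R$. Your rescaling argument bounds the diameter of this set by a constant times $(\vartheta^*)^k|X(\wh\pi(\ul R))|$. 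The dependence clause then becomes a separate assertion about the explicit construction in \cite{BCL}, not something the H\"older estimate needs.
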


\section{Coding for singular flows}\label{sec.codingthm}

\subsection{Construction of the topological Markov flow}

Now we consider the suspension flow of $\wh{\Sigma}$. Define $\wh{r}: \wh{\Sigma} \to (0,\infty)$ by $\ul{R} \mapsto \min\left\{ t>0 : \varphi_t(\wh{\pi}(\ul{R})) = \wh{\pi}(\wh{\sigma}(\ul{R})) \right\}$. 
Using $\wh{r}$ as the roof function over $\wh{\Sigma}$, we construct the topological Markov flow $(\wh{\Sigma}_{\wh{r}}, \wh{\sigma}_{\wh{r}})$. 
The factor map $\wh{\pi}_{\wh{r}} : \wh{\Sigma}_{\wh{r}} \to M$ is then defined as $\wh{\pi}_{\wh{r}}(\ul{R},s) = \varphi_s(\wh{\pi}(\ul{R}))$.

We reiterate that the proof follows exactly the same procedure as  \cite[Theorem 9.1]{BCL} and \cite[Theorem 9.1]{LMN}, as all requisite results have been established in preceding sections. Therefore, we provide only an outline here and refer readers to their work for details.

\begin{thm}\label{thm.strongthm}
Let $X$ be a $C^{1+\beta}$ vector field on $M$. For any $\chi>0$, there exists a locally compact topological Markov flow $(\wh{\Sigma}_{\wh{r}},\wh \sigma_{\wh{r}})$ generated by the directed graph $\wh{\mathscr{G}}=(\wh V,\wh E)$, a roof function $\wh{r}$ and a map $\wh{\pi}_{\wh{r}}:\wh{\Sigma}_{\wh{r}}\to M^d$ satisfying the following properties:
\begin{enumerate}
\item  $\wh{\pi}_{\wh{r}}\circ \wh{\sigma}_{\wh{r},t}=\varphi_t\circ \wh{\pi}_{\wh{r}}$ for all $t\in\mathbb{R}$.
\item\label{cod1} $\sup \wh{r}<\infty$ and $\inf \wh{r}>0$. Both $\wh{r}$ and $\wh{\pi}_{\wh{r}}$ are H\"older continuous with respect to the Bowen-Walters metric $d_{\wh{r}}$.  
Moreover,  there exist $\wh{H}>1$ and $\gamma>0$ such that for any $(\ul{u},t)$ and $(\ul{v},s)$ with $u_0=v_0$ and $u_1=v_1$, it holds that 
\[
\dist(\wh{\pi}_{\wh{r}}(\ul{u},t),\wh{\pi}_{\wh{r}}(\ul{v},s))<\wh{H}\cdot d_{\wh{r}}((\ul{u},t),(\ul{v},s))^{\gamma}\cdot |X(\wh{\pi}_{\wh{r}}(\ul{u},t))|.
\]

\item  \label{cod2}$\wh{\pi}_{\wh{r}}(\wh{\Sigma}^{\#}_{\wh{r}})=\NUH_{\chi}^{\#}$ has full measure for every regular $\chi$-hyperbolic $X$-invariant measure. Moreover, for every regular ergodic $\chi$-hyperbolic $X$-invariant measure $\mu$, there is an ergodic $\wh{\sigma}_{\wh{r}}$-invariant measure $\wh{\mu}$ on $\wh{\Sigma}_{\wh{r}}$  such that $\wh{\mu}\circ \wh{\pi}_{\wh{r}}^{-1}=\mu$ and $h_{\mu}(X)=h_{\wh{\mu}}(\wh{\sigma}_{\wh{r}})$. 

\item\label{cod3} For any $R,S\in \wh V$, there is a number $C(R,S)\in \bZ^+$ satisfying:
if $(\ul{R},t)\in\wh{\Sigma}_{\wh{r}}$ such that $R_n=R$ for infinitely many $n>0$ and $R_m=S$ for infinitely many $m<0$, then the number of $ \wh{\pi}_{\wh{r}}^{-1}( \wh{\pi}_{\wh{r}}(\ul{R},t))$ is less that $C(R,S)$.

\item \label{cod4} There exists a real number $\lambda>0$ and  a unique  splitting $\H_x=H^s_x\oplus\H^u_x$ at each $x\in\wh{\pi}_{\wh{r}}(\wh{\Sigma}_{\wh{r}})$,  where $\H_x$ is a $d-1$-dimensional subspace of $T_x M$,  such that 
\[
\limsup_{t\to+\infty} \frac{1}{t}\log\norm{\Phi^*_t|_{\H^s_x}}\leq -\lambda \text{ and } \liminf_{t\to+\infty} \frac{1}{t}\log\norm{\Phi^*_{-t}|_{\H^s_x}}\geq \lambda,
\]
\[
\limsup_{t\to+\infty} \frac{1}{t}\log\norm{\Phi^*_{-t}|_{\H^u_x}}\leq -\lambda \text{ and } \liminf_{t\to+\infty} \frac{1}{t}\log\norm{\Phi^*_{t}|_{\H^u_x}}\geq \lambda.
\]
Furthermore, the splitting is $\Phi^*$-invariant, and the maps $z\mapsto \H^{s/u}_{\wh{\pi}_{\wh{r}}(z)}$ are H\"older continuous on $\wh{\Sigma}_{\wh{r}}$.

\item \label{cod5} There exists $\alpha>0$ satisfying the following property: for every $z\in\wh{\Sigma}_{\wh{r}}$, denote $x=\wh{\pi}_{\wh{r}(z)}$, there are two $C^1$ submanifolds $V^{cs}(z)$, $V^{cu}(z)$ passing through $x$ such that
\begin{enumerate} 
\item[\rm (6a)]  \label{cod5.1} $T_x V^{cs}(z)=\H_x^s\oplus \la X(x)\ra$ and $T_x V^{cu}(z)=\H_x^u\oplus \la X(x)\ra$;
\item[\rm (6b)] \label{cod5.2} for any $y\in V^{cs}(z)$, there is $\tau\in\mathbb{R}$ such that $\dist(\varphi_t(x),\varphi_{t+\tau}(y))\leq e^{-\alpha t}|X(\varphi_t(x))|$, $\forall t\geq0$;
\item[\rm (6c)]  \label{cod5.3} for all $y\in V^{cu}(z)$, there is $\tau\in\mathbb{R}$ such that $\dist(\varphi_{-t}(x),\varphi_{-t+\tau}(y))\leq e^{-\alpha t}|X(\varphi_t(x))|$, $\forall t\geq0$.
\end{enumerate}

\item  \label{cod6} There is a symmetric binary relation ``$\sim$'' on $\wh{V}$ such that:
\begin{enumerate}
\item[\rm (7a)] \label{cod6.1} $\#\{S\in\wh{V}:R\sim S\}<\infty$ ror any $R\in\wh{V}$;
\item[\rm (7b)]  \label{cod6.2} the relation $\sim$ is a Bowen relation for $(\wh{\sigma}_{\wh{r}},\wh{\pi}_{\wh{r}}|_{\wh{\Sigma}^{\#}_{\wh{r}}},\phi_t)$.
\end{enumerate}

\item  \label{cod7} There is a measurable set $\mathscr{R}$ with a measurable partition $\{R:R\in\wh{V}\}$ indexed by $\wh{V}$ such that:
\begin{enumerate}
\item[\rm (8a)]  \label{cod7.1} the orbit of any point $x\in \NUH_{\chi}^{\#}$ intersects $\mathscr{R}$;
\item[\rm (8b)]  \label{cod7.2} the first return map $H:\mathscr{R}\to\mathscr{R}$ induced by the flow $\varphi_t$ is a well-defined bijection;
\item[\rm (8c)]  \label{cod7.3} for any $x\in\mathscr{R}$, if there is $\ul{R}=\{R_n\}_{n\in\mathbb{Z}}$ such that $H^n(x)\in R_n$ for all $n\in\mathbb{Z}$, then $(\ul{R},0)\in\wh{\Sigma}^{\#}_{\wh{r}}$ and $\wh{\pi}_{\wh{r}}(\ul{R},0)=x$.
\end{enumerate}
\item  \label{cod8} If $K\subset M\setminus\Sing(X)$ is a compact transitive invariant hyperbolic set, and all of the ergodic $X$-invariant measures on $K$ are $\chi$-hyperbolic, then there is a transitive $\wh{\sigma}_{\wh{r}}$-invariant compact set $A\subset\wh{\Sigma}_{\wh{r}}$ such that $\wh{\pi}_{\wh{r}}(A)=K$.
\end{enumerate}
\end{thm}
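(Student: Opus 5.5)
The plan is to take the second coding $(\wh\Sigma,\wh\sigma,\wh\pi)$ from \S\ref{sec.finitetoone} and suspend it by the roof $\wh r(\ul R)=\min\{t>0:\varphi_t(\wh\pi(\ul R))=\wh\pi(\wh\sigma(\ul R))\}$. Each item of Theorem~\ref{thm.strongthm} is then transferred from the corresponding statement in \cite[Theorem~9.1]{BCL} and \cite[Theorem~9.1]{LMN}. Whenever those arguments used compactness of a finite family of sections, we substitute local finiteness, via Theorem~\ref{thm.poincaresection}~\ref{se.fini}, Proposition~\ref{proL}~\ref{finL} and Lemma~\ref{finrefine}.

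We take the items in order. Item (1) holds by construction, since $\wh\pi_{\wh r}(\ul R,s)=\varphi_s(\wh\pi(\ul R))$. For item (2), Lemma~\ref{Rfinite} gives $\wh r\le N\sup R_{\mathbb D}\le N\rho$, and $\wh r\ge \tfrac12\inf R_{\mathbb D}$ because the sections are separated (Theorem~\ref{thm.poincaresection}~\ref{se.sepa},\ref{se.unif}). H\"older continuity of $\wh r$ comes from writing $\wh r$ as a Poincar\'e time $\tau_{o_i}$, whose derivative is $O(|X|^{-1})$ by Lemma~\ref{lem.C1Poincare}~\ref{fb.derivare}. This is composed with the scaled H\"older estimate of Proposition~\ref{pro.pi}~\ref{pi.2}, and the two factors $|X|$ cancel. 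The scaled H\"older bound for $\wh\pi_{\wh r}$ then follows from that same estimate together with the Bowen--Walters metric and (\ref{eq.flowGI}).

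For item (3) we combine Proposition~\ref{pro.pi}~\ref{pi.1}, Proposition~\ref{1code}~\ref{1code.incl} and Theorem~\ref{invs}, which give $\pi(\Sigma^\#)=\mathbb D\cap\NUH^\#_\chi$. Saturating by the flow and using Lemma~\ref{lem.recNUH} then yields full measure. The lift $\wh\mu$ is built as in \cite{BCL}: we induce $\mu$ on $\mathscr R$, code it using the partition of item (8), and equate entropies by Abramov's formula (\ref{eq.Abramov}) applied to finite-to-one extensions.

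Items (5)--(6) come from Propositions~\ref{invmnfd} and~\ref{strmnfd}, with $\H^{s/u}_x$ taken as the tangent spaces to $V^{s/u}$. Item (8) follows from Proposition~\ref{MarkovR}. Item (9) is proved by coding the uniformly hyperbolic compact set $K$. Since $K$ avoids $\Sing(X)$, it meets only finitely many sections, and $Q$ is bounded below on it by Lemma~\ref{lem.estimateQ}~(2), so only finitely many charts from Proposition~\ref{pro.SDR}~\ref{SDR.D} are involved.

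We expect the main obstacle to be items (4) and (7), the finiteness-to-one and the Bowen relation. For the relation we set $R\sim S$ when $R\subset\varphi_{[-\rho,\rho]}(Z)$ and $S\subset\varphi_{[-\rho,\rho]}(Z')$ for some $Z,Z'\in\mathscr L$ with $Z\cap\varphi_{[-2\rho,2\rho]}(Z')\neq\emptyset$. Finiteness in (7a) is exactly where local finiteness is needed: Lemma~\ref{finrefine} handles the part inside the $\varphi_{[-\rho,\rho]}$-neighbourhood of a single $Z$, and Proposition~\ref{proL}~\ref{finL} handles the passage between different $Z$'s. The Bowen property is argued as in \cite{BCL}. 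Two codings of the same point shadow the same orbit, and by Theorem~\ref{invs} their charts at matching times have comparable sizes and centres within scaled distance $\frac1{10}(p^s\wedge p^u)|X|$, so the rectangles are $\sim$-related. Item (4) then follows from the standard Bowen-relation counting argument of \cite{BCL}. The key inputs are that recurrence of $R$ and $S$ bounds the number of related symbols at recurrent times, and that item (8)'s Markov partition makes the resulting count depend only on $(R,S)$.
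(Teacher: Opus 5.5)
Your proposal is correct and is essentially the paper's proof: suspend the second coding, import each item from \cite{BCL,LMN}, and replace compactness by local finiteness through Proposition~\ref{proL}~\ref{finL} and Lemma~\ref{finrefine}. Your $\sim$ is a slightly enlarged version of the paper's affiliation relation, and item (4) comes from the same pigeonhole-and-bracket argument with a bound of the form $N(R)N(S)$. The only real variation is in item (3): you lift $\mu$ through the itinerary map supplied by item (8) instead of the averaged lift \eqref{liftmeas}, which makes the lift an isomorphic copy of $\mu$ (hence ergodic with equal entropy, with no finite-to-one input needed), whereas \eqref{liftmeas} relies on item (4) first.
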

\begin{proof}
Item (1) follows directly from the definition. According to the refinement of $\mathscr{L}$ and Proposition~\ref{proL}~\ref{proL.cov}, we have that $\wh{\pi}_{\wh{r}}(\wh{\Sigma}^{\#}_{\wh{r}})=\NUH_{\chi}^{\#}$. Since $\wh{\mathbb D}$ is a locally finite uniform Pioncar\'e section, we know that the roof function $\wh{r}$ satisfies $\sup \wh{r}<\infty$ and $\inf \wh{r}>0$. On the other hand, from the definition of the equivalent class $R$, the flow induces a continuous map on a small neighborhood of $R$ thus gives a continuous time function. Combining with Proposition~\ref{pro.pi}~\ref{pi.2}, we get the H\"older property of the roof function $\wh{r}$. As a direct application of the H\"older property of $\wh{r}$ and Proposition~\ref{pro.pi}, we further get the  H\"older continuity of $\wh{\pi}_{\wh{r}}$ through the same argument on Bowen-Walters metric as in \cite[Lemma 5.9]{LS} and \cite[Proposition 9.3]{BCL}, \cite[Proposition 9.3]{LMN}. This proves item \ref{cod1} and the first part of item \ref{cod2}.

We say that two rectangles $R, S\in\mathscr{R}$ are \emph{affiliated} (denote by $R\approx S$) if there are $Z, Z'\in\mathcal{L}$ such that $R\subset Z$, $S\subset Z'$ and $Z'\in \mathscr{I}_Z$. It holds that if $\ul{R}, \ul{S}\in \wh{\Sigma}^{\#}$ and $\wh{\pi}(\ul{R})=\varphi_t(\wh{\pi}(\ul{S}))$ with $t\in[-\rho,\rho]$ then $R_0\approx S_0$. For each $R\in\mathscr{R}$, define 
\[
A(R):=\{(S,Z')\in\mathscr{R}\times \mathscr{L}:R\approx S\text{ and } S\subset Z'\}.
\]
Lemma \ref{finrefine} shows that $N(R):=\#A(R)$ is finite. 

The relation ``$\sim$'' stated in Item \ref{cod6} is exact the relation ``$\approx$'' defined above. The properties of this relation are obtained by applying a stronger version of Proposition~\ref{markovp}~\ref{mark.incl} several times. See \cite[Section 9.4~(Part 6)]{BCL} for the details.

Now we prove Item~\ref{cod3}. Let $(\ul{R},t)\in \wh{\Sigma}^{\#}_{\wh{r}}$ with $x=\wh{\pi}_{\wh{r}}(\ul{R},t)$. Then there exist $n_k\to\infty$ such that $R_{n_k}=R$ and $m_l\to-\infty$ such that $R_{m_l}=S$. Take $C(R,S)=N(R)\cdot N(S)$. 

Assume that there are $C(R,S)+1$ elements $(\ul{R}^{(i)},t_i)$ in $\wh{\Sigma}^{\#}_{\wh{r}}$ whose projection is $x$. Assume that $\ul{R}^{(0)}=\ul{R}$. Then the pigeonhole criterion tells us that for each pair $(R_{n_k},R_{m_l})$ one can find $i, j\in\{0,\dots,C(R,S)\}$ and $\kappa_i, \kappa_j<0$ and $\tau_i, \tau_j >0$ such that $R^{(i)}_{\kappa_i}=R^{(j)}_{\kappa_j}=:A$ and $R^{(i)}_{\tau_i}=R^{(j)}_{\tau_j}=:B$.

Take $y\in\;_0[R^{(i)}_{\kappa_i},\dots,R^{(i)}_{\tau_i}]$,  $z\in \;_0[R^{(j)}_{\kappa_j},\dots,R^{(j)}_{\tau_j}]$ and $y'=H^{\tau_i-\kappa_i}(y)$ and $z'=H^{\tau_j-\kappa_j}(z)$. Note that $y, z\in A$ and $y', z'\in B$. Then by Markov property for $\mathscr{R}$ (see Proposition \ref{MarkovR}), one can get two points $w=[y,z]$ and $w'=[y',z']$.

Cutting the orbit segment of $x$ starting from $B$ to $A$ in an appropriate way as in \cite{BCL} and applying Proposition \ref{markovp} inductively, one can get that $w'$ is on the positive orbit of $w$. On the other hand, from construction of $w$ and $w'$ and the Markov property for $H$ (see Proposition~\ref{MarkovR}~\ref{MR.incl}),  one summarize that $w'=H^{\tau_i-\kappa_i}(w)$. Symmetrically, $w'=H^{\tau_j-\kappa_j}(w)$. Thus $\tau_i-\kappa_i=\tau_j-\kappa_j$. This means that $(R^{(i)}_{\tau_i},\dots,R^{(i)}_{\kappa_i})=(R^{(j)}_{\tau_j},\dots,R^{(j)}_{\kappa_j})$ because $H^{k}(w)\in R^{(i)}_{\kappa_i+k}$  for each $k\in \{0,\dots,\tau_i-\kappa_i\}$ and $H^{k}(w)\in R^{(j)}_{\kappa_j+k}$ for each $k\in \{0,\dots,\tau_j-\kappa_j\}$. Since $\kappa_i,\kappa_j$ tend to infinity and $i, j$ is finite then this means that there are $i, j\in\{0,\dots, C(R,S)\}$ so that $\ul{R}^{(i)}=\ul{R}^{(j)}$ which is a contradiction. This leads Item~\ref{cod3}.
 See \cite[Theorem 9.6]{BCL} for more details.

Next we consider a regular $\chi$-hyperbolic $X$-invariant measure $\mu$. The following equation 
\begin{equation}\label{liftmeas}
\wh{\mu}(E)=\int_{\NUH^{\#}_{\chi}}\frac{1}{\#\{\wh{\pi}_{\wh{r}}^{-1}(x)\}}\sum_{\wh{\pi}_{\wh{r}}(\ul{R})=x}1_{E}(\ul{R})d\mu(x)
\end{equation}
gives  a $\wh{\sigma}_{\wh{r}}$-invariant measure $\wh{\mu}$ on $\wh{\Sigma}_{\wh{r}}$ which preserves the entropy. Here the sum in (\ref{liftmeas}) makes sense by Item~\ref{cod3}.
In addition, for any ergodic $\wh{\sigma}_{\wh{r}}$-invariant measure $\ol{\nu}$ on $\wh{\Sigma}_{\wh{r}}$, the projection $\nu:=\ol{\nu}\circ \wh{\pi}^{-1}$ is a regular $X$-invariant ergodic measure. The hyperbolicity follows from Proposition~\ref{sca.invmnfd} and Proposition~\ref{invmnfd}. Since by Item~\ref{cod3} $\wh{\pi}$ has finitely many pre-images on $\NUH^{\#}_{\chi}$, it preserves the entropy. 
This proves the remaining part of Item \ref{cod2}.

Item \ref{cod4} follows from Proposition \ref{invmnfd} and Proposition \ref{MarkovR}. Item \ref{cod5} follows from Proposition \ref{strmnfd}.
See \cite[Section 9.4~(Part 5)]{BCL} for more explanations.
Item \ref{cod7} is clear due to Proposition \ref{pro.pi} and the process of the suspension. See \cite[Section 9.4~(Part 7)]{BCL}.
Finally, Item \ref{cod8} refer to compact hyperbolic sets which places us in the same case to \cite{BCL}. See \cite[Section 9.4~(Part 8)]{BCL} for the proof.

\end{proof}

\subsection{Irreducible coding}

In this subsection, we prove that mutually homoclinically related hyperbolic measures can be lifted to the same irreducible component.

\begin{thm}\label{thm.irrcomp}
Let $M$, $X$, $\chi>0$ and $(\wh{\Sigma}_{\wh{r}},\wh \sigma_{\wh{r}})$ be as in Theorem \ref{thm.strongthm}. For any regular $\chi$-hyperbolic measure $\mu$ which is not supported on a single periodic orbit, there is an irreducible component $\wh{\Sigma}'_{\wh{r}}\subset \wh{\Sigma}_{\wh{r}}$ satisfying conditions (1,2) and (4-9) in Theorem \ref{thm.strongthm}. The third statement of Theorem \ref{thm.strongthm} is strengthened to the following:

\begin{itemize}
\item Any regular $\chi$-hyperbolic ergodic measure $\nu$ on $M$ which is homoclinically related to $\mu$ has some lift in $\wh{\Sigma}'_{\wh{r}}$. 
\item The projection of any ergodic measure $\ol{\nu}$ on $\wh{\Sigma}'_{\wh{r}}$ is homoclinically related to $\mu$. 
\end{itemize}
\end{thm}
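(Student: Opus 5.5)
The plan is to follow the strategy of \cite{BCS22} (and its flow adaptation in \cite{BCL}): first restrict the countable alphabet to a subgraph built from the homoclinic class of $\mu$, and then show this subgraph lies inside a single irreducible component. Let $\BHC$ be the Borel homoclinic class carrying $\mu$ (Proposition~\ref{pro.proBHC}). I would call a vertex $R\in\wh V$ \emph{$\mu$-relevant} if some point of $R\cap\NUH^\#_\chi\cap\BHC$ is coded by a regular sequence passing through $R$. Let $\wh{\mathscr G}'$ be the maximal subgraph on such vertices. Two steps are needed. (i) Every $\chi$-hyperbolic ergodic $\nu\sim\mu$ lifts, via the measure \eqref{liftmeas}, to a measure whose support uses only $\mu$-relevant vertices; this is immediate because $\nu$-a.e. point lies in $\BHC\cap\NUH^\#_\chi$ and every coding of it is regular. (ii) Any two $\mu$-relevant vertices $R,S$ are connected by a path in $\wh{\mathscr G}$. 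Then the irreducible component $\wh\Sigma'$ of $\wh\Sigma$ containing these vertices works. Items (1,2),(4--9) of Theorem~\ref{thm.strongthm} are inherited by restriction, since they hold on all of $\wh{\Sigma}_{\wh r}$ and local compactness passes to subgraphs.

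For (ii), take $x\in R$ and $y\in S$ lying in $\BHC$ with regular codings $\ul R,\ul S$. Because $x\sim y$, there is a transverse intersection point $z\in W^u(x)\pitchfork W^s(y)$. I would use the hyperbolic-set coding, Item~\ref{cod8}: push $z$ backward near $x$ and forward near $y$. By an inclination-lemma and shadowing argument (Proposition~\ref{pro.shadow} applied to charts from $\mathscr A$ along the orbit of $z$), I would build an $\varepsilon$-gpo whose negative tail agrees with the coding of $x$ from some time $-n$ on, and whose positive tail agrees with that of $y$. The key input is that the orbit of $z$ stays in $\NUH_\chi$ with $q$ bounded below on the compact segment, so charts exist there. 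Concretely, I would follow \cite[\S 3]{BCS22}: use recurrence of $\ul R$ to a fixed symbol, take a periodic orbit $\mathcal O\sim\mu$ close to $x$ and to $y$ via the closing lemma \cite{LLL2024}, and code a horseshoe containing $\mathcal O$, $x$-shadowing and $y$-shadowing orbits. Item~\ref{cod8} gives a transitive compact subshift projecting onto the horseshoe, and the finiteness in Item~\ref{cod3} forces its symbols to interlock with the symbols of $\ul R,\ul S$. Transitivity then yields paths $R\to\dots\to S$ and back.

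For the second bullet, given ergodic $\ol\nu$ on $\wh\Sigma'_{\wh r}$, its projection $\nu$ is hyperbolic and regular (Theorem~\ref{thm.strongthm}~\ref{cod2}). For $\ol\nu$-typical $\ul R$ and a typical lift $\ul S$ of $\mu$, irreducibility gives admissible words joining their recurrent symbols. The shadowed points of the concatenated sequences lie on $V^u$ of one and $V^s$ of the other, with transverse intersection by Proposition~\ref{pro.graphinters}. This gives $x\sim y$, and hence $\nu\sim\mu$.

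The main obstacle I expect is step (ii), and specifically the singular setting. The horseshoe used to connect $x$ and $y$ must lie in $M\setminus\Sing(X)$ and have all its measures $\chi$-hyperbolic, so that Item~\ref{cod8} applies. This needs (NUH5) to choose $x,y$ bounded away from singularities, and the closing lemma for singular flows to produce the relevant periodic orbits with uniform scaled hyperbolicity. The hypothesis that $\mu$ is not a single periodic orbit is used exactly here, so that genuine transverse homoclinic points exist.
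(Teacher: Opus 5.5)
\paragraph{Main gap: step (ii).} You lift $\nu$ by the averaged measure \eqref{liftmeas}, so you need \emph{every} regular coding of $\nu$-a.e.\ point to lie in one component. Step (ii) therefore asks that all vertices occurring in any regular coding of any point of $\BHC$ be mutually reachable in $\wh{\mathscr G}$. Nothing in Theorem~\ref{thm.strongthm} gives this. It is also strictly more than the statement, which only asserts that each $\nu\sim\mu$ has \emph{some} lift in $\wh\Sigma'_{\wh r}$. A point of the class may have several regular codings, and several irreducible components may project into the same Borel homoclinic class; only the converse, Lemma~\ref{irr.l1}, holds.

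The mechanism you propose does not produce the paths either. The horseshoe $K$ contains periodic orbits near $x$ and $y$ but not $x,y$ themselves. The transitive lift $A$ from Item~(9) of Theorem~\ref{thm.strongthm} can code $K$ with symbols unrelated to $\ul R,\ul S$. Item~(4) only bounds the number of lifts of a single regular point and says nothing about codings of distinct nearby points. Even if a periodic orbit obtained by closing $\ul R$ symbolically lies in $K$, its lift near $\ul R$ and its lift inside $A$ may be different lifts sitting in different components. Likewise, concatenating charts of $\mathscr A$ along a heteroclinic orbit gives a path in the first graph on $\mathscr A$, not in $\wh{\mathscr G}$. The rectangles of $\mathscr R$ are not open, so a heteroclinic point on the local unstable manifold of $x$ need not lie in the fiber $W^u(x,R)$, and its itinerary need not share a tail with $\ul R$.

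\paragraph{What the paper does instead.} The missing idea is to \emph{select} one component by a pigeonhole/compactness argument, and to accept that the lift of a given point in it is a new coding, not $\ul R$.
\begin{enumerate}
\item Enumerate the $\chi$-hyperbolic periodic orbits $O_1,O_2,\dots$ homoclinically related to $\mu$. Take transitive hyperbolic sets $K_n\supset O_1\cup\dots\cup O_n$ and lift them by Item~(9) to transitive compact sets $A_n$.
\item Since $O_1$ has only finitely many lifts (Item~(4)), a subsequence of the $A_n$ lies in a single component $\wh\Sigma'_{\wh r}$. That component then contains a lift of every $O_i$ (Lemma~\ref{irr.l3}).
\item For a general $\nu$, close a $\bar\nu$-generic regular $\ul q$ into periodic sequences $\ul q^i\to\ul q$. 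By Lemmas~\ref{irr.l2} and~\ref{irr.l1}, their projections are $\chi$-hyperbolic periodic orbits related to $\mu$, so they have lifts $\ul p^i\in\wh\Sigma'_{\wh r}$.
\item Each $p^i_n$ is affiliated to $q^i_n$, and each vertex has finitely many affiliates (Item~(7)). A diagonal extraction together with recurrence of $\ul q$ gives a regular limit $\ul p\in\wh\Sigma'_{\wh r}$ with $\wh\pi_{\wh r}(\ul p)=\wh\pi_{\wh r}(\ul q)$. Averaging then produces the lift of $\nu$ (Lemma~\ref{irr.l4}).
\end{enumerate}
Your argument for the second bullet is essentially Lemma~\ref{irr.l1}. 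It is fine once a lift of $\mu$ in $\wh\Sigma'_{\wh r}$ is known to exist.
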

\begin{proof}
Proofs for diffeomorphisms and non-singular flows on three-dimensional compact manifolds are provided in \cite[Section 3]{BCS22} and \cite[Section 10]{BCL}. Here, we briefly outline the approaches and refer to their works for details.

Employing the technique from Proposition~\ref{proL}~\ref{proL.inter}, we have the following lemma, which aligns with \cite[Proposition 3.6]{BCS22} and \cite[Lemma 10.2]{BCL}.

\begin{lem}\label{irr.l1}
For any two ergodic measures supported on a common irreducible component of $\widehat{\Sigma}_{\widehat{r}}$, their projections are regular hyperbolic ergodic measures that are homoclinically related.
\end{lem}

For any $T \in \mathbb{R}$, both $\norm{\Phi^*_T|_{\mathcal{H}^s}}$ and $\norm{\Phi^*_T|_{\mathcal{H}^u}}$ are uniformly bounded and hence integrable by (\ref{eq.flowGI}). Moreover, by Theorem~\ref{thm.strongthm}~\ref{cod1}~\ref{cod4}, the functions $\frac{1}{T}\log\norm{\Phi^*_T|_{\mathcal{H}^s}}$ and $\frac{1}{T}\log\norm{\Phi^*_T|_{\mathcal{H}^s}}$ are continuous.

On the other hand, if $\bar{\nu}$ is an ergodic measure on $\wh{\Sigma}_{\wh{r}}$ then from Theorem \ref{thm.strongthm}~\ref{cod2}, $\nu=(\wh{\pi}_{\wh{r}})_*(\bar{\nu})$ be $\chi'$-hyperbolic for some $\chi'>0$. Then one can find some $T>0$ large enough so that  
\[\int_{\wh{\Sigma}^{\#}_{\wh{r}}} \frac{1}{T}\log\norm{\Phi^*_{T}|_{\mathcal{H}^s}}d\bar{\nu}<-\chi'\text{ and } \int_{\wh{\Sigma}^{\#}_{\wh{r}}} \frac{1}{T}\log\norm{\Phi^*_{-T}|_{\mathcal{H}^u}}d\bar{\nu}<-\chi',\]
which is also holds for an open neighborhood of $\ol{\nu}$. Then via a sub-additivity argument, we get the following lemma. The details can be seen in \cite[Proposition 3.7]{BCS22}, \cite[Lemma 10.3]{BCL}.

\begin{lem}\label{irr.l2}
For any $\chi>0$, the set of ergodic measures on $\wh{\Sigma}_{\wh{r}}$ whose projection is $\chi$-hyperbolic is open under the weak-$*$ topology.
\end{lem}

\begin{lem}\label{irr.l3}
There exists an irreducible component $\wh{\Sigma}'_{\wh{r}}$ of $\wh{\Sigma}_{\wh{r}}$ such that all $\chi$-hyperbolic periodic orbits which are homoclinically related to $\mu$ can be lifted to $\wh{\Sigma}'_{\wh{r}}$.
\end{lem}
\begin{proof}
The set of all $\chi$-hyperbolic periodic orbits homoclinically related to $\mu$ is countable. Let $\{O_i\}_{i\in\mathbb{Z}^+}$ denote the collection of such orbits. By classical uniform hyperbolic theory, for each $n\in\mathbb{N}$, there exists a compact transitive $\chi$-hyperbolic set $K_n$ containing $O_1, \dots, O_n$. Theorem~\ref{thm.strongthm}~\ref{cod8} implies the existence of a transitive compact set $A_n \subset \widehat{\Sigma}_{\widehat{r}}^\#$ such that $\widehat{\pi}_{\widehat{r}}(A_n) = K_n$ and $A_n$ contains some lifts of $O_1, \dots, O_n$.

Since Theorem~\ref{thm.strongthm}~\ref{cod3} guarantees that $O_1$ has finitely many lifts, there exist a subsequence $\{n_k\}_{k\in\mathbb{N}}$ and a unique irreducible component of $\widehat{\Sigma}_{\widehat{r}}$ containing every $A_{n_k}$. Thus it contains lifts of each $\{O_i\}_{i\in\mathbb{N}}$.

For technical details, see \cite[Lemmas~3.11 and 3.12]{BCS22} and \cite[Lemma~10.4]{BCL}.
\end{proof}

\begin{lem}\label{irr.l4}
Any regular $\chi$-hyperbolic ergodic measure which is homoclinically related to $\mu$ can be lifted to an ergodic one on $\wh{\Sigma}'_{\wh{r}}$.
\end{lem}
\begin{proof}
Suppose $\nu$ satisfies the given conditions. By Theorem~\ref{thm.strongthm}~\ref{cod2}, it admits a lift $\bar{\nu}$ to $\widehat{\Sigma}_{\widehat{r}}$. Fix a $\bar{\nu}$-generic point $\ul{q}\in\wh{\Sigma}_{\wh{r}}$  and let $x=\wh{\pi}_{\wh{r}}(\ul{q})$. We demonstrate that $x$ has a lift on $\wh{\Sigma}_{\wh{r}}^{'\#}$.

First, the regularity of $\widehat{\Sigma}_{\widehat{r}}'^\#$ yields a sequence of periodic points $\ul{q}^i \to \ul{q}$, implying convergence of their periodic measures to $\bar{\nu}$. Lemma~\ref{irr.l1} establishes that their projections are $\chi$-hyperbolic, while Lemma~\ref{irr.l2} confirms their homoclinic relation to $\mu$.

Furthermore, Lemma~\ref{irr.l3} constructs periodic points $\ul{p}^i\in\wh{\Sigma}_{\wh{r}}^{'\#}$ satisfying $\wh{\pi}_{\wh{r}}(\ul{p}^i)=\wh{\pi}_{\wh{r}}(\ul{q}^i)$. Theorem~\ref{thm.strongthm}~\ref{cod6} and the recurrence of $\ul{q}$ ensure that $\ul{p}^{i} \to \ul{p} \in \widehat{\Sigma}_{\widehat{r}}'^\#$. By the continuity of $\widehat{\pi}_{\widehat{r}}$ (Theorem~\ref{thm.strongthm}~\ref{cod1}), $\widehat{\pi}_{\widehat{r}}(\ul{p}) = \widehat{\pi}_{\widehat{r}}(\ul{q}) = x$.

Thus $x$ admits a lift on $\widehat{\Sigma}_{\widehat{r}}'^\#$. Applying \eqref{liftmeas} and the ergodic decomposition theorem, $\nu$ admits an ergodic lift on $\widehat{\Sigma}_{\widehat{r}}'^\#$. See \cite[Lemma 3.13]{BCS22} and \cite[Proposition 10.1]{BCL} for more details.
\end{proof}

The theorem follows from the above lemmas.
\end{proof}

\begin{cor}\label{cor.Borelclass}
Let $B$ be a Borel homoclinic class. Then  there is an irreducible component $\wh{\Sigma}'_{\wh{r}}\subset \wh{\Sigma}_{\wh{r}}$ satisfying conditions (1,2) and (4-9) in Theorem \ref{thm.strongthm} such that
\begin{itemize}
\item each regular $\chi$-hyperbolic ergodic measure carried by $B$ has some lift in $\wh{\Sigma}'_{\wh{r}}$. 
\end{itemize} 	
\end{cor}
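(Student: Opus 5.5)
The corollary should follow from Theorem~\ref{thm.irrcomp} once we choose a suitable reference measure $\mu$ carried by $B$. By Proposition~\ref{pro.proBHC}(2), $B$ contains a dense set of hyperbolic periodic points. Moreover, any two measures carried by $B$ are homoclinically related, and conversely two homoclinically related measures are carried by the same Borel homoclinic class. So the task reduces to choosing $\mu$ carried by $B$ for which Theorem~\ref{thm.irrcomp} applies. Every regular $\chi$-hyperbolic ergodic measure $\nu$ carried by $B$ is then homoclinically related to $\mu$, and therefore lifts to the component $\wh{\Sigma}'_{\wh r}$ produced by that theorem.

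The first case is when $B$ carries some regular $\chi$-hyperbolic ergodic measure $\mu$ that is not supported on a single periodic orbit. In that case we apply Theorem~\ref{thm.irrcomp} to $\mu$ directly. Conditions (1,2) and (4--9) are inherited, and the first bullet of Theorem~\ref{thm.irrcomp} gives the stated lifting property, since each $\nu$ carried by $B$ satisfies $\nu\sim\mu$.

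The second case is when every regular $\chi$-hyperbolic ergodic measure carried by $B$ is a periodic orbit measure, or when there is none at all. If there is none, the statement is vacuous: take any irreducible component, for instance one given by Theorem~\ref{thm.strongthm}. Otherwise we bypass the restriction on $\mu$. The proof of Theorem~\ref{thm.irrcomp} only used $\mu$ through Lemma~\ref{irr.l3}, which lifts all $\chi$-hyperbolic periodic orbits homoclinically related to $\mu$ into one irreducible component, and through Lemma~\ref{irr.l4}. We replace $\mu$ by a $\chi$-hyperbolic periodic orbit $\mathcal O\subset B$ and rerun Lemma~\ref{irr.l3} with the orbits $O_i\sim\mathcal O$. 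For each $n$ we take a compact transitive $\chi$-hyperbolic set $K_n$ containing $O_1,\dots,O_n$ and lift it by Theorem~\ref{thm.strongthm}(9). Since $O_1$ has only finitely many lifts by Theorem~\ref{thm.strongthm}(4), a single irreducible component contains lifts of all the $O_i$. Every measure in question is itself such an orbit measure, so it lifts to this component.

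The main point requiring care is the second case: verifying that the transitive hyperbolic sets $K_n$ exist with all their ergodic measures $\chi$-hyperbolic (possibly after slightly reducing $\chi$). If that becomes delicate, the fallback is to note that when $B$ carries a $\chi$-hyperbolic periodic orbit $\mathcal O$ and at least one other orbit homoclinically related to it, the horseshoe they generate carries non-periodic $\chi'$-hyperbolic ergodic measures. This returns us to the first case, with $\chi$ adjusted.
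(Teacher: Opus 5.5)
Your argument is correct and is essentially the paper's: the corollary is stated without proof as an immediate consequence of Theorem~\ref{thm.irrcomp}, using that any two measures carried by the same Borel homoclinic class are homoclinically related. Your Case~2 only makes explicit something the paper passes over, namely that the proof of Lemma~\ref{irr.l3} never uses non-periodicity of the reference measure. The one thing to drop is the fallback of ``adjusting $\chi$'', since the TMF is tied to the fixed $\chi$. It is not needed anyway: finitely many $\chi$-hyperbolic orbits $O_1,\dots,O_n$ have a uniform margin beyond $\chi$, so $K_n$ can be chosen with all ergodic measures $\chi$-hyperbolic, exactly as in Lemma~\ref{irr.l3}.
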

\qed

\subsection{SPR coding}

The aim of this section is to lift an SPR flow on $M$ to an SPR TMF.

Suppose that $B=M$ or $B\subset M$ is a Borel homoclinic class, and in each case, $B$ is SPR for a given H\"older potential $\vartheta : M\to\mathbb{R}$. The SPR property  says that there exists $\bar{\chi}>0$ such that for any small $\varepsilon>0$, there is a Pesin block $\Pes_{\bar{\chi}}(\ell,\varepsilon)$, a compact set $\Lambda\subset \Pes_{\bar{\chi}}(\ell,\varepsilon)$ and numbers $P_0<P(\varphi|_{\overline{K}})$, $\tau\in(0,1)$ satisfying: if $\nu$ is an ergodic probability measure with $P_{\nu}(\varphi,h)>P_0$ then $\nu(\Lambda)>\tau$.

By the definition of Pesin blocks (Definition~\ref{def.pesin}), there exists $\bar\ell>\ell$ such that for any $x\in \Pes_{\bar{\chi}}(\ell,\varepsilon)$ and $|t|\le \sup \wh r$, we have $\varphi_t(x)\in \Pes_{\bar{\chi}}(\bar\ell,\varepsilon)$. Take $\chi<\bar\chi$, 
let $(\wh\Sigma_{\wh r},\wh\sigma_{\wh r})$ be the locally compact TMF  given by Theorem \ref{thm.strongthm} or Corollary~\ref{cor.Borelclass}. Note that is $B$ is a Borel homoclinic class, then $(\wh\Sigma_{\wh r},\wh\sigma_{\wh r})$ is irreducible.

Next we show that this TMF satisfies the SPR property for $\vartheta \circ \wh\pi_{\wh r}$. The strategy follows from \cite[Proposition 10.4]{BCS25}, in which they introduce a crucial property of symbolic models: {\it $\chi$-bornology}.  Since here we just want to find an SPR coding, we only need the first part of their proof.

\begin{thm}\label{thm.SPRcoding}
Let $B$, $(\wh\Sigma_{\wh r},\wh \sigma_{\wh r})$ and $\vartheta:M\to \mathbb{R}$ be as above. 
Then  $(\wh\Sigma_{\wh r},\wh \sigma_{\wh r})$ is SPR for the H\"older potential $\vartheta \circ \wh\pi_{\wh r}$.
\end{thm}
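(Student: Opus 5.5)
The plan is to produce a finite union of cylinders $\mathbf C\subset\wh\Sigma$ whose suspension carries a uniform proportion of every high-pressure ergodic measure of the TMF. Two preliminary observations come first.

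The pressure of the TMF agrees with that of the flow on $B$. By Theorem~\ref{thm.strongthm}~\ref{cod2} (respectively Corollary~\ref{cor.Borelclass}), every $\chi$-hyperbolic regular ergodic measure on $B$ lifts with the same entropy. Conversely, every ergodic $\ol\nu$ on $\wh\Sigma_{\wh r}$ projects, with equal entropy, to a regular hyperbolic ergodic measure carried by $B$; this uses finite-to-one on $\wh\Sigma^\#_{\wh r}$ together with Lemma~\ref{irr.l1} in the Borel class case. Since $\int\vartheta\circ\wh\pi_{\wh r}\,d\ol\nu=\int\vartheta\,d\nu$, the two pressures coincide. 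Moreover, SPR on $B$ with constant $\bar\chi>\chi$ forces measures with $P_\nu>P_0$ to satisfy $\nu(\Lambda)>\tau$. After enlarging $P_0$ if needed, I would argue as in \cite{BCS25} that these measures are $\chi$-hyperbolic, so that $P_{\rm TOP}(\wh\sigma_{\wh r},\vartheta\circ\wh\pi_{\wh r})=P_{\rm TOP}(X|_B,\vartheta)$. Finiteness of Gurevich entropy follows from $h_{\rm top}(X)<\infty$, since $X$ is $C^1$ on a compact manifold.

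The core step follows the first part of \cite[Proposition~10.4]{BCS25}. First I replace $\Lambda$ by the compact set $\Lambda':=\varphi_{[-\sup\wh r,\sup\wh r]}(\Lambda)\subset\Pes_{\bar\chi}(\bar\ell,\varepsilon)$. Because $\Lambda$ is compact and avoids $\Sing(X)$, the set $\Lambda'$ stays at a positive distance from $\Sing(X)$, so only finitely many sections $\wh D_j$ meet it, by local finiteness in Theorem~\ref{thm.poincaresection}. At each $x\in\Lambda'\cap\mathbb D$ the Pesin block bounds give $\|C(x)^{-1}\|\le K(\bar\ell,\varepsilon)$, hence $Q(x)\ge Q_0>0$. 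Using the ratio estimates of Lemma~\ref{lem.pqcomp} along the orbit (with $\varepsilon$ small relative to $\bar\chi-\chi$), one gets $q(x)\ge q_0>0$ for points of a Pesin block. Theorem~\ref{invs}~(3) then shows that any lift $\ul R$ with $\wh\pi(\ul R)\in\Lambda'$ is shadowed by a chart $\wt v_0^*$ with $p^s_0\wedge p^u_0\ge e^{-\sqrt[3]\varepsilon}q_0 e^{-\mathfrak Y}$. By local discreteness (Proposition~\ref{pro.SDR}~\ref{SDR.D}), restricted to the finitely many relevant $j$, only finitely many double charts occur. Lemma~\ref{finrefine} converts this into finitely many rectangles, and I set $\mathbf C:=\bigcup\{[R]: R\subset\varphi_{[-\rho,\rho]}(Z(\wt v^*))\text{ for such }\wt v^*\}$.

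Now take an ergodic $\ol\nu$ with $P_{\ol\nu}>P_0$ and let $\nu=(\wh\pi_{\wh r})_*\ol\nu$. Then $P_\nu>P_0$, so $\nu(\Lambda)>\tau$. Every orbit through $\Lambda$ hits $\mathbb D\cap\Lambda'$ within time $\sup\wh r$, and the corresponding symbol lies in $\mathbf C$. Flow-invariance then gives $\ol\nu(\{(\ul R,t):\ul R\in\mathbf C\})\ge \tau\cdot\inf\wh r/(2\sup\wh r)$, and Proposition~\ref{pro.relCT} transfers this to the base if desired. I expect the main obstacle to be the uniform lower bound on $q$ (rather than only on $Q$) for points of $\Lambda$, since $q$ involves an infimum over the whole orbit. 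I would first try to derive it from the tempered bound $e^{\varepsilon|m|}$ in Definition~\ref{def.pesin}, which gives $\|C(\varphi_t x)^{-1}\|\lesssim \bar\ell e^{\varepsilon|t|}$ and hence $e^{\varepsilon'|t|}Q(\varphi_t x)$ bounded below once the exponents are chosen compatibly.
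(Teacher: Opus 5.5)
Your proposal is correct and follows essentially the same route as the paper, which also adapts the first part of \cite[Proposition~10.4]{BCS25}. The shared steps are: a compact set in a slightly larger Pesin block kept away from $\Sing(X)$ (the paper uses $\Lambda_{\mathscr R}$ where you use $\varphi_{[-\sup\wh r,\sup\wh r]}(\Lambda)$); a polynomial bound on $\|C_\chi(x)^{-1}\|$ there (the paper cites \cite[Lemma~9.3]{BCS25}); a uniform lower bound on $p^s,p^u$ via Theorem~\ref{invs}; and then local discreteness together with Lemma~\ref{finrefine} to obtain finitely many cylinders.

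Your additional checks are points the paper leaves implicit: equality of the top pressures, finiteness of the Gurevich entropy, and the lower bound on $q$. For the last one, note that Lemma~\ref{lem.pqcomp} does not supply it; it comes from the tempered Pesin estimates, by choosing the Pesin-block $\varepsilon$ small relative to the coding $\varepsilon$, which the SPR definition allows. Finally, the inclusion $\wh\Sigma^{\#}_{\wh r}\cap\wh\pi_{\wh r}^{-1}(\Lambda)\subset\mathbf C_r$ directly gives $\ol\nu(\mathbf C_r)\ge\nu(\Lambda)>\tau$, so the factor $\inf\wh r/(2\sup\wh r)$ is unnecessary.
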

\begin{proof}
Proceeded as \cite[Lemma 9.3]{BCS25}, there exist numbers $A,B>0$, depending on $\chi,\varepsilon$ and $X$, such that $\bar\ell\geq B\norm{C_{\chi}(x)^{-1}}^{A}$ for all $x\in\Pes_{\bar{\chi}}(\bar\ell,\varepsilon)$. By the relation of $Q_{\varepsilon}(x)$ and $\norm{C_{\chi}(x)^{-1}}$, the same holds for $Q_{\varepsilon}(x)$ by adjusting $A, B$. In addition, by Theorem \ref{invs}, every double chart in the pseudo orbit shadowing a point $x\in \Pes_{\bar{\chi}}(\bar\ell,\varepsilon)\cap \NUH^{\#}_{\chi}\cap \mathbb{D}$ satisfies $p^s,p^u>c_0$ for some $c_0>0$, which is uniform with respect to $x$.

Denote
\[
\Lambda_{\mathscr{R}}=\{\varphi_{t_0}(x): x\in \Lambda, \ t_0=\min\{t\ge 0: \varphi_{-t}(x)\in R\in\mathscr{R}\} \}.
\]

Since the compact set $\Lambda\subset  \Pes_{\bar{\chi}}(\ell,\varepsilon)$ is away from $\Sing(X)$, we have that  $\Lambda_{\mathscr{R}}\subset \Pes_{\bar{\chi}}(\bar\ell,\varepsilon)$ is also a  compact set  away from $\Sing(X)$. Then there are only finitely many components of $\mathbb{D}$ intersecting $\Lambda_{\mathscr{R}}$. 
According to Proposition \ref{pro.SDR} \ref{SDR.D} and the discussion in the previous paragraph, there are finitely many double charts contained in a pseudo orbit shadowing some point in $\Lambda_{\mathscr{R}}\cap \NUH^{\#}_{\chi}$. 

It follows from Lemma \ref{finrefine} that there is a finite union of cylinders $\mathbf{C}$ in $\Sigma$ such that
\[
\wh\Sigma^{\#}\cap \wh\pi^{-1}(\Lambda_{\mathscr{R}}\cap \NUH^{\#}_{\chi})\subset \mathbf{C}.
\]

Let $\mathbf{C}_r=\{(\ul{x},t)\in (\wh\Sigma_{\wh r},\wh \sigma_{\wh r}):\ul{x}\in \mathbf C,\ t\in[0,r(\ul{x}))\}$ be the suspension of $\mathbf{C}$ in $\Sigma_r$. It follows that
\[
\wh\Sigma_{\wh r}^{\#}\cap \wh\pi_{\wh r}^{-1}(\Lambda\cap \NUH^{\#}_{\chi})\subset \mathbf{C}_r.
\]

Suppose that $\bar{\nu}\in \mathbb{P}(\Sigma_r)$ is an ergodic probability measure satisfying $P_{\bar{\nu}}(\sigma_r,\vartheta \circ\pi_r)>P_0$. Then according to Theorem \ref{thm.strongthm} \ref{cod2}, we have $P_{\nu}(\varphi,\vartheta )>P_0$, where $\nu$ is the projection of $\bar{\nu}$ on $M$. Since $\nu$ is ergodic, we have $\nu(\Lambda)>\tau$ by the SPR property on $B$. Thus $\bar{\nu}(\mathbf{C}_r)>\tau$. This proves the Theorem.
  \end{proof}

\part{Statistical properties of singular flows}\label{part.thermdyn}

The latter two parts present applications of the singular flow coding constructed in the first two parts. Due to limited notations, we will reuse certain previously introduced notations but assign them new meanings. The original meanings of these notations will not appear in the subsequent sections.

\section{Finiteness of equilibrium states}\label{sec.equili}

In this section, we consider the finiteness of equilibrium states for singular flows and prove Theorem~\ref{main.mme} and \ref{main.spr}. We say that a vector field is $C^{1+}$ if it is $C^{1+\beta}$ for some $\beta>0$.

\subsection{Equilibrium states on Borel homoclinic classs}

Let $B$ be a measurable invariant set for the flow and $\vartheta$ be a potential  on $M$. Recall that 
an  equilibrium state of $\vartheta$ for $X$  on $B$ is a measure $\mu\in\mathbb P(X|_B)$ such that $P_{\mu}(X,\vartheta)= P_{\rm TOP}(X|_B,\vartheta)$.
Denote  by ${\rm Per}_T(X|_B)$ the number of periodic orbits contained in $B$ with minimal positive period less than $T$.

As showed in \S~\ref{subsec.BHC}, any two homoclinically related measures are carried by the same Borel homoclinic class. Therefore,  the following theorem is a stronger version of Theorem~\ref{main.mme}.

\begin{thm}\label{thm.main.mme}
	Let  $X$ be a $C^{1+}$ vector field on $M$. Then for any H\"older continuous potential $\vartheta$ on $M$ and any Borel homoclinic class $B$ of $X$, there is at most one regular hyperbolic ergodic equilibrium state of $\vartheta$ for $X$  on $B$. If it exists,  it must be Bernoulli up to a period.  
	
	In particular, there exists  at most one regular hyperbolic ergodic MME for $X$  on $B$.
If it exists, then there is $C>0$ such that ${\rm Per}_T(X|_B)\ge C e^{T\cdot h_{\rm TOP}(X|_B)}/T$ for any $T$ large enough.
\end{thm}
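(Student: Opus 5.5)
The plan is to reduce everything to the irreducible topological Markov flow given by Corollary~\ref{cor.Borelclass}, and then invoke uniqueness results for equilibrium states on irreducible countable Markov flows (\cite[Theorem~1.1]{LLS}, \cite[Theorems~4.7 and 5.1]{LS}).

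Suppose $\mu_1,\mu_2$ are regular hyperbolic ergodic equilibrium states of $\vartheta$ on $B$. Each is $\chi_i$-hyperbolic for some $\chi_i>0$. I fix $\chi<\min\{\chi_1,\chi_2\}$ and take the irreducible component $\wh\Sigma'_{\wh r}$ from Corollary~\ref{cor.Borelclass} for this $\chi$. Since both measures are carried by $B$, each admits an ergodic lift $\wh\mu_i$ to $\wh\Sigma'_{\wh r}$.

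Next I transfer the pressures. Theorem~\ref{thm.strongthm}\ref{cod2} gives entropy preservation, since the coding is finite-to-one on the regular set, and trivially $\int\vartheta\circ\wh\pi_{\wh r}\,d\wh\mu_i=\int\vartheta\,d\mu_i$. So $P_{\wh\mu_i}(\wh\sigma_{\wh r},\vartheta\circ\wh\pi_{\wh r})=P_{\rm TOP}(X|_B,\vartheta)$. Conversely, every ergodic measure on $\wh\Sigma'_{\wh r}$ projects, by Theorem~\ref{thm.irrcomp}, to a measure homoclinically related to $\mu$, hence carried by $B$, with no larger entropy. Therefore $P_{\rm TOP}(\wh\sigma_{\wh r}|_{\wh\Sigma'},\vartheta\circ\wh\pi_{\wh r})=P_{\rm TOP}(X|_B,\vartheta)$, and the $\wh\mu_i$ are equilibrium states of the Hölder potential $\vartheta\circ\wh\pi_{\wh r}$ on an irreducible TMF. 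Hölder continuity of this potential comes from Theorem~\ref{thm.strongthm}\ref{cod1}, and finiteness of pressure from boundedness of $\vartheta$ together with finite entropy.

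By Proposition~\ref{pro.relationmme} their projections to the base are equilibrium states of $\Delta_\vartheta-P\cdot\wh r$ on an irreducible TMS. Uniqueness there (Buzzi--Sarig) forces $\wh\mu_1=\wh\mu_2$, hence $\mu_1=\mu_2$. The Bernoulli-up-to-a-period conclusion for the symbolic equilibrium state follows from \cite{LS}, after recoding via Lemma~\ref{lem.spectdecomp} to a mixing base. It passes to $\mu$ because $\wh\pi_{\wh r}$ is a finite-to-one measurable isomorphism on regular sets; I would use the lifting formula \eqref{liftmeas} and ergodicity to get bijectivity almost everywhere.

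For the periodic orbit count with $\vartheta\equiv0$, I apply the growth estimate for irreducible TMFs carrying an MME (\cite[Theorem~1.1]{LLS}). Projecting the symbolic periodic orbits gives periodic orbits in $B$, since they are homoclinically related to $\mu$ by Theorem~\ref{thm.irrcomp}. The multiplicity must be controlled: I would restrict to symbolic periodic orbits passing through a fixed vertex pair $(R,S)$. Their projections are at most $C(R,S)$-to-one by Theorem~\ref{thm.strongthm}\ref{cod3}, and the counting theorem still gives the exponential rate for that subfamily.

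I expect the main obstacle to be the pressure equality on $\wh\Sigma'_{\wh r}$. The potential issue is that nonergodic lifts or measures on the component might project outside $B$ or to non-$\chi$-hyperbolic measures; Theorem~\ref{thm.irrcomp} resolves this.
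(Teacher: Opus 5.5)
Your proposal follows the paper's proof. You lift both measures to the irreducible component of Corollary~\ref{cor.Borelclass} and invoke uniqueness of equilibrium states on an irreducible TMF; your route through Proposition~\ref{pro.relationmme} and Buzzi--Sarig on the base is what the paper's citations \cite[Theorem~6.2]{LS} and \cite[Corollary~3.3]{BCS22} amount to. You then import the Bernoulli property and the periodic-orbit lower bound from the symbolic side, the latter with the fixed-vertex multiplicity control of Theorem~\ref{thm.strongthm}\ref{cod3}, as in \cite[Theorem~8.1]{LS}. Your explicit check that $P_{\rm TOP}(\wh\sigma_{\wh r}|_{\wh\Sigma'},\vartheta\circ\wh\pi_{\wh r})=P_{\rm TOP}(X|_B,\vartheta)$, via the second bullet of Theorem~\ref{thm.irrcomp}, is correct and fills in a point the paper leaves implicit.

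The one step that fails as written is the transfer of the Bernoulli property to $\mu$. Finite-to-one plus ergodicity does not make $\wh\pi_{\wh r}$ injective almost everywhere. Ergodicity only makes the number of preimages in the regular part of $\wh\Sigma'_{\wh r}$ almost surely constant, and that constant may exceed $1$, since ergodic finite-point extensions exist. The measure \eqref{liftmeas} spreads mass over all preimages, some possibly outside $\wh\Sigma'_{\wh r}$, so it cannot force the constant to be $1$. You do not need an isomorphism. The system $(M,\mu,\varphi_t)$ is a measurable factor of $(\wh\Sigma'_{\wh r},\wh\mu,\wh\sigma_{\wh r})$, and the Bernoulli-up-to-a-period property descends along this factor map by Ornstein theory, in particular the fact that factors of Bernoulli flows are Bernoulli (cf.\ \cite{LLS}). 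This is all the paper's ``consequently'' relies on.
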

\begin{proof}
	We first prove by contradiction that there is at most one regular hyperbolic ergodic equilibrium state of $\vartheta$ for $X$  on  a Borel homoclinic class $B$. Suppose that $\mu$ and $\nu$ are two different  regular ergodic equilibrium state of $\vartheta$ for $X$ on $B$, where $\mu$ is $\chi_1$-hyperbolic and $\nu$ is $\chi_2$-hyperbolic. Denote $\chi=\min\{\chi_1,\chi_2\}$. 
	
	By Theorem~\ref{thm.irrcomp}, there exists an irreducible component $\wh \Sigma'_{\wh r}$ of some TMF such that  $\mu$ and $\nu$ have  lifts $\wh\mu$ and $\wh\nu $ in it, respectively. Since $h_{\mu}(X)=h_{\wh{\mu}}(\wh{\sigma}_{\wh{r}})$ (Theorem~\ref{thm.strongthm}~(3)) and  $\int \vartheta d(\wh\mu\circ \wh\pi_{\wh r}^{-1})=\int \vartheta\circ \wh\pi_{\wh r} d\wh\mu $, both $\wh\mu$ and $\wh\nu $ are equilibrium states of $\vartheta\circ \wh\pi_{\wh r}$ for $\wh{\sigma}_{\wh{r}}$ on the irreducible component $\wh{\Sigma}_{\wh{r}}$, where $\wh\pi_{\wh r}$ is given by Theorem~\ref{thm.strongthm}. This contradicts the uniqueness of equilibrium states for  an irreducible component of a TMF with a H\"older continuous roof function. See \cite[Theorem~6.2]{LS} and \cite[Corollary~3.3]{BCS22} for details. 
	
	Assume that $\mu$ is a regular hyperbolic ergodic equilibrium state of $\vartheta$ on $B$. 
	Then $\wh\mu$ is an equilibrium states of $\vartheta\circ \wh\pi_{\wh r}$ for $\wh{\sigma}_{\wh{r}}$ on $\wh{\Sigma}_{\wh{r}}$. It is proved in \cite[Theorem~4.7 and 5.1]{LLS} that $\wh \mu$ is Bernoulli up to a period, and consequently $\mu$ shares this property.
	
	Recall that the MMEs are the equilibrium states of $\vartheta\equiv 0$. Thus there is at most one regular hyperbolic ergodic MME for $X$  on $B$. If there exists an MME on $B$, the growth rate estimate for periodic orbits follows similarly from \cite[Theorem 8.1]{LS}.
\end{proof}

\begin{rk}\label{rk.quasiholder}
\begin{enumerate}
  \item The preceding proof shows that only H\"older continuity of $\vartheta\circ \wh\pi_{\wh r}$
 is actually required. It is introduced in \cite{BCS25} the notion of {\rm  quasi-H\"older continuity}, extending the applicability of corresponding theorems to a wider class of potentials that include geometric potentials. Although the normal bundle fails to be uniformly continuous for singular flows, it is H\"older continuous in the scaled sense (see \cite[\S~4.2]{LLL2024}). Since the projection map  $\wh\pi_{\wh r}$ is also  H\"older continuous, one may similarly define quasi-H\"older continuity for singular flows.
 \item The coding provided by Theorem~\ref{thm.irrcomp} cannot generally be used to derive upper bounds for the growth rate of periodic orbits, since $B$ may contain numerous periodic orbits with hyperbolicity weaker than  $\chi$, which used in the coding process.

\end{enumerate}
	
\end{rk}

\subsection{Equilibrium states for SPR sets}

Now we consider the SPR case. The following theorem is a detailed version of Theorem~\ref{main.spr}.

\begin{thm}\label{thm.main.spr}
	Let $X$ be a $C^{1+}$ vector field  on $M$ and $\vartheta$ be a H\"older continuous potential on $M$. 
	\begin{itemize}
  \item If $X$ is SPR for $\vartheta$ on a Borel homoclinic class $B$, then there exists a unique   equilibrium state of $\vartheta$ for $X$  on $B$, which is Bernoulli up to a period.
  \item If $X$ is SPR for $\vartheta$ on $M$, then there exist only finitely many ergodic equilibrium states of $\vartheta$ for $X$, and each is Bernoulli up to a period.
    \end{itemize}
\end{thm}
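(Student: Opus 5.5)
The plan is to transfer the problem to the symbolic side through the SPR coding, Theorem~\ref{thm.SPRcoding}, and then use the thermodynamic formalism of SPR countable Markov shifts.

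\emph{First bullet.} Let $X$ be SPR for $\vartheta$ on a Borel homoclinic class $B$, with constant $\bar\chi$ from the SPR definition. Fix $\chi<\bar\chi$ and take the irreducible component $(\wh\Sigma'_{\wh r},\wh\sigma_{\wh r})$ given by Corollary~\ref{cor.Borelclass}. By Theorem~\ref{thm.SPRcoding}, this TMF is SPR for $\vartheta\circ\wh\pi_{\wh r}$.

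First I will check that the pressures agree, i.e.\ $P_{\rm TOP}(\wh\sigma_{\wh r},\vartheta\circ\wh\pi_{\wh r})=P_{\rm TOP}(X|_B,\vartheta)$.
\begin{itemize}
\item For "$\le$": projections of ergodic measures on the component are carried by $B$ (Theorem~\ref{thm.irrcomp}), and the projection preserves entropy (Theorem~\ref{thm.strongthm}~\ref{cod2}).
\item For "$\ge$": take an ergodic $\nu$ on $B$ with pressure $>P_0$. By SPR it gives mass $>\tau$ to $\Lambda\subset\Pes_{\bar\chi}(\ell,\varepsilon)$. Arguing as in the existence of hyperbolic Pesin blocks, such a measure is $\bar\chi$-hyperbolic up to $\varepsilon$-errors. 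I expect this is where the $(\chi,\varepsilon)$-block definition gives $\chi$-hyperbolicity for $\chi<\bar\chi$, possibly after shrinking $\varepsilon$. Hence $\nu$ lifts to the component, and the lift has the same pressure.
\end{itemize}

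Next I pass to the base shift. By Proposition~\ref{pro.relCT}, the base TMS is SPR for $\Delta_{\vartheta\circ\wh\pi_{\wh r}}-P\cdot\wh r$. An irreducible SPR TMS with finite Gurevich pressure and a Hölder potential has a unique equilibrium state: this is the Gurevich/Sarig positive recurrence theory, where SPR implies positive recurrence and hence existence via the RPF measure. By Proposition~\ref{pro.relationmme}, this yields a unique equilibrium state $\wh\mu$ on the TMF. Its projection $\mu$ is an equilibrium state on $B$, using the pressure equality above.

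Uniqueness on $B$ follows as in Theorem~\ref{thm.main.mme}. Any ergodic equilibrium state on $B$ has pressure $>P_0$, so by the argument above it is hyperbolic and regular, and Theorem~\ref{thm.main.mme} applies. The Bernoulli-up-to-period property is also inherited from Theorem~\ref{thm.main.mme}.

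\emph{Second bullet.} Suppose $X$ is SPR for $\vartheta$ on $M$. Every ergodic equilibrium state $\mu$ has pressure $>P_0$, so $\mu(\Lambda)>\tau$. Therefore $\mu$ is regular, and it is $\chi$-hyperbolic by the same Pesin-block argument, so it lies in some Borel homoclinic class (Proposition~\ref{pro.proBHC}). By Theorem~\ref{thm.main.mme}, each class carries at most one such measure, so it remains to bound the number of classes involved.

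For this, code $M$ via Theorem~\ref{thm.strongthm}, which is SPR by Theorem~\ref{thm.SPRcoding}. The lift of each equilibrium state gives mass $>\tau'$ to the finite union of cylinders $\mathbf C$. Hence it lives on an irreducible component meeting $\mathbf C$, and there are only finitely many such components because $\mathbf C$ involves finitely many vertices. Two equilibrium states lifted to the same component are homoclinically related by Lemma~\ref{irr.l1}, hence they lie in the same class and coincide. This gives finiteness.

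Existence is not required for the second bullet. For the first bullet, existence comes from the SPR symbolic theory as described above.

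The main obstacle is the pressure-comparison step: showing that every measure with pressure $>P_0$ is genuinely $\chi$-hyperbolic and lifts to the coding. This relies on the Pesin-block mass condition, in the style of \cite[Lemma 9.3]{BCS25}.
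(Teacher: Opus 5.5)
Your proposal is correct. For the first bullet you follow the paper's route: the irreducible SPR coding of $B$ (Corollary~\ref{cor.Borelclass} and Theorem~\ref{thm.SPRcoding}), then existence from the symbolic SPR theory, then uniqueness and the Bernoulli property from Theorem~\ref{thm.main.mme}. The paper simply cites \cite[Theorem~11.6]{BCS25} for existence, where you go through Propositions~\ref{pro.relCT} and \ref{pro.relationmme} and Gurevich--Sarig positive recurrence. You also spell out the identity $P_{\rm TOP}(\wh\sigma_{\wh r},\vartheta\circ\wh\pi_{\wh r})=P_{\rm TOP}(X|_B,\vartheta)$, which the paper leaves implicit. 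The step you flag as the main obstacle is actually easy. An ergodic measure charging $\Pes_{\bar\chi}(\ell,\varepsilon)$ is regular. At an Oseledets-regular point of the block, the splitting $\N^s_x\oplus\N^u_x$ forces every transverse exponent to have modulus at least $\bar\chi$ by dimension count, with no $\varepsilon$-loss. So the measure is $\chi$-hyperbolic for every $\chi<\bar\chi$ and lifts by Theorem~\ref{thm.strongthm}.

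For the second bullet your route is genuinely different. The paper argues geometrically and by contradiction: infinitely many ergodic equilibrium states would lie in pairwise disjoint Borel homoclinic classes $B_n$. Points $x_n\in B_n\cap\Lambda$ then accumulate in the compact set $\Lambda$. Uniform size and transversality of local stable and unstable manifolds on $\Lambda$ would make $x_i\sim x_j$ for large $i,j$. You stay on the symbolic side instead, in three steps.
\begin{enumerate}
\item The inclusion $\wh\Sigma^{\#}_{\wh r}\cap\wh\pi_{\wh r}^{-1}(\Lambda\cap\NUH^{\#}_\chi)\subset\mathbf C_r$ from the proof of Theorem~\ref{thm.SPRcoding} forces each lifted equilibrium state to charge a vertex of $\mathbf C$.
\item An ergodic measure on the shift is carried by the single irreducible component containing the vertices it charges. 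Only finitely many components contain vertices of $\mathbf C$.
\item Lemma~\ref{irr.l1} together with Theorem~\ref{thm.main.mme} allows at most one equilibrium state per component.
\end{enumerate}

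Your version reuses the coding already built rather than the uniform Pesin-manifold estimates on $\Lambda$, which the paper asserts without proof for flows. It also gives an explicit bound: the number of ergodic equilibrium states is at most the number of irreducible components meeting the vertices of $\mathbf C$. The paper's version needs no lifting at all for this bullet. One fact you should state explicitly is why an ergodic measure on $\wh\Sigma$ lives on one irreducible component. By ergodicity, the vertices of positive mass are mutually reachable, so they lie in one maximal connected component, and almost every sequence uses only those vertices.
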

\begin{proof}
	The first item follows from Theorem~\ref{thm.SPRcoding}, Theorem~\ref{thm.main.mme} and \cite[Theorem~11.6]{BCS25}. We focus on the second one. 
	
	To prove by contradiction, assume that there exists a sequence of different  ergodic equilibrium states $\mu_n$ of $\vartheta$ for $X$. Since $X$ is  SPR for $\vartheta$, there exists a Pesin block $\Pes_{\chi}(\ell,\varepsilon)$, a compact subset $\Lambda\subset \Pes_{\chi}(\ell,\varepsilon)$ and a constant $\tau\in(0,1)$ such that $\mu_n(\Lambda)>\tau$ for all $n$. 
	This establishes that each $\mu_n$ is regular. Consequently, by Proposition~\ref{pro.proBHC}, every $\mu_n$ is carried by a Borel homoclinic class $B_n$, and Theorem~\ref{thm.main.mme} shows that the $B_n$ are mutually disjoint.
	
	For each $n$, take $x_n\in B_n\cap \Lambda$.
By passing to a subsequence if necessary, we assume $x_n\to x\in \Lambda$. The local stable/unstable manifolds have uniform size throughout $\Lambda$, and the angles between stable/unstable subspaces are also uniform. Consequently, for sufficiently large indices $i$ and $j$, the points $x_i$ and $x_j$  are homoclinically related. This contradicts $B_i\cap B_j=\emptyset$, completing the proof.
\end{proof}

\section{Transfer operator of one-side TMS}\label{sec.tranoper}

The transfer operator (Ruelle operator) serves as a powerful tool for analyzing the statistical properties of symbolic systems. In this section, we will examine the properties of the transfer operator for SPR TMS.

\subsection{Transfer operator, one-side TMS and spectral gap}

\subsubsection{One-sided TMS}
Let $\mathscr{G}$ be a directed graph with vertex set $\mathscr{V}$ and edge set $\mathscr{E}=\{u\to v: u,v\in\mathscr{V}\}$. Assume that $\mathscr{G}$ is proper (see Section 5.1) and has at most countable vertices. Define (recall that $\bN_0=\{0\}\cup \bZ^+$)
\[
\Sigma^{+}=\Sigma^{+}(\mathscr{G})=\{\ul{x}=(x_0,x_1,\dots)\in\mathscr{V}^{\bN_0}:x_i\to x_{i+1} \text{ for all } i\in\bN_0\},
\]
which is endowed with a metric $d(\ul{x},\ul{y})=e^{-\min\{n\geq 0:x_i\neq y_i\}}$ and an action of the {\it left shift map} $\sigma$. The system $(\Sigma^{+},\sigma)$ is called a {\it one-sided TMS} induced by the graph $\mathscr{G}$. Since $\mathscr{G}$ is proper, $\Sigma^{+}$ is locally compact. Furthermore, if $\mathscr{G}$ is connected then $\Sigma^{+}$ is called {\it irreducible}. 

As before, denote by $(\Sigma,\sigma)$ the two-sided TMS generated by $\mathscr{G}$. Between $(\Sigma,\sigma)$ and $(\Sigma^{+},\sigma)$, there is a natural semi-conjugacy $\pi^{+}: \Sigma\to\Sigma^{+}$ such that $\pi^{+}(\{u_n\}_{n\in\mathbb{Z}})=\{u_n\}_{n\geq 0}$. 

This semi-conjugacy establishes a bijection between $\mathbb{P}(\Sigma)$ and $\mathbb{P}(\Sigma^{+})$ given by $\mu^{+}=\mu\circ (\pi^{+})^{-1}$. Both $\mu^{+}$ and $\mu$ are invariant. They have the same entropy, and $\mu^{+}$ is ergodic (resp. mixing) if and only if $\mu$ is ergodic (resp. mixing).

Moreover, every function $h^{+}:\Sigma^{+}\to\mathbb{R}$ can be lifted to a {\it one-side function} $h^{+}\circ\pi^{+}:\Sigma\to\mathbb{R}$.  We say that a one-side TMS $(\Sigma^+,\sigma)$ is {\it SPR} for $h^{+}:\Sigma^{+}\to\mathbb{R}$, if the associated two-side TMS $(\Sigma,\sigma)$ is SPR for $h^{+}\circ \pi^+:\Sigma\to\mathbb{R}$. 

Conversely, we have the following Sinai's lemma (see \cite{Bow08}).
\begin{lem}\label{Sinailem}
For any $\beta>0$, there is $C_{\beta}>0$ such that for every $\beta$-H\"older continuous potential $h$ on the two-sided TMS $\Sigma$, there are $\beta/2$-H\"older continuous functions $h^{+}:\Sigma^{+}\to\mathbb{R}$ and $\varrho:\Sigma\to\mathbb{R}$ such that $h^{+}\circ\pi^{+}=h+\varrho-\varrho\circ\sigma$ and satisfies $\norm{h^{+}}_{\beta/2}\leq C_{\beta}\norm{h}_{\beta}$.
\end{lem}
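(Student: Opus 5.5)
The plan is the classical Sinai construction: a coboundary that replaces $h$ by a function depending only on future coordinates. For each vertex $a\in\mathscr V$ that occurs as $x_0$ of some point, I fix a reference past. Because $\mathscr G$ is proper, every vertex has an incoming edge, so there is a left-infinite admissible sequence $(\dots,z^{a}_{-2},z^{a}_{-1})$ with $z^{a}_{-1}\to a$. For $\ul x\in\Sigma$ I set $\theta(\ul x)$ to be the point whose coordinates agree with $\ul x$ for indices $n\ge 0$ and equal $z^{x_0}_n$ for $n<0$. Then $\theta(\ul x)$ depends only on $\pi^+(\ul x)$, and it is admissible because the last reference symbol connects to $x_0$. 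I then define
\[
\varrho(\ul x)=\sum_{n\ge 0}\bigl(h(\sigma^n\ul x)-h(\sigma^n\theta(\ul x))\bigr).
\]
The points $\sigma^n\ul x$ and $\sigma^n\theta(\ul x)$ agree at all indices $\ge -n$. Since $h$ is locally $\beta$-H\"older, the $n$-th term is bounded by $\norm{h}_\beta e^{-\beta n}$, so the series converges uniformly and $|\varrho|\le C\norm{h}_\beta$.

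Next I set $h^+\circ\pi^+:=h+\varrho-\varrho\circ\sigma$ and check that the right side depends only on $\pi^+(\ul x)$. Telescoping gives
\[
h+\varrho-\varrho\circ\sigma = h(\theta(\ul x))+\sum_{n\ge0}\bigl(h(\sigma^{n+1}\theta(\ul x))-h(\sigma^{n}\theta(\sigma\ul x))\bigr).
\]
This is a series each of whose terms is a function of $\theta(\ul x)$ and $\theta(\sigma\ul x)$, hence only of $\pi^+(\ul x)$. Convergence holds because $\sigma\theta(\ul x)$ and $\theta(\sigma\ul x)$ agree at indices $\ge 0$. Boundedness of $h^+$ follows from $|h^+|\le |h|+2|\varrho|$.

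The main step is the H\"older estimate: showing $\varrho$ and $h^+$ are $\beta/2$-H\"older with constant at most $C_\beta\norm{h}_\beta$. Take $\ul x,\ul y$ with $x_i=y_i$ for $|i|\le N$; for $h^+$ on $\Sigma^+$, take $x_i=y_i$ for $0\le i\le N$. I split the sum defining $\varrho$ at $n=N/2$. For $n\le N/2$, the points $\sigma^n\ul x,\sigma^n\ul y$ agree on a window of size $\ge N/2$, and likewise for the $\theta$-versions, since $\theta$ uses the same reference past when $x_0=y_0$. Each such difference is therefore at most $\norm{h}_\beta e^{-\beta N/2}$. There are about $N/2$ such terms, and the factor $N e^{-\beta N/2}$ is absorbed by loosening the exponent, with constant depending only on $\beta$. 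For $n>N/2$, each term is bounded individually by $2\norm{h}_\beta e^{-\beta n}$, and the tail sums to $O(e^{-\beta N/2})$. This yields $\beta/2$-H\"older continuity with a uniform constant $C_\beta$. The same splitting applied to the telescoped series handles $h^+$ on $\Sigma^+$.

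One subtlety is that $h$ is only locally H\"older, meaning the estimate is assumed only when the zeroth symbols agree. All comparisons above are between points sharing the relevant zeroth symbol, so this suffices.
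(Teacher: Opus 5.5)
Your construction is the classical one from Bowen's book, which is what the paper cites for this lemma in place of a proof, and your convergence and sup-norm bounds are fine. However, the telescoping identity is wrong by a sign.

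With your $\varrho$, split off the $n=0$ term of $\varrho(\ul{x})$ and subtract $\varrho(\sigma\ul{x})=\sum_{n\ge0}\bigl(h(\sigma^{n+1}\ul{x})-h(\sigma^{n}\theta(\sigma\ul{x}))\bigr)$ termwise; this is legitimate since all the series converge absolutely. You get
\[
\varrho(\ul{x})-\varrho(\sigma\ul{x})=h(\ul{x})-E(\ul{x}),\qquad E(\ul{x}):=h(\theta(\ul{x}))+\sum_{n\ge0}\bigl(h(\sigma^{n+1}\theta(\ul{x}))-h(\sigma^{n}\theta(\sigma\ul{x}))\bigr).
\]
So the future-only function $E$ you wrote down equals $h-\varrho+\varrho\circ\sigma$. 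By contrast, $h+\varrho-\varrho\circ\sigma=2h-E$ still depends on the past; for $h(\ul{x})=f(x_{-1})$ it contains $2f(x_{-1})$. Since the lemma asks for $h^{+}\circ\pi^{+}=h+\varrho-\varrho\circ\sigma$, you must flip the sign and set $\varrho(\ul{x})=\sum_{n\ge0}\bigl(h(\sigma^n\theta(\ul{x}))-h(\sigma^n\ul{x})\bigr)$. None of your estimates is affected by this change.

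There is also a smaller problem: your H\"older bookkeeping, as written, does not reach the exponent $\beta/2$. You bound each of the roughly $N/2$ early terms by $e^{-\beta N/2}$, which gives $Ne^{-\beta N/2}$. Absorbing the factor $N$ by loosening the exponent only yields $\beta/2-\epsilon$, not $\beta/2$.

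Use the actual window instead. For $n\le N$, the pairs $\sigma^n\ul{x},\sigma^n\ul{y}$ and $\sigma^n\theta(\ul{x}),\sigma^n\theta(\ul{y})$ share their zeroth symbol and agree on indices $|i|\le N-n$. Hence the $n$-th difference is at most $2\norm{h}_\beta e^{-\beta(N-n)}$, and
\[
\sum_{n\le N/2}e^{-\beta(N-n)}\le \frac{e^{-\beta N/2}}{1-e^{-\beta}}.
\]
Adding the tail $\sum_{n>N/2}2\norm{h}_\beta e^{-\beta n}$ gives exactly exponent $\beta/2$, with a constant depending only on $\beta$.

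The same argument works for $h^{+}$ on $\Sigma^{+}$ when $x_i=y_i$ for $0\le i\le N$ with $N\ge1$. You need $N\ge 1$ so that $x_1=y_1$, which makes $\theta(\sigma\ul{x})$ and $\theta(\sigma\ul{y})$ use the same reference past. The case $N=0$ follows from boundedness of $h^{+}$.
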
 

Let $h$ on $\Sigma$ and $h^{+}$ on $\Sigma^{+}$ be as in Lemma \ref{Sinailem}. It is easy to see that $\mu$ is an equilibrium state for $h$ if and only if $\mu^{+}$ is an equilibrium state for $h^{+}$.
And if the two-sided TMS $(\Sigma,\sigma)$ is SPR for some potential $h$, then  the one-sided TMS $(\Sigma^{+},\sigma)$ is SPR for the potential $h^{+}$.

\subsubsection{One-sided TMF}
Let $r:\Sigma^{+}\to (0,\infty)$ be a roof function (see Section 5.1.2). Then the suspension semiflow $(\Sigma^{+}_r,\sigma_r)$ of $\Sigma^{+}$ with the roof function $r$ is called a {\it one-sided} TMF. 

Suppose $(\Sigma_r,\sigma_r)$ is a two-sided TMF with a $\theta$-H\"older continuous roof function $r$. Due to Lemma \ref{Sinailem}, there are $\theta/2$-H\"older continuous functions $r^{+}:\Sigma^{+}\to\mathbb{R}$ and  $\varrho:\Sigma\to\mathbb{R}$ such that $r^{+}\circ \pi^{+}=r+\varrho-\varrho\circ\sigma$.  Note that it is not necessary a roof function on $\Sigma^{+}$ since $\inf r^{+}$ may be non-positive. 

To deal with this problem, one may replace $\sigma$ by a sufficiently large $\sigma^{n}$. Then $(r^{+}\circ \pi^{+})_n=r_n+\varrho-\varrho\circ\sigma^{n}$, where $(r^{+}\circ \pi^{+})_n$ and $r_n$ are the $n$-th Birkhoff sum of $r^{+}\circ \pi^{+}$ and $r$, respectively. Since $\inf r_n\geq n\inf r>0$ and $\varrho$ is upper bounded, so  $(r^{+}\circ \pi^{+})_n$ is away from zero for a large enough $n$. Note that $(r^{+}\circ \pi^{+})_n$ is $\beta/2$-H\"older continuous and bounded away from infinity. Thus we get a roof function $r^{+}_n$ on the one-sided TMS $\Sigma^{+}$. 

In what follows, we always fix such a large $n\in\mathbb{Z}^{+}$ and consider the suspension flow over $(\Sigma,\sigma^n)$ with the roof function $r_n$. Thus there is no loss of generality in assuming that both $r$ and $r^{+}$ are roof functions on $\Sigma$ and $\Sigma^{+}$, respectively.
Denote $(\Sigma^{+}_r,\sigma_r)$ the corresponding one-side TMF.

\subsubsection{Transfer operators}
Let $(\Sigma^{+},\sigma)$ be an irreducible  one-sided TMS and $\mu$ be equilibrium measure for a H\"older continuous potential  $h:\Sigma^{+}\to \bR$.
Denote by $\mathcal{L}^1(\mu)$ the set of all $L^1$-integrable functions on $\Sigma^{+}$. The {\it transfer operator (Ruelle operator)} $\mathcal{L}_{h}: \mathcal{L}^1(\mu)\to\mathcal{L}^1(\mu)$ with respect to $h$ is given by
\[
\mathcal{L}_{h}f(\ul{x})=\sum_{\sigma(\ul{y})=\ul{x}}e^{h(\ul{y})}f(\ul{y}).
\]
Since $\Sigma^{+}$ is locally compact, the sum converges for all $\ul{x}$.

To study the  mixing rate for suspension flows, we also consider the twist transfer operators.
Let $r:\Sigma^+\to \bR^+$ be a roof function on $\Sigma^+$.
For any $s=a+ib\in\mathbb{C}$ ($a,b\in\mathbb{R}$), 
the {\it twisted transfer operator} $\mathcal{L}_{{h},s}: \mathcal{L}^1(\mu)\to \mathcal{L}^1(\mu)$ corresponding to ${h}$ is defined as
\[
\mathcal{L}_{{h},s}f(\ul{x})=\sum_{\sigma(\ul{y})=\ul{x}}e^{{h}(\ul{y})+sr(\ul{y})}\cdot f(\ul{y}).
\]

In the following, we write $\mathcal{L}_s=\mathcal{L}_{{h},s}$ for simplicity. In particular, $\mathcal{L}_0=\mathcal{L}_{{h}}$.

\subsubsection{Spectral gap}

Assume that $(\Sigma^{+},\sigma)$ is a topological mixing SPR one-sided TMS and $h$ is a H\"older continuous potential on $\Sigma^{+}$. Let $\mu$ be the unique equilibrium state of $h$.
Cyr-Sarig proved the existence of spectral gap for the transfer operator in an appropriate Banach space.  

Let $\theta\in (0,1)$. For a function $f:\Sigma^+\to \bR$, denote
\[
\norm{f}_{\theta}=\sup_{\ul{x}\in \mathscr{V}}|f(\ul{x})|
+ \sup\left\{\frac{|f(\ul{x})-f(\ul{y})|}{\theta^{t(\ul{x},\ul{y})}}:\ul{x}\neq \ul{y}\in \Sigma^+\right\},
\]
where $t(\ul{x},\ul{y})=\min\{i\geq0: x_i\neq y_i\}$. Let $C_{\theta}(\Sigma^+)$ be the space of H\"older continuous functions $f$ such that $\norm{f}_{\theta}<\infty$.

\begin{thm}{\rm \cite{CS}}\label{thm.CS2009}
	There exists a locally H\"older function $\bar{h}$ that is cohomology to $h$ such that for any \(\theta> 0\) there exists a  Banach space \((\mathscr{L}, \norm{\cdot}_{\mathscr{L}})\) of continuous functions, which is a subspace of $\mathcal{L}^1(\mu)$, such that:
	\begin{enumerate}
		\item $\mathcal{L}_{\bar{h}}$ is a positive, linear bounded operator and preserves $\mathscr{L}$. 
		               
		\item\label{CS.spectgap} \(\mathcal{L}_{\bar{h}}= P + N\), where \(P\) and \(N\) are bounded linear operators on \(\mathscr{L}\) such that \(Pf = \int f \, d\mu \cdot 1\), \(PN = NP = 0\), \(P^2 = P\), \(\operatorname{rank}(P) = 1\) and \(N\) has spectral radius less than $1$.		
		\item $\mathcal{L}_{\bar{h}}1=1$ and $\mathcal{L}_{\bar{h}}^*\mu=\mu$.

		\item $C_{\theta}(\Sigma^{+})\subset\mathscr{L}\subset \mathcal{L}^1(\mu)$ and $\norm{f}_{\mathscr{L}}\leq C_1\norm{f}_{\theta}$ for some constant $C_1>1$.  Moreover, if $f\in C_{\theta}(\Sigma^{+})$ and $g\in\mathscr{L}$ then $\norm{f\cdot g}_{\mathscr{L}}\leq \norm{f}_{\theta}\cdot \norm{g}_{\mathscr{L}}$.	
		\end{enumerate}
\end{thm}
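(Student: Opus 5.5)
This theorem is cited from \cite{CS}, so the plan is to reconstruct the Cyr--Sarig argument rather than prove anything new. It has three parts: normalize the potential, build a Banach space adapted to SPR, and verify a Doeblin--Fortet (Lasota--Yorke) inequality with compact embedding.

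First, normalization. By SPR, the one-sided TMS is positively recurrent for $h$. Sarig's generalized Ruelle--Perron--Frobenius theorem then gives $\lambda=e^{P_{\rm TOP}(h)}$, a positive continuous eigenfunction $\psi$ with $\mathcal{L}_h\psi=\lambda\psi$, and a conservative eigenmeasure $m$ with $\mathcal{L}_h^*m=\lambda m$. Moreover $\mu=\psi\,dm$ is the unique equilibrium state. Set
\[
\bar h=h+\log\psi-\log\psi\circ\sigma-\log\lambda .
\]
Then $\mathcal{L}_{\bar h}1=1$ and $\mathcal{L}_{\bar h}^*\mu=\mu$, which is item (3). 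Since $\psi$ is locally H\"older with bounded distortion on cylinders, $\bar h$ is locally H\"older.

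Second, the Banach space. Following Cyr--Sarig, fix a finite union of cylinders $\mathbf C$ witnessing SPR. Define $\mathscr L$ as the functions $f$ whose restriction to each one-cylinder $[a]$ is $\theta$-H\"older with weighted seminorm, plus a weighted sup. The weights are of the form $w(a)$ comparable to the $\mu$-mass, or to the first-return tail to $\mathbf C$ starting from $[a]$, so that $\|f\|_{\mathscr L}=\sup_a w(a)^{-1}\big(\|f1_{[a]}\|_\infty+{\rm Hol}_\theta(f|_{[a]})\big)$. With this choice, bounded $\theta$-H\"older functions embed continuously with constant $C_1$, and the multiplication bound in item (4) is immediate from the Leibniz rule for H\"older seminorms. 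The inclusion $\mathscr L\subset\mathcal L^1(\mu)$ requires $\sum_a w(a)\mu[a]<\infty$, which the weight choice ensures.

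Third, the quasi-compactness step, which is the main obstacle. I would prove
\[
\|\mathcal L_{\bar h}^nf\|_{\mathscr L}\le C\theta^n\|f\|_{\mathscr L}+C\|f\|_{L^1(\mu)},
\]
together with compactness of the unit ball of $\mathscr L$ in the weak norm. The H\"older part follows from the standard computation using local H\"older continuity of $\bar h$ and $\mathcal L^n_{\bar h}1=1$. The difficulty is the sup part on non-compact alphabets: mass can escape to infinity. Here SPR enters. I would first try inducing on $\mathbf C$: decompose $\mathcal L^n$ into returns to $\mathbf C$ through a renewal sequence. SPR gives a pressure gap at infinity, and hence exponential tails $\mu\{\text{return time}>n\}\lesssim e^{-\delta n}$, so the weights tame the excursions. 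By Hennion's theorem, the spectrum outside a disk of radius $<1$ is discrete.

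Finally, topological mixing and uniqueness of $\mu$ show that $1$ is a simple eigenvalue and the only one on the unit circle: an eigenfunction $\mathcal L f=e^{i\alpha}f$ forces $f\circ\sigma=e^{-i\alpha}f$ $\mu$-a.e., and mixing forces $\alpha=0$ with $f$ constant. This yields the decomposition $\mathcal L_{\bar h}=P+N$ with $Pf=\int f\,d\mu$, $PN=NP=0$, and spectral radius $r(N)<1$.
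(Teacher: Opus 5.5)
The paper does not reprove this theorem. It quotes \cite{CS} and records only the construction of $\mathscr L$ below the statement. Your outer architecture matches \cite{CS}: normalize by the RPF eigendata, prove a Doeblin--Fortet inequality on a weighted H\"older space, apply Hennion, then use mixing to remove the peripheral spectrum. Your normalization and final peripheral-spectrum step are fine.

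The gap is that the part carrying the theorem---which Banach space, and why the inequality holds on it---is either wrong or not proved.

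\begin{itemize}
\item \emph{The weight $w(a)\approx\mu[a]$ cannot work.} On an infinite alphabet $\inf_a\mu[a]=0$, so $\|1\|_{\mathscr L}=\sup_a w(a)^{-1}=\infty$. Then $1\notin\mathscr L$ and $C_\theta(\Sigma^+)\not\subset\mathscr L$, contradicting items (2)--(4). The weights must be bounded below and must \emph{grow} on states from which long excursions are likely. Your alternative, a ``first-return tail'', is not specified precisely enough to check.
\item \emph{The H\"older part is not the standard computation.} Once $w$ is non-constant, the oscillation of $\mathcal L^n_{\bar h}f$ on $[v]$ is controlled by $\|f\|_{\mathscr L}\sum_{\sigma^n y=x}e^{\bar h_n(y)}w(y_0)(\cdots)$. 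This must be compared with $w(v)$, which requires a Lyapunov-type bound for $\mathcal L^n_{\bar h}$ acting on $w$. The identity $\mathcal L_{\bar h}1=1$ does not give this.
\item \emph{The sup part is only gestured at.} You correctly call it the crux, but ``I would first try inducing'' is not an argument. Exponential tails of $\mu\{\text{return time}>n\}$ do not by themselves give a pointwise weighted bound on $\mathcal L^nf(x)$ uniformly over far-away cylinders.
\item \emph{The rate $C\theta^n$ is not what the argument gives.} In \cite{CS} the damping comes from the SPR gap, as $e^{-n\varepsilon_0}$. The parameter $\theta$ only absorbs a per-visit penalty, as explained next.
\end{itemize}

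The missing idea, which the paper records from \cite{CS}, is Sarig's SPR criterion. Fix a state $s_0$ and the given $\theta$. There are $p_0,\varepsilon_0>0$ with $\theta e^{p_0}<1$ such that $\bar H=\bar h+\varepsilon_0-p_0 1_{[s_0]}$ is still positively recurrent with zero pressure. Its eigenfunction $h_0$ satisfies $\mathcal L_{\bar H}h_0=h_0$ and $h_0/h_0[v]\in(C_2^{-1},C_2)$ on $[v]$. It supplies the weights $h_0[v]$, and the H\"older seminorm is measured by $\theta^{s(x,y)}$, where $s$ counts common visits to $s_0$.

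The mechanism is the identity
$e^{\bar h_n(y)}=e^{\bar H_n(y)}e^{-n\varepsilon_0}e^{p_0\#\{0\le i<n:\,y_i=s_0\}}$.
Combined with $\mathcal L^n_{\bar H}h_0=h_0$ it gives
$\sum_{\sigma^ny=x}e^{\bar h_n(y)}h_0(y)\,\theta^{\#\{i<n:\,y_i=s_0\}}\le e^{-n\varepsilon_0}h_0(x)$.
So each visit to $s_0$ costs $e^{p_0}$, paid for by a factor $\theta$, and excursions are damped by $e^{-n\varepsilon_0}$. Together with the local Gibbs property at $s_0$, this yields bounds such as
$\Upsilon_1(\mathcal L^n_0|f|)\le\tilde C_1e^{(k+1)p_0}\big(\|f\|_{\mathcal C}+(\theta^k+e^{-n\varepsilon_0})\|f\|_{\mathscr L}\big)$ (cf.\ Lemma~\ref{lem.Upsilon1}), from which quasi-compactness follows. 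Without this perturbed eigenfunction, or an equivalent Lyapunov weight, your third step does not go through.
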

For further use, we briefly explain how to find the Banach space $(\mathscr{L},\norm{\cdot}_{\mathscr{L}})$. More details are referred to  \cite{CS}. 
For $s\in\mathscr{V}$ and a potential $h$, denote $h[s]:=\sup\{h(\ul{x}):\ul{x}\in[s]\}$, where $[s]$ is the cylinder of $s$. We also denote $h[V]:=\sup\{h[s]:s\in V\}$, where $V\subset \mathscr{V}$ is a collection of symbols.

Fix $\theta>0$. Then, by means of a principle for the SPR property which was first introduced by Sarig \cite{Sar01}, there is $s_0\in \mathscr{V}$, numbers $p_0>0$, $\varepsilon_{0}>0$ such that $\theta\cdot e^p\in(0,1)$ and the function $\bar{H}=\bar{h}+\varepsilon_{0}-p_0\cdot 1_{[s_0]}$ is also SPR with the Gurevich pressure $P(\sigma,\bar{H})=0$. Then by the Ruelle-Perron-Fronbenius theorem, there exists a positive continuous $h_0:\Sigma^{+}\to\mathbb{R}$ and a $\sigma$-invariant measure $\nu_0$ such that $\mathcal{L}_{\bar{H}}h_0=h_0$, $\mathcal{L}_{\bar{H}}^*\nu_0=\nu_0$ and $\norm{h_0}_{\mathcal{C}}=1$,  
 where $\norm{\cdot}_{\mathcal{C}}$ is the $L^1$-norm with respect to $\nu_0$. Furtheremore, there exists $C_2>1$ satisfying that for all $v\in\mathscr{V}$ and $\ul{x}\in[v]$, $h_0(x)/h_0[v]\in (C_2^{-1},C_2)$.

The Banach space $\mathscr{L}$ is the collection of continuous function $f: \Sigma\to\mathbb{R}$ for which 
\[
\norm{f}_{\mathscr{L}}:=\sup_{v\in \mathscr{V}}\frac{1}{h_0[v]}\left( \sup_{\ul{x}\in[v]}|f(\ul{x})|
+ \sup\left\{\frac{|f(\ul{x})-f(\ul{y})|}{\theta^{s(\ul{x},\ul{y})}}:\ul{x}\neq \ul{y}\in[v]\right\}\right)<\infty,
\]
where $s(\ul{x},\ul{y}):=\#\{0\leq i\leq t(\ul{x},\ul{y}):x_i=y_i=s_0\}$.

\begin{rk}\label{rk.sprsym}
Due to \cite{Sar01}, the SPR property is irrelevant to the choice of $s_0\in\mathscr{V}$. 
\end{rk}

Using the splitting of the operator $\mathcal{L}_{\bar{h}}$, we get a splitting of the space $\mathscr{L}$.
\begin{lem}\label{lem.PN}
There is a splitting $\mathscr{L}=\mathscr{L}_1\oplus\mathscr{L}_2$ such that:
\begin{enumerate}
\item $\mathscr{L}_1={\rm Img} P$ and $\mathscr{L}_2=\Ker P$. In particular $\dim \mathscr{L}_1=1$.
\item For $i=1,2$, $\pi_i:\mathscr{L}\to\mathscr{L}_i$ is a linear bounded operator with bound $\Pi_0>0$. 
\item $\pi_1=P$.
\item\label{PN.gap} There are $C_3>0$ and $\delta\in(0,1)$ such that for every $f\in\mathscr{L}$ and $n\geq 0$, 
\[\norm{\pi_2(\mathcal{L}_{\bar{h}}^nf)}_{\mathscr{L}}=\norm{\mathcal{L}_{\bar{h}}^nf-\int fd\bar{\mu}}_{\mathscr{L}}\leq C_3\cdot \delta^n\norm{f}_{\mathscr{L}}.\]
\end{enumerate}
\end{lem}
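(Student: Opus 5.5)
The splitting comes from the algebraic relations in Theorem~\ref{thm.CS2009}~\ref{CS.spectgap}, with the decay estimate obtained from the spectral radius bound on $N$. First set $\pi_1:=P$ and $\pi_2:=\mathrm{Id}-P$ on $\mathscr{L}$. Since $P$ is a bounded projection ($P^2=P$), both $\pi_1$ and $\pi_2$ are bounded projections. We have $\pi_1+\pi_2=\mathrm{Id}$ and $\pi_1\pi_2=\pi_2\pi_1=P-P^2=0$. Put $\mathscr{L}_1:={\rm Img}\,P$ and $\mathscr{L}_2:=\Ker P={\rm Img}(\mathrm{Id}-P)$. Then $\mathscr{L}=\mathscr{L}_1\oplus\mathscr{L}_2$ is a topological direct sum of closed subspaces. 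Moreover $\dim\mathscr{L}_1=\operatorname{rank}P=1$, and indeed $\mathscr{L}_1=\mathbb{R}\cdot 1$ because $Pf=\int f\,d\mu\cdot 1$ and $P1=1$. Taking $\Pi_0:=\max\{\|P\|,\|\mathrm{Id}-P\|\}$ gives items (1)--(3).

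For item~\ref{PN.gap}, we use $PN=NP=0$ together with $P^2=P$. By induction these give $\mathcal{L}_{\bar h}^n=(P+N)^n=P+N^n$ for all $n\ge1$, since every mixed product containing both $P$ and $N$ vanishes. Hence $\mathcal{L}_{\bar h}^nf-Pf=N^nf$. Also $P\mathcal{L}_{\bar h}^n f=Pf$, so $\pi_2(\mathcal{L}_{\bar h}^nf)=\mathcal{L}_{\bar h}^nf-\int f\,d\mu$, where $\int f\,d\mu$ is read as the constant function $\int f\,d\mu\cdot 1$. This identifies the two expressions in the statement, with $\bar\mu=\mu$ the equilibrium measure.

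It remains to bound $\|N^n\|_{\mathscr{L}}$. The spectral radius of $N$ is less than $1$, so we fix $\delta$ strictly between that spectral radius and $1$. By Gelfand's formula, $\|N^n\|^{1/n}\to$ spectral radius, hence $\|N^n\|\le\delta^n$ for all $n\ge n_0$. Setting $C_3:=\max\{1,\max_{n<n_0}\|N^n\|\delta^{-n}\}$ gives $\|N^nf\|_{\mathscr{L}}\le C_3\delta^n\|f\|_{\mathscr{L}}$ for all $n\ge0$ (the case $n=0$ is covered by $\|N^0\|=1$).

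There is no real obstacle. The only point needing care is that $\mathscr{L}$ must be a Banach space, so that Gelfand's formula applies, and this is supplied by Theorem~\ref{thm.CS2009}.
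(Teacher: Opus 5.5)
Your proposal is correct and follows the same route as the paper's (terse) proof: the splitting and the bounded projections come from $P^2=P$, and the decay comes from $\mathcal{L}_{\bar h}^n=P+N^n$ together with the spectral radius bound on $N$, which you make explicit via Gelfand's formula. One small fix is needed at $n=0$: there the quantity is $\pi_2 f=(\mathrm{Id}-P)f$, not $N^0f=f$, so take $C_3\ge\|\mathrm{Id}-P\|_{\mathscr{L}}$ (equivalently, use $\pi_2\mathcal{L}_{\bar h}^n=N^n(\mathrm{Id}-P)$ for all $n\ge 0$) instead of appealing to $\|N^0\|=1$.
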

\begin{proof}
The splitting $\mathscr{L}=\mathscr{L}_1\oplus\mathscr{L}_2$ with $\mathscr{L}_1={\rm Img} P$ and $\mathscr{L}_2={\Ker} P$ follows from the property $P^2=P$. Meanwhile the second item follows from the bound of $\norm{P}_{\mathscr{L}}$ and the last item follows from the spectral gap given by Theorem \ref{thm.CS2009}. 
\end{proof}

\subsection{A Lasota-Yorke type inequality}

In the following, we write $\mathcal{L}_s=\mathcal{L}_{\bar{h},s}$ for simplicity. In particular, $\mathcal{L}_0=\mathcal{L}_{\bar{h}}$. For any $f\in\mathscr{L}$, denote 
\[
\Upsilon_1(f)=\sup_{v\in\mathscr{V}}\frac{1}{{h}_0[v]}\sup_{\ul{x}\in[v]}|f(\ul{x})| \text{\;\; and \;\;}
\Upsilon_2(f)=\sup_{v\in\mathscr{V}}\frac{1}{{h}_0[v]}\sup\left\{\frac{|f(\ul{x})-f(\ul{y})|}{\theta^{s(\ul{x},\ul{y})}}:\ul{x}\neq\ul{y}\in[v]\right\}. 
\]
It is obvious that $0\leq \Upsilon_1(f),\Upsilon_2(f)\leq \norm{f}_{\mathscr{L}}\leq \Upsilon_1(f)+\Upsilon_2(f)$.

First, we present the following estimates for $\Upsilon_1(\mathcal{L}^n_{ib}f)$ and $\Upsilon_2(\mathcal{L}^n_{ib}f)$.

\begin{lem}\label{lem.Upsilon1}
There exists $\tilde{C}_1>0$ such that for any $f\in\mathscr{L}$, $b\in\bR$, $k\in \bN_0$ and $n\in\bZ^+$, 
\[
\Upsilon_1(\mathcal{L}^n_{ib}f)\leq \Upsilon_1(\mathcal{L}^n_{0}|f|)\leq \tilde{C}_1e^{(k+1)p}\left(\norm{f}_{\mathcal{C}}+(\theta^k+e^{-n\varepsilon_0})\norm{f}_{\mathscr{L}}\right).
\]	
\end{lem}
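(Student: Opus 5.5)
The first inequality is immediate from the triangle inequality: since $|e^{ibr_n(\ul{y})}|=1$, we have $|\mathcal{L}^n_{ib}f(\ul{x})|\le \mathcal{L}^n_0|f|(\ul{x})$ pointwise, so $\Upsilon_1(\mathcal{L}^n_{ib}f)\le\Upsilon_1(\mathcal{L}^n_0|f|)$. The real work is bounding $\Upsilon_1(\mathcal{L}^n_0|f|)$, and for this I would follow the Cyr--Sarig machinery. The idea is to compare $\mathcal{L}_0=\mathcal{L}_{\bar h}$ with the auxiliary operator $\mathcal{L}_{\bar H}$, where $\bar H=\bar h+\varepsilon_0-p_0 1_{[s_0]}$. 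Pointwise we have $e^{\bar h_n}=e^{\bar H_n}e^{-n\varepsilon_0}e^{p_0\#\{0\le i<n: y_i=s_0\}}$. Here $p$ in the statement is the constant $p_0$, and $h_0$ is $\mathcal{L}_{\bar H}$-harmonic with $\nu_0$ conformal and $\norm{h_0}_{\mathcal{C}}=1$.

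Fix $\ul{x}\in[v]$ and split the preimages $\ul{y}$ of $\ul{x}$ under $\sigma^n$ according to the number $m=m(\ul{y})$ of visits to $s_0$ among $y_0,\dots,y_{n-1}$.

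\emph{Case $m\le k$.} The weight satisfies $e^{\bar h_n(\ul{y})}\le e^{kp}e^{-n\varepsilon_0}e^{\bar H_n(\ul{y})}$. Bounding $|f(\ul{y})|\le \norm{f}_{\mathscr{L}}h_0[y_0]\le C_2\norm{f}_{\mathscr{L}}h_0(\ul{y})$ and using $\mathcal{L}^n_{\bar H}h_0=h_0$, this part contributes at most $C_2e^{kp}e^{-n\varepsilon_0}\norm{f}_{\mathscr{L}}h_0(\ul{x})$.

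\emph{Case $m>k$.} Here I would group the $\ul{y}$ by the cylinder $Z=[y_0,\dots,y_{n-1}]$. On such a cylinder, $s(\ul{y},\ul{y}')\ge k$ for any two points $\ul{y},\ul{y}'\in Z$, so
\[
|f(\ul{y})|\le \frac{1}{\nu_0(Z)}\int_Z|f|\,d\nu_0+\theta^{k}\norm{f}_{\mathscr{L}}h_0[y_0].
\]
The $\theta^k$ term is handled exactly like the previous case: the extra $e^{p}$ per visit is bounded by $e^{(k+1)p}$ after using the last-visit decomposition, or alternatively by the choice $\theta e^{p}<1$, which absorbs the visit factors. The averaged term is controlled by the Gibbs-type estimate $e^{\bar H_n(\ul{y})}\asymp \nu_0(Z)h_0$-ratios, which follows from conformality $\mathcal{L}^*_{\bar H}\nu_0=\nu_0$ together with bounded distortion of $\bar H_n$ on $Z$, the latter coming from the local H\"older property. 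Summing over $Z$ then gives a multiple of $e^{(k+1)p}\int|f|d\nu_0=e^{(k+1)p}\norm{f}_{\mathcal{C}}$.

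The main obstacle is this last step. Bounded distortion for $\bar H_n$ on cylinders need not be uniform, because the potential is only locally H\"older. Likewise, the count of $s_0$-visits produces a factor $e^{mp}$ with $m$ unbounded. My plan is to cut the preimage word at the $(k+1)$-st-from-last visit to $s_0$, and then to use the return to $[s_0]$, where the distortion constants are uniform, to control everything up to a single factor $e^{(k+1)p}$. After dividing by $h_0[v]$ and using $h_0(\ul{x})/h_0[v]\le C_2$, the stated bound follows with some constant $\tilde C_1$ depending only on $C_2$ and the distortion constants.
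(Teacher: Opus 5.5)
Your first inequality is exactly the paper's. For the second, the paper gives no argument beyond citing \cite[(3.5)]{CS}, so your reconstruction has to stand on its own. It does not close at the averaged term.

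If you group preimages by the full cylinder $Z=[y_0,\dots,y_{n-1}]$, conformality gives $\nu_0(Z)=\int_{\sigma^nZ}e^{\bar H_n}\,d\nu_0$. But $\nu_0(\sigma^nZ)$ has no uniform lower bound on a shift without big images. That is the real non-uniformity, not distortion: a locally H\"older $\bar H$ has summable variations, so distortion of $\bar H_n$ on cylinders \emph{is} uniformly bounded. On top of this, the weight is $e^{\bar h_n}=e^{\bar H_n}e^{-n\varepsilon_0}e^{pm}$ with $m$ unbounded, while $\nu_0$ only sees $e^{\bar H_n}$.

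Your cut at the $(k+1)$-st visit counted from the end does not repair this. The averaging cylinder must be an initial cylinder $[y_0,\dots,y_j]$, because the $\mathscr{L}$-seminorm only sees agreement of initial symbols. For a Gibbs bound it must also end at $s_0$. With your cut this cylinder is $[w\,s_0]$ with $w=(y_0,\dots,y_{j-1})$. The head $w$ still contains $N(w)=m-k-1$ visits, so $e^{\bar h_j}/\nu_0([w s_0])\asymp e^{pN(w)-j\varepsilon_0}$, which is not bounded uniformly in $w$. Summing over $w$ just reproduces $\mathcal{L}_{\bar h}^j|f|$ at points of $[s_0]$, i.e.\ the original problem. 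Using the $\bar h$-conformal measure $\mu$ would control the head, but it produces $\int|f|\,d\mu$, and $\mu$ is not dominated by $\nu_0$.

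What works is the opposite cut.
\begin{itemize}
\item For $m\ge k+1$, let $j$ be the index of the $(k+1)$-st visit counted from $y_0$, and set $Z'=[y_0,\dots,y_j]$. Then $y_j=s_0$ and $\sigma^jZ'=[s_0]$.
\item The head has exactly $k$ visits, so conformality of $\nu_0$ and uniform distortion give $e^{\bar h_j(\ul{y})}=e^{kp}e^{-j\varepsilon_0}e^{\bar H_j(\ul{y})}\le Ce^{kp}\nu_0(Z')$.
\item Since $s\ge k+1$ on $Z'$, you have $|f(\ul{y})|\le \nu_0(Z')^{-1}\int_{Z'}|f|\,d\nu_0+\theta^{k+1}\norm{f}_{\mathscr{L}}h_0[y_0]$.
\item The tail needs no visit counting: its weights sum to $\mathcal{L}_{\bar h}^{n-j}1_{[s_0]}(\ul{x})\le 1$, because $\mathcal{L}_{\bar h}1=1$.
\item The cylinders $Z'$ are pairwise disjoint over all $j$, so the averages add up to at most $\norm{f}_{\mathcal{C}}$.
\item Using $\mathcal{L}_{\bar H}^jh_0=h_0$ on $[s_0]$, the oscillation part is at most $Ce^{kp}\theta^{k+1}\norm{f}_{\mathscr{L}}\sum_j e^{-j\varepsilon_0}\le Ce^{(k+1)p}\theta^k\norm{f}_{\mathscr{L}}$.
\end{itemize}
Combine this with your correct case $m\le k$, then divide by $h_0[v]$. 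That last step uses $\inf h_0>0$, which is implicit in $1\in\mathscr{L}$. This gives the stated bound.
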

\begin{proof}
	Note that for any $\ul{x}\in \Sigma^+$, 
\[
|\mathcal{L}_{ib}^nf(\ul{x})|= \left|\sum_{\sigma^n(\ul{y})=\ul{x}}e^{\bar{h}_n(\ul{y})+ibr_n(\ul{y})}\cdot f(\ul{y})\right|\le 
\sum_{\sigma^n(\ul{y})=\ul{x}}e^{\bar{h}_n(\ul{y})} |f(\ul{y})|=\mathcal{L}^n_{0}|f|(\ul{x}).
\]	
Thus $\Upsilon_1(\mathcal{L}^n_{ib}f)\leq \Upsilon_1(\mathcal{L}^n_{0}|f|)$. 

The second inequality in the statement of this lemma follows from \cite[(3.5)]{CS}.	
\end{proof}

\begin{lem}\label{lem.Upsilon2}
There exists $\tilde{C}_2>0$ such that for any $f\in\mathscr{L}$, $|b|\ge 1$, $k\in \bN_0$ and $n\in\bZ^+$, 	
\[
\Upsilon_2(\mathcal{L}^n_{ib}f)\leq \tilde{C}_2 |b| e^{(k+1)p}\norm{f}_{\mathcal{C}}+\tilde{C}_2e^{(k+1)p}\left(e^{-n\varepsilon_0}+ |b| (\theta^k+e^{-n\varepsilon_0})\right)\norm{f}_{\mathscr{L}}.
\]
\end{lem}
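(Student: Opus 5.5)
We fix two points $\ul{x},\ul{y}$ in the same cylinder $[v]$ and estimate
\[
|\mathcal{L}^n_{ib}f(\ul{x})-\mathcal{L}^n_{ib}f(\ul{y})|.
\]
Inverse branches under $\sigma^n$ are paired: each preimage $\ul{x}'=(a_0,\dots,a_{n-1},\ul{x})$ of $\ul{x}$ corresponds to $\ul{y}'=(a_0,\dots,a_{n-1},\ul{y})$. For each pair we write
\[
e^{\bar h_n(\ul{x}')+ibr_n(\ul{x}')}f(\ul{x}')-e^{\bar h_n(\ul{y}')+ibr_n(\ul{y}')}f(\ul{y}')
\]
as a sum of three terms:
\begin{itemize}
\item[(I)] $e^{\bar h_n(\ul{x}')}e^{ibr_n(\ul{x}')}\bigl(f(\ul{x}')-f(\ul{y}')\bigr)$;
\item[(II)] $\bigl(e^{\bar h_n(\ul{x}')}-e^{\bar h_n(\ul{y}')}\bigr)e^{ibr_n(\ul{x}')}f(\ul{y}')$;
\item[(III)] $e^{\bar h_n(\ul{y}')}\bigl(e^{ibr_n(\ul{x}')}-e^{ibr_n(\ul{y}')}\bigr)f(\ul{y}')$.
\end{itemize}
The point is that the twisted operator differs from $\mathcal{L}_0$ only through the phase. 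Terms (I) and (II) are bounded exactly as in Cyr--Sarig's Lasota--Yorke estimate for $\Upsilon_2(\mathcal{L}_0^nf)$, with the unimodular factor $e^{ibr_n}$ replaced by $1$ after taking absolute values. Term (III) produces the factor $|b|$.

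For (I), $s(\ul{x}',\ul{y}')\ge s(\ul{x},\ul{y})$ plus the number of visits of the word $a_0\cdots a_{n-1}$ to $s_0$. This gives
\[
|f(\ul{x}')-f(\ul{y}')|\le \Upsilon_2(f)\,h_0[a_0]\,\theta^{s(\ul{x},\ul{y})}\theta^{\#\text{visits}}.
\]
Summing with weights $e^{\bar h_n}$ and using $\bar H=\bar h+\varepsilon_0-p_0 1_{[s_0]}$, $\mathcal{L}_{\bar H}h_0=h_0$, and $\theta e^{p_0}<1$, we get $\lesssim e^{-n\varepsilon_0}h_0[v]\theta^{s(\ul{x},\ul{y})}\norm{f}_{\mathscr{L}}$. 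This is the computation behind \cite[(3.5)]{CS} and its Hölder analogue, which we would cite.

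For (II), local Hölder continuity of $\bar h$ gives
\[
|e^{\bar h_n(\ul{x}')}-e^{\bar h_n(\ul{y}')}|\le C e^{\bar h_n(\ul{x}')}\theta^{t(\ul{x},\ul{y})}.
\]
Since $s\le t$, we have $\theta^{t}\le\theta^{s}$. The remaining sum is then $\le C\,\mathcal{L}_0^n|f|(\ul{x})\,\theta^{s(\ul{x},\ul{y})}$. Lemma~\ref{lem.Upsilon1} bounds $\mathcal{L}_0^n|f|(\ul{x})/h_0[v]$ by $\tilde C_1 e^{(k+1)p}(\norm{f}_{\mathcal{C}}+(\theta^k+e^{-n\varepsilon_0})\norm{f}_{\mathscr{L}})$, which is dominated by the claimed bound because $|b|\ge1$.

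For (III), we use $|e^{i\alpha}-e^{i\beta}|\le|\alpha-\beta|$ and the Hölder property of the roof function. Since $r$ is locally Hölder, $|r_n(\ul{x}')-r_n(\ul{y}')|\le \sum_{j}C\theta_r^{\,t(\ul{x},\ul{y})+n-j}\le C'\theta^{t(\ul{x},\ul{y})}$, a geometric sum. Here we adjust $\theta$ so that $r$ is $\theta$-Hölder; this is possible after possibly enlarging $\theta$ at the start, which is where we fix the relation between the roof's exponent and the Banach space's $\theta$. So (III) is at most $C'|b|\theta^{s(\ul{x},\ul{y})}\mathcal{L}_0^n|f|(\ul{y})$. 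Lemma~\ref{lem.Upsilon1} again gives $|b|\tilde C_1e^{(k+1)p}(\norm{f}_{\mathcal{C}}+(\theta^k+e^{-n\varepsilon_0})\norm{f}_{\mathscr{L}})h_0[v]$, which accounts for the $|b|$ terms in the statement.

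Dividing by $h_0[v]\theta^{s(\ul{x},\ul{y})}$ and taking suprema then yields the stated inequality with $\tilde C_2$ collecting the constants. We would use $h_0(\ul{x})/h_0[v]\in(C_2^{-1},C_2)$ to compare $\ul{x}$ with $\ul{y}$. The main obstacle we expect is (I): making rigorous that the gain $\theta^{\#\text{visits to }s_0}$ compensates the $e^{p_0}$ weights so that the contraction $e^{-n\varepsilon_0}$ appears. Our first attempt there is to reproduce the Cyr--Sarig argument verbatim, since the phase has modulus one.
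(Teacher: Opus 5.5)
Your proposal is correct and follows essentially the paper's route: the paper splits into $e^{\bar h_n(\ul p\ul y)}|f(\ul p\ul x)-f(\ul p\ul y)|$, bounded by citing Cyr--Sarig, plus $e^{\bar h_n(\ul p\ul x)}|f(\ul p\ul x)|\,\bigl|1-e^{[\bar h_n(\ul p\ul y)-\bar h_n(\ul p\ul x)]+ib[r_n(\ul p\ul y)-r_n(\ul p\ul x)]}\bigr|$, bounding the last factor by $\tilde C_2'|b|\theta^{t(\ul x,\ul y)}$ (this merges your (II) and (III)) before applying Lemma~\ref{lem.Upsilon1}. The only cosmetic point is that your (II) puts the weight at $\ul x'$ but evaluates $f$ at $\ul y'$, so you need the bounded distortion $e^{\bar h_n(\ul x')}\le e^{C}e^{\bar h_n(\ul y')}$, immediate from local H\"older continuity of $\bar h$, before recognizing $\mathcal{L}_0^n|f|(\ul y)$.
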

\begin{proof}
For any $v\in \mathscr{V}$ and $n\geq 1$, denote $P^n(v):=\{\ul{p}=(p_0,\dots,p_{n-1}):(p_0,\dots,p_{n-1},v) \text{ is admissible}\}$ and  $n(\ul{p})=\#\{0\leq i\leq n-1:p_i=a\}$. Then for any $\ul{x}\in[v]$, $n\geq 1$ and $k\in\{0,\dots, n-1\}$, $\mathcal{L}_{ib}^nf(\ul{x})$ can be written as
\[
\mathcal{L}_{ib}^nf(\ul{x})=\sum_{\ul{p}\in P^n(v)}e^{\bar{h}_n(\ul{p}\ul{x})+ibr_n(\ul{p}\ul{x})}\cdot f(\ul{p}\ul{x}).
\]

Therefore
\begin{equation}\label{eq.Ups2}
  \begin{aligned}
|\mathcal{L}^n_{ib}f(\ul{x})-\mathcal{L}_{ib}^nf(\ul{y})|\leq 
&\sum_{\ul{p}\in P_n(v)}e^{\bar{h}_n(\ul{p}\ul{y})}|f(\ul{p}\ul{x})-f(\ul{p}\ul{y})|\\
+&\sum_{\ul{p}\in P_n(v)}e^{\bar{h}_n(\ul{p}\ul{x})}|f(\ul{p}\ul{x})|\cdot \left|1-e^{[\bar{h}_n(\ul{p}\ul{y})-\bar{h}_n(\ul{p}\ul{x})]+[ibr_n(\ul{p}\ul{y})-ibr_n(\ul{p}\ul{x})]}\right|.
\end{aligned}
\end{equation}

Following the estimate in \cite[Page 647]{CS}, there is $\tilde{C}'_1$ such that
\begin{equation}\label{eq.varphi21}
\sum_{\ul{p}\in P_n(v)}e^{\bar{h}_n(\ul{p}\ul{y})}|f(\ul{p}\ul{x})-f(\ul{p}\ul{y})|\leq \tilde{C}'_1\theta^{s(\ul{x},\ul{y})}\cdot e^{(k+1)p}\cdot e^{-n\varepsilon_0}\norm{f}_{\mathscr{L}} {h}_0[v].
\end{equation}

On the other hand, it follows from the locally H\"older property of $\bar{h}$ and $r$ that
\[
|\bar{h}_n(\ul{p}\ul{y})-\bar{h}_n(\ul{p}\ul{x})|\leq \frac{H_{\bar{h}}}{1-\theta}\cdot\theta^{t(\ul{x},\ul{y})} \text{ and } |r_n(\ul{p}\ul{y})-r_n(\ul{p}\ul{x})|\leq \frac{H_r}{1-\theta}\cdot\theta^{t(\ul{x},\ul{y})},
\]
where $H_{\bar{h}}$ and $H_r$ are the H\"older constants of  $\bar{h}$ and $r$.
Then there is $\tilde{C}'_2>0$, depending on $H_{\bar{h}}$, $H_r$ and $\theta$,  such that for all $b>0$, it holds that 
\[
 \left|1-e^{[\bar{h}_n(\ul{p}\ul{y})-\bar{h}_n(\ul{p}\ul{x})]+[ibr_n(\ul{p}\ul{y})-ibr_n(\ul{p}\ul{x})]}\right|\leq \tilde{C}'_2 |b| \theta^{t(\ul{x},\ul{y})}.
\]
Thus
\[\sum_{\ul{p}\in P_n(v)}e^{\bar{h}_n(\ul{p}\ul{x})}|f(\ul{p}\ul{x})|\cdot \left|1-e^{[\bar{h}_n(\ul{p}\ul{y})-\bar{h}_n(\ul{p}\ul{x})]+[ibr_n(\ul{p}\ul{y})-ibr_n(\ul{p}\ul{x})]}\right|
\leq \tilde{C}'_2 |b| \theta^{s(\ul{x},\ul{y})}\cdot \mathcal{L}^n_0|f|(\ul{x}).
\]

It follows form Lemma~\ref{lem.Upsilon1} that
\begin{equation}\label{eq.varphi22}
\begin{aligned}&\sum_{\ul{p}\in P_n(v)}e^{\bar{h}_n(\ul{p}\ul{x})}|f(\ul{p}\ul{x})|\cdot \left|1-e^{[\bar{h}_n(\ul{p}\ul{y})-\bar{h}_n(\ul{p}\ul{x})]+[ibr_n(\ul{p}\ul{y})-ibr_n(\ul{p}\ul{x})]}\right|\\
\leq & \tilde{C}_1 {\tilde{C}}_2^\prime  |b| \theta^{s(\ul{x},\ul{y})}\cdot e^{(k+1)p}\cdot\left(\norm{f}_{\mathcal{C}}+(\theta^k+e^{-n\varepsilon_0})\norm{f}_{\mathscr{L}}\right){h}_0[v].
\end{aligned}
\end{equation}

Then the conclusion of the lemma follows immediately by substituting \eqref{eq.varphi21} and \eqref{eq.varphi22} into \eqref{eq.Ups2}.

\end{proof}

\begin{lem}\label{lem.norml}
There exists $R_0>1$ such that $\norm{\mathcal{L}_{ib}^nf}_{\mathscr{L}}\leq R_0|b|\norm{f}_{\mathscr{L}}$ for all $f\in\mathscr{L}$, $|b|\geq 1$ and $n\in\bZ^+$.
\end{lem}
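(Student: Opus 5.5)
The plan is to combine Lemma~\ref{lem.Upsilon1} and Lemma~\ref{lem.Upsilon2} with a fixed choice of $k$, and then control the weaker norm $\norm{f}_{\mathcal{C}}$ by $\norm{f}_{\mathscr{L}}$. Recall that $\norm{\mathcal{L}_{ib}^nf}_{\mathscr{L}}\le \Upsilon_1(\mathcal{L}_{ib}^nf)+\Upsilon_2(\mathcal{L}_{ib}^nf)$. Both lemmas allow any $k\in\bN_0$, so I take $k=0$. This makes the factor $e^{(k+1)p}=e^{p}$ a fixed constant, while $\theta^k+e^{-n\varepsilon_0}\le 2$.

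First, I bound $\norm{f}_{\mathcal{C}}$, the $L^1(\nu_0)$-norm, by a constant times $\norm{f}_{\mathscr{L}}$. For $\ul{x}\in[v]$ we have $|f(\ul{x})|\le \Upsilon_1(f)\,h_0[v]\le C_2\,h_0(\ul{x})\norm{f}_{\mathscr{L}}$. Integrating against $\nu_0$ and using the normalization $\norm{h_0}_{\mathcal{C}}=1$ gives $\norm{f}_{\mathcal{C}}\le C_2\norm{f}_{\mathscr{L}}$.

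Next I plug this into the two lemmas. Lemma~\ref{lem.Upsilon1} gives
\[
\Upsilon_1(\mathcal{L}_{ib}^nf)\le \tilde C_1e^{p}(C_2+2)\norm{f}_{\mathscr{L}}.
\]
Lemma~\ref{lem.Upsilon2}, using $|b|\ge1$ and $e^{-n\varepsilon_0}\le 1\le|b|$, gives
\[
\Upsilon_2(\mathcal{L}_{ib}^nf)\le \tilde C_2e^{p}\bigl(C_2|b|+1+2|b|\bigr)\norm{f}_{\mathscr{L}}\le \tilde C_2e^{p}(C_2+3)|b|\,\norm{f}_{\mathscr{L}}.
\]
Since $|b|\ge1$, the $\Upsilon_1$ bound is also at most $\tilde C_1e^p(C_2+2)|b|\norm{f}_{\mathscr{L}}$. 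Adding the two bounds, I can take
\[
R_0=\max\{2,\;e^p[\tilde C_1(C_2+2)+\tilde C_2(C_2+3)]\}.
\]
This constant is independent of $n$, $b$ and $f$.

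The only point needing care is the comparison $\norm{f}_{\mathcal{C}}\le C_2\norm{f}_{\mathscr{L}}$. It relies on the normalization $\norm{h_0}_{\mathcal{C}}=1$ and on the uniform comparability $h_0(\ul{x})/h_0[v]\in(C_2^{-1},C_2)$ from the Cyr--Sarig construction. Both facts are stated in the description of $\mathscr{L}$, so no genuine obstacle is expected. The rest is bookkeeping of constants, with $k=0$ removing the $k$-dependence.
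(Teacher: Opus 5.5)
Your proposal is correct and takes essentially the same approach as the paper. The paper also bounds $\norm{f}_{\mathcal{C}}\le C_2\Upsilon_1(f)\le C_2\norm{f}_{\mathscr{L}}$, using the comparability $h_0(\ul{x})/h_0[v]\in(C_2^{-1},C_2)$ and the normalization of $h_0$; it then inserts this into Lemmas~\ref{lem.Upsilon1} and \ref{lem.Upsilon2} with $k=0$ and adds the two estimates via $\norm{\cdot}_{\mathscr{L}}\le\Upsilon_1+\Upsilon_2$, and your explicit tracking of the constant $R_0$ is fine.
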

\begin{proof}
Note that 
\[
|f(\ul{x})|\leq \frac{|f(\ul{x})|}{{h}_0(\ul{x})}\cdot {h}_0(\ul{x})\leq C_2\cdot \frac{|f(\ul{x})|}{{h}_0[v]}\cdot {h}_0(\ul{x})\leq C_2\Upsilon_1(f)\cdot {h}_0(\ul{x}).
\]
Integrate the above inequality with respect to the measure $\nu_0$ and note that $\int f_0d\nu_0=1$, we have
\begin{equation}\label{eq.normfC}
  \norm{f}_{\mathcal{C}}\leq C_2\Upsilon_1(f).
\end{equation}

Then according to Lemma~\ref{lem.Upsilon1} and \ref{lem.Upsilon2}, there is $\tilde{C}>0$ such that
\begin{equation}\label{eq.varphi}
\begin{aligned}
&\Upsilon_1(\mathcal{L}^n_{ib}f)\leq \tilde{C}e^{(k+1)p}\left(\Upsilon_1(f)+(\theta^k+e^{-n\varepsilon_0})\norm{f}_{\mathscr{L}}\right), \text{ and }\\
& \Upsilon_2(\mathcal{L}^n_{ib}f)\leq \tilde{C}|b|e^{(k+1)p}\Upsilon_1(f)+\tilde{C}e^{(k+1)p}|b|(\theta^k+2e^{-n\varepsilon_0})\norm{f}_{\mathscr{L}}.
\end{aligned}
\end{equation}

Recall that $\Upsilon_1(f),\Upsilon_2(f)\leq \norm{f}_{\mathscr{L}}\leq \Upsilon_1(f)+\Upsilon_2(f)$. Then the conclusion is obtained by taking $k=0$  in (\ref{eq.varphi}).
\end{proof}

We also get the following  Lasota-Yorke type inequality which will play a key role in subsequent discussions. 
\begin{lem}\label{lem.LYineq}
There exist numbers $\alpha_0>0$, $\beta_0>0$ such that for any $|b|\geq 3$, $n\geq \beta_0\log|b|$ and $f\in\mathscr{L}$, we have
\[
\Upsilon_2(\mathcal{L}^n_{ib}f)\leq |b|^{\alpha_0}\Upsilon_1(f)+\frac{1}{2}\norm{f}_{\mathscr{L}}.
\]
\end{lem}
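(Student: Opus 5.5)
Starting from the second line of (\ref{eq.varphi}), which holds for every $k\in\bN_0$ and $n\in\bZ^+$, I will choose $k$ so that the coefficient of $\norm{f}_{\mathscr{L}}$ is at most $\frac12$. With that $k$, the coefficient of $\Upsilon_1(f)$ becomes a power of $|b|$. Concretely, (\ref{eq.varphi}) gives
\[
\Upsilon_2(\mathcal{L}^n_{ib}f)\leq \tilde{C}|b|e^{(k+1)p}\Upsilon_1(f)+\tilde{C}e^{(k+1)p}|b|(\theta^k+2e^{-n\varepsilon_0})\norm{f}_{\mathscr{L}}.
\]
The term $e^{(k+1)p}\theta^k=e^p(\theta e^p)^k$ decays geometrically in $k$, because $p$ was fixed in the construction of $\mathscr{L}$ so that $\theta e^{p}\in(0,1)$. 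Set $\lambda:=-\log(\theta e^p)>0$.

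The first step is to choose $k$. I take $k=\lceil \gamma\log|b|\rceil$ with $\gamma$ large enough that
\[
\tilde{C}e^{p}|b|(\theta e^p)^k\le \tilde C e^{p}|b|^{1-\gamma\lambda}\le \tfrac14 \quad\text{for all }|b|\ge 3.
\]
It suffices to take $\gamma\lambda\ge 2$ and then enlarge $\gamma$ to absorb the constant $4\tilde Ce^p$, using $|b|\ge 3$ so that $\log|b|\ge 1$. With this choice $e^{(k+1)p}\le e^{2p}|b|^{\gamma p}$. The coefficient of $\Upsilon_1(f)$ is then at most $\tilde C e^{2p}|b|^{1+\gamma p}$, which is bounded by $|b|^{\alpha_0}$ once $\alpha_0$ is taken large enough (again using $|b|\ge3$ to absorb the constant).

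The second step is to choose $\beta_0$ so that the $e^{-n\varepsilon_0}$ term contributes at most $\frac14$. Its coefficient is $2\tilde Ce^{(k+1)p}|b|e^{-n\varepsilon_0}\le 2\tilde Ce^{2p}|b|^{1+\gamma p}e^{-n\varepsilon_0}$. I take $\beta_0$ with $\beta_0\varepsilon_0\ge 1+\gamma p+c$, where $c$ is large enough that $2\tilde Ce^{2p}3^{-c}\le\frac14$. Then for $n\ge\beta_0\log|b|$ this coefficient is at most $\frac14$. Adding the two contributions gives a total coefficient of $\frac12$ in front of $\norm{f}_{\mathscr{L}}$, which is the claimed inequality.

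There is one subtlety to check. Lemmas~\ref{lem.Upsilon1} and \ref{lem.Upsilon2} use the restriction $k\in\{0,\dots,n-1\}$, which appears in the proof of Lemma~\ref{lem.Upsilon2}, so I need $k<n$. Since $k\approx\gamma\log|b|$, this holds if $\beta_0>\gamma+1$, and I enlarge $\beta_0$ accordingly. I expect no real obstacle here. The only point requiring care is that $p$ and $\varepsilon_0$ are fixed constants from the Cyr--Sarig construction, independent of $b$, $n$ and $k$, so that all exponents stay uniform.
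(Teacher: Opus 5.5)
Your proposal is correct and follows essentially the same route as the paper's proof. You start from the estimate of Lemma~\ref{lem.Upsilon2} combined with \eqref{eq.normfC} (equivalently the second line of \eqref{eq.varphi}), choose $k$ of order $\log|b|$ so that $\tilde C e^{p}|b|(\theta e^p)^k\le\frac14$, take $n\ge\beta_0\log|b|$ so that the $e^{-n\varepsilon_0}$ term is at most $\frac14$, and absorb $\tilde C|b|e^{(k+1)p}$ into $|b|^{\alpha_0}$. Your explicit check that $k<n$, which the proof of Lemma~\ref{lem.Upsilon2} needs and the paper leaves implicit, is handled correctly by taking $\beta_0>\gamma+1$.
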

\begin{proof}
It follows from Lemma~\ref{lem.Upsilon2} and \eqref{eq.normfC} that
\[\begin{aligned}
\Upsilon_2(\mathcal{L}^n_{ib}f) &\leq \tilde{C}_2 |b| e^{(k+1)p}\norm{f}_{\mathcal{C}}+\tilde{C}_2e^{(k+1)p}\left(e^{-n\varepsilon_0}+ |b| (\theta^k+e^{-n\varepsilon_0})\right)\norm{f}_{\mathscr{L}}\\
&\leq \tilde{C}_2 C_2|b| e^{(k+1)p}\Upsilon_1(f)+\left(\tilde{C}_2e^{p}|b| (\theta e^p)^k+ 2 \tilde{C}_2 |b|e^{(k+1)p-n\varepsilon_0}\right)\norm{f}_{\mathscr{L}}.
\end{aligned}\]

Recall that $\theta\cdot e^p\in(0,1)$. Take $k=[\frac{\log(4\tilde{C}_2e^p|b|)}{-\log(\theta e^p)}]+1$, then we have $\tilde{C}_2e^{p}|b| (\theta e^p)^k<1/4$. 
Note that $k$ is fixed now. For any $n>(\log (8\tilde{C}_2|b|)+(k+1)p)/\varepsilon_0$, we have $2 \tilde{C}_2 |b|e^{(k+1)p-n\varepsilon_0}<1/4$. Thus one may find $\beta_0>0$ such that for any $n\geq\beta_0\log|b|$ and $|b|\ge 3$, 
\[
\tilde{C}_2e^{p}|b| (\theta e^p)^k+ 2 \tilde{C}_2 |b|e^{(k+1)p-n\varepsilon_0}<\frac{1}{2}.
\]

Moreover, there exists $\alpha_0>0$ such that $\tilde{C}|b|e^{(k+1)p}<|b|^{\alpha_0}$. This finishes the proof.
\end{proof}

\section{Rapid mixing for SPR Borel homoclinic classes with good asymptotics}\label{sec.rapidflow}

In this section, we show that the equilibrium state in a SPR Borel homoclinic class having good asymptotics is rapid mixing (see Theorem~\ref{thm.rapidmixing}).

\subsection{Good asymptotics}\label{subsec.goodasy}

In \cite{FMT07}, Field-Melbourne-T\"or\"ok introduced the concept of good asymptotics to establish that open and dense Axiom A flows are rapidly mixing. Let $p \in M$ be a hyperbolic periodic point  
and $q$ be a transverse homoclinic point of $p$. By the shadowing lemma, we may choose a sequence of periodic points $p_N\to q$. Denote by $\tau_0$ and $\tau_N$ the periods of $p$ and $p_N$, respectively.  It is proved in \cite{FMT07} that for any vector field in a $C^1$ open and $C^\alpha$ dense ($\alpha\ge 1$) set, there exist constants $\kappa>0$, $\gamma\in(0,1)$, $\alpha_0\in[0,2\pi)$, $S_0>0$ and sequences $E_N$ and $\zeta_N$ such that 
\begin{equation}\label{eq.expressperiod}
\tau_N=N\tau_0+\kappa+E_N\cdot \gamma^N\cos(N\alpha_0+\zeta_N)+o(\gamma^N)
\end{equation}
where $|E_N|<S_0$ for every $N\geq 1$ and either  $\alpha_0=0$ and $\zeta_N\equiv 0$, or  $\alpha_0\in(0, \pi)$ and $\zeta_N\in(\alpha_0-\pi/12,\alpha_0+\pi/12)$.
We say that the periodic point $p$ has {\it good asymptotics} if $\liminf\limits_{N\to\infty}|E_N|> 0$. Moreover, we say that a Borel homoclinic class $B$ has {\it good asymptotics} if it contains a periodic point having good asymptotics.

\begin{pro}{\rm \cite{FMT07}}\label{pro.goodasymptotics}
Let $X$ be a $C^\alpha$ vector field ($\alpha\ge 2$) on $M$ and $p$
be a hyperbolic periodic point of $X$ with a transverse homoclinic point. Then there exist a $C^1$ neighborhood $\U^1$ of $X$ satisfying the following property: there is a $C^2$ open and $C^\alpha$ dense subset $\U^\alpha$ in the set of all $C^\alpha$ vector fields in $\U^1$  such that for any $Y\in \U^\alpha$, the continuation $p^Y$ of $p$ has good asymptotics.
\end{pro}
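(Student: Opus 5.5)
The proposition is cited from \cite{FMT07}, which works in the Axiom A setting. The task is to check that its purely local argument applies verbatim here. The point is that good asymptotics for $p$ depends only on the dynamics in a small neighborhood $U$ of the compact set $\mathcal O(p)\cup \mathcal O(q)$, where $q$ is a transverse homoclinic point. This set is a uniformly hyperbolic basic set of horseshoe type and lies at positive distance from $\Sing(X)$. Singularities therefore play no role.

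First I fix the $C^1$ neighborhood. By structural stability of hyperbolic sets, I choose a neighborhood $U$ of $\Lambda_0:=\overline{\mathcal O(p)\cup\mathcal O(q)}$ with $U\cap\Sing(X)=\emptyset$. I then choose $\U^1$ so that for every $Y\in\U^1$ the maximal invariant set of $Y$ in $U$ is a hyperbolic continuation $\Lambda_Y$ of the basic set containing $\Lambda_0$. This set contains the continuations $p^Y$ and $q^Y$, and the shadowing orbits $p^Y_N\to q^Y$ with periods $\tau_N(Y)$.

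Second, I derive the expansion \eqref{eq.expressperiod}. For $Y$ in a $C^2$ neighborhood, I linearize the Poincar\'e return map of $p^Y$ on a local section in $U$. Since $\alpha\ge 2$, Sternberg/Hartman linearization gives a $C^1$ conjugacy near $p^Y$, with the estimates of \cite{FMT07}. Following the orbit of $p^Y_N$ through $N$ loops near $p^Y$ plus one excursion along $q^Y$, the time is $N\tau_0+\kappa$. The error is governed by the leading contracting/expanding eigenvalues of $p^Y$. Its size is $\gamma^N$, where $\gamma$ is the ratio of the moduli of the weakest eigenvalues. The oscillation $\cos(N\alpha_0+\zeta_N)$ appears exactly when the leading eigenvalue is complex. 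The coefficient $E_N$ converges to, or oscillates around, an explicit quantity $E(Y)$ built from the derivative of the holonomy/time function along the homoclinic orbit. This is the computation in \cite{FMT07}; it uses only data in $U$.

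Third, I carry out the open-and-dense step. The condition $\liminf|E_N|>0$ is equivalent to a nondegeneracy inequality on finitely many jets at $p^Y$ and $q^Y$: the leading coefficient is nonzero, and in the complex case the phase is nondegenerate. Such an inequality is $C^2$-open because these quantities depend continuously on $Y$ in $C^2$. For density, I perturb $Y$ in $C^\alpha$ by a small bump supported in a flow box around a point of $\mathcal O(q^Y)$ away from $\mathcal O(p^Y)$. The perturbation changes the time function along the homoclinic orbit to first order while leaving $p^Y$ unchanged, and this makes the coefficient nonzero. The flow box can be taken inside $U$, hence away from singularities, and the perturbation keeps the field in $\U^1$.

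The main obstacle is the uniformity of the expansion, in particular the control of the $o(\gamma^N)$ remainder uniformly in $Y$. I expect to handle it by citing the $C^1$ linearization estimates of \cite{FMT07} directly, noting that they are local to $U$ and that no hypothesis of Axiom A or absence of singularities enters.
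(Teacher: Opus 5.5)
Your outline follows the same route as the paper's sketch: localize near $\mathcal O(p)\cup\mathcal O(q)$ away from $\Sing(X)$, $C^1$-linearize the return map at $p$, get \eqref{eq.expressperiod} from \cite[Lemma~4.1]{FMT07}, and use \cite[\S4.2--4.3]{FMT07} for openness and density. However, it omits what the paper imposes first, namely the $C^1$-open, $C^\alpha$-dense nondegeneracy conditions (1)--(4) on $p$ and $q$.

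\emph{Linearization.} Your claim that $\alpha\ge 2$ alone gives a $C^1$ linearization is not correct in general. Hartman's theorem gives this for $C^2$ maps only in dimension two, or for contractions. For a saddle whose section has dimension at least three, linearization can fail at resonances; this is why the paper assumes $|\nu_i\nu_j|\neq|\nu_k|$.

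\emph{Form of the expansion.} The one-mode form $E_N\gamma^N\cos(N\alpha_0+\zeta_N)$, with a coefficient that can be nonzero, needs three further conditions. Eigenvalues of equal modulus must be complex conjugate. One needs $|\lambda_1^-||\lambda_1^+|\neq 1$, so that $\gamma=\max\{|\lambda_1^-|,|\lambda_1^+|^{-1}\}$ is fixed by one side; it is not a ratio of the weakest moduli. And the homoclinic orbit must approach $p$ with nonzero component along the leading eigendirections (condition (4)); if that component vanishes, the $\gamma^N$ term vanishes identically.

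\emph{Openness.} None of this is a condition on finitely many jets. Your $C^2$-openness also rests on the $C^1$ linearizing chart depending continuously on $Y$ in $C^2$, which again uses non-resonance.

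\emph{Density.} This is where the argument breaks. Your bump is supported along $\mathcal O(q^Y)$ and leaves $p^Y$ unchanged, so it cannot remove a resonance or an eigenvalue coincidence at $p^Y$. If $Y$ violates one of these spectral conditions, your perturbed field $Z$ still does, and the expansion of \cite[Lemma~4.1]{FMT07} is not available for $Z$, let alone with $\liminf|E_N|>0$. Moreover, used purely as a change of time function, the bump leaves the Poincar\'e map unchanged. Hence it cannot create a nonzero leading component of $q^Y$ either.

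\emph{Repair.} Build $\U^\alpha$ as the paper and \cite{FMT07} do. First make a $C^\alpha$-small perturbation, near $p$ and of the homoclinic intersection, to enter the open dense set defined by (1)--(4). Only then perturb the time function along the homoclinic excursion to make the limiting coefficient nonzero, and intersect the two sets.

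When $\dim M=3$ the section is two-dimensional, so Hartman's theorem applies and (1), (2), (4) are automatic; your sketch is much closer to complete there. But the proposition is stated in arbitrary dimension, and the paper uses it that way for star flows in Theorem~\ref{thm.main.starrm}.
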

\begin{proof}[Sketch of the Proof]
Let $\U^1$ be a $C^1$ neighborhood  of $X$ such that the continuation $p^Y$ of $p$ is well defined for any $Y\in\U^1$.
	Fix a transverse section $\Sigma_0$ containing $p$ and consider the Poincar\'e return map $\mathbf{P}:\Sigma_1\to \Sigma_0$, where $\Sigma_1\subset \Sigma_0$ is a small neighborhood of $p$ in $\Sigma_0$. Denote the eigenvalues of $d_p\mathbf{P}$ by $|\lambda^-_{n^-}|\leq \cdots\leq|\lambda^-_1|<1<|\lambda^+_1|\leq\cdots\leq|\lambda^+_{n^+}|$. Let $q$ be a transverse homoclinic point of $p$ in $\Sigma_1$. 	Assume that the following $C^1$ open and $C^\alpha$ dense nondegeneracy conditions holds:
	\begin{enumerate}
  \item $|\nu_i|=|\nu_j|$ unless $
\nu_i$ and $\nu_j$ are conjugate complex numbers.
 \item $|\nu_i\nu_j|\neq|\nu_k|$ for all eigenvalues $\nu_i,\nu_j,\nu_k$.
 \item $|\lambda^-_1||\lambda^+_1|\neq 1$.
 \item There exists $C>0$ such that $|\mathbf{P}^n(q)(\lambda^-_1)^{-n}|$, $|\mathbf{P}^{-n}(q)(\lambda^+_1)^n|>C$.
\end{enumerate}

Then we may $C^1$ linearize $\mathbf P$ in a small neighborhood of $p$. Assume that $q$ is in this neighborhood. Take $q'\in \Sigma_1$ in this neighborhood which is a point on the negative orbit of $q$ and lies in the local unstable manifold of $p$ in $\Sigma_1$. Assume under the $C^1$ linearized chart, $q$ has coordinates $(A,0)$ and $q'$ has coordinates $(0,B)$. Denote $d_p\mathbf{P}=\begin{pmatrix}\bm{\lambda} &0\\ 0&\bm{\mu}\end{pmatrix}$ and set $a_N=(A,\bm{\lambda}^{-N}B)$. Then $\mathbf{P}^N(a_N)=(\bm{\mu}^NA,B)$. It is apparent that $a_N\to q$ and $\mathbf{P}^N(a_N)\to q'$. Using the Anosov closing lemma, we may obtain a periodic point $p_N$ which is shadowed by the pseudo orbit given by orbit segments $[a_N,\mathbf{P}^N(a_N)]$ and $[q',q]$ for all $N$ large enough. It follows from \cite[Lemma~4.1]{FMT07} that $p_N$ satisfies (\ref{eq.expressperiod}).

Then by using the argument in \cite[\S~4.2 and 4.3]{FMT07}, for any  $C^\alpha$ vector field $Y\in\U^1$, there exist a vector field $Z$ which is $C^\alpha$ close to $Y$ and a $C^2$ open neighborhood $\U(Z)$ of $Z$  such that, $\liminf\limits_{N\to\infty}|E_N|> 0$ holds for any $Z'\in \U(Z)$. This proves the Proposition.	
\end{proof}

Now let $B$ be a Borel homoclinic class having good asymptotics. Assume that $X$ is SPR on $B$ for a H\"older continuous potential $\vartheta :M\to\mathbb{R}$. By Theorem~\ref{thm.SPRcoding}, there is an irreducible SPR TMF $(\Sigma_r,\sigma_r)$ coding $B$. Recall that $\Sigma$ is the base of $\Sigma_r$, that is, $(\Sigma_r,\sigma_r)$ is the suspension flow of $(\Sigma,\sigma)$.

If $\Sigma$ is not topological mixing, then by using the spectral decomposition (Lemma~\ref{lem.spectdecomp}), we may recode $(\Sigma,\sigma)$ as a TMF whose base is topologically mixing (see \S~\ref{sssec.tmf}). So we always assume that $\Sigma$ is topological mixing.

Let $p\in B$ be the periodic point having good asymptotics, and let $q$ be the corresponding  transverse homoclinic point. Let $p_N$ be the periodic constructed in Proposition~\ref{pro.goodasymptotics}. 
Take a small neighborhood $U_p$ of $\Orb(p)$ such that if $\Orb(x)\subset U_p$ then $x\in \Orb(p)$.
By taking $\delta>0$ small enough in Theorem~\ref{thm.poincaresection}, we may assume that there is 
a neighborhood $U_0\subset U_p$ of $\Orb(p)$ such that, if a component $D_j$ intersects $U_0$ then $D_j\subset U_p$.

Take a small enough neighborhood $U_q$ of $\Orb(q)$. Let $\Lambda_0$ be the maximum invariant set in $U_q$. Then $\Lambda_0$ is a transitive compact hyperbolic set containing $\Orb(p_N)$ for all big $N$. 
According to Theorem~\ref{thm.strongthm}, there exists a transitive compact invariant $A_{\wh r}\subset \wh\Sigma_{\wh r}$ such that $\wh \pi(A_{\wh r})=\Lambda_0$. The base of $A_{\wh r}$ is a $\wh\sigma$-invariant set $A\subset \wh\Sigma$ constituted by a finite number of vertices.
By replacing $p,q,p_N$  with other points on the same orbit, we may assume that $p,q,p_N\in \wh\pi(A)$.
Denote by 
\[
\ul{p_N}=\{\ldots,p^N_1,p_2^N,\ldots,p^N_{k_N},p^N_1,p_2^N,\ldots,p^N_{k_N},\ldots \}
\]
the canonical coding lift of $p_N$. By the construction of $p_N$ in Proposition~\ref{pro.goodasymptotics}, there exists $n_0>0$ such that for all big enough $N$, $\wh\pi(\wh\sigma^j(\ul{p_N}))\in U_0$ for $j=n_0+1,n_0+2,\ldots, N-n_0$. Since $\wh\sigma^j(\ul{p_N})$ is contained in the union of finite many cylinders, by taking a subsequence if necessary, we may assume that $p^N_{-n_0}, p^N_{-n_0+1},\ldots, p^N_{n_0}$ are independent to $N$.

On the other hand, for every sufficiently large $N$, there exists at least one vertex that appears multiple times in $\{p_{n_0+1}^N,\ldots, p_{N-n_0}^N\}$. Taking subsequence again, we assume that vertex $a$ appears multiple times for each $N$. Take a big $N_0$. Assume that $(a,a_1,\ldots, a_{k_p-1},a)$ is a piece appears in  $\ul{p_{N_0}}$ between $p_{n_0+1}^{N_0}$ and $p_{{N_0}-n_0}^{N_0}$. Moreover, assume 
\[
\ul{p_{N_0}}=\{\ldots, a,a_1,\ldots, a_{k_p-1},a, e_1,e_2,\ldots,e_\ell,a,a_1,\ldots, a_{k_p-1},a, e_1,e_2,\ldots,e_\ell,\ldots\}.
\]

Denote
\[
\ul p= \{\ldots, a,a_1,\ldots, a_{k_p-1},a,a_1,\ldots, a_{k_p-1},\ldots \}.
\]
By the choice of $U_0$, we have $\wh\pi(\ul p)\in\Orb(p)$. Without loss of generality, we assume $\wh\pi(\ul p)=p$. There is $m\in \bZ^+$ such that the period of  $(\ul p,0)$ for $\sigma_r$ is $m\tau_0$. Define
\[
\ul{p_{N_0+im}}=\{\underbrace{\ldots,a,a_1,\dots,a_{k_p-1},\dots,a,a_1,\dots,a_{k_p-1}}_{\text{$a,a_1,\dots,a_{k_p-1}$  repeated $i$
times}}, e_1,\dots,e_\ell,a,a_1,\dots,a_{k_p-1},\dots,a,a_1,\dots,a_{k_p-1}, e_1,\dots,e_\ell,\dots\}.
\]

Then by the shadowing lemma (Proposition~\ref{pro.shadow}) and the construction in Proposition~\ref{pro.goodasymptotics}, we have $\wh\pi(\ul{p_{N_0+im}})\in \Orb(p_{N_0+im})$.
Note that the period of $\ul{p_{N_0+im}}$ for $\sigma_r$ is $\tau_{N_0+im}$, since except for the piece $(p^N_{-n_0}, p^N_{-n_0+1},\ldots, p^N_{n_0})$, the corresponding $R$ of all other vertices lie within the neighborhood $U$, and this piece only appear once in one period.

If \eqref{eq.expressperiod} holds  for every $N=N_0+im$ 
and $\liminf\limits_{i\to\infty}|E_{N_0+im}|> 0$, we  say that the TMF $(\Sigma_r,\sigma_r)$ has  {\it good asymptotics}.
Without loss of generality, we assume that $\wh\pi(\ul{p_{N_0+im}})=p_{N_0+im}$ and $m=1$.

\subsection{Approximate eigenfunction}

Assume that $(\Sigma^+,\sigma)$ is a topological mixing SPR one-side TMS such that the suspension semiflow $(\Sigma^{+}_r,\sigma_r)$ has good asymptotics. 
As discussed in the previous subsection, we assume that the periodic point $(\ul{p},0)$ has  good asymptotics, where
\[
\ul{p}=\{a,a_1,\dots,a_{k_p-1},a,a_1,\dots,a_{k_p-1},\dots\}.
\]
The corresponding periodic points are $\{(\ul{p_N},0)\}$, where
\begin{equation}\label{eq.exppN}
  \ul{p_N}=\{\underbrace{a,a_1,\dots,a_{k_p-1},\dots,a,a_1,\dots,a_{k_p-1}}_{\text{$a,a_1,\dots,a_{k_p-1}$  repeated $N-N_0$
times}}, e_1,\dots,e_\ell,a,a_1,\dots,a_{k_p-1},\dots,a,a_1,\dots,a_{k_p-1}, e_1,\dots,e_\ell,\dots\}.
\end{equation}

The periods of  $(\ul{p},0)$ and $(\ul{p_N},0)$ satisfy \eqref{eq.expressperiod}.

The frequency of occurrence of the symbol $a\in\mathscr{V}$ in $\ul{p}$ and $\ul{p_N}$ is uniformly bounded away from zero. \begin{lem}\label{lem.sprratio}
There exists $\lambda\in(0,1)$ such that for all $\ul{x}\in\Orb(\ul{p})\cup\bigcup_{N\geq 1}\Orb(\ul{p_N})$ and any $\ul{y}\in\Sigma^+$, if $d(\ul{x},\ul{y})\leq e^{t(\ul{x},\ul{y})}$ then $s(\ul{x},\ul{y})\geq \lambda t(\ul{x},\ul{y})$.
\end{lem}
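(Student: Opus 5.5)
The hypothesis ``$d(\ul{x},\ul{y})\leq e^{t(\ul{x},\ul{y})}$'' is vacuous as written, since $d(\ul x,\ul y)=e^{-t(\ul x,\ul y)}$ always. I will therefore prove the inequality $s(\ul{x},\ul{y})\ge\lambda\, t(\ul{x},\ul{y})$ for every $\ul y\neq\ul x$, possibly up to a bounded additive constant. If the intended statement requires $t(\ul x,\ul y)$ to be at least some fixed $t_0$, the additive constant is absorbed by shrinking $\lambda$.

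The argument is purely combinatorial. It uses the explicit form \eqref{eq.exppN} of $\ul{p_N}$ and of $\ul p$, together with the freedom granted by Remark~\ref{rk.sprsym}: the SPR construction, and hence the counting function $s(\cdot,\cdot)$, may be based at any vertex $s_0$. I take $s_0=a$, the vertex repeated in the periodic block $(a,a_1,\dots,a_{k_p-1})$. Then, for $t=t(\ul x,\ul y)$, we have $x_i=y_i$ for $0\le i<t$, so
\[
s(\ul x,\ul y)\ \ge\ \#\{0\le i<t:\ x_i=a\}.
\]
It therefore suffices to bound from below the number of occurrences of $a$ in any initial window of length $t$ of any shift of $\ul p$ or $\ul{p_N}$.

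For $\ul x\in\Orb(\ul p)$, the symbol $a$ occurs exactly once in every consecutive block of length $k_p$, so a window of length $t$ contains at least $\lfloor t/k_p\rfloor$ copies of $a$. For $\ul x\in\Orb(\ul{p_N})$, the sequence is the concatenation of blocks $(a,a_1,\dots,a_{k_p-1})$ and the single fixed word $(e_1,\dots,e_\ell)$. Here $\ell$ is independent of $N$, since $\ul{p_N}$ is obtained from the fixed sequence $\ul{p_{N_0}}$ by inserting copies of the $p$-block. Each period has length $(N-N_0)k_p+\ell$ and contains at least one $a$; moreover $a$ also appears among the $e_j$ or immediately after them. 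So the longest run of consecutive symbols without an $a$ is at most $L:=\max\{k_p,\ell+k_p\}$, uniformly in $N$. Consequently any window of length $t$ contains at least $\lfloor t/L\rfloor$ occurrences of $a$. This gives
\[
s(\ul x,\ul y)\ \ge\ \lfloor t/L\rfloor\ \ge\ \tfrac{t}{L}-1,
\]
and the inequality $s\ge\lambda t$ follows with $\lambda=1/(2L)$ once $t\ge 2L$.

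The main obstacle is small $t$, in particular $t=0$ or $t<L$, where a window may contain no $a$ at all. I would handle this in one of two ways. The first is to note that the windows starting at a given shift of $\ul p$ or $\ul{p_N}$ form finitely many patterns modulo the long $a$-blocks, and then either restate the lemma with ``$+1$'' or for $t$ large, or reduce $\lambda$. The second is to observe that in the later use the condition concerns nearby points, where $t\ge 1$ and $x_0$ is one of the finitely many symbols occurring in these orbits, and then adjust $\lambda$ accordingly.

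A second point to check is that choosing $s_0=a$ is compatible with the fixed Banach space $\mathscr L$ from Theorem~\ref{thm.CS2009}. By Remark~\ref{rk.sprsym} I would simply fix $s_0=a$ from the outset when invoking Theorem~\ref{thm.CS2009}, so that no comparison between different base vertices is needed.
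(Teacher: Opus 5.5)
Your proposal is correct and takes essentially the paper's approach. The paper's proof consists only of choosing $\lambda=\frac{1}{2(k_p+\ell+1)}$. It relies, implicitly with $s_0=a$ as permitted by Remark~\ref{rk.sprsym}, on the uniform lower bound for the frequency of $a$ along $\Orb(\ul p)\cup\bigcup_N\Orb(\ul{p_N})$, which is your run-length bound.

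You are also right that the printed hypothesis is vacuous and that the inequality can hold only for $t(\ul x,\ul y)$ large. Your first fix is the valid one: if a short window contains no $a$, then $s=0$ and no choice of $\lambda$ helps. A lower bound on $t$ is all that is needed later in Claim~3, where $t(\ul{x_1},\ul y)\ge m_0$.
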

\begin{proof}
The lemma follows by taking $\lambda=\frac{1}{2(k_p+\ell+1)}\in(0,1)$.
\end{proof}

Denote by $V_0\subset \mathscr{V}$ the collection of finite symbols appeared in (\ref{eq.exppN}). 
\begin{lem}\label{lem.unimixing}
There exists $\rho_0\in\mathbb{Z}^{+}$ such that for all $n\geq \rho_0$, all $v_1, v_2\in V_0$, $\sigma^n[v_1]\cap[v_2]\neq \emptyset$.
\end{lem}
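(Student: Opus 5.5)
The plan is to use topological mixing of $(\Sigma^+,\sigma)$ together with the finiteness of $V_0$. Recall that $\Sigma$ is assumed irreducible and aperiodic, i.e.\ the graph $\mathscr{G}$ is connected with period $1$. For each ordered pair $(v_1,v_2)\in V_0\times V_0$, I will show there is $N(v_1,v_2)$ such that for every $n\ge N(v_1,v_2)$ there is an admissible path of length $n$ from $v_1$ to $v_2$. Then $\rho_0:=\max_{v_1,v_2\in V_0}N(v_1,v_2)$ is finite because $V_0$ is finite.

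First I need the path statement. Since the graph is connected, I fix a path $w_1$ of length $m_1$ from $v_1$ to $v_2$. The period at $v_2$ equals $1$, so the set $L(v_2)=\{n: \text{there is a loop of length } n \text{ at } v_2\}$ is closed under addition and has gcd $1$. By the standard numerical semigroup argument, it contains all sufficiently large integers, say all $n\ge K(v_2)$. Concatenating $w_1$ with a loop of length $n-m_1$ gives a path of length $n$ from $v_1$ to $v_2$ for all $n\ge m_1+K(v_2)=:N(v_1,v_2)$. Here ``length'' means the number of shift steps, so that $x_0=v_1$ and $x_n=v_2$.

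Second, I convert the path into a point. Since $\mathscr{G}$ is proper, every vertex has positive out-degree, so the finite admissible word $(v_1,\dots,v_2)$ extends to an infinite one-sided sequence $\ul{x}\in\Sigma^+$. This $\ul{x}$ lies in $[v_1]$ and satisfies $x_n=v_2$, so $\sigma^n(\ul{x})\in[v_2]$. Therefore $\sigma^n[v_1]\cap[v_2]\neq\emptyset$.

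The only delicate point is justifying that the period is $1$, i.e.\ that the topologically mixing assumption on the base TMS, made after the recoding via Lemma~\ref{lem.spectdecomp}, is equivalent to aperiodicity of the graph. This is the stated equivalence in the preliminaries, so I will just cite it. Uniformity of $\rho_0$ is then immediate from the finiteness of $V_0$.
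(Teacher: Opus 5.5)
Your proof is correct and takes the same route as the paper: get a per-pair threshold $\rho(v_1,v_2)$ beyond which $\sigma^n[v_1]\cap[v_2]\neq\emptyset$, then take the maximum over the finite set $V_0\times V_0$. The only difference is that the paper reads the per-pair threshold directly off topological mixing (cylinders are open), whereas you derive it from irreducibility and aperiodicity of the graph via the numerical-semigroup argument; this is a harmless unpacking of the same fact.
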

\begin{proof}
The topological mixing property of $\Sigma$ indicates that for any $v_1, v_2\in V_0$ there is $\rho(v_1,v_2)\in\mathbb{Z}^{+}$ such that for all $n\geq \rho(v_1,v_2)$, $\sigma^n[v_1]\cap[v_2]\neq \emptyset$. Since $V_0$ is finite then the lemma follows by taking $\rho_0=\max\{ \rho(v_1,v_2):v_1,v_2\in V_0\}$.
\end{proof}

It follows from the above lemma that for any $v_1,v_2\in V_0$, there is a loop of length $\rho_0$ connecting $v_1,v_2$ in the direct graph. We fix such a loop for each pair $(v_1,v_2)$ and 
denote the collection of symbols appeared in these loops by $V_1$. Note that $V_1$ is also finite and $V_0\subset V_1$.

It is  proved in \cite{FMT07}  that the presence of good asymptotics excludes the existence of approximate eigenfunctions. And \cite{Dol98} establishes that the absence of approximate eigenfunctions implies rapid mixing. However, applying this  approach to our current coding  of singular flows encounters several key challenges arising from nonuniform hyperbolicity: the Gibbs property and norm $\|\cdot\|_{\mathscr L}$ both lack uniformity. To address this, we must confine both the good asymptotics and approximate eigenfunctions within a local region, thereby obtaining uniform control within these localized settings.

\begin{pro}\label{pro.appfunct} 
 For every $\alpha>0$ and $K>0$, there exist $\beta>0$ and $b_0\ge 3$ satisfying the following: For any  $|b|\geq b_0$ and $g\in\mathscr{L}$, define $m_b=[\beta\log|b|]$ and $\mathcal{L}_{ib}^{jm_b}g(\ul{x})=r_j^b(\ul{x})\cdot e^{iw_j^b(\ul{x})}$  ($j=0,1,2$). If  $\norm{\mathcal{L}_{ib}^{n}g}\leq 9R_0|b|^{\alpha_0+1}$ for all $n\geq 0$, where $R_0$ and $\alpha_0$ are given by Lemma \ref{lem.norml} and \ref{lem.LYineq} respectively, with $|r_0^b(\ul{p})|=|g(\ul{p})|\geq K$ and $|r_1^b(\ul{p})|=|\mathcal{L}_{ib}^{m_b}g(\ul{x})|\geq K$, then there exists  $\ul{x_0}$ such that either
\begin{itemize}
\item $\ul{x_0}\in \Orb(\ul{p})\cup\bigcup_{N\geq 1}\Orb(\ul{p_N})$, or
\item $\sigma^{m_b}(\ul{x_0})\in \Orb(\ul{p})\cup\bigcup_{N\geq 1}\Orb(\ul{p_N})$ with $\ul{x_0}\in \{\{y_n\}_{n\geq 0}\in[a]: y_n\in V_1 \text{ for all } n\geq 0\}$, $r_0^b(\ul{x_0})\geq K/2$ and $r_1^b(\ul{x_0})\geq K/2$,
\end{itemize}
for which at least one inequality holds:
\[
|e^{iw_1(\sigma^{m_b}(\ul{x_0})}-e^{iw_0(\ul{x_0})}e^{ibr_{m_b}(\ul{x_0})}|\geq \frac{1}{|b|^{\alpha}},
\]
or 
\[
|e^{iw_2(\sigma^{m_b}(\ul{x_0}))}-e^{iw_1(\ul{x_0})}e^{ibr_{m_b}(\ul{x_0})}|\geq \frac{1}{|b|^{\alpha}}.
\]  
\end{pro}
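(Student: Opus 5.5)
We argue by contradiction, following the Field--Melbourne--T\"or\"ok scheme but confined to the finite alphabet $V_1$. Suppose that for every admissible $\ul{x_0}$ of the two types listed, both phase differences are smaller than $|b|^{-\alpha}$. The first step is to turn the norm bound $\norm{\mathcal{L}^n_{ib}g}_{\mathscr L}\le 9R_0|b|^{\alpha_0+1}$ into a uniform Lipschitz-type control of $r^b_j$ and $w^b_j$ along $\Orb(\ul p)\cup\bigcup_N\Orb(\ul{p_N})$ and along sequences with all symbols in $V_1$. On $V_1$ the weights $h_0[v]$ are bounded above and below by constants depending only on the finite set $V_1$. Lemma~\ref{lem.sprratio} then converts $\theta^{s(\ul x,\ul y)}$ into $\theta^{\lambda t(\ul x,\ul y)}$. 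Hence $|\mathcal{L}^{jm_b}_{ib}g(\ul x)-\mathcal{L}^{jm_b}_{ib}g(\ul y)|\le C|b|^{\alpha_0+1}\theta^{\lambda t(\ul x,\ul y)}$ for such points. Together with the lower bound $r^b_j\ge K/2$, this gives $|e^{iw_j(\ul x)}-e^{iw_j(\ul y)}|\le C'K^{-1}|b|^{\alpha_0+1}\theta^{\lambda t}$. The same estimate first shows that $r_0^b,r_1^b\ge K/2$ persists on cylinders of depth $\gtrsim\beta'\log|b|$ around $\ul p$. We then use Lemma~\ref{lem.unimixing} and the fixed loops through $V_1$ to produce points $\ul{x_0}\in[a]$ with symbols in $V_1$ and $\sigma^{m_b}(\ul{x_0})$ on the relevant periodic orbits.

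The second step iterates the assumed approximate eigenfunction identity around the periodic orbits. Along $\ul p$, applying the relation $e^{iw_1(\sigma^{m_b}\ul x)}\approx e^{iw_0(\ul x)}e^{ibr_{m_b}(\ul x)}$ (and its $w_1,w_2$ analogue) and using periodicity gives $e^{ib\,m\tau_0 k}\approx$ a ratio of phases at $\ul p$, with error $O(k|b|^{-\alpha})$. For $\ul{p_N}$ we compare the orbit with $\ul p$ on the long stretch where the two codings agree, using the H\"older control from the first step. This yields
\[
\bigl|e^{ib(\tau_N-N\tau_0-\kappa')}-1\bigr|\le C|b|^{-\alpha}+C|b|^{\alpha_0+1}\theta^{\lambda N'},
\]
where $N'$ is the agreement depth and $\kappa'$ is independent of $N$. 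We also obtain the corresponding second-difference relation from the two consecutive blocks $j=0,1$ and $j=1,2$. The point of having two blocks of length $m_b$ is to eliminate the unknown constant phase $b\kappa$, leaving control on $b(\tau_{N+1}-\tau_N-\tau_0)$ or on second differences of $\tau_N$.

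The final step inserts the expansion \eqref{eq.expressperiod}. The second differences of $\tau_N$ equal $E_N\gamma^N(\cos\text{-combination})+o(\gamma^N)$. By good asymptotics, $|E_N|$ is bounded below, and the angle condition on $\zeta_N$ makes the cosine combination nonvanishing for a positive proportion of $N$. We choose $N\approx c\log|b|$ with $\gamma^N\approx|b|^{-1/2}$, so that $b\,\gamma^N$ is large but still $\ll\pi$ after a further adjustment of $c$. Then the phase $e^{ib(\cdots)}$ differs from $1$ by at least $c|b|\gamma^N\gtrsim|b|^{-1/2}$. Choosing $\alpha$ large relative to this and $\beta$ so that $\theta^{\lambda m_b}\le|b|^{-\alpha-\alpha_0-2}$ forces a contradiction once $|b|\ge b_0$.

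I expect the main obstacle to be the compatibility of scales. $m_b=[\beta\log|b|]$ must be large enough that the H\"older errors $|b|^{\alpha_0+1}\theta^{\lambda m_b}$ are negligible. It must also be large enough that the lower bound $K/2$ on $r^b_j$ propagates to the chosen $\ul{x_0}$. At the same time, the relevant periodic index $N$ must fit inside a window of length $m_b$ so that the comparison is legitimate. Reconciling these, and the dependence of $\beta,b_0$ on $\alpha,K$, is where the care goes. The key input making this possible is that everything happens inside the finite alphabet $V_1$, where Lemma~\ref{lem.sprratio} and the bounds on $h_0[v]$ give uniform constants.
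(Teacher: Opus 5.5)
Your first step, constructing points $\ul{x_0}\in[a]$ with symbols in $V_1$ that follow $\ul p$ for about $m_b$ steps and then land (via Lemma~\ref{lem.unimixing}) on a prescribed point of the orbits, with $r_0^b,r_1^b\ge K/2$ propagated from $\ul p$, is exactly the paper's $\xi(\ul x)$ and Lemma~\ref{lem.r0r1xi}.

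The gap is in your second step. On the periodic orbits the two inequalities read $e^{iw_1(\sigma^{m_b}\ul x)}\approx e^{iw_0(\ul x)}e^{ibr_{m_b}(\ul x)}$ and $e^{iw_2(\sigma^{m_b}\ul x)}\approx e^{iw_1(\ul x)}e^{ibr_{m_b}(\ul x)}$. These link three different phase functions and do not telescope around an orbit. Chaining them needs $w_0$ at $\sigma^{m_b}\ul x$ expressed through $w_1$ there, and dividing them only gives $(w_2-w_1)\circ\sigma^{m_b}\approx w_1-w_0$. H\"older comparison with $\ul p$ does not repair this: distinct points of an orbit are far apart, and the lower bound $K$ is only known at $\ul p$.

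The missing idea, which is what the two blocks and the auxiliary points are really for, is to apply \emph{both} inequalities at $\ul y=\xi(\ul x)$. They contain the same unknown phase $e^{ibr_{m_b}(\xi(\ul x))}$ of the connecting segment. Moreover $e^{iw_j(\xi(\ul x))}\approx e^{iw_j(\ul p)}$ for $j=0,1$. Dividing gives $e^{iw_2(\ul x)}\approx e^{i\eta}e^{iw_1(\ul x)}$ with $\eta=w_1(\ul p)-w_0(\ul p)$ independent of $\ul x$ (Lemma~\ref{lem.approxfun}). Only then does the second inequality at $\ul x$ become an approximate eigenfunction equation $e^{ibr_{m_b}}\approx e^{i\eta}e^{i(w_1\circ\sigma^{m_b}-w_1)}$ that telescopes.

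Going around $\Orb(\ul{p_N})$ then controls $e^{ibum_b\tau_N}$, not $e^{ib\tau_N}$. The factor $m_b$ is forced, and the paper also multiplies by a Dirichlet denominator $u\le|b|^{\alpha/2}$ (Lemma~\ref{lem.dirichlet}) to neutralize the accumulated $e^{il\eta}$. The constant $b\kappa$ is not removed by the two blocks. The paper pins $d(bum_b\kappa,2\pi\mathbb Z)$ by taking $N\approx\rho\log|b|$ so large that the $\gamma^N$ term is negligible.

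Your last step also fails as written. The statement is for every $\alpha>0$, so you cannot take $\alpha$ large. Your scales are also inconsistent: $\gamma^N\approx|b|^{-1/2}$ makes $|b|\gamma^N\approx|b|^{1/2}$, not $\ll\pi$, and a signal of size $|b|^{-1/2}$ beats an error $|b|^{-\alpha}$ only if $\alpha>1/2$.

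The right choice is $N=M+j$ with $S_0|b|um_b\gamma^{M}\in[\gamma/2,1/2]$. Then $bum_bE_N\gamma^N\cos(N\alpha_0+\zeta_N)$ has modulus at most $1/2$ and lies within $O(|b|^{-\alpha/2}\log|b|)$ of $2\pi\mathbb Z$, hence within that distance of $0$. Since $\liminf|E_N|>0$, this forces $\cos((M+j)\alpha_0+\zeta_{M+j})\to0$ for each fixed $j$. That contradicts both admissible forms of $(\alpha_0,\zeta_N)$ once you compare $j=0$ with $j=l$.

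This works for every $\alpha>0$ because the signal is of order one while the errors are $O(N|b|^{-\alpha/2})$ with $N=O(\log|b|)$. Your claim that second differences are nonzero ``for a positive proportion of $N$'' is unproved and not needed.
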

\begin{proof}

We proceed by contradiction. Suppose there exist $\alpha>0$ and $K>0$ such that for any $k\in \mathbb{Z}^+$, if we define
\[
\beta = k_p \cdot \frac{\log(18R_0 \cdot f_0[a]/K) + \alpha_0 + \alpha + 1}{-\log\theta} + \rho_0 + 1,
\]
then there exist $|b_k|\geq k$ and $g_k\in\mathscr{L}$ satisfying $\norm{\mathcal{L}_{ib_k}^{n}g_k}\leq 9R_0|b_k|^{\alpha_0+1}$ for all $n\geq 0$ and 
\[
|r^k_0(\underline{p})| = |g_k(\underline{p})| \geq K, \quad |r^k_1(\underline{p})| = |\mathcal{L}_{ib_k}^{m_k}g_k(\underline{p})| \geq K,
\]
where $m_k=m_{b_k}$ and 
\[
\mathcal{L}_{ib_k}^{jm_k}g_k(\underline{x})=r_j^k(\underline{x})e^{iw_j^k(\underline{x})} \quad (j=0,1,2),
\]
for every $\underline{y}$ satisfying
\begin{itemize}
\item $\underline{y}\in\Orb(\underline{p}) \cup \bigcup_{N\geq 1}\Orb(\underline{p_N})$, or
\item $\sigma^{m_k}(\underline{y})\in\Orb(\underline{p}) \cup \bigcup_{N\geq 1}\Orb(\underline{p_N})$ with $\left\{ \{y_n\}_{n\geq 0} \in [a] : y_n\in V_1 \ \forall n\geq 0 \right\}$, $r_0^k(\underline{y})\geq K/2$ and $r_1^k(\underline{y})\geq K/2$,
\end{itemize}
both inequalities hold:
\begin{equation}\label{eq.assumption}
|e^{iw_1^k(\sigma^{m_{k}}(\ul{y}))}-e^{iw_0^k(\ul{y})}e^{ib_kr_{m_{k}}(\ul{y})}|<\frac{1}{|b_k|^{\alpha}} \text{ and }
|e^{iw_2^k(\sigma^{m_{k}}(\ul{y}))}-e^{iw_1^k(\ul{y})}e^{ib_kr_{m_{k}}(\ul{y})}|< \frac{1}{|b_k|^{\alpha}}.
\end{equation}

Note that $m_k>k_p\log|b_k|\cdot [\log(18R_0\cdot f_0[a]/K)+\alpha_0+\alpha+1]/(-\log\theta)+\rho_0>\rho_0+1$ for any $k$ large enough. Thus for every $\ul{x}\in \Orb(\ul{p})\cup\bigcup_{N\geq 1}\Orb(\ul{p_N})$, we may define
\[
\xi(\ul{x})=(\underbrace{a_0,\dots,a_{k_p-1},\cdots,a_0,\dots,a_{k_p-1},a_0,\dots,a_{j}}_{m_k-\rho_0+1\text{ positions}},d_1,\dots,d_{\rho_0-1}, \ul{x})\in\Sigma^+,
\] 
where $d_i\in V_1$. It is easy to see that $\sigma^{m_k}(\xi(\ul{x}))=\ul{x}$ and $\xi(\ul{x})\in \{\{y_n\}_{n\geq 0}\in[a]: y_n\in V_1 \text{ for all } n\geq 0\}$. Moreover, we have the following estimates.

\begin{lem}\label{lem.r0r1xi}
	$|r_0^k(\xi(\ul{x}))|\geq K/2$ and $|r_1^k(\xi(\ul{x}))|\geq K/2$ for any $k$ large enough.
\end{lem} 
\begin{proof}
Since $\xi(\ul{x}), \ul{p}$ are contained in the cylinder $[a]$ then, from the definition of the norm $\norm{\cdot}_{\mathscr{L}}$ and the fact $\norm{g_k}_{\mathscr{L}}\leq 9R_0\cdot |b_k|^{\alpha_0+1}$, we have
\[
|g_k(\xi(\ul{x}))-g_k(\ul{p})|\leq 9R_0\cdot f_0[a]\cdot  |b_k|^{\alpha_0+1}\cdot \theta^{s(\xi(\ul{x}),\ul{p})}.
\]
Due to the construction of $\xi(\ul{x})$, it follows that
\[
s(\xi(\ul{x}),\ul{p})\geq\frac{m_k-\rho_0+1}{k_p}>\frac{\log\frac{18R_0\cdot f_0[a]}{K}+\alpha_0+\alpha+1}{-\log\theta}\log|b_k|.
\]
Therefore, we have that
\begin{equation}\label{eq.r0est}
|g_k(\xi(\ul{x}))-g_k(\ul{p})|\leq \frac{K}{2}\cdot |b_k|^{-\alpha}<\frac{K}{2}.
\end{equation}
It implies  $|r_0^k(\xi(\ul{x}))|=|g_k(\xi(\ul{x}))|\geq K/2$. Similarly, recall that $\norm{\mathcal{L}_{ib_k}^{m_k}(g_k)}_{\mathscr{L}}\leq 9R_0\cdot |b_k|^{\alpha_0+1}$ and note that 
\[
|r_1^k(\xi(\ul{x}))-r_1^k(\ul{p})|\leq |\mathcal{L}_{ib_k}^{m_k}g_k(\xi(\ul{x}))-\mathcal{L}_{ib_k}^{m_k}g_k(\ul{p})|\leq 9R_0\cdot f_0[a]\cdot  |b_k|^{\alpha_0+1}\cdot \theta^{s(\xi(\ul{x}),\ul{p})},
\]
thus 
\begin{equation}\label{eq.r1est}
|r_1^k(\xi(\ul{x}))-r_1^k(\ul{p})|\leq\frac{K}{2}\cdot |b_k|^{-\alpha}<\frac{K}{2}.
\end{equation}
Then we have $|r_1^k(\xi(\ul{x}))|\geq K/2$.
\end{proof}

\begin{lem}\label{lem.approxfun}
	There exits $\eta_k\in\mathbb{R}$ such that for all $\ul{x}\in\Orb(p)\cup\bigcup\Orb(p_N)$ and all large enough $k$, we have
\[
|e^{ib_kr_{m_k}(\ul{x})}e^{-iw_1^k(\sigma^{m_k}(\ul{x}))}-e^{i\eta_k}e^{-iw_1(\ul{x})}|<\frac{5}{|b_k|^{\alpha}}.
\]
\end{lem}
\begin{proof}
Taking $\ul{y}=\xi(\ul{x})$ in (\ref{eq.assumption}), we have
\[
|e^{iw_1^k(\sigma^{m_{k}}(\ul{x})}-e^{iw_0^k(\xi(\ul{x}))}e^{ib_kr_{m_{k}}(\xi(\ul{x}))}|<\frac{1}{|b_k|^{\alpha}} \text{ and }
|e^{iw_2^k(\sigma^{m_{k}}(\ul{x}))}-e^{iw_1^k(\xi(\ul{x}))}e^{ib_kr_{m_{k}}(\xi(\ul{x}))}|< \frac{1}{|b_k|^{\alpha}}.
\]
Moreover, we have the following fact through a direct calculation:
\[
|e^{iw}-e^{i\tilde{w}}|\leq \frac{1}{r\tilde{r}}\cdot|re^{iw}-\tilde{r}e^{i\tilde{w}}| \]
for any $w,\tilde{w}, r, \tilde{r}\in\mathbb{R}$ with $r\tilde{r}\neq 0$.
Combining with Lemma~\ref{lem.r0r1xi} and (\ref{eq.r0est}) and (\ref{eq.r1est}), we know that
\[
|e^{iw_0^k(\xi(\ul{x}))}-e^{iw_0^k(\ul{p})}|\leq \frac{1}{[r_0^k(\xi(\ul{x}))r_0^k(\ul{p})]^{\frac{1}{2}}}|g_k(\xi(\ul{x}))-g_k(\ul{p})|\leq \frac{\sqrt{2}}{K}\cdot \frac{K}{2} |b_k|^{-\alpha}<|b_k|^{-\alpha}.
\]
Similarly, we also have
\[
|e^{iw_1^k(\xi(\ul{x}))}-e^{iw_1^k(\ul{p})}|<|b_k|^{-\alpha}.
\] 

Denote $\eta_k=w_1^k(\ul{p})-w_0^k(\ul{p})$. Then for all $x\in \Orb(\ul{p})\cup\bigcup_{N\geq 1}\Orb(\ul{p_N})$, we have
\[\begin{aligned}
|e^{iw_2^k(\ul{x})}-e^{iw_1^k(\ul{x})}e^{i\eta_k}|
\leq &|e^{iw_2^k(\ul{x})}-e^{ib_kr_{m_k}(\xi(\ul{x}))}e^{iw_1^k(\xi(\ul{x}))}|\\
&+|e^{ib_kr_{m_k}(\xi(\ul{x}))}(e^{iw_1^k(\xi(\ul{x}))}-e^{iw_1^k(\ul{p})})|\\
&+|e^{ib_kr_{m_k}(\xi(\ul{x}))}e^{i\eta_k}(e^{iw_0^k(\xi(\ul{x}))}-e^{iw_0^k(\ul{p})})|\\
&+|e^{i\eta_k}(e^{iw_1^k(x)}-e^{ib_kr_{m_k}(\xi(\ul{x}))}e^{iw_0^k(\xi(\ul{x}))})|\\
\leq &\frac{4}{|b_k|^{\alpha}}.
\end{aligned}\]
Here we use the assumption
 $|e^{iw_1^k(x)}-e^{ib_kr_{m_k}(\xi(\ul{x}))}e^{iw_0^k(\xi(\ul{x}))}|, |e^{iw_2^k(\ul{x})}-e^{ib_kr_{m_k}(\xi(\ul{x}))}e^{iw_1^k(\xi(\ul{x}))}|<|b_k|^{-\alpha}$ and $|e^{iw_0^k(\xi(\ul{x}))}-e^{iw_0^k(\ul{p})}|, |e^{iw_1^k(\xi(\ul{x}))}-e^{iw_1^k(\ul{p})}|<|b_k|^{-\alpha}$ presented in the previous paragraph.

In particular, replacing $\ul{x}$ by $\sigma^{m_k}(\ul{x})$, we have $|e^{iw_2^k(\sigma^{m_k}(\ul{x}))}-e^{iw_1^k(\sigma^{m_k}(\ul{x}))}e^{i\eta_k}|\leq 4|b_k|^{-\alpha}$. According to (\ref{eq.assumption}), we also have $|e^{iw_2^k(\sigma^{m_k}(\ul{x}))}-e^{ib_kr_{m_k}(\ul{x})}e^{iw_1^k(\ul{x})}|\leq |b_k|^{-\alpha}$. Then by using the triangle inequality, we have that $|e^{ib_kr_{m_k}(\ul{x})}e^{-iw_1^k(\sigma^{m_k}(\ul{x}))}-e^{i\eta_k}e^{-iw_1(\ul{x})}|<5|b_k|^{-\alpha}$.
\end{proof}

\begin{lem}\label{lem.dirichlet}
	For all $k\in\mathbb{Z}^{+}$ large enough, there exists $u_k\in\mathbb{Z}^{+}$ with $u_k\leq |b_k|^{\alpha/2}$ and $t_k\in\mathbb{Z}$ such that for all $\ul{x}\in \Orb(\ul{p})\cup\bigcup_{N\geq 1}\Orb(\ul{p_N})$,
\[
|e^{ib_kr_{m_k}(\ul{x})}e^{-iw_1^k(\sigma^{m_k}(\ul{x}))}-e^{i\cdot 2\pi\frac{t_k}{u_k}}e^{-iw_1^k(\ul{x})}|\leq \frac{20}{u_k}\cdot \frac{1}{|b_k|^{\frac{\alpha}{2}}}.
\]
\end{lem}
\begin{proof}
By the Dirichlet's theorem, there exist $u_k, t_k\in\mathbb{Z}$ with $1\leq u_k\leq [|b_k|^{\alpha/2}]$ such that 
\[
\left|\frac{\eta_k}{2\pi}-\frac{t_k}{u_k}\right|\leq \frac{1}{u_k[|b_k|^{\alpha/2}]},
\]
 where $\eta_k$ is given by Lemma~\ref{lem.approxfun}. It implies that
\[
|e^{i\eta_k}-e^{i\cdot 2\pi\frac{t_k}{u_k}}|\leq \frac{2\pi}{u_k[|b_k|^{\alpha/2}]}.
\]

Combining with Lemma~\ref{lem.approxfun}, we get this lemma.
\end{proof}

Now we go back to prove Proposition~\ref{pro.appfunct} using an approach similar to that in \cite[Theorem~1.6~(a)]{FMT07}. Recall that the period of $\ul{p}$ with respect to the flow is $\tau_0$ and the period of $\ul{p_N}$ with respect to the flow is $\tau_N$. Also recall that the period of $\ul{p}$ with respect to the shift is $k_p$ and the period of $\ul{p_N}$ with respect to the shift is $k_N$. It follows that 
\[
\tau_0=r_{k_p}(\ul{p}) \text{ and } \tau_N=r_{k_N}(\ul{p_N}).
\]
In particular, $\tau_0\geq k_p\cdot \inf r$ and $\tau_N\geq k_N\cdot \inf r$.

For all $N\geq 1$ and $k\in\mathbb{Z}^{+}$ large enough, denote $l=u_k\cdot k_N\in\mathbb{Z}^{+}$. Note that $\sigma^{lm_k}(\ul{p_N})=\ul{p_N}$ and $r_{lm_k}(\ul{p_N})=u_km_k\tau_N$. 
By taking  $\ul{x}=\sigma^{i}(\ul{p_N})$, $0\leq i\leq l-1$, in Lemma~\ref{lem.dirichlet}, we conclude that  
\[
|e^{ib_ku_km_k\tau_N}e^{-iw_1^k(\ul{p_N})}-e^{i\cdot 2\pi k_Nt_k}e^{-iw_1^k(\ul{p_N})}|\leq \frac{20\tau_N}{\inf r}\cdot \frac{1}{|b_k|^{\frac{\alpha}{2}}}.
\]
Equivalently, 
\[
|e^{ib_ku_km_k\tau_N}-1|\leq\frac{20\tau_N}{\inf r}\cdot \frac{1}{|b_k|^{\frac{\alpha}{2}}}.
\]
 Note from (\ref{eq.expressperiod}) that $\tau_N=O(N)$.
Then we may find some constant $C>0$, which is independent of $k$ and $N$, such that $d(b_ku_km_k\tau_N,2\pi\mathbb{Z})\leq CN|b_k|^{-\alpha/2}$.
By the same reason, we also have $d(b_ku_km_kN\tau_0,2\pi\mathbb{Z})\leq CN|b_k|^{-\alpha/2}$. 

The above two estimates implies that
\[
d(b_ku_km_k(\kappa+E_N\gamma^N\cos(N\alpha_0+\zeta_N)+o(\gamma^N)),2\pi\mathbb{Z})\leq 2CN|b_k|^{-\alpha/2}.
\]
That is, 
\[
d(b_ku_km_k(\kappa+E_N\gamma^N\cos(N\alpha_0+\zeta_N)+o(\gamma^N)),2\pi\mathbb{Z})=O(N|b_k|^{-\alpha/2}).
\]

Take $N(k)=[\rho\log|b_k|]$. Then for any $\rho$ large enough, we have $b_ku_km_k \cdot\gamma^{N(k)}=O(|b_k|^{-\alpha}\log|b_k|)$. Since $|E_{N(k)}|\leq S_0$, we have that
\[
d(b_ku_km_k\kappa,2\pi\mathbb{Z})=O(|b_k|^{-\alpha/2}\log|b_k|).
\]
Thus, 
\[
d(b_ku_km_k(E_{N(k)}\gamma^{N(k)}\cos(N(k)\alpha_0+\zeta_{N(k)})+o(\gamma^{N(k)})),2\pi\mathbb{Z})= O(N(k)|b_k|^{-\alpha/2})+O(|b_k|^{-\alpha/2}\log|b_k|).
\]

Take $M(k)=[\log|b_k|u_km_k+\log S_0+\log2/(-\log\gamma)]+1$. We have $\gamma/2\leq S_0|b_k|u_km_k\cdot \gamma^{M(k)}\leq 1/2$.
Denote $N(k)=M(k)+j$, $j\geq 0$. Then
\[
\lim\limits_{k\to\infty}b_ku_km_k(E_{M(k)+j}\gamma^{M(k)+j}\cos((M(k)+j)\alpha_0+\zeta_{M(k)+j})=0
\]
for any fixed $j$.
It follows from the choice of $M(k)$ and the assumption $\liminf\limits_{N\to\infty}|E_N|\neq 0$ that 
\begin{equation}\label{eq.cosMk}
  \lim\limits_{k\to\infty}\cos((M(k)+j)\alpha_0+\zeta_{M(k)+j})=0.
\end{equation}

So we obtain that
\begin{itemize}
\item if $\alpha_0=0$, then $\zeta_{M(k)+j}=0$;
\item if $\alpha_0\in(0,\pi)$, then $\zeta_{M(k)+j}\in(\alpha_0-\pi/12,\alpha_0+\pi/12)$. 
\end{itemize}

The first item contradicts to \eqref{eq.cosMk}. Thus the second item holds.
For any $l\in\mathbb{Z}^{+}$, take $j=0$ and $j=l$, we have $\lim\limits_{k\to\infty}d(l\alpha_0+\zeta_{M(k)+l}-\zeta_{M(k)},\pi/2+\pi\mathbb{Z})=0$. Since $|\zeta_{M(k)+l}-\zeta_{M(k)}|<\pi/6$ then $|l\alpha_0|<\pi/6$. The arbitrariness of $l$ leads to a contradiction.

We complete the proof.
\end{proof}

It is proved in \cite{LS}  the following dichotomy theorem.

\begin{thm}{\rm \cite{LS}}\label{thm.LS}
	Assume that $(\Sigma_r,\sigma_r)$ is  an irreducible TMF and $\vartheta$ is a H \"older continuous potential. Then either every equilibrium measure of $\vartheta$ is mixing, or there is $\Sigma'_r\subset\Sigma_r$ of full measure such that $(\Sigma'_r,\sigma_r)$ is topologically conjugate to a TMF with constant roof function.
\end{thm}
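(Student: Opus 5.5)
This dichotomy theorem is quoted from \cite{LS}; I would reduce it to the standard mixing criterion for suspension flows over irreducible countable Markov shifts. First normalize. By Proposition~\ref{pro.relationmme}, an equilibrium state $\mu$ of $\vartheta$ for $\sigma_r$ corresponds to an equilibrium state $\mu_\Sigma$ of $\Delta_\vartheta-P_{\rm TOP}(\sigma_r,\vartheta)\, r$ for $\sigma$. By Lemma~\ref{lem.spectdecomp} we may recode so that the base is topologically mixing, replacing $r$ by $r_p$; this changes neither mixing of the flow measure nor the conclusion. By Lemma~\ref{Sinailem} we may pass to one-sided versions of the potential and of $r$, up to coboundaries. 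The equilibrium state on the base is then unique and has the local Gibbs property on cylinders, with full support on the regular set.

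Next, characterize non-mixing. Suppose $\mu$ is not mixing. By the Bernoulli/K-property of $\mu_\Sigma$ (a Gibbs state of a mixing countable Markov shift), failure of mixing for the suspension means there is a nonconstant $L^2$ eigenfunction of the flow, $F\circ\sigma_{r,t}=e^{ibt}F$ with $b\neq0$. Restricting to the base gives a measurable $g$ with $|g|=1$ and
\[
e^{ibr}=g\circ\sigma/g \quad \mu_\Sigma\text{-a.e.}
\]
The key step is a Livsic-type regularity result: $g$ agrees a.e.\ with a locally H\"older function on each cylinder, so that $b\,r=\omega-\omega\circ\sigma+2\pi k$ holds everywhere on a full measure (indeed regular) subset with $\omega$ locally H\"older and $k$ integer valued. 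I would prove this by the transfer-operator argument: apply the twisted operator $\mathcal{L}_{ib}$ and use the uniqueness of the fixed point in the space $\mathscr{L}$ for the mixing Gibbs measure, as in Theorem~\ref{thm.CS2009}. Alternatively, I would use the two-sided argument via stable and unstable holonomies (Parry--Pollicott, generalized to countable alphabets using local compactness and recurrence to a fixed cylinder). I expect this to be the main obstacle, since the lack of compactness rules out the global Gibbs bounds; it is handled by working in the first-return system to a single cylinder $[a]$, which has the BIP property.

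Finally, conjugate. Given $r=\tfrac{2\pi}{b}k+\tfrac1b(\omega-\omega\circ\sigma)$ with $k$ locally constant, set $r'=r+\tfrac1b(\omega\circ\sigma-\omega)$, which takes values in $\tfrac{2\pi}{b}\mathbb{Z}^+$ and depends on finitely many coordinates. The map $(\ul{x},t)\mapsto(\ul{x},t-\omega(\ul{x})/b)$ is a topological conjugacy on the regular, full measure set $\Sigma'_r$ to the suspension with roof $r'$. Recoding by subdividing each symbol's roof into $k(\ul{x})$ unit pieces yields a TMF with constant roof $2\pi/|b|$. Since the conjugacy moves only along flow lines by a continuous amount, it is a homeomorphism onto its image, giving the second alternative of the dichotomy.
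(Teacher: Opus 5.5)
The paper gives no proof of this statement; it imports it from \cite{LS}. Your outline therefore has to stand on its own. Its architecture (eigenfunction, Liv\v{s}ic regularity, lattice-valued roof, constant-roof recoding) is the standard one, but three steps do not go through as written.

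\textbf{Non-mixing versus weak mixing.} ``Not mixing'' yields an $L^2$ eigenfunction only if you already know that weak mixing of $\mu$ implies mixing. This does not follow from the K or Bernoulli property of the base measure $\mu_\Sigma$. That property does not control mixing of the suspension: a Bernoulli base with constant roof is not even weak mixing. Nor does it by itself upgrade weak mixing of the flow to mixing. For equilibrium measures of TMFs with H\"older roof, the implication ``weak mixing $\Rightarrow$ Bernoulli'' is a genuine theorem, proved by a Ratner/Ornstein--Weiss type argument using strong stable and unstable sets of $\sigma_r$ (cf.\ \cite{LLS}); you must prove it or invoke it.

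\textbf{Regularity without SPR.} Your main regularity argument goes through Theorem~\ref{thm.CS2009}, which requires SPR. The statement assumes nothing of the kind, and without SPR there is no spectral gap and no space $\mathscr{L}$. Your fallback is what actually works and should be the main route: induce on a cylinder $[a]$ and use Liv\v{s}ic regularity for the resulting Gibbs--Markov full shift on first-return loops.

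\textbf{The conjugacy step.} As stated, this step is incorrect. Nothing makes $r'=\frac{2\pi}{b}k$ positive (take $b>0$), and for a fixed $b$ no transfer function can. Summing $br=\omega-\omega\circ\sigma+2\pi k$ over a periodic orbit of period $n$ gives $\sum_{j<n}k(\sigma^j\ul x)=\frac{b}{2\pi}r_n(\ul x)$, whatever $\omega$ is.

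For example, take the graph $0\to0$, $0\to1$, $1\to0$, with $r=c$ on $[00]$ and $r=c/2$ on $[01]\cup[10]$. Then $r=c\,1_{[0]}+\Omega-\Omega\circ\sigma$ with $\Omega=\frac c2 1_{[1]}$. For $b=2\pi/c$, every admissible $k$ vanishes at some point of the orbit of $\overline{01}$, so ``subdividing into $k$ pieces'' collapses.

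The fix is twofold. Replace $b$ by a large multiple $mb$, which is still an eigenvalue. Then choose a continuous lift $\omega$ of the $S^1$-valued transfer function with values in an interval of length less than $4\pi$; this is possible because the shift space is zero-dimensional. This gives $k>\frac{mb\inf r}{2\pi}-2\ge1$.

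\textbf{Uniform window.} Over a countable alphabet, ``$k$ is locally constant'' does not give a uniform window of coordinates, and the block recoding into a TMS needs one. A transfer function pulled back from the induced system along first hitting times of $[a]$ need not be uniformly H\"older in the original metric.

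Both problems disappear if you carry out the cohomology and the recoding on the induced full shift over $[a]$ itself. There, the roof $R=r_\tau$ is uniformly H\"older, the Gibbs--Markov Liv\v{s}ic theorem gives a uniformly H\"older transfer function, and the integer function depends on a bounded number of loops. A higher-block presentation followed by subdivision then yields a constant-roof TMF. The flow-saturation of points visiting $[a]$ infinitely often in both time directions supplies the full-measure set $\Sigma'_r$ of the statement.
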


By employing a proof analogous to Proposition~\ref{pro.appfunct}  but with simpler calculations, we may obtain the following lemma.

\begin{lem}\label{lem.mixing}
	If $(\Sigma_r, \sigma_r)$ is an irreducible TMF having good asymptotics, then it is topological mixing.
\end{lem}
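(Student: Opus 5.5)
We argue by contradiction using the dichotomy of Theorem~\ref{thm.LS} combined with the good asymptotics expansion \eqref{eq.expressperiod}. The argument runs along the lines of Proposition~\ref{pro.appfunct}, but with an exact eigenfunction in place of an approximate one.

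First we reduce topological mixing of the suspension flow to an arithmetic condition on periods. Since the base $\Sigma$ is irreducible (and, after Lemma~\ref{lem.spectdecomp}, we may assume it is mixing), the standard criterion for suspension flows says the following. If $\sigma_r$ is \emph{not} topologically mixing, then there exist $c>0$ and a continuous $w:\Sigma\to\mathbb R/c\mathbb Z$ such that $r-w\circ\sigma+w\in c\mathbb Z$, at least on the finite subshift carrying the symbols $V_1$. This is the version of Theorem~\ref{thm.LS} (or its topological analogue) we will invoke. In particular, every periodic orbit of $\sigma_r$ has period in $c\mathbb Z$. Here we will only use this consequence for the periodic orbits $\ul p$ and $\ul{p_N}$, which live in the finite-alphabet subsystem on $V_0\subset V_1$. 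This is why the non-uniformity of the countable alphabet does not enter.

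Second, we apply this consequence to the periods. We get $\tau_0\in c\mathbb Z$ and $\tau_N\in c\mathbb Z$ for all $N\ge N_0$ (indices $N_0+im$ with $m=1$). Substituting \eqref{eq.expressperiod} gives
\[
\kappa+E_N\gamma^N\cos(N\alpha_0+\zeta_N)+o(\gamma^N)\in c\mathbb Z .
\]
The left side converges to $\kappa$ as $N\to\infty$. Since $c\mathbb Z$ is discrete, the expression must equal $\kappa\in c\mathbb Z$ for all large $N$. Hence
\[
E_N\gamma^N\cos(N\alpha_0+\zeta_N)+o(\gamma^N)=0
\]
exactly. Dividing by $\gamma^N$ and using $\liminf|E_N|>0$, we obtain $\cos(N\alpha_0+\zeta_N)\to0$.

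Third, we rule out the two cases. If $\alpha_0=0$, then $\zeta_N\equiv0$ and the cosine is identically $1$, a contradiction. If $\alpha_0\in(0,\pi)$, then $N\alpha_0+\zeta_N$ must approach $\pi/2+\pi\mathbb Z$ for every large $N$. Comparing $N$ and $N+l$, we get $d(l\alpha_0+\zeta_{N+l}-\zeta_N,\pi\mathbb Z)\to0$. Since $|\zeta_{N+l}-\zeta_N|<\pi/6$, it follows that $d(l\alpha_0,\pi\mathbb Z)<\pi/6$ for every $l\ge1$. Choosing $l$ with $l\alpha_0$ mod $\pi$ in $[\pi/6,5\pi/6]$, which is possible since $\alpha_0\in(0,\pi)$, gives a contradiction. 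This is the same endgame as in the proof of Proposition~\ref{pro.appfunct}.

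The main obstacle is the first step: justifying that failure of topological mixing forces all periods of the relevant periodic orbits into a common lattice $c\mathbb Z$ for a countable-state, only locally compact TMF. If the cited dichotomy is only measure-theoretic, we would instead argue directly. Non-mixing yields a nonconstant continuous eigenfunction $F\circ\sigma_{r,t}=e^{2\pi i t/c}F$. We then restrict it to the compact transitive subflow over the finite alphabet $V_1$, where standard arguments show that $|F|$ is constant and nonzero. Evaluating $F$ along the periodic orbits $\ul p$ and $\ul{p_N}$ then gives $e^{2\pi i\tau/c}=1$ for each of their periods $\tau$.
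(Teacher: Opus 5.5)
This is essentially the paper's route: the paper only says the lemma follows from a simpler version of the proof of Proposition~\ref{pro.appfunct}, namely the exact case (constant roof, cf.\ Theorem~\ref{thm.LS}) instead of the approximate-eigenfunction one, and that is precisely your reduction to $\tau_0,\tau_N\in c\mathbb Z$ followed by the same $\cos(N\alpha_0+\zeta_N)\to 0$ endgame. Your Step~1 is taken for granted in both arguments, and the cleanest way to fill it is the finite-subsystem version rather than an eigenfunction on all of the non-compact $\Sigma_r$: mixing of the compact suspension over the finite irreducible subshift spanned by the loops of $\ul{p}$ and $\ul{p_N}$ already implies mixing of the whole flow (join arbitrary cylinders to it by fixed paths and control the two junction errors by H\"older continuity of $r$), so non-mixing of $\sigma_r$ lets you apply the classical compact criterion there; also, in Step~3 the limit only gives $d(l\alpha_0,\pi\mathbb Z)\le\pi/6$, so choose $l$ with $l\alpha_0 \bmod \pi$ in the open interval $(\pi/6,5\pi/6)$.
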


\subsection{Estimates of the transfer operator}

 We consider a 
 a new norm on $\mathscr{L}$ based on Lemma \ref{lem.LYineq}. For any $f\in\mathscr{L}$, define
\[
\norm{f}_{(b)}=\max\{\Upsilon_1(\pi_1(|f|)),\Upsilon_1(f),\frac{\Upsilon_2(f)}{8|b|^{\alpha_0}}\}.
\]

It is easy to check that $\norm{\cdot}_{(b)}$ is a norm on $\mathscr{L}$ and satisfies 
\begin{equation}\label{eq.relnorms}
\norm{f}_{(b)}\leq \Pi_0\cdot \norm{f}_{\mathscr{L}} \text{ and }\norm{f}_{\mathscr{L}}\leq (8|b|^{\alpha_0}+1)\cdot \norm{f}_{(b)}.
\end{equation}
Recall $\Pi_0$ is given by Lemma \ref{lem.PN}.

We now proceed to establish estimates for $\Upsilon_1(\pi_1(|\mathcal{L}_{ib}^nf|))$, $\Upsilon_1(\mathcal{L}_{ib}^nf)$ and $\Upsilon_2(\mathcal{L}_{ib}^nf)/(8|b|^{\alpha_0})$ respectively.

\begin{lem}\label{lem.Upsilon22}
	 There exist $\alpha_1>0$ and $\beta_1>0$ such that for all $|b|\geq b_0$, $n\geq \beta_1\log|b|$ and $f\in\mathscr{L}$,  we have 
	 \[
	 \frac{\Upsilon_2(\mathcal{L}_{ib}^nf)}{8|b|^{\alpha_0}}\leq (1-|b|^{-\alpha_1})\norm{f}_{(b)}.
	 \]
\end{lem}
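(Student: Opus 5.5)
This is essentially a direct consequence of Lemma~\ref{lem.LYineq} together with the comparison between $\Upsilon_1$, $\norm{\cdot}_{\mathscr{L}}$ and $\norm{\cdot}_{(b)}$. The plan is to apply Lemma~\ref{lem.LYineq} with $\beta_1:=\beta_0$. For $|b|\ge b_0\ge 3$ and $n\ge \beta_1\log|b|$ it gives
\[
\Upsilon_2(\mathcal{L}^n_{ib}f)\le |b|^{\alpha_0}\Upsilon_1(f)+\tfrac12\norm{f}_{\mathscr{L}}.
\]
Dividing by $8|b|^{\alpha_0}$ and using $\Upsilon_1(f)\le\norm{f}_{(b)}$ bounds the first term by $\frac18\norm{f}_{(b)}$.

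For the second term, I would avoid the crude bound $\norm{f}_{\mathscr{L}}\le(8|b|^{\alpha_0}+1)\norm{f}_{(b)}$ from \eqref{eq.relnorms}. That bound alone would give $\frac{1}{16|b|^{\alpha_0}}(8|b|^{\alpha_0}+1)\norm{f}_{(b)}=(\frac12+\frac1{16|b|^{\alpha_0}})\norm{f}_{(b)}$. Instead I split $\norm{f}_{\mathscr{L}}\le\Upsilon_1(f)+\Upsilon_2(f)\le \norm{f}_{(b)}+8|b|^{\alpha_0}\norm{f}_{(b)}$, which is the same thing. Combining the two terms then yields
\[
\frac{\Upsilon_2(\mathcal{L}^n_{ib}f)}{8|b|^{\alpha_0}}\le\Bigl(\tfrac18+\tfrac12+\tfrac{1}{16|b|^{\alpha_0}}\Bigr)\norm{f}_{(b)}\le \tfrac{11}{16}\norm{f}_{(b)}
\]
for $|b|\ge 3$.

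Finally I would choose $\alpha_1>0$ so that $1-|b|^{-\alpha_1}\ge\frac{11}{16}$ for all $|b|\ge b_0$. Any $\alpha_1$ with $b_0^{-\alpha_1}\le 5/16$ works, for instance $\alpha_1=2$ since $b_0\ge3$. This choice is independent of $f$ and $n$.

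The only point needing care, and the place where an error could creep in, is the constant bookkeeping. One must check that the $\frac12\norm{f}_{\mathscr{L}}$ term from Lemma~\ref{lem.LYineq} does not become order $1$ relative to $\norm{f}_{(b)}$. This is exactly why the factor $8$ appears in the normalization of $\norm{\cdot}_{(b)}$. If Lemma~\ref{lem.LYineq} were only available with a constant near $1$ in place of $\frac12$, I would iterate it, taking $n$ as a multiple of $\beta_0\log|b|$ and using Lemma~\ref{lem.Upsilon1} to control $\Upsilon_1$ of intermediate iterates, which only enlarges $\beta_1$. Here that step is unnecessary, so the lemma holds with a uniform contraction, which in particular implies the stated $1-|b|^{-\alpha_1}$ bound.
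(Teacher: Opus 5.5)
Your proof is correct and is essentially the paper's argument: take $\beta_1=\beta_0$ from Lemma~\ref{lem.LYineq}, then use $\Upsilon_1(f)\le\norm{f}_{(b)}$ and $\norm{f}_{\mathscr{L}}\le(8|b|^{\alpha_0}+1)\norm{f}_{(b)}$ to get a uniform factor below $1$ (the paper gets $\frac34$, you get $\frac{11}{16}$), and finally choose $\alpha_1$ so that $|b|^{-\alpha_1}$ is small for $|b|\ge b_0$. Your remark about ``avoiding'' the bound \eqref{eq.relnorms} is moot, since the splitting you use instead is that same bound, but this does not affect correctness.
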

\begin{proof}
	Take $\alpha_1>2\log2/\log b_0$ and $\beta_1=\beta_0$ where $\beta_0$ is given by Lemma \ref{lem.LYineq}. From Lemma \ref{lem.LYineq} and the relation between norms $\norm{f}_{\mathscr{L}}$ and $\norm{f}_{(b)}$, we know that $\Upsilon_2(\mathcal{L}_{ib}^{n}f)\leq (5|b|^{\alpha_0}+1/2)\norm{f}_{(b)}$. Then $\Upsilon_2(\mathcal{L}_{ib}^nf)/8|b|^{\alpha_0}\leq 3/4\norm{f}_{(b)}\leq (1-|b|^{-\alpha_1})\norm{f}_{(b)}$. 
\end{proof}

\begin{lem}\label{lem.Upsilonpi}
	There exist $\alpha_2>0$, $\beta_2>0$ and $\tilde{b}>0$ such that for all $|b|\geq \tilde{b}$, $n\geq \beta_2\log|b|$ and $f\in\mathscr{L}$, we have
	\[
	\Upsilon_1(\pi_1|\mathcal{L}_{ib}^{n}f|)\leq (1-|b|^{-\alpha_2})\norm{f}_{(b)}.
	\]
\end{lem}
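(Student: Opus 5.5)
The idea is to run Dolgopyat's cancellation argument, localized to the finite subsystem on $V_1$, and to use Proposition~\ref{pro.appfunct} as the source of cancellation. Since $\pi_1(|g|)=\int|g|\,d\mu\cdot 1$ and $\Upsilon_1(1)=\sup_v 1/h_0[v]$ is a fixed constant, it suffices to show
\[
\int|\mathcal{L}_{ib}^nf|\,d\mu\le(1-c|b|^{-\alpha_2'})\,\norm{f}_{(b)} \quad (n\ge\beta_2\log|b|)
\]
for a suitable $\alpha_2'$, and then enlarge the exponent. Because $\mathcal{L}_0^*\mu=\mu$ and $|\mathcal{L}_{ib}^nf|\le\mathcal{L}_0^n|f|$ pointwise, we get $\int|\mathcal{L}_{ib}^nf|\,d\mu\le\int|f|\,d\mu$ for free. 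The task is therefore to gain a loss of order $|b|^{-\alpha}$ on a set of $\mu$-measure bounded below by a power of $|b|^{-1}$.

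First normalize $\norm{f}_{(b)}=1$. By Lemma~\ref{lem.norml}, $\norm{\mathcal{L}_{ib}^kf}_{\mathscr{L}}\le R_0|b|\,(8|b|^{\alpha_0}+1)\le 9R_0|b|^{\alpha_0+1}$ for all $k$, so the hypothesis of Proposition~\ref{pro.appfunct} holds for $g=\mathcal{L}_{ib}^{k}f$.

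Next split into two cases according to the size of $|g|$ at $\ul p$, using a threshold $K$ that will be a small constant times $\int|f|\,d\mu$.

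\textbf{Case 1: $|g(\ul p)|<K$ or $|\mathcal{L}^{m_b}_{ib}g(\ul p)|<K$.} On the cylinder $[a]$, the H\"older control from the $\Upsilon_2$ part of the norm shows $|g|$ stays below $2K$ on a deep cylinder around $\ul p$ of depth $O(\log|b|)$. Since $\ul p$ lies in the finite subsystem on $V_1$, the local Gibbs property makes the $\mu$-measure of that cylinder at least $|b|^{-C}$. Hence $\int|\mathcal{L}^{m_b}_{ib}g|\,d\mu$ drops by at least $|b|^{-C}$ times a constant.

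\textbf{Case 2: both values are $\ge K$.} Proposition~\ref{pro.appfunct} gives a point $\ul{x_0}$ in the $V_1$-subsystem with $r_0,r_1\ge K/2$ at which the phases of two preimage branches of $\mathcal{L}^{m_b}_{ib}$ (the branch through $\ul{x_0}$ and its companion) misalign by at least $|b|^{-\alpha}$. The elementary inequality $|z_1+z_2|\le|z_1|+|z_2|-c\min(|z_1|,|z_2|)\theta^2$, valid when the arguments differ by $\theta$, then yields a pointwise strict inequality $|\mathcal{L}^{m_b}_{ib}g|\le(1-c|b|^{-2\alpha})\mathcal{L}^{m_b}_0|g|$. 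This inequality persists on a cylinder of depth $O(\log|b|)$ around $\sigma^{m_b}(\ul{x_0})$, again by the $\Upsilon_2$ control and the Lipschitz dependence of $br_{m_b}$ along cylinders. That cylinder lies in the finite subsystem, so its $\mu$-measure is $\ge|b|^{-C}$. Integrating gives the desired gain.

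Finally choose $n\ge\beta_2\log|b|$ to cover the $2m_b$ iterates plus the burn-in from Lemma~\ref{lem.Upsilon1}, which uses $\norm{f}_{\mathcal{C}}\le C_2\Upsilon_1(f)$ to pass from $L^1$ decay to $\Upsilon_1$ bounds.

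The main obstacle is making the measure lower bounds uniform. This requires choosing the threshold $K$ relative to $\norm{f}_{(b)}$ and not a fixed constant, since Proposition~\ref{pro.appfunct} is stated for a fixed $K$. I would first handle this by rescaling $g$ by $\int|f|\,d\mu$ when that quantity is not tiny. When it is tiny relative to $\norm{f}_{(b)}$, the contraction of $\Upsilon_1(\pi_1|\cdot|)$ is immediate anyway.
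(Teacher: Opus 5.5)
You have the paper's architecture: a trivial case when $\int|f|\,d\mu$ is small compared with $\norm{f}_{(b)}$; otherwise either a small value of $|\mathcal{L}_{ib}^kf|$ or a phase misalignment from Proposition~\ref{pro.appfunct}; propagation to a cylinder of depth $O(\log|b|)$ around a periodic point; the local Gibbs lower bound; and monotonicity of $\int|\mathcal{L}_{ib}^nf|\,d\mu$. However, your Case~1 produces no drop as written.

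The gain is $\int(\mathcal{L}_0^k|f|-|\mathcal{L}_{ib}^kf|)\,d\mu\ge\int_{\mathbf C}(\mathcal{L}_0^k|f|-|g|)\,d\mu$. Knowing $|g|<2K$ on $\mathbf C$ helps only if you also know $\mathcal{L}_0^k|f|\ge 3K$ there. Nothing in your proposal gives such a pointwise lower bound; Lemma~\ref{lem.Upsilon1} bounds $\mathcal{L}_0^n|f|$ only from above.

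The missing ingredient is the paper's Claim~1, which comes from the spectral gap of the untwisted operator. Apply Lemma~\ref{lem.PN}(4) to $|f|$, using $\norm{|f|}_{\mathscr L}\le\norm{f}_{\mathscr L}\le(8|b|^{\alpha_0}+1)\norm{f}_{(b)}$. For every $\ul x\in[v]$ with $v\in V_0$ this gives
\[
\bigl|\mathcal{L}_0^l|f|(\ul x)-\textstyle\int|f|\,d\mu\bigr|\le C_3\delta^l(8|b|^{\alpha_0}+1)\,h_0[V_0]\,\norm{f}_{(b)}.
\]
Take $l\ge\beta'\log|b|$; this is what the logarithmic burn-in is really for, since it absorbs the factor $|b|^{\alpha_0}$. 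In the nontrivial case $\int|f|\,d\mu>\frac{1}{2\Upsilon_1(1)}\norm{f}_{(b)}$ you then get $\mathcal{L}_0^l|f|\ge\frac{1}{4\Upsilon_1(1)}\norm{f}_{(b)}$ on all $V_0$-cylinders, hence near every point of $\Orb(\ul p)\cup\bigcup_N\Orb(\ul{p_N})$. Choosing $K$ as a fraction of $\int|f|\,d\mu$ shows you intend the comparison $\mathcal{L}_0^k|f|\approx\int|f|\,d\mu$, but that comparison is exactly the step that needs proof.

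Second, splitting cases only by the values at $\ul p$ does not make Case~2 work. As stated, Proposition~\ref{pro.appfunct} may return $\ul{x_0}$ of the first type: a point of the periodic orbits with no asserted lower bound on $r_0^b(\ul{x_0})$ or $r_1^b(\ul{x_0})$. Its proof genuinely uses the contradiction hypothesis at such points, in the last step of Lemma~\ref{lem.approxfun}. The misaligned branch may then carry no mass, and nothing cancels.

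The paper's Case~2 instead assumes $|\mathcal{L}_{ib}^lf(\ul x)|\ge K$ for \emph{all} $\ul x\in\Orb(\ul p)\cup\bigcup_N\Orb(\ul{p_N})$ and all $l$ in a window of length $O(\log|b|)$ containing $l_0$ and $l_0+m_b$. Case~1 is then a small value at an arbitrary periodic point $\ul{x_1}$. This is why the lower bound above must hold uniformly on the $V_0$-cylinders.

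Smaller points:
\begin{itemize}
\item The misaligned branch has weight $|g(\ul{x_0})|e^{\bar h_{m_b}(\ul{x_0})}\ge\frac K2|b|^{-C\beta}$, and it is misaligned against the phase of the whole sum. The right inequality is $|\sum_yz_y|\le\sum_y|z_y|-|z_{\ul{x_0}}|(1-\cos\phi)$, not a two-branch bound with factor $1-c|b|^{-2\alpha}$. This is harmless, since any polynomial rate suffices.
\item Your reduction drops the factor $\Upsilon_1(1)$. What you need is $\int|\mathcal{L}_{ib}^nf|\,d\mu\le\int|f|\,d\mu-c|b|^{-\alpha'}\norm{f}_{(b)}$, together with $\Upsilon_1(\pi_1|f|)\le\norm{f}_{(b)}$.
\item The rescaling in your last paragraph is unnecessary, and it would break the normalization $\norm{\mathcal{L}_{ib}^ng}_{\mathscr L}\le 9R_0|b|^{\alpha_0+1}$ required by Proposition~\ref{pro.appfunct}. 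After setting $\norm{f}_{(b)}=1$, the nontrivial case already pins $\int|f|\,d\mu$ between two constants, so the fixed $K=\frac{1}{8\Upsilon_1(1)}$ works.
\end{itemize}
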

\begin{proof}
	Since $|\mathcal{L}_{ib}^nf|(x)\leq \mathcal{L}_0^n|f|(x)$, we know that
	\[
	\Upsilon_1(\pi_1|\mathcal{L}_{ib}^nf|)\leq \Upsilon_1(\pi_1(\mathcal{L}_0^n|f|))=\int |f|d\mu\cdot \norm{1}_{\mathscr{L}}.
	\]

First, if $\int |f|d\mu\cdot \norm{1}_{\mathscr{L}}\leq \frac{1}{2}\norm{f}_{(b)}$, then the conclusion holds for all $\alpha_2>0$, $\beta_2>0$ and $n\geq 0$. Following  we assume that:
\begin{equation}\label{eq.nasp}
\int |f|d\bar{\mu}\cdot \norm{1}_{\mathscr{L}}> \frac{1}{2}\norm{f}_{(b)}.
\end{equation}

\noindent {\bf Claim 1.}  {\it  There exists $\beta_2'>\beta_1$ such that for any $|b|\geq b_1$, $\ul{x}\in \Orb(\ul{p})\cup\bigcup_{N\geq 1}\Orb(\ul{p_N})$, $l\geq \beta_2'\log|b|$ and $f\in\mathscr{L}$ satisfying (\ref{eq.nasp}), we have
\[
\mathcal{L}_0^l|f|(x)\geq \frac{1}{4\norm{1}_{\mathscr{L}}}\norm{f}_{(b)}.
\]
}
\begin{proof}
Notice that for any $l\in\mathbb{Z}^{+}$, $\mathcal{L}_0^l|f|-P(f)\in\mathscr{L}_2$.
Thus 
\[
\Upsilon_1(\mathcal{L}_0^l|f|-P(f))\leq  C_3\delta^l\norm{f}_{\mathscr{L}}\leq C_3\delta^l(8|b|^{\alpha_0}+1)\norm{f}_{(b)}.
\]

Recall that ${h}_0[V_0]=\sup\{{h}_0(\ul{x}):\ul{x}\in [v] \text{ and } v\in V_0\}$. Then for any $\ul{x}\in \Orb(\ul{p})\cup\bigcup_{N\geq 1}\Orb(\ul{p_N})$,  we have
\[
|\mathcal{L}_0^l|f|(\ul{x})-\int |f|d\bar{\mu}|\leq C_3\delta^l(8|b|^{\alpha_0}+1)\norm{f}_{(b)}\cdot {h}_0[V_0].
\]

Take $\alpha_1'>\alpha_1$ such that  $|b|^{\alpha_1'}>4\norm{1}_{\mathscr{L}}C_3(8|b|^{\alpha_0}+1)f_0[\mathcal{S}_0]$ for all $|b|\geq b_1$, and take $\beta_1'=\alpha_1'/(-\log\delta)$. It follows that $|\mathcal{L}_0^l|f|(\ul{x})-\int |f|d\bar{\mu}|\leq \norm{f}_{(b)}/(4\norm{1}_{\mathscr{L}})$ for all $l\geq \beta_1'\log|b|$. Combining with (\ref{eq.nasp}), we obtain that $\mathcal{L}_0^l|f|(x)\geq \norm{f}_{(b)}/(4\norm{1}_{\mathscr{L}})$. 
\end{proof}

\noindent {\bf Claim 2.} {\it  There exist $\alpha_2'>0$, $\beta_2>\beta_2'$ and $\tilde{b}>b_0$ such that for all $|b|\geq \tilde{b}$, $f\in\mathscr{L}$ satisfying (\ref{eq.nasp}), there is a point $\ul{x_1}\in \Orb(\ul{p})\cup\bigcup_{N\geq 1}\Orb(\ul{p_N})$ and an integer $n_0\in[\beta_1'\log|b|,\beta_2\log|b|]$ such that 
\[
|\mathcal{L}_{ib}^{n_0}f(\ul{x_1})|\leq \mathcal{L}_0^{n_0}|f|(\ul{x_1})-|b|^{-\alpha_2'}\norm{f}_{(b)}.
\]
}
\begin{proof}
Take $\alpha=\alpha_1'$ and $K=1/(8\norm{1}_{\mathscr{L}})$, there exist $\beta>0$ and $\tilde{b}>0$ such that the conclusion of Proposition~\ref{pro.appfunct} holds. Take $\alpha_2'>0$ such that $|b|^{\alpha_2'-(2\alpha+\beta{h}_0[V_1])}>32\norm{1}_{\mathscr{L}}$ for all $|b|\geq \tilde{b}$. Set $\beta_2=\beta_1'+2\beta+1$. 

Without loss of generality, we fix $|b|\geq \tilde{b}$ and $f\in\mathscr{L}$ satisfying $\norm{f}_{(b)}=1$ and (\ref{eq.nasp}) (when $f=0$, the conclusion holds automatically). We consider the following two cases.

\vspace{5pt}
{\bf Case 1:} There exist an integer $n_0\in[\beta_1'\log|b|$, $\beta_2\log|b|]$ and a point $\ul{x_1}\in \Orb(\ul{p})\cup\bigcup_{N\geq 1}\Orb(\ul{p_N})$ such that $|\mathcal{L}_{ib}^{n_0}f(\ul{x_1})|<1/(8\norm{1}_{\mathscr{L}})$.
\vspace{5pt}

Note that we have  $\mathcal{L}_{0}^{n_0}(\ul{x_1})\geq 1/(4\norm{1}_{\mathscr{L}})$ by Claim 1. Thus $|\mathcal{L}_{ib}^{n_0}f(\ul{x_1})|<\mathcal{L}_0^{n_0}|f|(\ul{x_1})-1/(8\norm{1}_{\mathscr{L}})$. Then it is easy to find $\alpha_2'>0$ so that $|\mathcal{L}_{ib}^{n_0}f(\ul{x_1})|\leq \mathcal{L}_0^{n_0}|f|(\ul{x_1})-|b|^{-\alpha_2'}=\mathcal{L}_0^{n_0}|f|(\ul{x_1})-|b|^{-\alpha_2'}\norm{f}_{(b)}$. We proves the claim in this case.

\vspace{5pt}
{\bf Case 2:} For every integer $l\in[\beta_1'\log|b|,\beta_2\log|b|]$ and $\ul{x}\in \Orb(\ul{p})\cup\bigcup_{N\geq 1}\Orb(\ul{p_N})$, we have $|\mathcal{L}_{ib}^{n_0}f(\ul{x})|\geq 1/(8\norm{1}_{\mathscr{L}})$.
\vspace{5pt}

In this case, take $l_0=[\beta_1'\log|b|]+1$ and denote $g=\mathcal{L}_{ib}^{l_0}f$. As in Proposition~\ref{pro.appfunct}, denote $m_b=[\beta\log|b|]$ and $\mathcal{L}_{ib}^{jm_b}g(\ul{x})=r_j^b(\ul{x})\cdot e^{iw_j^b(\ul{x})}$ for $j=0,1,2$. Then by Lemma \ref{lem.norml}, it follows that  $\norm{\mathcal{L}_{ib}^ng}_{\mathscr{L}}=\norm{\mathcal{L}_{ib}^{n+l_0}f}_{\mathscr{L}}\leq R_0|b|\norm{f}_{\mathscr{L}}\leq 9R_0|b|^{\alpha_0+1}$ for all $n\geq 0$. Moreover, we have $|r_0^b(\ul{p})|\geq K$ and $|r_1^b(\ul{p})|\geq K$ by the hypothesis of Case 2. 

Thus, Proposition~\ref{pro.appfunct} can be applied here. Precisely, there exists $\ul{x_0}$ satisfying one of the following conditions:
\begin{itemize}
\item $\ul{x_0}\in \Orb(\ul{p})\cup\bigcup_{N\geq 1}\Orb(\ul{p_N})$, or 
\item $\sigma^{m_b}(\ul{x_0})\in \Orb(\ul{p})\cup\bigcup_{N\geq 1}\Orb(\ul{p_N})$ with $\ul{x_0}\in \{\{y_n\}_{n\geq 1}\in[a]: y_n\in V_1 \text{ for all } n\geq 1\}$, $r_0^b(\ul{x_0})\geq K/2$ and $r_1^b(\ul{x_0})\geq K/2$,
\end{itemize}
such that either
\begin{equation}\label{eq.dic1}
|e^{iw_1(\sigma^{m_b}(\ul{x_0})}-e^{iw_0(\ul{x_0})}e^{ibr_{m_b}(\ul{x_0})}|\geq \frac{1}{|b|^{\alpha}},
\end{equation}
or 
\begin{equation}\label{eq.dic2}
|e^{iw_2(\sigma^{m_b}(\ul{x_0}))}-e^{iw_1(\ul{x_0})}e^{ibr_{m_b}(\ul{x_0})}|\geq \frac{1}{|b|^{\alpha}}.
\end{equation}

We first assume that (\ref{eq.dic1}) holds for $\ul{x_0}$. Note that in either case of the location of $\ul{x_0}$, we have $g(\ul{x_0})=r_0^b(\ul{x_0})\geq K/2=1/(16\norm{1}_{\mathscr{L}})$. Denote $\ul{x_1}=\sigma^{m_b}(\ul{x_0})$ which belongs to $\Orb(\ul{p})\cup\bigcup_{N\geq 1}\Orb(\ul{p_N})$. 

Since 
\[
\mathcal{L}_0^{m_b}|g|(\ul{x_1})-r_1^b(\ul{x_1})=\sum_{\sigma^{m_b}(\ul{y})=\ul{x_1}}\Re[|g(\ul{y})|e^{\bar{h}_{m_b}(\ul{y})}(1-e^{ibr_{m_b}(\ul{y})}e^{iw_0^b(\ul{y})}e^{-iw_1^b(\ul{x_1})})] 
\]
and each summand is non-negative, so we have
\[\begin{aligned}
\mathcal{L}_0^{m_b}|g|(\ul{x_1})-r_1^b(\ul{x_1})&\geq |g(\ul{x_0})|e^{\bar{h}_{m_b}(\ul{x_0})}(1-e^{ibr_{m_b}(\ul{x_0})}e^{iw_0^b(\ul{x_0})}e^{-iw_1^b(\ul{x_1})})\\
&\geq e^{-\beta{h}_0[V_1]\log |b|}\cdot \frac{1}{16\norm{1}_{\mathscr{L}}}\cdot \frac{1}{2|b|^{2\alpha}}\\
&=\frac{1}{32\norm{1}_{\mathscr{L}}}|b|^{-(2\alpha+\beta{h}_0[V_1])}.
\end{aligned}\]
Therefore, 
\[\begin{aligned}
|\mathcal{L}_{ib}^{l_0+m_b}f(\ul{x_1})|&=|r_1^b(\ul{x_1})|\\
&\leq \mathcal{L}_0^{m_b}|g|(\ul{x_1})-\frac{1}{32\norm{1}_{\mathscr{L}}}|b|^{-(2\alpha+\beta{h}_0[V_1])}\\
&\leq \mathcal{L}_0^{l_0+m_b}|f|(\ul{x_1})-\frac{1}{32\norm{1}_{\mathscr{L}}}|b|^{-(2\alpha+\beta{h}_0[V_1])}\\
&\leq \mathcal{L}_0^{l_0+m_b}|f|(\ul{x_1})-|b|^{-\alpha_2'}.
\end{aligned}\]
This inequality proves Claim 2. 

If (\ref{eq.dic2}) holds, an identical method establishes the claim as well.
\end{proof}

\noindent  {\bf Claim 3.} {\it There exists $\alpha_2''>0$ such that for all $|b|\geq \tilde{b}$, $f\in\mathscr{L}$ satisfying (\ref{eq.nasp}), we have 
\[
\int |\mathcal{L}_{ib}^{n_0}f|d\bar{\mu}\leq \int \mathcal{L}_0^{n_0}|f|d\bar{\mu}-|b|^{-\alpha_2''}\norm{f}_{(b)},
\]
where $\tilde{b}$ and $n_0$ are given by Claim 2.}
\begin{proof}
By Claim 2, there is a point $\ul{x_1}\in\Orb(\ul{p})\cup\bigcup_{N\geq 1}\Orb(\ul{p_N})$ such that $|\mathcal{L}_{ib}^{n_0}f(\ul{x_1})|\leq \mathcal{L}_0^{n_0}|f|(\ul{x_1})-|b|^{-\alpha_2'}\norm{f}_{(b)}$. Denote 
\[
m_0=\left[\frac{1}{-\lambda\log\theta}(\log(36R_0{h}_0[V_0])+(\alpha_0+\alpha_2'+1)\log|b|)\right]+1.
\]

 Take a cylinder  $\mathbf{C}_0\subset \Sigma^+$ containing $\ul{x_1}$ such that $d(\ul{x_1},\ul{y})\leq e^{-m_0}$ for all $\ul{y}\in \mathbf{C}_0$. That is, $t(\ul{x_1},\ul{y})\geq m_0$  for all $\ul{y}\in \mathbf{C}_0$. Moreover, for all $\ul{y}\in \mathbf{C}_0$, we have
\[\begin{aligned}
|\mathcal{L}_0^{n_0}|f|(\ul{x_1})-\mathcal{L}_0^{n_0}|f|(\ul{y})|
&\leq  {h}_0[V_0]\cdot\theta^{\lambda t(\ul{x_1},\ul{y})}\norm{f}_{\mathscr{L}}\\
&\leq {h}_0[V_0]\cdot\theta^{\lambda t(\ul{x_1},\ul{y})}(1+C_3(1+8|b|^{\alpha_0}))\norm{f}_{(b)} \text{ (by (\ref{eq.relnorms}))}\\
&\leq 9R_0\cdot {h}_0[V_0]|b|^{\alpha_0+1}\cdot \theta^{\lambda t(\ul{x_1},\ul{y})}\\
&\leq \frac{1}{4|b|^{\alpha_2'}}\norm{f}_{(b)} \text{ (by the choice of $m_0$)}.
\end{aligned}\]
Therefore,
\[\begin{aligned}
|\mathcal{L}_0^{n_0}|f|(\ul{y})|-|\mathcal{L}_{ib}^{n_0}f(\ul{y})|
&\geq \left(\mathcal{L}_0^{n_0}|f|(\ul{x_1})-|\mathcal{L}_0^{n_0}|f|(\ul{x_1})-\mathcal{L}_0^{n_0}|f|(\ul{y})|\right)-\left(|\mathcal{L}_{ib}^{n_0}f(\ul{x_1})|+|\mathcal{L}_{ib}^{n_0}f(\ul{x_1})-\mathcal{L}_{ib}^{n_0}f(\ul{y})|\right)\\
&\geq \frac{1}{2|b|^{\alpha_2'}}\norm{f}_{(b)}.
\end{aligned}\]

Since $\Sigma^+$ is SPR of the potential $\Delta_h-P_{\rm TOP}\cdot r$ and the top pressure is finite, we have the following {\it local Gibbs property} on $V_1$ (see \cite{Gur}):  there are constants $G_0$, $\gamma_0>0$, depending on $V_1$, such that $\bar{\mu}(\mathbf{C}_0)\geq G_0\cdot \gamma_0^{m_0}$. Then, we may find a suitable $\alpha_2''>0$ such that 
\[\begin{aligned}
\int_{\Sigma} \mathcal{L}_0^{n_0}|f|d\bar{\mu}-\int_{\Sigma} |\mathcal{L}_{ib}^{n_0}f|d\bar{\mu}&=\int_{\mathbf{C}_0}(\mathcal{L}_0^{n_0}|f|-|\mathcal{L}_{ib}^{n_0}f|)d\bar{\mu}+\int_{\Sigma\setminus \mathbf{C}_0} |\mathcal{L}_{ib}^{n_0}f|d\bar{\mu}\\
&=\int_{\mathbf{C}_0}(\mathcal{L}_0^{n_0}|f|-|\mathcal{L}_{ib}^{n_0}f|)d\bar{\mu}\\
&\geq \frac{1}{2|b|^{\alpha_2'}}\norm{f}_{(b)}\cdot G_0\cdot \gamma_0^{m_0}\\
&\geq \frac{1}{|b|^{\alpha_2''}}\norm{f}_{(b)}.
\end{aligned}\]

The proof of Claim 3 is completed.
\end{proof}

According to Claim 3 and the fact $\int \mathcal{L}_0^{n_0}|f|d\bar{\mu}=\int |f|d\bar{\mu}=\Upsilon_1(\pi_1|f|)\leq \norm{f}_{(b)}$, we may find $\alpha_2>0$ such that 
\[\begin{aligned}
\Upsilon_1(\pi_1|\mathcal{L}_{ib}^{n_0}f|)&=\int|\mathcal{L}_{ib}^{n_0}f|d\bar{\mu}\cdot \norm{1}_{\mathscr{L}}\\
&\leq \int \mathcal{L}_0^{n_0}|f|d\bar{\mu}-\frac{1}{|b|^{\alpha_2''}}\norm{f}_{(b)}\\
&\leq (1-\frac{\norm{1}_{\mathscr{L}}}{|b|^{\alpha_2''}})\norm{f}_{(b)}\\
&\leq (1-\frac{1}{|b|^{\alpha_2}})\norm{f}_{(b)}.
\end{aligned}\]

On the other hand, $\int |\mathcal{L}_{ib}^nf|d\bar{\mu}\leq \int \mathcal{L}_{0}^{n-n_0}|\mathcal{L}_{ib}^{n_0}f|d\bar{\mu}=\int|\mathcal{L}_{ib}^{n_0}f|d\bar{\mu}$ for all $n\geq n_0$. Hence
\[
\Upsilon_1(\pi_1|\mathcal{L}_{ib}^{n}f|)\leq \Upsilon_1(\pi_1|\mathcal{L}_{ib}^{n_0}f|)\leq (1-|b|^{-\alpha_2})\norm{f}_{(b)}.
\]
This proves the lemma.
\end{proof}

\begin{lem}\label{lem.Upsilon11}
	There exist $\alpha_3>0$ and $\beta_3>0$ such that for all $|b|\geq \tilde{b}$, $n\geq \beta_3\log|b|$ and $f\in\mathscr{L}$, we have
	\[
	\Upsilon_1(\mathcal{L}_{ib}^{n}f)\leq (1-|b|^{-\alpha_3})\norm{f}_{(b)}.
	\]
\end{lem}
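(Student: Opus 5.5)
We control $\Upsilon_1(\mathcal{L}_{ib}^{n}f)$ by $\Upsilon_1$ of the untwisted operator applied to a function with small integral. Concretely, split $n=n_1+n_2$ with $n_1\ge \beta_2\log|b|$ given by Lemma~\ref{lem.Upsilonpi} and $n_2\ge$ a multiple of $\log|b|$ to be chosen. Put $g=|\mathcal{L}_{ib}^{n_1}f|$. Pointwise $|\mathcal{L}_{ib}^{n}f|\le \mathcal{L}_0^{n_2}g$, so
\[
\Upsilon_1(\mathcal{L}_{ib}^{n}f)\le \Upsilon_1(\mathcal{L}_0^{n_2}g)\le \Upsilon_1(\pi_1 g)+\Upsilon_1(\pi_2\mathcal{L}_0^{n_2}g),
\]
using $\mathcal{L}_0^{n_2}g=\pi_1 g+\pi_2(\mathcal{L}_0^{n_2}g)$, since $P$ commutes with $\mathcal{L}_0$ and $\mathcal{L}_0 1=1$.

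The first term is at most $(1-|b|^{-\alpha_2})\norm{f}_{(b)}$ by Lemma~\ref{lem.Upsilonpi}. For the second term, Lemma~\ref{lem.PN}\ref{PN.gap} gives
\[
\Upsilon_1(\pi_2\mathcal{L}_0^{n_2}g)\le C_3\delta^{n_2}\norm{g}_{\mathscr{L}}.
\]
The issue is that $g=|\mathcal{L}_{ib}^{n_1}f|$ must lie in $\mathscr{L}$ with controlled norm. This holds because $\big||u(\ul x)|-|u(\ul y)|\big|\le |u(\ul x)-u(\ul y)|$, so $\norm{|u|}_{\mathscr{L}}\le\norm{u}_{\mathscr{L}}$. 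Lemma~\ref{lem.norml} then gives $\norm{g}_{\mathscr{L}}\le R_0|b|\norm{f}_{\mathscr{L}}$, and by \eqref{eq.relnorms}
\[
\norm{g}_{\mathscr{L}}\le R_0|b|(8|b|^{\alpha_0}+1)\norm{f}_{(b)}.
\]
Choosing $n_2\ge \beta'\log|b|$ with $\beta'$ large (depending on $\delta,\alpha_0,\alpha_2,R_0,C_3$) makes $C_3\delta^{n_2}R_0|b|(8|b|^{\alpha_0}+1)\le \tfrac12|b|^{-\alpha_2}$ for $|b|\ge\tilde b$. Combining the two terms yields $\Upsilon_1(\mathcal{L}_{ib}^{n}f)\le(1-\tfrac12|b|^{-\alpha_2})\norm{f}_{(b)}$. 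Taking $\alpha_3>\alpha_2$ with $|b|^{-\alpha_3}\le\tfrac12|b|^{-\alpha_2}$ (possibly enlarging $\tilde b$ or $\alpha_3$) and $\beta_3=\beta_2+\beta'$ gives the statement.

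One small check: $\Upsilon_1(\pi_1 g)=\int g\,d\bar\mu\cdot\Upsilon_1(1)$ is exactly the quantity bounded in Lemma~\ref{lem.Upsilonpi}. The bound there was proved for $n\ge n_0$ and shown monotone in $n$, so it applies at $n_1$ as long as $n_1\ge\beta_2\log|b|$.

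The main obstacle is the second term. Its decay comes only from the spectral gap in $\norm{\cdot}_{\mathscr{L}}$, while the loss $\norm{\cdot}_{\mathscr{L}}\lesssim |b|^{\alpha_0+1}\norm{\cdot}_{(b)}$ is polynomial. It is absorbed by taking $n_2$ logarithmic in $|b|$, which is why the threshold must be $\beta_3\log|b|$ rather than a constant.
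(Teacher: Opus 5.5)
Your proof is correct and follows the paper's argument. Like the paper, you split $n=n_1+n_2$, dominate $|\mathcal{L}_{ib}^n f|$ pointwise by $\mathcal{L}_0^{n_2}|\mathcal{L}_{ib}^{n_1}f|$, and bound the $\pi_1$-part by Lemma~\ref{lem.Upsilonpi}; the $\pi_2$-part is controlled by the spectral gap of Lemma~\ref{lem.PN} together with Lemma~\ref{lem.norml} and \eqref{eq.relnorms}, with a logarithmic choice of $n_2$ absorbing the polynomial loss in $|b|$. Your requirement that this error term be at most $\tfrac12|b|^{-\alpha_2}$ is the one the final inequality actually needs; the paper's text states only $<\tfrac12$, which is a slip.
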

\begin{proof}
Denote 
\[
\beta_3=\frac{1}{-\log\delta}[\log(18R_0C_3)+\alpha_0+\alpha_2]+\beta_2+1 \text{ and } n_1=[\beta_2\log|b|]+1.
\]
Then for all $n\geq \beta_3\log|b|$, we have
\[\begin{aligned}
\Upsilon_1(\mathcal{L}_{ib}^{n}f)&\leq \Upsilon_1(\mathcal{L}_{0}^{n-n_1}|\mathcal{L}_{ib}^{n_0}f|)\\
&\leq \norm{\mathcal{L}_{0}^{n-n_1}|\mathcal{L}_{ib}^{n_0}f|}_{\mathscr{L}}\\
&\leq \Upsilon_1(\pi_1|\mathcal{L}_{ib}^{n_1}f|)+C_3\delta^{n-n_1}\cdot \norm{|\mathcal{L}_{ib}^{n_1}f|}_{\mathscr{L}} \text{ (by Lemma \ref{lem.PN}(\ref{PN.gap}))}\\
&\leq (1-\frac{1}{|b|^{\alpha_2}})\norm{f}_{(b)}+9R_0C_3|b|^{\alpha_0+1}\delta^{n-n_1}\norm{f}_{(b)} \text{ (by (B) and (\ref{eq.relnorms}))}.
\end{aligned}\]

The choice of $\beta_3$ implies $9R_0C_3|b|^{\alpha_0+1}\delta^{n-n_1}<1/2$. Thus $\Upsilon_1(\mathcal{L}_{ib}^{n}f)\leq (1-\frac{1}{2}|b|^{-\alpha_2})\norm{f}_{(b)}$. By choosing an appropriate $\alpha_3>\alpha_2$, we may get the conclusion of this lemma. 
	
\end{proof}

According to the above estimates, we have the following corollary.

\begin{cor}\label{cor.bnorm}
	There exist $\bar{\alpha}>0$ and $\bar{\beta}>0$ such that for all $|b|\geq \tilde{b}$ (where $\tilde{b}$ is given by Lemma~\ref{lem.Upsilonpi}),  $f\in\mathscr{L}$ and $n\geq \bar{\beta}\log|b|$, we have
	\[
	\norm{\mathcal{L}_{ib}^nf}_{(b)}\leq (1-|b|^{-\bar{\alpha}})\norm{f}_{(b)}.
	\]
\end{cor}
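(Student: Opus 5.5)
The corollary follows by combining the three component estimates just established, Lemma~\ref{lem.Upsilon22}, Lemma~\ref{lem.Upsilonpi} and Lemma~\ref{lem.Upsilon11}, since $\norm{\cdot}_{(b)}$ is by definition the maximum of the three quantities $\Upsilon_1(\pi_1(|\cdot|))$, $\Upsilon_1(\cdot)$ and $\Upsilon_2(\cdot)/(8|b|^{\alpha_0})$.

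The plan is to set
\[
\bar\beta=\max\{\beta_1,\beta_2,\beta_3\},\qquad \bar\alpha=\max\{\alpha_1,\alpha_2,\alpha_3\}.
\]
We work with $|b|\ge\tilde b$, where we take $\tilde b\ge \max\{b_0,3\}$; enlarging $\tilde b$ if necessary is harmless. Then for $n\ge\bar\beta\log|b|$, all three lemmas apply simultaneously with the same $n$. Since $|b|\ge 1$, we have $|b|^{-\alpha_i}\ge |b|^{-\bar\alpha}$, so each of the three quantities evaluated at $\mathcal{L}_{ib}^nf$ is bounded by $(1-|b|^{-\bar\alpha})\norm{f}_{(b)}$. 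Taking the maximum of the three bounds gives the claim.

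The one point needing care is that each lemma must hold for every $n$ beyond its threshold, not just for a particular $n$.
\begin{itemize}
\item For Lemma~\ref{lem.Upsilonpi}, monotonicity in $n$ was already shown at the end of its proof, using $\int|\mathcal{L}_{ib}^nf|\,d\bar\mu\le\int|\mathcal{L}_{ib}^{n_0}f|\,d\bar\mu$.
\item For Lemma~\ref{lem.Upsilon22}, the statement is already uniform in $n\ge\beta_0\log|b|$ via Lemma~\ref{lem.LYineq}.
\item For Lemma~\ref{lem.Upsilon11}, it is stated for all $n\ge\beta_3\log|b|$.
\end{itemize}
I expect no real obstacle beyond this consistency check of the thresholds $b_0$, $\tilde b$ and the exponents.
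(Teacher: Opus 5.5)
Your proposal is correct and is essentially the paper's proof: the paper likewise sets $\bar\alpha=\max\{\alpha_1,\alpha_2,\alpha_3\}$ and $\bar\beta=\max\{\beta_1,\beta_2,\beta_3\}$ and combines Lemmas~\ref{lem.Upsilon22}, \ref{lem.Upsilonpi} and \ref{lem.Upsilon11} through the definition of $\norm{\cdot}_{(b)}$ as a maximum. Also, you need not enlarge $\tilde b$, since the proof of Lemma~\ref{lem.Upsilonpi} already gives $\tilde b>b_0\ge 3$.
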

\begin{proof}
	Setting $\bar{\alpha}=\max\{\alpha_1,\alpha_2,\alpha_3\}$ and $\bar{\beta}=\max\{\beta_1,\beta_2,\beta_3\}$, where $\alpha_1,\alpha_2,\alpha_3$ and $\beta_1,\beta_2,\beta_3$ are given by \ref{lem.Upsilon22}, \ref{lem.Upsilonpi} and Lemma~\ref{lem.Upsilon11}. Then we get the conclusion.
\end{proof}

Now we may prove the main result of this subsection.

\begin{pro}\label{pro.mainestimatofL}
There exists $\tilde{\alpha}>0$ such that for any $|b|\geq \tilde{b}$, we have
\[
\norm{(id-\mathcal{L}_{ib})^{-1}}_{\mathscr{L}}\leq |b|^{\tilde{\alpha}}.
\] 
\end{pro}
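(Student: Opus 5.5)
We will invert $id-\mathcal{L}_{ib}$ by a Neumann series, blocking powers of $\mathcal{L}_{ib}$ into chunks of length $n_b:=\lceil\bar{\beta}\log|b|\rceil$. Corollary~\ref{cor.bnorm} gives a contraction in the auxiliary norm $\norm{\cdot}_{(b)}$ over each chunk. The cost of working in the auxiliary norm is controlled by the norm comparison \eqref{eq.relnorms}. The powers inside a chunk are controlled by Lemma~\ref{lem.norml}.

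\textbf{Step 1: formal expansion.} For $|b|\ge\tilde b$, we write
\[
\sum_{n\ge0}\mathcal{L}_{ib}^n=\Big(\sum_{j=0}^{n_b-1}\mathcal{L}_{ib}^j\Big)\sum_{k\ge0}\mathcal{L}_{ib}^{kn_b}.
\]
Once the right-hand side is shown to converge in operator norm on $\mathscr{L}$, it is a two-sided inverse of $id-\mathcal{L}_{ib}$. This follows from the telescoping identity $(id-\mathcal{L}_{ib})\sum_{n<N}\mathcal{L}_{ib}^n=id-\mathcal{L}_{ib}^N$, together with $\mathcal{L}_{ib}^N\to0$, which we obtain from the same estimates.

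\textbf{Step 2: geometric decay in the auxiliary norm.} Since $n_b\ge\bar\beta\log|b|$, Corollary~\ref{cor.bnorm} gives $\norm{\mathcal{L}_{ib}^{n_b}f}_{(b)}\le(1-|b|^{-\bar\alpha})\norm{f}_{(b)}$. Iterating, $\norm{\mathcal{L}_{ib}^{kn_b}f}_{(b)}\le(1-|b|^{-\bar\alpha})^k\norm{f}_{(b)}$. Summing over $k$ bounds $\norm{\sum_k\mathcal{L}_{ib}^{kn_b}f}_{(b)}$ by $|b|^{\bar\alpha}\norm{f}_{(b)}$.

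\textbf{Step 3: return to $\norm{\cdot}_{\mathscr{L}}$.} By \eqref{eq.relnorms}, we have $\norm{f}_{(b)}\le\Pi_0\norm{f}_{\mathscr{L}}$ and $\norm{g}_{\mathscr{L}}\le(8|b|^{\alpha_0}+1)\norm{g}_{(b)}$. Hence
\[
\Big\|\sum_k\mathcal{L}_{ib}^{kn_b}\Big\|_{\mathscr{L}}\le\Pi_0(8|b|^{\alpha_0}+1)|b|^{\bar\alpha}.
\]
This also gives $\norm{\mathcal{L}_{ib}^{N}}_{\mathscr{L}}\to0$. To see this, write $N=kn_b+j$ with $0\le j<n_b$. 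The factor $\mathcal{L}_{ib}^j$ is bounded by Lemma~\ref{lem.norml}, and the factor $\mathcal{L}_{ib}^{kn_b}$ is bounded by the geometric factor $(1-|b|^{-\bar\alpha})^k$ together with polynomial factors in $|b|$.

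\textbf{Step 4: the finite prefix.} Lemma~\ref{lem.norml} gives $\norm{\mathcal{L}_{ib}^j}_{\mathscr{L}}\le R_0|b|$ for $j\ge1$, and the $j=0$ term is the identity. The finite sum therefore has norm at most $1+n_bR_0|b|\le C|b|\log|b|$. Multiplying this with the bound from Step 3, the total is at most $C'|b|^{1+\alpha_0+\bar\alpha}\log|b|$. Choosing $\tilde\alpha>1+\alpha_0+\bar\alpha$, this is at most $|b|^{\tilde\alpha}$ for $|b|\ge\tilde b$, after enlarging $\tilde b$ if necessary, or by absorbing the constants into the exponent since $|b|\ge\tilde b>1$.

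The main obstacle, the uniform contraction of Step 2, is already handled by Corollary~\ref{cor.bnorm}. The step needing care is the bookkeeping between the two norms. Only $\norm{\cdot}_{(b)}$ contracts, so the conversions must be applied only once, at the outer ends of the composition, to keep the loss polynomial. One must also check that Lemma~\ref{lem.norml} is valid uniformly for all $j\ge1$, which it is as stated.
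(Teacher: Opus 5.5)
Your proposal is correct and follows essentially the paper's argument. Both use the factorization $(id-\mathcal{L}_{ib})^{-1}=\big(\sum_{j<n_b}\mathcal{L}_{ib}^j\big)(id-\mathcal{L}_{ib}^{n_b})^{-1}$, the contraction of $\mathcal{L}_{ib}^{n_b}$ in $\norm{\cdot}_{(b)}$ from Corollary~\ref{cor.bnorm}, and polynomial-in-$|b|$ losses from Lemma~\ref{lem.norml} and \eqref{eq.relnorms}. Your norm bookkeeping is in fact slightly more careful than the paper's: the paper works only in $\norm{\cdot}_{(b)}$, leaves the final conversion to $\norm{\cdot}_{\mathscr{L}}$ implicit, and drops the factor $|b|$ from Lemma~\ref{lem.norml} in its prefix bound, but none of this affects the conclusion.
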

\begin{proof}
Denote $n_2=[\bar{\beta}\log |b|]+1$. Due to Corollary~\ref{cor.bnorm}, we have $\norm{\mathcal{L}_{ib}^{n_2}f}_{(b)}\leq (1-|b|^{-\bar{\alpha}})\norm{f}_{(b)}$  for all $f\in\mathscr{L}$. It is equivalent to $\norm{\mathcal{L}_{ib}^{n_2}}_{(b)}\leq 1-|b|^{-\bar{\alpha}}$. Thus $\norm{(id-\mathcal{L}_{ib}^{n_2})^{-1}}_{(b)}\leq |b|^{\bar{\alpha}}$.

On the other hand, it follows from Lemma~\ref{lem.norml} and (\ref{eq.relnorms}) that $\norm{\mathcal{L}_{ib}^{i}f}_{(b)}\leq \Pi_0R_0(8|b|^{\alpha_0}+1)\norm{f}_{(b)}$ for all $i\in\{0,\dots,n_2-1\}$. Then we obtain that
\[
\norm{(id-\mathcal{L}_{ib})^{-1}}_{(b)}\leq \sum_{i=0}^{n_2-1}\norm{\mathcal{L}_{ib}^{i}}_{(b)}\cdot \norm{(id-\mathcal{L}_{ib}^{n_2})^{-1}}_{(b)}\leq \Pi_0R_0(8|b|^{\alpha_0}+1)\cdot 2\bar{\beta}\log |b|\cdot |b|^{\bar{\alpha}}.
\]
It is easy to take $\tilde{\alpha}>0$ such that the right side of the above inequality is less than $|b|^{\tilde{\alpha}}$. This completes the proof.
\end{proof}

Moreover, we have the following estimate.

\begin{lem}\label{lem.lsnorm}
For any $|b|\geq \tilde{b}$ and any $|a|\leq |b|^{-\tilde{\alpha}}$, we have
\[
\norm{(id-\mathcal{L}_{s})^{-1}}_{\mathscr{L}}\leq |b|^{\hat{\alpha}},
\]
where $s=a+ib$.
\end{lem}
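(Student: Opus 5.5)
The plan is a perturbation argument around the purely imaginary case treated in Proposition~\ref{pro.mainestimatofL}. Write $s=a+ib$. Since $r$ is a roof function, we have
\[
\mathcal{L}_s f=\mathcal{L}_{ib}(e^{ar}f),
\qquad\text{hence}\qquad
\mathcal{L}_s-\mathcal{L}_{ib}=\mathcal{L}_{ib}\circ M_{e^{ar}-1},
\]
where $M_g$ denotes multiplication by $g$. We then factor
\[
\mathrm{id}-\mathcal{L}_s=(\mathrm{id}-\mathcal{L}_{ib})\bigl(\mathrm{id}-(\mathrm{id}-\mathcal{L}_{ib})^{-1}(\mathcal{L}_s-\mathcal{L}_{ib})\bigr)
\]
and invert the second factor by a Neumann series. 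This requires
\[
\norm{(\mathrm{id}-\mathcal{L}_{ib})^{-1}}_{\mathscr{L}}\cdot\norm{\mathcal{L}_s-\mathcal{L}_{ib}}_{\mathscr{L}}\le \tfrac12 .
\]
Once that holds, we get $\norm{(\mathrm{id}-\mathcal{L}_s)^{-1}}_{\mathscr{L}}\le 2|b|^{\tilde\alpha}$, and $\hat\alpha$ can be taken slightly larger than $\tilde\alpha$ (absorbing the $2$ for $|b|\ge\tilde b$, enlarging $\tilde b$ if needed).

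The key step is bounding $\norm{\mathcal{L}_s-\mathcal{L}_{ib}}_{\mathscr{L}}$ linearly in $|a|$ with a polynomial factor in $|b|$.

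\emph{Multiplier.} Since $r$ is bounded and locally H\"older, $g_a:=e^{ar}-1$ satisfies $\norm{g_a}_\theta\le C|a|$ for $|a|\le 1$. Here we use $|e^{ar(x)}-e^{ar(y)}|\le |a|e^{|a|\sup r}|r(x)-r(y)|$ and choose $\theta$ compatible with the H\"older exponent of $r$, replacing $\theta$ by a power if necessary, as in Lemma~\ref{lem.Upsilon2}.

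\emph{Product estimate.} Theorem~\ref{thm.CS2009}(4) gives $\norm{g_af}_{\mathscr{L}}\le C|a|\norm{f}_{\mathscr{L}}$.

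\emph{Operator estimate.} Lemma~\ref{lem.norml} with $n=1$ gives $\norm{\mathcal{L}_{ib}}_{\mathscr{L}}\le R_0|b|$.

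Together these give $\norm{\mathcal{L}_s-\mathcal{L}_{ib}}_{\mathscr{L}}\le CR_0|a||b|$.

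Combined with Proposition~\ref{pro.mainestimatofL}, the Neumann condition becomes $CR_0|a||b|^{1+\tilde\alpha}\le\tfrac12$. The stated hypothesis is only $|a|\le|b|^{-\tilde\alpha}$, which is too weak for this as written. So I would apply the argument with the exponent in the hypothesis understood as a suitably enlarged $\tilde\alpha$ (replace $\tilde\alpha$ by $\tilde\alpha+2$ in the smallness condition), or equivalently rename constants. In the paper's usage, $\tilde\alpha$ in Proposition~\ref{pro.mainestimatofL} may be increased freely, since $|b|^{\tilde\alpha}$ is only an upper bound.

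The main obstacle is making the exponents consistent: the smallness of $|a|$ must beat $|b|^{1+\tilde\alpha}$. I would first enlarge $\tilde\alpha$ in Proposition~\ref{pro.mainestimatofL}, which remains true, and then use the new value in both places. A secondary point is that Theorem~\ref{thm.CS2009}(4) needs $g_a\in C_\theta(\Sigma^+)$. This needs the uniform bound of $\sup|r|$ and the local H\"older constant of $r$, both available since $r$ is a roof function.
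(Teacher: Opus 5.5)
You are following the paper's route: treat $\mathrm{id}-\mathcal{L}_s$ as a perturbation of $\mathrm{id}-\mathcal{L}_{ib}$, bound $\norm{\mathcal{L}_s-\mathcal{L}_{ib}}_{\mathscr{L}}$ linearly in $|a|$, and invert by a Neumann series using Proposition~\ref{pro.mainestimatofL}.

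Your difference bound $CR_0|a||b|$ is more careful than the paper's $R_2|a|$. You get it from $\mathcal{L}_s-\mathcal{L}_{ib}=\mathcal{L}_{ib}\circ M_{e^{ar}-1}$, Theorem~\ref{thm.CS2009}(4) and Lemma~\ref{lem.norml}. The paper instead dominates $(\mathcal{L}_s-\mathcal{L}_{ib})f$ by $\mathcal{L}_0|f|\cdot\norm{e^{ar}-1}_\theta$. That overlooks the factor $|b|$ which the phases $e^{ibr}$ contribute to the H\"older part of $\norm{\cdot}_{\mathscr{L}}$, exactly as in Lemma~\ref{lem.Upsilon2}.

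You are also right that $|a|\le|b|^{-\tilde\alpha}$ does not close the Neumann series. Even with the paper's bound the product is only $\le |b|^{\tilde\alpha}\cdot R_2|b|^{-\tilde\alpha}=R_2$, and enlarging $\tilde b$ does not make this small.

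However, the fix in your last paragraph is stated wrongly. Suppose you enlarge the exponent to $\tilde\alpha'$ and use it in both places, i.e.\ $\norm{(\mathrm{id}-\mathcal{L}_{ib})^{-1}}_{\mathscr{L}}\le|b|^{\tilde\alpha'}$ and $|a|\le|b|^{-\tilde\alpha'}$. Then the Neumann product is $CR_0|a||b|^{1+\tilde\alpha'}\le CR_0|b|$, which grows.

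The two exponents must be decoupled. Invoke Proposition~\ref{pro.mainestimatofL} with its original exponent $\tilde\alpha$, and impose $|a|\le|b|^{-(\tilde\alpha+2)}$ (any exponent strictly beyond $\tilde\alpha+1$ works).
\begin{itemize}
\item The product is then at most $CR_0|b|^{-1}$, which is $\le\tfrac12$ once $|b|\ge 2CR_0$.
\item This gives $\norm{(\mathrm{id}-\mathcal{L}_s)^{-1}}_{\mathscr{L}}\le 2|b|^{\tilde\alpha}\le|b|^{\tilde\alpha+1}$ for $|b|\ge 2$, so $\hat\alpha=\tilde\alpha+1$.
\end{itemize}

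If you want to keep the statement's literal form, read the $\tilde\alpha$ in the hypothesis as \emph{defined} to be the resolvent exponent plus $2$. The Proposition remains true with the larger exponent, but the proof must still use the smaller exponent in the resolvent bound.

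Narrowing the strip in $a$ to width $|b|^{-(\tilde\alpha+2)}$ does no harm in Theorem~\ref{thm.rapidmixing}, since that argument only needs a strip of polynomial width.
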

\begin{proof}
Fix $s=a+ib$ such that $|a|\leq |b|^{-\tilde{\alpha}}$ which is much smaller than $1$.
Note that for any  
$f\in\mathscr{L}$, we have
\[
(\mathcal{L}_{s}-\mathcal{L}_{ib})f(\ul{x})=\sum_{\sigma(\ul{y})=\ul{x}}e^{\bar{h}(\ul{y})+ibr(\ul{y})}\cdot f(\ul{y})\cdot(e^{ar(\ul{y})}-1).
\] 
Since $e^{ar}-1$ is $\theta$-H\"older continuous, it follows from Theorem \ref{thm.CS2009} that 
\[
\norm{(\mathcal{L}_{s}-\mathcal{L}_{ib})f}_{\mathscr{L}}\leq \norm{\mathscr{L}_0|f|}_{\mathscr{L}}\cdot \norm{e^{ar}-1}_{\theta}.
\]

Write $\mathscr{L}_0|f|=\pi_1(\mathscr{L}_0|f|)+\pi_2(\mathscr{L}_0|f|)$. Then we have
\begin{itemize}
\item As a constant function in $\mathscr{L}$, $\norm{\pi_1(\mathscr{L}_0|f|)}_{\mathscr{L}}=\norm{\int_{\Sigma}\mathscr{L}_0|f|d\bar{\mu}}_{\mathscr{L}}\leq \frac{\Pi_0}{\norm{1}_{\mathscr{L}}}\cdot\norm{f}_{\mathscr{L}}$,
\item Due to Lemma \ref{lem.PN}, $\norm{\pi_2(\mathscr{L}_0|f|)}_{\mathscr{L}}\leq C_3\delta\norm{|f|}_{\mathscr{L}}\leq C_3\delta\norm{f}_{\mathscr{L}}$.
\end{itemize}

We conclude that there is $R_1>0$ such that $\norm{\mathscr{L}_0|f|}_{\mathscr{L}}\leq R_1 \norm{f}_{\mathscr{L}}$. In addition it is obvious from the H\"older property and uniform bound of the roof function $r$ that $\norm{e^{ar}-1}_{\theta}/|a|$ is uniformly upper bounded.  As a summary, one may find $R_2>0$ such that $\norm{(\mathcal{L}_{s}-\mathcal{L}_{ib})f}_{\mathscr{L}}\leq R_2|a| \norm{f}_{\mathscr{L}}$. It implies 
\begin{equation}\label{eq.sib}
\norm{\mathcal{L}_{s}-\mathcal{L}_{ib}}_{\mathscr{L}}\leq R_2|a|.
\end{equation}

Also notice the following identity
\[
(id-\mathcal{L}_{s})^{-1}=[id-(id-\mathcal{L}_{ib})^{-1}(\mathcal{L}_{s}-\mathcal{L}_{ib})]^{-1}(id-\mathcal{L}_{ib})^{-1}.
\]
By increasing $\tilde{b}$ and considering $|a|\leq |b|^{-\tilde{\alpha}}$, the lemma follows from Proposition~\ref{pro.mainestimatofL} and (\ref{eq.sib}).
\end{proof}

\subsection{Rapid mixing for singular flows}

Now we state the main result of this section.

\begin{thm}\label{thm.rapidmixing}
	Let  $X$ be a $C^{1+}$ vector field  on $M$ and $\vartheta$ be a H\"older continuous potential on $M$.  If $X$ is SPR for $\vartheta$ on a Borel homoclinic class $B$ and $B$ has good asymptotics. Then the unique   equilibrium state of $\vartheta$ for $X$  on $B$ is rapid mixing.
	\end{thm}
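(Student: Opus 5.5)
The plan is to reduce to the symbolic model and then run Dolgopyat's resolvent argument, using Proposition~\ref{pro.mainestimatofL} and Lemma~\ref{lem.lsnorm} as the key spectral input. By Theorem~\ref{thm.SPRcoding} (with Corollary~\ref{cor.Borelclass}), $B$ is coded by an irreducible SPR topological Markov flow $(\wh\Sigma_{\wh r},\wh\sigma_{\wh r})$, and the unique equilibrium state $\mu$ of Theorem~\ref{thm.main.spr} lifts to the unique equilibrium state $\wh\mu$ of $\vartheta\circ\wh\pi_{\wh r}$. The lift preserves entropy and pushes forward to $\mu$. Since $B$ has good asymptotics, the construction of \S\ref{subsec.goodasy} shows that the TMF has good asymptotics. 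Lemma~\ref{lem.mixing} then gives topological mixing, and after recoding we may take the base to be topologically mixing. For observables $F,G\in C^m(M)$, the lifts $F\circ\wh\pi_{\wh r}$ and $G\circ\wh\pi_{\wh r}$ are H\"older in the Bowen--Walters metric by Theorem~\ref{thm.strongthm}~\ref{cod1}, with norms controlled by $\|F\|_{C^1}$ and $\|G\|_{C^1}$. The correlation function of $\mu$ equals that of $\wh\mu$, so it suffices to prove superpolynomial decay for H\"older observables on the TMF, with constants depending on $C^m$-type norms along the flow direction.

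Next, using Lemma~\ref{Sinailem}, I pass to the one-sided semiflow. The roof $r$ and the normalized potential $\bar h=\Delta_\vartheta-P\cdot r$ (Proposition~\ref{pro.relationmme}, Proposition~\ref{pro.relCT}) are made to depend on the future only. The observables are approximated by functions constant on local stable sets, at an error exponentially small in the approximation depth; this is the standard Dolgopyat/Melbourne reduction. I then take the Laplace transform $\hat\rho(s)=\int_0^\infty e^{-st}\rho(t)\,dt$ of the correlation function and write it, as in \cite{Dol98}, as a finite sum of terms of the form $\int \hat v_s\,(id-\mathcal L_s)^{-1}\hat w_s\,d\bar\mu$. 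Here $\hat v_s,\hat w_s$ are fiberwise Laplace integrals of the observables along $[0,r)$; they lie in $C_\theta(\Sigma^+)\subset\mathscr L$ with norms $O(1+|s|)$.

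Near $s=0$ and on $\{|b|\le\tilde b\}$, Theorem~\ref{thm.CS2009} gives a simple pole at $s=0$ from the eigenvalue $1$ of $\mathcal L_0$. This pole is cancelled by subtracting $\int F\,d\mu\int G\,d\mu$. Topological mixing of the flow excludes eigenvalue $1$ of $\mathcal L_{ib}$ for $b\ne0$ (via Theorem~\ref{thm.LS} and aperiodicity), so the resolvent is analytic on a neighborhood of $\{a=0, |b|\le\tilde b\}$. For $|b|\ge\tilde b$, Lemma~\ref{lem.lsnorm} gives $\|(id-\mathcal L_s)^{-1}\|_{\mathscr L}\le|b|^{\hat\alpha}$ on the strip $|a|\le|b|^{-\tilde\alpha}$. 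Hence $\hat\rho$ extends analytically to the region $\{a>-|b|^{-\tilde\alpha}\}$ minus the removable point $s=0$, with polynomial growth in $|b|$.

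Finally, I integrate by parts $k$ times in $t$, using smoothness of the observables along the flow; this costs $C^m$ norms and yields extra factors $|s|^{-k}$ in $\hat v_s,\hat w_s$. Then I move the inversion contour to $a=-|b|^{-\tilde\alpha}$ as in \cite[\S 3]{Dol98} and \cite{FMT07}. This gives $|\rho(t)|\le C\|F\|_{C^m}\|G\|_{C^m}t^{-n}$ for any $n$, with $m$ depending on $n$, $\tilde\alpha$ and $\hat\alpha$. I expect the main obstacle to be the non-uniformity of $\mathscr L$. The weight $1/h_0[v]$ means that functions like $\hat v_s$ are not automatically controlled in $\mathscr L$ merely by sup-norms. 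I would handle this via Theorem~\ref{thm.CS2009}(4), which gives $\|f g\|_{\mathscr L}\le\|f\|_\theta\|g\|_{\mathscr L}$ and $C_\theta\subset\mathscr L$, and by pairing against $\bar\mu$ only through $\mathcal L^1(\mu)$ bounds. Transferring the two-sided approximation error with exponential (not merely stretched) rate is the other technical point.
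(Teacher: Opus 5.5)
Your proposal is correct and follows essentially the same route as the paper. You lift to the SPR topological Markov flow coding $B$, pass to the one-sided shift via Sinai's lemma and past-independent approximations of the observables, express the Laplace transform of the correlation through the twisted transfer operator with the polynomial resolvent bound of Lemma~\ref{lem.lsnorm}, gain factors $|s|^{-j}$ by differentiating along the flow, and invert the Laplace transform. Your explicit handling of the bounded-frequency region and of the pole at $s=0$, which the paper absorbs into the ``analytic part'' $R_{\partial^j_t f,g}$ without discussion, is a useful addition rather than a different approach.
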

\begin{proof}
In the preceding two sections, we have obtained estimates for the transfer operator of one-sided TMS. The remainder of the proof follows in the standard way as in \cite{Pol85,Dol98,Melbourne2007}.

Let $F, G$ be $C^m$ functions on $M$ for some large $m$ which will be determined later. Recall that the correlation function $\rho_{F,G}$ is 
\[
\rho_{F,G}(t)=\int_M F\cdot (G\circ \varphi_t)d\mu-\int_M Fd\mu\cdot \int_M Gd\mu.
\]

Let $(\Sigma_r,\sigma_r)$ be the coding TMF and $\pi_r:\Sigma_r\to B$ be the projection map  given in \S~\ref{subsec.goodasy}. Denote $f=F\circ \pi_r$ and $g= G\circ \pi_r$. It suffices to prove the rapid decay of  the correlation function for $f$ and $g$. Denote $\bar\mu=\mu\circ \pi_r^{-1}$. Let $\partial^j_t f$ be the $j$-th order derivative of $f$ along the flow direction, $0\le j<m$.
Let $s=a+ib$ with $|a|<|b|^{-\alpha}$ and $|b|>\tilde b$.  We consider the Laplace transform of $\rho_{\partial^j_t f,g}$:
\begin{equation}\label{equ.laplace}
  \hat{\rho}_{\partial^j_t f,g}(s)=
\sum_{n=0}^{\infty}\int_{\Sigma} e^{-sr_n}f_s\cdot g_s\circ \sigma^nd\bar{\mu}+ R_{\partial^j_t f,g}(s),
\end{equation}

where $f_s(\ul{y})=\int_0^{r(\ul{y})}e^{su}\partial^j_t f(\ul{y},u)du$, $g_s(\ul{y})=\int_0^{r(\ul{y})}e^{-su}g(\ul{y},u)du$ and $R_{\partial^j_t f,g}$ is an analytic part which has no impacts on the mixing rate.

By Lemma~\ref{Sinailem}, there exists a  H\"older continuous roof function  $\bar{r}$ which is independent of the past and is cohomology to  $r$ through some H\"older function $\varrho$, that is, $r=\bar{r}+\varrho-\varrho\circ \sigma$.  
 Take $\theta\in (0,1)$ such that all of $\pi_r$, $r$ and $\bar r$ are $\theta$-H\"older continuous. Note that for each $j\in\{1,\dots,m-1\}$, $\partial_t^j f$ can be treated as a $\theta$-H\"older continuous function on $\Sigma_r$. 

Then the main part of \eqref{equ.laplace} can be written as
\[
\sum_{n=0}^{\infty}\int_{\Sigma} e^{-s\bar{r}_n}(e^{-s\varrho}f_s)\cdot (e^{s\varrho}g_s)\circ \sigma^nd\bar{\mu}.
\]

Define 
\[
f_{s,k}(\ul x)=\inf\{(e^{-s\varrho}f_s)f^k(\ul y):d(\ul x, \ul y)<e^{-3k}\}\text{ and } g_{s,k}(\ul x)=\inf\{(e^{-s\varrho}g_s)f^k(\ul y):d(\ul x, \ul y)<e^{-3k}\}.
\]
According to \cite[Lemma~5.5]{Melbourne2007}, we know that  $f_{s,k}$ and $g_{s,k}$ are $\theta$-H\"older continuous functions and there is $\gamma_1\in(0,1)$ such that 
\begin{itemize}
\item $|f_{s,k}|_{\infty}=|e^{s\varrho}f_s|_{\infty}$ and $|g_{s,k}|_{\infty}=|e^{-s\varrho}g_s|_{\infty}$;
\item $\norm{f_{s,k}}_{\theta}=O(\theta^{-3k}|f_s|_{\infty})$ and $\norm{g_{s,k}}_{\theta}=O(\theta^{-3k}|g_s|_{\infty})$;
\item $|f_{s,k}-(e^{s\varrho}f_s)\circ\sigma^k|=O(|b|\norm{f_s}_{\theta}\gamma_1^{k})$ and $|g_{s,k}-(e^{-s\varrho}g_s)\circ\sigma^{k}|=O( |b|\norm{g_s}_{\theta}\gamma_1^{k})$; 
\item Both $f_{s,k}$ and $g_{s,k}$ are independent of the past.
\end{itemize}
Then
\begin{equation}\label{eq.dcnk}\begin{aligned}
\int_{\Sigma} e^{-s\bar{r}_n}(e^{-s\varrho}f_s)\cdot (e^{s\varrho}g_s)\circ \sigma^nd\bar{\mu}
&=\int_{\Sigma} e^{-s\bar{r}_n\circ \sigma^k}(e^{-s\varrho}f_s)\circ \sigma^k\cdot (e^{s\varrho}g_s)\circ \sigma^{n+k}d\bar{\mu}\\
&=\rho_1(s,n,k)+\rho_2(s,n,k)+\rho_3(s,n,k),
\end{aligned}\end{equation}
where 
\[\begin{aligned}
& \rho_1(s,n,k)=\int_{\Sigma}e^{-s\bar{r}_n\circ \sigma^k}((e^{-s\varrho}f_s)\circ\sigma^k-f_{s,k})\cdot (g_{s,k}\circ \sigma^{n})d\bar{\mu}, \\
& \rho_2(s,n,k)=\int_{\Sigma}e^{-s\bar{r}_n\circ \sigma^k}(e^{-s\varrho}f_{s})\circ \sigma^k \cdot ((e^{s\varrho}g_s)\circ \sigma^k-g_{s,k})\circ \sigma^nd\bar{\mu}, \\
& \rho_3(s,n,k)=\int_{\Sigma}e^{-s\bar{r}_n\circ \sigma^k}f_{s,k}\cdot g_{s,k}\circ\sigma^nd\bar{\mu}.
\end{aligned}\]

The above properties of $f_{s,k}, g_{s,k}$ show that  
\[
|\rho_1(s,n,k)|,\;|\rho_2(s,n,k)|=O(e^{n|b|^{-\tilde{\alpha}}\sup|\bar{r}|}\cdot \gamma_1^k\cdot|b|\|\partial^j_t f\|_{\theta}\norm{g}_{\theta}).
\] 
For all $k>2\sup\bar{r}\cdot n|b|^{-\tilde{\alpha}}/(-\log\gamma_1)$, we have 
\[
|\rho_1(s,n,k)|, |\rho_2(s,n,k)|=O(|b|\|\partial^j_t f\|_{\theta}\norm{g}_{\theta}\cdot e^{-n|b|^{-\tilde{\alpha}}\sup|\bar{r}|}).
\]

On the other hand, 
since $\bar{r}$, $f_{s,k}$ and $g_{s,k}$ can be treated as $\theta$-H\"older functions on the corresponding one-sided TMS $(\Sigma^+,\sigma)$, we may rewrite $\rho_3(s,n,k)$ as
\[
\rho_3(s,n,k)=\int_{\Sigma^+}(\mathcal{L}_{-s}^n (e^{s\bar{r}_k}f_{s,k})(e^{-s\bar{r}_k} g_{s,k})d\bar{\mu}.
\]

We claim that $\norm{\mathcal{L}_{-s}^n}_{\mathscr{L}}=O(|b|^{\tilde{\alpha}}e^{-n\delta|b|^{-\tilde{\alpha}}})$. Indeed, denote by $\mathcal{L}_{-s,z}$ the operator defined as $\mathcal{L}_{-s,z}(f)=\mathcal{L}_{-s}(e^z\cdot f)$ with $z=\sigma+i\omega\in\mathbb{C}$. Then proceeding as before, we have that for all $n\in\mathbb{Z}^{+}$, $|b|>\tilde b$, $|a|<|b|^{-\tilde\alpha}$ and $\omega\in[0,2\pi)$, $\norm{(id-\mathcal{L}_{-s,i\omega})^{-1}}_{(b)}=O(|b|^{\tilde{\alpha}})$. Then the Fourier series of $(id-\mathcal{L}_{-s,i\omega})^{-1}$ can be analytically extended to an annulus $e^z=e^{\sigma+i\omega}$ with $|\sigma|<|b|^{-\tilde\alpha}$, which indicates the claim.

As a corollary, we may get that $|\rho_3(s,n,k)|=O( |b|^{\tilde{\alpha}+1}\cdot e^{-n\delta|b|^{-\tilde{\alpha}}}\cdot\|f_{s,k}\|_{\theta}|g_{s,k}|_{\infty})=O(|b|^{\tilde{\alpha}+1}\cdot e^{-n\delta|b|^{-\tilde{\alpha}}}\cdot \theta^{-3k}\cdot |f_{s}|_{\infty}|g_{s}|_{\infty})$ (since $\norm{f_{s,k}}_{\theta}=O(\theta^{-3k}|f_s|_{\infty})$) and then $|\rho_3(s,n,k)|=O(e^{-n\delta|b|^{-\tilde{\alpha}}/2}\cdot |b|^{\tilde{\alpha}+1}\|\partial^j_t f\|_{\theta}\norm{g}_{\theta})$ by choosing suitable $k=k(n,b)$. Plugging estimates of $|\rho_1|, |\rho_2|$ and $|\rho_3|$ to (\ref{eq.dcnk}), we have 
 \[
\left |\int_{\Sigma} e^{-sr_n}f_s\cdot g_s\circ \sigma^nd\bar{\mu}\right|=O(e^{-n\delta|b|^{-\tilde{\alpha}}/2}\cdot |b|^{\tilde{\alpha}+1}\norm{F}_{C^m}\norm{G}_{C^m}).
 \]
  Summing over $n$, we conclude that $| \hat{\rho}_{\partial^j_t f,g}(s)|=O(|b|^{2\tilde{\alpha}+1}\norm{F}_{C^m}\norm{G}_{C^m})$. 

Note that $|\hat{\rho}_{f,g}(s)|=|s|^{-j}|\hat{\rho}_{\partial^j_t f,g}(s)|\leq |b|^{-j}|\hat{\rho}_{\partial^j_t f,g}(s)|$ for all $j\in\{1,\dots,m-1\}$. Thus $|\hat{\rho}_{f,g}(s)|=O(\norm{F}_{C^m}\norm{G}_{C^m}|b|^{2\tilde{\alpha}+1-m})$. 
Therefore for any $n\in\mathbb{Z}^{+}$, there exists an integer $m>1$ larger enough such that the desired rapid decay follows from the inverse Laplace transform.
\end{proof}

\part{Applications}\label{part.app}

\section{Three-dimensional flows}\label{sec.3flow}

In this section, we consider $C^\infty$-smooth flows on a $3$-dimensional closed manifold $M^3$.

\subsection{SPR property for three-dimensional flows}

Our objective in this subsection is to prove the following theorem. Theorem~\ref{main.3flowmme} is an immediate corollary of this theorem by taking $K=M$.

\begin{thm}\label{thm.3SPRset}
Let $X$ be a $C^\infty$-smooth vector field on  $M^3$ and  $\vartheta :M^3\to\mathbb{R}$ be a H\"older continuous function. If $K$ is an $X$-invariant Borel set  satisfying
\begin{itemize}
\item $\Var(\vartheta)<h_{\rm top}(X|_{\ol K})$ (which implies that $h_{\rm top}(X|_{\ol K})>0$); and
\item $P_{\rm TOP}(X|_K,\vartheta)=P_{\rm top}(X|_{\overline{K}},\vartheta)$, 
\end{itemize}
  then $X$ is SPR for $\vartheta$ on $K$.
\end{thm}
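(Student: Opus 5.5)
The plan is to adapt the Buzzi--Crovisier--Sarig argument for surface diffeomorphisms to three-dimensional flows, using the scaled linear Poincar\'e flow in place of the derivative. The normal bundle $\N$ is two-dimensional, so $\psi^*_t$ has two exponents.

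The first step is to show that measures with large pressure are hyperbolic with uniform exponent. For $\nu\in\mathbb P_e(X|_K)$ we have
\[
P_\nu(X,\vartheta)>P_{\rm top}(X|_{\ol K},\vartheta)-\delta\ge h_{\rm top}(X|_{\ol K})+\inf\vartheta-\delta ,
\]
hence $h_\nu(X)>h_{\rm top}(X|_{\ol K})-\Var(\vartheta)-\delta=:h_0>0$ once $\delta$ is small. By Ruelle's inequality, applied to the time-one map and using that the flow direction has exponent zero, $\nu$ is non-atomic on $\Sing(X)$. Hence $\nu$ is regular and hyperbolic, with one positive and one negative exponent of $\psi^*$, both of absolute value at least $h_0$. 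Set $\chi=h_0/2$.

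The second step is a uniform Pesin-block estimate. The aim is to show: for each $\varepsilon>0$ there exist $\ell$, a compact $\Lambda\subset\Pes_\chi(\ell,\varepsilon)$ at positive distance from $\Sing(X)$, $P_0<P_{\rm TOP}(X|_K,\vartheta)$ and $\tau>0$ such that $\nu(\Lambda)>\tau$ whenever $P_\nu(X,\vartheta)>P_0$. I would argue by contradiction, following \cite{BCS25}. Take ergodic $\nu_n$ with $P_{\nu_n}\to P_{\rm top}(X|_{\ol K},\vartheta)$ and $\nu_n(\Lambda)\to 0$ for every such $\Lambda$. Pass to a weak-$*$ limit $\nu$ on $\ol K$.

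Two phenomena must then be excluded. The first is escape of mass to singularities. Here I would use upper semicontinuity of entropy, available since $X$ is $C^\infty$ (Newhouse--Yomdin). Any mass of $\nu$ carried by $\Sing(X)$ contributes zero entropy and at most $\sup\vartheta$ to the integral. Decompose $\nu=\lambda\nu_{\rm sing}+(1-\lambda)\nu'$. Semicontinuity gives
\[
P_{\rm top}\le\lambda\sup\vartheta+(1-\lambda)P_{\nu'} .
\]
Since $\sup\vartheta<h_{\rm top}+\inf\vartheta\le P_{\rm top}$, this forces $\lambda=0$. The second phenomenon is loss of hyperbolicity at regular points, that is, exponents degenerating in the limit. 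For this I would use the Buzzi--Crovisier--Sarig entropy/Lyapunov-exponent continuity argument: in the $C^\infty$ setting, Yomdin's bound on local entropy controls the defect of the limit exponents. It should be applied to the scaled Poincar\'e flow via uniform $C^r$ estimates for the scaled sectional Poincar\'e maps $\P^*$ (Lemma~\ref{lem.holderPoincare} with higher regularity). The output is that $\nu$-almost every limit point has exponent at least $\chi$ in absolute value, and the entropy of the limit is close to $\limsup h_{\nu_n}$.

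The third step combines the two: the limit $\nu$ gives positive weight to $\Pes_\chi(\ell,\varepsilon)$ away from singularities. An upper-semicontinuity argument for $\nu\mapsto\nu(\overline{\Pes_\chi(\ell,\varepsilon)}\setminus M(\Sing,r))$ then yields the contradiction, using that $\ol{\Pes}\setminus\Pes\subset\Sing$ so the relevant set is compact.

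The main obstacle is the second half of the second step: transporting the BCS exponent-continuity theorem, which relies on Yomdin reparametrization of curves tangent to unstable directions, to the scaled flow near singularities. Near singularities the flow boxes shrink, so Yomdin's lemma must be applied in scaled coordinates on the tubular neighborhoods $\FB(x,\rho,\rho)$. The plan is to use $C^\infty$ bounds uniform after scaling (the scaled flow has bounded $C^r$ norms, as in Liao's theory) and to check that the error from the speed ratio $|X(\varphi_t x)|/|X(x)|$ is subexponential $\nu_n$-almost everywhere uniformly, using the first part of the second step to keep the mass near $\Sing(X)$ small.
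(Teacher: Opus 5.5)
Your first step and your exclusion of mass at $\Sing(X)$ essentially match the paper (Lemma~\ref{lem.regularhyper} and the Claim in the proof). The gap is in your third step, and it breaks the contradiction scheme. For a closed set $F$, weak-$*$ convergence $\nu_n\to\nu$ only gives $\limsup_n\nu_n(F)\le\nu(F)$. So $\nu(F)>0$ for $F=\overline{\Pes_\chi(\ell,\varepsilon)}\setminus M(\Sing(X),r)$ is perfectly compatible with $\nu_n(F)\to0$. Semicontinuity runs in the direction you need only for open sets. Membership in a Pesin block is an infinite-time condition over all $m,n$: it is not open, and its mass is not lower semicontinuous in the measure. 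Knowing that the limit is hyperbolic, even with good entropy control, tells you nothing about how long $\nu_n$-typical orbits take to display their hyperbolicity, and that is exactly what a uniform $\ell$ measures.

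The missing idea is to replace the Pesin block by a finite-time open condition on the Grassmannian bundle. The paper uses the unstable lifts $\widetilde\nu^+$ on $G^1(M^3)$ and the open sets $\widetilde Q^+_N(\chi)=\{(x,E):x\notin\Sing(X),\ \|d_x\varphi^*_{-N}|_E\|<e^{-\chi N}\}$. Once every limit $\widetilde\mu$ of $\widetilde\nu_n^+$ with $P_{\nu_n}\to P_{\rm TOP}$ is known to satisfy $\widetilde\mu=\widetilde\mu^+$ (Proposition~\ref{pro.continu}), one gets $\widetilde\mu(\widetilde Q^+_N(\chi))\ge1-\delta^2$ for some $N$. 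Openness transfers this to all nearby lifts, and compactness of the set of limits gives a uniform $N_0$ (Lemma~\ref{lem.cptmeas}). A Pliss-type argument as in \cite[Proposition~2.21, Theorem~3.1]{BCS25} then converts ``$\widetilde\nu^+(\widetilde U^+_N(\chi))>1-\delta^2$ for some $N\le N_0$'' into $\nu(\Pes_\chi(\ell,\varepsilon))>\tau$ with $\ell=\ell(N_0)$. Nothing in your outline plays the role of this conversion.

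This also changes where the Yomdin input belongs. Hyperbolicity of the limit needs no Yomdin at all. By Newhouse's upper semicontinuity the limit is an equilibrium state on $\ol K$, so almost every ergodic component is one too, and each is hyperbolic with exponents bounded away from zero by your first step. What the $C^\infty$ entropy/exponent theory actually supplies is that the unstable directions of $\nu_n$ converge to those of the limit, i.e.\ no mass of $\widetilde\nu_n^+$ leaks into other directions. That is the content of Proposition~\ref{thm.zang} (Buzzi--Crovisier--Sarig, Burguet, Zang).

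The obstacle you single out, running Yomdin reparametrization in scaled coordinates near singularities, is avoided in the paper. Proposition~\ref{thm.zang} is applied to the time-one map $\varphi_1$, a $C^\infty$ diffeomorphism of the compact manifold $M^3$ with exactly one positive exponent. Singularities are handled separately: limits are regular by the pressure gap, and $\widetilde Q^+_N(\chi)$ avoids $\widetilde\pi^{-1}(\Sing(X))$. The speed ratio therefore only enters at bounded times $N\le N_0$, where it is harmless.
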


Fix   a H\"older continuous function $\vartheta :M^3\to\mathbb{R}$ and  an $X$-invariant Borel set $K$ satisfying the assumption of Theorem~\ref{thm.3SPRset}, that is,  $\Var(\vartheta)<h_{\rm top}(X|_{\ol K})$  and  $P_{\rm TOP}(X|_K,\vartheta)=P_{\rm top}(X|_{\overline{K}},\vartheta)$. 
We first state some basic facts. Denote $\chi_0=\frac{1}{2}(h_{\rm top}(X|_{\ol K})-\Var(\vartheta))>0$.  

\begin{lem}\label{lem.regularhyper} We have the following properties.
  \begin{itemize}
  \item For any $C^1$ vector field $Y$ and any $\nu\in \mathbb P_e(Y)$, if $P_{\nu}(Y,\vartheta)\ge P_{\rm top}(X|_{\ol K},\vartheta)-\chi_0$, then $\nu$ is regular and $\chi_0$-hyperbolic. 
  \item If there are sequences of $C^\infty$ vector fields $Y_n$ and $\nu_n\in \mathbb P(Y_n)$ such that $Y_n\to X$ in the $C^\infty$ topology, $\nu_n \to  \mu\in\mathbb P(X|_{\ol K})$ in the weak-$*$ topology and $\lim_{n\to\infty} P_{\nu_n}(Y_n,\vartheta) \ge  P_{\mathrm{TOP}}(X|_K,\vartheta)$, then $\mu$ is a regular $\chi_0$-hyperbolic equilibrium state of $\vartheta$ for $X$ on $K$. 
  \end{itemize}
\end{lem}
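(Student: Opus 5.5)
The first item will follow from the entropy bound coming from Ruelle's inequality in dimension three, together with the observation that the pressure hypothesis forces the entropy to be large. Let $\nu\in\mathbb P_e(Y)$ with $P_\nu(Y,\vartheta)\ge P_{\rm top}(X|_{\ol K},\vartheta)-\chi_0$. Every invariant measure of $X$ carried by $\ol K$ has pressure at least $h_\mu(X)+\inf\vartheta$. Taking the supremum gives $P_{\rm top}(X|_{\ol K},\vartheta)\ge h_{\rm top}(X|_{\ol K})+\inf\vartheta$, while trivially $P_\nu(Y,\vartheta)\le h_\nu(Y)+\sup\vartheta$. Combining these,
\[
h_\nu(Y)\ge h_{\rm top}(X|_{\ol K})-\Var(\vartheta)-\chi_0=\chi_0>0 .
\]
Atomic measures on singularities (and, more generally, any measure of a singular point) have zero entropy, so $\nu$ is not supported on $\Sing(Y)$. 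Since $\nu$ is ergodic and $\Sing(Y)$ is invariant, this gives $\nu(\Sing(Y))=0$, so $\nu$ is regular.

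For the hyperbolicity, I use that in dimension three the normal bundle is two-dimensional. Ruelle's inequality for the time-one map gives $h_\nu(Y)\le\chi^+(\nu)$, the unique positive exponent of $\psi$. Applying it to the reversed flow gives $h_\nu(Y)\le-\chi^-(\nu)$. Here the flow direction carries the zero exponent, and the remaining exponents equal those of $\psi$ (\S\ref{subsec.Lyapunov}). Both transverse exponents therefore have absolute value at least $\chi_0$. Strictly, $\chi_0$-hyperbolicity requires the exponents to lie outside $[-\chi_0,\chi_0]$, so the inequality must be made strict. I would handle this either by rereading $\chi_0$-hyperbolic as allowing the boundary case, or by noting that the first inequality above is strict unless $\nu$ is an equilibrium state of a degenerate kind. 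If needed, I would simply run the argument with $\chi_0$ replaced by a slightly smaller number.

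The second item needs upper semicontinuity, and for that I rely on Buzzi's / Newhouse's result that for $C^\infty$ flows the entropy map is upper semicontinuous. This holds even jointly in the vector field in the $C^\infty$ topology, via Yomdin theory: the tail entropy $h^*(Y)$ tends to $0$ uniformly for $Y$ in a $C^\infty$ neighborhood, with the speed controlled by $C^r$ norms. I expect this joint upper semicontinuity to be the main obstacle, and I would cite Yomdin/Buzzi estimates applied to the time-one maps $\varphi^{Y_n}_1\to\varphi^X_1$ in $C^\infty$.

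Given upper semicontinuity, passing to the limit yields
\[
P_\mu(X,\vartheta)\ge\limsup_n P_{\nu_n}(Y_n,\vartheta)\ge P_{\rm TOP}(X|_K,\vartheta)=P_{\rm top}(X|_{\ol K},\vartheta),
\]
using the continuity of $\vartheta$. Weak-$*$ limits of $Y_n$-invariant measures are $X$-invariant, and $\mu\in\mathbb P(X|_{\ol K})$ by hypothesis, so $\mu$ is an equilibrium state on $\ol K$. Each ergodic component of $\mu$ then has pressure equal to the top pressure by affinity, and so, by the first item, each component is regular and $\chi_0$-hyperbolic. Hence $\mu$ is regular and $\chi_0$-hyperbolic, and by the second hypothesis it realizes $P_{\rm TOP}(X|_K,\vartheta)$. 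Finally, to conclude that $\mu$ lives on $K$ I would either interpret "equilibrium state on $K$" as an equilibrium state supported on $\ol K$ attaining $P_{\rm TOP}(X|_K,\vartheta)$, or invoke the Borel homoclinic class structure to show that its components are carried by $K$.
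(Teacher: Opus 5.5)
Your proposal is correct and follows the paper's argument: the variational-principle bound $h_\nu(Y)\ge h_{\rm top}(X|_{\ol K})-{\rm Var}(\vartheta)-\chi_0=\chi_0$, zero entropy on $\Sing(Y)$, Ruelle's inequality in both time directions, and Newhouse's joint upper semicontinuity for the second item. The one difference is that you get regularity of $\mu$ from the ergodic decomposition plus the first item, while the paper splits off the singular part $\mu=a\mu_0+(1-a)\mu_1$ and uses $P_{\mu_0}(X,\vartheta)\le\sup\vartheta\le P_{\rm TOP}(X|_K,\vartheta)-2\chi_0$; your route also handles a point the paper leaves implicit, namely that the first item is stated only for ergodic measures.

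The two loose ends you flag, exponents $\ge\chi_0$ rather than $>\chi_0$ and $\mu(\ol K)=1$ rather than $\mu(K)=1$, are passed over in the paper too. The paper effectively takes your first reading in each case, and later uses only need some $\chi<\chi_0$. Your other suggested fixes are unjustified and unnecessary: the ``degenerate'' strictness argument, and an appeal to Borel homoclinic classes, which would not work for an arbitrary invariant set $K$.
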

\begin{proof}
By the variational principle, we have
\[
P_{\rm top}(X|_{\overline{K}},\vartheta)=\sup_{\nu \in \mathbb P(X|_{\ol K})} ( h_\nu(X)+\int \vartheta d\nu )\ge \sup_{\nu \in \mathbb P(X|_{\ol K})}  h_\nu(X)+\inf_{x\in M^3} \vartheta(x)=h_{\rm top}(X|_{\ol K})+ \inf_{x\in M^3} \vartheta(x).
\]	
If $\nu\in \mathbb P_e(Y)$ satisfies $P_{\nu}(Y,\vartheta)\ge P_{\rm top}(X|_{\ol K},\vartheta)-\chi_0$, then we have
\[
\begin{aligned}
h_{\nu}(Y)\ge P_{\nu}(Y,\vartheta)-\sup_{x\in M^3} \vartheta(x) \ge & P_{\rm top}(X|_{\ol K},\vartheta)-\chi_0-\sup_{x\in M^3} \vartheta(x)\\ 
\ge & h_{\rm top}(X|_{\ol K})+ \inf_{x\in M^3} \vartheta(x)-\sup_{x\in M^3} \vartheta(x)-\chi_0=\chi_0>0.
\end{aligned}
\]

It implies that the ergodic measure $\nu$ does not support on singularities. Thus $\nu$ is regular. 
Moreover, the  $\chi_0$-hyperbolicity of $\nu$  follows  from the Ruelle's inequality.

Next we assume that $\nu_n\in \mathbb P(Y_n)$, $Y_n\to X$,  $\nu_n \to  \mu\in\mathbb P(X|_{\ol K})$ and $\lim_{n\to\infty} P_{\nu_n}(Y_n,\vartheta) \ge  P_{\mathrm{TOP}}(X|_K,\vartheta)$. By the upper-semicontinuity theorem of Newhouse \cite{New}, we know that $\limsup_{n\to \infty}h_{\nu_n}(Y_n)\le h_\mu(X)$ and then $\limsup_{n\to \infty}P_{\nu_n}(Y_n,\vartheta)\le P_\mu(X,\vartheta)$. Thus $P_\mu(X,\vartheta)\ge P_{\mathrm{TOP}}(X|_K,\vartheta)$.
	
	On the other hand, it follows from $ \mu\in\mathbb P(X|_{\ol K})$ that $P_\mu(X,\vartheta)\le P_{\mathrm{top}}(X|_{\ol K},\vartheta)$. Since $ P_{\mathrm{top}}(X|_{\ol K},\vartheta)=P_{\mathrm{TOP}}(X|_K,\vartheta)$, we obtain $P_\mu(X,\vartheta)=P_{\mathrm{TOP}}(X|_K,\vartheta)$, which means that $\mu$ is an equilibrium state on $K$. Then by the first item, $\mu$ is $\chi_0$-hyperbolic.

 Finally we prove that $\mu$ is regular. If it is not true, then $a:=\mu(\Sing(X))>0$. It follows that $\mu$ has a decomposition ${\mu}=a{\mu}_{0}+(1-a){\mu}_{1}$, where $\mu_0\in \mathbb P(X|_{\Sing(X)})$ and $ \mu_1\in\mathbb P(X|_{\ol K})$. Since 
 \[
 P_{\mu_0}(X)\le h_{\mu_0}(X)+\sup_{x\in M^3} \vartheta(x)=\sup_{x\in M^3} \vartheta(x), \text{ and}
 \]
\[
P_{\mathrm{TOP}}(X|_K,\vartheta)=P_{\rm top}(X|_{\overline{K}},\vartheta)\ge h_{\rm top}(X|_{\ol K})+ \inf_{x\in M^3} \vartheta(x)=\sup_{x\in M^3} \vartheta(x)+2\chi_0,
\]
we have
\[
P_{\mu}(X)=aP_{\mu_0}(X)+(1-a)P_{\mu_1}(X)\le a(P_{\mathrm{TOP}}(X|_K,\vartheta)-2\chi_0)+(1-a)P_{\mathrm{TOP}}(X|_K,\vartheta)< P_{\mathrm{TOP}}(X|_K,\vartheta).
\]
 
This contradiction proves that $\mu$ is regular. 	
\end{proof}

For every $x\in M^3$, define
\[
G^1_x:=\{E\subset T_xM^3: \text{$E$ is a $1$-dimensional linear subspace of $T_xM^3$}\}.
\]
The disjoint union $G^1(M^3):=\bigcup_{x\in M^3} G^1_x$ forms a smooth fibre bundle over $M^3$, which is called the {\it $1$-dimensional Grassmannian bundle} of $M$. Denote by $(x,E)$ the point in $G^1(M^3)$  and by $\widetilde{\pi}$ the natural projection from $G^1(M^3)$ to $M^3$, that is, $\widetilde{\pi}(x,E)=x$.
In addition, the flow $\varphi_t$ and its tangent flow $d\varphi_t$ induce a continuous flow $\widetilde{\varphi}_t$ on $G^1(M^3)$ as
\[
\widetilde{\varphi}_t(x,E)=(\varphi_t(x),d\varphi_t(E)).
\]

For any $\wt \varphi$-invariant measure on $G^1(M^3)$, the $X$-invariant measure $\mu:=\wt \mu\circ \wt\pi^{-1}$ is called the {\it projection} of $\wt\mu$.

Assume that $\mu$ is a saddle-type regular hyperbolic measure. For $\mu$-a.e. $x \in M^3$, let $T_xM^3 = E^s_x \oplus E^0_x \oplus E^u_x$ denote the Oseledets splitting, where $E^0_x$ corresponds to the flow direction. Define $\lambda^{-}(x) < 0$ and $\lambda^{+}(x) > 0$ as the Lyapunov exponents associated with $E^s_x$ and $E^u_x$, respectively. Set
\[
\widetilde{\mu}^{+}=\int_{G(M^3)} \delta_{(x,E^u_x)}d\mu(x) \text{ \ and \ } \widetilde{\mu}^{-}=\int_{G^1(M^3)} \delta_{(x,E^s_x)}d\mu(x).
\]
These two $\wt\varphi$-invariant measures $\widetilde{\mu}^+$ and $\widetilde{\mu}^-$ are called the {\it unstable lift} and {\it stable lift} of $\mu$, respectively. They satisfy $\widetilde{\pi}_{*}(\widetilde{\mu}^{\pm}) = \mu$. Moreover, if $\mu$ is ergodic, then both $\widetilde{\mu}^{+}$ and $\widetilde{\mu}^{-}$ are ergodic.  Define
\[
\widetilde{\mathscr{L}}_K^{+}:= \left\{ \widetilde{\mu} : \exists \nu_n \in \mathbb{P}_{\mathrm{e}}(X|_K) \text{ s.t. } \lim_{n\to \infty}P_{\nu_n}(X,\vartheta) \ge  P_{\mathrm{TOP}}(X|_K,\vartheta) \text{ and }\widetilde{\nu}_n^+ \to \widetilde{\mu} \text{ on } G^1(M^3)\right\}.
\]
Note that by Lemma~\ref{lem.regularhyper}, when $P_{\nu_n}(X,\vartheta)$ is close enough to $P_{\mathrm{TOP}}(X|_K,\vartheta)$, the unstable lift $\widetilde{\nu}_n^+$ is well defined.

\begin{pro}\label{pro.continu}
The set $\widetilde{\mathscr{L}}^{+}_K$ is compact in the set of all $\wt\varphi$-invariant measure on $G^1(M^3)$ under the weak-$*$ topology. 
For any $\widetilde{\mu}\in\widetilde{\mathscr{L}}_K^{+}$,   the projection $\mu=\widetilde{\pi}_{*}(\widetilde{\mu})$  is regular, $\chi_0$-hyperbolic and satisfies $\widetilde{\mu}^{+}=\widetilde{\mu}$.
\end{pro}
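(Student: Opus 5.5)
Compactness will follow from a diagonal argument. The space of $\wt\varphi$-invariant probability measures on the compact space $G^1(M^3)$ is weak-$*$ compact and metrizable. So it suffices to show that $\widetilde{\mathscr{L}}^+_K$ is closed. Take $\wt\mu_k\in\widetilde{\mathscr{L}}^+_K$ with $\wt\mu_k\to\wt\mu$. For each $k$, choose $\nu_{k,n}\in\mathbb P_e(X|_K)$ witnessing membership, and pick $n_k$ so that the following two conditions hold:
\begin{itemize}
\item $\wt\nu_{k,n_k}^+$ lies within distance $1/k$ of $\wt\mu_k$;
\item $P_{\nu_{k,n_k}}(X,\vartheta)>P_{\rm TOP}(X|_K,\vartheta)-1/k$.
\end{itemize}
The diagonal sequence $\nu_{k,n_k}$ then witnesses $\wt\mu\in\widetilde{\mathscr{L}}^+_K$.

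Now fix $\wt\mu\in\widetilde{\mathscr{L}}^+_K$ with witnesses $\nu_n$, and set $\mu=\wt\pi_*\wt\mu$. By continuity of $\wt\pi$, we have $\nu_n=\wt\pi_*\wt\nu_n^+\to\mu$. Since each $\nu_n$ is supported on $\ol K$, so is $\mu$. Applying the second item of Lemma~\ref{lem.regularhyper} with $Y_n\equiv X$ shows that $\mu$ is a regular $\chi_0$-hyperbolic equilibrium state on $K$. In particular, almost every ergodic component of $\mu$ is of saddle type, with $\lambda^-\le-\chi_0$ and $\lambda^+\ge\chi_0$. Hence $\wt\mu^+$ is well defined.

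The main step is to show $\wt\mu=\wt\mu^+$, following Buzzi--Crovisier--Sarig for surface diffeomorphisms. Consider the continuous function on $G^1(M^3)$ away from singular fibres given by the normal expansion
\[
\Phi_1(x,E)=\log\frac{\|\psi^*_1|_{\proj_\N E}\|}{1},
\]
or more simply $\log\|d\varphi_1|_E\|-\log\frac{|X(\varphi_1x)|}{|X(x)|}$. We have $\int\Phi_1\,d\wt\nu_n^+=\lambda^+(\nu_n)$. Ruelle's inequality gives $\lambda^+(\nu_n)\ge h_{\nu_n}(X)$, and by upper semicontinuity of entropy (Newhouse, using $C^\infty$) the limit of $h_{\nu_n}$ is at most $h_\mu$.

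The key step, which I expect to be the main obstacle, is to show that the limit measure $\wt\mu$ gives no mass to the stable directions $E^s$ or to the flow direction $E^0$. I would disintegrate $\wt\mu$ over $\mu$; each fibre measure lives on the projective line of $T_xM^3$. For the flow direction: $d\varphi_t X=X$, so lines close to $\la X\ra$ have normal component nearly zero. The unstable lifts $E^u_{\nu_n}$ are uniformly transverse to $X$ at points of Pesin blocks, because of the uniform $\chi_0$-hyperbolicity and the fact that $\mu$ gives no mass to $\Sing(X)$. This should exclude mass on $\la X\ra$; it is the singular issue, and I would handle it with the scaled flow $\psi^*$ and the tightness supplied by regularity of $\mu$. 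For the stable directions: any ergodic component $(x,E)$ of $\wt\mu$ with $E\neq E^u_x$ has forward growth rate $\lambda^-$ or $0$. Then $\int\Phi_1\,d\wt\mu<\int\Phi_1\,d\wt\mu^+=\lambda^+(\mu)$ if that component has positive mass. On the other hand, semicontinuity gives
\[
\int\Phi_1\,d\wt\mu=\lim\lambda^+(\nu_n)\ge\limsup h_{\nu_n}=h_\mu,
\]
and I would upgrade this to $\lambda^+$ of each component by working on ergodic components and using the Ledrappier--Young entropy formula, or by sharpening Ruelle's inequality through a Newhouse/Yomdin tail estimate. A convenient route is to realize $\wt\mu$ as an invariant measure on $G^1$ projecting to $\mu$: any such measure is a combination $a\wt\mu^++b\wt\mu^-+c\wt\mu^0$, by Oseledets in dimension two of the normal bundle. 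Then $\int\Phi_1 d\wt\mu=a\lambda^++b\lambda^-$, and comparing with the entropy lower bound forces $b=c=0$.
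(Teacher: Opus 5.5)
Your compactness argument (closedness via a diagonal choice of witnesses) is fine. So is your use of the second item of Lemma~\ref{lem.regularhyper} with $Y_n\equiv X$ to show that $\mu$ is a regular $\chi_0$-hyperbolic equilibrium state; both agree with the paper.

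\textbf{The gap is the main step $\wt\mu=\wt\mu^+$.} Write $\wt\mu=a^+\wt\mu^++a^-\wt\mu^-+a^0\wt\mu^0$ with invariant weights. The only information your argument produces is
\[
\int (a^+\lambda^++a^-\lambda^-)\,d\mu=\lim_{n\to\infty}\lambda^+(\nu_n)\ \ge\ \lim_{n\to\infty}h_{\nu_n}(X)=h_\mu(X).
\]
This does not force $a^-=a^0=0$. Ruelle's inequality gives $h_\mu\le\int\lambda^+d\mu$, and in general the inequality is strict (equality is the SRB case). So, for instance, $a^-=0$ with a small $a^0>0$ is compatible with the display. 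What you actually need is $\lim_n\lambda^+(\nu_n)\ge\int\lambda^+d\mu$, i.e.\ upper continuity of the unstable exponent along $(\nu_n)$. Given the decomposition, that is equivalent to the claim itself.

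The Ledrappier--Young formula cannot supply this: it writes $h_\mu$ as $\lambda^+$ times an unstable dimension $\le 1$, so it never raises $h_\mu$ to $\lambda^+$. A Yomdin-type estimate is the right kind of tool, but turning it into the needed statement is exactly the hard content you have not provided.

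Your separate exclusion of mass on $\la X\ra$, via uniform transversality of $E^u_{\nu_n}$ on Pesin blocks, is also unjustified. It presupposes Pesin blocks uniform in $n$. That is precisely the kind of uniformity (Lemma~\ref{lem.cptmeas}, Theorem~\ref{thm.3SPRset}) the paper \emph{derives from} this proposition, so the step is circular.

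\textbf{The missing ingredient} is the Buzzi--Crovisier--Sarig/Burguet/Zang entropy bound, Proposition~\ref{thm.zang}, applied to the time-one map $\varphi_1$; this is where $C^\infty$ is used. For every $\alpha>0$ it gives $\wt\mu=(1-b)\wt\mu_0+b\wt\mu_1^+$ with
\[
\limsup_n h_{\nu_n}(\varphi_1)\le b\,h_{\mu_1}(\varphi_1)+(1-b)\alpha .
\]
In words, asymptotically all entropy beyond $\alpha$ is carried by the part of $\wt\mu$ that is an unstable lift.

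The paper then argues with pressure rather than exponents. Take $\alpha=\chi_0$ and use $\int\vartheta\,d\mu_0\le\sup\vartheta\le P_{\rm TOP}(X|_K,\vartheta)-2\chi_0$.
\begin{itemize}
\item If $b=0$, this already gives $P_{\rm TOP}(X|_K,\vartheta)\le P_{\rm TOP}(X|_K,\vartheta)-\chi_0$, a contradiction.
\item If $b>0$, then $\mu_1$ is carried by $\ol K$, so $P_{\mu_1}(X,\vartheta)\le P_{\rm TOP}(X|_K,\vartheta)$. Hence
\[
P_{\rm TOP}(X|_K,\vartheta)=\lim_{n\to\infty}P_{\nu_n}(X,\vartheta)\le b\,P_{\rm TOP}(X|_K,\vartheta)+(1-b)\bigl(P_{\rm TOP}(X|_K,\vartheta)-\chi_0\bigr),
\]
which forces $b=1$.
\end{itemize}
Therefore $\mu=\mu_1$ and $\wt\mu=\wt\mu_1^+=\wt\mu^+$. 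No Oseledets analysis of the fibre measures of $\wt\mu$ is needed.
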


To prove the above proposition,  we require the following result. Similar results were first established in \cite{BCS22b}. An improved version for surface diffeomorphisms was subsequently proved in \cite{Bur}. Following the same method, \cite{Zan} extended these results to higher-dimensional diffeomorphisms with exactly one positive Lyapunov exponent for the case $f_n=f$.
 Applying the same proof, we may get  the following proposition. Noted that the results in \cite{BCS22b,Bur,Zan} are stated for  general $C^r$ ($r>1$) diffeomorphisms.

\begin{pro}\label{thm.zang}
Let $f,f_n$ be $C^\infty$ diffeomorphisms on $M^3$ such that $f_n\to f$ under the $C^\infty$ topology. Assume that $\nu_{n}$ is an  $f_n$-ergodic measure having exactly one positive Lyapunov exponent. If the unstable lifts $\wt{\nu}_{n}^{+}$ converge to some $f$-invariant measure $\wt{\mu}$ whose projection $\mu$ also has exactly one positive Lyapunov exponent, then for any $\alpha>0$ there exist an $f$-invariant measure $\mu_1$, a $\wt f$-invariant measure $\wt\mu_0$, and  $b\in[0,1]$ such that  $\wt\mu$ admits the decomposition $\wt{\mu}=(1-b)\wt{\mu}_{0}+b\wt{\mu}_{1}^{+}$ satisfying
\[
\limsup_{n\to\infty}h_{\nu_{n}}(f_n)\leq b h_{\mu_{1}}(f)+(1-b)\alpha.
\]
\end{pro}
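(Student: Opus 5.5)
We follow the strategy of \cite{BCS22b,Bur,Zan}, carrying the perturbation parameter $f_n$ through every estimate. Work in the projective bundle $\wt f$ acting on $G^1(M^3)$. The aim is to split each $\nu_n$ (more precisely its unstable lift $\wt\nu_n^+$) into a ``good'' part and a ``bad'' part. On the good part the unstable direction has uniformly large unstable curves. On the bad part the expansion along the unstable direction is eventually weak, so entropy is small. Fix $\alpha>0$.

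The first step is a $C^\infty$ reparametrization (Yomdin--Gromov) lemma that is uniform in $n$. Since $f_n\to f$ in $C^\infty$, the constants in the Burguet--Yomdin reparametrization of iterated curves, $C^r$ bounded by $\|f_n\|_{C^r}$, are uniform. This gives the following bound. For $r$ large, the number of reparametrizing maps needed to cover $f_n^k\circ\gamma$ over a block of times where the derivative along $E^u$ is small is at most $e^{k\alpha}$, up to a factor tending to $1$ as $r\to\infty$.

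Next, as in \cite{Bur}, define for each $n$ a set of ``geometric times'' $k$ for a $\nu_n$-typical $x$. At such a time the unstable curve of size $\varepsilon$ through $f_n^k x$ has bounded $C^r$ geometry, and $\log\|d f_n|_{E^u}\|$ is controlled along the past. Pliss lemma arguments show that the density of geometric times, on a set $G_n\subset G^1(M^3)$ of points $(x,E)$ with $\|df_n^{-m}|_E\|\le e^{-m\delta}$ for all $m\le N$, is converted into a weight $b_n=\wt\nu_n^+(G_n)$-like quantity. Passing to limits, choose a subsequence with $b_n\to b$. The limits of the measures restricted to the geometric set give $b\wt\mu_1'$, and those of the complement give $(1-b)\wt\mu_0$. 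The key point is that on $G_n$ the expansion condition is closed in $G^1(M^3)$ and persists under $f_n\to f$, because $d f_n\to df$ uniformly. Hence the limit $\wt\mu_1'$ is carried by lines $E$ that are uniformly backward-contracted. Those lines must be the unstable Oseledets directions of $\mu_1:=\wt\pi_*\wt\mu_1'$, so $\wt\mu_1'=\wt\mu_1^+$. Here we use the hypothesis that $\mu$, and hence $\mu_1$, has exactly one positive exponent; this identifies the unstable direction as the unique expanding line.

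Finally, bound entropy by a Misiurewicz-type argument. Cover by Bowen balls using unstable curves (Ledrappier--Young for one positive exponent). The entropy produced during geometric times is controlled by upper semicontinuity at scale $\varepsilon$: the $f_n$ are uniformly $C^\infty$, so by Newhouse/Buzzi the tail entropy is uniformly small. In the limit this part contributes at most $b\,h_{\mu_1}(f)$. The non-geometric times contribute at most $\alpha$ per unit time by the reparametrization lemma, which gives the term $(1-b)\alpha$. The main obstacle is this last step. We must make the reparametrization bound and the neighbourhood of geometric times uniform in $n$ while keeping the semicontinuity of $h$ restricted to the good part. We would attempt it with the entropy-structure formulation of \cite{Bur}, replacing fixed $f$ by the compact family $\{f_n\}\cup\{f\}$ throughout.
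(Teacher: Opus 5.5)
Your overall plan is the same as the paper's: rerun the arguments of \cite{BCS22b,Bur,Zan} over the compact family $\{f_n\}\cup\{f\}$, with the $C^r$ constants in the reparametrization lemma uniform in $n$, and with backward-contraction conditions on lines passing to the limit under $C^1$ convergence. The paper gives no more proof than this.

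The decomposition step you actually write down fails, and it is the step that produces the objects in the conclusion.
\begin{itemize}
\item \emph{Invariance.} The set $G_n$ is not $\widetilde{f_n}$-invariant, so $\widetilde\nu_n^+|_{G_n}$, $\widetilde\nu_n^+|_{G_n^c}$ and their limits are not invariant. Hence $\widetilde\mu_0$ is not $\widetilde f$-invariant, $\mu_1=\widetilde\pi_*\widetilde\mu_1'$ is not $f$-invariant, and $h_{\mu_1}(f)$, the Oseledets splitting of $\mu_1$ and $\widetilde\mu_1^+$ are all undefined.
\item \emph{Identification with $E^u$.} Contraction only for $m\le N$ puts $E$ in a cone around $E^u$, not on $E^u$.
\item \emph{The weight is wrong.} Take $f_n=f$ and $\nu_n=\nu$, a fixed ergodic measure with one positive exponent and $h_\nu(f)>\alpha$. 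Choose $\nu$ so that its expansion along $E^u$ is weaker than $e^{\delta}$ on a set of positive measure; then $b=\widetilde\nu^+(G_n)<1$. If $0<b<1$, ergodicity of $\widetilde\mu=\widetilde\nu^+$ forces $\widetilde\mu_1^+=\widetilde\mu_0=\widetilde\nu^+$, so $\mu_1=\nu$. In either case ($b=0$ or $0<b<1$) the required inequality reduces to $h_\nu(f)\le\alpha$, which is false.
\end{itemize}
So $b$ cannot be the density of good times, and not every non-good time can be charged at rate $\alpha$.

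What is needed is to split by the \emph{length of the gap} between consecutive good times. Take $\delta\le\alpha$ and use the closed set $\mathcal G_\delta(g)=\{(x,E):\|dg^{-m}|_E\|\le e^{-m\delta}\ \text{for all } m\ge1\}$ (all $m$, not only $m\le N$). For this set, $\limsup_n\widetilde\nu_n^+(\mathcal G_\delta(f_n))\le\widetilde\mu(\mathcal G_\delta(f))$.
\begin{itemize}
\item Write $\widetilde\nu_n^+$ as a Kac tower over $\mathcal G_\delta(f_n)$, and let $\widetilde\nu_n^{\le L}$ be the part lying in excursions of length at most $L$.
\item By Kac's formula, the bases of longer excursions have total mass at most $1/L$. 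Hence $\widetilde\nu_n^{\le L}$ is $\widetilde{f_n}$-invariant up to total variation $2/L$.
\item Letting $n\to\infty$ and then $L\to\infty$ gives invariant measures $b\widetilde\mu_1$ and $(1-b)\widetilde\mu_0$. Moreover $\widetilde\mu_1$ is carried by $\bigcup_{j\ge0}\widetilde f^{\,j}(\mathcal G_\delta(f))$.
\item Since $\mu_1\ll\mu$ has exactly one positive exponent, $\widetilde\mu_1$ is therefore carried by $E^u$ at $\mu$-regular points.
\end{itemize}
In the example above this gives $b=1$, because $\lambda^+(\nu)\ge h_\nu(f)>\delta$.

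The reparametrization bound is then used only along the long excursions. This gives $(1-b)\alpha$ once $r$ is large, uniformly in $n$. Measures $\nu_n$ with exponent at most $\delta$ are handled by Ruelle's inequality.

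Finally, ``upper semicontinuity at scale $\varepsilon$'' does not by itself yield the term $b\,h_{\mu_1}(f)$ from the bounded excursions. That comparison is the heart of \cite{BCS22b,Bur,Zan}, and it is exactly what must be checked to hold uniformly along $f_n\to f$.
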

\qed

\begin{proof}[Proof of Proposition~\ref{pro.continu}]
Since $\widetilde{\mathscr{L}}^{+}_K$ consists of limit points, it is compact.	Let $\widetilde{\mu}\in\widetilde{\mathscr{L}}_K^{+}$ and $\nu_n$ be the regular hyperbolic ergodic measure given by the definition of $\widetilde{\mathscr{L}}_K^{+}$. Since $\widetilde{\nu}_n^+ \to \widetilde{\mu}$, we have ${\nu}_n \to {\mu}=\wt\pi_*(\wt\mu)$.  Then by Lemma~\ref{lem.regularhyper}, $\mu$ is a regular $\chi_0$-hyperbolic measure. 
	
	Consider the time-$1$ map $\varphi_1$ of the flow. It is a $C^\infty$ diffeomorphism on $M^3$ whose Lyapunov exponents  coincide with those of the flow $\varphi_t$. Thus, both $\nu_{n}$ and  $\mu$
	have exactly one positive Lyapunov exponent for $\varphi_1$.
	
Take $\alpha=\chi_0$. By Theorem~\ref{thm.zang}, there exist a $\varphi_1$-invariant measure $\mu_1$, a $\wt \varphi_1$-invariant measure $\wt\mu_0$, and  $b\in[0,1]$ such that  $\wt{\mu}=(1-b)\wt{\mu}_{0}+b\wt{\mu}_{1}^{+}$ and 
\[
\limsup_{n\to\infty}h_{\nu_{n}}(\varphi_1)\leq b h_{\mu_{1}}(\varphi_1)+(1-b)\alpha.
\]

It implies that
\[
\limsup_{n\to \infty} P_{\nu_n}(X,\vartheta)=\limsup_{n\to \infty} (h_{\nu_n}(\varphi_1,\vartheta)+\int \vartheta d\nu_n)\le b h_{\mu_{1}}(\varphi_1)+(1-b)\alpha +\int \vartheta d\mu. 
\]
 Applying the projection $\wt\pi_*$ to $\wt{\mu}=(1-b)\wt{\mu}_{0}+b\wt{\mu}_{1}^{+}$ yields ${\mu}=(1-b){\mu}_{0}+b{\mu}_{1}$. Thus,
 \[
\limsup_{n\to \infty} P_{\nu_n}(X,\vartheta)\le b( h_{\mu_{1}}(\varphi_1)+\int\vartheta d\mu_1) +(1-b)(\alpha +\int \vartheta d\mu_0). 
\]

If $b=0$ then $P_{\mathrm{TOP}}(X|_K,\vartheta)=\limsup_{n\to \infty} P_{\nu_n}(X,\vartheta)\le \chi_0+\int \vartheta d\mu_0<P_{\rm top}(X|_{\overline{K}},\vartheta)$. This contradiction proves  $b>0$. Thus $\mu_1\in\mathbb P(X|_{\ol K})$  follows form $\mu\in\mathbb P(X|_{\ol K})$. 
So we have $h_{\mu_{1}}(\varphi_1)+\int\vartheta d\mu_1\le P_{\mathrm{top}}(X|_{\ol K},\vartheta)=P_{\mathrm{TOP}}(X|_K,\vartheta)$. Note that $P_{\mathrm{TOP}}(X|_K,\vartheta)\ge h_{\mathrm{top}}(X|_{\ol K})+\inf_{x\in M^3}\vartheta(x)=\sup_{x\in M^3}\vartheta(x)+2\chi_0$. Therefore,
\[
P_{\mathrm{TOP}}(X|_K,\vartheta)=\limsup_{n\to \infty} P_{\nu_n}(X,\vartheta)\le bP_{\mathrm{TOP}}(X|_K,\vartheta)+(1-b)(P_{\mathrm{TOP}}(X|_K,\vartheta)-\chi_0).
\]
It follows that $b=1$. Thus $\mu=\mu_1$ and $\wt\mu=\wt\mu_1^+=\wt\mu^+$. 
 
\end{proof}

We consider the {\it scaled tangent flow} $d\varphi^*_t: TM\to TM$ which is defined as
\[
d\varphi^*_t(v)=\frac{d\varphi_t(|X(x)|v)}{|X(\varphi_t(x))|}=\frac{|X(x)|}{|X(\varphi_t(x))|}d\varphi_t(v),
\]
where $x\in M$ and $v\in T_x M$.
For any  $N\ge 1$ and $\chi>0$ define
\[
\widetilde{Q}^{+}_N(\chi):=\{(x,E)\in G^1(M^3): x\in M\setminus\Sing(X),\ \norm{d_x\varphi^*_{_{-N}}|_{E}}<e^{-\chi N}\}.
\]
Since $\wt\pi^{-1}(\Sing(X))$ is a closed subset of $G^1(M^3)$, $\widetilde{Q}^{+}_N(\chi)$ is open in $G^1(M^3)$. 

\begin{lem}\label{lem.cptmeas}
For any $\chi\in (0, \chi_0)$ and $\delta\in(0,1/2)$, there exist a neighborhood $\widetilde{\mathscr{V}}_K^{+}$ of $\widetilde{\mathscr{L}}^{+}_K$ and an integer $N_0\in\mathbb{Z}^{+}$ such that for all $\widetilde{\mu}\in\widetilde{\mathscr{V}}_K^{+}$, there is $N\in\{1,\dots,N_0\}$ satisfying $\widetilde{\mu}(\widetilde{Q}^{+}_N(\chi))\geq 1-\delta^2$.
\end{lem}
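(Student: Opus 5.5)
The plan is a compactness argument on $\widetilde{\mathscr{L}}^{+}_K$ (compact by Proposition~\ref{pro.continu}), combined with openness of the sets $\widetilde{Q}^{+}_N(\chi)$ and lower semicontinuity of the mass of open sets under weak-$*$ convergence. We proceed in three steps.

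\emph{Step 1: each limit lift gives large mass to some $\widetilde{Q}^{+}_N(\chi)$.} Fix $\widetilde{\mu}\in\widetilde{\mathscr{L}}^{+}_K$. By Proposition~\ref{pro.continu}, its projection $\mu$ is regular and $\chi_0$-hyperbolic, and $\widetilde{\mu}=\widetilde{\mu}^{+}$. Hence $\widetilde{\mu}$-a.e.\ point has the form $(x,E^u_x)$ with $x\notin\Sing(X)$. At such points the exponent of $d\varphi_{-t}|_{E^u_x}$ is $-\lambda^+(x)\le-\chi_0$. By \eqref{eq.zeroexp}, the scaling factor $|X(x)|/|X(\varphi_{-t}(x))|$ has zero exponential rate for $\mu$-a.e.\ $x$, so $\frac1N\log\|d_x\varphi^*_{-N}|_{E^u_x}\|\to-\lambda^+(x)<-\chi$. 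Consequently $\widetilde{\mu}\big(\bigcup_{N\ge N_1}\widetilde{Q}^{+}_N(\chi)\big)=1$ for every $N_1$, and the monotone limit
\[
\widetilde{\mu}\Big(\bigcup_{N\le N_{\widetilde{\mu}}}\widetilde{Q}^{+}_N(\chi)\Big)>1-\delta^2/2
\]
holds for some finite $N_{\widetilde{\mu}}$. This is not yet enough, since we need a single $N$ rather than a union. To get one, apply Egorov (or the pointwise a.e.\ convergence above) to choose a single $N_{\widetilde{\mu}}$ with
\[
\widetilde{\mu}\big(\widetilde{Q}^{+}_{N_{\widetilde{\mu}}}(\chi)\big)>1-\delta^2/2 .
\]
This works because the set $\{\tfrac1N\log\|d\varphi^*_{-N}|_E\|<-\chi\}$ has $\widetilde{\mu}$-measure tending to $1$ as $N\to\infty$, by convergence in measure.

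\emph{Step 2: openness.} Since $\widetilde{Q}^{+}_{N}(\chi)$ is open in $G^1(M^3)$, the map $\widetilde{\nu}\mapsto\widetilde{\nu}(\widetilde{Q}^{+}_{N}(\chi))$ is lower semicontinuous in the weak-$*$ topology (Portmanteau). Thus
\[
\widetilde{\mathscr{W}}_{\widetilde{\mu}}:=\big\{\widetilde{\nu}:\widetilde{\nu}(\widetilde{Q}^{+}_{N_{\widetilde{\mu}}}(\chi))>1-\delta^2\big\}
\]
is an open neighborhood of $\widetilde{\mu}$.

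\emph{Step 3: compactness.} Cover the compact set $\widetilde{\mathscr{L}}^{+}_K$ by finitely many neighborhoods $\widetilde{\mathscr{W}}_{\widetilde{\mu}_1},\dots,\widetilde{\mathscr{W}}_{\widetilde{\mu}_k}$. Take $\widetilde{\mathscr{V}}^{+}_K$ to be their union and $N_0=\max_i N_{\widetilde{\mu}_i}$. Every $\widetilde{\nu}\in\widetilde{\mathscr{V}}^{+}_K$ then lies in some $\widetilde{\mathscr{W}}_{\widetilde{\mu}_i}$, so $N=N_{\widetilde{\mu}_i}\le N_0$ satisfies $\widetilde{\nu}(\widetilde{Q}^{+}_N(\chi))\ge1-\delta^2$, as required.

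The main obstacle is Step 1, specifically handling the scaling near singularities. A point $x$ could be close to $\Sing(X)$, and the factor $|X(x)|/|X(\varphi_{-N}(x))|$ is not uniformly controlled there. We only need almost-sure asymptotics for the fixed measure $\mu$, which is regular; there the NUH0-type property \eqref{eq.zeroexp} applies, and it suffices. The openness in Step 2 uses that $\widetilde{Q}^{+}_N(\chi)$ excludes the fibres over singularities, so singular points cause no issue.
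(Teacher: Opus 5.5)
Your proposal is correct and follows the paper's proof essentially step for step. Like the paper, you use $\widetilde{\mu}=\widetilde{\mu}^{+}$ and the $\chi_0$-hyperbolicity of the regular projection to obtain a single $N_{\widetilde{\mu}}$ with large $\widetilde{\mu}(\widetilde{Q}^{+}_{N_{\widetilde{\mu}}}(\chi))$, then openness of $\widetilde{Q}^{+}_{N}(\chi)$ to get a weak-$*$ neighborhood, and finally compactness of $\widetilde{\mathscr{L}}^{+}_K$ with $N_0$ taken as the maximum.

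Your strict margin $1-\delta^2/2$ at $\widetilde{\mu}$ is a small genuine tightening. Lower semicontinuity of $\widetilde{\nu}\mapsto\widetilde{\nu}(\widetilde{Q}^{+}_{N}(\chi))$ only yields an open neighborhood when the inequality at $\widetilde{\mu}$ is strict, and the paper glosses over this by writing $\ge 1-\delta^2$ at both stages.
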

\begin{proof}
Let  $\chi\in (0, \chi_0)$, $\delta\in(0,1/2)$ and take any  $\widetilde{\mu}\in\widetilde{\mathscr{L}}^{+}_K$. It follows from Lemma~\ref{lem.regularhyper} and Proposition~\ref{pro.continu} that $\widetilde{\mu}=\widetilde{\mu}^{+}$ and $\mu=\widetilde{\pi}_{*}\widetilde{\mu}$ is $\chi_0$-hyperbolic. Thus $(x,E)=(x, E^u_x)$ holds $\widetilde{\mu}^{+}$-almost everywhere. Consequently, there exists $N_{\widetilde{\mu}}\geq1$ such that 
$\widetilde{\mu}(\widetilde{Q}^{+}_{N_{\widetilde{\mu}}}(\chi))\geq 1-\delta^2$.

Since $\widetilde{Q}^{+}_{N_{\widetilde{\mu}}}(\chi)$ is open,  there exists a neighborhood $\widetilde{\mathscr{V}}_{\widetilde{\mu}}$ of $\wt\mu$ such that
\[
\widetilde{\nu}(\widetilde{Q}^{+}_{N_{\widetilde{\mu}}}(\chi))\geq 1-\delta^2
\]
 for any $\wt\nu\in \widetilde{\mathscr{V}}_{\widetilde{\mu}}$. 
  By the compactness of $\widetilde{\mathscr{L}}^{+}_K$, there are finitely many $\wt\mu_1,\dots,\wt\mu_k\in \widetilde{\mathscr{L}}^{+}_K$ such that $\widetilde{\mathscr{L}}^{+}_K\subset  \bigcup_{j=1}^k \widetilde{\mathscr{V}}_{\widetilde{\mu}_j}$.
 
 Then $\widetilde{\mathscr{V}}_K^+:= \bigcup_{j=1}^k \widetilde{\mathscr{V}}_{\widetilde{\mu}_j}$ is a neighborhood of $\widetilde{\mathscr{L}}^{+}_K$ in $G^1(M^3)$. Take $N_0=\max\{N_{\wt\mu_j}:j=1,\dots,k\}$. For any $\wt\nu \in \widetilde{\mathscr{V}}_K^+$, $\wt\nu \in \widetilde{\mathscr{V}}_{\widetilde{\mu}_j}$ for some $j$. So we have $\widetilde{\mu}(\widetilde{Q}^{+}_{N_{\wt\mu_j}}(\chi))\geq 1-\delta^2$.
\end{proof}

Now we return to the main theorem of this subsection.

\begin{proof}[Proof of Theorem~\ref{thm.3SPRset}]
	
Define 	
\[
\widetilde{U}^{+}_N(\chi):=\{(x,E)\in G^1(M^3): x\in M\setminus\Sing(X),\ \norm{\psi^*_{_{-N}}|_{E}}<e^{-\chi N}\}.
\]	
Recall that $\psi_t(v)$ is the orthogonal projection of $d\varphi_t(v)$, thus $\|\psi^*_{_{-N}}|_{E}\|\le \|d_x\varphi^*_{_{-N}}|_{E}\|$. It follows that $\widetilde{Q}^{+}_N(\chi) \subset \widetilde{U}^{+}_N(\chi)$. This implies that the conclusion of Lemma~\ref{lem.cptmeas} remains valid when 
$\widetilde{Q}^{+}_N(\chi)$ is replaced by $\widetilde{U}^{+}_N(\chi)$.

Proceeding with the same argument as in \cite[Theorem~3.1]{BCS25}, we obtain that for every $\tau\in (0,1)$, there exist $\chi>0$, $\ell>0$ and $P_0<P_{\rm TOP}(X|_K,\vartheta)$ such that for any small $\varepsilon>0$ and any  $\nu\in \mathbb P_e(X|_K)$ with $P_\nu(X|_K,\vartheta)>P_0$, we have $\nu(\Pes_\chi(\ell,\varepsilon))>\tau$. 
	
To complete the proof, we require the following claim, which was established in \cite[Lemma~5.4]{PYY25a}.

\smallskip
\smallskip
\nt{\bf Claim.} {\it  Increasing $P_0<P_{\rm TOP}(X|_K,\vartheta)$ if necessary, there exists $r>0$ such that if $\nu$ is an $X$-invariant measure on $K$ satisfying $P_\nu(X|_K,\vartheta)>P_0$, then $\nu(M(\Sing(X),r))< \tau/2$.} 
\begin{proof}	
	The proof is by contraction. Suppose that there are sequences of $\nu_n\in \mathbb P(X|_K)$ and $r_n>0$ such that $r_n\to 0$,
	$P_{\nu_n}(X|_K,\vartheta)\to P_{\rm TOP}(X|_K,\vartheta)$ and $\nu_n(M(\Sing(X),r_n))\ge  \tau/2$. Without loss of generality, we may assume that $\nu_n\to\mu$ under the weak-$*$ topology. Therefore, $\mu$ is regular by Lemma~\ref{lem.regularhyper}. However, $\nu_n(M(\Sing(X),r_n))\ge  \tau/2$ implies that $\mu(\Sing(X))\ge  \tau/2$. This contraction proves the claim.
\end{proof}
	
Let $P_0$ and $\delta$ be given by the above claim. Then for any $\varepsilon>0$, 	$\Lambda:=\Pes_\chi(\ell,\varepsilon) \setminus {\rm Int}(M(\Sing(X),\delta))$ is a compact subset, where ${\rm Int}(M(\Sing(X),\delta))$ is the interior of ${M}(\Sing(X),\delta)$. Moreover, for any   $\nu\in \mathbb P_e(X|_K)$ with $P_\nu(X|_K,\vartheta)>P_0$, we have $\nu(\Lambda)>\tau/2$. This completes the proof.	
\end{proof}

Based on Theorem~\ref{thm.3SPRset} and \ref{thm.main.spr}, we immediately obtain the following corollary.

\begin{cor}\label{thm.main.3flowmme}
	Let $M^3$ be a closed $3$-dimensional $C^\infty$ Riemannian manifold and $X$ be a $C^{\infty}$ vector field on $M^3$ having positive topological entropy.
\begin{itemize}
  \item If $B$ is a Borel homoclinic class of $X$ and $\vartheta$ is a  H\"older continuous potential on $M^3$ satisfying ${\rm Var} (\vartheta) < h_{\rm TOP}(X|_B)=h_{\rm top}(X|_{\ol{B}})$, then $X$ is SPR for $\vartheta$ on $B$. As a corollary, there exists a unique   equilibrium state of $\vartheta$  for $X$  on $B$. 
  \item If $\vartheta$ is a  H\"older continuous potential on $M^3$ satisfying ${\rm Var} (\vartheta) < h_{\rm top}(X)$, then $X$ is SPR for $\vartheta$ on $M^3$. As a corollary, there exist only finitely many ergodic equilibrium states of $\vartheta$ for $X$. 
\end{itemize}
\end{cor}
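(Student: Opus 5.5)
The plan is to deduce both items by applying Theorem~\ref{thm.3SPRset} to a suitable invariant Borel set $K$, and then to invoke Theorem~\ref{thm.main.spr}.

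\emph{First item.} Take $K=B$, a Borel homoclinic class; it is $X$-invariant and Borel by Proposition~\ref{pro.proBHC}. Two hypotheses of Theorem~\ref{thm.3SPRset} must be checked.
\begin{itemize}
\item[(i)] The condition $\Var(\vartheta)<h_{\rm top}(X|_{\ol B})$ is given directly, since the statement assumes $h_{\rm TOP}(X|_B)=h_{\rm top}(X|_{\ol B})$.
\item[(ii)] We need the pressure identity $P_{\rm TOP}(X|_B,\vartheta)=P_{\rm top}(X|_{\ol B},\vartheta)$.
\end{itemize}
The inequality ``$\le$'' in (ii) is trivial. For ``$\ge$'', take an ergodic measure $\mu$ on $\ol B$ whose pressure is close to $P_{\rm top}(X|_{\ol B},\vartheta)$. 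We have
\[
P_{\rm top}(X|_{\ol B},\vartheta)\ge h_{\rm top}(X|_{\ol B})+\inf\vartheta .
\]
Combined with the variation bound, this forces $h_\mu(X)\ge\chi_0>0$, exactly as in the first item of Lemma~\ref{lem.regularhyper}. Hence $\mu$ is regular and hyperbolic, so it is carried by some Borel homoclinic class $B'$ (Proposition~\ref{pro.proBHC}).

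The main obstacle is showing that $B'=B$. In general $\ol B=\HC(\mathcal O)$ may carry measures from other classes. The route I would try is a Katok-type approximation. The hyperbolic measure $\mu$ is approximated in entropy and weak-$*$ topology by horseshoes inside $\ol B$, using Katok's closing argument together with the closing lemma for singular flows quoted in \S\ref{subsec.Pesinblock}. The periodic orbits of these horseshoes are dense in $\supp\mu\subset\ol B$.

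In dimension three with a saddle-type measure, I would then use a Buzzi--Crovisier--Sarig style intersection argument. This argument relies on one-dimensional stable and unstable manifolds of uniform size on a Pesin block, together with density of periodic points of $B$ in $\ol B$. It should show that a horseshoe of large pressure is homoclinically related to a periodic orbit of $B$, so its measures lie in $B$. This yields measures in $B$ with pressure arbitrarily close to $P_{\rm top}(X|_{\ol B},\vartheta)$, which proves ``$\ge$''. If the intersection step proves delicate, I would fall back on an entropy-based argument: apply Proposition~\ref{pro.continu} with $K=B$ to a sequence of measures in $B$ converging to an equilibrium state. But this presupposes the identity (ii), so the geometric argument is the real content.

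With (i) and (ii) verified, Theorem~\ref{thm.3SPRset} gives that $X$ is SPR for $\vartheta$ on $B$. The first bullet of Theorem~\ref{thm.main.spr} then yields a unique equilibrium state on $B$.

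\emph{Second item.} Take $K=M^3$. Then $\ol K=M^3$ and the pressure identity holds trivially. The hypothesis $\Var(\vartheta)<h_{\rm top}(X)$ is given. Theorem~\ref{thm.3SPRset} gives SPR on $M^3$, and the second bullet of Theorem~\ref{thm.main.spr} gives finitely many ergodic equilibrium states. This part is routine.
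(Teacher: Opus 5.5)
Your reduction coincides with the paper's. The paper derives the corollary in one line from Theorem~\ref{thm.3SPRset} (with $K=B$, resp.\ $K=M^3$) and Theorem~\ref{thm.main.spr}, and your treatment of the second bullet is complete as written.

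For the first bullet, you are right that Theorem~\ref{thm.3SPRset} needs the pressure identity $P_{\rm TOP}(X|_B,\vartheta)=P_{\rm top}(X|_{\ol B},\vartheta)$. You are also right that the assumed entropy identity does not imply it: it only gives $0\le P_{\rm top}(X|_{\ol B},\vartheta)-P_{\rm TOP}(X|_B,\vartheta)\le\Var(\vartheta)$. The paper's one-line deduction does not verify this point separately.

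However, your argument for ``$\ge$'' has a genuine gap, exactly at the step you call the main obstacle. The claim that a large-pressure Katok horseshoe for $\mu$ is homoclinically related to a periodic orbit of $B$ is asserted, not derived, and the ingredients you cite do not yield it.
\begin{itemize}
\item A Pesin block controls the size and angles of local invariant curves only for $\mu$-typical points (and for the horseshoe).
\item The periodic points of $B$ that accumulate on the support of $\mu$ have zero entropy and no uniform hyperbolicity. Their local stable and unstable curves can therefore be arbitrarily short, or badly oriented relative to their distance from the block.
\item Positional density alone thus produces no transverse intersection. Crossing arguments of Buzzi--Crovisier--Sarig type require large invariant manifolds on \emph{both} sides.
\end{itemize}
Nothing in your sketch excludes that $\mu$ is carried by a Borel homoclinic class $B'\neq B$ even though its support lies in $\ol B$. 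Also, the Katok horseshoes are homoclinically related to $\mu$, so they belong to $B'$ and only lie near the support of $\mu$, not ``inside $\ol B$''. Finally, the fallback through Proposition~\ref{pro.continu} is circular, as you note yourself.

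So, as written, the first bullet is proved only when the pressure identity holds. This is automatic for constant $\vartheta$ (in particular $\vartheta\equiv 0$), where it reduces exactly to the assumed entropy identity. For a general H\"older $\vartheta$ you must do one of two things. Either add $P_{\rm TOP}(X|_B,\vartheta)=P_{\rm top}(X|_{\ol B},\vartheta)$ as a hypothesis, or give a real proof that ergodic measures on $\ol B$ with pressure close to $P_{\rm top}(X|_{\ol B},\vartheta)$ are carried by $B$.
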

\qed

\subsection{Good asymptotics for three-dimensional flows}

Next we consider the existence of good asymptotics. Let  $\vartheta :M^3\to\mathbb{R}$ be a H\"older continuous function. Assume $X$ is a $C^\infty$ flow on $M^3$ satisfying $\Var (\vartheta)<h_{\rm top}(X)$. Define $\chi_0=\frac{1}{2}(h_{\rm top}(X)-\Var(\vartheta))>0$ and
\[
\widetilde{\mathscr{L}}^{+}:= \left\{ \widetilde{\mu} : \exists \nu_n \in \mathbb{P}_{\mathrm{e}}(Y_n) \text{ s.t. } Y_n\to X,\ \widetilde{\nu}_n^+ \to \widetilde{\mu}\text{ and }  \lim_{n\to \infty} P_{\nu_n}(Y_n,\vartheta) \ge  P_{\mathrm{top}}(X,\vartheta) \right\}.
\]

Similar to Proposition~\ref{pro.continu}, $\widetilde{\mathscr{L}}^{+}$ is compact and every  $\wt\mu\in \widetilde{\mathscr{L}}^{+}$ is a regular $\chi_0$-hyperbolic equilibrium state of $\vartheta$ for $X$ satisfying $\wt\mu=\wt\mu^+$.
Recall that
\[
\widetilde{Q}^{+}_N(\chi)=\{(x,E)\in G^1(M^3): x\in M\setminus\Sing(Y),\ \norm{d_x\varphi^{*}_{_{-N}}|_{E}}<e^{-\chi N}\}.
\]
By using the same argument as in Lemma~\ref{lem.cptmeas}, we have the following lemma.

\begin{lem}\label{lem.cptmeasY}
 For any $\chi\in (0, \chi_0)$ and $\delta\in(0,1/2)$, there exist a neighborhood $\widetilde{\mathscr{V}}^{+}$ of $\widetilde{\mathscr{L}}^{+}$ and an integer $N_0\in\mathbb{Z}^{+}$ such that for all $\widetilde{\mu}\in\widetilde{\mathscr{V}}^{+}$, there is $N\in\{1,\dots,N_0\}$ satisfying $\widetilde{\mu}(\widetilde{Q}^{+}_N(\chi))\geq 1-\delta^2/2$.
\end{lem}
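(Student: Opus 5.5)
The plan mirrors the proof of Lemma~\ref{lem.cptmeas}: first use compactness of $\widetilde{\mathscr{L}}^{+}$ together with the structure of its elements, then pass to a finite subcover. Two preliminary facts are needed, namely that $\widetilde{\mathscr{L}}^{+}$ is compact and that each $\widetilde{\mu}\in\widetilde{\mathscr{L}}^{+}$ satisfies $\widetilde{\mu}=\widetilde{\mu}^{+}$ with regular, $\chi_0$-hyperbolic projection $\mu$. Both are the analogue of Proposition~\ref{pro.continu}. Compactness holds because $\widetilde{\mathscr{L}}^{+}$ is a set of limit points: a diagonal argument over $Y_n\to X$ shows it is closed in the compact space of probability measures on $G^1(M^3)$. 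The structural statement would follow as in Proposition~\ref{pro.continu}, by combining the second item of Lemma~\ref{lem.regularhyper} (with $K=M^3$, so that $\mu$ is a regular $\chi_0$-hyperbolic equilibrium state) with Proposition~\ref{thm.zang} applied to the time-one maps $f_n=\varphi^{Y_n}_1\to f=\varphi_1$ in $C^\infty$. The same entropy-accounting argument forces $b=1$.

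Next I fix $\widetilde{\mu}\in\widetilde{\mathscr{L}}^{+}$. Since $\widetilde{\mu}=\widetilde{\mu}^{+}$, for $\widetilde{\mu}$-a.e. $(x,E)$ we have $E=E^u_x$ with $x$ regular and Lyapunov exponent $\lambda^+(x)>\chi_0>\chi$. The scaling factor $|X(x)|/|X(\varphi_{-N}(x))|$ has subexponential growth a.e. by (\ref{eq.zeroexp}). Hence $\frac1N\log\|d_x\varphi^*_{-N}|_{E^u_x}\|\to-\lambda^+(x)<-\chi$, and so $\widetilde{\mu}(\widetilde{Q}^{+}_N(\chi))\to 1$. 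I can therefore choose $N_{\widetilde{\mu}}$ with $\widetilde{\mu}(\widetilde{Q}^{+}_{N_{\widetilde{\mu}}}(\chi))>1-\delta^2/2$, with strict inequality.

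The set $\widetilde{Q}^{+}_N(\chi)$ is open, because $\widetilde{\pi}^{-1}(\Sing(X))$ is closed and the defining condition is continuous off singularities. By the portmanteau theorem, $\widetilde{\nu}\mapsto\widetilde{\nu}(\widetilde{Q}^{+}_{N_{\widetilde{\mu}}}(\chi))$ is lower semicontinuous. It follows that some weak-$*$ neighborhood $\widetilde{\mathscr{V}}_{\widetilde{\mu}}$ satisfies $\widetilde{\nu}(\widetilde{Q}^{+}_{N_{\widetilde{\mu}}}(\chi))\ge 1-\delta^2/2$ for all $\widetilde{\nu}$ in it. Compactness then gives finitely many $\widetilde{\mu}_1,\dots,\widetilde{\mu}_k$ whose neighborhoods cover $\widetilde{\mathscr{L}}^{+}$. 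I set $\widetilde{\mathscr{V}}^{+}=\bigcup_j\widetilde{\mathscr{V}}_{\widetilde{\mu}_j}$ and $N_0=\max_j N_{\widetilde{\mu}_j}$.

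The main obstacle is the first step, the analogue of Proposition~\ref{pro.continu} for varying vector fields $Y_n$. Two points need care there. One is the use of Proposition~\ref{thm.zang} with $f_n\ne f$. The other is the upper semicontinuity of entropy under $C^\infty$ perturbation of the flow (Newhouse), which is needed in Lemma~\ref{lem.regularhyper}. Both are stated earlier in the paper for this setting, so I would invoke them directly. The remaining steps are the routine covering argument.
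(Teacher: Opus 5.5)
Your proposal is correct and follows the same route as the paper. The paper proves this lemma by repeating the compactness-and-finite-subcover argument of Lemma~\ref{lem.cptmeas}, using that $\widetilde{\mathscr{L}}^{+}$ is compact and that each $\widetilde{\mu}\in\widetilde{\mathscr{L}}^{+}$ satisfies $\widetilde{\mu}=\widetilde{\mu}^{+}$ with regular $\chi_0$-hyperbolic projection, obtained ``similar to Proposition~\ref{pro.continu}''. Your choice of $N_{\widetilde{\mu}}$ with a strict inequality, so that lower semicontinuity of $\widetilde{\nu}\mapsto\widetilde{\nu}(\widetilde{Q}^{+}_{N}(\chi))$ on the open set actually yields a weak-$*$ neighborhood, is slightly more careful than the paper's wording.
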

\qed

Define
\[
\widetilde{U}^{Y,+}_N(\chi):=\{(x,E)\in G^1(M^3): x\in M\setminus\Sing(Y),\ \norm{\psi^{Y,*}_{_{-N}}|_{E}}<e^{-\chi N}\}.
\]

\begin{lem}\label{lem.bcs}
	There exists  a $C^{\infty}$-neighborhood $\mathcal{U}_0$ of $X$ such that for any $0<\chi<\chi_0$,  $0<\tau<1$ and $0<\varepsilon\ll \chi$, there exist $0<\delta<1$ and $\ell>0$  satisfying the following property: if there is $Y\in \U_0$ and $\nu\in \mathbb P_e(Y)$ such that $\wt\nu(\widetilde{U}^{Y,+}_N(\chi))>1-\delta^2$ for some lift $\wt\nu$ of $\nu$ and $N\in\{1,\dots,N_0\}$, then  $\nu(\Pes^Y_\chi(\ell,\varepsilon))>\tau$.	 
\end{lem}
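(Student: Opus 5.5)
The plan is to follow the argument of \cite[Theorem~3.1]{BCS25}, the same one invoked in the proof of Theorem~\ref{thm.3SPRset}, while keeping every constant uniform in $Y\in\U_0$. Take $\U_0$ to be a $C^\infty$ (in fact $C^2$ suffices) neighborhood of $X$ on which $\|\nabla Y\|\le 2$ and $\|Y\|_{C^{1+\beta}}$ is uniformly bounded. Then (\ref{eq.flowGI}) gives
\[
\|\psi^{Y,*}_t\|\le L e^{L|t|}
\]
with $L$ independent of $Y$. Since $\dim M^3=3$, the normal bundle $\N^Y$ is $2$-dimensional, so an ergodic hyperbolic $\nu$ has a one-dimensional unstable direction $\N^u$, and the lift $\wt\nu$ sits on lines $E\subset\N^Y_x$ (or its projection to $\N$).

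\textbf{Step 1.} Apply Birkhoff's theorem for the time-$N$ map. The hypothesis $\wt\nu(\widetilde U^{Y,+}_N(\chi))>1-\delta^2$ says that, for $\wt\nu$-typical $(x,E)$, the backward iterates $\wt\varphi_{-kN}(x,E)$ land in $\widetilde U^{Y,+}_N(\chi)$ with frequency at least $1-\delta^2$. On the complementary times the contraction $\|\psi^{Y,*}_{-N}|_E\|$ is at most $Le^{LN_0}$. Consequently
\[
\|\psi^{Y,*}_{-t}|_E\|\le \ell_1 e^{-(\chi-C\delta^2 N_0)t}
\]
on a set of large measure. A Pliss-lemma / maximal-inequality argument then produces, for a set of $x$ of $\nu$-measure at least $1-C\delta$, a line $E$ that is uniformly contracted in backward time at every scale starting from $x$.

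\textbf{Step 2.} Two-dimensionality does the rest. Because $\nu$ is ergodic and $\chi$-hyperbolic (Lemma~\ref{lem.regularhyper}, which holds for any $C^1$ field $Y$), the typical backward-contracted line must be $E=\N^u_x$. Since $|\det\psi^{Y,*}_t|=|\det\psi^Y_t|\cdot|Y(x)|^2/|Y(\varphi_t x)|^2$ and $\log|Y|$ has zero Birkhoff average (NUH0), the determinant controls the complementary direction: together with bounded angle on the good set, forward contraction of $\N^s$ follows at rate $\chi$ on the good set. Tempering in $m$ (the factor $e^{\varepsilon|m|}$ in Definition~\ref{def.pesin}) is obtained as in \cite{BCS25} via a Pliss-type selection of times where the partial sums stay above the linear barrier with slack $\varepsilon$. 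This gives $\nu(\Pes^Y_\chi(\ell,\varepsilon))>\tau$ once $\delta$ is chosen small relative to $1-\tau$, $\varepsilon$ and $N_0$, and $\ell$ large, with both depending only on $L,N_0,\chi,\varepsilon,\tau$.

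The main obstacle is Step 2's uniform angle and determinant control. The scaling factor $|Y(x)|/|Y(\varphi_t x)|$ is not bounded along orbits near singularities, so the determinant bound is only asymptotic, not uniform in $\ell$. My first attempt would be to work throughout with scaled quantities, whose one-step norms are uniformly bounded by (\ref{eq.flowGI}) regardless of singularities. I would then get the stable estimate from the unstable one by the inverse-transpose (adjoint) trick on the $2$-dimensional normal space, applying the same Pliss argument to the stable lift $\wt\nu^-$ and using the symmetric version of the hypothesis obtained from Lemma~\ref{lem.cptmeasY} applied to stable lifts. This avoids the determinant altogether.
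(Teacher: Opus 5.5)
Your skeleton (a uniform bound on $\psi^{Y,*}_{\pm 1}$ over $\U_0$, followed by the Pliss/maximal-inequality argument of \cite{BCS25} with all constants uniform in $Y$) is exactly the paper's proof. That proof consists of this bound plus a citation of \cite[Proposition~2.21, Theorem~3.1]{BCS25}, and it never treats the stable direction separately.

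The gap is in your Step~2. You use that $\nu$ is $\chi$-hyperbolic of saddle type ``by Lemma~\ref{lem.regularhyper}''. That lemma needs $P_\nu(Y,\vartheta)\ge P_{\rm top}(X,\vartheta)-\chi_0$, which is not a hypothesis here. Your final repair rests on a mass condition for the stable lift $\wt\nu^-$ ``from Lemma~\ref{lem.cptmeasY} applied to stable lifts''. This is neither assumed in the statement nor provided by Lemma~\ref{lem.cptmeasY}, which concerns unstable lifts near $\wt{\mathscr{L}}^{+}$, i.e.\ measures of large pressure. So what you actually prove is a lemma with an extra hypothesis.

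The inverse-transpose remark does not change this. On a $2$-dimensional space $\|A^{-T}|_{E^\perp}\|=\|A|_E\|/|\det A|$, so it reintroduces the scaled determinant $e^{\int_0^t \mathrm{div}\,Y(\varphi_s x)\,ds}\,(|Y(x)|/|Y(\varphi_t x)|)^3$. The work is really done by the stable-lift hypothesis, i.e.\ by Step~1 run for $-Y$.

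Some stable-side input is unavoidable, because the stated hypothesis alone does not give the conclusion. Suppose some $Y\in\U_0$ (say $Y=X$) has a saddle periodic orbit $\gamma$ with $\|\psi^{Y,*}_{-1}|_{\N^u}\|<e^{-\chi}$ along $\gamma$ and stable exponent in $(-\chi,0)$; nothing in the setting rules this out. Then the periodic measure with its unstable lift satisfies the hypothesis with $N=1$ for every $\delta$. Yet $\gamma\cap \Pes^Y_\chi(\ell,\varepsilon)=\emptyset$ for all $\ell$. Indeed, by Definition~\ref{def.pesin} with $m=0$, a point of the block needs a subspace with forward exponents $\le -\chi$ and a complement with backward exponents $\le -\chi$. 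This is impossible when the exponents are $\lambda^+>0>\lambda^->-\chi$.

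The right fix has three parts:
\begin{enumerate}
\item Add the symmetric hypothesis $\wt\nu^-(\wt U^{Y,-}_N(\chi))>1-\delta^2$, defined with $\psi^{Y,*}_{N}$, i.e.\ the unstable condition for $-Y$.
\item Run Step~1 for $Y$ and for $-Y$, and intersect the two good sets.
\item Check the new hypothesis where the lemma is used (Lemma~\ref{lem.pertSPR}), via the time-reversed versions of Proposition~\ref{pro.continu} and Lemma~\ref{lem.cptmeasY}. Pressure is invariant under $Y\mapsto -Y$, and the entropy result applies equally to $\varphi_{-1}$.
\end{enumerate}

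Note also that Step~1 only gives a rate strictly below $\chi$: your own $\chi-C\delta^2N_0$, since bad blocks cost up to $Le^{LN_0}$. So either conclude membership in $\Pes^Y_{\chi'}$ for some $\chi'<\chi$, or state the hypothesis with a slightly larger exponent than the conclusion. Lemma~\ref{lem.pertSPR} has room for the latter, since it starts from $\wt Q^{+}_N(\chi')$ with $\chi'>\chi$.
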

\begin{proof}
Fix $0<\chi<\chi_0$, $0<\tau<1$ and $0<\varepsilon\ll\chi$. Observe that 
\[
\|\psi^{Y,*}_t\| = \|\psi^Y_t\| \cdot \|d\varphi^Y_t|_{\langle X \rangle}\|^{-1} \leq \|d\varphi^Y_t\|^2.
\]
Consequently, $\|\psi^{Y,*}_{\pm 1}\|$ is uniformly bounded over a  small $C^{\infty}$-neighborhood $\mathcal{U}_0$ of $X$. Following the same reason as in the proofs of \cite[Proposition 2.21, Theorem 3.1]{BCS25}, there exist $\delta > 0$ and $N_0$ (from Lemma~\ref{lem.cptmeasY}), along with a constant $\ell = \ell(N_0) > 0$, such that for any $Y \in \mathcal{U}_0$ and $\nu \in \mathbb{P}_e(Y)$ satisfying $\wt\nu(\widetilde{U}^{Y,+}_N(\chi)) > 1-\delta^2$ for some lift $\wt\nu$ of $\nu$ and $N \in \{1,\dots,N_0\}$, we have $\nu(\Pes^Y_\chi(\ell,\varepsilon)) > \tau$.

\end{proof}

We also have the following lemma.

\begin{lem}\label{lem.pertSPR}
 For any $0<\chi<\chi_0$,  $0<\tau'<1$ and $0<\varepsilon\ll \chi$, there exist $P_0<P_{\rm top}(X,\vartheta)$, $\ell>0$, $r>0$ and a $C^{\infty}$-neighborhood $\mathcal{U}$ of $X$ such that for any $Y\in\mathcal{U}$, any $Y$-ergodic measure $\nu$  with $P_{\nu}(Y,\vartheta)\geq P_0$, we have  $\nu(\Pes^Y_\chi(\ell,\varepsilon)\setminus M(\Sing(Y),r))>\tau'$.
\end{lem}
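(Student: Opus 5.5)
We argue by contradiction, combining the compactness of $\widetilde{\mathscr{L}}^{+}$ with Lemma~\ref{lem.cptmeasY}, Lemma~\ref{lem.bcs}, and an argument for the singularities in the style of the Claim in the proof of Theorem~\ref{thm.3SPRset}.

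Fix $\chi,\tau',\varepsilon$ and put $\tau=(1+\tau')/2$. Lemma~\ref{lem.cptmeasY} with a $\delta$ to be chosen gives a neighborhood $\widetilde{\mathscr{V}}^{+}$ of $\widetilde{\mathscr{L}}^{+}$ and an integer $N_0$. Lemma~\ref{lem.bcs} for $\tau$ then gives $\mathcal U_0$, $\delta$ and $\ell$. Since $\delta$ in Lemma~\ref{lem.bcs} depends on $N_0$, which in turn depends on $\delta$, I would first fix $N_0$ from Lemma~\ref{lem.cptmeasY} with some $\delta_1$. I would then take $\delta$ from Lemma~\ref{lem.bcs} for this $N_0$ and shrink $\delta_1\le\delta$. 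Shrinking $\delta$ only enlarges the required measure bound, so the neighborhood has to be rechosen. I expect this to work, since the $N_0$ in the proof of Lemma~\ref{lem.bcs} enters only as a bound on the number of iterates; if it does not, I would instead use the fixed-$N_0$ version from \cite[Theorem~3.1]{BCS25} directly.

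\emph{First step: the Pesin-block estimate.} We claim there exist $P_0<P_{\rm top}(X,\vartheta)$ and a $C^\infty$-neighborhood $\mathcal U\subset\mathcal U_0$ such that every $Y\in\mathcal U$ and every $\nu\in\mathbb P_e(Y)$ with $P_\nu(Y,\vartheta)\ge P_0$ satisfy $\wt\nu^+\in\widetilde{\mathscr V}^{+}$. If not, there are $Y_n\to X$ and $\nu_n\in\mathbb P_e(Y_n)$ with $P_{\nu_n}(Y_n,\vartheta)\to$ something $\ge P_{\rm top}(X,\vartheta)$ and $\wt\nu_n^+\notin\widetilde{\mathscr V}^{+}$. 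For $n$ large, Lemma~\ref{lem.regularhyper} applied to $Y_n$ shows $\nu_n$ is regular and hyperbolic, so $\wt\nu_n^+$ is defined. By compactness of $G^1(M^3)$ we pass to a limit $\wt\nu_n^+\to\wt\mu$. By definition $\wt\mu\in\widetilde{\mathscr L}^+\subset\widetilde{\mathscr V}^{+}$, and $\widetilde{\mathscr V}^{+}$ is open, a contradiction. So, by Lemma~\ref{lem.cptmeasY}, $\wt\nu^+(\widetilde Q^{+}_N(\chi))\ge1-\delta^2/2$ for some $N\le N_0$. Here $\widetilde Q^+_N$ must be taken for $Y$, not $X$; I would handle this by noting that the weak-$*$ openness argument uses the set for $X$, and for $Y$ close to $X$ in $C^\infty$ with $N\le N_0$ the sets $\widetilde Q^{Y,+}_N(\chi')$ contain compact pieces of $\widetilde Q^{X,+}_N(\chi)$ for a slightly smaller $\chi'$, away from a neighborhood of singularities. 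The contribution near singularities is controlled by the second step below. Since $\widetilde Q^{Y,+}_N\subset\widetilde U^{Y,+}_N$, Lemma~\ref{lem.bcs} then yields $\nu(\Pes^Y_\chi(\ell,\varepsilon))>\tau$.

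\emph{Second step: uniform avoidance of singularities.} We claim that, after raising $P_0$ and shrinking $\mathcal U$, there is $r>0$ with $\nu(M(\Sing(Y),r))<\tau-\tau'$ for all $Y\in\mathcal U$ and $\nu$ as above. Suppose not: take $Y_n\to X$, $r_n\to0$ and $\nu_n$ with pressures tending to at least $P_{\rm top}(X,\vartheta)$ and $\nu_n(M(\Sing(Y_n),r_n))\ge\tau-\tau'$. Since $\Sing(Y_n)$ lies in a small neighborhood of $\Sing(X)$ (nondegeneracy is not needed, only closedness), any weak-$*$ limit $\mu$ satisfies $\mu(\Sing(X))\ge\tau-\tau'$. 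But the second item of Lemma~\ref{lem.regularhyper} with $K=M$ says $\mu$ is regular, a contradiction. Combining both steps gives $\nu(\Pes^Y_\chi(\ell,\varepsilon)\setminus M(\Sing(Y),r))>\tau'$.

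The main obstacle is uniformity of the Pesin-block estimate in $Y$, namely passing between the $X$-defined open sets and the $Y$-defined sets near singularities. I would first try to resolve it using the scaled flows $\psi^{Y,*}$ and the bound $\|\psi^{Y,*}_{\pm1}\|\le\|d\varphi^Y_{\pm1}\|^2$, as in the proof of Lemma~\ref{lem.bcs}.
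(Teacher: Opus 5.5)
Your overall architecture matches the paper's, and three of its pieces are correct: the compactness argument that puts $\wt\nu^+$ into $\widetilde{\mathscr V}^{+}$ for high-pressure $\nu$ and all $Y$ near $X$; the regularity argument (item two of Lemma~\ref{lem.regularhyper}) that keeps mass away from singularities; and the final appeal to Lemma~\ref{lem.bcs}. The gap is exactly the step you flag as the main obstacle: passing from the $X$-set $\widetilde Q^{+}_N(\cdot)$, controlled by Lemma~\ref{lem.cptmeasY}, to the $Y$-set $\widetilde U^{Y,+}_N(\chi)$ required by Lemma~\ref{lem.bcs}. Your sketch of this step does not work as written, for two reasons. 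First, Lemma~\ref{lem.cptmeasY} bounds $\wt\nu^+$ of the whole open set $\widetilde Q^{+}_N$ uniformly over the weak-$*$ neighborhood. It says nothing about a compact piece, so ``compact pieces'' cannot be chosen uniformly in $\nu$ without reproving that lemma. Second, if you start from $\widetilde Q^{+}_N(\chi)$ and give up a little exponent when passing to $Y$, Lemma~\ref{lem.bcs} only yields $\nu(\Pes^Y_{\chi'}(\ell,\varepsilon))>\tau$ with $\chi'<\chi$. This is strictly weaker than the claim, since $\Pes^Y_{\chi}(\ell,\varepsilon)\subset\Pes^Y_{\chi'}(\ell,\varepsilon)$. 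The bound $\|\psi^{Y,*}_{\pm1}\|\le\|d\varphi^Y_{\pm1}\|^2$ does not help here; it only makes Lemma~\ref{lem.bcs} uniform in $Y$.

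The missing device is to put the exponent margin on the other side and to cut out singularities before comparing $X$ with $Y$. Apply Lemma~\ref{lem.cptmeasY} with some $\chi'\in(\chi,\chi_0)$, which is allowed because $\chi<\chi_0$. Fix $r$ first, and shrink $\mathcal U$ so that
\[\|d_x\varphi^{Y,*}_{-N}|_E\|<e^{-\chi N}\quad\text{on } \widetilde Q^{+}_N(\chi')\setminus\wt\pi^{-1}\big(M(\Sing(X),2r)\big)\text{ for all } Y\in\mathcal U,\ 1\le N\le N_0.\]
Together with $\|\psi^{Y,*}_{-N}|_E\|\le\|d_x\varphi^{Y,*}_{-N}|_E\|$, this gives the inclusion of that set into $\widetilde U^{Y,+}_N(\chi)$. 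To prove it, suppose it failed. Then there would be $Y_n\to X$, a fixed $N$, and $(x_n,E_n)\to(x,E)$ with $\dist(x,\Sing(X))\ge 2r$ and $\|d_{x_n}\varphi^{Y_n,*}_{-N}|_{E_n}\|\ge e^{-\chi N}$. Away from $\Sing(X)$ the scaled derivative is jointly continuous in $(Y,x,E)$, so $\|d_x\varphi^{*}_{-N}|_E\|\ge e^{-\chi N}>e^{-\chi' N}$. This contradicts the bound $\le e^{-\chi'N}$ inherited by limits of points of $\widetilde Q^{+}_N(\chi')$.

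Cut out a neighborhood of $\Sing(X)$ rather than of $\Sing(Y)$, since singularities of $X$ need not persist under perturbation; still, $M(\Sing(Y),r)\subset M(\Sing(X),2r)$ for $Y$ close to $X$. Accordingly, run your second step for $M(\Sing(X),2r)$ with threshold $\min\{\tau-\tau',\delta^2/2\}$ instead of $\tau-\tau'$. The $\delta^2/2$ is what turns the bound $1-\delta^2/2$ from Lemma~\ref{lem.cptmeasY} into $\wt\nu^+(\widetilde U^{Y,+}_N(\chi))>1-\delta^2$, as Lemma~\ref{lem.bcs} requires.

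Lastly, your $\delta$--$N_0$ loop does not close as written. The consistent order is: $\delta$ from Lemma~\ref{lem.bcs} first (it does not depend on $N_0$), then $N_0$ from Lemma~\ref{lem.cptmeasY}, then $\ell=\ell(N_0)$.
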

\begin{proof}
	Fix $0<\chi<\chi_0$,  $0<\tau'<1$ and $0<\varepsilon\ll \chi$ and take $\tau'<\tau<1$. Let $0<\delta<1$, $\ell>0$ and $\U_0$ be given by Lemma~\ref{lem.bcs}.

Similar to the claim in the proof of Theorem~\ref{thm.3SPRset}, there exists a $C^\infty$ neighborhood $\mathcal{U}\subset \U_0$ of $X$, numbers $P_0<P_{\rm top}(X,\vartheta)$ and $r>0$ such that if $\nu$ is a $Y$-invariant measure for some $Y\in \U$ satisfying $P_\nu(Y,\vartheta)>P_0$, then $\nu(M(\Sing(Y),r))< \min\{\tau-\tau',\delta^2/2\}$. 

Take $\chi<\chi'<\chi_0$. According to Lemma~\ref{lem.cptmeasY}, there is a neighborhood $\widetilde{\mathscr{V}}^{+}$ of $\widetilde{\mathscr{L}}^{+}$ and an integer $N_0\in\mathbb{Z}^{+}$ such that for all $\widetilde{\nu}\in\widetilde{\mathscr{V}}^{+}$, we have $\widetilde{\nu}(\widetilde{Q}^{+}_N(\chi'))\geq 1-\delta^2/2$ for some $N\in\{1,\dots,N_0\}$.

By the definition of  $\widetilde{\mathscr{L}}^{+}$, increasing $P_0<P_{\rm top}(X,\vartheta)$ and decreasing  $\mathcal{U}$ if necessary, we may assume that for any $Y\in \U$ and $\nu\in\mathbb P_e(Y)$ satisfying $P_\nu (Y,\vartheta )>P_0$, the unstable lift $\wt\nu^+\in \widetilde{\mathscr{V}}^{+}$. Thus $\widetilde{\nu}^+(\widetilde{Q}^{+}_N(\chi'))\geq 1-\delta^2/2$ for some $N\in\{1,\dots,N_0\}$. Combining with $\nu(M(\Sing(Y),r))< \delta^2/2$, we obtain $\widetilde{\nu}^+(\widetilde{Q}^{+}_N(\chi')\setminus \wt\pi^{-1}(M(\Sing(Y),r)))> 1-\delta^2$.

By decreasing $\U$ again, we may assume that $\|d_x\varphi^{Y,*}_{_{-N}}|_{E}\|<e^{-\chi N}$ for any $(x,E)\in \widetilde{Q}^{+}_N(\chi')\setminus \wt\pi^{-1}(M(\Sing(Y),r))$ and $Y\in\U $.
Otherwise, there exist $Y_n\to X$ and $(x_n,E_n)\to (x,E)\in \ol{\widetilde{Q}^{+}_N(\chi') \setminus \wt\pi^{-1}(M(\Sing(X),r)))}$ such that $\|d_{x_n}\varphi^{Y_n,*}_{_{-N}}|_{E_n}\|\ge e^{-\chi N}$. It follows that $\|d_{x}\varphi^{*}_{_{-N}}|_{E}\|\ge e^{-\chi N}$.
But by the definition of $\widetilde{Q}^{+}_N(\chi')$, we have $\|d_{x}\varphi^{*}_{_{-N}}|_{E}\|\le e^{-\chi' N}$. It is a contradiction. 

Together with  $\|\psi^{Y,*}_{_{-N}}|_{E}\|\le \|d_x\varphi^{Y,*}_{_{-N}}|_{E}\|$, we obtain $\widetilde{Q}^{+}_N(\chi')\setminus \wt\pi^{-1}(M(\Sing(Y),r)) \subset\widetilde{U}^{Y,+}_N(\chi)$. Thus,  $\widetilde{\nu}^+(\widetilde{U}^{Y,+}_N(\chi))>1-\delta^2$. By Lemma~\ref{lem.bcs}, we have $\nu(\Pes^Y_\chi(\ell,\varepsilon))>\tau$. Since $\nu(M(\Sing(Y),r))< \tau-\tau'$, we finally get  $\nu(\Pes^Y_\chi(\ell,\varepsilon)\setminus M(\Sing(Y),r))>\tau'$.	 
\end{proof}

 \begin{lem}\label{lem.pertasym}
There exists $P_0<P_{\rm top}(X,\vartheta)$ such that for any $C^{\infty}$ neighborhood $\mathcal{U}_X$ of $X$, there exist $Z\in\mathcal{U}_X$ and a $C^{\infty}$ neighborhood $\mathcal{U}_Z\subset \mathcal{U}_X$ of $Z$ satisfying:  for any $Y\in\mathcal{U}_Z$ and any $Y$-ergodic measure $\nu$ with $P_{\nu}(Y,\vartheta)\geq P_0$, the unique Borel homoclinic class carrying $\nu$ has good asymptotics.
\end{lem}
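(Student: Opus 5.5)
Take $P_0$, $\ell$, $r$ and the neighborhood $\mathcal U$ from Lemma~\ref{lem.pertSPR}, with some fixed $\chi\in(0,\chi_0)$, $\tau'=1/2$ and small $\varepsilon$, shrinking $\mathcal U_X$ so that $\mathcal U_X\subset\mathcal U$. Every $\nu\in\mathbb P_e(Y)$ with $Y\in\mathcal U_X$ and $P_\nu(Y,\vartheta)\ge P_0$ then gives mass at least $1/2$ to the compact set $\Lambda_Y:=\Pes^Y_\chi(\ell,\varepsilon)\setminus M(\Sing(Y),r)$. The Pesin block data are uniform on such sets: local stable and unstable manifolds have a uniform size and the angle between them is uniformly bounded below, uniformly in $Y\in\mathcal U_X$, as in the proof of Theorem~\ref{thm.main.spr}. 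The plan has three steps.

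First, reduce to finitely many hyperbolic periodic orbits. Cover the compact set $M\setminus M(\Sing(X),r/2)$ by finitely many balls of radius $\eta$, with $\eta$ much smaller than the uniform Pesin size. By Katok's closing lemma for singular flows (\cite{LLL2024}, used to obtain $\mu(\Pes')=1$), each ball containing a point of some $\Lambda_X$-type block contains a hyperbolic periodic point $p_i$ of $X$ with uniform local manifolds. Any $\nu$ as above is homoclinically related to one of these finitely many orbits $\mathcal O_1,\dots,\mathcal O_k$. Indeed, a $\nu$-generic point of $\Lambda_Y$ lies $\eta$-close to some $p_i$, and uniform transversality gives $W^u\pitchfork W^s\ne\emptyset$ in both directions. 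The orbits persist as continuations $p_i^Y$ for $Y$ near $X$, and the transversality is $C^1$-robust, so the same finite list works for all $Y$ in a smaller $\mathcal U_X$. One point needs checking: $\Lambda_Y$ must stay uniformly away from $\Sing(Y)$ and close to the finite net, which holds because $\Sing(Y)$ is near $\Sing(X)$.

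Second, for each $i$ the orbit $\mathcal O_i$ has a transverse homoclinic point, because it is homoclinically related to a hyperbolic measure of positive entropy. Apply Proposition~\ref{pro.goodasymptotics} successively to $\mathcal O_1,\dots,\mathcal O_k$, each time with $\alpha=\infty$. Each application gives a $C^2$-open and $C^\infty$-dense set of vector fields in a $C^1$-neighborhood for which the continuation of $p_i$ has good asymptotics. A finite intersection of open dense sets inside $\mathcal U_X$ is still open and dense. Choose $Z$ in it and let $\mathcal U_Z$ be a $C^\infty$-neighborhood contained in the $C^2$-open set and in $\mathcal U_X$.

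Third, conclude. For $Y\in\mathcal U_Z$ and $\nu$ with $P_\nu(Y,\vartheta)\ge P_0$, Step 1 shows that $\nu$ lies in the Borel homoclinic class $\BHC(\mathcal O_i^Y)$ for some $i$; this class is unique by Proposition~\ref{pro.proBHC}. By Step 2, $p_i^Y$ has good asymptotics, so the class has good asymptotics by definition.

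The main obstacle is Step 1: the finite list of periodic orbits must be chosen uniformly in $Y$, since the closing lemma is applied for $X$ but the measures belong to perturbations. The first approach to try is to apply closing and Pesin theory for $Y$ with constants uniform over $\mathcal U$, which is justified because $\Lambda_Y$ avoids the singularities. Then compare to $X$'s periodic orbits through the $C^1$-continuity of the uniform local invariant manifolds over the bounded-away-from-$\Sing$ region.
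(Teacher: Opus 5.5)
Your proposal has a genuine gap in Step~1, at the assertion that a $\nu$-generic point of $\Lambda_Y$ lies $\eta$-close to some $p_i$.

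\textbf{Why Step 1 fails as written.} Your periodic points come from Katok's closing lemma applied to $X$. So they exist only in those balls of the net that meet the Pesin block of $X$, while the net covers all of $M\setminus M(\Sing(X),r/2)$. The justification you give (``$\Sing(Y)$ is near $\Sing(X)$'') places $\Lambda_Y$ at best inside the union of \emph{all} the balls. It does not place $\Lambda_Y$ in the balls carrying a $p_i$. Even that weaker placement needs every singularity of $X$ to persist under perturbation, which fails for degenerate singularities.

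Nothing in the proposal prevents a high-pressure ergodic measure of a perturbation $Y$ from putting its Pesin-block mass where $X$ has no Pesin-block points at all. In that case your finite list contains nothing to relate it to.

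The fallback you sketch, closing for $Y$ with uniform constants, does not help. It produces $Y$-dependent periodic orbits, whereas Proposition~\ref{pro.goodasymptotics} must be applied to a finite list fixed before $Z$ is chosen. Likewise, ``continuity of local invariant manifolds'' cannot compare points that are not close to begin with.

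\textbf{The missing step, as the paper does it.} The paper closes this gap using upper semicontinuity of the metric pressure in the $C^\infty$ topology (Newhouse).

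\begin{enumerate}
\item Let $U_0$ be the open union of the balls $M(p_j,\gamma/2)$ covering $\Pes_\chi(\ell,\varepsilon)\setminus M(\Sing(X),r)$.
\item Lemma~\ref{lem.pertSPR}, applied to $X$ itself, shows that every equilibrium state of $X$, ergodic or not, gives $U_0$ mass $>0.9$.
\item Suppose high-pressure ergodic measures $\nu_n$ of $Y_n\to X$ had $\nu_n(U_0)\le 0.8$. Any weak-$*$ limit would then be an equilibrium state of $X$ with mass $\le 0.8$ on the open set $U_0$, a contradiction.
\item Hence, after shrinking the neighborhood and raising $P_0$, every such $\nu$ satisfies $\nu(U_0)>0.8$.
\item Intersecting with $\nu(\Lambda_Y)>\tau'$ (your $\tau'=1/2$ would still suffice) gives a set of positive $\nu$-measure of $Y$-Pesin-block points within $\gamma$ of some continuation $p_j(Y)$.
\item Uniform Pesin theory makes these points homoclinically related to $p_j(Y)$, and ergodicity of $\nu$ concludes.
\end{enumerate}

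Note that only positive $\nu$-mass near the $p_j$ is needed, not all of $\Lambda_Y$. With this step in place, your Steps~2 and~3 match the paper's argument.
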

\begin{proof}
	Take $\tau'=0.9$. Let $0<\chi<\chi_0$ and $0<\varepsilon\ll \chi$. By Lemma~\ref{lem.pertSPR}, there exist $P_0<P_{\rm top}(X,\vartheta)$, $\ell>0$, $r>0$ and a $C^{\infty}$ neighborhood $\mathcal{U}$ of $X$ such that for any $Y\in\mathcal{U}$ and any $Y$-ergodic measure $\nu$  with $P_{\nu}(Y,\vartheta)\geq P_0$, we have  
\begin{equation}\label{eq.measureofPES}
  \nu(\Pes^Y_\chi(\ell,\varepsilon)\setminus M(\Sing(Y),r))>0.9.
\end{equation}

By using the argument in \cite{Kat80} and the closing lemma for singular flows in \cite{LLL2024}, there are $\chi'<\chi$, $\ell'>\ell$ and $\varepsilon'>\varepsilon$ such that 
\[
\Pes_\chi(\ell,\varepsilon)\subset \ol{\{p\in \Pes_{\chi'}(\ell',\varepsilon'):p \text{ is a periodic point of $X$ which has a transverse homoclinic point} \}}.
\]
 Take $\chi''<\chi'$, $\ell''>\ell'$ and $\varepsilon''>\varepsilon'$. By the  stable manifold theorem of Pesin \cite{Pes76}, there exist $\gamma>0$ and a $C^{\infty}$ neighborhood $\U_1$ of $X$ satisfying: for any $Y\in \U_1$,  if $x,y\in \Pes^Y_{\chi''}(\ell'',\varepsilon'')\setminus M(\Sing(Y),r)$ with $\dist(x,y)\le \gamma$ then $x$ and $y$ are homoclinically related.  	
	 
Choose finitely many periodic point $p_1,\ldots,p_m \in \Pes_{\chi'}(\ell',\varepsilon')$ such that
\[
\Pes_\chi(\ell,\varepsilon)\setminus M(\Sing(X),r)\subset \bigcup_{j=1}^m {\rm Int}(M(p_j,\gamma/2))=:U_0.
\] 
Reduce $\U_1$ if necessary, we may assume that for any $Y\in\U_1$, $p_1(Y),\ldots,p_m(Y) \in \Pes^Y_{\chi''}(\ell'',\varepsilon'')$ and $U_0\subset \bigcup_{j=1}^m {\rm Int}(M(p_j(Y),\gamma))$, where $p_j(Y)$ is the continuation of $p_j$. Note that $U_0$ is an open set and $\mu(U_0)\ge \mu(\Pes_\chi(\ell,\varepsilon)\setminus M(\Sing(X),r))>0.9$ for any equilibrium state of $X$ (not necessarily ergodic). Since the metric pressure is  
upper-semicontinuous, by reducing $\U_1$ and increasing $P_0<P_{\rm top}(Y,\vartheta)$ if necessary, we may assume that for any $Y\in \U_1$ and $\nu\in \mathbb P_e(Y)$ with $P_\nu(Y,\vartheta)\ge P_0$, $\nu(U_0)>0.8$.

For any $C^{\infty}$ neighborhood $\mathcal{U}_X$ of $X$, according to Proposition~\ref{pro.goodasymptotics}, there exist $Z\in\U\cap\mathcal{U}_1\cap\U_X$ and a $C^{\infty}$ neighborhood $\mathcal{U}_Z\subset \U\cap\mathcal{U}_1\cap\U_X$ of $Z$ such that the orbit of $p_j(Y)$ has good asymptotics for any $Y\in \U_Z$ and any $1\le j\le m$. 

Then for any $Y\in\mathcal{U}_Z$ and any $Y$-ergodic measure $\nu$  with $P_{\nu}(Y,\vartheta)\geq P_0$,
we have $\nu(\Pes^Y_\chi(\ell,\varepsilon)\setminus M(\Sing(Y),r))>0.9$ (by \eqref{eq.measureofPES}) and $\nu(U_0)>0.8$. Thus there exists $1\le j\le m$ such that $\nu((\Pes^Y_{\chi''}(\ell'',\varepsilon'')\setminus M(\Sing(Y),r))\cap {\rm Int}(M(p_j(Y),\gamma)))>0.7$.
Based on the ergodicity of $\nu$ and the selection of $\gamma$, the orbit of $p_j(Y)$ is contained in the unique Borel homoclinic class carrying $\nu$. This completes the proof.
\end{proof}

Now we prove Theorem~\ref{main.3flowrm}. Recall that  $\X_{\vartheta}^\infty(M^3)$ is the set of all $C^\infty$ vector fields on $M^3$ satisfying  $\Var(\vartheta)< h_{\rm top}(X)$.

\begin{thm}{\rm (Theorem~\ref{main.3flowrm})}\label{thm.main.3flowrm}
	If $\vartheta$ is a  H\"older continuous potential on $M^3$, Then there exists a $C^\infty$ open and dense subset $\U$ of $\X_{\vartheta}^\infty(M^3)$ such that for every $Y\in \U$,  every ergodic equilibrium state of $\vartheta$ for $Y$ is rapid mixing.
\end{thm}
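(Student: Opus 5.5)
The plan is to assemble Theorem~\ref{thm.main.3flowrm} from the rapid mixing criterion Theorem~\ref{thm.rapidmixing}, the SPR result Corollary~\ref{thm.main.3flowmme}, and the perturbative good asymptotics Lemma~\ref{lem.pertasym}. Openness is built in, because every ingredient is a $C^\infty$-robust statement and the condition $\Var(\vartheta)<h_{\rm top}(X)$ is $C^\infty$ open. This openness comes from upper semicontinuity of $h_{\rm top}$ for $C^\infty$ flows (Newhouse) combined with lower semicontinuity of entropy through robust horseshoes (Katok's argument applied to a hyperbolic ergodic measure of almost maximal entropy, which is regular by Lemma~\ref{lem.regularhyper}). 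Consequently $\X_\vartheta^\infty(M^3)$ is itself open.

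First, given $X\in\X_\vartheta^\infty(M^3)$ and any $C^\infty$ neighbourhood $\U_X\subset\X_\vartheta^\infty(M^3)$, we apply Lemma~\ref{lem.pertasym}. This yields $P_0=P_0(X)<P_{\rm top}(X,\vartheta)$, a field $Z\in\U_X$ and an open set $\U_Z\subset\U_X$ such that, for every $Y\in\U_Z$, every $Y$-ergodic $\nu$ with $P_\nu(Y,\vartheta)\ge P_0$ lies in a Borel homoclinic class with good asymptotics. Shrinking $\U_Z$ and using upper semicontinuity of $Y\mapsto P_{\rm top}(Y,\vartheta)$ at $X$ together with lower semicontinuity via horseshoes, we arrange $P_{\rm top}(Y,\vartheta)>P_0$ on $\U_Z$. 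Then every ergodic equilibrium state $\mu$ of $\vartheta$ for $Y\in\U_Z$ satisfies $P_\mu(Y,\vartheta)=P_{\rm top}(Y,\vartheta)>P_0$. By Lemma~\ref{lem.regularhyper} such a $\mu$ is regular and $\chi_0$-hyperbolic, so by Proposition~\ref{pro.proBHC} it is carried by a Borel homoclinic class $B$, and this $B$ has good asymptotics.

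Next, for each such $B$ we verify the hypotheses of Theorem~\ref{thm.rapidmixing}. We must know that $Y$ is SPR for $\vartheta$ on $B$; the first item of Corollary~\ref{thm.main.3flowmme} gives this once $\Var(\vartheta)<h_{\rm TOP}(Y|_B)=h_{\rm top}(Y|_{\ol B})$ and $P_{\rm TOP}(Y|_B,\vartheta)=P_{\rm top}(Y|_{\ol B},\vartheta)$. Since $\mu$ is an equilibrium state for $Y$ carried by $B$, we have $P_{\rm TOP}(Y|_B,\vartheta)=P_{\rm top}(Y,\vartheta)\ge P_{\rm top}(Y|_{\ol B},\vartheta)$, so the pressure equality holds. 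For the entropy condition, I would not route through $h_{\rm TOP}(Y|_B)$ directly. Instead I would apply Theorem~\ref{thm.3SPRset} to $K=B$ with $\chi_0$ taken from the global condition $\Var(\vartheta)<h_{\rm top}(Y)$: in its proof, the entropy gap is used only through $P_{\rm TOP}(Y|_K,\vartheta)\ge \sup\vartheta+2\chi_0$. Here this inequality holds because $P_{\rm TOP}(Y|_B,\vartheta)=P_{\rm top}(Y,\vartheta)\ge h_{\rm top}(Y)+\inf\vartheta$. Given SPR on $B$, Theorem~\ref{thm.rapidmixing} yields that the unique equilibrium state on $B$, namely $\mu$, is rapid mixing.

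Finally, set $\U=\bigcup_X \U_{Z(X,\U_X)}$, taken over all $X\in\X_\vartheta^\infty(M^3)$ and all neighbourhoods $\U_X$. This set is open by construction and dense because each $\U_X$ contains some $\U_Z$. The main obstacle is the semicontinuity step guaranteeing that equilibrium states of every $Y\in\U_Z$ have pressure above the fixed threshold $P_0$. Upper semicontinuity of $P_{\rm top}$ in $C^\infty$ topology (Newhouse/Yomdin) alone gives the wrong direction, so I would combine it with lower semicontinuity obtained from a hyperbolic horseshoe carrying pressure close to $P_{\rm top}(X,\vartheta)$, which persists under $C^1$ perturbation. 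A secondary technical point is checking that the proof of Theorem~\ref{thm.3SPRset} indeed only uses the global entropy gap, as argued above.
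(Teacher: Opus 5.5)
Your proposal is correct and is essentially the paper's proof: lower semicontinuity of $P_{\rm top}(\cdot,\vartheta)$ via horseshoes, then Lemma~\ref{lem.pertasym}, then Theorem~\ref{thm.rapidmixing}. Your explicit check that $Y$ is SPR on $B$, which the paper leaves implicit, is valid but can be shortened: the global SPR property from Corollary~\ref{thm.main.3flowmme} restricts directly to $B$ once $P_{\rm TOP}(Y|_B,\vartheta)=P_{\rm top}(Y,\vartheta)$, so you need not re-run the proof of Theorem~\ref{thm.3SPRset} with $K=B$.

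The one thing to repair is the order of choices. As in the paper, first fix a neighbourhood $\U_2$ of $X$ on which $P_{\rm top}(\cdot,\vartheta)>P_0$, for which only lower semicontinuity is needed, and then apply Lemma~\ref{lem.pertasym} with $\U_X\subset\U_2$. Shrinking $\U_Z$ afterwards does not work, because $Z$ may lie far from $X$, where this bound can fail.
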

\begin{proof}
 For any $X\in \X_{\vartheta}^\infty(M^3)$, let $P_0=P_0(X)<P_{\rm top}(X,\vartheta)$ be given by Lemma~\ref{lem.pertasym}. It is proof in \cite{LY12} (see also \cite{Kat80,LLL2024}) that there exists a hyperbolic horseshoe $\Lambda_0$ for $X$ whose topological entropy can be made arbitrarily close to $h_{\rm top}(X)$. Then by using the technique in \cite{Gel}, we may also require that the topological pressure of $\vartheta$ on $\Lambda_0$
is arbitrarily close to $P_{\rm top}(X,\vartheta)$. Since the topological pressure of $\vartheta$ varies lower semi-continuously on a hyperbolic horseshoe, we obtain that the topological pressure of $\vartheta$ on $M^3$ varies lower semi-continuously with respect to the vector field. Thus there is a $C^\infty$ neighborhood $\mathcal{U}_2$ of $X$ such  that $P_{\rm top}(Y,\vartheta)>P_0$ for all $Y\in\mathcal{U}_2$. 

By Lemma~\ref{lem.pertasym}, there exist a sequence of vector fields $Z_{X,n}\in\mathcal{U}_2$ converging to $X$ and a corresponding sequence of $C^{\infty}$ neighborhoods $\mathcal{U}_{Z_{X,n}}\subset \mathcal{U}_2$ of $Z_{X,n}$ such that for every $n$, any $Y\in\mathcal{U}_{Z_{X,n}}$ and any $Y$-ergodic measure $\nu$ satisfying $P_{\nu}(Y,\vartheta)\geq P_0$, the unique Borel homoclinic class carrying $\nu$ has good asymptotics. In particular, every ergodic equilibrium state of $\vartheta$ for $Y\in\mathcal{U}_{Z_{X,n}}$ has good asymptotics. 

Define $\U:=\bigcup_{X\in \X_{\vartheta}^\infty(M^3)}\bigcup_{n\ge 1}\U_{Z_{X,n}}$. Then $\U$ is a $
C^\infty$ open and dense subset of $\X_{\vartheta}^\infty(M^3)$. It follows from Theorem~\ref{thm.rapidmixing} that $\U$ satisfies the statement of this theorem.
\end{proof}

\section{Star flows}\label{sec.starflow}

In this section, we assume  $M$ is  a smooth closed Riemannian manifold  of dimension at least $3$, and $X$ is   a  star vector field  on $M$.

\subsection{Basic properties of star flows}

Recall that a $C^1$ vector field $X$ has the  {\it star property} if there exists a $C^1$ neighborhood $\U$ of $X$ such that for all $Y\in\U$, all periodic orbits and all singularities of $Y$ are hyperbolic. For star vector fields, we have the following fundamental result \cite{Liao1}:

\begin{thm}{\rm \cite{Liao1}}\label{thm.liao}
	For any star vector field $ X$, there is a $ C^{1} $ neighborhood $ \mathcal{U} $ of $X$ and numbers $ \eta, t_0 > 0 $ such that for any periodic orbit $ \gamma $ of $ Y \in \mathcal{U} $ with period $ T \geq t_0 $, if $ \mathcal{N}_{\gamma}={\bar\N}^{s} \oplus {\bar\N}^{u} $ is the hyperbolic splitting with respect to $ \psi_{t}^{Y} $ then
\begin{enumerate}
    \item for every $ x \in \gamma $ and $ t \geq t_0 $, we have
   \[
   \|\psi_{t}^{Y}|_{{\bar\N}^{s}(x)}\|\cdot \|\psi_{-t}^{Y}|_{{\bar\N}^{u}(\varphi^Y_t(x))}\| \leq \mathrm{e}^{-2\eta t};
    \]
    \item for every $ x \in \gamma $, we have
    \[
    \prod_{i=0}^{[T/t_0]-1} \|\psi_{t_0}^{Y}|_{{\bar\N}^{s}(\varphi_{it_0}^{Y}(x))}\| \leq \mathrm{e}^{-\eta T} \text{ and } \prod_{i=0}^{[T/t_0]-1} \|\psi_{-t_0}^{Y}|_{{\bar\N}^{u}(\varphi_{(i+1)t_0}^{Y}(x))}\| \leq \mathrm{e}^{-\eta T}.
    \]
  \end{enumerate}
\end{thm}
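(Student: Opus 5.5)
This is Liao's theorem, which the paper quotes from \cite{Liao1}; in context it is cited, not reproved. I would still follow Liao's classical route, which parallels Mañé's argument for star diffeomorphisms, applied to the linear Poincar\'e flow. The first step is a uniform bound on the periodic linear Poincar\'e cocycles near $X$. I would use Franks' lemma for flows, perturbing the linear Poincar\'e flow along a periodic orbit by a $C^1$-small perturbation supported near that orbit. With it I would show that there is a $C^1$ neighborhood $\mathcal U$ and $\eta_0>0$ such that every periodic orbit of every $Y\in\mathcal U$ has its periodic-orbit Lyapunov exponents of $\psi^Y$ outside $(-\eta_0,\eta_0)$. Otherwise, a small linear perturbation would create a non-hyperbolic periodic orbit for some $Y'$ still in the star neighborhood, which is a contradiction.

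The second step proves item (1), the domination estimate. Suppose it fails. Then there are periodic orbits, points $x$ and times $t$ along which the angle between ${\bar\N}^s$ and ${\bar\N}^u$ becomes small, or along which no exponential domination of ${\bar\N}^s$ by ${\bar\N}^u$ holds. A Franks-type perturbation then rotates ${\bar\N}^u$ into ${\bar\N}^s$ along a segment of the orbit, which produces a non-hyperbolic periodic orbit. This is again a contradiction.

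The main obstacle is uniformity in the presence of singularities. Near $\Sing(Y)$ the flow speed degenerates, so I would work with $\psi^Y_t$ over fixed time $t_0$ and use the compactness of the extended linear Poincar\'e flow \cite{LGW}, which gives $\|\psi^Y_{\pm t_0}\|$ bounded uniformly in $Y\in\mathcal U$. Franks' lemma for $\psi^Y$ must also be applied at points arbitrarily close to singularities. There I would first try scaled perturbations supported in boxes $\FB(x,r,\tau)$ whose size is proportional to $|X(x)|$, as in Lemma~\ref{lem.C1Poincare}. These keep the $C^1$ size of the perturbation uniform.

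The third step proves item (2) by Mañé's ergodic closing argument, applied to the time-$t_0$ cocycle. Assume there are periodic orbits of long period $T$ along which the product of $\|\psi_{t_0}|_{{\bar\N}^s}\|$ is not $\le e^{-\eta T}$. Using the dominated splitting from step 2, I would apply Pliss-type and product-of-matrices perturbations: small rotations or contractions along the orbit in each time-$t_0$ block, accumulated over the whole orbit. This yields a perturbation with a periodic orbit whose stable exponent is zero, contradicting step 1. The unstable product is handled symmetrically. Finally I would choose $\eta$ smaller than both $\eta_0$ and the domination constant, and choose $t_0$ large.
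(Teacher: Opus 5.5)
The paper gives no argument of its own here: it imports the statement from Liao \cite{Liao1}. Your outline is the standard proof behind that citation: a Franks lemma for $\psi^Y$ with uniform constants (near $\Sing(Y)$ these come from flow boxes of size proportional to $|Y(x)|$), followed by Ma\~n\'e-type perturbation arguments on the periodic linear Poincar\'e cocycles, which give first domination and then contraction in mean over blocks of a large length $t_0$. It is therefore consistent with the paper, apart from two minor corrections: the uniform bound on $\|\psi^Y_{\pm t_0}\|$ needs nothing beyond Gronwall, since $\|\psi^Y_t\|\le\|d\varphi^Y_t\|$; and item~(2) rests on Ma\~n\'e's perturbation lemma for periodic sequences of linear maps rather than on the ergodic closing lemma, which is used only afterwards (as in \cite{SGW}) to pass from periodic orbits to invariant measures.
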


Based on the above lemma, it is essentially proved in \cite[Theorem~5.6]{SGW} by using the Ergodic Closing Lemma \cite{Man82,Wen96} that for any star vector field $X$, there exist a $C^1$ neighborhood $\mathcal U$ of $X$ and a constant $\chi_0> 0$ such that for every vector field $Y\in \mathcal U$, any $Y$-invariant measure must be $\chi_0$-hyperbolic. It implies that $\mu (\Pes^Y_{\chi_0})=1$ for any $Y$-invariant regular measure $\mu$, where $\Pes^Y_{\chi_0}$ is defined in \S~\ref{subsec.Pesinblock}.

Recall that $O(X) \subset M \setminus \Sing(X)$ is defined in \S~\ref{subsec.Lyapunov} via the Oseledec Theorem and the Poincar\'e Recurrence Theorem.
 Denote by $\N^u_x$ (resp. $\N^s_x$) the direct sum of subspaces at $x$ whose Lyapunov exponents are positive (resp. negative). 
Then there is a measurable invariant splitting $\N_x=\N_x^s\oplus \N_x^u$ for $\psi^*$. Denote $u(x)=\dim \N_x^u$
If $\mu$ is a regular ergodic measure, then $u(x)$ is a constant for $\mu$-a.e. $x$, denote it by $u(\mu)$.

Denote by $\X^*(M)$ the set of $C^1$ star vector fields. The following property holds for {\it multi-singular hyperbolic} vector fields, which constitute an open dense set of $\X^*(M)$. See \cite{SGW,BD,CDYZ}.

\begin{lem}{\rm \cite[Proposition~4.2]{CDYZ}}\label{lem.domsplstar}
There is a $C^1$ open and dense subset of $\U^*$ of $\X^*(M)$ satisfying: for every chain recurrent class $C$  there is $i$ such that for every regular invariant measure $\mu$ supported on $C$, $u(x)=i$ for $\mu$-a.e. $x$.
\end{lem}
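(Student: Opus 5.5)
The plan is to recast the statement in terms of the index of the hyperbolic splitting of $\psi^*$ along typical orbits, and to get constancy of the index on a chain recurrent class from the multi-singular hyperbolic structure available for $C^1$ generic star fields. I would take $\U^*$ to be the $C^1$ open and dense set of multi-singular hyperbolic star vector fields (in the sense of Bonatti--da Luz \cite{BD}, whose openness and density are proved via \cite{SGW}), intersected with the open dense set of Kupka--Smale-type star fields whose chain recurrent classes are either singularities, periodic orbits, or nontrivial classes containing periodic orbits. Fix $X\in\U^*$ and a chain recurrent class $C$.

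\textbf{Step 1: a dominated splitting on the regular part of $C$.} Multi-singular hyperbolicity provides a $\psi$-invariant (after reparametrization, $\psi^*$-invariant) dominated splitting $\N=\N^{cs}\oplus\N^{cu}$ over $C\setminus\Sing(X)$, with a uniform index $i$. The dimension is constant along $C$ because of the domination and the extension of the splitting to the extended linear Poincar\'e flow on the closure in the Grassmannian (using the compactified bundle from \cite{LGW}). Near singularities the index is governed by the singular points' own splittings, and multi-singular hyperbolicity forces compatibility.

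\textbf{Step 2: identify Oseledets with the dominated splitting.} Let $\mu$ be a regular invariant measure on $C$. By the $\chi_0$-hyperbolicity recalled above (from \cite[Theorem~5.6]{SGW}), every ergodic component has no zero exponents for $\psi^*$. Using Theorem~\ref{thm.liao} together with Ma\~n\'e's ergodic closing lemma, typical points of each ergodic component are approximated by periodic orbits of $C^1$-nearby fields, and these orbits satisfy uniform contraction/expansion on $\bar\N^s,\bar\N^u$ in the sense of Theorem~\ref{thm.liao}~(2). Passing to the limit, the Oseledets stable space is contained in $\N^{cs}$ and the unstable space in $\N^{cu}$. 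Since the exponents are nonzero, this gives $u(x)=\dim\N^{cu}=i$. The extension to non-ergodic $\mu$ is immediate by ergodic decomposition, since every ergodic component also lives on $C$.

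\textbf{Main obstacle.} The hardest step is ruling out different periodic indices inside one class, i.e.\ showing that the approximating periodic orbits from the ergodic closing lemma have index $i$ and not something else. This requires that the approximating orbits lie near $C$ and, by genericity (chain recurrent classes varying continuously on a residual set, then openness from the robustness of multi-singular hyperbolicity), inherit the dominated splitting of the class. I would handle it by citing the index-uniformity statement of \cite[Proposition~4.2]{CDYZ}, which is exactly this claim, and only sketching the mechanism above.
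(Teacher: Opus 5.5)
Your route agrees with the paper's. The paper gives no argument of its own here: it takes $\U^*$ to be the $C^1$ open and dense set of multi-singular hyperbolic star fields (\cite{BD}, via \cite{SGW}) and quotes \cite[Proposition~4.2]{CDYZ}. Your Steps~1--2 are only heuristic, and their one non-formal point is exactly what you outsource to that same citation. That point is that the periodic orbits of the perturbed fields $Y_n$ produced by Ma\~n\'e's ergodic closing lemma have index $i$. Equivalently, the sign change of the exponents of a regular measure must sit at the gap of $\N^{cs}\oplus\N^{cu}$; domination alone only orders the exponents and does not locate the zero.

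The one thing that is actually wrong is your definition of $\U^*$. Intersecting with ``Kupka--Smale-type'' star fields whose classes are singularities, periodic orbits, or contain periodic orbits costs you openness. Transversality of invariant manifolds of critical elements, and likewise the periodic-orbit condition on classes, are residual conditions in general, not open ones. For example, robust heteroclinic tangencies between distinct basic sets make non-transverse fields dense in suitable open sets of Axiom~A no-cycle flows, and those flows are star. With that intersection you only show $\U^*$ is residual, which is weaker than what the lemma asserts.

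The extra condition is also unnecessary. Classes carrying no regular invariant measure satisfy the statement vacuously, and the cited index uniformity requires only multi-singular hyperbolicity. So take $\U^*$ to be just the multi-singular hyperbolic star fields, as the paper does.
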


For pressures of star vector fields, we have the following lemma.

\begin{lem}\label{lem.continustar} 
Let  $\vartheta :M\to\mathbb{R}$ be a continuous function. Assume that $Y_n$ and $X$ are star vector fields such that 
$Y_n\to X$ in the 
$C^1$ topology, and $\nu_n\in \mathbb P(Y_n)$, $\mu\in \mathbb P(X)$ satisfy $\nu_n\to \mu$ in the weak-$*$ topology. Then
\begin{enumerate}
  \item $\limsup_{n\to \infty}P_{\nu_n}(Y_n,\vartheta)\le P_{\mu}(X,\vartheta)$. 
  \item $\limsup_{n\to \infty} P_{\rm top}(Y_n,\vartheta)\ge P_{\rm top}(X,\vartheta)$.
\end{enumerate}	
\end{lem}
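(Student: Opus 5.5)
Both items reduce to the fact, quoted above from \cite{SGW}, that star vector fields are uniformly hyperbolic in the Liao sense on a $C^1$ neighborhood. The plan is to use this uniformity to obtain robust entropy expansiveness away from singularities and to treat the singular part separately.

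\textbf{Item (1).} Since $\vartheta$ is continuous and $\nu_n\to\mu$ weakly, we have $\int\vartheta\,d\nu_n\to\int\vartheta\,d\mu$. So it suffices to show $\limsup_n h_{\nu_n}(Y_n)\le h_\mu(X)$, an upper semicontinuity of metric entropy jointly in the vector field and the measure.

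First I will decompose $\mu=a\mu_0+(1-a)\mu_1$, with $\mu_0$ supported on $\Sing(X)$ and $\mu_1$ regular. Since $h_{\mu_0}(X)=0$, the target becomes $\limsup_n h_{\nu_n}(Y_n)\le(1-a)h_{\mu_1}(X)$.

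Next I will use the known robust entropy-expansiveness of star flows. By \cite{PYY22} and the multi-singular hyperbolicity in \cite{BD,SGW}, there is $\epsilon_0>0$, uniform on a $C^1$ neighborhood $\U$ of $X$, such that for all $Y\in\U$ and all $x$, the Bowen ball $\Gamma_{\epsilon_0}(x)$ for the time-one map has zero topological entropy, at least for orbits not too close to singularities. The scaled-hyperbolicity estimates of Theorem~\ref{thm.liao} are exactly what give this uniformity. With entropy expansiveness in hand, $h_{\nu}(Y)=h_\nu(\varphi^Y_1,\mathcal P)$ for any partition $\mathcal P$ of diameter $<\epsilon_0$. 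Choosing $\mathcal P$ with $\mu(\partial\mathcal P)=0$, the standard argument (Misiurewicz, as in \cite{New}) gives joint upper semicontinuity, because $\varphi^{Y_n}_1\to\varphi^X_1$ in $C^0$.

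\textbf{Main obstacle: mass near singularities.} Entropy expansiveness of singular star flows holds only in a scaled or ``away from singularities'' sense. The fix I will try first is a Pacifico--Yang--Yang style estimate: for $r$ small, the entropy contributed by the time $\nu_n$ spends in $M(\Sing(Y_n),r)$ is at most $o_r(1)$ times that mass. This comes from the hyperbolicity of the singularities, which persists under perturbation. Letting $r\to0$ then recovers the factor $(1-a)$ and the regular part $h_{\mu_1}(X)$.

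\textbf{Item (2).} This follows from the variational principle and robustness of horseshoes. Fix $\epsilon>0$ and choose an ergodic $\mu$ with $P_\mu(X,\vartheta)>P_{\rm top}(X,\vartheta)-\epsilon$. If $\mu$ is atomic on a singularity $\sigma$, it persists as $\sigma_{Y_n}\to\sigma$, so $P_{\rm top}(Y_n,\vartheta)\ge\vartheta(\sigma_{Y_n})\to\vartheta(\sigma)$. Otherwise $\mu$ is regular and $\chi_0$-hyperbolic. By Katok's horseshoe theorem for singular flows (\cite{Kat80,LLL2024}, together with \cite{Gel} for pressure) there is a hyperbolic horseshoe $\Lambda_0\subset M\setminus\Sing(X)$ with $P_{\rm top}(X|_{\Lambda_0},\vartheta)>P_\mu(X,\vartheta)-\epsilon$. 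The horseshoe has a continuation $\Lambda_n$ for $Y_n$ that is topologically conjugate and $C^0$-close. Hence $P_{\rm top}(Y_n|_{\Lambda_n},\vartheta)\to P_{\rm top}(X|_{\Lambda_0},\vartheta)$, using the uniform continuity of $\vartheta$ and conjugacies close to the identity, with the time change handled as in the structural stability of hyperbolic sets. Letting $\epsilon\to0$ gives item (2).
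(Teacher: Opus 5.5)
Item (1) has a genuine gap. You correctly reduce it to joint upper semicontinuity, $\limsup_n h_{\nu_n}(Y_n)\le h_\mu(X)$, but neither ingredient you use is actually available.

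First, the references you cite do not give a uniform entropy-expansiveness scale $\epsilon_0$ on a $C^1$ neighbourhood for singular star flows. (\cite{PYY22} is the improved Climenhaga--Thompson criterion; \cite{BD,SGW} concern multi-singular hyperbolicity.) Your own qualification, ``at least for orbits not too close to singularities,'' also makes it unusable in the Misiurewicz argument. The set $\Gamma_{\epsilon_0}(x)$ depends on the whole orbit, and $\nu_n$-typical orbits come arbitrarily close to $\Sing(Y_n)$ infinitely often. To get $h_\nu(Y)=h_\nu(\varphi^Y_1,\mathcal P)$, or a tail-entropy bound uniform in $n$, you need control at $\nu$-almost every point.

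Second, the claim that the entropy ``contributed'' by time in $M(\Sing(Y_n),r)$ is $o_r(1)$ times that mass is exactly the hard part, and it is only asserted. It is not stated in a form that enters the Misiurewicz inequality. The step ``letting $r\to0$ recovers $(1-a)h_{\mu_1}(X)$'' also has no mechanism behind it, because the $\nu_n$ carry no corresponding singular/regular decomposition.

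The underlying problem is that a partition of fixed diameter is too coarse, in the scaled sense, near singularities. There you need local entropy control at scale comparable to $|Y(x)|$, uniformly over a $C^1$ neighbourhood of $X$. The paper does not prove this either. It takes item (1) as an immediate corollary of the upper semicontinuity of metric entropy for star vector fields established in \cite{LYYZ}, which uses a countable partition adapted to the singular geometry. Without that result, or an equivalent uniform estimate near singularities, your argument for item (1) does not close.

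Your item (2) is essentially the route the paper indicates. You approximate a high-pressure ergodic measure by a horseshoe using Gelfert's technique \cite{Gel}, then use persistence of hyperbolic sets under $C^1$ perturbation; the paper cites \cite[Theorem~B]{LSWW} for the entropy version. The atomic case is handled correctly by the hyperbolicity of singularities.

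One caveat: the lemma concerns $C^1$ star fields, while \cite{Kat80} and \cite{LLL2024} need $C^{1+\beta}$ regularity. In the $C^1$ setting the horseshoes must come from a shadowing argument that uses only the uniform estimates of Theorem~\ref{thm.liao} (Liao's shadowing lemma), as in the argument behind \cite{LSWW}. In the paper's application $X$ is $C^2$, so this is harmless there.
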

\begin{proof}
It is proved in \cite{LYYZ} that  the metric entropy of star vector fields is  upper-semicontinuous. Then Item (1) is an immediate corollary. The lower-semicontinuity of the topological entropy for star vector fields is proved in \cite[Theorem~B]{LSWW}. By using the technique in \cite{Gel} (see the proof of Theorem~\ref{thm.main.3flowrm}), one may show that the topological pressure for star vector fields is also lower-semicontinuous.
\end{proof}

\subsection{Star flows with pressure gap}

The main theorem of this section is the following:

\begin{thm}{\rm (Theorem~\ref{main.starrm})}\label{thm.main.starrm}
	 For any H\"older continuous potential $\vartheta$, there exists a $C^2$ open and $C^1$ dense subset $\U$ of $\X_\vartheta^{*}(M)$ such that for every $X\in \U$,  every ergodic equilibrium state of $\vartheta$ is rapid mixing.
\end{thm}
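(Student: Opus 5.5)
The plan is to copy the architecture of the proof of Theorem~\ref{thm.main.3flowrm}. There, the 3-dimensional $C^\infty$ inputs (Newhouse upper semicontinuity, Proposition~\ref{thm.zang}) get replaced by the star-flow inputs: uniform $\chi_0$-hyperbolicity from Theorem~\ref{thm.liao} and \cite{SGW}, the constant index on chain recurrent classes from Lemma~\ref{lem.domsplstar}, and the semicontinuity statements of Lemma~\ref{lem.continustar}.

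\textbf{Step 1: robust SPR.} I start from $X$ in the $C^1$ open dense set $\U^*$ of Lemma~\ref{lem.domsplstar}, with $X\in\X^*_\vartheta(M)$ and $X$ of class $C^2$. First I show that $X$ is SPR for $\vartheta$, uniformly for all $C^2$ fields $Y$ in a $C^1$ neighborhood of $X$. The pressure gap $\sup_{\Sing(X)}\vartheta<P_{\rm top}(X,\vartheta)$ persists under $C^1$ perturbation by Lemma~\ref{lem.continustar}(2), since singularities are hyperbolic and have continuations. I then argue as in the Claim in the proof of Theorem~\ref{thm.3SPRset}, now using Lemma~\ref{lem.continustar}(1). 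This gives $P_0<P_{\rm top}(X,\vartheta)$, $r>0$ and a neighborhood $\U$ such that every $Y\in\U$ and every $\nu\in\mathbb P_e(Y)$ with $P_\nu(Y,\vartheta)>P_0$ satisfy $\nu(M(\Sing(Y),r))<\tau/2$. Any limit of such measures is a regular equilibrium state. Every invariant measure is $\chi_0$-hyperbolic, and on each class the index is constant. Together with Liao's uniform estimates in Theorem~\ref{thm.liao}, this gives uniform dominated-type control of $\psi^{Y,*}_{\pm t_0}$ away from singularities. That control replaces the unstable-lift compactness argument of Proposition~\ref{pro.continu}. I then run the argument of \cite[Theorem~3.1]{BCS25}, as in Lemmas~\ref{lem.bcs} and \ref{lem.pertSPR}, to get $\nu(\Pes^Y_\chi(\ell,\varepsilon)\setminus M(\Sing(Y),r))>\tau'$, uniformly in $Y\in\U$. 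Hence each $Y\in\U$ is SPR for $\vartheta$.

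\textbf{Step 2: good asymptotics by perturbation.} I reproduce Lemma~\ref{lem.pertasym} in the star setting. Using Katok's argument and the singular closing lemma of \cite{LLL2024}, I choose finitely many periodic points $p_1,\dots,p_m$ with transverse homoclinic points whose $\gamma/2$-neighborhoods cover the Pesin block minus $M(\Sing(X),r)$. Proposition~\ref{pro.goodasymptotics} is applied with $\alpha=2$ and uses the neighborhood of $X$ given there. It yields a $C^2$-close $Z$, and a $C^2$-open neighborhood $\U_Z$ of $Z$, on which every continuation $p_j(Y)$ has good asymptotics. Pesin's stable manifold theorem, with its uniform size on the block, forces every ergodic $\nu$ with $P_\nu(Y,\vartheta)\ge P_0$ to lie in the Borel homoclinic class of some $p_j(Y)$. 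By Step 1 and Lemma~\ref{lem.continustar}(2), every ergodic equilibrium state of $Y\in\U_Z$ has $P_\nu\ge P_0$. Theorem~\ref{thm.main.spr} then gives finitely many equilibrium states, each carried by an SPR Borel homoclinic class with good asymptotics. Theorem~\ref{thm.rapidmixing} gives rapid mixing.

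\textbf{Step 3: assembling the set.} I take $\U=\bigcup_X\bigcup_n\U_{Z_{X,n}}$ with $Z_{X,n}\to X$. This set is $C^2$ open. It is $C^1$ dense because $C^2$ star fields are $C^1$ dense in $\X^*_\vartheta(M)$, $\U^*$ is $C^1$ dense, and each such $X$ is approximated by the $Z_{X,n}$.

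\textbf{Main obstacle.} The hardest step is Step 1. There the uniform Pesin-block estimate must hold over a whole $C^1$ neighborhood, while the flows are only $C^2$ and the 3-dimensional entropy-continuity tools are unavailable. I would try to extract the needed ``$\wt\nu(\widetilde U^{Y,+}_N(\chi))>1-\delta^2$'' bound directly from Theorem~\ref{thm.liao}, by approximating ergodic measures by periodic orbits via the ergodic closing lemma, combined with the singular-neighborhood bound from the pressure gap.
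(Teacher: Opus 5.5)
Your architecture matches the paper's: uniform SPR over a neighborhood via the argument of \cite[Proposition~2.21, Theorem~3.1]{BCS25} as in Lemmas~\ref{lem.bcs} and \ref{lem.pertSPR}, then the star analogue of Lemma~\ref{lem.pertasym} with Proposition~\ref{pro.goodasymptotics} for $\alpha=2$, then the union of the $\U_{Z_{X,n}}$. Steps~2 and~3 are fine.

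The gap is in Step~1, where you locate the obstacle yourself. You never produce the uniform input that Lemma~\ref{lem.bcs} needs: an $N_0$ such that for every $Y$ in a $C^1$ neighborhood of $X$ and every high-pressure $\nu\in\mathbb P_e(Y)$ there is $N\le N_0$ with $\nu\{x:\|\psi^{Y,*}_{-N}|_{\N^{Y,u}_x}\|<e^{-\chi N}\}>1-\delta^2$. The route you suggest does not give this.
\begin{itemize}
\item Theorem~\ref{thm.liao}(2) controls products over a whole period. Passed to ergodic measures by the ergodic closing lemma, it yields at best an integral bound at the fixed scale $t_0$; this is how uniform $\chi_0$-hyperbolicity is proved.
\item Since $\log\|\psi^{Y,*}_{-t_0}|_{\N^{Y,u}}\|$ is bounded, such a bound only keeps the good set's measure away from $0$, not above $1-\delta^2$.
\item The scale at which the good set becomes almost full depends on $\nu$, and nothing in Liao's estimates makes it uniform.
\item Domination (Theorem~\ref{thm.liao}(1)) is not hyperbolicity of either bundle, so it cannot close this by itself.
\end{itemize}

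The missing idea is the compactness argument of Lemma~\ref{lem.cptmeasstar}. Fix an index $i$ and let $\mathscr{L}^i$ be the set of weak-$*$ limits of $\nu_n\in\mathbb P_e(Y_n)$ with $u^{Y_n}(\nu_n)=i$, $Y_n\to X$ and $P_{\nu_n}(Y_n,\vartheta)\to P_{\rm top}(X,\vartheta)$.
\begin{itemize}
\item You say entropy-continuity tools are unavailable, but Lemma~\ref{lem.continustar}(1) (upper semicontinuity of metric entropy for star fields) is exactly that tool. It shows every $\mu\in\mathscr{L}^i$ is an equilibrium state of $X$, hence regular by the pressure gap and $\chi_0$-hyperbolic.
\item For such $\mu$, choose a compact Pesin block $\Lambda$ at positive distance from $\Sing(X)$ with $\mu(\Lambda)>1-\delta^2/2$, and $N_\mu$ with $\|\psi^{*}_{-N_\mu}|_{\N^{u}_x}\|<e^{-\chi_0N_\mu/2}$ on $\Lambda$.
\item Domination replaces only the unstable-lift identification in Proposition~\ref{pro.continu}, not the compactness step. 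What it gives is $\N^{Y_n,u}_{x_n}\to\N^u_x$ whenever $Y_n\to X$, $x_n\to x\notin\Sing(X)$ and $u^{Y_n}(x_n)=i$ (Lemma~\ref{lem.domsplstar} and Theorem~\ref{thm.liao}).
\item Hence the strict inequality persists on an open neighborhood $U_\mu$ of $\Lambda$ for all $Y$ in a $C^1$ neighborhood of $X$. Then $\nu(U_\mu)>1-\delta^2$ for every $\nu$ weak-$*$ close to $\mu$.
\item Covering the compact set $\mathscr{L}^i$ by finitely many such neighborhoods gives $N_0=\max_j N_{\mu_j}$.
\end{itemize}
Without this step, or some genuinely uniform hyperbolicity of $\psi^{Y}$ along orbit segments avoiding a neighborhood of $\Sing(Y)$ (which Theorem~\ref{thm.liao} alone does not supply), Step~1 is not established. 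The uniform Pesin blocks that your Step~2 relies on are then not established either.
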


The proof of Theorem~\ref{thm.main.starrm} follows a similar method to that of Theorem~\ref{thm.main.3flowrm}. The main difficulty lies in handling the higher-dimensional case. However, thanks to the star condition, the linear Poincar\'e flow admits a uniform dominated splitting, which allows us to avoid lifting measures to the Grassmannian bundle.

Define
\[
\mathscr{L}^i:=\{\mu:\exists \nu_n \in \mathbb{P}_{e}(Y_n)\text{ s.t. } u^Y(\nu_n)=i,\ Y_n\to X,\ \nu_n\to\mu \text{ and } P_{\nu_n}(Y_n,\vartheta) \to P_{\mathrm{top}}(X,\vartheta)\}.
\]

For any $\mu\in \mathscr{L}^i$, according to the upper-semicontinuity of metric pressure of star vector fields (Lemma~\ref{lem.continustar} (1)),  $\mu$ is an equilibrium state of $\vartheta$ for $X$. Then similar to Lemma~\ref{lem.regularhyper}, by using the pressure gap condition $\sup_{x\in\Sing(X)}\vartheta(x) <P_{\rm top}(X,\vartheta)$, we have  $\mu$ is regular.

\begin{lem}\label{lem.cptmeasstar}
Let $X\in \U^*$, where $\U^*$ is given by Lemma~\ref{lem.domsplstar}. If
  ${\mathscr{L}}^i\neq \emptyset$ for some $i\in \{1,2,\ldots, d-2\}$, 
 then for any $\delta\in(0,1/2)$, there exist a $C^1$ neighborhood $\U_i\subset \U^*$ of $X$, a neighborhood ${\mathscr{V}}^i$ of ${\mathscr{L}}^i$ and an integer $N_0\in\mathbb{Z}^{+}$ satisfying: for any $Y\in \U_i$, ${\nu}\in{\mathscr{V}}^i\cap \mathbb P_e(Y)$ with $u^Y(\nu)=i$, there is a subset $U^i(\delta)\subset M\setminus \Sing(X)$ and $1\le N\le N_0$ such that $\nu(U^i(\delta))>1-\delta^2$ and $\|\psi^{Y,*}_{-N}|_{\N^{Y,u}_x}\|<e^{-\chi_0N/2}$  for any $x\in  U^i(\delta)\cap O(Y)$ with $u^Y(x)=i$.
\end{lem}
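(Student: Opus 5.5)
The plan mirrors Lemma~\ref{lem.cptmeas}, replacing the Grassmannian lift by the uniform dominated splitting that the star condition gives for $\psi^{*}$. First I will check that ${\mathscr{L}}^i$ is weak-$*$ compact: it is the set of limit points of sequences as in its definition, so it is closed in the compact space of probability measures. By Lemma~\ref{lem.continustar}~(1) and the pressure gap, every $\mu\in{\mathscr{L}}^i$ is a regular equilibrium state of $\vartheta$ for $X$. By \cite{SGW} it is $\chi_0$-hyperbolic. By Lemma~\ref{lem.domsplstar} its unstable index is $i$ almost everywhere, since the ergodic components of $\mu$ lie in chain recurrent classes, and index $i$ passes to the limit through the dominated splitting.

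The key tool is the following. For $X\in\U^*$ and $Y$ $C^1$-close to $X$, the splitting $\N=\N^{s}\oplus\N^{u}$ of index $i$ over the regular part of a chain recurrent class is dominated for $\psi^{Y,*}$ with uniform constants (\cite{SGW,BD,CDYZ}). It therefore extends continuously to closures away from $\Sing$ and is stable under $C^1$ perturbation. Using this, I will define
\[
U^i_N:=\{x\in M\setminus M(\Sing(X),r): \|\psi^{*}_{-N}|_{E^{u,i}_x}\|<e^{-\chi_0 N/2}\},
\]
where $E^{u,i}$ is the continuous $i$-dimensional bundle coming from the dominated splitting near the relevant classes, or from cone fields. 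This set is open up to the singular neighbourhood. For each $\mu\in{\mathscr{L}}^i$, $\chi_0$-hyperbolicity, the identity $E^{u,i}=\N^u$ $\mu$-a.e., and regularity give $N_\mu$ and $r_\mu$ with $\mu(U^i_{N_\mu})>1-\delta^2/2$. Openness then gives a weak-$*$ neighbourhood $\mathscr V_\mu$ on which the inequality persists.

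Compactness yields finitely many $\mu_j$. I set ${\mathscr V}^i=\bigcup_j\mathscr V_{\mu_j}$ and $N_0=\max_j N_{\mu_j}$. Then I take $\U_i$ so small that, on the compact set $\overline{U^i_{N}}\setminus M(\Sing,r)$, the bound $\|\psi^{Y,*}_{-N}|_{\N^{Y,u}_x}\|<e^{-\chi_0N/2}$ holds whenever $u^Y(x)=i$. I will prove this last step by contradiction exactly as in Lemma~\ref{lem.pertSPR}: from sequences $Y_n\to X$ and $x_n\to x$ violating the bound, continuity of the dominated bundles under perturbation transfers the failure to $X$. To make that contradiction strict, I will define $U^i_N$ with a slightly stronger exponent $\chi_0'>\chi_0/2$.

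The main obstacle is this perturbation step. One must ensure that for $\nu\in\mathbb P_e(Y)$ with index $i$, the Oseledets unstable space $\N^{Y,u}_x$ coincides with the perturbed dominated bundle and is close to $E^{u,i}$ at nearby points of $X$. I will handle this through uniform invariant cone fields of index $i$ for $\psi^{*}$ on a neighbourhood of the relevant chain recurrent classes minus $M(\Sing,r)$; these cone fields are robust in the $C^1$ topology. I will then take $U^i(\delta)=U^i_N\setminus M(\Sing,r)$, choosing $r$ via the pressure gap as in the Claim in the proof of Theorem~\ref{thm.3SPRset}, so that $\nu(M(\Sing,r))<\delta^2/2$.
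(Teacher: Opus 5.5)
Your outline is correct and has the same skeleton as the paper's proof. Both arguments use the compactness of ${\mathscr{L}}^i$. Both choose, for each $\mu$, an $N_\mu$ and an open set of $\mu$-measure close to $1$ on which the $N_\mu$-step backward contraction holds. Both pass to a weak-$*$ neighbourhood $\mathscr{V}_\mu$ by the portmanteau theorem, take a finite subcover, and transfer the bound to $Y$ by contradiction, using convergence of the index-$i$ unstable bundles.

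The difference is the order of the last steps.
\begin{itemize}
\item You build the open set first. It is a sublevel set of $x\mapsto\|\psi^*_{-N}|_{E^{u,i}_x}\|$, where $E^{u,i}$ is some continuous extension of $X$'s dominated bundle to an open set, taken with a margin $\chi_0'>\chi_0/2$. You then run the contradiction on $\overline{U^i_N}\setminus M(\Sing,r)$.
\item The paper instead fixes the compact set $\Lambda=\Pes_{\chi_0}(\ell_0,\varepsilon_0)\setminus\mathrm{Int}(M(\Sing(X),2r))$. On $\Lambda$, $\N^u$ is the canonical continuous Pesin bundle, and the strict bound $\|\psi^*_{-N_\mu}|_{\N^u_x}\|\le \ell_0e^{-\chi_0N_\mu}<e^{-\chi_0N_\mu/2}$ holds. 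The contradiction argument, with $x_n\to x\in\Lambda$, then produces the open neighbourhood $U_\mu\supset\Lambda$ and the $C^1$ neighbourhood $\U_\mu$ at the same time.
\end{itemize}

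The paper's ordering has three advantages. It never extends the bundle off the invariant set. It needs no margin, because the failure condition is closed and the bound on $\Lambda$ is strict. It only requires $\N^{Y_n,u}_{x_n}\to\N^u_x$ at points of a $\mu$-Pesin block.

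In your ordering, the limit point $x$ can be any point of $\overline{U^i_N}$, where $E^{u,i}_x$ is only your chosen extension. The argument still closes, but only because such an $x$ is a limit of index-$i$ Oseledets points of $Y_n\to X$. It therefore lies where $X$'s index-$i$ dominated splitting is canonical, and there $\N^{Y_n,u}_{x_n}\to E^{u,i}_x$. You should state this explicitly. Robust cone fields alone only give closeness up to the cone aperture, and that aperture would then have to be tuned to $N_\mu$.
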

\begin{proof}
Firstly, we claim that for any $\delta\in(0,1/2)$ there exist $r>0$ and a neighborhood ${\mathscr{V}}_0$ of ${\mathscr{L}}^i$ such that $\nu(M(\Sing(X),2r))\le \delta^2/4$ for any  $\nu\in \mathscr{V}_0$. If it is not true, then there are  $\nu_n\to \mu\in {\mathscr{L}}^i$ 
and $r_n\to 0$ satisfying $\nu_n(M(\Sing(X),r_n))>\delta^2/4$. It implies that $\mu(\Sing(X))\ge \delta^2/4$, contradicting to the regularity of $\mu$. Take  a $C^1$ neighborhood $\U_0\subset \U^*$ of $X$, such that for any $Y\in \U_0$ and any $\nu\in \mathscr{V}_0$,  $\nu(M(\Sing(Y),r))\le \delta^2/4$.

\smallskip
\nt {\bf Claim.} {\it For any $\mu\in \mathscr{L}^i$ there exist $N_{\mu}>0$, a $C^1$ neighborhood $\U_{\mu}\subset\U_0$ of $X$, a neighborhood $\mathscr{V}_{\mu}\subset \mathscr{V}_0$ of $\mu$ and an open set $U_{\mu}\subset M\setminus \Sing(X)$ such that for any $Y\in \U_{\mu}$, any $Y$-ergodic measure $\nu\in\mathscr{V}_{\mu}$} with $u(\nu)=i$, we have $\nu(U^i_{\mu}(\delta))> 1-\delta^2$ and $\|\psi^{Y,*}_{-N_{\mu}}|_{\N^{Y,u}_x}\|<e^{-\chi_0N_{\mu}/2}$  for any $x\in U^i_{\mu}(\delta)\cap O(Y)$ with $u^Y(x)=i$.  
\smallskip
\begin{proof}	
	Recall that $\mu$ is $\chi_0$-hyperbolic. Thus  there exist $\ell_0\ge 1$ and $\varepsilon_0\ll \chi_0$ such that $\mu(\Pes_{\chi_0}(\ell_0,\varepsilon_0))>1-\delta^2/4$. Denote $\Lambda:=\Pes_{\chi_0}(\ell_0,\varepsilon_0)\setminus {\rm Int}(M(\Sing(X),2r))$. Then $\Lambda$ is a compact set satisfying $\mu(\Lambda)>1-\delta^2/2$.
	By the Definition of Pesin blocks, there is $N_{\mu}$ such that $\|\psi^*_{-N_{\mu}}|_{\N^u_x}\|<e^{-\chi_0N_{\mu}/2}$ for any $x\in\Lambda$.

	We state that there is a $C^1$ neighborhood $\U_{\mu}\subset\U_0$ of $X$ and an open neighborhood $U_\mu$ of $\Lambda$ such that for any $Y\in \U_{\mu}$ and $x\in U_\mu\cap O(Y)$ with $u^Y(x)=i$, we have $\|\psi^{Y,*}_{-N_{\mu}}|_{\N^{Y,u}_x}\|<e^{-\chi_0N_{\mu}/2}$. Otherwise, there exist  sequences of $Y_n\to X$ and of $x_n\in O(Y_n)$  with $u^{Y_n}(x_n)=i$ such that $x_n\to x\in \Lambda$ and $\|\psi^{Y_n,*}_{-N_{\mu}}|_{\N^{Y_n,u}_{x_n}}\|\ge e^{-\chi_0N_{\mu}/2}$. 
	Since $x\not\in \Sing(X)$, by Lemma~\ref{lem.domsplstar} and Theorem~\ref{thm.liao}, we have $\N^{Y_n,u}_{x_n}\to \N^u_x$.	
	It implies $\|\psi^*_{-N_{\mu}}|_{\N^u_{x}}\|\ge e^{-\chi_0N_{\mu}/2}$. This contradiction proves the statement.
	 
	 Since $U_\mu$ is open, there is a neighborhood $\mathscr{V}_{\mu}\subset \mathscr{V}_0$ of $\mu$ such that $\nu(U_\mu)>1-\delta^2$ for any $\nu\in  \mathscr{V}_{\mu}$.
The proof is finished.
\end{proof}

Since $\mathscr{L}^i$ is compact, there exist finitely many $\mu_1,\ldots,\mu_k$ such that $\mathscr{L}^i \subset \bigcup_{j=1}^k \mathscr{V}_{\mu_j}$. Define $\U_i=\bigcap_{j=1}^k \U_{\mu_j}$,  ${\mathscr{V}}^i := \bigcup_{j=1}^k \mathscr{V}_{\mu_j}$, $U^i(\delta)=\bigcup_{j=1}^k U_{\mu_j}$ and $N_0 := \max\{N_{\mu_1},\ldots, N_{\mu_k}\}$.  

Now for any $Y\in \U_i$, $\nu \in {\mathscr{V}}^i \cap \mathbb{P}_e(Y)$ satisfying $u^Y(\nu) = i$, there exists $1 \le j \le k$ such that $\nu \in \mathscr{V}_{\mu_j}$. By the above claim, the lemma holds for  $\nu$.	
\end{proof}

\begin{proof}[Proof of Theorem~\ref{thm.main.starrm}]
Following the proof strategy of Theorem~\ref{thm.main.3flowrm}, we may get the conclusion of this theorem by applying Lemma~\ref{lem.cptmeasstar}  and adapting the proof of \cite[Proposition 2.21, Theorem 3.1]{BCS25} with appropriate modifications.
\end{proof}

\bibliographystyle{amsplain}

\begin{thebibliography}{10}

\bibitem{Abr59}
L. M. Abramov, On the entropy of a flow, {\it Dokl. Akad. Nauk SSSR}, {\bf  128} (1959) 873--875.


\bibitem{AW}
R. L. Adler and B. Weiss, Entropy, a complete metric invariant for automorphisms of the torus, {\it  Proc. Nat. Acad. Sci. U.S.A.}, {\bf 57} (1967), 1573--1576.


\bibitem{ABS}
V. S. Afra\u{\i}movi\v{c}, V. V. Bykov, and L. P. Sil'nikov, The origin and structure of the Lorenz attractor, {\it  Dokl. Akad. Nauk. SSSR}, {\bf 234} (1977), no. 2, 336--339.


 \bibitem{AK1942} 
 W. Ambrose and S. Kakutani, Structure and continuity of measurable flows, {\it Duke Math. J.}, {\bf 9} (1942), 25--42.

\bibitem{Aok}
N. Aoki, The set of Axiom A diffeomorphisms with no cycles, {\it  Bol. Soc. Brasil. Mat. (N.S.)}, {\bf  23} (1992), no. 1-2, 21--65.

\bibitem{ALP}
E. Araujo, Y. Lima and M. Poletti, Symbolic dynamics for nonuniformly hyperbolic maps with singularities in high dimension, {\it  Mem. Amer. Math. Soc.}, {\bf 301} (2024), no. 1511, vi+117 pp.


\bibitem{AM2016} 
V. Ara\'ujo and I. Melbourne, Exponential decay of correlations for nonuniformly hyperbolic flows with a $C^{1+\alpha}$  stable foliation, including the classical Lorenz attractor, {\it Ann. Henri Poincar\'e}, {\bf 17} (2016), no. 11, 2975--3004.


 \bibitem{AM2019} 
 V. Ara\'ujo and I. Melbourne, Mixing properties and statistical limit theorems for singular hyperbolic flows without a smooth stable foliation, {\it Adv. Math.}, {\bf 349} (2019), 212--245.

\bibitem{AV}
V. Ara\'ujo and P. Varandas, Robust exponential decay of correlations for singular-flows, {\it  Comm. Math. Phys.}, {\bf  311} (2012), 215--246. Erratum, {\it Comm. Math. Phys.}, {\bf 341} (2016), no. 2, 729--731.


 \bibitem{AGY2006} 
 A. Avila, S. Gou\"ezel and J.Yoccoz, Exponential mixing for the Teichm\"uller flow, {\it Publ. Math. Inst. Hautes \'Etudes Sci.}, No. {\bf 104} (2006), 143--211. 
 
 
 
 
\bibitem{BDL}
V. Baladi, M. F. Demers and C. Liverani, Exponential decay of correlations for finite horizon Sinai billiard flows, {\it Invent. Math.}, {\bf  211} (2018), no. 1, 39--177.


\bibitem{BV2005} 
V. Baladi and B. Vall\'ee, Exponential decay of correlations for surface semi-flows without finite Markov partitions, {\it Proc. Amer. Math. Soc.}, {\bf  133} (2005), 865--874.

\bibitem{BBM}
P. B\'alint, O. Butterley and I. Melbourne, Polynomial decay of correlations for flows, including Lorentz gas examples, {\it Comm. Math. Phys.}, {\bf 368} (2019), 55-111.


\bibitem{BI2006} 
L. Barreira and G. Iommi, Suspension flows over countable Markov shifts, {\it J. Stat. Phys.}, {\bf 124} (2006), no.1, 207--230.

\bibitem{BP07}
L. Barreira and Y. Pesin, {\it  Nonuniform hyperbolicity}, volume 115 of Encyclopedia of Mathematics and its Applications, Cambridge University Press, Cambridge, 2007.

\bibitem{BS00}
L. Barreira and B. Saussol, Multifractal analysis of hyperbolic flows, {\it  Comm. Math. Phys.}, {\bf 214} (2000), no. 2, 339--371.

\bibitem{Ben}
S. Ben Ovadia, Symbolic dynamics for non-uniformly hyperbolic diffeomorphisms of compact smooth manifolds, {\it  J. Mod. Dyn.}, {\bf 13} (2018), 43--113.


\bibitem{BD}
C. Bonatti and A. da Luz, Star flows and multisingular hyperbolicity, {\it  J. Eur. Math. Soc. (JEMS)}, {\bf 23} (2021), no. 8, 2649--2705.

\bibitem{Bow70}
R. Bowen, Markov partitions for Axiom A diffeomorphisms, {\it  Amer. J. Math.}, {\bf 92} (1970), 725--747.

\bibitem{Bow71}
R. Bowen, Periodic points and measures for Axiom A diffeomorphisms, {\it Trans. Amer. Math. Soc.}, {\bf 154} (1971), 377--397.

\bibitem{Bow73}
R. Bowen, Symbolic dynamics for hyperbolic flows, {\it  Amer. J. Math.}, {\bf 95} (1973), 429--460.


\bibitem{Bow74}
R. Bowen, Maximizing entropy for a hyperbolic flow, {\it  Math. Systems Theory}, {\bf 7} (1974), no. 4, 300--303.

\bibitem{Bow75}
R. Bowen, Some systems with unique equilibrium states, {\it  Math. Systems Theory}, {\bf 8} (1975), no. 3, 193--202.

\bibitem{Bow08}
R. Bowen, {\it  Equilibrium states and the ergodic theory of Anosov diffeomorphisms}, Lecture Notes in Math., 470, Springer-Verlag, Berlin, 2008. viii+75 pp.


\bibitem{BW}
R. Bowen and P. Walters, Expansive one-parameter flows, {\it  J. Differential Equations}, {\bf  12} (1972), 180--193.


\bibitem{Bur}
D. Burguet, Maximal measure and entropic continuity of Lyapunov exponents for $C^r$ surface diffeomorphisms with large entropy, {\it  Ann. Henri Poincar\'e}, {\bf  25} (2024), no. 2, 1485--1510.

\bibitem{BLY}
D. Burguet, C. Luo and D. Yang, Effective SPR property for surface diffeomorphisms and three-dimensional vector fields, {\it  Preprint}, 2025. arXiv:2512.03515.


\bibitem{BCFT}
K. Burns, V. Climenhaga, T. Fisher, and D. J. Thompson, Unique equilibrium states for geodesic flows in nonpositive curvature, {\it  Geom. Funct. Anal.}, {\bf 28} (2018), no. 5, 1209--1259.

\bibitem{BMW}
K. Burns, H. Masur,  A. Wilkinson, The Weil-Petersson geodesic flow is ergodic, {\it  Ann. Math. (2)}, {\bf 175} (2012), no.2, 835--908.

 \bibitem{BM20} 
 O. Butterley and K. War, Open sets of exponentially mixing Anosov flows, {\it J. Eur. Math. Soc. (JEMS)}, {\bf 22} (2020), no. 7, 2253--2285.


\bibitem{Buz97}
J. Buzzi, Intrinsic ergodicity of smooth interval maps, {\it  Israel J. Math.}, {\bf 100} (1997), 125--161.

\bibitem{Buz05}
J. Buzzi, Subshifts of quasi-finite type, {\it  Invent. Math.}, {\bf 159} (2005), no. 2, 369--406.


\bibitem{BCL}
J. Buzzi, S. Crovisier, and Y. Lima, Symbolic dynamics for large non-uniformly hyperbolic sets of three dimensional flows, {\it  Adv. Math.}, {\bf 479} (2025), part A, Paper No. 110410, 91 pp.

\bibitem{BCS22}
 J. Buzzi, S. Crovisier, and O. Sarig, Measures of maximal entropy for surface diffeomorphisms, {\it  Ann. of Math. (2)}, {\bf 195} (2022), no. 2, 421--508.

\bibitem{BCS22b}
J. Buzzi, S. Crovisier, and O. Sarig, Continuity of Lyapunov exponents and entropy for surface diffeomorphisms, {\it  Invent. Math.}, {\bf  230} (2022), no. 2, 767--849.

\bibitem{BCS25}
J. Buzzi, S. Crovisier, and O. Sarig, Strong positive recurrence and exponential mixing for diffeomorphisms,  {\it  Preprint}, 2025. arXiv:2501.07455.

\bibitem{BF}
J. Buzzi and T. Fisher, Entropic stability beyond partial hyperbolicity, {\it  J. Mod. Dyn.}, {\bf 7} (2013), no. 4, 527--552.

\bibitem{BFSV}
 J. Buzzi, T. Fisher, M. Sambarino, and C. V\'asquez, Maximal entropy measures for certain partially hyperbolic, derived from Anosov systems, {\it  Ergodic Theory Dynam. Systems}, {\bf 32} (2012), no.1, 63--79.
 
 \bibitem{BuzLY}
 J. Buzzi, C. Luo, and D. Yang, Continuity properties of ergodic measures of maximal entropy for $C^r$ surface diffeomorphisms, {\it  Preprint}, 2024. arXiv:2412.19658.
 

\bibitem{CKW}
V. Climenhaga, G. Knieper, and K. War, Uniqueness of the measure of maximal entropy for geodesic
flows on certain manifolds without conjugate points, {\it  Adv. Math.}, {\bf 376} (2021), Paper No. 107452, 44 pp.

\bibitem{CT14}
V. Climenhaga and D. J.  Thompson, Intrinsic ergodicity via obstruction entropies, {\it  Ergodic Theory Dynam. Systems}, {\bf 34} (2014), no. 6, 1816--1831.

\bibitem{CT16}
Vaughn Climenhaga and D. J. Thompson, Unique equilibrium states for flows and homeomorphisms with non-uniform structure, {\it  Adv. Math.}, {\bf 303} (2016), 745--799.

\bibitem{CT18}
V. Climenhaga, T. Fisher, and D. J. Thompson, Unique equilibrium states for Bonatti-Viana diffeomorphisms, {\it  Nonlinearity}, {\bf 31} (2018), no. 6, 2532--2570.


\bibitem{CS}
V. Cyr and O. Sarig, Spectral gap and transience for Ruelle operators on countable Markov shifts, {\it  Comm.
Math. Phys.}, {\bf  292} (2009), no. 3, 637--666.


\bibitem{CDYZ}
S. Crovisier, A. da Luz, D. Yang, and J. Zhang, On the notions of singular domination and (multi)singular hyperbolicity, {\it Sci. China Math.}, {\bf 63} (2020), no. 9, 1721--1744.


\bibitem{CY}
	S. Crovisier and D. Yang, On the nonlinear Poincar\'e flow, {\it  Nonlinearity}, {\bf 38} (2025), no. 3, Paper No. 035022, 25 pp.

\bibitem{DaL}
A. da Luz, Star flows with singularities of different indices,  {\it  Preprint}, 2018. arXiv:1806.09011.


\bibitem{DV2021} 
D. Daltro and P. Varandas, Exponential decay of correlations for Gibbs measures and semiflows over $C^{1+\alpha}$ piecewise expanding maps, {\it Ann. Henri Poincar\'e}, {\bf 22} (2021), no. 7, 2137--2159.

\bibitem{Dol98a}
D. Dolgopyat, On decay of correlations in Anosov flows, {\it Ann. of Math. (2)}, {\bf  147} (1998), no. 2, 357--390.

\bibitem{Dol98}
D. Dolgopyat, Prevalence of rapid mixing in hyperbolic flows, {\it Ergodic Theory Dynam. Systems}, {\bf 18} (1998), no. 5, 1097--1114.


\bibitem{FMT07} 
M. Field, I. Melbourne and A. T\"or\"ok, Stability of mixing and rapid mixing for hyperbolic flows, {\it Ann. of Math. (2)},  {\bf 166} (2007), no. 1, 269--291. 


\bibitem{GY} S. Gan and D. Yang, Morse-Smale systems and horseshoes for three dimensional singular flows, {\it  Ann. Sci. \'{E}c. Norm. Sup\'{e}r (4)},  {\bf 51} (2018), no. 1, 39--112.
	
\bibitem{GW}
S. Gan and L. Wen, Nonsingular star flows satisfy Axiom A and the no-cycle condition, {\it Invent. Math.}, {\bf  164} (2006), no. 2, 279--315.
	
\bibitem{Gel}	
K. Gelfert, Horseshoes for diffeomorphisms preserving hyperbolic measures, {\it  Math. Z.},
{\bf 283} (2016), 685--701.
	
	
\bibitem{GLP}	
P. Giulietti, C. Liverani,  M. Pollicott,  Anosov flows and dynamical zeta functions, {\it Ann. of Math. (2)}, {\bf 178} (2013), no. 2, 687--773.	
	
\bibitem{Guc76}	
J. Guckenheimer, A strange, strange attractor,  In {\it The Hopf Bifurcation Theorems and Its Applications.} Applied Mathematical Series, vol. 19. New York: Springer-Verlag, 368--381, 1976.	
	
\bibitem{GucW}
J. Guckenheimer and R. F. Williams, Structural stability of Lorenz attractors, {\it  Inst. Hautes \'Etudes Sci. Publ. Math.}, No. {\bf 50} (1979), 59--72.	
	
	
\bibitem{Gur}
B. M. Gurevich, Shift entropy and Markov measures in the space of paths of a countable graph, {\it  Dokl. Akad. Nauk SSSR}, {\bf 192} (1970), 963--965.	


\bibitem{Gur96}
B. M. Gurevich, Stably recurrent nonnegative matrices, {\it Uspekhi Mat. Nauk}, {\bf  51} (1996), no. 3(309), 195--196; translation in {\it Russian Math. Surveys} {\bf 51} (1996), no. 3, 551--552.


\bibitem{GS}
B. M. Gurevich and S. V. Savchenko, Thermodynamic formalism for symbolic Markov chains with a countable number of states, {\it  Uspekhi Mat. Nauk}, {\bf  53} (1998), no. 2(320), 3--106; translation in {\it Russian Math. Surveys} {\bf 53} (1998), no. 2, 245--344.


\bibitem{GZ}
B. M. Gurevich and A. S. Zargaryan, Conditions for the existence of a maximal measure for a countable symbolic Markov chain, {\it  Vestnik Moskov. Univ. Ser. I Mat. Mekh.}, (1988), no. 5, 14--18, 103; translation in {\it Moscow Univ. Math. Bull.}, {\bf 43} (1988), no. 5, 18--23.



\bibitem{Hay}
S. Hayashi, Diffeomorphisms in ${\mathcal F}^1(M)$ satisfy Axiom A, {\it Ergodic Theory Dynam. Systems}, {\bf 12} (1992), no. 2, 233--253.


\bibitem{Hof}
F. Hofbauer, On intrinsic ergodicity of piecewise monotonic transformations with positive entropy, {\it Israel J. Math.}, {\bf 34}(1979), no. 3, 213--237.


\bibitem{IV2021}
G. Iommi and A. Velozo, Measures of maximal entropy for suspension flows, {\it Math. Z.}, {\bf  297} (2021), no. 3-4, 1473--1482.

\bibitem{Kat80}
A. Katok, Lyapunov exponents, entropy and periodic orbits for diffeomorphisms, {\it Publ. Math. Inst. Hautes \'Etudes Sci.}, {\bf 51} (1980), 137--173.

\bibitem{KSSV}
M. Kunzinger, H. Schichl, R. Steinbauer, and J. A. Vickers, Global Gronwall estimates for integral curves on Riemannian manifolds, {\it  Rev. Mat. Complut.}, {\bf  19} (2006), no. 1, 133--137.

\bibitem{LLS}
F. Ledrappier, Y. Lima, and O. Sarig, Ergodic properties of equilibrium measures for smooth three dimensional flows, {\it  Comment. Math. Helv.}, {\bf  91} (2016), no. 1, 65--106.



\bibitem{LP2023} 
J. Li and W. Pan, Exponential mixing of geodesic flows for geometrically finite hyperbolic manifolds with cusps, {\it Invent. Math.}, {\bf 231} (2023), no. 3, 931--1021.

\bibitem{LGW} 
M. Li, S. Gan and L. Wen, Robustly transitive singular sets via approach of an extended linear Poincar\'e flow, {\it Discrete Contin. Dyn. Syst.}, {\bf 13} (2005), no. 2, 239--269.


\bibitem{LLL2024} 
M. Li, C. Liang and X. Liu, A closing lemma for non-uniformly hyperbolic singular flows, {\it Comm. Math. Phys.}, {\bf 405} (2024), no. 8, Paper No. 195, 35 pp.


\bibitem{LSWW}
M. Li, Y. Shi, S. Wang and X. Wang, Measures of intermediate entropies for star
vector fields, {\it  Israel J. Math.}, {\bf  240} (2020), no. 2, 791--819.

\bibitem{LYYZ}
M. Li, F. Yang, J. Yang and R. Zheng, A countable partition for singular flow based on filtration structure: construction and application, {\it  Preprint}, 2024.

\bibitem{LY12}
Z. Lian, L.-S. Young, Lyapunov exponents, periodic orbits, and horseshoes for semiflows on Hilbert spaces, {\it J. Am. Math. Soc.}, {\bf 25} (2012), no. 3, 637--665.

\bibitem{Liao}
S. Liao, Certain ergodic properties of a differential system on a compact differentiable manifold, {\it Front. Math. China}, {\bf  1} (2006), no. 1, 1--52.

\bibitem{Liao1} 
S. Liao, A basic property of a certain class of differential systems, {\it Acta Math. Sinica}, {\bf 22} (1979), no. 3, 316--343.

\bibitem{Liao81}
S. Liao, Obstruction sets.(II), {\it  Beijing Daxue Xuebao}, (1981), no.2, 1--36.


\bibitem{Liao2} 
S. Liao, Some uniformity properties of ordinary differential systems and a generalization of an existence theorem for periodic orbits, {\it Beijing Daxue Xuebao}, (1985), no. 2, 1--19.
	
\bibitem{Liao3} 
S. Liao, On $(\eta,d)$-contractible orbits of vector fields, {\it Systems Sci. Math. Sci.}, {\bf 2} (1989), no. 3, 193--227.


\bibitem{Lim}
Y. Lima, Symbolic dynamics for one dimensional maps with nonuniform expansion, {\it  Ann. Inst. H. Poincar\'e C Anal. Non Lin\'eaire}, {\bf 37} (2020), no. 3, 727--755.	

	
\bibitem{LM}
Y. Lima and C. Matheus, Symbolic dynamics for non-uniformly hyperbolic surface maps with discontinuities, {\it Ann. Sci. \'Ec. Norm. Sup\'er (4)},  {\bf 51} (2018), no. 1, 1-38.

\bibitem{LMN}
Y. Lima, J. Mongez and J. Nascimento, Symbolic dynamics for non-uniformly hyperbolic flows in high dimension, {\it  Preprint}, 2025. arXiv:2509.09050.


\bibitem{LP}
Y. Lima and M. Poletti, Homoclinic classes of geodesic flows on rank $1$ manifolds, {\it  Proc. Amer. Math. Soc.}, {\bf 153} (2025), no. 4, 1611--1620.


\bibitem{LS}
Y. Lima and O. Sarig, Symbolic dynamics for three-dimensional flows with positive topological entropy, {\it  J. Eur. Math. Soc. (JEMS)}, {\bf 21} (2019), no. 1, 199--256.



\bibitem{Liv04}
 C. Liverani, On contact Anosov flows, {\it Ann. of Math. (2)},  {\bf 159} (2004), no. 3, 1275--1312.


\bibitem{Lor}
E. N. Lorenz, Deterministic nonperiodic flow, {\it  J. Atmospheric. Sci.}, {\bf 20} (1963),130--141.

\bibitem{LY26} C. Luo and D. Yang, Strong Positive recurrence for potential and exponential mixing of equilibrium states of surface diffeomorphisms, {\it  Preprint}, 2026, arXiv: 2602.04275.


\bibitem{Man82}
R. Ma\~n\'e, An ergodic closing lemma, {\it Ann. of Math. (2)},  {\bf 116}(1982), no. 3, 503--540.

\bibitem{Man88}
R. Ma\~n\'e, A proof of the $C^1$ stability conjecture, {\it  Inst. Hautes \'Etudes Sci. Publ. Math.}, No. {\bf 66} (1988), 161--210.


\bibitem{Melbourne2007} 
I. Melbourne, Rapid decay of correlations for nonuniformly hyperbolic flows, {\it Trans. Amer. Math. Soc.}, {\bf 359} (2007), no. 5, 2421--2441.

\bibitem{Mel18}
I. Melbourne, Superpolynomial and polynomial mixing for semiflows and flows, {\it Nonlinearity}, {\bf 31} (2018), no. 10, R268--R316.


\bibitem{MM}
R. Metzger and C. A. Morales, Sectional-hyperbolic systems, {\it Ergodic Theory  Dynam. Systems}, {\bf  28} (2028), no. 5, 1587--1597.

\bibitem{MPP99}
C. A. Morales, M. J. Pacifico and E. R. Pujals, Singular hyperbolic systems, {\it  Proc. Amer. Math. Soc.}, {\bf  127} (1999), no. 11, 3393--3401.

\bibitem{New72}
S. E. Newhouse, Hyperbolic limit sets, {\it  Trans. Amer. Math. Soc.}, {\bf  167} (1972), 125--150.

\bibitem{New}
S. E. Newhouse, Continuity properties of entropy, {\it Ann. of Math. (2)},   {\bf  129} (1989), no. 2, 215--235.

\bibitem{Ose}
V.I. Oseledec, A multiplicative ergodic theorem. Characteristic  Ljapunov, exponents of dynamical systems, {\it Trudy Moskov. Mat. Ob\v{s}\v{c}.},  {\bf 19} (1968), 179--210.

\bibitem{PYY22}
M. J. Pacifico, F. Yang and J. Yang, Existence and uniqueness of equilibrium states for systems with specification at a fixed scale: an improved Climenhaga-Thompson criterion, {\it Nonlinearity}, {\bf 35} (2022), no. 12, 5963--5992.

\bibitem{PYY25a}
M. J. Pacifico, F. Yang and J. Yang, Equilibrium states for the classical Lorenz attractor and sectional-hyperbolic attractors in higher dimensions, {\it  Duke Math. J.}, {\bf 174} (2025), no. 10, 1901--2010.


\bibitem{PYY25b}
M. J. Pacifico, F. Yang and J. Yang, An ergodic spectral decomposition theorem for singular star flows, {\it  Preprint}, 2025. arXiv:2506.19989.


\bibitem{PYYY}
M. J. Pacifico, F. Yang, J. Yang and G. Yao, An improved Climenhaga-Thompson criterion for locally maximal sets, {\it  Preprint}, 2025. arXiv:2503.23173.


\bibitem{Par}
W. Parry, Intrinsic Markov chains, {\it  Trans. Amer. Math. Soc.}, {\bf 112} (1964), 55--66.

\bibitem{PP}
W. Parry and M. Pollicott, Zeta functions and the periodic orbit structure of hyperbolic dynamics, {\it Ast\'erisque}, No. 187-188 (1990), 268 pp.

\bibitem{Pes76}
J. B. Pesin, Families of invariant manifolds that correspond to nonzero characteristic exponents, {\it  Izv. Akad. Nauk SSSR Ser. Mat.}, {\bf  40} (1976), 1332--1379, 1440.

\bibitem{Pol85}
 M. Pollicott, On the rate of mixing of Axiom A flows, {\it Invent. Math.}, {\bf  81} (1985), no. 3, 413--426.

\bibitem{PS}
M. Pollicott, R. Sharp, Exponential error terms for growth functions on negatively curved surfaces, {\it Amer. J. Math.}, {\bf 120} (1998), no. 5,  1019--1042.


\bibitem{Rat69}
M. E. Ratner, Markov decomposition for an y-flow on a three-dimensional manifold, {\it  Mat. Zametki}, {\bf 6} (1969), 693--704.

\bibitem{Rat73}
M. E. Ratner,  Markov partitions for Anosov flows on $n$-dimensional manifolds, {\it  Israel J. Math.}, {\bf 15} (1973), 92--114.


\bibitem{Rue83}
D. Ruelle, Flots qui ne m\'elangent pas exponentiellement, {\it C. R. Acad. Sci. Paris S\'er. I Math.}, {\bf  296} (1983), no. 4, 191--193.

\bibitem{Sarig1999} 
O. Sarig, Thermodynamic formalism for countable Markov shifts, {\it Ergodic Theory Dynam. Systems}, {\bf 19} (1999), no. 6, 1565--1593.

\bibitem{Sar01}
O. Sarig, Phase transitions for countable Markov shifts, {\it  Comm. Math. Phys.}, {\bf  217} (2001), no. 3, 555--577.

\bibitem{Sar13}
O. Sarig, Symbolic dynamics for surface diffeomorphisms with positive entropy, {\it  J. Amer. Math. Soc.}, {\bf 26} (2013), no. 2, 341--426.

\bibitem{SGW}
Y. Shi, S. Gan and L. Wen, On the singular hyperbolicity of star flows, {\it J. Mod. Dyn.}, {\bf  8} (2014), no. 2, 191--219.

\bibitem{SY}
Y. Shi and F. Yang, Star flows with singular equilibrium states, in preparation.

\bibitem{Sin68}
Ja. G. Sina\u{\i}, Construction of Markov partitionings, {\it  Funkcional. Anal. i Prilo\v{z}en.}, {\bf 2} (1968), no. 3, 70--80.

\bibitem{Sin}
Ja. G. Sina\u{\i}, Gibbs measures in ergodic theory, {\it  Uspehi Mat. Nauk}, {\bf 27} (1972), no. 4(166), 21--64.


\bibitem{Tsu18} 
M. Tsujii, Exponential mixing for generic volume-preserving Anosov flows in dimension three, {\it J. Math. Soc. Japan}, {\bf 70} (2018), no. 2,  757--821.
	
\bibitem{TZ23} 
M. Tsujii and Z. Zhang, Smooth mixing Anosov flows in dimension three are exponentially mixing, {\it Ann. of Math. (2)},  {\bf 197} (2023), no. 1, 65--158.

\bibitem{Ver}
D. Vere-Jones, Geometric ergodicity in denumerable Markov chains, {\it  Quart. J. Math. Oxford Ser. (2)},  {\bf  13} (1962), 7--28.

\bibitem{Wal}
P. Walters, {\it  An introduction to ergodic theory}, Grad. Texts in Math.,  79, Springer-Verlag, New York-Berlin, 1982. ix+250 pp.

\bibitem{Wen96}
L. Wen, On the $C^1$ stability conjecture for flows, {\it J. Differential Equations}, {\bf 129} (1996), 334--357.

\bibitem{WW}
X. Wen and L. Wen, A rescaled expansiveness for flows, {\it Trans. Amer. Math. Soc.}, {\bf 371} (2019), no. 5, 3179--3207. 

\bibitem{Young1998} 
L.-S. Young, Statistical properties of dynamical systems with some hyperbolicity, {\it Ann. of Math. (2)},  {\bf 147} (1998), no. 3, 585--650.

\bibitem{You99}
L.-S. Young, Recurrence times and rates of mixing, {\it  Israel J. Math.}, {\bf  110} (1999), 153--188.

\bibitem{Zan}
Y. Zang, Measures of maximal entropy for $C^\infty$ three-dimensional flows, {\it  Preprint}, 2025. arXiv:2503.21183.


\end{thebibliography}

\noindent Ming Li,
\smallskip

\noindent  School of Mathematical Sciences and LPMC

\noindent  Nankai University, Tianjin 300071, People's Republic of China

\noindent limingmath@nankai.edu.cn

\bigskip

\noindent Xingzhong Liu,
\smallskip

\noindent  School of Mathematical Sciences and Insititute of Mathematics and Interdisciplinary Sciences

\noindent  Tianjin Normal University, Tianjin 300387, People's Republic of China

\noindent lxzmathematics@yeah.net

\end{document}